\newcommand{\myTitle}{On Multiplicative Weightings for Lie Groupoids and Lie Algebroids\xspace}
\newcommand{\myName}{Daniel Hudson\xspace}
\newcommand{\myDegree}{Doctor of Philosophy\xspace}
\newcommand{\myDepartment}{Graduate Department of Mathematics\xspace}
\newcommand{\myUni}{University of Toronto\xspace}
\newcommand{\myTime}{2024\xspace}
    \titleformat{\chapter}[display]%
    {\relax}{\raggedleft{\color{CTsemi}\chapterNumber\thechapter} \\ }{0pt}%
    {\titlerule\vspace*{.9\baselineskip}\raggedright\Large\color{CTtitle}\spacedallcaps}[\normalsize\vspace*{.8\baselineskip}\titlerule]%
    \titleformat{\chapter}[display]%
    {\relax}{\mbox{}\oldmarginpar{\vspace*{-3\baselineskip}\color{CTsemi}\chapterNumber\thechapter}}{0pt}%
    {\raggedright\huge\color{CTtitle}\spacedallcaps}[\normalsize\vspace*{.8\baselineskip}\titlerule]%
\newtheorem{theorem}{Theorem}[chapter]
\newtheorem*{theorem*}{Theorem}
\newtheorem{lemma}[theorem]{Lemma}
\newtheorem{corollary}[theorem]{Corollary}
\newtheorem{proposition}[theorem]{Proposition}
\theoremstyle{definition}
\newtheorem{definition}[theorem]{Definition}
\newtheorem*{definition*}{Definition}
\newtheorem{definitions}[theorem]{Definitions}
\newtheorem{problem}[theorem]{Problem}
\newtheorem{example}[theorem]{Example}
\newtheorem{examples}[theorem]{Examples}
\theoremstyle{remark}
\newtheorem{remark}[theorem]{Remark}
\newtheorem{remarks}[theorem]{Remarks}
\newenvironment{itemize*}
  {\begin{itemize}[topsep=-\parskip+\jot,itemsep=-\parskip-\jot]}
  {\end{itemize}}
\newenvironment{enumerate*}
  {\begin{enumerate}[label=(\alph*),topsep=-\parskip+\jot,itemsep=-\parskip-\jot]}
  {\end{enumerate}}
\newenvironment{enumerate**}
  {\begin{enumerate}[label=(\roman*),topsep=-\parskip+\jot,itemsep=-\parskip-\jot]}
  {\end{enumerate}}
\newenvironment{enumerate***}
  {\begin{enumerate}[label=(\alph*'),topsep=-\parskip+\jot,itemsep=-\parskip-\jot]}
  {\end{enumerate}}
\newcommand{\into}{\hookrightarrow}
\newcommand{\toto}{\rightrightarrows}
\newcommand{\C}{\mathbb{C}}
\newcommand{\R}{\mathbb{R}}
\newcommand{\Z}{\mathbb{Z}}
\newcommand{\N}{\mathbb{N}}
\newcommand{\G}{\Gamma}
\newcommand{\T}{\mathbb{T}}
\newcommand{\End}{\mathrm{End}}
\newcommand{\A}{\mathbb{A}}
\newcommand{\aHom}{\mathrm{Hom}_{\mathrm{alg}}}
\newcommand{\rees}{\mathrm{Rees}}
\newcommand{\Cl}{\mathbb{C}\mathrm{l}}
\newcommand{\ger}[1]{\mathfrak{#1}}
\newcommand{\corr}[1]{\overset{#1}{\longrightarrow}}
\newcommand{\la}{\langle}
\newcommand{\ra}{\rangle}
\newcommand{\bd}{\partial}
\newcommand{\sset}{\subseteq}
\newcommand*{\van}[1]{\mathcal{I}_{#1}}
\newcommand{\ed}{\mathrm{d}}
\newcommand{\nuw}{\nu_\mathcal{W}}
\newcommand{\defw}{\delta_\mathcal{W}}
\newcommand{\gr}{\mathrm{gr}}
\newcommand{\W}{\mathcal{W}}
\newcommand{\DO}{\mathrm{DO}}
\newcommand{\WT}[1]{\widetilde{#1}}
\begin{document}

\frenchspacing
\raggedbottom

\selectlanguage{english}

\pagenumbering{roman}
\pagestyle{plain}
\begin{titlepage}
    \pdfbookmark[1]{\myTitle}{titlepage}

    \begin{center}
        \large

        \hfill

        \vfill

        \begingroup
            \color{CTtitle}\spacedallcaps{\myTitle}
        \endgroup

		\vfill
		\spacedlowsmallcaps{by}\bigskip
		
        \spacedlowsmallcaps{\myName}

		\vfill
        
        A thesis submitted in conformity with\\
        the requirements for the degree of\smallskip
        
        \myDegree \smallskip
        
        \myDepartment\\
        \myUni\bigskip
        
        \textcopyright\ Copyright by \myName \myTime

    \end{center}
\end{titlepage}

\cleardoublepage
\pdfbookmark[1]{Abstract}{Abstract}

\begingroup
\let\clearpage\relax
\let\cleardoublepage\relax
\let\cleardoublepage\relax

\chapter*{Abstract}

\doublespacing

\begin{center}
	\myTitle\\
\myName\\
\myDegree\\
\myDepartment\\
\myUni\\
\myTime
\end{center}

We present a thorough study of the differential geometry of weightings and develop the theory of weightings for vector bundles, Lie groupoids, and Lie algebroids. 

We begin by extending the work of Loizides and Meinrenken in~\cite{loizides2023differential} on weighted manifolds. We define weighted submanifolds, weighted immersions, and weighted embeddings, and prove normal form theorems for these objects. We also study characterizations of weighted morphisms in terms of their graphs and in terms of weighted paths. We further extend the theory of weighted manifolds by developing a theory of linear weightings for vector bundles. Our work on linear weightings is the content of the pre-print~\cite{hudson2023linear}.

Following this, we give three equivalent definitions of a multiplicative weighting for a Lie groupoid $G\toto M$: one involving the structure maps for the Lie groupoid, one involving the graph of the groupoid multiplication, and one involving the weighted deformation space. We also include a discussion of weighted VB-groupoids, and prove some basic theorems involving these objects. 

In the last two chapters of this thesis, we study infinitesimally multiplicative weightings for Lie algebroids. We characterize these in terms of linear Poisson structures and homological vector fields. We show that multiplicative weightings differentiate to infinitesimally multiplicative weightings and solve the integration problem for infinitesimally multiplicative weightings along wide Lie subalgebroids. In particular, we classify multiplicative weightings of a Lie groupoid along its units in terms of Lie filtrations of its Lie algebroid (cf.~\cite[Definition 67]{van2019groupoid}). We make progress towards a solution for the general integration problem by giving a condition for when an infinitesimally multiplicative weighting along a general Lie subalgebroid integrates. 

\endgroup
\cleardoublepage
\pdfbookmark[1]{Acknowledgements}{acknowledgements}

\begingroup
\let\clearpage\relax
\let\cleardoublepage\relax

\chapter*{Acknowledgements}

The task of writing this thesis, in addition to completing everything else that goes along with doing a PhD, has been a tremendous undertaking - one that I could not have accomplished alone. I am greatly indebted to everyone I have had the pleasure of spending time with over the past five years. Although you might not see it in the following pages, all of you have contributed to this work in various ways. 

To my soul mate and life partner, Jane Paul - if I were to fully convey how much it meant to have you throughout this endeavour it would double the length of the thesis, so I will have to content myself with these few words. Your constant commitment and support during this experience has been the greatest gift that I have ever been given. Your rationality and ability to calm me down as I catastrophize every mistake and misstep throughout this has been absolutely paramount to the completion of this project. Moving forward, my sole ambition is to repay you and make our dream life a reality.

To my supervisor, Eckhard Meinrenken - thank you for demonstrating extraordinary patience, encouragement, and guidance throughout my PhD. Thanks to your support I have been able to pursue my passion, travel the world, and meet many wonderful and interesting people. You have not only made this work possible, but the entire experience as rewarding and enjoyable as it could have been. Above all, I want to thank you for teaching me to seek beauty and simplicity in all my creative pursuits. 

To my master's supervisor, Heath Emerson - thank you for you guidance throughout my mathematical journey, and in particular for pointing me towards the University of Toronto for my PhD. Your strong sense of mathematical taste and style has greatly benefited me. I'm happy to have you as a friend. 

To my supervisory committee, Boris Khesin and Marco Gualtieri - thank you for your earnest advice and attentive listening during our meetings. Your insights have been invaluable in completing this work.

To my parents, Frank and Theresa Hudson - thank you for your unwavering encouragement and belief in me throughout my entire post-secondary career. I am so fortunate to have parents who have shown unconditional support while I chase my dreams involving something they do not understand. I hope I've lived up to your expectations. 

To my mathematical brother, Ethan Ross - thank you for being the superlative office mate and sounding board. I cannot express how much I appreciate you listening to me ramble about nonsense and your unending willingness to go for coffee. Your passion for the esoteric and commitment to your interests has been truly inspirational to me. 

To my Alberta friends, Adam Morgan, Carrie Clark, Curtis Brown, Eli Vigneron, and Kristen C\^{o}t\`{e}, and Mississauguah friends Nick Plati, Julia Bonavita, and Turner Silverthorne - thank you for welcoming Jane and me into your friends group and including us in all your activities. I can't imagine better people to have found upon moving to Toronto. Thank you for letting me punish you endlessly with shop talk and death metal facts at our various gatherings - at least I had fun. 

To my friends at the University of Toronto and beyond, in particular Alice Rolf, Chloe Lampman, Eva Politou, Flora Bowditch, Mackenzie Wheeler, and Valia Liontou - thank you for making my time here so enjoyable with our discussions on mathematics, teaching, and cats. I secretly believe that U of T has a personality screening before admitting grad students, and you are all evidence of this. 

To the \#1 rockers, Noel Peters and Ian Stephenson - thank you for welcoming Jane and me into the Inertia Entertainment family. Outside of my PhD, the best part of being in Toronto has been the ability to go to shows, take pictures, and immerse ourselves in the music scene here.

To my aunts, Annie Brown and Shelley Emery - thank you for storing my belongings during my PhD. The financial burden you have lifted from Jane and me has not gone unnoticed or unappreciated. 

To the University of Toronto, Government of Ontario, and NSERC - thank you for the financial support throughout my PhD, including various OGS scholarships and an NSERC CGS-D. I think you will find that it was money well spent.

\endgroup

\cleardoublepage
\phantomsection
\pdfbookmark[1]{Dedication}{Dedication}

\vspace*{3cm}

\begingroup
\setlength{\parindent}{0cm}
\textit{To Jane,\vspace{3mm} \\
\hspace*{5mm} ``You are the sun, \\
\hspace*{6mm} I've been reflecting your light \\
\hspace*{6mm} since the day that we met." \vspace{3mm} \\
\hspace*{35mm} - adapted from "Solar Flare" by Linea Aspera}
\endgroup

\newpage

\noindent
\begin{minipage}[c][\textheight][c]{\textwidth}
    \centering
    \includegraphics[scale=0.17]{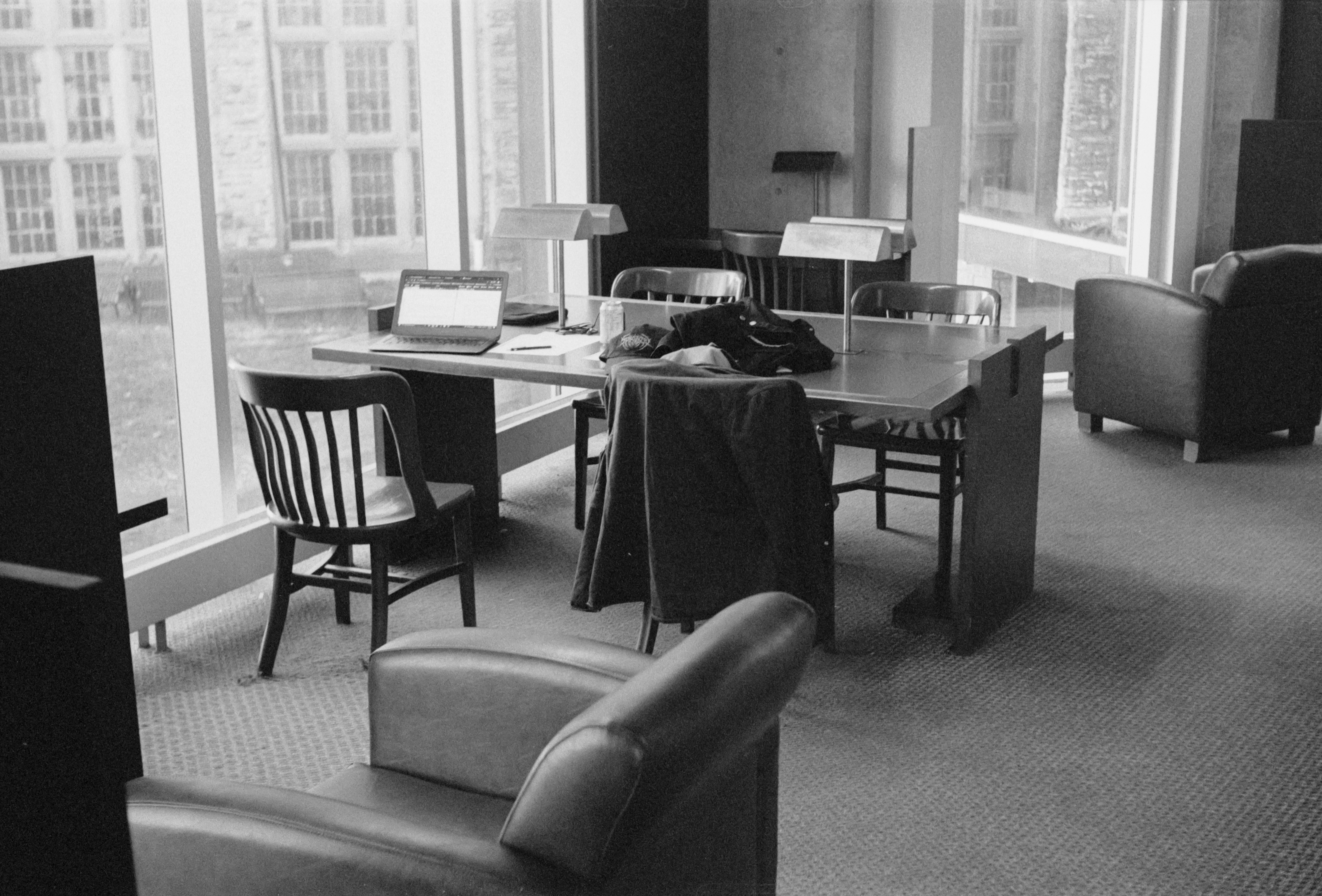}
\end{minipage}





\clearpage
\setlength{\cftbeforetoctitleskip}{100em}
\begingroup
\pagestyle{scrheadings}
\pdfbookmark[1]{\contentsname}{tableofcontents}
\setcounter{tocdepth}{1} 
\setcounter{secnumdepth}{2} 
\manualmark
\markboth{\spacedlowsmallcaps{\contentsname}}{\spacedlowsmallcaps{\contentsname}}
\tableofcontents
\automark[section]{chapter}
\renewcommand{\chaptermark}[1]{\markboth{\spacedlowsmallcaps{#1}}{\spacedlowsmallcaps{#1}}}
\renewcommand{\sectionmark}[1]{\markright{\textsc{\thesection}\enspace\spacedlowsmallcaps{#1}}}
\endgroup

\cleardoublepage
\pagestyle{scrheadings}
\pagenumbering{arabic}
\cleardoublepage
\chapter{Introduction}
\label{chapter: introduction}

The purpose of this thesis is to give a detailed and thorough study of the theory of \emph{weightings} in the categories of manifolds, vector bundles, Lie groupoids, and Lie algebroids, with the goal of unifying, clarify, and generalizing constructions of Connes~\cite{connes1994}, Debord and Skandalis~\cite{debord2014adiabatic}, Higson and Yi~\cite{higson2019spinors}, Haj and Higson~\cite{sadegh2018euler}, van Erp and Yuncken~\cite{van2017tangent}, and \v{S}evera~\cite{vsevera2017letters}.  

\section{Background}

\subsection{Connes' tangent groupoid}

The story of weightings can be said to have started with the so-called \emph{tangent groupoid} introduced by Connes in his book~\cite{connes1994}. Given a smooth manifold $M$, the tangent groupoid $\mathbb{T}M$ is defined as a set to be the disjoint union
    \begin{equation*}
    \label{equation: tangent groupoid}
        \mathbb{T}M = TM \sqcup (M\times M\times \R^\times).
    \end{equation*}
The topology on $\mathbb{T}M$ is specified by declaring that $M\times M\times \R^\times$ is an open set, and a sequence $(x_n, y_n, t_n) \in M\times M\times \R^\times$ with $t_n \to 0$ converges to a tangent vector $X_p\in T_pM$ if and only if 
    \[ x_n \to p, \quad y_n \to p, \quad \frac{x_n-y_n}{t_n} \to X_p, \]
where the last term is understood to take place in a local coordinate system. This admits the structure of a Lie groupoid $\mathbb{T}M\toto M\times\R$. Alternatively, it can be understood as a family of Lie groupoids $\mathbb{T}M_{t} \toto M$ parameterized $t \in \R$, where 
    \[ \mathbb{T}M_{t} = \left\{
        \begin{array}{ll}
             \mathrm{Pair}(M) & t\neq 0   \\
             TM & t = 0. 
        \end{array}
    \right.\]
Connes uses this construction together with K-theory for groupoid $C^*$-algebras to give a geometric reformulation of the analytic index map of Atiyah and Singer and a proof of the Atiyah-Singer index theorem (see~\cite{higson2023fields} for a more thorough account of Connes' argument). 

\subsection{The groupoid approach to pseudodifferential operators}

There is a smooth action  $\alpha: \R^\times \times \mathbb{T}M \to \mathbb{T}M$ (the \emph{zoom action}) defined on the open set $M\times M\times \R^\times$ by
    \[ \alpha_\lambda(p,q,t) = (p,q, \lambda^{-1}t) \]
and on $TM$ by scalar multiplication. Debord and Skandalis~\cite{debord2014adiabatic} observed that any order $k$ differential operator $D$ on $M$ extends to a family of differential operators $\mathbb{D}_t$ on $\mathbb{T}M$ which is homogeneous of degree $k$, as follows. Suppose that in local coordinates $x_a$ on $M$ one has 
    \[ D = \sum_{|\alpha| \leq k}a_\alpha(x)\bd^\alpha. \]
Let $D_0$ be the differential operator on $TM$ which, for $p\in M$, acts on the fibre $T_pM$ as the constant coefficient differential operator 
    \[ D_{0, p} = \sum_{|\alpha| = k}a_\alpha(p)\bd^\alpha. \]
Then $\mathbb{D}_t$ is given by
    \[ \mathbb{D}_t = \left\{
        \begin{array}{ll}
             t^kD & t\neq 0  \\
             D_0 & t = 0, 
        \end{array}
    \right.\]
where $t^kD$ acts on the first component of $\mathbb{T}M_t = M\times M \times \{t\}$. More generally, if $D$ is a pseudodifferential operator on $M$ of order $k$ with Schwartz kernel $d\in \mathcal{D}'(M\times M)$, then $d$ extends to a distribution $\mathbb{D}$ on $\mathbb{T}M$ such that $\alpha_\lambda^*\mathbb{D}-\lambda^k\mathbb{D}$ is a smooth density. It was then shown by van Erp and Yuncken that this property \emph{characterizes} the pseudodifferential operators on $M$ (\cite[Theorem 2]{van2019groupoid}). 

An attractive feature of this result is that it allows for a coordinate free definition of pseudodifferential operators. In particular, it provides an avenue for defining pseudodifferential operators and their principal symbols in situations where the Fourier transform is unavailable. The setting in which van Erp and Yuncken apply this principle is that of \emph{filtered} (or \emph{Carnot}) manifolds, first considered by Melin~\cite{melin82lie}. A filtered manifold is a manifold $M$ together with a filtration 
    \[ TM = F_{-r} \supseteq F_{-r+1} \supseteq \cdots \supseteq F_{-1} \supseteq 0 \]
of $TM$ by subbundles $F_{-i}$ such that 
    \begin{equation*}
    \label{equation: lie bracket condition}
        [ \G(F_{-i}), \G(F_{-j}) ] \sset \G(F_{-i-j}).
    \end{equation*}
The Lie bracket condition ensures that the graded vector bundle 
    \[ \ger{t}_FM = \bigoplus_{i=0}^rF_{-i}/F_{-i+1} \to M \]
has the structure of a family of nilpotent Lie algebras. The corresponding family $T_FM = \exp(\ger{t}_FM)$ of simply connected nilpotent Lie groups fits into a smooth family of Lie groupoids 
    \[ \mathbb{T}_FM = T_FM \sqcup (\mathrm{Pair}(M)\times \R^\times)  \]
generalizing Connes' tangent groupoid (\cite{choi2015privileged, van2017tangent, sadegh2018euler}). The action of $\R^\times$ on $\ger{t}_FM$ defined by 
    \[ \alpha_\lambda(\xi) = \lambda^i\xi, \quad \text{for } \xi \in F_{-i}/F_{-i+1}, \]
is by Lie algebroid automorphisms, hence integrates to an action on $T_FG$ by Lie groupoid automorphisms, and this extends to a smooth action on $\mathbb{T}_FM$ by Lie groupoid automorphisms; this is the zoom action on $\T_FM$. Using their characterization of pseudodifferential operators, van Erp and Yuncken then \emph{define} the $F$-pseudodifferential calculus for a filtered manifold $M$ by replacing $\mathbb{T}M$ with $\mathbb{T}_FM$. 

\subsection{Euler-like vector fields and deformation to the normal cone}

Around the same time as van Erp and Yuncken's work on pseudodifferential operators, Bischoff, Bursztyn, Lima, and Meinrenken (\cite{bursztyn2019splitting, meinrenken2021euler, bischoff2020deformation}) had discovered that many linearization and normal form results could be deduced from the fact that a specific class of vector fields, called \emph{Euler-like vector fields}, could be linearized. 

Recall that the Euler-vector field $E$ on a vector bundle $V\to M$ is the vector field whose flow is given by scalar multiplication by $e^t$; if $x_a, p_b$ are local vector bundle coordinates for $V$ then 
    \[ E = \sum_b p_b\frac{\bd}{\bd p_b}.  \]
A vector field $X\in \ger{X}(M)$ is called \emph{Euler-like} with respect to a submanifold $N \sset M$ if it vanishes along $N$ and the induced vector field $\nu(X)$ on the normal bundle $\nu(M,N)$ is equal to the Euler vector field; if $x_a, y_b$ are local coordinates on $M$ such that $N$ is cut out by setting $y_b = 0$, then $X\in \ger{X}(M)$ is Euler-like with respect to $N$ if and only if 
    \[ X = \sum_af_a(x,y)\frac{\bd}{\bd x_a} + \sum_b(y_b+g_b(x,y))\frac{\bd}{\bd y_b} \]
where $f_a$ vanishes along $N$ and $g_b$ vanishes to order 2 along $N$. The linearization theorem of Bursztyn, Lima, and Meinrenken (\cite[Proposition 2.7]{bursztyn2019splitting}) is that there is a 1-1 correspondence between (germs of) tubular neighbourhoods of $N$ in $M$ and vector fields on $M$ which are Euler-like with respect to $N$. The connection between the linearization theorem of Bursztyn, Lima, and Meinrenken and Connes' tangent groupoid was established by Haj and Higson (\cite{sadegh2018euler}), who use $\mathbb{T}M$ to give a beautiful geometric explanation of this result.

\subsection{Weightings}

More generally, Haj and Higson also considered filtered manifolds. If $M$ is a filtered manifold, then they define (cf.~\cite[Definition 7.1]{sadegh2018euler}) \emph{filtered submanifolds} to be submanifolds $N\sset M$ with the property that the intersections $TN\cap (F_{-i})|_N$ are vector subbundles of $TM|_N$. Haj and Higson construct a smooth deformation to the normal cone is this setting, and explain how this construction generalizes the aforementioned $F$-tangent groupoid (~\cite[Section 9]{sadegh2018euler}). Motivated by this work, Loizides and Meinrenken (cf.~\cite{loizides2023differential}) determined what additional structure was needed along a submanifold to define these ``exotic'' deformation spaces. To this end, they introduced the concept of a \emph{weighting} (cf.~\cite[Definition 2.2]{loizides2023differential}); a similar definition was given by Melrose in~\cite{melrose1996differential} under the name of a ``quasi-homogeneous structure".

A weighting of a manifold $M$ along a closed, embedded submanifold $N$ in defined terms of a multiplicative filtration 
    \begin{equation}
    \label{introeq: weighting}
        C^\infty_{M} = C^\infty_{M, (0)} \supseteq C^\infty_{M, (1)} \supseteq C^\infty_{M, (2)} \supseteq \cdots
    \end{equation}
of the sheaf of smooth functions on $M$ such that $C^\infty_{M, (1)} = \van{N}$ is the vanishing ideal of $N$ (see~\autoref{definition: weighting}). Given a weighting of $M$ along $N$, Loizides and Meinrenken define a fibre bundle
    \[ \nuw(M,N) \to N,\]
called the \emph{weighted normal bundle} (\autoref{definition: weighted normal bundle}), generalizing the normal bundle of $M$ in $N$. Additionally, they explain how the weighted normal bundle fits into a \emph{weighted deformation space}
    \[ \defw(M,N) = \nuw(M,N) \sqcup (M\times \R^\times) \]
(\autoref{defintion: weighted deformation space}). Loizides and Meinrenken explain in~\cite{loizides2022singular} how their constructions generalize the work of Haj and Higson, by showing that if $M$ is a filtered manifold then $M\times M$ has a canonical weighting along the diagonal and with respect to this one has $\defw(M\times M, M)  = \mathbb{T}_FM$. 

\section{Overview and Summary of Results}

A key feature of Connes' construction is that the tangent groupoid is, as the name suggests, a \emph{groupoid}. Therefore a motivating question for this work is when a weighting of a Lie groupoid $G$ along a subgroupoid $H$ is compatible with the Lie groupoid structure in such a way that the groupoid structure of $G\times \R^\times$ extends to  $\defw(G,H)$. 

This question turns out to be more subtle than one might expect. Recall (or see~\autoref{section: groupoid prelims}) that the data of a Lie groupoid $G\toto M$ includes the submanifold $M \sset G$ of units, two surjective submersions $s,t:G\to M$, a partially defined multiplication $\mathrm{mult}:G\times_{M} G \to G$, and an inversion map $G \to G$. Therefore, the reasonable first attempt at a definition of a ``multiplicative weighting'' would be to simply add the adjective ``weighted'' in front of everything. However, this already begs the question of what exactly one means by this: what is a weighted submanifold? What is a weighted submersion? In order to ask that multiplication be a weighted morphism, we need the fibre product $G\times_M G$ to be weighted - is this automatic? Once the notion of weighted submanifold is understood, another reasonable definition of multiplicative weighting would be the requirement that the graph of the groupoid multiplication be a weighted submanifold of $G^3$. Do these two approaches amount to the same thing?

\subsection{Overview of Chapter 2}

This question thus demands a careful investigation into the weighted analogs of the basic notions of differential geometry, and to this we devote~\autoref{chapter: Weightings}. We begin in~\autoref{section: basics of weightings} by reviewing the basics of weightings, as described in~\cite{loizides2023differential}. In particular, we recall how a weighting of $M$ along $N$ defines a filtration 
    \begin{equation}
    \label{introeq: filtration of tangent bundle}
        TM|_N = (TM|_N)_{(-r)} \supseteq \cdots \supseteq (TM|_N)_{(-1)} \supseteq (TM|_N)_{(0)} = TN
    \end{equation}
by subbundles. In~\autoref{section: weighted submanifolds}, we define the notion of weighted submanifolds, give several examples, and explain how weighted submanifolds become weighted manifolds in their own right. 

A weighted morphism is a smooth map between weighted manifolds whose pull-back respects the filtrations of smooth functions~\eqref{introeq: weighting}. We devote~\autoref{section: weighted morphisms} to an in depth study of weighted morphisms, where we define weighted embeddings (\autoref{definition: weighted embedding}) and weighted submersions (\autoref{definition: weighted submersion}) using the filtration~\eqref{introeq: filtration of tangent bundle}. The weighted analogues of the classical normal form theorems for these maps are the content of~\autoref{theorem: weighted embedding characterization} and~\autoref{theorem: weighted submersion coordinates}. 

It is not always easy to check whether or not a map is a weighted morphism, and so an important result in this chapter is the following. 

\begin{theorem*}[=~\autoref{theorem: characterization of weighted morphisms in terms of their graphs}]
    Suppose that $(M,N)$ and $(M', N')$ are weighted pairs and $F:(M,N) \to (M', N')$ is a smooth map of pairs. Then $F$ is a weighted morphism if and only if
        \begin{enumerate}
            \item[(a)] the graph $\G(F)\sset M'\times M$ is a weighted submanifold  and 
            \item[(b)] the tangent map $TF:TM|_N \to TM'|_{N'}$ is filtration preserving. 
        \end{enumerate}
\end{theorem*}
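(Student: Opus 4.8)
The plan is to reduce both implications to the behaviour of the graph embedding $\gamma\colon M \to M'\times M$, $\gamma(p) = (F(p), p)$, with respect to the product weighting on $M'\times M$ along $N'\times N$. Recall that $\gamma$ is always a smooth embedding with image $\G(F)$, that $\pi_M\circ\gamma = \id_M$ and $\pi_{M'}\circ\gamma = F$, and that both projections $\pi_M,\pi_{M'}$ are weighted submersions for the product weighting, hence weighted morphisms. Throughout I would use two general facts established earlier: (i) compositions of weighted morphisms are weighted morphisms, and the inclusion of a weighted submanifold is a weighted morphism; and (ii) a weighted morphism induces a filtration-preserving tangent map for the filtrations~\eqref{introeq: filtration of tangent bundle}. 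Since $F$ is a map of pairs, $R := \G(F)\cap(N'\times N) = \gamma(N)$, and at points of $R$ one computes $T\gamma(X) = (TF(X), X)$. Using that the product filtration splits as $\bigl(T(M'\times M)|_{N'\times N}\bigr)_{(-i)} = (TM'|_{N'})_{(-i)}\oplus(TM|_N)_{(-i)}$, one gets the identity
\[
 T\G(F)|_R \cap \bigl(T(M'\times M)|_{N'\times N}\bigr)_{(-i)} = \{(TF(X),X): X\in (TM|_N)_{(-i)},\ TF(X)\in (TM'|_{N'})_{(-i)}\}.
\]
This shows that $T\gamma$ is a filtered isomorphism onto its image if and only if $TF$ is filtration preserving, i.e. if and only if condition (b) holds; this calculation is the bridge between (b) and the infinitesimal hypothesis of the weighted embedding normal form.

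For the forward implication, assume $F$ is a weighted morphism. Then (b) is immediate from fact (ii). Moreover $\gamma = (F,\id_M)$ is a weighted morphism into the product, since each of its components is and the product weighting has the expected universal property; it is a smooth embedding, and by the displayed computation its tangent map is a filtered isomorphism onto its image. Applying the weighted embedding characterization~(\autoref{theorem: weighted embedding characterization}) then identifies $\gamma$ as a weighted embedding, so $\G(F) = \gamma(M)$ is a weighted submanifold, which is (a).

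For the converse, assume (a) and (b). By (a) the inclusion $\iota\colon \G(F)\hookrightarrow M'\times M$ is a weighted morphism, so $q := \pi_M\circ\iota\colon \G(F)\to M$ is a weighted morphism that happens to be a diffeomorphism of manifolds restricting to a diffeomorphism $R\to N$. The crux is to upgrade $q$ to a \emph{weighted} diffeomorphism: by (b) and the displayed identity, the filtration that $T\G(F)|_R$ inherits from the product corresponds exactly to the filtration on $TM|_N$ under $Tq$, so $Tq$ is a filtered isomorphism. Feeding this into the weighted embedding and submersion normal forms (\autoref{theorem: weighted embedding characterization}, \autoref{theorem: weighted submersion coordinates}) produces weighted coordinates in which $q$ is the identity, whence $q^{-1} = \gamma$ is a weighted morphism as well. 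Then $F = (\pi_{M'}\circ\iota)\circ\gamma$ is a composition of weighted morphisms, hence a weighted morphism.

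The main obstacle is precisely this upgrade step in the converse. Condition (a) alone only controls the weighting that $\G(F)$ carries as a submanifold of the product, and a priori the diffeomorphism $\pi_M|_{\G(F)}$ could distort weighted orders when transporting that weighting back to $M$; one gets the order inequality in only one direction for free. Condition (b) is exactly what rules this out, forcing the inherited filtration on $T\G(F)|_R$ to match the one on $TM|_N$ so that $\pi_M|_{\G(F)}$ becomes a genuine weighted diffeomorphism rather than merely an order-non-decreasing bijection. Turning the "filtered-isomorphism tangent map" into an honest weighted diffeomorphism is where the normal form theorems do the real work, and is the technical heart of the argument.
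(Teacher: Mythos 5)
Your proof is correct and follows essentially the same route as the paper: both directions hinge on factoring $F$ through the graph map and the projections, and on the key computation identifying the inherited filtration of $T\G(F)$ with $\{(TF(X),X) : X \in (TM|_N)_{(i)},\ TF(X)\in (TM'|_{N'})_{(i)}\}$, so that condition (b) is exactly what makes $\pi_M|_{\G(F)}$ a filtered isomorphism on tangent spaces, which the submersion normal form then upgrades to a weighted diffeomorphism. The only cosmetic difference is in the forward direction, where the paper writes down explicit weighted submanifold coordinates $\tilde{y}_b = y_b - F^*y_b$ for the graph (and verifies (b) by a partial-derivative computation) instead of invoking the weighted embedding theorem for $(F,\mathrm{id}_M)$ and citing filtration-preservation of $TF$ as a general consequence of the definitions.
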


This theorem combined with our work on weighted embeddings is a valuable tool for our work on multiplicative weightings. We also include another characterization of weighted morphisms, this time using \emph{weighted paths} (\autoref{definition: weighted paths}), which is based on valuable discussions with Beiner, Loizides, and Meinrenken.

\begin{theorem*}[=~\autoref{A-proposition: characterization of weighted morphisms}]
    \begin{enumerate}
        \item[(a)] We have $f \in C^\infty(M)_{(i)}$ if and only if  
            \[ f(\gamma(t)) =  O(t^i)\]
        for every weighted path $\gamma:\R \to M$.

        \item[(b)] A smooth map $F:(M, N) \to (M', N')$ between weighted pairs is a weighted morphism if and only if it takes weighted paths to weighted paths. 
    \end{enumerate}
\end{theorem*}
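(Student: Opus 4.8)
The plan is to treat part (a) as the essential content and to derive part (b) from it, using only the elementary fact—immediate from the definition of a weighted morphism as a map whose pullback respects the filtrations $C^\infty(M)_{(i)}$—that a composition of weighted morphisms is again a weighted morphism. Recall that a weighted path is precisely a weighted morphism $\gamma\colon(\R,\{0\})\to(M,N)$, where $\R$ carries its standard weighting along the origin, so that $C^\infty(\R)_{(i)}$ is the ideal of functions vanishing to order $\geq i$ at $0$, i.e. the $O(t^i)$ functions. Thus $\gamma$ is a weighted path if and only if $f(\gamma(t))=O(t^i)$ for all $f\in C^\infty(M)_{(i)}$ and all $i$, which makes the ``only if'' direction of (a) a tautology.

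For part (b) the forward direction is then immediate: if $F$ is a weighted morphism and $\gamma$ is a weighted path, then $F\circ\gamma$ is a composition of weighted morphisms out of $(\R,\{0\})$, hence a weighted path (using $F(N)\sset N'$ to see $F(\gamma(0))\in N'$). For the converse, suppose $F$ carries weighted paths to weighted paths and let $f'\in C^\infty(M')_{(i)}$; I want $F^*f'=f'\circ F\in C^\infty(M)_{(i)}$. By part (a) it suffices to show $(f'\circ F)(\gamma(t))=O(t^i)$ for every weighted path $\gamma\colon\R\to M$. But $(f'\circ F)(\gamma(t))=f'\bigl((F\circ\gamma)(t)\bigr)$, and $F\circ\gamma$ is by hypothesis a weighted path in $M'$; applying the ``only if'' direction of (a) to $f'$ and this path gives the estimate. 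So the whole theorem rests on the nontrivial ``if'' direction of (a).

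To prove that direction I would localize near $N$ and use weighted coordinates. Since $C^\infty_{M,(i)}$ is a sheaf whose stalks away from $N$ are the full function stalks, membership $f\in C^\infty(M)_{(i)}$ need only be tested near each $p\in N$; there the normal-form results recalled in \autoref{section: basics of weightings} furnish weighted coordinates $(x_a,y_b)$, with $w(y_b)=w_b\geq 1$ and $w(x_a)=0$, in which $f\in C^\infty(M)_{(i)}$ is equivalent to the vanishing of all weighted Taylor coefficients of $f$ along $N$ of weighted degree $<i$. For fixed $p=(x_0,0)$ and parameters $a=(a_b)$ consider the model curves
\[ \gamma_a(t)=\bigl(x_0,\,a_1t^{w_1},\dots,a_kt^{w_k}\bigr), \]
which are weighted paths since $y_b(\gamma_a(t))=a_bt^{w_b}=O(t^{w_b})$. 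Truncating the Taylor expansion $f=\sum_\beta c_\beta(x)\,y^\beta+R_N$ at large ordinary order $N$ and substituting gives
\[ f(\gamma_a(t))=\sum_{d<i}\Bigl(\sum_{\sum_b\beta_bw_b=d}c_\beta(x_0)\,a^\beta\Bigr)t^{d}+O(t^i), \]
the $O(t^i)$ absorbing both the monomials of weighted degree $\geq i$ and the remainder $R_N=O(t^{(N+1)\min_bw_b})$, which is negligible once $N$ is large because each $w_b\geq 1$. The hypothesis $f(\gamma_a(t))=O(t^i)$ for all $a$ then forces, for each $d<i$, the polynomial $\sum_{\sum\beta_bw_b=d}c_\beta(x_0)\,a^\beta$ to vanish identically in $a$; since distinct monomials $a^\beta$ are linearly independent, every $c_\beta(x_0)$ of weighted degree $<i$ vanishes. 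As $p\in N$ was arbitrary, $f\in C^\infty(M)_{(i)}$.

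The main obstacle is exactly this last direction: producing enough weighted paths to detect the filtration and, above all, separating the contributions of the finitely many monomials sharing a fixed weighted degree $d$. A single test curve could permit cancellations among these monomials, so the key device is the parametrized family $\gamma_a$ together with the observation that the $t^d$–coefficient is a genuine polynomial in the auxiliary variables $a$, whose identical vanishing forces each $c_\beta(x_0)$ to vanish separately. The two technical points to handle with care are the control of the Taylor remainder along the curves (which succeeds because every weight is $\geq 1$, pushing a high-order truncation beyond $t^i$) and the verification that the model curves $\gamma_a$ really are weighted morphisms—a direct local computation that can also be quoted from the local characterization of weighted paths established alongside \autoref{definition: weighted paths}.
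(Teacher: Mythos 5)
Your proposal is correct and follows essentially the same route as the paper: part (b) is reduced to part (a) exactly as in the text (composition for the forward direction, pulling back $f'\in C^\infty(M')_{(i)}$ along $F\circ\gamma$ for the converse), and the nontrivial direction of (a) is proved by the same device of weighted Taylor expansion tested against the parameterized model curves $t\mapsto(\lambda_a t^{w_a})$, with identical vanishing of the resulting polynomials in the parameters forcing the low-weight coefficients to vanish. The only differences are bookkeeping: the paper groups terms into weighted-homogeneous polynomials $p_j(\lambda)$ while you separate monomials via linear independence, and you are slightly more explicit about controlling the Taylor remainder.
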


In~\autoref{section: weighted normal bundles} and~\autoref{section: weighted deformation space}, we review the aforementioned weighted normal bundle and weighted deformation space and their functorial properties. In particular, we show that our definitions of weighted immersions and weighted submersions are exactly the correct ones:  

\begin{theorem*}[=~\autoref{theorem: Characterization of Weighted Immersions and Submersions}]
    Suppose that $F:(M,N)\to (M', N')$ is a weighted morphism. Then
\begin{itemize}
    \item[(a)] $F$ is a weighted immersion if and only if $\defw(F):\defw(M,N)\to \defw(M', N')$ is an immersion. 
    \item[(b)] $F$ is a weighted submersion if and only if $\defw(F):\defw(M,N)\to \defw(M', N')$ is a submersion. 
\end{itemize}
\end{theorem*}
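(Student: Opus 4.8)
The plan is to characterize weighted immersions and submersions via the induced filtration on the tangent bundle and then transfer this condition across the deformation space. Recall that a weighting of $M$ along $N$ produces the filtration~\eqref{introeq: filtration of tangent bundle} of $TM|_N$, and by definition a weighted morphism $F:(M,N)\to(M',N')$ has $TF|_N$ filtration-preserving. I would take as working definitions that $F$ is a weighted immersion (resp.\ submersion) precisely when the associated graded map $\gr(TF|_N)$ on $\bigoplus_i (TM|_N)_{(-i)}/(TM|_N)_{(-i+1)}$ is fibrewise injective (resp.\ surjective), matching the normal-form theorems~\autoref{theorem: weighted embedding characterization} and~\autoref{theorem: weighted submersion coordinates}. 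The key observation linking the two sides is that the weighted normal bundle $\nuw(M,N)\to N$, sitting over $t=0$ in $\defw(M,N)$, carries exactly this graded data: its fibres are built from the associated graded of the tangent filtration, so that the linear part of $\defw(F)$ at a point of $\nuw(M,N)$ \emph{is} $\gr(TF|_N)$.

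First I would analyze the map $\defw(F)$ on the open dense part $M\times\R^\times$, where it is simply $F\times\id_{\R^\times}$; here $\defw(F)$ is an immersion (resp.\ submersion) at such points if and only if $F$ is, by the ordinary (unweighted) statement. The substantive content is therefore entirely at $t=0$, i.e.\ along $\nuw(M,N)$. Next I would compute the differential of $\defw(F)$ at a point $v\in\nuw(M,N)$. Using the functoriality of the weighted deformation space reviewed in~\autoref{section: weighted deformation space}, $\defw(F)$ restricts over $t=0$ to the induced map $\nuw(F):\nuw(M,N)\to\nuw(M',N')$, and the key step is to identify the differential $T_v\,\defw(F)$ in terms of $\gr(TF|_N)$ together with $TF$ along $N$ itself. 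Because $\nuw(M,N)$ is a (graded) bundle over $N$ whose vertical tangent spaces are the graded pieces of the tangent filtration, the vertical part of $T_v\,\defw(F)$ is governed by $\gr(TF|_N)$ while the horizontal part is governed by $TF$ restricted to $TN$.

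The core computation is then a rank count: $T_v\,\defw(F)$ is injective (resp.\ surjective) if and only if both its horizontal and vertical components are, and these assemble exactly into the condition that $\gr(TF|_N)$ is fibrewise injective (resp.\ surjective). To make this clean I would work in the weighted coordinate charts furnished by the normal-form theorems, so that both $\defw(M,N)$ and $\defw(M',N')$ are given in explicit coordinates near $t=0$ and $\defw(F)$ becomes a polynomial family in $t$; reading off its Jacobian at $t=0$ then reduces the immersion/submersion condition to the injectivity/surjectivity of a block-upper-triangular matrix whose diagonal blocks are the graded components of $TF|_N$. The triangular structure is what lets the off-diagonal (non-leading) terms be ignored, so the rank is determined by the diagonal, i.e.\ by $\gr(TF|_N)$.

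I expect the main obstacle to be the precise bookkeeping at $t=0$: identifying the differential of $\defw(F)$ along $\nuw(M,N)$ with the graded tangent map, and verifying that the coordinate description of $\defw(F)$ is genuinely smooth across $t=0$ and compatible with the chosen weighted charts on both sides. The essential point requiring care is that the leading-order behaviour in the weighted coordinates is exactly $\gr(TF|_N)$ and that the lower-order corrections form a nilpotent (triangular) perturbation that cannot affect the rank of the Jacobian at $t=0$; once this is established, parts (a) and (b) follow symmetrically by replacing ``injective'' with ``surjective'' throughout, with the open-dense $\R^\times$ part handled by the ordinary immersion/submersion theorem as noted above.
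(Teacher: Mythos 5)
Your overall strategy coincides with the paper's own proof: dispose of the open dense part $M\times\R^\times$ by the ordinary immersion/submersion statement, prove the forward implication in normal-form coordinates, and for the converse read off the Jacobian of $\defw(F)$ at $t=0$, where $T\nuw(F)|_N=\gr(TF|_N)$ appears as a block. The genuine gap is in part (a): your ``working definition'' of weighted immersion (fibrewise injectivity of $\gr(TF|_N)$) is not \autoref{definition: weighted embedding}, which additionally requires that $TF$ carry $(TM|_N)_{(i)}$ onto $TF(TM)\cap(TM'|_{N'})_{(i)}$. Saying the equivalence ``matches the normal-form theorems'' does not discharge this: no such equivalence is stated anywhere in the paper, and its nontrivial half is precisely the final step of the paper's proof --- given $X_{F(p)}\in TF(T_pM)\cap(T_{F(p)}M')_{(i)}$, write $X_{F(p)}=TF(Y_p)$ with $Y_p$ unique because $F$ is an immersion, then use injectivity of $\gr(TF|_N)$ to force $Y_p\in(T_pM)_{(i)}$, since otherwise the leading nonzero graded class of $Y_p$ in some degree $j<i$ would have nonzero image, contradicting $TF(Y_p)\in(TM'|_{N'})_{(j+1)}$. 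Without this argument your converse in (a) only establishes the theorem for the substitute notion. Note also that (a) and (b) are not ``symmetric'' as you claim: \autoref{definition: weighted submersion} has no image condition, and fibrewise surjectivity of the filtered maps is equivalent to graded surjectivity by downward induction on the finite filtration, so this gap is specific to the immersion case.

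Two smaller points in your Jacobian step. First, ``the rank is determined by the diagonal'' is false for general block-triangular matrices (injectivity or surjectivity of the full matrix does not pass to all diagonal blocks); what makes the converse work is the specific shape at $p\in N$,
\begin{equation*}
    T_p\defw(F)=\begin{bmatrix} T_p\nuw(F) & * \\ 0 & 1 \end{bmatrix},
\end{equation*}
with vanishing lower-left block and the \emph{identity} in the $t$-slot: restricting to vectors with zero $t$-component shows that injectivity of $T_p\defw(F)$ forces injectivity of $T_p\nuw(F)$, and hitting targets with zero $t$-component shows the same for surjectivity. Second, your identification of the diagonal blocks with the graded components of $TF|_N$ holds only at points of $N$, not at arbitrary $v\in\nuw(M,N)$: away from $N$ the derivatives of the higher weighted-homogeneous terms of $(F^*y_b)^{[w'_b]}$ need not vanish, so the Jacobian of $\nuw(F)$ there is not triangular with those diagonal blocks. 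This does not sink the argument --- points of $N$ suffice for the converse, and your forward direction goes through normal-form charts (or by homogeneity, as in \autoref{corollary: weighted submersions induce submersions}) --- but the proof should be organized so that the triangular computation is only invoked along $N$.
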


This chapter ends with~\autoref{section: singular Lie filtrations}, which is a review of singular Lie filtrations; these constitute an important method of constructing weightings.

\subsection{Overview of Chapter 3}

With a more thorough understanding of the differential geometry of weightings, we turn our attention to linear weightings for vector bundles. In addition to Connes' tangent groupoid and the $F$-tangent bundle of Choi-Ponge and van Erp-Yuncken, another example of a weighted deformation space appearing in the literature is the \emph{re-scaled spinor bundle} of Higson and Yi~\cite{higson2019spinors}, which we now recall. Suppose that $M$ is a Riemannian spin manifold with spinor bundle $S\to M$. In order to understand Getzler's approach (\cite{berline2003heat, getzler1983pseudodifferential}) to the index theorem from the perspective Connes' tangent groupoid, Higson and Yi introduce their rescaled spinor bundle $\mathbb{S}$, which is a vector bundle over the tangent groupoid $\T M = TM \sqcup (M\times M\times \R^\times)$. It can be understood as a family of vector bundles parameterized by $\R$, given by 
    \begin{equation*}
        \mathbb{S}_t = \left\{
            \begin{array}{ll}
                S\boxtimes S^* & t\neq 0  \\
                \pi^*(\wedge^\bullet TM) & t = 0, 
            \end{array}
        \right.
    \end{equation*}
where $\pi:TM \to M$ is the vector bundle projection and $S\boxtimes S^*$ is the vector bundle over $M\times M$ with fibre $(S\boxtimes S^*)_{(m_1,m_2)} = S_{m_1}\otimes S^*_{m_2}$. 

We unify the constructions of the rescaled spinor bundle $\mathbb{S}$ and the weighted deformation space $\defw(M,N)$ by introducing \emph{linear weightings}. If $V \to M$ is a vector bundle over a weighted manifold, then a linear weighting of $V$ is defined in terms of a $\Z$-graded filtration 
    \begin{equation}
    \label{introeq: linear weighting}
        \cdots \supseteq \G_{V, (i)} \supseteq \G_{V, (i+1)} \supseteq \cdots
    \end{equation}
of the sheaf of sections of $V$ which is compatible with the filtration of $C^\infty_M$, see~\autoref{definition: linear weighting}. In particular, we allow the filtration~\eqref{introeq: linear weighting} to be non-trivial in negative degree, in contrast with~\eqref{introeq: weighting}; this was originally suggested by Beiner~\cite{beiner2022linear}.

We give the basic definitions, examples, and constructions of linear weightings in~\autoref{section: weighted vector bundles}. A specific result we obtain  (\autoref{theorem: linear weightings in terms of polynomials}) says that linear weightings may equivalently be defined in terms of a multiplicative filtration 
    \[ \cdots \supseteq C^\infty_{pol, V, (i)} \supseteq C^\infty_{pol, V, (i+1)} \supseteq \cdots,  \]
where $C^\infty_{pol, V}$ denotes the sheaf of fiber-wise polynomial functions on $V$. We use this result in~\autoref{section: linear weighted normal bundle} and~\autoref{section: linear weighted deformation bundle} to extend the definition of the weighted normal bundle and weighted deformation space to linear weightings. This gives \emph{vector bundles} 
    \[ \nuw(V) \to \nuw(M,N) \quad \text{and} \quad \defw(V) \to \defw(M,N)  \]
such that $\defw(V)$ can be understood as a family of vector bundles 
    \begin{equation*}
     \xymatrix{
        \defw(V) = \nuw(V) \sqcup (V \times \R^\times) \ar[d] \\
        \defw(M,N) = \nuw(M,N) \sqcup (M\times \R^\times);
    }
    \end{equation*} 
see~\autoref{theorem: linear weighted normal bundle} and~\autoref{theorem: linear weighted deformation bundle}. We also give the following explicit description of the sections of $\nuw(V) \to \nuw(M,N)$ and $\defw(V)\to \defw(M,N)$.

\begin{theorem*}[=~\autoref{theorem: sections of the weighted normal bundle} and~\autoref{theorem: sections of deformation bundle}]
If $V\to M$ is a linearly weighted vector bundle then
\begin{itemize}
    \item[(a)] $\G(\nuw(V)) = C^\infty(\nuw(M,N))\otimes_{\gr(C^\infty(M))}\gr(\G(V))$ and
    \item[(b)] $\G(\defw(V)) = C^\infty(\defw(M,N))\otimes_{\rees(C^\infty(M))}\rees(\G(V))$. 
\end{itemize}
\end{theorem*}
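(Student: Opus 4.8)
The plan is to recognize both identities as instances of extension of scalars and to prove that the resulting comparison maps are isomorphisms by reducing to a local model in which every module is free. I treat part (a) first. By the construction of the linear weighted normal bundle in~\autoref{theorem: linear weighted normal bundle} (via~\autoref{theorem: linear weightings in terms of polynomials}), the fiberwise-polynomial functions on $\nuw(M,N)$ are $\gr(C^\infty(M))$, and the fiberwise-polynomial sections of $\nuw(V)\to\nuw(M,N)$ are $\gr(\G(V))$ as a module over $\gr(C^\infty(M))$. A fiberwise-polynomial section is in particular a smooth section, and the induced $\gr(C^\infty(M))$-action on $\gr(\G(V))$ is the symbol of the $C^\infty(M)$-action on $\G(V)$, so the inclusions $\gr(C^\infty(M)) \into C^\infty(\nuw(M,N))$ and $\gr(\G(V)) \into \G(\nuw(V))$ are compatible. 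The universal property of extension of scalars then yields a canonical $C^\infty(\nuw(M,N))$-linear map
\[ \Phi: C^\infty(\nuw(M,N)) \otimes_{\gr(C^\infty(M))} \gr(\G(V)) \To \G(\nuw(V)), \]
and the content of (a) is that $\Phi$ is an isomorphism.

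Both the source and the target of $\Phi$ are sheaves of $C^\infty(\nuw(M,N))$-modules over the base $N$ (pushing forward along $\nuw(M,N)\to N$), and $\Phi$ is a morphism of such sheaves, so it is enough to check that $\Phi$ is an isomorphism locally over $N$. Over a sufficiently small weighted chart I would use the splitting/normal-form structure of linear weightings to produce an \emph{adapted homogeneous frame}: finitely many sections $e_1,\dots,e_k$ of $V$ with $e_a \in \G_{V,(w_a)}$ whose symbols $[e_a]$ form a $\gr(C^\infty(M))$-basis of $\gr(\G(V))$ and which simultaneously trivialize $\nuw(V)$ over the chart. Granting such a frame, the left-hand side is locally free on $\{[e_a]\}$, hence $\cong \bigoplus_a C^\infty(\nuw(M,N))$, the right-hand side is the free module on the trivializing sections, and $\Phi$ carries one basis to the other. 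It is essential here that $V$ has finite rank, so that only finitely many generators occur and the purely algebraic tensor product already recovers every smooth section as a finite $C^\infty(\nuw(M,N))$-linear combination of the $[e_a]$; no completion is needed. Gluing the local isomorphisms with a partition of unity on $N$ subordinate to the cover gives the global statement.

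For part (b) I would run the identical argument with $\rees$ in place of $\gr$ and $\defw$ in place of $\nuw$. By~\autoref{theorem: linear weighted deformation bundle} the fiberwise-polynomial functions on $\defw(M,N)$ are $\rees(C^\infty(M))$ and the fiberwise-polynomial sections of $\defw(V)$ are $\rees(\G(V))$, which again produces a canonical comparison map; the same adapted frame, now lifted to the Rees construction by attaching the appropriate power of the deformation parameter $t$ to each $e_a$, trivializes $\defw(V)$ over the chart. The one additional point is to confirm that the frame behaves uniformly across the fibre $\{t=0\} = \nuw(M,N)$ and the open part $\{t\neq 0\}\cong M\times\R^\times$: on the latter the claim reduces to the tautology $\G(V\times\R^\times) = C^\infty(M\times\R^\times)\otimes_{C^\infty(M)}\G(V)$, on the former it is exactly part (a), and the Rees module interpolates between the two. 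Torsion-freeness of $\rees(\G(V))$ over $\R[t]$ guarantees these two descriptions are compatible along $t=0$.

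I expect the main obstacle to be the local step, namely producing the adapted homogeneous frame and verifying that $\gr(\G(V))$ (resp.\ $\rees(\G(V))$) is locally free over $\gr(C^\infty(M))$ (resp.\ $\rees(C^\infty(M))$) with a homogeneous basis that \emph{actually trivializes} the geometric bundle $\nuw(V)$ (resp.\ $\defw(V)$). This is precisely where the structural input must be invoked: that a linear weighting splits locally into homogeneous summands, as established in the construction of $\nuw(V)$ and $\defw(V)$. Once a homogeneous frame is in hand, the base-change isomorphism is formal, and both parts follow.
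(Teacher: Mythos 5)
Your proposal is correct and follows essentially the same route as the paper: localize over a weighted chart, use a weighted frame of $V$ whose homogeneous images trivialize $\nuw(V)$ (resp.\ $\defw(V)$), observe that both sides of the comparison map are then free modules on that frame, and glue. The one caveat is that your opening claim that the polynomial sections of $\nuw(V)$ \emph{are} $\gr(\G(V))$ is itself part of the theorem being proved rather than a consequence of~\autoref{theorem: linear weighted normal bundle}; the paper supplies exactly this missing piece by constructing the homogeneous approximation $\sigma \mapsto \sigma^{[i]}$ as a genuine smooth section (\autoref{lemma: graded modules gives sections of weighted normal bundle}) and then verifying the frame property through the duality computation $\sigma_b^{[v_b]}(\varphi_0)\bigl(p_a^{[-v_a]}\bigr) = \delta_{ab}$ against the dual fibre coordinates, which is precisely the local step you defer to ``structural input.''
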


In~\autoref{section: examples in the literature} how our construction captures the rescaled spinor bundle of Higson and Yi (\cite{higson2019spinors}), as well as a related construction by \v{S}evera in his letters to Weinstein (\cite{vsevera2017letters}). 

\subsection{Overview of Chapter 4}

In~\autoref{chapter: Multiplicative Weightings} we return to the issue of multiplicative weightings for Lie groupoids. We begin by reviewing the basics of Lie groupoids, including VB-groupoids. After this, equipped with correct definitions of weighted submanifolds and weighted submersions, we give the following definition:

\begin{definition*}[=~\autoref{definition: multiplicative weighting}]
    A weighting of $G$ along $H\sset G$ is said to be \emph{multiplicative} if 
        \begin{enumerate}
            \item[(a)] The units $M\sset G$ are a weighted submanifold, 
            \item[(b)] the source map is a weighted submersion,
            \item[(c)] multiplication $m:G^{(2)} \to G$ is a weighted morphism, and 
            \item[(d)] inversion is a weighted morphism. 
        \end{enumerate}
\end{definition*}

We devote~\autoref{section: definition of multiplicative weightings} to exploring this definition and examples thereof. We prove, in particular, that it is \emph{not} enough to simply ask that the graph of multiplication be weighted.

\begin{theorem*}[=~\autoref{theorem: characterization of mult weightings}]
    A weighting of $G\toto M$ along $H\sset G$ is multiplicative if and only if 
        \begin{itemize}
            \item[(a)] $M$ is a weighted submanifold, 
            \item[(b)] the graph of multiplication is a weighted submanifold, and 
            \item[(c)] the filtration of $TG|_H$ is by subgroupoids 
                \[ (TG|_H)_{(i)} \toto (TM|_N)_{(i)} \]
        \end{itemize}
\end{theorem*}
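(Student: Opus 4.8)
The plan is to prove both implications by using \autoref{theorem: characterization of weighted morphisms in terms of their graphs} as a bridge between the structure-map formulation of \autoref{definition: multiplicative weighting} and the graph/subgroupoid formulation in the statement. Throughout I will use that the tangent prolongation $TG \toto TM$ is a Lie groupoid whose source, target, multiplication, unit, and inversion maps are the tangent maps $Ts$, $Tt$, $Tm$, $Tu$, $T\iota$ of those of $G$, and that restriction to $H$ yields $TG|_H \toto TM|_N$ with $N = H\cap M$. The key mechanism is that the graph theorem splits ``$F$ is a weighted morphism'' into a global condition (the graph is a weighted submanifold) and a linear condition (the tangent map preserves filtrations); condition (b) in the statement supplies data of the first kind and condition (c) supplies data of the second kind.

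For the forward implication, (a) is immediate since it coincides with part (a) of \autoref{definition: multiplicative weighting}. Condition (b) follows because multiplication is a weighted morphism, so the forward direction of the graph theorem gives that $\G(m)$ is a weighted submanifold. For (c), the same theorem guarantees that a weighted morphism, weighted submersion, or weighted embedding has a filtration-preserving tangent map; applying this to the source (a weighted submersion), to multiplication and inversion (weighted morphisms), and to the inclusion of units (a weighted embedding) shows that $Ts$, $Tm$, $T\iota$, and $Tu$ all preserve the filtrations, and $Tt = Ts\circ T\iota$ then does as well. This says exactly that $(TG|_H)_{(i)}$ contains the units $(TM|_N)_{(i)}$ and is closed under all the operations of $TG|_H \toto TM|_N$, i.e.\ that $(TG|_H)_{(i)}\toto (TM|_N)_{(i)}$ is a subgroupoid for every $i$.

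For the reverse implication I proceed in the order source, fibre product, multiplication, inversion. First, intersecting $\G(m)\subseteq G^3$ with the unit submanifold in an appropriate factor, which is a weighted submanifold by (a), and using the closure of weighted submanifolds under clean intersection and diffeomorphic restriction (from \autoref{section: weighted submanifolds}), I extract the graphs of $s$, $t$, and $\iota$ as weighted submanifolds; combined with the filtration-preservation and surjectivity of $Ts$, $Tt$, $T\iota$ encoded in the subgroupoid condition (c) --- note that surjectivity of $Ts$ onto $(TM|_N)_{(i)}$ is automatic once the units are contained in the subgroupoid --- the graph theorem and \autoref{definition: weighted submersion} give that $s$ and $t$ are weighted submersions and that $\iota$ is a weighted morphism. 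Knowing $s$ and $t$ are weighted submersions, I realise $G^{(2)} = G\times_M G$ as a weighted submanifold of $G\times G$ via the fibre-product results for weighted submersions in \autoref{section: weighted morphisms}. Finally, with $G^{(2)}$ a weighted manifold, hypothesis (b) that $\G(m)$ is weighted together with the filtration-preservation of $Tm$ from (c) lets the graph theorem conclude that $m$ is a weighted morphism, completing the verification of \autoref{definition: multiplicative weighting}.

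The main obstacle is the bookkeeping in the reverse direction: showing that the single linear hypothesis (c), together with (a) and (b), simultaneously supplies all of the graph-level and tangent-level data demanded by \autoref{definition: multiplicative weighting}. The most delicate point is equipping the fibre product $G^{(2)}$ with a weighting and checking that it is compatible with the weighting carried by $\G(m)$ under the diffeomorphism $G^{(2)}\cong \G(m)$, so that the two uses of the graph theorem (one to build $G^{(2)}$ from the weighted submersions $s,t$, one to conclude $m$ is a morphism on $G^{(2)}$) refer to the same weighted structure. This is precisely the subtlety flagged in the introduction --- that a weighting making $\G(m)$ a weighted submanifold need not make multiplication a weighted morphism --- and condition (c) is exactly the extra hypothesis that repairs it.
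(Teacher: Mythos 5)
Your forward direction follows the paper's, with one omission: after showing all tangent maps are filtration preserving you conclude that each $(TG|_H)_{(i)} \toto (TM|_N)_{(i)}$ ``is a subgroupoid,'' but this only gives a set-theoretic subgroupoid. The paper additionally verifies that these are \emph{Lie} subgroupoids, i.e.\ that $Ts$ and $Tt$ restrict to submersions $(TG|_H)_{(i)} \to (TM|_N)_{(i)}$, which it deduces from fibrewise surjectivity plus~\autoref{lemma: vector bundle submersions}. That is a small, fixable omission. The genuine gaps are in your reverse direction. Your first step extracts the graphs of $s$, $t$, and $\mathrm{inv}_G$ by intersecting $\G(\mathrm{mult}_G)$ with unit slices such as $G\times G\times M$ and invoking ``closure of weighted submanifolds under clean intersection.'' No such closure result exists in the paper, and it cannot: mere clean intersection of weighted submanifolds does not produce a weighted submanifold (this failure is the central subtlety of the whole theory, cf.\ the curve $y=x^2$ in~\autoref{examples: weighted submanifolds}). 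The only available tool is weighted transversality (\autoref{theorem: weighted fibre products} and its corollary), and verifying weighted transversality of these two inclusions requires exactly the graded tangent data of hypothesis (c) --- so the step you treat as bookkeeping is where all the work lives. There is a further unaddressed issue even granting the intersection: the intersection $\{(g,g,s(g))\}$ is a weighted submanifold of $G^3$, whereas the graph theorem needs $\G(s)\sset M\times G$ to be weighted, and transporting weighted submanifolds along projections requires a weighted-embedding argument, not just a diffeomorphism.

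Your final step has the same structural problem, and you flag it yourself without resolving it: to apply~\autoref{theorem: characterization of weighted morphisms in terms of their graphs} to $\mathrm{mult}_G : G^{(2)} \to G$ you need $\G(\mathrm{mult}_G)$ to be a weighted submanifold of $G\times G^{(2)}$, where $G^{(2)}$ carries its fibre-product weighting, but hypothesis (b) only supplies it as a weighted submanifold of $G^3$; nothing in your argument bridges the two ambient weighted structures. The paper's proof is organized precisely to avoid both gaps: it first proves that the projection $f:\G(\mathrm{mult}_G)\to G^2$ onto the last two factors is a weighted \emph{embedding}, checking the tangent conditions of~\autoref{theorem: weighted embedding characterization} by using (c) through the identification $T\G(\mathrm{mult}_G) = \G(\mathrm{mult}_{TG})$ and the fact that each $(TG|_H)_{(i)}$ being a subgroupoid makes its multiplication graph a subbundle. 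This single step simultaneously equips $G^{(2)} = f(\G(\mathrm{mult}_G))$ with its weighting and gives a weighted diffeomorphism $\G(\mathrm{mult}_G)\cong G^{(2)}$, so that $\mathrm{mult}_G$ and $\pi_1$ are weighted morphisms simply as compositions $G^{(2)} \to \G(\mathrm{mult}_G) \into G^3 \to G$ --- no second application of the graph theorem to $\mathrm{mult}_G$ is ever made. The graphs of inversion and of the source are then obtained as preimages of the weighted submanifold $M$ under the weighted submersions $\mathrm{mult}_G, \pi_1 : G^{(2)}\to G$ via~\autoref{corollary: inverse image of mfld is mfld}, after which the graph theorem (with filtration preservation from (c)) finishes. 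To repair your proposal you would essentially have to reproduce this embedding argument, at which point your route collapses into the paper's.
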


This characterization of multiplicative weightings is particularly useful when applied to VB-groupoids, which we shall use for the problem of differentiating multiplicative weightings (\autoref{theorem: weightings can be differentiated}). We close this chapter by showing that our definition of multiplicative weightings is exactly the right one, by proving the following theorem:

\begin{theorem*}[=~\autoref{theorem: weighted normal and weighted deformation groupoids}]
    A weighting of a Lie groupoid $G$ along $H$ is multiplicative if and only if the groupoid structure $G\times \R^\times \to M\times \R^\times$ extends to 
        \[ \defw(G,H) \toto \defw(M,N).  \]  
\end{theorem*}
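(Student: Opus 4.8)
The plan is to phrase both sides of the equivalence through the deformation functor $\defw$, exploiting the categorical principle that a functor preserving the relevant fibre products carries groupoid objects to groupoid objects. The groupoid structure on $G$ is encoded by the unit inclusion $u\colon M\to G$, the source and target $s,t\colon G\to M$, the multiplication $m\colon G^{(2)}\to G$, and the inversion $\mathrm{inv}$, subject to axioms expressed as commuting diagrams built from the composable pairs $G^{(2)}=G\times_M G$. Applying $\defw$ to these diagrams will produce the sought structure on $\defw(G,H)$, provided each structure map is a weighted morphism and $\defw$ commutes with the fibre products involved. The bridge translating ``weighted'' hypotheses into ``deformation-space'' statements is \autoref{theorem: Characterization of Weighted Immersions and Submersions}. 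The key lemma, used in both directions, is that if $s$ is a weighted submersion then $G^{(2)}=G\times_{s,M,t}G$ carries a canonical weighting along $H^{(2)}$ for which both projections are weighted submersions, and $\defw$ preserves this fibre product:
    \[ \defw\!\left(G^{(2)}\right) \;=\; \defw(G,H)\times_{\defw(M,N)}\defw(G,H), \]
the right-hand side being taken over $\defw(s)$ and $\defw(t)$. Since $\defw(s)$ is an honest submersion by \autoref{theorem: Characterization of Weighted Immersions and Submersions}, the right-hand side is a genuine manifold, and a computation in the weighted submersion coordinates of \autoref{theorem: weighted submersion coordinates} identifies its defining filtration with the one $\defw$ assigns to $G^{(2)}$. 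I would prove this lemma first.

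For the forward direction, assume the weighting is multiplicative. By \autoref{definition: multiplicative weighting} the maps $u,s,m$ and $\mathrm{inv}$ are weighted morphisms with $s$ a weighted submersion, and the target $t=s\circ\mathrm{inv}$ is then a weighted morphism as a composite. Applying $\defw$ and using the lemma to rewrite $\defw(G^{(2)})$ as the fibre product $\defw(G,H)\times_{\defw(M,N)}\defw(G,H)$, the maps $\defw(u),\defw(s),\defw(t),\defw(m)$ and $\defw(\mathrm{inv})$ satisfy the groupoid axioms, because $\defw$ is a functor and each axiom is a commuting diagram assembled from these fibre products. On the open dense locus $G\times\R^\times\subseteq\defw(G,H)$ these maps restrict to the original structure, so $\defw(G,H)\toto\defw(M,N)$ is the desired extension.

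For the converse, suppose the groupoid structure on $G\times\R^\times$ extends to $\defw(G,H)\toto\defw(M,N)$. The source of any Lie groupoid is a submersion; the extended source covers $s\times\id_{\R^\times}$, and the existence of such a smooth extension shows by functoriality of $\defw$ that $s$ is a weighted morphism with $\defw(s)$ equal to that extension, whence $s$ is a weighted submersion by \autoref{theorem: Characterization of Weighted Immersions and Submersions} --- condition (b). The unit embedding covers $u\times\id_{\R^\times}$ and is a deformation-space immersion, so the same theorem gives that $M$ is a weighted submanifold --- condition (a). Finally, via the lemma the extended multiplication has domain $\defw(G^{(2)})$ and covers $m\times\id_{\R^\times}$, so it coincides with $\defw(m)$ and $m$ is a weighted morphism; the identical argument applied to the extended inversion yields conditions (c) and (d). Hence the weighting is multiplicative.

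The main obstacle I anticipate is the fibre-product lemma itself. Everything else is formal once $\defw$ is known to commute with the fibre products appearing in the groupoid axioms, but establishing both that $G^{(2)}$ carries a canonical weighting and that $\defw$ preserves the fibre product over the weighted submersions $s,t$ requires genuine work: one must verify smoothness of the extended multiplication \emph{across the special fibre} $t=0$, rather than merely on the dense locus $t\neq 0$ where it is automatic. This is exactly the point at which the hypothesis that $s$ is a weighted submersion is indispensable, and I expect the weighted submersion normal form of \autoref{theorem: weighted submersion coordinates} to do the heavy lifting there.
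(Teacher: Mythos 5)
Your forward direction is essentially the paper's proof: apply $\defw$ to the structure maps, use the compatibility of $\defw$ with fibre products along weighted submersions (which is already available as~\autoref{subsection: properties of weighted deformation space}~(c) together with~\autoref{theorem: weighted fibre products}, so your ``key lemma'' does not need to be re-proved from scratch), and conclude the groupoid axioms by functoriality plus density of $G\times\R^\times$.

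The converse, however, contains a genuine gap. You write that the existence of a smooth extension of the source map to the deformation spaces ``shows by functoriality of $\defw$ that $s$ is a weighted morphism.'' Functoriality gives exactly the opposite implication: \emph{if} $s$ is weighted, \emph{then} $\defw(s)$ exists and is smooth. What you need is the converse statement: if the map $(g,t)\mapsto (s(g),t)$ on $G\times\R^\times$ admits a smooth extension $\defw(G,H)\to\defw(M,N)$, then $s$ is a weighted morphism (and the extension is $\defw(s)$). This is not formal; it is the content of~\autoref{lemma: extensions of morphisms}, whose proof is a local computation in weighted coordinates showing that for $f\in C^\infty(M)$ the function $(p,t)\mapsto t^{-j}f(p)$ extends smoothly across the special fibre $t=0$ if and only if $f\in C^\infty(M)_{(j)}$ --- one writes $f=\chi\cdot x^s$ and checks that $t^{-j}f = t^{\,s\cdot w - j}\,\chi(t^{w_1}y_1,\dots,t^{w_m}y_m)\,y^s$ in deformation coordinates, which is smooth at $t=0$ precisely when $s\cdot w\geq j$. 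The same lemma is what lets you conclude that the unit inclusion, multiplication, and inversion are weighted morphisms from their extensions; only \emph{after} that can~\autoref{theorem: Characterization of Weighted Immersions and Submersions} be invoked to upgrade $s$ to a weighted submersion and $u$ to a weighted embedding. So your assessment of where the difficulty lies is inverted: the fibre-product lemma you flag as the main obstacle is standing machinery, while the step you treat as formal (extension $\Rightarrow$ weighted) is the real analytic content of the converse and must be proved.
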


\subsection{Overview of Chapter 5}

\autoref{chapter: Infinitesimally Multiplicative Weightings} is devoted to the study of the infinitesimal analogue of multiplicative weightings, called \emph{infinitesimally multiplicative weightings}. We being with a review of the basics of Lie algebroids, including the Lie functor from Lie groupoids to Lie algebroids, as well as the descriptions of Lie algebroids as linear Poisson manifolds and 1-shifted vector bundles equipped with a homological vector field. 

In~\autoref{section: IM weighting definition} we give the following definition:

\begin{definition*}[=~\autoref{definition: IM weighting}]
    An \emph{infinitesimally multiplicative weighting} of $A\Rightarrow M$ is a linear weighting of $A$ with the additional properties that 
        \begin{itemize}
            \item[(a)] the anchor $a:A\to TM$ is a weighted morphism, and 
            \item[(b)] for all $\sigma \in \G(A)_{(i)}$ and $\tau \in \G(A)_{(j)}$, we have 
                \[ [\sigma, \tau] \in \G(A)_{(i+j)}.  \]
        \end{itemize}
\end{definition*}

In~\autoref{section: IM characterizations} we prove the following alternative characterizations of infinitesimally multiplicative weightings. 

\begin{theorem*}[=~\autoref{theorem: equvalent characterizations of IM weightings}]
    Let $A\Rightarrow M$ be a Lie algebroid endowed with a linear weighting. Then the following are equivalent. 
        \begin{itemize}
            \item[(a)] $A$ is a weighted Lie algebroid, 
            \item[(b)] the Poisson bivector field $\pi \in \ger{X}^2(A^*)$ has filtration degree zero, 
            \item[(c)] the differential $\ed_A : \G(\wedge A^*) \to \G(\wedge A^*)$ is filtration preserving. 
        \end{itemize}
\end{theorem*}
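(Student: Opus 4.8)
The plan is to reduce all three conditions to a single pair of statements about the generating data of the Lie algebroid — the anchor and the bracket — and to observe that each of (b) and (c) encodes exactly this pair. The bookkeeping I would fix first concerns degrees under dualization. Using the description of linear weightings by multiplicative filtrations of fibrewise polynomial functions, the fibrewise-linear function $\ell_\sigma$ on $A^*$ lies in $C^\infty(A^*)_{(i)}$ precisely when $\sigma\in\G(A)_{(i)}$, while a one-form $\alpha$ lies in $\G(A^*)_{(j)}$ precisely when $\langle\alpha,\sigma\rangle\in C^\infty(M)_{(i+j)}$ for every $\sigma\in\G(A)_{(i)}$; here I use the dual weighting convention, under which the pairing $\G(A^*)\times\G(A)\to C^\infty(M)$ has filtration degree zero. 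Both $C^\infty(A^*)$ and $\G(\wedge A^*)$ are then \emph{multiplicatively} filtered algebras generated by $C^\infty(M)$ together with $\{\ell_\sigma\}$, respectively $\G(A^*)$.

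Next I would record the meaning of (a). By the characterization of weighted morphisms of vector bundles, the anchor $a:A\to TM$ is a weighted morphism if and only if it carries $\G(A)_{(i)}$ into $\ger{X}(M)_{(i)}$; concretely, for $\sigma\in\G(A)_{(i)}$ the derivation $a(\sigma)$ satisfies $a(\sigma)\bigl(C^\infty(M)_{(j)}\bigr)\sset C^\infty(M)_{(i+j)}$. Thus (a) is precisely the conjunction of this anchor condition with the bracket condition $[\sigma,\tau]\in\G(A)_{(i+j)}$ for $\sigma\in\G(A)_{(i)}$ and $\tau\in\G(A)_{(j)}$.

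For (a) $\Leftrightarrow$ (b), recall that $\pi$ has filtration degree zero exactly when the Poisson bracket satisfies $\{C^\infty(A^*)_{(i)},C^\infty(A^*)_{(j)}\}\sset C^\infty(A^*)_{(i+j)}$. Since $\pi$ is a biderivation and the filtration is multiplicative, the Leibniz rule reduces this to testing on the algebra generators, where the structure equations give
\[
\{\pr^*f,\pr^*g\}=0,\qquad \{\ell_\sigma,\pr^*f\}=\pr^*\bigl(a(\sigma)f\bigr),\qquad \{\ell_\sigma,\ell_\tau\}=\ell_{[\sigma,\tau]}.
\]
The first is automatic; by the degree bookkeeping the second lies in the correct filtration degree exactly when the anchor condition holds, and the third exactly when the bracket condition holds. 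Hence (a) $\Leftrightarrow$ (b). The same mechanism yields (a) $\Leftrightarrow$ (c): as $\ed_A$ is a derivation of $\G(\wedge A^*)$ and the filtration is multiplicative, it is filtration preserving if and only if it preserves the filtration on the generators $C^\infty(M)$ and $\G(A^*)$. For $f\in C^\infty(M)_{(j)}$ we have $\langle\ed_A f,\sigma\rangle=a(\sigma)f$, so $\ed_A f\in\G(A^*)_{(j)}$ for all such $f$ iff the anchor condition holds; for $\alpha\in\G(A^*)_{(j)}$ the Cartan-type formula
\[
(\ed_A\alpha)(\sigma,\tau)=a(\sigma)\langle\alpha,\tau\rangle-a(\tau)\langle\alpha,\sigma\rangle-\langle\alpha,[\sigma,\tau]\rangle
\]
shows that all three terms land in $C^\infty(M)_{(i_\sigma+i_\tau+j)}$ exactly under the anchor and bracket conditions, and specializing $\alpha,\sigma,\tau$ recovers those conditions.

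I expect the only real obstacle to be bookkeeping rather than ideas: fixing the filtration degrees under dualization (that $\ell_\sigma$ inherits the degree of $\sigma$ while the degree of a one-form is governed by the pairing convention) and justifying the reduction to generators — namely that filtration-compatibility of a biderivation, respectively a derivation, on a multiplicatively filtered algebra is detected on a generating set, which for $C^\infty(A^*)$ additionally uses that a linear Poisson bivector is determined by its values on base and fibrewise-linear functions. Once these conventions are pinned down, each of the two equivalences is the short computation above.
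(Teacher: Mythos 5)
Your proposal is correct and takes essentially the same route as the paper: both equivalences are reduced to the anchor and bracket conditions through the structure equations $\{f_\sigma,f_\tau\}=f_{[\sigma,\tau]}$, $\{f_\sigma,p^*f\}=p^*(\mathcal{L}_{a(\sigma)}f)$ for (b) and the Koszul/Cartan formula for $\ed_A$ for (c). The only cosmetic difference is that you package the reduction to generators as a single Leibniz-rule principle for (bi)derivations on multiplicatively filtered algebras, whereas the paper invokes weighted vector bundle coordinates to reduce (b) to homogeneity degrees $n=0,1$ and, for (c), checks the Koszul formula on arbitrary $k$-forms in one direction and uses the operator identities $\mathcal{L}_\sigma=\ed_A\circ\iota_\sigma+\iota_\sigma\circ\ed_A$, $\iota_{[\sigma_1,\sigma_2]}=[\mathcal{L}_{\sigma_1},\iota_{\sigma_2}]$ in the other.
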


We use this, as well as the theory of weighted VB-groupoids,  in~\autoref{section: differentiation of M weightings} to prove that multiplicative weightings can be differentiated to infinitesimally multiplicative weightings. 

\begin{theorem*}[=~\autoref{theorem: weightings can be differentiated}]
    Let $G\toto M$ be a weighted Lie groupoid with Lie algebroid $\mathrm{Lie}(G) \Rightarrow M$. Then 
        \begin{equation}
            \G(\mathrm{Lie}(G)|_U)_{(i)} = \{ \sigma \in \G(\mathrm{Lie}(G)|_U) : \sigma^L\in \mathfrak{X}^L(G|_U)_{(i)} \}
        \end{equation}
    defines an infinitesimally multiplicative weighting of $\mathrm{Lie}(G)$ such that 
        \[ \mathrm{Lie}(\nuw(G,H)) = \nuw(\mathrm{Lie}(G)) \quad \text{and} \quad \mathrm{Lie}(\defw(G,H)) = \defw(\mathrm{Lie}(G)).  \]
\end{theorem*}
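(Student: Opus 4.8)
The plan is to verify the three assertions in turn: that the stated formula defines a linear weighting of $\mathrm{Lie}(G)$, that this weighting satisfies the two axioms of~\autoref{definition: IM weighting}, and that applying the Lie functor to the weighted normal and deformation groupoids recovers $\nuw(\mathrm{Lie}(G))$ and $\defw(\mathrm{Lie}(G))$. Throughout I would work with the identification of $\G(\mathrm{Lie}(G))$ with the left-invariant vector fields $\ger{X}^L(G)\sset \ker(Ts)$ via $\sigma \mapsto \sigma^L$, and I would exploit that $TG\toto TM$ is a VB-groupoid whose multiplicative weighting is supplied by the subgroupoid filtration $(TG|_H)_{(i)}\toto (TM|_N)_{(i)}$ produced by~\autoref{theorem: characterization of mult weightings}; this structure is precisely what makes left-translation compatible with filtration degrees.

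For the first assertion, the compatibility of the proposed filtration of $\G(\mathrm{Lie}(G))$ with the weighting of $C^\infty_M$ follows from the identity $(f\sigma)^L = (s^*f)\,\sigma^L$: since the source map is a weighted submersion, $s^*$ carries $C^\infty(M)_{(j)}$ into $C^\infty(G)_{(j)}$, so $(s^*f)\sigma^L$ has vector-field degree at least $i+j$ whenever $f$ has degree $j$ and $\sigma^L$ has degree $i$. To see that the filtration is genuinely a \emph{linear} weighting and not merely a filtered module, I would produce local weighted frames: putting $s$ into the weighted submersion normal form of~\autoref{theorem: weighted submersion coordinates} exhibits $\ker(Ts)$ as the span of coordinate vector fields of definite weights, and restricting these to the units $M$ and extending left-invariantly gives, near each point of $N$, a frame of $\mathrm{Lie}(G)$ whose members have well-defined weights. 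This identifies $\gr(\G(\mathrm{Lie}(G)))$ with the sections of a graded vector bundle, which is exactly the condition for a linear weighting.

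The two infinitesimal multiplicativity axioms are then short. The bracket condition is immediate from $[\sigma,\tau]^L = [\sigma^L, \tau^L]$ together with the standard fact that the filtration of vector fields on a weighted manifold is a filtered Lie algebra, $[\ger{X}(G)_{(i)}, \ger{X}(G)_{(j)}]\sset \ger{X}(G)_{(i+j)}$, which follows at once from the derivation characterization $X\in\ger{X}(G)_{(i)} \iff X(C^\infty(G)_{(k)})\sset C^\infty(G)_{(i+k)}$ for all $k$. For the anchor, $a(\sigma)$ is the base field to which $\sigma^L$ is related under the target map $t = s\circ\mathrm{inv}$; since inversion is a weighted morphism, $t$ is again a weighted submersion, and a weighted submersion sends a related field of degree $i$ to a field of degree $i$ (because it both preserves and reflects the filtration on functions). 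Thus $a$ is a weighted morphism. At this stage~\autoref{theorem: equvalent characterizations of IM weightings} may alternatively be invoked, by checking instead that the Lie algebroid differential is filtration preserving.

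The final and most delicate assertion is the commutation of the Lie functor with $\nuw$ and $\defw$. Because the weighting is multiplicative,~\autoref{theorem: weighted normal and weighted deformation groupoids} makes $\nuw(G,H)\toto \nuw(M,N)$ and $\defw(G,H)\toto \defw(M,N)$ into Lie groupoids, so each has a well-defined Lie algebroid, and it remains to identify these with $\nuw(\mathrm{Lie}(G))$ and $\defw(\mathrm{Lie}(G))$. I would compute the Lie algebroid of $\nuw(G,H)$ through its own left-invariant vector fields and compare the result with the section formula $\G(\nuw(V)) = C^\infty(\nuw(M,N))\otimes_{\gr(C^\infty(M))}\gr(\G(V))$ of~\autoref{theorem: sections of the weighted normal bundle} applied to $V=\mathrm{Lie}(G)$; the deformation statement is handled identically with $\rees$ in place of $\gr$. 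The main obstacle is precisely this comparison: it amounts to showing that forming the associated graded (respectively the Rees construction) commutes with the Lie functor, which requires tracking, through the weighted submersion normal form and the VB-groupoid structure on $TG$, that the left-invariant vector fields on the normal groupoid are exactly the symbols of the weighted left-invariant vector fields on $G$. Once this matching of frames and weights is established, the two descriptions agree and the claimed equalities of Lie algebroids follow.
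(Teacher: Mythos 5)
Your route is genuinely different from the paper's. You work directly with left-invariant vector fields and try to build local weighted frames from the submersion normal form of $s$ (\autoref{theorem: weighted submersion coordinates}); the paper instead dualizes: since $G$ is a weighted Lie groupoid, the cotangent groupoid $T^*G \toto \mathrm{Lie}(G)^*$ is a weighted VB-groupoid (\autoref{proposition: dual of a weighted vb groupoid}), so $A^* = \mathrm{Lie}(G)^*$ is a weighted \emph{subbundle} of $T^*G$ and therefore carries an honest linear weighting, and the weighting on $A$ is defined as its dual. Infinitesimal multiplicativity is then obtained by showing the linear Poisson bivector on $A^*$ has filtration degree zero and invoking \autoref{theorem: equvalent characterizations of IM weightings}, and only at the end is the left-invariant formula verified, via the chain $\sigma \in \G(A|_U)_{(i)} \iff f_\sigma \in C^\infty_{[1]}(A^*|_U)_{(i)} \iff s_{T^*G}^*f_\sigma \in C^\infty_{[1]}(T^*G|_U)_{(i)} \iff \sigma^L \in \ger{X}(G|_U)_{(i)}$, using that $s_{T^*G}$ is a weighted submersion. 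Within your route, the bracket argument ($[\sigma,\tau]^L = [\sigma^L,\tau^L]$ plus the filtered Lie algebra property of $\ger{X}(G)$) and the anchor argument (relatedness under a weighted submersion, which both preserves and reflects filtration degrees) are sound, up to a convention slip: with the paper's conventions $\sigma^L \sim_t 0$ and $\sigma^L \sim_s -a(\sigma)$, so the relevant submersion is the source map, not the target map.

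The gap is in your first step, which is exactly the step the paper's dualization is designed to avoid: you have not shown that your filtration is a linear weighting in the sense of \autoref{definition: linear weighting}. Two things are missing. First, that the left-invariant extension of a weight-$w$ frame element (a weighted coordinate vector field restricted to the units) again has filtration degree $w$. This does not follow from the filtration $(TG|_H)_{(i)} \toto (TM|_N)_{(i)}$ being by subgroupoids, as you claim, because that is pointwise data along $H$, whereas membership in $\ger{X}(G)_{(w)}$ constrains behaviour on a neighbourhood of $H$; the paper's own example of two distinct linear weightings of $\R \times \R^2 \to \R$ inducing the \emph{same} filtration of $V|_N$ shows fibrewise data along the base cannot control filtration degrees of sections. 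What is actually needed is that left translation by bisections of $H$ is a weighted diffeomorphism, which the paper only obtains later (in the proof of \autoref{theorem: wide weighted groupoids are filtered}, via \autoref{lemma: extensions of morphisms} and the deformation space). Second, even granting weighted frame elements $\sigma_a^L$, you must show they \emph{generate} the filtration, i.e.\ that $\sum_a (s^*f_a)\sigma_a^L \in \ger{X}(G|_U)_{(i)}$ forces $f_a \in C^\infty(U)_{(i-v_a)}$; your substitute criterion, that $\gr(\G(A))$ be the section module of a graded bundle, is not the definition and is nowhere shown to be equivalent to it. Finally, you flag the identification $\mathrm{Lie}(\nuw(G,H)) = \nuw(A)$ as the main obstacle and leave it open; the paper closes it by exhibiting the explicit section-level maps $\sigma^{[i]} \mapsto (\sigma^L)^{[i]}$ and $\WT{\sigma}^{[i]} \mapsto \WT{(\sigma^L)}^{[i]}$, which are well defined once the equivalence $\sigma \in \G(A)_{(i)} \iff \sigma^L \in \ger{X}(G)_{(i)}$ and \autoref{theorem: sections of the weighted normal bundle} are in place.
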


In~\autoref{section: wide integration} we prove the following partial converse to the previous theorem. 

\begin{theorem*}[=~\autoref{theorem: wide integration}]
    Suppose that $G\toto M$ is a Lie groupoid and 
        \begin{equation}
        \label{introeq: Wide IM weighting to integrate}
            A=A_{-r} \supseteq A_{-r+1} \supseteq \cdots A_{-1} \supseteq 0
        \end{equation}
    is a Lie filtration of $A = \mathrm{Lie}(G)$. Suppose that $H\sset G$ is a wide, $s$-connected Lie subgroupoid. If $B = \mathrm{Lie}(H)$ is such that
        \begin{enumerate}
            \item[(a)] $[\G(B), \G(A_{-i})] \sset \G(A_{-i})$ for all $i$

            \item[(b)] the assignment $m \mapsto \dim(B_m+A_{-i}|_m)$ is constant as a function on $M$
        \end{enumerate}
    then~\eqref{introeq: Wide IM weighting to integrate} defines a multiplicative weighting of $G$ along $H$ such that the induced weighting of $A$ defined by~\autoref{theorem: weightings can be differentiated} is given by the filtration  
        \[ A=A_{-r} + B \supseteq A_{-r+1} + B \supseteq \cdots A_{-1} + B \supseteq B. \]
\end{theorem*}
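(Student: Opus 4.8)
The plan is to build the weighting of $G$ along $H$ by hand, from the left-invariant distributions attached to the subalgebroids $A_{-i}+B$, and then to verify multiplicativity using~\autoref{theorem: characterization of mult weightings}; the differentiated weighting will be read off from~\autoref{theorem: weightings can be differentiated}.

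First I would promote the sum $A_{-i}+B$ to a Lie filtration of $A$ by subalgebroids. Hypothesis (b) says $A_{-i}+B$ has locally constant rank, so it is a subbundle; since $A_0=0$ we have $A_0+B=B$, and the filtration $A=A_{-r}+B\supseteq\cdots\supseteq A_{-1}+B\supseteq B$ is by subbundles. Bracket-closure is checked on generators by expanding $[\G(A_{-i}+B),\G(A_{-j}+B)]$ into four pieces: the $A_{-i}$--$A_{-j}$ bracket lands in $\G(A_{-(i+j)})$ because $(A_{-i})$ is a Lie filtration; the two mixed brackets land in $\G(A_{-i})$ and $\G(A_{-j})$ by hypothesis (a), and $A_{-i},A_{-j}\sset A_{-(i+j)}$; and the $B$--$B$ bracket lands in $\G(B)$ since $B$ is a subalgebroid. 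Hence each $A_{-i}+B$ is a subalgebroid and the collection is a Lie filtration.

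Next I would transport this filtration to $TG$ via left-invariant vector fields, defining
\[ (TG|_H)_{(-i)}:=(A_{-i}+B)^L|_H+TH. \]
Since $B^L|_H\sset TH$ the bottom piece is $(TG|_H)_{(0)}=TH$, while the top piece is all of $TG|_H$; constant rank makes each a subbundle, and the identity $[\sigma^L,\tau^L]=[\sigma,\tau]^L$ together with involutivity of $TH$ shows this is a Lie filtration along $H$. The construction of~\autoref{section: singular Lie filtrations} then produces from it a weighting of $G$ along $H$, and by design the left-invariant vector fields of filtration degree $-i$ are exactly the $\sigma^L$ with $\sigma\in\G(A_{-i}+B)$.

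It remains to check the three conditions of~\autoref{theorem: characterization of mult weightings}, of which condition (c) is the crux: that $(TG|_H)_{(-i)}=(A_{-i}+B)^L|_H+TH$ is a VB-subgroupoid of $TG\toto TM$ over $H\toto M$. Reading off the VB-groupoid structure of $TG\toto TM$, whose core is $A$, the core of this candidate at the units is exactly $A_{-i}+B$, and the subgroupoid property reduces to the statement that this core is invariant under the adjoint action of $H$ on $A|_M$. The infinitesimal form of that invariance is precisely hypothesis (a); and because $H$ is $s$-connected it is generated by flows of sections of $B$, so infinitesimal invariance integrates to genuine $\mathrm{Ad}_H$-invariance of $A_{-i}+B$. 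Combined with $A_{-i}+B$ being a subalgebroid and $TH$ being tangent to $H$, this yields (c). The remaining two conditions are comparatively formal: the units form a weighted submanifold because $M\sset H$ and, by constant rank, each $(TG|_H)_{(-i)}$ meets $TM$ in a subbundle; and, once (c) is available, the graph of multiplication is a weighted submanifold because its tangent spaces are graphs of the filtration-preserving maps $Tm$ and meet the product filtration cleanly (cf.~\autoref{theorem: characterization of weighted morphisms in terms of their graphs}). Applying~\autoref{theorem: weightings can be differentiated} and using that the degree-$(-i)$ left-invariant vector fields are exactly those coming from $\G(A_{-i}+B)$ then identifies the differentiated infinitesimally multiplicative weighting with the filtration $A_{-i}+B$, as claimed. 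I expect the integration of hypothesis (a) to $\mathrm{Ad}_H$-invariance, and the resulting verification of condition (c), to be the main obstacle, since this is the one step that genuinely passes from infinitesimal to global data and is exactly where $s$-connectedness of $H$ is indispensable.
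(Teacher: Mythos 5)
Your overall strategy---invariant vector fields, adjoint invariance integrated using $s$-connectedness, and then \autoref{theorem: characterization of mult weightings}---is the same as the paper's, but as written there are two genuine gaps, and they sit exactly at the hard points of the argument.

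First, the weighting of $G$ along $H$ is never actually constructed. The machinery of \autoref{section: singular Lie filtrations} (\autoref{theorem: singular Lie filtrations and weightings}) takes as input a singular Lie filtration of the sheaf of vector fields on \emph{all} of $G$, together with a submanifold that is \emph{clean} for it; it does not accept a filtration of $TG|_H$ by subbundles. Your object $(A_{-i}+B)^L|_H+TH$ only exists along $H$ (the summand $TH$ has no meaning elsewhere, and brackets of its sections with left-invariant fields are not defined as sheaf data), and no result in the paper produces a weighting from a bundle filtration of $TG|_H$ alone. The paper instead defines $\mathcal{F}_{(-i)}=\G(F_{-i}^L)+\G(F_{-i}^R)$ globally on $G$---note: both left- \emph{and} right-invariant fields, with $\mathcal{F}_{(-r)}=\ger{X}(G)$, and with no $B$ or $TH$ summand---and then must prove that $H$ is $\mathcal{F}_\bullet$-clean. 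That cleanness verification, carried out by translating along bisections $\exp(\sigma_1)\cdots\exp(\sigma_k)$ of $H$ and using $\mathrm{Ad}(\exp\sigma)=\exp(\mathrm{ad}_\sigma)$, is precisely where hypothesis (b) and the $s$-connectedness of $H$ are first used. In your proposal, (b) is used only to give $A_{-i}+B$ constant rank, cleanness is never checked, and the integrated $\mathrm{Ad}_H$-invariance appears only later; so the weighting you go on to analyze has not been shown to exist. The asymmetric (left-only) choice also causes downstream trouble: the paper's symmetric filtration is what makes inversion and the graph argument tractable.

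Second, your treatment of the graph of multiplication is an invalid implication. You assert the graph is a weighted submanifold because its tangent spaces "meet the product filtration cleanly," but in this theory tangent-level conditions never suffice for being a weighted submanifold---that is the point of the paper's examples (the curve $y=x^2$ in the $(1,3)$-weighted plane) and of \autoref{corollary: criteria for weighted submanifolds}, which additionally requires a weighting on the submanifold and that the inclusion be a weighted morphism. The paper's proof of this step is a separate, substantive argument: it uses that $\G(\mathrm{mult}_G)$ is the flow-out of the unit diagonal by the vector fields $(-\sigma^R,-\sigma^R,0)$ and $(0,\tau^L,-\tau^R)$ to build a singular Lie filtration on the graph itself, proves $\G(\mathrm{mult}_H)$ is clean for it, shows the inclusion $\G(\mathrm{mult}_G)\into G^3$ is weighted via \autoref{proposition: weighted morphisms of weighted lie filtrations}, and only then invokes \autoref{corollary: criteria for weighted submanifolds}. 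Relatedly, your reduction of condition (c) to "$\mathrm{Ad}_H$-invariance of the core" is under-justified: the paper verifies closure of $(TG|_H)_{(-i)}$ under the tangent-groupoid multiplication directly through the bisection formula \eqref{equation: multiplication on the tangent groupoid}, and some such verification is needed rather than a formal appeal to the core.
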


Using this, we show that a filtered Lie groupoid (cf.~\cite[Definition 67]{van2019groupoid}) is the same thing as a Lie groupoid with a multiplicative weighting along its objects (\autoref{theorem: wide weighted groupoids are filtered}). 

\subsection{Overview of Chapter 6}

In~\autoref{chapter: Weightings and Higher Tangent Bundles} we use a description of weightings as graded subbundles of the higher jet bundles to prove another partial converse the differentiation theorem,~\autoref{theorem: weightings can be differentiated}.

We begin by reviewing the $r$-th order tangent bundle $T_rM \to M$, and review Loizides and Meinrenken's work (\cite[Section 7]{loizides2023differential} relating weightings of $M$ along $N$ and graded subbundles $Q\sset T_rM$ over $N$. We characterize multiplicative and infinitesimally multiplicative weightings in terms of the graded bundle $Q$. 

\begin{theorem*}[=~\autoref{proposition: equivalence of groupoids weighting definitions} and~\autoref{theorem: LA weightings in terms of Q}]
    \begin{enumerate}
        \item[(a)] Let $G\toto M$ be weighted along $H \sset G$. The weighting is multiplicative if and only if the graded subbundle $Q_G \sset T_rG$ is a Lie subgroupoid $Q_G\toto Q_M$ of $T_rG \toto T_rM$.

        \item[(b)] Let $A\Rightarrow M$ be linearly weighted along $B\Rightarrow N$. The weighting is infinitesimally multiplicative if and only if the corresponding graded subbundle $Q_A\sset T_rA$ is a Lie subalgebroid $Q_A \Rightarrow Q_M$ of $T_rA \Rightarrow T_rM$.
    \end{enumerate}    
\end{theorem*}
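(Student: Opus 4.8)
The plan is to prove both parts by translating the defining conditions for (infinitesimal) multiplicativity into closure conditions on the graded subbundle $Q$ under the structure maps of $T_rG$ (resp.\ $T_rA$). Two facts drive everything. First, $T_r$ is a functor preserving products and clean fiber products along submersions, so that $T_rG\toto T_rM$ is automatically a Lie groupoid and $T_rA\Rightarrow T_rM$ automatically a Lie algebroid, with all structure maps obtained by applying $T_r$ to those of $G$ (resp.\ $A$). Second, there is the Loizides--Meinrenken dictionary between weightings and graded subbundles $Q\sset T_r$. The single most important ingredient is a functoriality lemma, which I would establish first since every later step is an application of it: a smooth map of pairs $F:(M,N)\to(M',N')$ is a weighted morphism if and only if $T_rF$ maps $Q_M$ into $Q_{M'}$, with the refinement that $F$ is a weighted submersion precisely when $T_rF$ restricts to a fiberwise surjection $Q_M\onto Q_{M'}$.

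For part (a) I would argue from the characterization \autoref{theorem: characterization of mult weightings} rather than the raw definition. The condition that $M$ be a weighted submanifold translates, via the unit inclusion $u:M\into G$ and the lemma, into $T_ru(Q_M)\sset Q_G$, i.e.\ the identity bisection of $T_rG$ lands in $Q_G$. That the source be a weighted submersion gives $T_rs(Q_G)=Q_M$ (and similarly for the target), so $Q_G$ acquires well-defined source and target onto $Q_M$; crucially, this submersivity is exactly what forces $T_r$ to carry $G^{(2)}=G\times_MG$ to $(T_rG)^{(2)}=T_rG\times_{T_rM}T_rG$ and the induced weighting on $G^{(2)}$ to correspond to $Q_{G^{(2)}}=Q_G\times_{Q_M}Q_G$. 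Granting this, the condition that multiplication $m$ be a weighted morphism becomes $T_rm(Q_G\times_{Q_M}Q_G)\sset Q_G$, closure under the multiplication of $T_rG$, and the condition that inversion be weighted becomes closure under the inversion of $T_rG$. Collecting the four conditions yields precisely that $Q_G\toto Q_M$ is a subgroupoid of $T_rG\toto T_rM$.

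For part (b) I would run the analogous argument infinitesimally. Recall that $T_rA\Rightarrow T_rM$ is a Lie algebroid whose anchor is $T_ra$ followed by the canonical isomorphism $T_r(TM)\cong T(T_rM)$, and whose bracket is determined by prolonged sections. A Lie subalgebroid $Q_A\Rightarrow Q_M$ amounts to three requirements: $Q_A$ is a subbundle over $Q_M$ (automatic, since $Q_A$ is the graded subbundle of the given linear weighting); the anchor of $T_rA$ maps $Q_A$ into $TQ_M$; and $\G(Q_A)$ is bracket-closed. The anchor condition is, by the functoriality lemma applied to $a:A\to TM$ together with the identification $Q_{TM}\leftrightarrow TQ_M$, exactly the statement that the anchor is a weighted morphism. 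For the bracket, the point is that the filtration degree of $\sigma\in\G(A)$ is detected by whether its prolongation lies in $\G(Q_A)$, and the bracket on $T_rA$ computes the prolongation of $[\sigma,\tau]$ from those of $\sigma$ and $\tau$; hence $[\G(A)_{(i)},\G(A)_{(j)}]\sset\G(A)_{(i+j)}$ for all $i,j$ is equivalent to $\G(Q_A)$ being bracket-closed. Together with the anchor condition this gives the equivalence with $Q_A$ being a Lie subalgebroid. As a consistency check, for integrable $A$ one can verify this is the infinitesimal shadow of part (a) under the Lie functor via \autoref{theorem: weightings can be differentiated}.

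The main obstacle, in both parts, is the fiber-product/bracket bookkeeping connecting the graded-subbundle picture to the groupoid and algebroid operations. In (a) the delicate step is verifying that weighted submersivity of the source is precisely what makes $Q_{G^{(2)}}=Q_G\times_{Q_M}Q_G$ hold, since without this one cannot even phrase the multiplication-closure condition in terms of $Q$. In (b) the analogous difficulty is the precise comparison between the Lie bracket of the prolonged algebroid $T_rA$ and the filtration-degree condition on brackets of sections of $A$; here I expect the linear-Poisson and homological-vector-field descriptions of \autoref{theorem: equvalent characterizations of IM weightings} to be the most efficient tools, since filtration degree zero of the Poisson bivector (equivalently, $\ed_A$ being filtration preserving) repackages the bracket condition in a form that reads off directly from $Q_A$.
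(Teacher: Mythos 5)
Your strategy coincides with the paper's: establish the Loizides--Meinrenken dictionary (a map of pairs is weighted iff $T_rF$ carries $Q$ into $Q'$, with a refinement for submersions) and then translate each axiom of (infinitesimal) multiplicativity into a closure property of the graded subbundle. Part (b) in particular matches the paper's proof in outline: the anchor condition is handled via $T_ra(\sigma^{(i)}) = (a(\sigma))^{(i)}$ and tangency to $Q_M$, and the bracket condition via prolonged sections. Be aware, though, that in the forward direction of (b) bracket-closure must be checked for \emph{general} elements of $\G(T_rA,Q_A)$, which are $C^\infty(T_rM)$-combinations $f\sigma^{(i)}$, not just lifts; the paper resolves this with a Leibniz-rule computation and a three-case analysis according to whether $f$, $g$ vanish on $Q_M$. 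This is precisely the ``bookkeeping'' you defer, and it does not follow formally from the statement about lifts alone.

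The genuine gap is in the converse of part (a). You translate ``$M$ is a weighted submanifold of $G$'' into ``$T_ru(Q_M)\sset Q_G$'', i.e.\ the unit inclusion is a weighted morphism, and treat this as an equivalence. It is not one: the paper warns explicitly (see the remarks following \autoref{examples: weighted submanifolds}) that a submanifold whose inclusion is a weighted morphism need not be a weighted submanifold --- the curve $y=x^2$ in $\R^2$ with $\mathrm{wt}(x)=1$, $\mathrm{wt}(y)=3$, given the doubled trivial weighting, is the standing counterexample. Worse, in the converse direction the weighting of $M$, and hence $Q_M$ itself, is not defined a priori, so none of your translations for $s$, $\mathrm{mult}_G$, $\mathrm{inv}_G$ can even be stated until it is produced. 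The missing step --- which is how the paper's proof proceeds --- is to observe that if $Q_G$ is a Lie subgroupoid of $T_rG$, then its unit space is $Q_G\cap T_rM$ and this intersection is automatically \emph{clean}; by \autoref{proposition: weighted submanifolds in terms of $Q$} this yields that $M$ is a weighted submanifold with $Q_M=Q_G\cap T_rM$, and only then can the functoriality lemma (\autoref{theorem: weighting-graded bundle correspondences}~(e)) and the submersion refinement be applied to recover the remaining axioms. Without this clean-intersection argument your converse does not get off the ground.
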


Taking advantage of this characterization of multiplicative and infinitesimally multiplicative weightings, we show that the spray exponential (\cite[Definition 3.20]{cabrera2020local}) corresponding to any Lie algebroid spray of filtration degree zero is a (partially defined) weighted morphism. The main result of~\autoref{chapter: Weightings and Higher Tangent Bundles} is following theorem. 

\begin{theorem*}[=~\autoref{theorem: integration of multiplicative weightings}]
    Suppose that $G\toto M$ is a $s$-connected Lie groupoid and $H\toto N$ is a $s$-connected Lie subgroupoid with Lie algebroids $A = \mathrm{Lie}(G)$ and $B = \mathrm{Lie}(H)$, respectively, and suppose that $A$ is infinitesimally multiplicatively weighted along $B$. Let $Q_A \Rightarrow Q_M$ be the graded subbundle of $T_rA \Rightarrow T_rM$ corresponding to the weighting of $A$ along $B$. If $Q_A$ integrates to an $s$-connected subgroupoid $Q_G \toto Q_M$ of $T_rG\toto T_rM$, then $Q_G$ is a graded subbundle of $T_rG \toto T_rM$ which defines a multiplicative weighting of $G$ along $H$. 
\end{theorem*}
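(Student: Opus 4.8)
The plan is to invoke part (a) of~\autoref{proposition: equivalence of groupoids weighting definitions}: since the hypothesis already provides that $Q_G\toto Q_M$ is a Lie subgroupoid of $T_rG\toto T_rM$, it suffices to show that $Q_G$ is a \emph{graded} subbundle of $T_rG$ sitting over $H$. The key structural input is that the $r$-th order tangent functor $T_r$ is natural, so the homogeneity action $\kappa_\lambda$ (reparametrising jets of curves by $t\mapsto\lambda t$) commutes with $T_r$ applied to every structure map. Consequently each $\kappa_\lambda$ with $\lambda\in\R^\times$ is a Lie groupoid automorphism of $T_rG\toto T_rM$, covering the homogeneity automorphism $\kappa_\lambda$ of $T_rM$, and its differentiation is the homogeneity automorphism $\kappa_\lambda$ of $\mathrm{Lie}(T_rG)=T_rA$. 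I will also use, from the correspondence underlying~\autoref{theorem: LA weightings in terms of Q}, that a graded subbundle is precisely a subbundle invariant under the homogeneity action (Grabowski--Rotkiewicz), so that $Q_A$ satisfies $\kappa_\lambda(Q_A)=Q_A$ for all $\lambda$.

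First I would establish $\R^\times$-invariance of $Q_G$. Fix $\lambda\in\R^\times$. Since $\kappa_\lambda$ is a groupoid automorphism of $T_rG$ preserving the base $Q_M=\kappa_\lambda(Q_M)$, the image $\kappa_\lambda(Q_G)$ is again an $s$-connected Lie subgroupoid of $T_rG\toto T_rM$, and its Lie algebroid is $\mathrm{Lie}(\kappa_\lambda(Q_G))=\kappa_\lambda(\mathrm{Lie}(Q_G))=\kappa_\lambda(Q_A)=Q_A$. Thus $\kappa_\lambda(Q_G)$ and $Q_G$ are two $s$-connected Lie subgroupoids of the fixed ambient groupoid $T_rG$ with the same Lie subalgebroid $Q_A$ over the same base $Q_M$. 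By uniqueness of the $s$-connected integration of a Lie subalgebroid, $\kappa_\lambda(Q_G)=Q_G$; as $\lambda$ was arbitrary, $Q_G$ is invariant under the full $\R^\times$-action.

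Next I would upgrade this dynamical invariance to the statement that $Q_G$ is a graded subbundle over $H$. The $\lambda\to 0$ limit of the homogeneity action retracts $T_rG$ onto its zero section $G$, and by naturality this retraction is compatible with the bundle projections $p_G:T_rG\to G$ and $p_A:T_rA\to A$, which satisfy $p_A=\mathrm{Lie}(p_G)$. Since $p_A(Q_A)=B=\mathrm{Lie}(H)$ and both $Q_G$ and $H$ are $s$-connected, functoriality of integration identifies the base $\kappa_0(Q_G)=p_G(Q_G)$ with $H$. Invoking Grabowski--Rotkiewicz --- a homogeneity-invariant submanifold of a graded bundle is a graded subbundle --- together with the constant rank forced by $\R^\times$-invariance, I conclude that $Q_G\sset T_rG|_H$ is a graded subbundle. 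Finally, since $Q_G$ is also a Lie subgroupoid of $T_rG\toto T_rM$ by hypothesis, part (a) of~\autoref{proposition: equivalence of groupoids weighting definitions} shows that the weighting of $G$ along $H$ determined by $Q_G$ is multiplicative.

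The main obstacle is the third step: passing from $\R^\times$-invariance of the a priori only immersed integrated subgroupoid $Q_G$ to the conclusion that it is an honestly embedded graded subbundle, with constant rank and with base exactly $H$. The uniqueness-of-integration argument is clean, but controlling the $\lambda\to 0$ behaviour --- ensuring the homogeneity limit lands inside $Q_G$ so that $\R^\times$-invariance promotes to genuine homogeneity-invariance, and that no drop in rank occurs --- is where the real work lies, and is precisely where the hypotheses that $H$ is a genuine $s$-connected Lie subgroupoid and that $Q_M$ is an embedded graded subbundle over $N$ are needed.
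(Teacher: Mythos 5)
Your opening move is partially sound, and is even a nice alternative to part of the paper's argument: where the paper deduces closure of $Q_G$ under the monoid action from equivariance of the spray exponential (using $T_r\exp_V=\exp_{V^{(0)}}$ for a spray $V\in\mathfrak{X}(A)_{(0)}$, so that $\exp_{V^{(0)}}$ commutes with $\kappa_t$), you deduce $\kappa_\lambda(Q_G)=Q_G$ for $\lambda\neq 0$ from uniqueness of $s$-connected integrations of $Q_A$ inside $T_rG$, and closure under $\kappa_0$ can indeed be patched by observing $H\sset Q_G$ (the $s$-connected integration of $B\sset Q_A$ inside $Q_G$ equals $H$ by the same uniqueness). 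The genuine gap is in your last step: knowing that $Q_G$ is a graded subbundle \emph{and} a Lie subgroupoid does not yet produce a weighting of $G$ along $H$. A graded subbundle of $T_rG$ defines a weighting only if near every point of $H$ it has the local normal form~\eqref{equation: cut out functions for Q}, i.e.\ is cut out by the lifts $x_a^{(j)}$, $j<w_a$, of some coordinate system; this is an extra condition, not a consequence of gradedness. For instance, with $r=1$ the graded subbundles of $T_1M=TM$ arising from weightings are exactly those of the form $TM|_N$, so $TN\sset TM|_N$ is a graded subbundle over $N$ that defines no weighting at all.

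Moreover, \autoref{proposition: equivalence of groupoids weighting definitions}(a) cannot close this gap, because it presupposes that $G$ is \emph{already} weighted along $H$ and that $Q_G$ is the graded subbundle associated to that weighting; invoking it for a graded subbundle not yet known to come from a weighting begs the question (your phrase ``the weighting of $G$ along $H$ determined by $Q_G$'' is exactly what has not been constructed). Producing the normal form is where the paper's proof does its real work: weighted coordinates on $A$ are pushed forward to coordinates on $G$ near $M$ by the spray exponential $\exp_V$ of a degree-zero spray, using $T_r\exp_V(Q_A\cap T_rU_A)=Q_G\cap T_rU_G$; they are then transported to arbitrary points of $H$ by the flows of the lifts $(\sigma^L)^{(0)}$ of left-invariant vector fields with $\sigma\in\Gamma(A)_{(0)}$, which are tangent to $Q_G$ and so carry the cut-out equations of $Q_G$ to cut-out equations of the same form, with $s$-connectedness of $H$ used to reach every point. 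Your proposal contains no substitute for this coordinate-construction step, so it does not establish that $Q_G$ defines a (multiplicative) weighting of $G$ along $H$.
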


\subsection{Overview of Chapter 7}

We end this thesis with some possible future directions we would like to pursue. First, we propose a definition of \emph{linear singular Lie filtrations} (\autoref{definition: linear singular Lie filtrations}) and put forward the problem of determining when a linear singular Lie filtration defines a linear weighting, extending the work of Loizides and Meinrenken on singular Lie filtrations (\cite{loizides2022singular}) to vector bundles. 

Next, we briefly review \emph{associative algebroids} in the sense of \v{S}evera (\cite[Letter 6]{vsevera2017letters}) and present the task of studying deformation spaces in this setting, with a possible application the work of van Erp and Yuncken to pseudodifferential operators acting on sections of vector bundles. 

Following this, we summarily explain $k$-multifiltered manifolds and multiweightings. We ask for a connection between the two in a way that generalizes~\autoref{theorem: wide integration}. In particular, we make a conjecture that the right object for the "tangent groupoid" for a 2-multifiltered manifold should be a double groupoid of the form
    \begin{equation*}
    \xymatrix{
        \defw(\mathrm{Pair}^2(M), \Delta^h, \Delta^v) \ar@<-.5ex>[d] \ar@<.5ex>[d] \ar@<-.5ex>[r] \ar@<.5ex>[r] & \T_{F^h}M \ar@<-.5ex>[d] \ar@<.5ex>[d] \\
        \T_{F^v}M \ar@<-.5ex>[r] \ar@<.5ex>[r] & M\times \R,
    }
    \end{equation*}
where $F^v$ and $F^h$ are Lie filtrations of $M$ coming from the 2-multifiltration. We conclude by asking what type of $C^*$-algebraic information could be obtained from such an object.

\cleardoublepage
\chapter{The Differential Geometry of Weightings}
\label{chapter: Weightings}

Weightings were first introduced by Melrose under the name of ``quasi-homogeneous structures" in order to do analysis on manifolds with corners (\cite{melrose1996differential}). Independently, they were later considered by Loizides and Meinrenken (\cite{loizides2023differential}) in order to understand the local information necessary to construct the exotic deformation spaces appearing in the work of Haj and Higson on linearization theorems for Euler-like vector fields with weights (\cite{sadegh2018euler}). Our exposition follows the work by Loizides and Meinrenken. 

\section{Basics of Weightings}
\label{section: basics of weightings}

Let $M$ be a smooth manifold and $N\sset M$ an embedded submanifold. Associated to $N$ is a filtration of the sheaf of smooth functions $C^\infty_{M}$ by ideals
    \[ C^\infty_{M} \supseteq \van{N} \supseteq \van{N}^2 \supseteq \cdots  \]
where 
    \begin{equation*}
        \van{N}(U) = \{ f \in C^\infty(U) : f|_{U\cap N} = 0 \};
    \end{equation*}
that is, $\van{N}^k$ is the subsheaf of smooth functions on $M$ vanishing to order $k$ along $N$. This filtration has the following special property: if $x_1, \dots, x_m$ is a coordinate system defined on $U\sset M$ such that 
    \[ N\cap U = \{ x_{1} = \cdots = x_n = 0\} \]
then $\van{N}^k(U)$ is the ideal generated by the monomials 
    \begin{equation}
    \label{equation: vanishing coordinates}
        x_{1}^{s_{1}} \cdots x_n^{s_n} \quad \text{where} \quad  \sum s_i \geq k.
    \end{equation}
A \emph{weighting} of a manifold $M$ along a submanifold $N$ is a modified notion of order of vanishing along $N$. More specifically, a weighting of $M$ along $N$ multiplicative filtration of the sheaf smooth functions on $M$ satisfying a local property generalizing~\eqref{equation: vanishing coordinates}. We recall their definition now. 

\subsection{Definition of a Weighting} A \emph{weight vector} $w = (w_1, \dots, w_m) \in \Z_{\geq 0}^m$ is an $m$-tuple of non-negative integers. Any integer $r \in \Z$ satisfying $r \geq \max_i\{w_i\}$ will be called the \emph{order} of the weight vector. Given an open subset $U\sset R^m$, let $C^\infty(U)_{(i)} \sset C^\infty(U)$ be the ideal generated by the monomials 
    \[ x_1^{s_1} \cdots x_m^{s_m} \quad \text{where} \quad  \sum_a w_as_a \geq i. \]
This defines a filtration 
    \begin{equation}
    \label{equation: local weighting filtration}
        C^\infty(U) = C^\infty(U)_{(0)} \supseteq C^\infty(U)_{(1)} \supseteq C^\infty(U)_{(2)} \supseteq \cdots 
    \end{equation}

\begin{definition}[{\cite[Definition 2.2]{loizides2023differential}}]
\label{definition: weighting}
    A \emph{weighting} of $M$ is a multiplicative filtration
        \begin{equation}
        \label{equation: weighting as a filtration of sheaf}
            C^\infty_M = C^\infty_{M,(0)} \supseteq C^\infty_{M,(1)} \supseteq C^\infty_{M,(2)} \supseteq \cdots
        \end{equation}
    of the sheaf of smooth functions on $M$ with the property that each point $p\in M$ has an open neighbourhood $U\sset M$ with coordinates $x_1,\dots, x_n$ defined on $U$ so that the filtration of $C^\infty_M(U) = C^\infty(U)$ is given by~\eqref{equation: local weighting filtration}.
\end{definition}

The coordinates $x_a$ will be called \emph{weighted coordinates}. Given a weighting of $M$, let $N \sset M$ denote the set of points for which the filtration~\eqref{equation: weighting as a filtration of sheaf} is non-trivial. Then~\cite[Lemma 2.4]{loizides2023differential} implies that $N$ is a closed submanifold and $C^\infty_{M,(1)} = \van{N}$. If $N$ is given in advance, then we will say that $M$ is \emph{weighted along} $N$ and refer to $(M,N)$ as a \emph{weighted manifold pair}. We note that, by definition, weighted coordinates serve as submanifold coordinates for $N$. 

If $(M,N)$ and $(M',N')$ are weighted pairs, a \emph{weighted morphism} from $(M,N)$ to $(M',N')$ is a smooth map $F:M\to M'$ such that 
    \[ F^*C^\infty_{M',(i)} \sset C^\infty_{M, (i)} \]
for all $i \geq 0$. 

\begin{remark}
     In this thesis we will work in the $C^\infty$-category unless explicitly stated otherwise. Therefore we may take advantage of the existence of partitions of unity to avoid the use of sheaves, working instead with the filtration of global functions 
        \[ C^\infty(M) = C^\infty(M)_{(0)} \supseteq C^\infty(M)_{(1)} \supseteq C^\infty(M)_{(2)} \supseteq \cdots . \]
\end{remark}

\subsubsection{Basic Examples}

We now give several examples of weightings. 

\begin{examples}
\label{example: examples of weightings}
    \begin{enumerate}
        \item[(a)] As already discussed, if $N$ is a closed submanifold of $M$ then order of vanishing defines a weighting of $M$ along $N$:
            \[ C^\infty_M \supseteq \van{N} \supseteq \van{N}^2 \supseteq \cdots. \]
        We refer to this weighting as the \emph{trivial weighting} of $M$ along $N$. There are two extreme cases we want to point out: $N = M$ and $N = \emptyset$. In these cases, the trivial weighting is given by 
            \[ C^\infty(M) \supseteq 0 \supseteq 0 \supseteq \cdots, \]
        and 
            \[ C^\infty(M) \supseteq C^\infty(M) \supseteq C^\infty(M) \supseteq \cdots, \]
        respectively. 

        \item[(b)] Define a weighting of $\R^2$ along the origin by declaring that $x$ has weight 1 and $y$ has weight 3. The resulting filtration is given by 
            \[ C^\infty(\R^2) \supseteq \la x,y \ra \supseteq \la x^2, y \ra \supseteq \la x^3, y \ra \supseteq \la x^4, xy, y^2 \ra \supseteq \cdots.  \]
        As simple as it is, this example will shed a considerable amount of light on some of the concepts later in this chapter. 

        \item[(c)] \cite[Examples 2.9 (c)]{loizides2023differential} A nested sequence
            \[ M = N_{r+1} \supseteq N_{r} \supseteq \cdots \supseteq N_{1} \supseteq N_0 = N \]
        of embedded submanifolds $N_i$ (with $N$ closed) defines a weighting of order $r$ by 
            \[ C^\infty_{M, (i)} = \sum_{k\geq 0} \sum_{i_1 + \cdots + i_k = i} \van{N_{i_1}}\cdots \van{N_{i_k}}. \]
        Weighted coordinates near point $p\in N$ are given by coordinates $x_a$ which are submanifold coordinates for each of the $N_i$.   

        \item[(d)] \cite[Secton 2.2]{loizides2023differential} If $(M, N)$ and $(M',N')$, then $M\times M'$ is naturally weighted along $N\times N'$ by letting $C^\infty(M\times M')_{(i)}$ be the ideal generated by 
            \[ \sum_{i_1+i_2 = i} C^\infty(M)_{(i_1)} \otimes C^\infty(M)_{(i_2)}.  \]
        If $x_a$ are local weighted coordinates for $M$ and $y_b$ are local weighted coordinates for $M'$, then $x_a, y_b$ for a local weighted coordinate system for $M\times M'$. 
    \end{enumerate}
\end{examples}

\subsection{Filtration of the Tangent and Cotangent Bundles}

It was observed (\cite[Proposition 2.6]{loizides2023differential}) the a weighting of $M$ along $N$ determines a filtration 
    \begin{equation}
    \label{equation: normal filtration}
        \nu(M,N) = F_{-r} \supseteq F_{-r+1} \supseteq \cdots \supseteq F_0 = 0 
    \end{equation}
of the normal bundle of $N$ in $M$ by subbundles $F_{-i}\to N$. This filtration is induced by a filtration of $TM|_N$, which is in turn defined by ``dualizing'' a filtration of $T^*M|_N$. Through our study of weighted manifolds, we found it to be fruitful to consider the filtrations of $TM|_N$ and $T^*M|_N$ in their own right. We recall their constructions now. 

Let 
    \[ (T^*M|_N)_{(i)} = \mathrm{span}_{C^\infty(N)}\{ \ed f|_N : f \in C^\infty(M)_{(i)} \}. \]
If $x_a$ is a weighted coordinate system defined on $U\sset M$, then $\{\ed x_a|_{U\cap N} : w_a \geq i\}$ is a frame for  $T^*M|_{U\cap N}$, hence each $(T^*M|_N)_{(i)}$ is a subbundle of $T^*M|_N$. This defines the filtration
    \begin{equation}
    \label{equation: filtration of cotangent bundle}
        T^*M|_N = (T^*M|_N)_{(0)} \supseteq (T^*M|_N)_{(1)} \supseteq \cdots \supseteq (T^*M|_N)_{(r)} \supseteq 0.
    \end{equation}
We remark, in particular, that  $(T^*M)_{(1)} = \mathrm{ann}(TN)$ and 
    \[ \mathrm{rank}(\gr(T^*M|_N)_{(i)}) = \# \{ a : w_a = i\} , \]
where 
    \[ \gr(T^*M|_N)_{(i)} = (T^*M|_N)_{(i)}/(T^*M|_N)_{(i+1)}.  \]
    
\begin{lemma}
\label{lemma: extension of weighted coordinates}
    Let $(M,N)$ be a weighted pair and $p\in N$. Let $x_1, \dots, x_k$ be functions of filtration degrees $w_1, \dots, w_k$ defined near $p$. Suppose that for all $i$ the image of the set $\{\ed_p x_a : w_a = i\}$ in $\gr(T^*_pM)_{(i)}$ is linearly independent. Then the functions $x_a$ are contained in a system of weighted coordinates on an open neighbourhood $U$ of $p$. 
\end{lemma}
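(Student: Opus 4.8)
The plan is to build the desired coordinate system out of a pre-existing weighted chart by a basis-extension argument in the associated graded cotangent bundle, and then to check that the new chart induces the given filtration.

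First I would invoke \autoref{definition: weighting} to choose an honest weighted coordinate system $y_1, \dots, y_m$, with weights $v_1, \dots, v_m$, on some neighbourhood $U_0$ of $p$. As recorded just before the lemma, for each $i$ the covectors $\{\ed y_b|_N : v_b = i\}$ descend to a basis of $\gr(T^*_pM)_{(i)}$, and $\mathrm{rank}(\gr(T^*M|_N)_{(i)}) = \#\{b : v_b = i\}$. By hypothesis the images of $\{\ed_p x_a : w_a = i\}$ in $\gr(T^*_pM)_{(i)}$ are linearly independent, so $\#\{a : w_a = i\} \leq \#\{b : v_b = i\}$ for every $i$, and I may extend $\{[\ed_p x_a] : w_a = i\}$ to a basis of $\gr(T^*_pM)_{(i)}$ by adjoining the images of a suitable subset $\{y_b : b \in S_i\}$ of the weight-$i$ coordinates. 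This is ordinary linear algebra, carried out one weight at a time.

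Next I would assemble the candidate coordinates: let $z_1, \dots, z_m$ be the functions $x_a$ together with the selected $y_b$ for $b \in \bigcup_i S_i$, each carrying its filtration degree as weight. By construction the weight multiset of the $z_c$ equals that of the $y_b$, and for each $i$ the covectors $\{[\ed_p z_c] : \mathrm{weight}(z_c) = i\}$ form a basis of $\gr(T^*_pM)_{(i)}$. A standard fact about filtered vector spaces — a family whose symbols form a basis of the associated graded is itself a basis — then shows that $\ed_p z_1, \dots, \ed_p z_m$ are linearly independent in $T^*_pM$. After subtracting constants (harmless, since this alters neither differentials nor filtration degrees) the inverse function theorem yields a neighbourhood $U \sset U_0$ of $p$ on which $z_1, \dots, z_m$ is a coordinate system; the $z_c$ of weight $\geq 1$ lie in $C^\infty(U)_{(1)} = \van{N}$ and hence cut out $N$, while those of weight $0$ restrict to coordinates on $N$.

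It then remains to prove that $z_1, \dots, z_m$ is a \emph{weighted} coordinate system, i.e. that the monomial ideal $\mathcal{I}_{(i)}$ generated by the $z$-monomials of weighted degree $\geq i$ coincides with $C^\infty(U)_{(i)}$. One inclusion is immediate: each $z_c$ has filtration degree $\geq \mathrm{weight}(z_c)$, so multiplicativity of the given filtration together with the fact that each $C^\infty(U)_{(i)}$ is an ideal gives $\mathcal{I}_{(i)} \sset C^\infty(U)_{(i)}$. The reverse inclusion is the crux, and I would obtain it by showing that the transition diffeomorphism $T$ from the $y$-chart to the $z$-chart is a weighted \emph{isomorphism}: the inclusion just established says $T$ is a weighted morphism, while the within-weight invertibility built into the basis extension shows that $T$ induces an isomorphism on the associated graded algebras $\gr(C^\infty)$ near $p$. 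Since the $y_b$ induce the given filtration, a weighted isomorphism $T$ transports this to the $z_c$. The main obstacle is exactly this step: promoting the graded isomorphism back to an equality of the actual, non-stabilizing filtrations. I expect to make this precise by a Borel summation argument, using that $\bigcap_i C^\infty(U)_{(i)}$ and $\bigcap_i \mathcal{I}_{(i)}$ both equal the ideal of functions flat along $N$, so that no functions are ``lost'' in passing to the new coordinates; the hypothesis on the differentials enters precisely in guaranteeing that the induced graded map is an isomorphism.
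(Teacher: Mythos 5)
Your proposal is correct and its first half is exactly the paper's proof: the paper likewise fixes a weighted chart $y_b$ near $p$, applies the replacement theorem in each graded piece $\gr(T^*_pM)_{(i)}$ to select an index set $J$, and concludes that $\{\ed_p x_a\} \cup \{\ed_p y_b : b \in J\}$ is a basis of $T^*_pM$, hence that $\{x_a\}\cup\{y_b : b\in J\}$ is the required system. The paper's proof stops at that assertion, so the entire second half of your proposal --- verifying that functions of the stated filtration degrees with adapted, independent differentials really induce the given filtration --- is your filling-in of a step the paper treats as immediate; your outline of it is sound, though it closes more cleanly via the characterization of the local monomial ideal by vanishing of weighted derivatives along $N$ (cf.\ \eqref{equation: recover weighting from differential operators}) or via weighted paths (\autoref{A-proposition: characterization of weighted morphisms}) than via Borel summation.
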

\begin{proof}
    Let $y_a$ be any system of weighted coordinates near $p$. The image of $\{\ed_p y_a : w_a = i\}$ in $\gr(T^*_pM)_{(i)}$ is a basis for $\gr(T^*_pM)_{(i)}$, hence by the replacement theorem from linear algebra there is an index set $J_i$ such that the image of 
        \[ \{ \ed_p x_a : w_a = i\} \cup \{\ed_p y_a : w_a = i, a \in J_i\} \]
    in $\gr(T^*_pM)_{(i)}$ is a basis for $\gr(T^*_pM)_{(i)}$. Letting $J = \bigcup_i J_i$ it follows that $\{\ed_p x_a \} \cup \{ \ed_p y_a : a \in J\}$
    is a basis for $T_p^*M$, hence 
        \[ \{x_a\} \cup \{y_b : b \in J\}\]
    is the required coordinate system. 
\end{proof}

The filtration of $TM|_N$,
    \begin{equation}
    \label{equation: filtration of tangent bundle}
        TM|_N = (TM|_N)_{(-r)} \supseteq \cdots \supseteq (TM|_N)_{(0)} \supseteq 0,
    \end{equation}
is the ``dual'' of~\eqref{equation: filtration of cotangent bundle}, given by
    \[ (TM|_N)_{(-i)} = \mathrm{ann}((T^*M|_N)_{(i+1)}). \]
In a local weighted coordinate system $x_a$ defined near $p\in N$ we have 
    \[ (T_pM)_{(-i)} = \mathrm{span}\left\{ \frac{\bd}{\bd x_a}\bigg|_p : w_a \leq i \right\}; \]
note in particular that $(TM|_N)_{(0)} = TN$. As we will see in the forthcoming sections, the tangent bundle for the weighted pair $(M,N)$ should be $TM$ together with the filtration of $TM|_N$. 

\begin{examples}
    \begin{enumerate}
        \item[(a)] If $M$ is given the trivial weighting along $N$, then the filtration~\eqref{equation: filtration of tangent bundle} is just 
            \[ TM|_N \supseteq TN.  \]

        \item[(b)] If $\R^2$ is weighted by declaring that $\mathrm{wt}(x) = 1$ and $\mathrm{wt}(y) = 3$ as in~\autoref{example: examples of weightings} (b), then the filtration of $T_0\R^2 = \R^2$ is 
            \[ \R^2 \supseteq \{y = 0\} \supseteq \{y = 0\} \supseteq \{0\} \]

        \item[(c)] If $M$ is given the weighting defined by a nested sequence of submanifolds $M = N_{r+1} \supseteq N_{r} \supseteq \cdots \supseteq N_{1} \supseteq N_0 = N$ as in~\autoref{example: examples of weightings} (c), the filtration of $TM|_N$ is given by 
            \[ TM|_N \supseteq TN_{r}|_N \supseteq TN_{r-1}|_N \supseteq \cdots \supseteq TN. \]

        \item[(d)] If $(M_i, N_i)$, $i=1,2$ are weighted pairs, then the filtration of $T(M_1\times M_2) = TM_1\times TM_2$ is given by 
            \[ (T(M_1\times M_2)|_{N_1\times N_2})_{(i)} = (TM_1|_{N_1})_{(i)} \times (TM_2|_{N_2})_{(i)} \]
    \end{enumerate}
\end{examples}

\section{Weighted Submanifolds}
\label{section: weighted submanifolds}

Recall that submanifolds of $M$ are subsets which locally look like coordinate subspaces. Modifying this definition by requiring one use weighted coordinates gives the definition of a submanifold in the category of weighted manifolds. 

\begin{definition}
    Let $(M,N)$ be a weighted pair. A submanifold $R\sset M$ is called a \emph{weighted submanifold} if there exists a weighted atlas of submanifold charts. Such a choice of coordinates will be called \emph{weighted submanifold coordinates}.
\end{definition}

That is, at each point $p\in N\cap R$ there exist local coordinates which are simultaneously submanifold coordinates for $R$ and weighted coordinates for $N$. 

\begin{examples}
\label{examples: weighted submanifolds}
    \begin{enumerate}
        \item[(a)] Let $(M,N)$ be a weighted pair. If $R\cap N = \emptyset$ then $R$ is a weighted submanifold of $(M,N)$. Any submanifold $R \sset N$ is a weighted submanifold.
    
        \item[(b)] Any point $p\in M$ is a weighted submanifold. 
        
        \item[(c)] Recall submanifolds $N, R \subseteq M$ are said to \emph{intersect cleanly} if $N\cap R$ is a submanifold and $T(N\cap R) = TN\cap TR$. This is equivalent to $M$ admitting an atlas of submanifold charts for $N$ and $R$ simultaneously (see~\cite[Proposition C.3.1]{hormander2007analysis}, for instance). Thus, if $M$ is trivially weighted along $N$, then the weighted submanifolds are exactly the submanifolds of $M$ intersecting $N$ cleanly. 

        \item[(d)] If $(M, N)$ is a weighted pair then any submanifold $R$ which is transverse to $N$ is a weighted submanifold. Indeed, let $p\in R\cap N$. Choosing weighted coordinates $x_a$ near $p$, then $N$ is, by definition, given near $p$ by $\{x_a = 0 : w_a \geq 1\}$. If $R$ is given near $p$ by $\{y_b = 0\}$, then transversality ensures that each $y_b \in C^\infty(M)_{(0)}$ and the image of the set $\{\ed_py_b\}$ in  $\gr(T^*_pM)_{(0)}$ is independent. Thus, 
            \[ \{y_b\} \cup \{x_a : w_a \geq 1\} \]
        can be completed to a weighted coordinate system near $p$ by~\autoref{lemma: extension of weighted coordinates}.

        \item[(e)] If $\R^n$ is weighted by assigning weights to the standard coordinates $x_a$, then any linear subspace $V$ is a weighted submanifold. To see this, let $e_i$ be the standard basis for $\R^n$. The weighting defines a filtration
            \[ \R^n = W_{(-r)} \supseteq W_{(-r+1)} \supseteq \cdots \supseteq W_{(0)} \supseteq 0, \]
        where 
            \[ W_{(-i)} = \mathrm{span}\{e_a : w_a \leq i \}.  \]
        Letting $V_{(-i)} = V \cap W_{(-i)}$, we can inductively find subspaces
            \[ V'_{(-r)} \supseteq V'_{(-r+1)} \supseteq 
            \cdots \supseteq V'_{(-1)} \supseteq V'_{(0)} \supseteq 0\]
        such that $W_{(-i)} = V_{(-i)} \oplus V'_{(-i)}$. Let 
            \[ \mathcal{B}_{(i)} = \{ v_a^{(i)} : a = 0, \dots, \dim(V_{(-i)}/V_{(-i+1)})\}   \]
        be a linearly independent set with the property that $\bigcup_{i=0}^j \mathcal{B}_{(i)}$ is a basis for $V_{(j)}$; in particular, $\mathcal{B} = \bigcup_{i=0}^r\mathcal{B}_{(i)}$ is a basis for $V$. Letting $\mathcal{B}'$ be an analogously constructed basis for $V'$, the linear coordinates for $\R^n$ defined by $\mathcal{B}\cup \mathcal{B}'$ define weighted submanifold coordinates for $V$.  

        \item[(f)] Define a weighting of $\R^2$ along the origin by declaring that $\mathrm{wt}(x) = 1$ and $\mathrm{wt}(y) = 3$, as in~\autoref{example: examples of weightings} (b). Then the weighted submanifolds of $(\R^2,\{0\})$ are precisely those which: 
            \begin{enumerate}
                \item do not pass through the origin 

                \item submanifolds $S$ passing through the origin which are not tangent to the x-axis. 

                \item submanifolds $R$ passing through the origin with at least third order tangency with the $x$-axis. In particular, the curve $y=x^2$ is \emph{not} a weighted submanifold. 
            \end{enumerate}
        The first case is obvious. For the second case, by the implicit function theorem we can locally describe $S$ as $x=f(y)$ for some function $f$ with $f(0)=0$. In this case, the weighted submanifold coordinates can be given by $\Tilde{x} = x-f(y)$ and $\Tilde{y} = y$. For the third case, the submanifold $R$ is given near the origin by $y=f(x)$ with $f(0)=0$ and $f'(x) = 0$ (i.e. tangent to the $x$-axis). Suppose that $\Tilde{x}$ and $\Tilde{y}$ are weighted coordinates so that the curve $y = f(x)$ has the standard form $\Tilde{y} = 0$. Then
            \[ \Tilde{y} = a(y-f(x)) + O_f(4) \]
        where $O_f(4)$ are terms of filtration order at least 4. However, this has filtration order 3 if and only if only if $f = O(x^3)$. 
    \end{enumerate}
\end{examples}

\begin{remarks}
    \begin{itemize}
        \item[(a)] Note that~\autoref{examples: weighted submanifolds} shows that for a weighted pair $(M,N)$, submanifolds $R\sset M$ containing $N$ \emph{need not} be weighted submanifolds. 

        \item[(b)] A weighted submanifold $R$ of a weighted pair $(M,N)$ is \emph{not} simply a submanifold with a weighting along $R\cap N$ such that the inclusion $R \into M$ is a weighted morphism. 
        
        For example, consider the weighting of $\R^2$ defined by declaring $x$ to have weight 1 and $y$ to have weight 3. If we give the curve $y=x^2$ the doubled trivial weighting along $\{0\}$ then the inclusion into $\R^2$ is a weighted morphism, but $y=x^2$ is \emph{not} a weighted submanifold. 
    \end{itemize}
        
\end{remarks}

\begin{proposition}
\label{proposition: weighted submanifolds are weighted}
    If $R$ is a weighted submanifold of the weighted pair $(M,N)$, then $R$ inherits a natural weighting along $R\cap N$. With respect to this weighting, the inclusion $R \into M$ is a weighted morphism, and we have moreover that 
        \begin{equation}
        \label{equation: tangent filtration of weighted subbundles}
            (TR|_{R\cap N})_{(i)} = TR\cap (TM|_N)_{(i)}.
        \end{equation}
\end{proposition}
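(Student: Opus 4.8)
The plan is to define the induced weighting on $R$ as the restriction of the filtration on $M$, and then to verify all three claims simultaneously in weighted submanifold coordinates, whose existence is guaranteed by the definition of a weighted submanifold.

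First I would \emph{define} the filtration on $R$ by $C^\infty(R)_{(i)} := \iota^* C^\infty(M)_{(i)}$, where $\iota : R \into M$ is the inclusion. Since $\iota^*$ is an algebra homomorphism and smooth functions on the embedded submanifold $R$ extend to $M$, each $C^\infty(R)_{(i)}$ is an ideal of $C^\infty(R)$, and multiplicativity of the filtration on $M$ immediately yields $C^\infty(R)_{(i)} \cdot C^\infty(R)_{(j)} \sset C^\infty(R)_{(i+j)}$. With this definition the containment $\iota^* C^\infty(M)_{(i)} \sset C^\infty(R)_{(i)}$ is an equality, so the inclusion $\iota$ is a weighted morphism \emph{by construction}; the real content of the first two assertions is therefore to show that this filtration is a genuine weighting, with associated submanifold exactly $R \cap N$.

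To establish the local normal form, I would fix $p \in R\cap N$ and choose weighted submanifold coordinates $x_a$ with weights $w_a$, so that $R = \{x_b = 0 : b\in B\}$ for some index set $B$ and $\bar x_a := \iota^* x_a$ ($a\notin B$) restrict to coordinates on $R$. Locally $C^\infty(M)_{(i)}$ is generated by the monomials $\prod_a x_a^{s_a}$ with $\sum_a w_a s_a \geq i$. Applying $\iota^*$ annihilates every monomial containing a factor $x_b$ with $b\in B$ and carries the surviving monomials to the corresponding monomials in the $\bar x_a$; since arbitrary coefficients in $C^\infty(R)$ are obtained by restricting their extensions, $C^\infty(R)_{(i)}$ is precisely the ideal generated by the weight-$\geq i$ monomials in the $\bar x_a$. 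This is exactly the local weighting filtration~\eqref{equation: local weighting filtration}, so the $\bar x_a$ are weighted coordinates for $R$, and the locus where the filtration is nontrivial is $\{\bar x_a = 0 : a\notin B,\ w_a\geq 1\} = R\cap N$.

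Finally, the tangent bundle identity~\eqref{equation: tangent filtration of weighted subbundles} follows by comparing the local descriptions~\eqref{equation: filtration of tangent bundle} in these coordinates. For each $i\geq 0$ one has $(T_pM)_{(-i)} = \mathrm{span}\{\frac{\bd}{\bd x_a}|_p : w_a\leq i\}$ on $M$ and $(T_pR)_{(-i)} = \mathrm{span}\{\frac{\bd}{\bd \bar x_a}|_p : a\notin B,\ w_a\leq i\}$ on $R$, while $T_pR = \mathrm{span}\{\frac{\bd}{\bd x_a}|_p : a\notin B\}$ as a subspace of $T_pM$. Intersecting the first span with $T_pR$ produces exactly $(T_pR)_{(-i)}$, which is the claimed formula. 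The main obstacle is the bookkeeping in the third paragraph: one must verify carefully that $\iota^* C^\infty(M)_{(i)}$ is generated precisely by the restricted weight-$\geq i$ monomials, separating the generators that vanish on $R$ from those that survive and invoking smooth extension to recover arbitrary coefficients. Once this local normal form is in hand, both the weighting axioms and the tangent filtration identity are formal.
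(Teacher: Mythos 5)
Your proposal is correct and takes essentially the same approach as the paper: both define the induced filtration as the image of $C^\infty(M)_{(i)}$ under restriction to $R$ (the paper writes this as the quotient $C^\infty(M)_{(i)}/(C^\infty(M)_{(i)}\cap \van{R})$), both verify the weighting axiom by observing that restricted weighted submanifold coordinates serve as weighted coordinates on $R$, and both note the inclusion is weighted by construction. The only cosmetic difference is in the tangent identity: the paper computes annihilators in $T^*_pM$ and double-dualizes, whereas you intersect spans of coordinate vector fields directly on the tangent side — the same coordinate computation, just without passing through the dual.
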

\begin{proof}
    Recall that restriction to $R$ determines an isomorphism 
        \[ C^\infty(M)/\van{R} \cong C^\infty(R).  \]
    Using this, we define 
        \[ C^\infty(R)_{(i)} = C^\infty(M)_{(i)}/(C^\infty(M)_{(i)} \cap \van{N}); \]
    the restriction of weighted submanifold coordinates to $R$ yields weighted coordinates for $R$, hence this defines a weighting. Since the inclusion $R\into M$ induces the restriction map $C^\infty(M) \to C^\infty(R)$, it is clear that it is a weighted morphism. 

    For the statement about filtration of tangent spaces, let $p\in N\cap R$ and let 
    $x_a, y_b$ be a weighted submanifold coordinate system defined on $U\sset M$ containing $p$ so that $R\cap U = \{y_b = 0\}$. As a subbundle of $T^*_pM$, 
        \[ \mathrm{ann}((T_pR)_{(i)}) = \mathrm{span}\{\ed_px_a : w_a \geq i\} + \mathrm{span}\{\ed_py_b \}.  \]
    On the other hand, we have that 
        \begin{align*}
             \mathrm{ann}(T_pR) & = \mathrm{span}\{\ed_py_a\} \quad \text{and} \\
                 T^*_pM_{(i)} & = \mathrm{span}\{\ed_px_a,\ \ed_py_b : w_a \geq i, w_b \geq i\},
        \end{align*}
    hence $\mathrm{ann}((TR|_{R\cap N})_{(i)}) = \mathrm{ann}(TR) + (T^*M|_N)_{(i)}$. Therefore, 
        \begin{align*}
            (TR|_{R\cap N})_{(i)} & = \mathrm{ann}(\mathrm{ann}((TR|_{R\cap N})_{(-i+1)}) \\
            & = \mathrm{ann}(\mathrm{ann}(TR) + (T^*M|_N)_{(-i+1)}) = TR \cap (TM|_N)_{(i)}. \qedhere
        \end{align*}
\end{proof}

\begin{proposition}
    Let $(M,N)$ be a weighted pair. Given $p \in M$, any subspace of $T_pM$ is realized as the tangent space to a weighted submanifold $S$.
\end{proposition}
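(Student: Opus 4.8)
The plan is to reduce everything to the linear model already treated in \autoref{examples: weighted submanifolds}(e) by transporting along weighted coordinates, splitting into the cases $p \notin N$ and $p \in N$. If $p \notin N$, I would take $S$ to be a small piece of any submanifold through $p$ with $T_pS = V$ (for instance, the affine subspace modelling the given subspace $V \sset T_pM$ in an arbitrary chart centered at $p$), shrunk so that $S \cap N = \void$. By \autoref{examples: weighted submanifolds}(a) such an $S$ is automatically a weighted submanifold, and $T_pS = V$ by construction, so this case is immediate.

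The substantive case is $p \in N$. Here I would choose, via \autoref{definition: weighting}, weighted coordinates $x_1,\dots,x_m$ of weights $w_1,\dots,w_m$ on an open set $U \ni p$ centered at $p$. This is precisely the data of a weighted diffeomorphism $\phi\colon U \to \phi(U) \sset \R^m$ onto an open subset of $\R^m$ equipped with the linear weighting assigning weight $w_a$ to $x_a$, with $\phi(p)=0$. The differential $\ed\phi_p\colon T_pM \to T_0\R^m = \R^m$ is a linear isomorphism, so $\widehat V := \ed\phi_p(V)$ is a linear subspace of $\R^m$. By \autoref{examples: weighted submanifolds}(e), $\widehat V$ (viewed as a submanifold of $\R^m$ through the origin) is a weighted submanifold of the linearly weighted $\R^m$, and the construction there produces weighted submanifold coordinates for $\widehat V$ obtained from the standard coordinates by an explicit linear change of basis adapted to the filtration $\widehat V \cap W_{(-i)}$.

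Finally I would transport this back: set $S := \phi^{-1}(\widehat V \cap \phi(U))$, an embedded submanifold of $U$ through $p$. Composing $\phi$ with the linear adapted coordinate change of \autoref{examples: weighted submanifolds}(e) produces a new weighted coordinate system on $U$ in which $S$ is a coordinate subspace, so these serve as weighted submanifold charts and $S$ is a weighted submanifold of $(M,N)$; moreover $T_pS = (\ed\phi_p)^{-1}(\widehat V) = V$, as required. The only step that needs genuine verification is this transport: one must check that pulling back along the weighted chart $\phi$ carries the subspace-adapted coordinates on $\R^m$ to an honest weighted coordinate system on $M$. This amounts to the fact that a composite of weighted diffeomorphisms is again a weighted diffeomorphism, which is immediate from the definition of a weighted morphism; since $\widehat V$ is a coordinate subspace in the adapted coordinates, $S$ is a coordinate subspace in the pulled-back system. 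Apart from this bookkeeping, all the genuine content is already supplied by the linear case \autoref{examples: weighted submanifolds}(e), so I do not expect any serious obstacle.
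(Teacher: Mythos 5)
Your proof is correct and is essentially the paper's own argument: the paper likewise uses weighted coordinates to identify a neighbourhood of $p$ with the linearly weighted $\R^n$ of \autoref{examples: weighted submanifolds} (e), where every linear subspace is a weighted submanifold, and pulls the subspace back through that chart. Your separate treatment of the case $p \notin N$ is harmless extra bookkeeping that the paper's one-line proof subsumes.
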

\begin{proof}
    A choice of weighted coordinates $x_a$ on $U\subset M$ allows us to identify $U$ with $T_pU = \R^n$ as weighted manifolds, where $\R^n$ is weighted as in~\autoref{examples: weighted submanifolds} (e), from which the result follows. 
\end{proof}

We close our discussion with a weighted version of~\cite[Theorem 1.13]{kolar2013natural}.

\begin{proposition}
\label{proposition: weighted projections and weighted submanifolds}
    Let $(M,N)$ be a weighted pair, and suppose that $p:M\to M$ is a weighted morphism satisfying $p\circ p = p$. Then $R = p(M)$ is a weighted submanifold.
\end{proposition}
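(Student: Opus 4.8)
The plan is to work locally near a point $x\in R\cap N$, since away from $N$ the weighting is trivial and there $R$ is already a submanifold, so the weighted submanifold condition is automatic. First I would invoke the classical statement~\cite[Theorem 1.13]{kolar2013natural}: because $p\circ p = p$, the image $R=p(M)$ coincides with the fixed-point set $\{y : p(y)=y\}$ and is a closed submanifold with $T_yR = \mathrm{im}(T_yp)$ for every $y\in R$. Since $p$ is a weighted morphism it satisfies $p^*\van{N}\sset\van{N}$, so $p(N)\sset N$, and for $x\in R\cap N$ the differential $P:=T_xp\colon T_xM\to T_xM$ is an idempotent, $P^2 = T_x(p\circ p)=P$.

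Next I would record that $P$ respects the filtration of $T_xM$. This is immediate from the definition of a weighted morphism: for $f\in C^\infty(M)_{(i)}$ one has $p^*f\in C^\infty(M)_{(i)}$ and $(T_xp)^*\ed_x f = \ed_x(p^*f)$, so $P^*$ preserves the cotangent filtration $(T^*M|_N)_{(i)}$; dualizing gives $P\bigl((TM|_N)_{(-i)}\bigr)\sset (TM|_N)_{(-i)}$. Thus $P$ is a filtration-preserving idempotent, and so is the complementary projection $Q:=\id - P$, whose image is the conormal space $\mathrm{ann}(T_xR)$. Because a filtration-preserving idempotent splits the filtration compatibly, the induced map $\gr(Q^*)$ on $\gr(T^*_xM)$ is again idempotent, with image $\gr(\mathrm{ann}(T_xR))$.

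The construction of weighted submanifold coordinates then proceeds as follows. Choose weighted coordinates $z_a$ of weight $w_a$ centred at $x$ and set $g_a := p^*z_a - z_a$. Since $z_a\in C^\infty(M)_{(w_a)}$ and $p$ is a weighted morphism, $g_a\in C^\infty(M)_{(w_a)}$, and all the $g_a$ vanish on $R=\mathrm{Fix}(p)$. Passing to symbols, $[\ed_x g_a] = -\gr(Q^*)[\ed_x z_a]$ in $\gr(T^*_xM)_{(w_a)}$. For each weight $i$ the classes $[\ed_x z_a]$ with $w_a=i$ form a basis of $\gr(T^*_xM)_{(i)}$, so their images under $\gr(Q^*)$ span $\gr(\mathrm{ann}(T_xR))_{(i)}$; I would select an index set $A_i\sset\{a : w_a=i\}$ for which $\{[\ed_x g_a] : a\in A_i\}$ is a basis of this space. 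Setting $A=\bigcup_i A_i$, the leading symbols of $\{g_a : a\in A\}$ are, weight by weight, linearly independent, so~\autoref{lemma: extension of weighted coordinates} completes them to a weighted coordinate system near $x$. Finally $|A| = \dim\gr(\mathrm{ann}(T_xR)) = \mathrm{codim}(R)$ and the $\ed_x g_a$ with $a\in A$ span $\mathrm{ann}(T_xR)$, so $\{g_a=0 : a\in A\}$ is a submanifold of the same dimension as $R$ containing it; hence it agrees with $R$ near $x$, exhibiting $R$ as a coordinate subspace in a weighted chart.

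The main obstacle is the compatibility between the idempotent and the filtration: everything hinges on $\gr(Q^*)$ being an honest idempotent whose image is exactly $\gr(\mathrm{ann}(T_xR))$, which in turn relies on $P$ preserving the tangent filtration. Once that is in hand, the selection of the $g_a$ and the dimension count are routine; the only remaining care is to verify that each selected $g_a$ genuinely has nonzero symbol in weight $w_a$, so that~\autoref{lemma: extension of weighted coordinates} applies with these weights, and to use the fixed-point description $R=\mathrm{Fix}(p)$ to guarantee that the $g_a$ cut $R$ out set-theoretically.
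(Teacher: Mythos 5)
Your proposal is correct, and it follows the same overall skeleton as the paper's proof (Kol\'a\v{r}'s theorem for the fixed-point set, idempotency of the differential, difference functions $p^*z_a - z_a$ cutting out $R$, and a codimension count), but the mechanism by which you produce the weighted chart is genuinely different. The paper proves a bespoke linear-algebra lemma: for a projection $P$ on $V$ and \emph{any} basis $\{v_a\}$, one can choose an index set $\mathcal{I}$ with $|\mathcal{I}|=\dim\mathrm{Im}(P)$ so that $\{Pv_a : a\in\mathcal{I}\}\cup\{(1-P)v_b : b\in\mathcal{I}^c\}$ is again a basis; applying this to $T^*_qp$ yields in one stroke the mixed chart $\{p^*x_a\}_{a\in\mathcal{I}}\cup\{x_b-p^*x_b\}_{b\in\mathcal{I}^c}$, which is weighted because $p$ is a weighted morphism. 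You instead work only with the difference functions, pass to the associated graded of the conormal space, and use the observation that for a filtration-preserving idempotent $Q^*$ one has $\mathrm{im}(\gr(Q^*))=\gr(\mathrm{im}(Q^*))$ (this is exactly where idempotency enters: $\mathrm{im}(Q^*)\cap(T^*_xM)_{(i)}=Q^*\bigl((T^*_xM)_{(i)}\bigr)$, a statement that fails for general filtration-preserving maps), then complete the selected $g_a$ to a chart via~\autoref{lemma: extension of weighted coordinates}. Each route has a payoff: the paper's mixed chart simultaneously provides weighted coordinates on $R$ itself (the functions $p^*x_a$, $a\in\mathcal{I}$, restrict to coordinates there), so the induced weighting of $R$ is visible at once, whereas your argument reuses a lemma already available rather than introducing a new one, and it makes fully explicit why the resulting chart is weighted — a point the paper dispatches with the terse remark ``since $p$ is a weighted morphism, it is a weighted coordinate system,'' which in fact requires the weight-by-weight independence check you carry out.
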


Before proceeding to the proof, we need a lemma from linear algebra. 

\begin{lemma}
\label{lemma: linear algebra projection fact}
    Let $V$ be a finite dimensional vector space and $P:V\to V$ be a projection. If $\{v_1, \dots, v_n\}$ is any basis for $V$, then there exists an index set $\mathcal{I} \sset \{1, 2, \dots, n\}$ with $|\mathcal{I}| = \dim (\mathrm{Im}(P))$ such that 
        \[ \{ Pv_a : a \in \mathcal{I}\} \cup \{(1-P)v_b : b \in \mathcal{I}^c \} \]
    is a basis for $V$. 
\end{lemma}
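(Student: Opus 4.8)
The plan is to exploit the multilinearity of the determinant together with the splitting $V = \mathrm{Im}(P) \oplus \ker(P)$ furnished by the projection. Write $k = \dim(\mathrm{Im}(P))$, so that $\dim(\ker(P)) = n - k$ and I must produce an index set $\mathcal{I}$ of size exactly $k$. For an arbitrary subset $\mathcal{I} \subseteq \{1,\dots,n\}$, I would form the ordered $n$-tuple $B_\mathcal{I}$ whose entry in position $a$ is $Pv_a$ if $a \in \mathcal{I}$ and $(1-P)v_a$ if $a \in \mathcal{I}^c$, and I would let $D(B_\mathcal{I})$ denote the determinant of the matrix expressing these $n$ vectors in terms of the fixed basis $\{v_1,\dots,v_n\}$. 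Then $B_\mathcal{I}$ is a basis of $V$ precisely when $D(B_\mathcal{I}) \neq 0$, so the whole lemma reduces to producing an $\mathcal{I}$ of size $k$ with $D(B_\mathcal{I}) \neq 0$.

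The first step is the observation that $D(B_\mathcal{I})$ can be nonzero only when $|\mathcal{I}| = k$. Indeed, the vectors $\{Pv_a : a \in \mathcal{I}\}$ all lie in the $k$-dimensional space $\mathrm{Im}(P)$, so if $|\mathcal{I}| > k$ they are linearly dependent and hence so is $B_\mathcal{I}$; dually, the vectors $\{(1-P)v_b : b \in \mathcal{I}^c\}$ lie in the $(n-k)$-dimensional space $\ker(P)$, so if $|\mathcal{I}| < k$ then $|\mathcal{I}^c| > n-k$ and $B_\mathcal{I}$ is again dependent. In either case $D(B_\mathcal{I}) = 0$.

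The key step is to expand a single determinant using the decomposition $v_a = Pv_a + (1-P)v_a$. Multilinearity of $D$ in its $n$ arguments gives
\[
    1 = D(v_1,\dots,v_n) = \sum_{\mathcal{I} \subseteq \{1,\dots,n\}} D(B_\mathcal{I}),
\]
since expanding each argument into its two summands and collecting the resulting $2^n$ terms amounts to choosing, for each index $a$, whether to place $a$ in $\mathcal{I}$ (keeping $Pv_a$) or in $\mathcal{I}^c$ (keeping $(1-P)v_a$). Combining this with the first step, all but the size-$k$ summands vanish, so $\sum_{|\mathcal{I}| = k} D(B_\mathcal{I}) = 1$. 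In particular at least one summand is nonzero, and any $\mathcal{I}$ achieving $D(B_\mathcal{I}) \neq 0$ necessarily has $|\mathcal{I}| = k$ and furnishes the required basis.

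I do not expect a genuine obstacle in the execution: the only point demanding care is the bookkeeping that the surviving terms of the multilinear expansion are exactly those with $|\mathcal{I}| = k$, which is immediate from the dimension count in the first step. The real content is conceptual rather than computational — the step that converts an existence claim ranging over exponentially many subsets into a one-line argument is recognizing the decomposition $v_a = Pv_a + (1-P)v_a$ as the natural input to a multilinear expansion whose total value $1$ certifies the existence of a nonvanishing term. (A more structural but heavier alternative would be to invoke the matroid union theorem applied to the two matroids on $\{1,\dots,n\}$ given by independence of $\{Pv_a\}$ and of $\{(1-P)v_a\}$, using the injectivity of $u \mapsto (Pu,(1-P)u)$ to verify the covering inequality; I would prefer the self-contained determinant argument above.)
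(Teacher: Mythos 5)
Your proof is correct, and it takes a genuinely different route from the paper's. The paper argues by inductive exchange: it shows that for any basis one can replace $v_1$ by either $Pv_1$ or $(1-P)v_1$ and still have a basis (if both replacements failed, then both $Pv_1$ and $(1-P)v_1$ would lie in $\mathrm{span}\{v_2,\dots,v_n\}$, forcing $v_1$ into that span, a contradiction), and iterates this over all $n$ indices; the size constraint $|\mathcal{I}| = \dim(\mathrm{Im}(P))$ is then deduced afterwards by the same two dimension counts you use, namely $|\mathcal{I}| \leq \dim(\mathrm{Im}(P))$ and $|\mathcal{I}^c| \leq \dim(\mathrm{Im}(1-P)) = n - \dim(\mathrm{Im}(P))$. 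Your argument instead gets existence and the cardinality constraint simultaneously from the single identity
\[
    1 = D(v_1,\dots,v_n) = \sum_{\mathcal{I} \subseteq \{1,\dots,n\}} D(B_\mathcal{I}),
\]
with the dimension counts serving to kill all terms of the wrong size. Both proofs use the projection hypothesis in exactly one place — to know $\mathrm{Im}(1-P) = \ker(P)$ has dimension $n - \dim(\mathrm{Im}(P))$ — and both work over any field. What the paper's approach buys is a greedy, determinant-free procedure that one could actually run step by step (relevant in spirit to how the lemma is applied, building coordinates one function at a time); what yours buys is a non-inductive, one-shot certificate, plus the mildly stronger conclusion that the determinants $D(B_\mathcal{I})$ over all valid $\mathcal{I}$ sum to $1$, not merely that one of them is nonzero.
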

\begin{proof}
    For the existence of $\mathcal{I}$, by an inductive argument it is enough to show that $\{Pv_1, v_2, \dots v_n\}$ or $\{(1-P)v_1, v_2, \dots, v_n\}$ is a basis for $V$. If $Pv_1 = 0$ or $(1-P)v_1 = 0$ then there is nothing to prove, so assume that $Pv_1 \neq 0$ and $(1-P)v_1 \neq 0$. If neither of these sets are linearly independent then $P v_1 \in \mathrm{span}\{v_2, \dots v_n\}$ and $(1-P)v_1 \in \mathrm{span}\{v_2, \dots v_n\}$. However, this implies that $v_1 \in \mathrm{span}\{v_2, \dots v_n\}$, which is a contradiction. 
    
    To see that $|\mathcal{I}| = \dim (\mathrm{Im}(P))$, note that we obviously have $|\mathcal{I}| \leq \dim (\mathrm{Im}(P))$. On the other hand, since $P$ is a projection,  
        \[n - |\mathcal{I}| = |\mathcal{I}^c| \leq \dim (\mathrm{Im}(1-P)) = n-\dim(\mathrm{Im}(P)) \]
    hence $\dim(\mathrm{Im}(P)) \leq |\mathcal{I}|$.
\end{proof}

\begin{proof}[Proof of~\autoref{proposition: weighted projections and weighted submanifolds}]
    By~\cite[Theorem 1.13]{kolar2013natural}, $p$ has constant rank near $R = p(M)$. Let $q\in R$ and let $x_a$ be a weighted coordinate system defined near $q$. Since the cotangent map $T^*_qp:T^*_qM \to T^*_qM$ is a projection,~\autoref{lemma: linear algebra projection fact} implies that there is an index set $\mathcal{I}$ with $|\mathcal{I}| = \mathrm{rank}_q(p) = \dim(R)$ such that 
        \begin{equation}
        \label{equation: weighted submanifold coordinates for weighted projection}
            \{p^*x_a : a \in \mathcal{I}\} \cup \{x_b-p^*x_b: b \in \mathcal{I}^c\}
        \end{equation}
    is a coordinate system. In fact, since $p$ is a weighted morphism, it is a weighted coordinate system near $q$. Note as well that for any $q' \in R$ sufficiently close to $q$ we have that 
        \[ (x_b-p^*x_b)(q') = x_b(q') - x_b(p(q')) = x_b(q') - x_b(q') = 0; \]
    since $|\mathcal{I}^c| = \mathrm{codim}(R)$ it follows that~\eqref{equation: weighted submanifold coordinates for weighted projection} is a weighted submanifold coordinate system for $R$. 
\end{proof}

\section{Weighted Morphisms}
\label{section: weighted morphisms}

We now turn our attention to weighted morphisms. In this section we will establish the weighted analogues of embeddings, submersions, and transversality. In practice, it can be hard to check whether a given map is a weighted morphism or not, so we also give two characterizations of weighted morphisms - one in terms of its graph and one in terms of ``weighted paths''. Let us begin be giving some examples of weighted morphisms. 

\begin{examples}
    \begin{itemize}
        \item[(a)] Let $(M,N)$ and $(M',N')$ be weighted pairs, and let $F:M\to N$ be a smooth map. If $M$ and $M'$ are both trivially weighted, then $F$ is a weighted morphism if and only if $F(N) \sset N'$. In general, if $F$ is weighted then $F(N)\sset N'$. 

        \item[(b)] With the set up as above, if $N = \emptyset$ or $N' = M'$, then $F$ is a weighted morphism. 

        \item[(c)] Suppose that $(M,N)$ and $(M',N')$ are weighted manifolds. Then both of the projections $M\times M' \to M$ and $M\times M' \to M$ are weighted morphisms. If $p \in M'$, then the inclusion $M \to M\times M'$, $m \mapsto (m,p)$ is a weighted morphism if and only if $p \in N'$. On the other hand, the inclusion $M \times \{p\} \into M\times M'$ \emph{is} a weighted morphism for any $p\in M'$ since $M\times \{p\}$ is a weighted submanifold of $M\times M'$; in the case when $p\notin N'$, $M\times \{p\}$ is weighted along the empty set, hence is \emph{not} isomorphic to $M$ as a weighted manifold. 
    \end{itemize}
\end{examples}

\subsection{Weighted Embeddings}

We now discuss \emph{weighted embeddings}. Recall that an immersion is a smooth map whose tangent map is an isomorphism onto its image. Taking into account the philosophy that the tangent bundle of a weighted pair $(M,N)$ is $TM$ together with the filtration~\eqref{equation: filtration of tangent bundle} of $TM|_N$, we arrive at the following definition.

\begin{definition}
\label{definition: weighted embedding}
    A weighted morphism $f:(M,N) \to (M',N')$ is called a \emph{weighted immersion} if 
        \begin{enumerate}
            \item[(a)] $f:M\to M'$ is an immersion, 

            \item[(b)] $Tf:TM \to TM'$ restricts to injections $(TM|_N)_{(i)} \to (TM'|_{N'})_{(i)}$ with image 
                \[ Tf(TM)\cap (TM'|_{N'})_{(i)}. \]
        \end{enumerate}
    A weighted \emph{embedding} is defined analogously. 
\end{definition}

In particular, the intersections $Tf(TM)\cap (TM'|_{N'})_{(i)}$ are subbundles of $TM'|_{N'}$ and 
    \[ Tf:TM \to Tf(TM)\cap (TM'|_{N'})_{(i)}\]
is a vector bundle isomorphism. 

\begin{examples}
\label{examples: weighted embeddings}
    \begin{enumerate}
        \item[(a)] If $R$ is a weighted submanifold of the weighted pair $(M,N)$, then the inclusion $\iota : (R, R\cap N) \into (M,N)$ is a weighted embedding by~\autoref{proposition: weighted submanifolds are weighted}.
        
        \item[(b)] If $(M,N)$ and $(M',N')$ are both given the trivial weighting, then a weighted morphism $f:(M,N) \to (M',N')$ is a weighted embedding if and only if $f$ is an embedding and $f(M)$ intersects $N'$ cleanly, with intersection $f(N)$. Indeed, for $i=1$ the bundles in question are the zero bundles, so we have a diffeomorphism  
            \[ N \cong f(M) \cap N', \]
        which, in particular, says that $f(M)\cap N'$ is a manifold with $T(f(M)\cap N') = Tf(TN)$. For $i=0$ this says that 
            \[ TN \cong Tf(TM)\cap TN',  \]
        hence $f(M)$ intersects $N'$ cleanly. 

        \item[(c)] Let $M=\R$ with $\mathrm{wt}(x) = 2$ and $M'=\R$ with $\mathrm{wt}(x) = 1$. Then the identity map $M\to M'$ is a weighted morphism, but it is \emph{not} a weighted embedding, since $(T_0M)_{(-1)} = 0$ but $(T_0M')_{(-1)} = \R$. 
    \end{enumerate}
\end{examples}

\begin{theorem}[Normal Form for Weighted Immersions]
\label{theorem: weighted embedding characterization}
    Let $i: M\to M'$ be a smooth map between weighted manifolds of dimensions $n\leq n'$. Then $i$ is a weighted immersion if and only if for all $p\in M$, with image $p_0 = i(p)$, there exist weighted coordinates $x_1, \dots, x_n$ near $p_0$ such that $i^*x_1, \dots, i^* x_n$ are weighted coordinates near $p$ and $i^*x_{n+1} = \cdots = i^*x_{n'}=0$.  
\end{theorem}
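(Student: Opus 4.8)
The plan is to handle the two directions separately, with essentially all the content lying in the ``only if'' direction, where I must upgrade the infinitesimal data of condition (b) in~\autoref{definition: weighted embedding} to an actual weighted coordinate normal form. The workhorse throughout is~\autoref{lemma: extension of weighted coordinates}, which manufactures weighted coordinate systems out of functions having prescribed filtration degrees and linearly independent symbols in the associated graded cotangent space. Since away from $N$ the weightings are locally trivial and the statement collapses to the classical immersion normal form, I focus on a point $p\in N$; then $i(N)\sset N'$ forces $p_0=i(p)\in N'$.

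First I would dispose of the ``if'' direction. Given weighted coordinates $x_1,\dots,x_{n'}$ near $p_0$ of weights $w_a$, with $i^*x_1,\dots,i^*x_n$ weighted coordinates near $p$ of the same weights and $i^*x_{n+1}=\cdots=i^*x_{n'}=0$, the map reads $u\mapsto(u,0)$ in the coordinates $u_a=i^*x_a$ on $M$ and $x_a$ on $M'$, so $i$ is manifestly an immersion. It is a weighted morphism because a generating monomial $\prod x_a^{s_a}$ of $C^\infty(M')_{(j)}$ pulls back to $0$ whenever some $s_a$ with $a>n$ is positive, and otherwise to $\prod_{a\le n}(i^*x_a)^{s_a}$, whose $M$-filtration degree is $\sum_{a\le n}w_as_a\ge j$ since the weights agree. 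Condition (b) is then read off the coordinate form at each $p\in N$: as $T_pi(\partial/\partial u_a)=\partial/\partial x_a$, the subspace $(T_pM)_{(-j)}$ maps isomorphically onto $\mathrm{Im}(T_pi)\cap(T_{p_0}M')_{(-j)}$.

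For the ``only if'' direction the key observation is a dual reformulation of condition (b): since $T_pi$ is injective with $(T_pM)_{(-j)}=(T_pi)^{-1}\big((T_{p_0}M')_{(-j)}\big)$, the surjective cotangent map $(T_pi)^*\colon T^*_{p_0}M'\to T^*_pM$ carries $(T^*_{p_0}M')_{(j)}$ \emph{onto} $(T^*_pM)_{(j)}$ for every $j$, hence induces a surjection $\gr(T^*_{p_0}M')_{(j)}\to\gr(T^*_pM)_{(j)}$; I would verify this by a short annihilator computation. Now pick weighted coordinates $y_1,\dots,y_{n'}$ near $p_0$ of weights $v_a$, so that each $i^*y_a$ lies in $C^\infty(M)_{(v_a)}$ by the weighted morphism hypothesis. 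Using the graded surjectivity, I reorder the $y_a$ so that for each degree $j$ the symbols of $\{i^*y_a:v_a=j,\ a\in A\}$ form a basis of $\gr(T^*_pM)_{(j)}$ for a suitable index set $A$ with $|A|=n$, the remaining indices $B$ contributing nothing new. Since these $i^*y_a$ $(a\in A)$ then have filtration degree exactly $v_a$ and full independent symbols,~\autoref{lemma: extension of weighted coordinates} shows they already constitute a weighted coordinate system near $p$.

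It remains to correct the leftover coordinates, and this is where I expect the real difficulty. Because $\{i^*y_a:a\in A\}$ are weighted coordinates on $M$, each $i^*y_b$ with $b\in B$ is a smooth function of them, and since $i^*y_b\in C^\infty(M)_{(v_b)}$ that function $\phi_b$ has weighted degree at least $v_b$ in the weights $v_a$. Setting $\tilde y_b=y_b-\phi_b(y_a:a\in A)$ gives $i^*\tilde y_b=0$, and because $\phi_b(y_a)$ has filtration degree $\ge v_b$ on $M'$, the triangular substitution $(y_a,y_b)\mapsto(y_a,\tilde y_b)$ is a weighted change of coordinates; relabelling yields the required normal form. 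The main obstacle is precisely ensuring that killing the $i^*y_b$ does not disturb the weighting on $M'$: this hinges on $i^*y_b$ being expressible as a \emph{weighted} function (degree $\ge v_b$) of the $i^*y_a$, which in turn rests on the matching of filtration degrees supplied by the graded cotangent surjection together with the weighted morphism property.
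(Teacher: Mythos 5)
Your proof is correct and takes essentially the same route as the paper's: select a subset of weighted coordinates near $p_0$ whose pullbacks have linearly independent symbols in each $\gr(T^*_pM)_{(j)}$ (hence form weighted coordinates on $M$ via~\autoref{lemma: extension of weighted coordinates}), then kill the remaining pullbacks by subtracting functions of the selected coordinates having the appropriate weighted degree. The only differences are presentational: you spell out the dual cotangent surjectivity that the paper leaves implicit when it says ``we may choose a subset,'' and you perform all corrections simultaneously where the paper iterates from the largest weight downward --- both are valid, since each correction only involves the selected coordinates, which are never modified.
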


We refer to the coordinates $x_1, \dots, x_{n'}$ in~\autoref{theorem: weighted embedding characterization} as \emph{immersion coordinates} for the map $i$.

\begin{proof}
    The direction $(\Leftarrow)$ follows by direct examination in weighted immersion coordinates. For the converse, suppose that $\iota$ is a weighted immersion. To build weighted submanifold coordinates near $p_0 \in  f(M) \cap N'$, choose a system of weighted coordinates $x_1, \dots, x_{n'}$. We may choose a subset of coordinates, re-indexed as $x_1, \dots, x_n$, such that the image of the set $\{\ed_{p_0} x_a : w_a = i\}$ in $\mathrm{gr}(T^*_{p_0}M')_{(i)}$ pulls back to a basis of $\mathrm{gr}(T^*_pM)_{(i)}$. In particular, $y_a = \iota^*x_a$, for $a=1,\dots, n$, are weighted coordinates on $M$. If the pull-backs of remaining coordinates vanish, we are done. Otherwise, consider a coordinate $x_a$ with $a > n$ and $i = w_a$ as large as possible that such that $\iota^*x_a \neq 0$. Then $\iota^*x_a$ has filtration degree $i$, so it can be written as $\iota^*x_a = h(y_1, \dots, y_n)$, where $h$ is a function with $h(t^{w_1}c_1, \dots, t^{w_n}c_n) = O(t^i)$ for all $c\in \R^n$. By replacing $x_a$ with $x_a - h(x_1, \dots, x_n)$, we can arrange that $\iota^*x_a = 0$. Proceeding in this way, we obtain the required weighted coordinate system.
\end{proof}

Applying this to the case that our weighted immersion is a weighted embedding yields the following.

\begin{corollary}
    If $i:(M,N)\to (M', N')$ is a weighted embedding, then $i(M)$ is a weighted submanifold of $(M', N')$ such that $i(M) \cap N' = i(N)$ and $(M,N)$ is isomorphic to $(i(M), i(N))$ as weighted manifolds. 
\end{corollary}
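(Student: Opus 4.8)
The plan is to extract essentially everything from the Normal Form Theorem (\autoref{theorem: weighted embedding characterization}), which already provides, near each $p\in M$ with image $p_0 = i(p)$, weighted coordinates $x_1, \dots, x_{n'}$ on $M'$ such that $i^*x_1, \dots, i^*x_n$ are weighted coordinates on $M$ with the same weights $w_1, \dots, w_n$, and $i^*x_{n+1} = \cdots = i^*x_{n'} = 0$. First I would observe that a weighted embedding is in particular an embedding, so $i(M)$ is an embedded submanifold of $M'$ and $i:M\to i(M)$ is a diffeomorphism. Because $i$ is an embedding, the vanishing of the pull-backs $i^*x_{n+1}, \dots, i^*x_{n'}$ upgrades to the local statement that $i(M) = \{x_{n+1} = \cdots = x_{n'} = 0\}$. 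Thus the immersion coordinates are simultaneously submanifold coordinates for $i(M)$ and weighted coordinates for $N'$, which is exactly what it means for $i(M)$ to be a weighted submanifold of $(M', N')$. Since the Normal Form applies at every $p\in M$, no special treatment of points with $p_0\notin N'$ is needed.

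Next I would identify $i(M)\cap N'$. One inclusion is free: $i$ is a weighted morphism, so $i(N)\sset N'$, whence $i(N)\sset i(M)\cap N'$. For the reverse inclusion I would work in immersion coordinates near $p_0 = i(p)$, where $N'$ is cut out by $\{x_a = 0 : w_a \geq 1\}$. Then $i(p)\in N'$ is equivalent to $(i^*x_a)(p) = 0$ for every $a\leq n$ with $w_a\geq 1$; since these $i^*x_a$ are precisely the pulled-back weighted coordinates of positive weight, and hence cut out $N$ near $p$, this holds if and only if $p\in N$. This yields $i(M)\cap N' = i(N)$.

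Finally, to see that $i$ is an isomorphism $(M,N)\cong (i(M), i(N))$ of weighted manifolds, I would compare the two weightings on $i(M)$: the one it carries as a weighted submanifold of $(M', N')$ via \autoref{proposition: weighted submanifolds are weighted}, and the one transported from $M$ through the diffeomorphism $i$. By \autoref{proposition: weighted submanifolds are weighted}, weighted coordinates for $i(M)$ are obtained by restricting $x_1, \dots, x_n$, and these pull back under $i$ to $i^*x_1, \dots, i^*x_n$, which are weighted coordinates on $M$ carrying the same weights. Since the two coordinate systems are matched weight-for-weight, the induced filtrations of $C^\infty(i(M))$ and $C^\infty(M)$ correspond exactly, so both $i$ and $i^{-1}$ are weighted morphisms.

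The genuinely substantive work is all contained in the Normal Form Theorem; the point requiring the most care here is the last step, namely confirming that the subspace weighting inherited by $i(M)$ truly \emph{agrees} with the weighting pushed forward from $M$, rather than merely being comparable to it. This reconciliation is immediate once one notes that the normal form matches weighted coordinates on $M$ with weighted submanifold coordinates on $i(M)$ while preserving all weights.
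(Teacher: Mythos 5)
Your proposal is correct and takes the same route as the paper, which states this corollary as an immediate consequence of \autoref{theorem: weighted embedding characterization} with no further argument: the immersion coordinates become simultaneous weighted/submanifold charts for $i(M)$ (using the embedding hypothesis to cut out the image locally), the positive-weight coordinates identify $i(M)\cap N'$ with $i(N)$, and the weight-for-weight matching of coordinates shows the submanifold weighting from \autoref{proposition: weighted submanifolds are weighted} coincides with the one transported from $(M,N)$. Your filled-in details, including the final reconciliation of the two weightings on $i(M)$, are exactly the content the paper leaves implicit.
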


Applying this to the inclusion of a submanifold $R\sset M$ yields the following characterization of weighted submanifolds, which generalizes clean intersection. 

\begin{corollary}
\label{corollary: criteria for weighted submanifolds}
    If $(M, N)$ is a weighted pair, then a submanifold $R\sset M$ is a weighted submanifold if and only if 
        \begin{enumerate}
            \item[(a)] $R$ is weighted along $R\cap N$,
            \item[(b)] the inclusion $R\into M$ is a weighted morphism, and
            \item[(c)] $(TR|_{R\cap N})_{(i)} = TR\cap (TM|_N)_{(i)}$. 
        \end{enumerate}
\end{corollary}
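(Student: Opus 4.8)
The statement is a biconditional whose two directions rest on different earlier results, so I would treat them separately. For the forward implication there is essentially nothing new to prove: \autoref{proposition: weighted submanifolds are weighted} already shows that a weighted submanifold $R$ acquires a weighting along $R\cap N$ (this is (a)), that the inclusion is then a weighted morphism (this is (b)), and that the tangent-space identity~\eqref{equation: tangent filtration of weighted subbundles} holds (this is exactly (c)). So my plan is simply to cite that proposition.

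All the content sits in the reverse direction. The plan is to recognize that conditions (a)--(c) say precisely that the inclusion $\iota:(R,R\cap N)\into(M,N)$ is a weighted embedding, and then to invoke the preceding corollary (that the image of a weighted embedding is a weighted submanifold); since $\iota(R)=R$, this concludes. Concretely: condition (a) endows $R$ with a weighting, making $\iota$ a map of weighted pairs, and (b) asserts it is a weighted morphism, which is the prerequisite for being a weighted immersion in \autoref{definition: weighted embedding}. It then remains to verify the two clauses of that definition. The underlying map $\iota$ is an embedding because $R$ is a submanifold, and $T\iota:TR\to TM$ is the inclusion of $TR$ as a subbundle, hence injective. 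For the filtered clause, $T\iota$ restricted to $(TR|_{R\cap N})_{(i)}$ is again an inclusion, so it is injective; that it lands in $(TM|_N)_{(i)}$ and that its image equals $T\iota(TR)\cap (TM|_N)_{(i)}=TR\cap(TM|_N)_{(i)}$ is, using $T\iota(TR)=TR$, exactly the identity in (c). Thus $\iota$ is a weighted immersion, and being an embedding of manifolds it is a weighted embedding, so the preceding corollary applies.

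I do not expect a serious obstacle here; the proof is an unravelling of definitions with no analytic difficulty. The one point requiring care is the bookkeeping in the filtered clause—reading off that the abstract ``injection with prescribed image'' requirement of \autoref{definition: weighted embedding} is literally condition (c), via the fact that for an inclusion the tangent map acts as the identity. It is worth noting that all three hypotheses are genuinely used: (a) to even define the filtration $(TR|_{R\cap N})_{(i)}$ appearing in (c), (b) to control pullbacks of functions (which the tangent condition alone does not give, since being filtration-preserving on tangent vectors is weaker than being a weighted morphism), and (c) to pin down both that the filtered pieces map into the correct subbundle and what their image is.
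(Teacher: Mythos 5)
Your proof is correct and matches the paper's intended argument exactly: the paper derives this corollary by applying the preceding corollary (that the image of a weighted embedding is a weighted submanifold) to the inclusion $R\into M$, with the forward direction supplied by \autoref{proposition: weighted submanifolds are weighted} — precisely your two steps. Your unpacking of why (a)--(c) amount to the inclusion being a weighted embedding is the same bookkeeping the paper leaves implicit.
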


\begin{remark}
    Note that this says that we could have taken the conclusion of~\autoref{proposition: weighted submanifolds are weighted} as a definition of weighted submanifolds. In particular, we could use this to give a define immersed weighted submanifolds. 
\end{remark}

\subsection{Weighted Submersions}

We now discuss \emph{weighted submersions}. Since a submersion is a smooth map whose tangent map is fibrewise surjective, the discussion at the beginning of the last section leads us to the following definition. 

\begin{definition}
\label{definition: weighted submersion}
    Let $(M,N)$ and $(M',N')$ be weighted pairs. A submersion $\pi:M\to M'$ is called a \emph{weighted submersion} if it is a weighted morphism such that for each $i$ the map 
        \[ (TM|_N)_{(i)} \to (TM'|_{N'})_{(i)} \]
    is fibrewise surjective. 
\end{definition}

\begin{examples}
    \begin{enumerate}
        \item[(a)] If $(M_1,N_1)$ and $(M_2,N_2)$ are weighted pairs, then both of the projections $\pi_i: M_1\times M_2\to M_i$ are weighted submersions.

        \item[(b)] If $M = \R^2$ is weighted by $\mathrm{wt}(x) = 1$ and $\mathrm{wt}(y) = 3$, and $M' = \R^2$ is given the trivial weighting along the curve $y = x^2$, then the identity map $M\to M'$ is a weighted morphism and a submersion, but it is not a weighted submersion. Indeed, we have that $(T_0M)_{(-i)} = \mathrm{span}\{\bd_x\}$, but $(T_0M')_{(-1)} = \mathrm{span}\{\bd_x, \bd_y\}$

        \item[(c)] If $(M,N)$ and $(M',N')$ are both trivially weighted, then the weighted submersions are the submersions $\pi:M\to M'$ which restrict to submersions $\pi|_N : N\to N'$. 
    \end{enumerate}
\end{examples}

\begin{theorem}[Normal Form for Weighted Submersions]
\label{theorem: weighted submersion coordinates}
     Let $\pi: M \to M'$ be a smooth map between weighted manifolds of dimensions $n \geq  n'$. Then $\pi$ is a weighted submersion if and only if for all $p \in M$, with image $p_0 = \pi(p)$, there are weighted coordinates $x_1, \dots, x_n$ around $p$ such that $x_1, \dots, x_{n'}$ are $\pi$-basic and descend to weighted coordinates near $p_0$.
\end{theorem}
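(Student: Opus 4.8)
The plan is to mirror the proof of the normal form for weighted immersions (\autoref{theorem: weighted embedding characterization}), but to run the argument on the \emph{cotangent} filtration~\eqref{equation: filtration of cotangent bundle} in the ``dual'' direction. The easy implication $(\Leftarrow)$ follows by direct inspection: in the asserted coordinates $\pi$ becomes the standard projection $(x_1,\dots,x_n)\mapsto(x_1,\dots,x_{n'})$, and since $x_1,\dots,x_{n'}$ descend to weighted coordinates downstairs they carry the same weights as their images. One then reads off immediately that $\pi^*$ sends the generating monomials of $C^\infty(M')_{(i)}$ into $C^\infty(M)_{(i)}$, and that on tangent spaces $(TM|_N)_{(-i)}=\mathrm{span}\{\bd/\bd x_a : w_a\leq i\}$ surjects onto $(TM'|_{N'})_{(-i)}=\mathrm{span}\{\bd/\bd y_b : w'_b\leq i\}$ via $\bd/\bd x_b\mapsto \bd/\bd y_b$ for $b\leq n'$; hence $\pi$ is a weighted submersion.

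For the converse, the first step is a pointwise linear-algebra dualization of \autoref{definition: weighted submersion}. Fix $p\in M$, set $p_0=\pi(p)$ and $\phi=T_p\pi:T_pM\to T_{p_0}M'$ (surjective, as $\pi$ is a submersion), and write $V^*_{(j)},W^*_{(j)}$ for the cotangent filtrations at $p,p_0$. Using the duality $(TM|_N)_{(-i)}=\mathrm{ann}((T^*M|_N)_{(i+1)})$ together with the standard identity $\mathrm{ann}(\phi(U))=(\phi^*)^{-1}(\mathrm{ann}(U))$, I would show that the weighted-submersion condition $\phi\big((T_pM)_{(-i)}\big)=(T_{p_0}M')_{(-i)}$ for all $i$ is equivalent to $(\phi^*)^{-1}(V^*_{(j)})=W^*_{(j)}$ for all $j$; that is, $\phi^*$ maps $W^*_{(j)}$ into $V^*_{(j)}$ and \emph{reflects} filtration degree exactly. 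Since $\phi^*$ is injective, it is therefore injective and degree-preserving on the associated graded.

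The second step builds the coordinates from this. Choose weighted coordinates $y_1,\dots,y_{n'}$ near $p_0$, of weights $w'_1,\dots,w'_{n'}$. Because $\pi$ is a weighted morphism, each $\pi^*y_b$ lies in $C^\infty(M)_{(w'_b)}$, and the reflection property shows that $\ed_p(\pi^*y_b)=\phi^*(\ed_{p_0}y_b)$ is nonzero in $\gr(T^*_pM)_{(w'_b)}$, so that $\pi^*y_b$ has filtration degree exactly $w'_b$. Moreover, for each fixed $j$ the classes $\{\ed_p(\pi^*y_b):w'_b=j\}$ are linearly independent in $\gr(T^*_pM)_{(j)}$, since a nontrivial relation there would, by reflection, produce one among the $\ed_{p_0}y_b$ of weight $j$ in $\gr(T^*_{p_0}M')_{(j)}$, contradicting that those form a basis. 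The functions $\pi^*y_1,\dots,\pi^*y_{n'}$ thus satisfy the hypotheses of \autoref{lemma: extension of weighted coordinates} and extend to a weighted coordinate system $x_1,\dots,x_n$ near $p$ with $x_b=\pi^*y_b$ for $b\leq n'$. By construction $x_1,\dots,x_{n'}$ are $\pi$-basic and descend to the weighted coordinates $y_b$ near $p_0$, as required.

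I expect the main obstacle to be the linear-algebra duality of the first step: getting the annihilator bookkeeping and the index shift $(-i)\leftrightarrow(i+1)$ exactly right, and then converting ``$\phi^*$ reflects filtration degree'' into the precise linear-independence-in-the-graded statement needed to invoke \autoref{lemma: extension of weighted coordinates}. Once that duality is in hand, the coordinate construction and the $(\Leftarrow)$ verification are routine.
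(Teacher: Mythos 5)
Your proposal is correct and takes essentially the same route as the paper's proof: the easy direction by inspection in the normal-form coordinates, and the converse by observing that the weighted-submersion condition makes the graded cotangent maps $\gr(T^*_{p_0}M')_{(i)} \to \gr(T^*_pM)_{(i)}$ injective, so that the pullbacks $\pi^*y_b$ of weighted coordinates downstairs have linearly independent classes in the graded and extend to weighted coordinates near $p$ by \autoref{lemma: extension of weighted coordinates}. The only difference is that you spell out the annihilator/duality bookkeeping behind the injectivity claim, which the paper asserts without detail.
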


We refer to the coordinates $x_1, \dots, x_n$ in~\autoref{theorem: weighted submersion coordinates} as \emph{submersion coordinates} for the map $\pi$.

\begin{proof}
    The direction $(\Leftarrow)$ follows by direct examination in weighted submersion coordinates. For the opposite direction, note that the condition implies that for $p \in N$, with image $p_0 = \pi(p)$, the map
        \[ \mathrm{gr}(T^*_{p_0}M')_{(i)} \to \gr(T^*_pM)_{(i)} \]
    is injective for each $i$. Let $\{y_b\}$ be coordinates on an open neighborhood $U_0$ of $p_0$. Then the set $\{ \ed_{p_0} y_b : \mathrm{wt}(y_b) = i\}$ defines a basis of $\gr(T^*_{p_0}M')_{(i)}$. Their pullbacks under $\pi$ are hence linearly independent in $\gr(T^*_pM)_{(i)}$. By~\autoref{lemma: extension of weighted coordinates}, the functions $\pi^*y_b$ are part of a system of weighted coordinates near $p$.
\end{proof}

Put differently, this result says that given a weighted submersion we can always find weighted coordinates which are simultaneously submersion coordinates. In light of this, we have the following weighted versions of the standard results from differential geometry. 

\begin{corollary}
\label{corollary: inverse image of mfld is mfld}
    Let $(M,N)$ and $(M',N')$ be weighted pairs and $\pi : (M,N) \to (M',N')$ a weighted submersion. Then the pre-image of any weighted submanifold is a weighted submanifold. In particular, the fibres of a weighted submersion are weighted submanifolds. 
\end{corollary}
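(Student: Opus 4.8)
The plan is to reduce everything to the normal form provided by \autoref{theorem: weighted submersion coordinates}. Since being a weighted submanifold is a local condition, it suffices to work in a neighbourhood of an arbitrary point $p \in \pi^{-1}(R')$, where $R' \sset M'$ is the weighted submanifold whose preimage we wish to understand. Set $p_0 = \pi(p) \in R'$. The strategy is to produce weighted submanifold coordinates for $\pi^{-1}(R')$ near $p$ by pulling back weighted submanifold coordinates for $R'$ near $p_0$ via the submersion normal form.

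First I would apply \autoref{theorem: weighted submersion coordinates} at $p$ to obtain weighted coordinates $x_1, \dots, x_n$ near $p$ such that $x_1, \dots, x_{n'}$ are $\pi$-basic and descend to weighted coordinates near $p_0$; call the descended coordinates $\bar{x}_1, \dots, \bar{x}_{n'}$, so that $x_a = \pi^* \bar{x}_a$ for $a \leq n'$. Next, since $R'$ is a weighted submanifold of $(M', N')$, after possibly shrinking the neighbourhood and re-indexing I may assume the $\bar{x}_a$ are chosen to be weighted submanifold coordinates for $R'$, so that $R'$ is cut out near $p_0$ by the vanishing of some subcollection $\{\bar{x}_a : a \in S\}$ with $S \sset \{1, \dots, n'\}$. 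The key point is that the submersion normal form lets us arrange the \emph{same} coordinates on both ends: the defining functions for $R'$ pull back to a subcollection of the weighted coordinates $\{x_a : a \leq n'\}$ on $M$.

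Then I would observe that $\pi^{-1}(R')$ is cut out near $p$ precisely by the equations $\{x_a = 0 : a \in S\}$, since $x_a = \pi^* \bar{x}_a$ and a point $q$ near $p$ lies in $\pi^{-1}(R')$ exactly when $\pi(q) \in R'$, i.e. when $\bar{x}_a(\pi(q)) = x_a(q) = 0$ for all $a \in S$. Because the $x_1, \dots, x_n$ are weighted coordinates near $p$ and $\pi^{-1}(R')$ is locally the vanishing locus of a subset of these coordinates, this exhibits a weighted submanifold chart for $\pi^{-1}(R')$, which is exactly what is required. The final sentence, that fibres are weighted submanifolds, follows by taking $R' = \{p_0\}$ to be a single point, which is a weighted submanifold by \autoref{examples: weighted submanifolds}~(b).

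I expect the only real subtlety to be the bookkeeping in the second step: one must ensure that the $\pi$-basic coordinates furnished by \autoref{theorem: weighted submersion coordinates} can be taken to \emph{simultaneously} serve as weighted submanifold coordinates for $R'$ downstairs. This is legitimate because the theorem allows freedom in the choice of the descended weighted coordinates $\bar{x}_1, \dots, \bar{x}_{n'}$ near $p_0$ — any system of weighted coordinates works — so one simply feeds in a weighted submanifold coordinate system for $R'$ from the start. Once this compatibility is secured, the rest is a direct verification in coordinates.
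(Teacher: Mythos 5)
Your proof is correct and is essentially the argument the paper intends: the corollary is stated as an immediate consequence of the normal form for weighted submersions, and you have filled in exactly those details. The subtlety you flag --- feeding a prescribed weighted submanifold coordinate system for $R'$ into the normal form --- is legitimate, since the proof of \autoref{theorem: weighted submersion coordinates} starts from an \emph{arbitrary} weighted coordinate system near $p_0$ and extends its pullback to weighted coordinates near $p$ via \autoref{lemma: extension of weighted coordinates}.
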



\subsection{Weighted Transversality}

We may also consider the weighted analogue of transversality. Let $(M, N)$, $(M', N')$, and $(M'', N'')$ be weighted pairs and $f:M\to M''$, $g:M'\to M''$ be weighted morphisms. 

\begin{definition}
\label{definition: weighted transverse}
    We say that $f$ and $g$ are \emph{weighted transverse} if they are transverse and
        \[ T_pf(\gr(T_pM)_{(i)})+T_qg(\gr(T_qM')_{(i)}) = \gr(T_{f(p)}M'')_{(i)} \]
    for all $(p,q)\in N\times N'$ such that $f(p) = g(q)$. 
\end{definition}

\begin{example}
    A weighted submersion is weighted transverse to any weighted morphism.
\end{example}

\begin{remark}
    If $f$ and $g$ are weighted transverse, then their restrictions to $N$ and $N'$ remain transverse. Hence $N\times_{N''} N'$ is a smooth manifold.
\end{remark}

\begin{theorem}
\label{theorem: weighted fibre products}
    If $f:M_0\to M$ and $g:M_1\to M$ are weighted transverse, then their fibre product 
        \[ M_1\times_{M}M_0 = \{ (q, p) \in M_1\times M_0 : f(p) = g(q) \} \]
    is a weighted submanifold of $M_1\times M_0$, with induced weighted along $N_1\times_N N_0$. Moreover, the projections 
        \[ \mathrm{pr}_i : M_1\times_M M_0 \to M_i, \quad i = 0, 1, \]
    are both weighted morphisms.
\end{theorem}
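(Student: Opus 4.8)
The plan is to reduce the statement to the normal form for weighted submersions (\autoref{theorem: weighted submersion coordinates}) together with the characterization of weighted submanifolds via transverse inclusions, exactly as one proves that ordinary transverse fibre products are submanifolds. First I would observe that since $f$ and $g$ are in particular transverse in the ordinary sense, the fibre product $M_1\times_M M_0$ is a smooth submanifold of $M_1\times M_0$. The weighted pair $M_1\times M_0$ carries the product weighting of \autoref{example: examples of weightings} (d), along $N_1\times N_0$, with the tangent filtration given fibrewise by $(T(M_1\times M_0))_{(i)} = (TM_1)_{(i)}\times (TM_0)_{(i)}$. So the whole content is to show that this smooth submanifold is a \emph{weighted} submanifold, for which I would verify conditions (a)--(c) of \autoref{corollary: criteria for weighted submanifolds}.

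The cleanest route is to exhibit $M_1\times_M M_0$ as the preimage of a weighted submanifold under a weighted submersion, so that \autoref{corollary: inverse image of mfld is mfld} applies directly. Concretely, consider the weighted morphism
    \[ f\times g : M_1\times M_0 \to M\times M, \]
which is a weighted morphism because $f,g$ are and because of the functoriality of the product weighting. I would first check that $f\times g$ is \emph{weighted transverse} to the diagonal inclusion $\Delta_M\into M\times M$ (equivalently, that the weighted transversality hypothesis on $f,g$ is precisely the statement that the associated tangent maps hit each $\gr(T(M\times M))_{(i)}$ modulo the diagonal). The fibre product is then $(f\times g)^{-1}(\Delta_M)$. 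To invoke \autoref{corollary: inverse image of mfld is mfld} I need two inputs: that the diagonal $\Delta_M\sset M\times M$ is a weighted submanifold of the product pair $(M\times M, N\times N)$, and that $f\times g$ restricted appropriately behaves like a weighted submersion onto a neighbourhood of $\Delta_M$. The diagonal is a weighted submanifold because in product weighted coordinates $x_a$ on each factor it is cut out by $x_a^{(1)}-x_a^{(0)}=0$, and these differences have the correct filtration degrees; this can be checked via \autoref{lemma: extension of weighted coordinates}.

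The main obstacle I anticipate is that $f\times g$ is generally \emph{not} a weighted submersion, so \autoref{corollary: inverse image of mfld is mfld} does not apply verbatim; one only has surjectivity of the relevant filtered tangent maps \emph{transverse to the diagonal}, which is exactly the weighted transversality condition. I would therefore instead argue locally in coordinates, mirroring the proof of \autoref{theorem: weighted submersion coordinates}: near a point $(p,q)$ with $f(p)=g(q)$, choose weighted coordinates on $M$ near the common image and use weighted transversality to guarantee that their pullbacks under $f$ and $g$ can be completed, via \autoref{lemma: extension of weighted coordinates}, to weighted coordinate systems on $M_1$ and $M_0$; the differences of the pulled-back coordinates then serve as weighted submanifold coordinates cutting out $M_1\times_M M_0$. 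Condition (c) of \autoref{corollary: criteria for weighted submanifolds}, namely $(T(M_1\times_M M_0))_{(i)} = T(M_1\times_M M_0)\cap (T(M_1\times M_0))_{(i)}$, then follows by a fibrewise linear-algebra computation using the explicit product filtration and the surjectivity built into weighted transversality. Finally, the projections $\mathrm{pr}_i$ are weighted morphisms as restrictions of the weighted projections $M_1\times M_0\to M_i$ to a weighted submanifold, using \autoref{proposition: weighted submanifolds are weighted}.
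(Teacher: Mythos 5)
Your overall strategy coincides with the paper's: both reduce to a local argument in which $M_1\times_M M_0$ is cut out by differences of pulled-back weighted coordinates, with \autoref{lemma: extension of weighted coordinates} supplying the completion to a weighted chart; and you correctly diagnose why the slicker route through $(f\times g)^{-1}(\Delta_M)$ and \autoref{corollary: inverse image of mfld is mfld} fails ($f\times g$ need not be a weighted submersion). However, your fallback argument contains a genuine error: the claim that weighted transversality guarantees the pullbacks $f^*x_a$ and $g^*x_a$ can each be completed, via \autoref{lemma: extension of weighted coordinates}, to weighted coordinate systems on $M_0$ and $M_1$ \emph{separately}. That would require the images of $\{\ed_p(f^*x_a) : w_a = i\}$ to be linearly independent in $\gr(T^*_pM_0)_{(i)}$, which is the weighted \emph{submersion} condition on $f$ alone, not weighted transversality; the latter only gives \emph{joint} surjectivity of $T_pf$ and $T_qg$ onto each $\gr(T_{f(p)}M)_{(i)}$. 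Concretely, take $f$ to be the inclusion of a point of $N$ (or of $N$ itself) and $g$ any weighted submersion: these are weighted transverse, yet $f^*x_a$ is constant for $w_a\geq 1$ and cannot belong to any coordinate system on $M_0$.

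The fix, which is exactly what the paper does, is to skip the factorwise completion entirely and apply \autoref{lemma: extension of weighted coordinates} on the product $M_1\times M_0$ directly to the differences $y_a = f^*x_a - g^*x_a$. One notes $y_a \in C^\infty(M_1\times M_0)_{(w_a)}$ because $f$ and $g$ are weighted morphisms, and then observes that weighted transversality is precisely dual to the linear independence of the images of $\{\ed_{(q,p)}y_a : w_a = i\}$ in $\gr(T^*_{(q,p)}(M_1\times M_0))_{(i)}$: a combination $\sum_a c_a\,\ed y_a$ of filtration degree $i+1$ pairs to zero with every vector $T_pf(u)-T_qg(v)$, and since these span $\gr(T_{f(p)}M)_{(i)}$ by hypothesis, the corresponding combination $\sum_a c_a\,\ed x_a$ dies in $\gr(T^*_{f(p)}M)_{(i)}$, forcing $c_a = 0$. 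The extension lemma then completes the $y_a$ to a weighted chart of $M_1\times M_0$, and since the $y_a$ locally cut out $M_1\times_M M_0$, this chart is a weighted submanifold chart; no appeal to condition (c) of \autoref{corollary: criteria for weighted submanifolds} is needed, as a weighted atlas of submanifold charts is the definition. Your treatment of the projections (restrictions of the weighted projections to a weighted submanifold, via \autoref{proposition: weighted submanifolds are weighted}) agrees with the paper and is fine.
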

\begin{proof}
    Since $f$ and $g$ are transverse, it follows that $M_1\times_{M}M_0$ is a submanifold of $M_1\times M_0$. Therefore, we must produce weighted submanifold coordinates near any $(p,q) \in (M_1\times_{M}M_0) \cap (N_1\times N_0)$. 

    Let $U\sset M$ be an open neighbourhood of the point $r = f(p) = g(q) \in M$ and let $x_1, \dots, x_m$ be a weighted coordinate system defined on $U$. Consider the functions 
        \begin{align*}
            y_a : M_1\times M_0 & \to \R \\
            (q,p) & \mapsto (f^*x_a)(p) - (g^*x_a)(q).
        \end{align*}
    Since $f$ and $g$ are weighted morphisms, and since $x_a$ has weight $w_a$, it follows that $y_a \in C^\infty(M_1\times M_0)_{(w_a)}$. Moreover, since 
        \begin{align*}
            \ed y_a = f^*\ed x_a - g^*\ed x_a 
        \end{align*}
    the weighted transversality assumption ensures that the image of the set $\{\ed_{(q,p)}y_a : w_a = i\}$ in $\gr(T_{(q,p)}^*(M_1\times M_0))_{(i)}$ is linearly independent. Thus,~\autoref{lemma: extension of weighted coordinates} ensures that $y_a$ can be extended to a weighted coordinate system. Since $M_1\times_M M_1$ is locally cut out by the $y_a$, it follows that the completed coordinate system are the required weighted submanifold coordinates. Since the projections factor as the composition 
        \[ M_1\times_M M_0 \into M_1\times M_0 \to M_i,\]
    it is clear that they are weighted morphisms.
\end{proof}

\begin{corollary}
    If two weighted submanifolds intersect in a weighted transverse manner (i.e. the inclusions are weighted transverse), then their intersection is again a weighted submanifold. 
\end{corollary}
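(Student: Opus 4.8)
The plan is to construct weighted submanifold coordinates for the intersection $S = R_0 \cap R_1$ directly, in close parallel with the proof of \autoref{theorem: weighted fibre products}; indeed this corollary is essentially that theorem seen through the diagonal. Since weighted transversality includes ordinary transversality, $S$ is an embedded submanifold of $M$ with $T_pS = T_pR_0 \cap T_pR_1$, and by the Remark following \autoref{definition: weighted transverse} the restrictions to $N$ stay transverse, so that everything in sight remains smooth. It therefore suffices to produce, near each $p \in S \cap N$, weighted coordinates on $M$ in which $S$ is a coordinate subspace; the induced weighting along $S \cap N$ is then the one supplied by \autoref{proposition: weighted submanifolds are weighted}.

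So fix such a $p$. Because each $R_j$ ($j = 0,1$) is a weighted submanifold, I would use \autoref{proposition: weighted submanifolds are weighted} to obtain weighted coordinates near $p$ in which $R_j$ is cut out by a subcollection of the coordinate functions; write $\{g^{(j)}_\alpha\}$ for these, so that near $p$ one has $R_j = \{g^{(j)}_\alpha = 0\}$, each $g^{(j)}_\alpha$ has a well-defined filtration degree, and $\mathrm{ann}(T_pR_j) = \mathrm{span}\{\ed_p g^{(j)}_\alpha\}$. Since $S$ is cut out near $p$ by the combined family $\{g^{(0)}_\alpha\} \cup \{g^{(1)}_\beta\}$, the strategy is to show this family is part of a weighted coordinate system via \autoref{lemma: extension of weighted coordinates}: the common zero set of a subsystem of weighted coordinates is automatically a weighted submanifold chart.

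The real content is verifying the hypothesis of \autoref{lemma: extension of weighted coordinates}, namely that for every $i$ the images in $\gr(T^*_pM)_{(i)}$ of those $\ed_p g^{(0)}_\alpha$ and $\ed_p g^{(1)}_\beta$ of filtration degree $i$ are jointly linearly independent. This is exactly the dual formulation of weighted transversality, and I expect it to be the main obstacle. The degree-$i$ graded piece of $\mathrm{ann}(T_pR_j)$ is spanned by the images of the $\ed_p g^{(j)}_\alpha$ of filtration degree $i$, and these are already independent there (being part of a weighted coordinate system). The weighted transversality condition $T_p R_0$- and $T_p R_1$-graded pieces summing to $\gr(T_pM)_{(i)}$ dualizes, degree by degree, to the statement that the graded images of $\mathrm{ann}(T_pR_0)$ and $\mathrm{ann}(T_pR_1)$ meet only in zero inside $\gr(T^*_pM)_{(i)}$. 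Carefully justifying this degreewise dualization — using that the associated gradeds of $T_pM$ and $T^*_pM$ are dual and that the cotangent filtration is read off from weighted coordinates as in \autoref{proposition: weighted submanifolds are weighted} — is the crux; combined with the per-$R_j$ independence it yields the joint independence of the combined family.

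With the hypothesis in hand, \autoref{lemma: extension of weighted coordinates} extends $\{g^{(0)}_\alpha\} \cup \{g^{(1)}_\beta\}$ to a weighted coordinate system near $p$, which is the desired weighted submanifold chart for $S$. Hence $S$ is a weighted submanifold of $(M,N)$, and one could alternatively package the tangent-space bookkeeping above through \autoref{corollary: criteria for weighted submanifolds}, checking conditions (a)--(c) for $S \sset M$ directly; everything beyond the dualization step is the now-familiar routine from the fibre-product argument.
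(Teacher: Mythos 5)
Your proof is correct and is essentially the paper's own argument: the paper obtains this corollary from \autoref{theorem: weighted fibre products}, whose proof is exactly the construction you reproduce (degree-homogeneous cut-out functions, graded independence extracted from weighted transversality, extension via \autoref{lemma: extension of weighted coordinates}), and the dualization you flag as the crux is sound because $\gr(T^*_pM)_{(i)}$ pairs non-degenerately with $\gr(T_pM)_{(-i)}$, under which the span of the images of the degree-$i$ cut-out differentials for $R_j$ is precisely $\mathrm{ann}(\gr(T_pR_j)_{(-i)})$, so the weighted transversality condition forces these spans to meet only in zero. Note also that your simultaneous weighted chart for both submanifolds is exactly the content of the remark the paper places immediately after this corollary.
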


\begin{remark}
    In fact, if the intersection of two weighted submanifolds $R$ and $R'$ is weighted transverse, then by modifying the standard argument appropriately one finds that any point $p \in R \cap R'$ is contained in an open neighbourhood $U\sset M$ on which there is a local weighted coordinate system $x_a$ such that 
        \[ R\cap U = \{ x_1 = \cdots = x_{r} = 0 \} \quad \text{and} \quad R'\cap U = \{ x_{r+1} = \cdots = x_{r'} = 0 \}.  \]
\end{remark}

\subsection{Characterizations of Weighted Morphisms}

It can be difficult in practice to verify if a given map between weighted manifolds is a weighted morphism. It is therefore desirable to have alternative characterizations that one can appeal to in specific examples. 

Let $(M,N)$ and $(M', N')$ be weighted manifolds. One might guess that weighted morphisms are the smooth maps of pairs $f:(M,N) \to (M', N')$ for which the tangent map $Tf:TM|_N \to TM'|_{N'}$ is filtration preserving. However, this is not sufficient, as the following example shows. 

\begin{example}
    Consider $\R$ with the trivial weighting along the origin, and $\R^2$ weighted along the origin by declaring that $\mathrm{wt}(x) = 1$ and $\mathrm{wt}(y) = 3$. Consider the function
        \begin{align*}
            f: \R & \to \R^2 \\
            x & \mapsto (x, x^2);
        \end{align*}
    this is not a weighted morphism because $y$ has weight 3, but $f^*y = x^2$ has weight 2. However, $Tf_0 : T_0\R \to T_0\R^2$ is a filtration preserving map.  
\end{example}

For another possible approach, recall that morphisms of manifolds with some additional structure can be characterized in terms of their graphs. For instance, 
    \begin{itemize}
        \item[(a)] linear maps between vector spaces are exactly the maps $\varphi: V\to W$ whose graph 
            \[ \G(\varphi) = \{(\varphi(v), v) : v\in V \} \sset W\times V \]
        is a linear subspace of $W\times V$;

        \item[(b)] Lie algebroid morphisms are the vector bundle morphisms $\varphi:A \to B$ between Lie algebroids whose graphs are Lie subalgebroids of $B\times A$;

        \item[(c)]  Lie groupoid morphisms are exactly the smooth maps $\varphi : G \to H$ between Lie groupoids whose graphs are Lie subgroupoids of $H\times G$.
    \end{itemize}
In light of this, one might guess that weighted morphisms are exactly the smooth maps whose graph is a weighted submanifold of the product. It turns out that this is also not sufficient, as the following example shows.

\begin{example}
    Let $(M,N) = (\R^2, \{0\})$ be the trivially weighted pair, and let $(M', N') = (\R^2, \{0\})$ be weighting pair given by assigning $x$ weight $1$ and $y$ weight 3. Then the identity map $\mathrm{id} : (M,N)\to (M', N')$ is not a weighted morphism, but its graph is a weighted submanifold of $(M'\times M, N'\times N)$. Indeed, this follows because $\mathrm{id} : (M',N')\to (M, N)$ \emph{is} a weighted morphism. 
\end{example}

The following theorem says that these two conditions together are enough to characterize weighted morphisms.

\begin{theorem}
\label{theorem: characterization of weighted morphisms in terms of their graphs}
    Suppose that $(M,N)$ and $(M', N')$ are weighted pairs and $f:(M,N) \to (M', N')$ is a smooth map of pairs. Then $f$ is a weighted morphism if and only if
        \begin{enumerate}
            \item[(a)] the graph $\G(f)\sset M'\times M$ is a weighted submanifold  and 
            \item[(b)] the tangent map $Tf:TM|_N \to TM'|_{N'}$ is filtration preserving. 
        \end{enumerate}
\end{theorem}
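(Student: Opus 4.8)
The plan is to treat the two implications separately, with the converse carrying essentially all the difficulty.

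For the forward implication, assume $f$ is a weighted morphism. Condition (b) is immediate by dualizing: for $g \in C^\infty(M')_{(i)}$ one has $(T_pf)^*(\ed_{f(p)} g) = \ed_p(f^*g)$, and since $f^*g \in C^\infty(M)_{(i)}$ this shows the cotangent map sends $(T^*M'|_{N'})_{(i)}$ into $(T^*M|_N)_{(i)}$; as the tangent filtration is the annihilator of the cotangent filtration (one step up), dualizing shows that $Tf$ preserves the filtration of $TM|_N$. For (a), I would observe that $\gamma = (f, \id): M \to M' \times M$ is a weighted morphism into the product weighting (its pullback sends a product generator $\pr_{M'}^* a \cdot \pr_M^* b$ to $f^*a \cdot b$, which lies in the correct filtration degree) and is an embedding onto $\G(f)$. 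Using (b), the tangent map $T\gamma(v) = (Tf(v), v)$ restricts to an injection $(TM|_N)_{(i)} \to (T(M'\times M))_{(i)} = (TM'|_{N'})_{(i)} \times (TM|_N)_{(i)}$ whose image is exactly $T\gamma(TM) \cap (T(M'\times M))_{(i)}$, the redundancy of the condition $Tf(v) \in (TM'|_{N'})_{(i)}$ being supplied by (b). Hence $\gamma$ is a weighted embedding in the sense of \autoref{definition: weighted embedding}, and its image $\G(f)$ is a weighted submanifold by the corollary following \autoref{theorem: weighted embedding characterization}.

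For the converse, assume (a) and (b). Since the filtration is full away from $N$ and $f(N)\sset N'$, I only need to verify $f^*C^\infty(M')_{(i)} \sset C^\infty(M)_{(i)}$ near a point $p \in N$ with $p_0 = f(p) \in N'$. Multiplicativity of the filtration together with the fact that $f^*$ is a ring homomorphism reduces this to treating generators, i.e. to showing $f^* x'_a \in C^\infty(M)_{(w'_a)}$ for each weighted coordinate $x'_a$ of weight $w'_a$ near $p_0$. The idea is to factor this pullback through the graph: writing $\iota: \G(f) \into M'\times M$ for the inclusion and $\gamma: M \to \G(f)$, $m \mapsto (f(m), m)$, for the inverse of $\pr_M|_{\G(f)}$, the function $h = \pr_{M'}^* x'_a$ is a weighted coordinate of weight $w'_a$ on the product, and $f^* x'_a = \gamma^*\iota^* h$. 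Because $\G(f)$ is a weighted submanifold, $\iota$ is a weighted morphism (\autoref{proposition: weighted submanifolds are weighted}), so $\iota^* h \in C^\infty(\G(f))_{(w'_a)}$; thus everything reduces to showing that $\gamma$ is a weighted morphism.

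The crux is therefore to prove that $\pr_M|_{\G(f)}: \G(f) \to M$ is a weighted isomorphism. It is a diffeomorphism, and it is a weighted morphism since it factors as $\pr_M \circ \iota$ with both maps weighted. I claim hypothesis (b) makes it a weighted submersion: by \autoref{proposition: weighted submanifolds are weighted} the induced filtration on the graph is
\[ (T\G(f))_{(i)} = T\G(f) \cap (T(M'\times M))_{(i)} = \{(Tf(v),v) : v \in (TM|_N)_{(i)},\ Tf(v) \in (TM'|_{N'})_{(i)}\}, \]
and (b) renders the second condition redundant, so $(T\G(f))_{(i)} = \{(Tf(v),v) : v \in (TM|_N)_{(i)}\}$, which the tangent map of $\pr_M|_{\G(f)}$ carries onto $(TM|_N)_{(i)}$. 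Hence $\pr_M|_{\G(f)}$ is a weighted submersion (\autoref{definition: weighted submersion}) between manifolds of equal dimension; the normal form \autoref{theorem: weighted submersion coordinates} then provides weighted coordinates on $\G(f)$ all of which are basic and descend to weighted coordinates on $M$ of the same weight, so $\pr_M|_{\G(f)}$ is a local weighted diffeomorphism and, being bijective, a weighted isomorphism. Its inverse $\gamma$ is then a weighted morphism, and the reduction above gives $f^* x'_a \in C^\infty(M)_{(w'_a)}$, completing the proof.

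I expect the main obstacle to be the converse, and within it the recognition that the correct object to study is the projection $\pr_M|_{\G(f)}$ rather than $f$ itself: condition (a) alone only gives that this projection is a weighted morphism, and it is precisely the tangent hypothesis (b) that upgrades it to a weighted submersion, after which equidimensionality forces it to be an isomorphism via the normal form. Some care is also needed to confirm the weight-matching in \autoref{theorem: weighted submersion coordinates} (that basic weighted coordinates descend with unchanged weight), since this is exactly what makes the inverse $\gamma$ filtration preserving rather than merely a smooth map of pairs.
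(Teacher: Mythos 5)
Your proposal is correct and, on the crucial converse direction, follows the paper's own argument essentially verbatim: both proofs reduce to showing that $\pr_M|_{\G(f)}:\G(f)\to M$ is a weighted diffeomorphism, use \autoref{proposition: weighted submanifolds are weighted} to identify $(T\G(f)|_{\G(f)\cap(N'\times N)})_{(i)}$ as the set of pairs $(Tf(v),v)$ with $v\in (TM|_N)_{(i)}$ and $Tf(v)\in (TM'|_{N'})_{(i)}$, invoke hypothesis (b) to see this maps isomorphically onto $(TM|_N)_{(i)}$, and conclude via the normal form \autoref{theorem: weighted submersion coordinates}. The only divergence is cosmetic and lies in the forward direction, where you obtain (b) by dualizing the cotangent filtration and (a) by citing the weighted-embedding corollary to \autoref{theorem: weighted embedding characterization}, whereas the paper writes down the explicit weighted submanifold coordinates $\tilde{x}_a=x_a$, $\tilde{y}_b=y_b-f^*y_b$ and checks (b) by a coordinate computation; both routes rest on the same underlying facts.
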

\begin{proof}
    First, suppose that $f:(M,N) \to (M', N')$ is a weighted morphism. Let $x_a$ be weighted coordinates near $p\in N$ and $y_b$ be weighted coordinates near $q = f(p) \in N'$. Then weighted submanifold coordinates for $\G(f)$ near $(q, p)$ are given by $\Tilde{x_a} = x_a$ and $\Tilde{y_b} = y_b - f^*y_b$. To see that $Tf|_N$ is filtration preserving, let $p\in N$. Since $\bd_{x_a}(f^*y_b) \in C^\infty(M)_{(w'_b - w_a)}$ we have that 
        \[ w'_b > w_a \implies \frac{\bd (f^*y_b)}{\bd x_a}\bigg|_p = 0.  \]
    Therefore, 
        \[ T_pf\left( \frac{\bd}{\bd x_a}\bigg|_p \right) = \sum_{b\ :\ w'_b \leq w_a} \frac{\bd (y_b^*f)}{\bd x_a}\bigg|_p\frac{\bd}{\bd y_b}\bigg|_{f(p)} \in (T_{f(p)}M')_{(-w_a)}. \]

    For the converse, suppose that (a) and (b) are satisfied. The map $f$ factors as a composition of the diffeomorphism 
        \begin{align}
        \label{equation: weighted diffeo}
            M \to \G(f), \quad  x \mapsto (f(x), x),
        \end{align}
    the inclusion $\G(f) \to M'\times M$, and the projection $M'\times M \to M'$. Since the composition of weighted morphisms is a weighted morphism, it suffices to show that the map~\eqref{equation: weighted diffeo} is a weighted diffeomorphism (i.e.  a weighted morphism with weighted inverse). The inverse map $\G(f) \to M$ is a weighted morphism, since it is the restriction of the projection $M' \times M \to M$ to a weighted submanifold. By~\autoref{theorem: weighted submersion coordinates} it is enough to show that the map 
        \[ T\G(f)|_{\G(f) \cap N} \to TM|_N \]
    is an isomorphism of filtered vector bundles. By~\autoref{proposition: weighted submanifolds are weighted}, the filtration on $T\G(f) = T\G(f) \cap (N'\times N)$ is defined by the intersections 
        \[ (T\G(f)|_{\G(f)\cap N})|_{(i)} = T\G(f) \cap ((TM'|_{N'})_{(i)}\times (TM|_N)_{(i)}). \]
    This intersection consists of all $(Tf(v), v)$ such that $v\in (TM|_N)_{(i)}$ and $Tf(v) \in (TM'_{N'})_{(i)}$, hence it maps isomorphically onto $(TM|_N)_{(i)}$ if and only if $Tf$ is filtration preserving. 
\end{proof}

\begin{example}
\label{example: diagonal is weighted submanifold}
    For any weighted manifold $M$, the diagonal $\Delta_M\sset M\times M$ is a weighted submanifold. 
\end{example}

Another characterization of weighted morphisms is given by the notion of a \emph{weighted path}. Let $(M,N)$ be a weighted pair.

\begin{definition}
\label{definition: weighted paths}
    A path $\gamma:\R \to M$ is called a \emph{weighted path} if it is a weighted morphism for the trivial weighting of $\R$ along the origin.
\end{definition}

If $x_a$ is a system of weighted coordinates, with weight $w_a$, then $\gamma$ is of the form 
    \[ \gamma(t) = (x_1(t), \dots, x_n(t)) = (O(t^{w_1}), \dots, O(t^{w_n})). \]
We can completely recover the weighting of $M$ by knowledge of the weighted paths. 

\begin{theorem}
\label{A-proposition: characterization of weighted morphisms}
    \begin{enumerate}
        \item[(a)] We have $f \in C^\infty(M)_{(i)}$ if and only if  
            \[ f(\gamma(t)) =  O(t^i)\]
        for every weighted path $\gamma:\R \to M$.

        \item[(b)] A smooth map $\phi:(M, N) \to (M', N')$ between weighted pairs is a weighted morphism if and only if it takes weighted paths to weighted paths. 
    \end{enumerate}
\end{theorem}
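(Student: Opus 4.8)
The plan is to derive everything from part~(a), and within part~(a) from the ``if'' direction, which carries all the real content; the remaining implications are formal. A weighted path is by definition a weighted morphism $(\R,\{0\})\to(M,N)$, where $\R$ is trivially weighted along the origin and $C^\infty(\R)_{(i)}=\{h:h=O(t^i)\}$. Hence for the ``only if'' direction of~(a) one simply observes that if $f\in C^\infty(M)_{(i)}$ and $\gamma$ is weighted, then $f\circ\gamma=\gamma^*f$ lies in $C^\infty(\R)_{(i)}$, since $\gamma^*C^\infty(M)_{(i)}\sset C^\infty(\R)_{(i)}$. For part~(b), the forward implication is that the composite of the weighted morphism $\phi$ with a weighted path is a composite of weighted morphisms, hence a weighted path. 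Conversely, if $\phi$ sends weighted paths to weighted paths and $f'\in C^\infty(M')_{(i)}$, then for every weighted path $\gamma$ in $M$ the path $\phi\circ\gamma$ is weighted, so $(\phi^*f')(\gamma(t))=f'((\phi\circ\gamma)(t))=O(t^i)$ by~(a); applying the ``if'' direction of~(a) gives $\phi^*f'\in C^\infty(M)_{(i)}$, which is exactly the assertion that $\phi$ is a weighted morphism. Thus everything reduces to the ``if'' direction of~(a).

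For that direction I would first reduce to a local, pointwise statement. Membership $f\in C^\infty(M)_{(i)}$ is local and, via partitions of unity (as in the remark following~\autoref{definition: weighting}), may be checked on a cover; away from $N$ the filtration is trivial so there is nothing to check, and near a point $p_0\in N$ I fix weighted coordinates and split them as $u$ (the weight-zero coordinates, restricting to coordinates on $N$) and $v=(v_1,\dots,v_l)$ (the coordinates of weight $w_b\ge 1$, so that $N=\{v=0\}$). The key local lemma I would establish, directly from the local model~\eqref{equation: local weighting filtration} together with Taylor's theorem with remainder, is that $f\in C^\infty(U)_{(i)}$ if and only if the transverse derivatives satisfy $\partial_v^\beta f|_N=0$ for every multi-index $\beta$ with $|\beta|_w:=\sum_b w_b\beta_b<i$. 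Here $w_b\ge 1$ makes the remainder cooperate: a Taylor remainder of ordinary order $>i$ automatically has weighted order $>i$, since $|\beta|\ge i$ forces $|\beta|_w\ge i$.

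It then remains to detect these transverse derivatives using paths. For a fixed point $p'=(u',0)\in N$ and constants $c=(c_1,\dots,c_l)$, the curve $\gamma(t)=(u',\,c_1t^{w_1},\dots,c_lt^{w_l})$ is a weighted path, since each $v_b$ satisfies $v_b(\gamma(t))=c_bt^{w_b}=O(t^{w_b})$ and the weight-zero coordinates are constant. Expanding $f$ in its $v$-Taylor series at $p'$ and substituting $v^\beta=c^\beta t^{|\beta|_w}$ gives
\[ f(\gamma(t)) \;=\; \sum_{0\le k<i} \Bigl(\sum_{|\beta|_w=k} \tfrac{1}{\beta!}\,\partial_v^\beta f(u',0)\,c^\beta\Bigr)t^k \;+\; O(t^i). \]
The hypothesis $f(\gamma(t))=O(t^i)$ forces the coefficient of each $t^k$ with $k<i$ to vanish for all $c$; since distinct $\beta$ of the same weighted degree give linearly independent monomials $c^\beta$, this yields $\partial_v^\beta f(u',0)=0$ for all $\beta$ with $|\beta|_w<i$. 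As $p'$ ranges over $N\cap U$, the local lemma gives $f|_U\in C^\infty(U)_{(i)}$, and gluing completes the proof.

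The main obstacle is the local lemma characterizing $C^\infty(U)_{(i)}$ through vanishing of transverse derivatives on $N$. Its ``if'' half requires organizing the $v$-Taylor expansion so that every surviving monomial $v^\beta$ has $|\beta|_w\ge i$: one truncates at ordinary order $i$, discards the low-weighted-degree terms (which vanish by hypothesis), and absorbs the order-$i$ remainder into the ideal using $|\beta|\ge i\Rightarrow|\beta|_w\ge i$. Care is needed to keep the coefficient functions $\partial_v^\beta f(\cdot,0)$ smooth in $u$ and to keep the bookkeeping of weighted versus ordinary degree consistent; once this lemma is in hand, the path computation and the formal reductions above are routine.
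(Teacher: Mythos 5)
Your proposal is correct and follows essentially the same route as the paper: reduce (b) and the easy direction of (a) to formalities about composition, then prove the hard direction of (a) by Taylor-expanding $f$ in weighted coordinates and testing against the monomial paths $t\mapsto(\lambda_a t^{w_a})$, using linear independence of the monomials $\lambda^\beta$ to kill all terms of weighted degree $<i$. The only difference is organizational — your transverse-derivative lemma (splitting off the weight-zero coordinates $u$ and keeping coefficients smooth in $u$) repackages the paper's direct splitting $f=\sum_{j<i}p_j+g$ with $g\in C^\infty(M)_{(i)}$, and in fact handles the weight-zero coordinates slightly more carefully than the paper's constant-coefficient sum over $I\cdot w<i$.
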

\begin{proof}
    \begin{enumerate}
        \item[(a)] We have to show the "only if" direction.  Suppose that $f\in C^\infty(M)$ has the property that $f(\gamma(t)) =  O(t^i)$ for every weighted path $\gamma :\R \to M$ has filtration degree $i$. 

        Let $p \in N$ and let $x_a$ be weighted coordinates defined on $U\sset M$. Consider the Taylor expansion of $f$ with respect to these coordinates:
            \[ f(x) = \sum_{I \cdot w < i} c_Ix^I + g(x) = \sum_{j=1}^{i-1}p_j(x) + g(x), \]
        where $I = (i_1, \dots, i_n)$ is a multi-index, $g\in C^\infty(M)_{(i)}$, and $p_j$ are weighted homogeneous of degree $j$. Let $\lambda = (\lambda_1, \dots, \lambda_n) \in \R^n$ and consider the weighted path $\gamma(t) = (\lambda_1t^{w_1}, \dots, \lambda_nt^{w_n})$. Using that the $p_j$ are weighted homogeneous of degree $j$, we have, by assumption, that 
            \[ f(\gamma(t)) = \sum_{j=0}^{i-1}p_j(\gamma(t))+g(\gamma(t)) = \sum_{j=0}^{i-1}t^jp_j(\lambda)+g(\gamma(t)) = O(t^i). \]
        Since $g(\gamma(t)) = O(t^i)$, this implies that $t^jp_j(\lambda) = O(t^i)$, whence $p_j(\lambda) = 0$ for all $\lambda \in \R^n$. Thus, $f = g \in C^\infty(M)_{(i)}$.

        \item[(b)] Since the composition of weighted morphisms is weighted, a weighted morphism takes weighted paths to weighted paths. For the converse, suppose that $\phi$ takes weighted paths to weighted paths. Let $f \in C^\infty(M')_{(i)}$ and let $\gamma : \R \to M$ be a weighted path. Then $\phi\circ \gamma : \R \to M'$ is a weighted path and
            \[ (\phi^*f)(\gamma(t)) = f(\phi(\gamma(t))) = O(t^i), \]
        hence $\phi^*f \in C^\infty(M)_{(i)}$ by part (a). \qedhere
    \end{enumerate}
\end{proof}

\section{The Weighted Normal Bundle}
\label{section: weighted normal bundles}

Let us motivate this section by the following observation, which we learned from Haj and Higson (\cite{sadegh2018euler}). Suppose that $N\sset M$ is a closed submanifold, and let
    \[ \nu(M,N) = TM|_N/TN \]
be the normal bundle of $N$ in $M$. Any $f\in C^\infty(M)$ defines a fibrewise constant function $f^{[0]} \in C^\infty_{[0]}(\nu(M,N)) = C^\infty(N)$ by restriction to $N$ followed by pull back, and the assignment $f \mapsto f^{[0]}$ defines an isomorphism 
    \[ C^\infty(M)/\van{N} \to C^\infty_{[0]}(\nu(M,N)).  \]
If $f \in \van{N}$, the differential $\ed f$ vanishes along $TN$, hence defines a fibrewise linear map 
    \[ f^{[1]} \in C^\infty_{[1]}(\nu(M,N)) = \G(\nu(M,N)^*), \quad X_p \mapsto \ed_pf(X_p),\]
and the assignment $f \mapsto f^{[1]}$ defines an isomorphism
    \[ \van{N}/\van{N}^2 \to C^\infty_{[1]}(\nu(M,N)).  \]
This extends uniquely to an isomorphism of graded algebras
    \[ \bigoplus_{k=0}^\infty \van{N}^k/\van{N}^{k+1} \to C^\infty_{pol}(\nu(M,N)), \]
where $C^\infty_{pol}(\nu(M,N))$ denotes the graded algebra of fibrewise polynomial functions on $\nu(M,N)$ (see~\eqref{equation: polynomials on graded bundles}). It can be shown (see, for instance,~\cite{grabowski2009higher}) that the map  
    \begin{align*}
        \nu(M,N) \to \aHom(C^\infty_{pol}(\nu(M,N)), \R), \quad X_p \mapsto \mathrm{ev}_{X_p}
    \end{align*}
is a bijection of sets. Even more, one can naturally define a vector bundle structure on $\aHom(C^\infty_{pol}(\nu(M,N)), \R)$ in such a way that the evaluation map is an isomorphism of vector bundles. From this observation, we deduce that we can recover the normal bundle of $N$ in $M$ from the filtration 
    \[ C^\infty(M) \supseteq \van{N} \supseteq \van{N}^2 \supseteq \cdots .\]
Loizides and Meinrenken repeated this construction with general weightings in~\cite{loizides2023differential}, calling the resulting object the \emph{weighted normal bundle} $\nuw(M,N) \to N$, which is the topic of this section. 

\subsection{Graded Bundles}
\label{subsection: graded bundles}

In contrast to the normal bundle, the weighted normal bundle is not naturally a vector bundle; instead it is a \emph{graded bundle}, in the sense of Grabowski and Rotkiewicz, \cite{grabowski2012graded}. 

\begin{definition}[\cite{grabowski2012graded}]
    A \emph{graded bundle} is a smooth manifold $E$ with a smooth monoid action of $(\R, \cdot)$, 
        \[ \kappa : \R \times E \to E, \quad (t,x) \mapsto \kappa_t(x). \]
    A \emph{morphism of graded bundles} if a smooth map between graded bundles which intertwines the $(\R, \cdot)$-action. 
\end{definition}

Note in particular that $\kappa_0\circ \kappa_0 = \kappa_0$, hence $N = \kappa_0(E)$ is a submanifold of $E$; in fact, Grabowski and Rotkiewicz showed in~\cite{grabowski2012graded} that $E$ is a locally trivial fibre bundle over $N$.

\begin{examples}
    \begin{itemize}
        \item[(a)] A vector bundle $V \to M$ is a graded bundle, with the monoid action of $\R$ given by scalar multiplication. In fact, Grabowski and Rotkiewicz showed in~\cite{grabowski2009higher} that vector bundles are precisely the graded bundle for which the map 
            \begin{equation}
            \label{equation: vecotr bundle regularity}
                V \to TE, \quad v \mapsto \frac{d}{dt}\bigg|_{t=0}\kappa_t(v)
            \end{equation}
        is injective. 

        \item[(b)] A negatively graded vector bundle $E = E_{-r}\oplus E_{-r+1} \oplus \cdots \oplus E_{-1} \to N$ is a graded bundle, where $\R$ acts on the factor $E_{-i}$ as scalar multiplication by $t^i$. If $E \to N$ is a graded bundle, then $\nu(E, N)$ is a graded vector bundle and there is a non-canonical isomorphism 
            \[ E \cong \nu(E, N) \]
        as graded bundles (see~\cite{grabowski2012graded}). Thus, any graded bundle is \emph{non-canonically} the total space of a vector bundle. 

        \item[(c)] A graded Lie groupoid is a Lie groupoid $G\toto M$ with a monoid action of $(\R, \cdot)$ by Lie groupoid morphisms. In the case that the scalar multiplication satisfies~\eqref{equation: vecotr bundle regularity}, then this recovers the notion of a VB-groupoid. On the other hand, if $G\toto M$ is a vector bundle, then this recovers the notion of a graded vector bundle.
    \end{itemize}
\end{examples}

Given a graded bundle $\pi:E \to N$, let $C^\infty_{[n]}(E) = \{f \in C^\infty(E) : \kappa_t^*f = t^nf \}$ and let 
    \begin{equation}
    \label{equation: polynomials on graded bundles}
        C^\infty_{pol}(E) = \bigoplus_{n\geq 0} C^\infty_{[n]}(E)
    \end{equation}
be the space of polynomial functions on $E$; note that $C^\infty_{[0]}(E) = C^\infty(N)$. Let $k_0 = \mathrm{dim}(N)$ and $k_i = \mathrm{rank}(\nu(E, N)_{-i})$. Given an open set $U\sset N$, coordinate systems $x_a$ defined on $\pi^{-1}(U)\sset E$ are called \emph{graded bundle coordinates} if $x_a \in C^\infty_{[i]}(E)$ for $k_{i-1}<a\leq k_i$. A \emph{graded subbundle} is a submanifold $F\sset E$ which is closed under the monoid action of $\R$; graded subbundles are always locally cut out by graded coordinates. 

\begin{example}
    If $E\to N$ is a vector bundle then $C^\infty_{[1]}(E) \cong \G(E^*)$. More generally, we have 
        \[ C^\infty_{pol}(E) \cong \mathrm{Sym}(\G(V^*)) \]
    as graded algebras. Graded coordinate systems for $E$ are vector bundle coordinates systems, consisting of coordinates for the base and linear coordinates on the fibres.
\end{example}

As shown in~\cite{grabowski2017duality}, graded bundles are completely determined by their polynomial functions, in the sense that 
    \begin{equation}
    \label{equation: graded bundles vs polynomial functions}
        E = \aHom(C^\infty_{pol}(E), \R).
    \end{equation}

\subsection{Definition of the weighted normal bundle}
\label{subsection: definition of weighted normal bundle}

In light of~\eqref{equation: graded bundles vs polynomial functions}, one could \emph{define} a graded bundle by declaring what its polynomial functions are (cf.~\cite{sadegh2018euler}). Motivated by the observation that $C^\infty_{pol}(\nu(M,N)) = \bigoplus_{k\geq 0} \van{N}^k/ \van{N}^{k+1}$, let $(M,N)$ be a weighted pair and let $\gr(C^\infty(M))$ be the graded algebra with graded components
    \[ \gr(C^\infty(M))_{(i)} = C^\infty(M)_{(i)}/C^\infty(M)_{(i+1)}. \]

\begin{definition}[\cite{loizides2023differential}]
\label{definition: weighted normal bundle}
    The \emph{weighted normal bundle} of the weighted pair $(M,N)$ is the set 
        \[ \nuw(M,N) = \aHom(\gr(C^\infty(M)), \R).  \]
\end{definition}

Given $f\in C^\infty(M)_{(i)}$, let $f^{[i]}$ denote its class in $\gr(C^\infty(M))_{(i)}$. Evaluation by $f^{[i]}$ determines a map, which we also denote by $f^{[i]}$, 
    \[ f^{[i]} : \nuw(M,N)\to \R \quad \varphi \mapsto \varphi(f^{[i]}) \]
which we refer to as the \emph{$i$-th homogeneous approximation} of $f$. The $C^\infty$-structure of $\nuw(M,N)$ is defined by declaring that $f^{[i]}$ is smooth for any $f \in C^\infty(M)_{(i)}$. 

\begin{theorem}[{\cite[Theorem 4.2]{loizides2023differential}}]
\label{theorem: weighted normal bundle is smooth}
    \begin{itemize}
        \item[(a)] The weighted normal bundle $\nuw(M,N)$ has a unique structure as a graded bundle over $N$, of dimension equal to that of $M$, in such a way that
            \[ C^\infty_{pol}(\nuw(M,N)) = \gr(C^\infty(M)) \]
        as graded algebras.

        \item [(b)] Given weighted coordinates $x_1,\dots , x_n$ on $U \sset M$, the homogeneous approximations $x_a^{[w_a]}$ for $a = 1, \dots ,n$ serve as graded bundle coordinates on $\nuw(M,N)|_{U\cap N} = \nuw(U, U\cap N)$. Moreover, $C^\infty_{pol}(\nuw(U, U\cap N))$ is generated as an algebra over $C^\infty(U\cap N)$ by $x_a^{[w_a]}$ with $w_a \geq 1$. 
    \end{itemize}
\end{theorem}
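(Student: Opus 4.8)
The plan is to reduce everything to a computation in local weighted coordinates and then glue, exploiting the principle \eqref{equation: graded bundles vs polynomial functions} that a graded bundle is recovered from its algebra of polynomial functions in order to obtain existence and uniqueness simultaneously. First I would fix $p \in N$ together with weighted coordinates $x_1, \dots, x_n$ on a neighbourhood $U$, with weights $w_1, \dots, w_n$ arranged so that $w_a = 0$ for $a \leq k_0$ (these restrict to coordinates on $N\cap U$) and $w_a \geq 1$ otherwise. The whole theorem then rests on identifying the local associated graded algebra $\gr(C^\infty(U))$ explicitly.

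The technical heart is to establish
    \[ \gr(C^\infty(U)) \cong C^\infty(U\cap N)[\,\xi_a : w_a \geq 1\,], \]
the free polynomial algebra over $C^\infty(U\cap N)$ in generators $\xi_a = x_a^{[w_a]}$ of degree $w_a$. In degree $0$ this is $C^\infty(U)/C^\infty(U)_{(1)} \cong C^\infty(U\cap N)$, which holds because $C^\infty(U)_{(1)}$ is, by \eqref{equation: local weighting filtration}, the ideal generated by $\{x_a : w_a \geq 1\}$. For the general statement I would Taylor-expand an arbitrary $f$ in the positive-weight variables about the locus $N = \{x_a = 0 : w_a \geq 1\}$ and prove the key characterization: $f \in C^\infty(U)_{(i)}$ if and only if every weighted Taylor coefficient of weighted degree $< i$ vanishes identically on $U\cap N$. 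This single statement yields both spanning (the expansion of $f\in C^\infty(U)_{(i)}$ modulo $C^\infty(U)_{(i+1)}$ involves only monomials of weighted degree exactly $i$) and freeness (no nonzero weighted-homogeneous coordinate polynomial of degree $i$ lies in $C^\infty(U)_{(i+1)}$). Proving this characterization is the main obstacle; I would do it by substituting the scaling paths $x_a \mapsto \lambda_a t^{w_a}$ of \autoref{A-proposition: characterization of weighted morphisms} and reading off the coefficient of $t^j$, namely $\sum_{w\cdot s = j} c_s\,\lambda^s$, which vanishes identically in $\lambda$ precisely when the coefficients $c_s$ do.

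Granting the local model, I would dualize it. An algebra homomorphism $\varphi \in \aHom(\gr(C^\infty(U)), \R)$ is determined by its restriction to the degree-zero part $C^\infty(U\cap N)$ — equivalently a point of $U\cap N$, since $\R$-algebra characters of the smooth functions on a manifold are point evaluations — together with the scalars $\varphi(\xi_a)$ for $w_a \geq 1$. Hence $\nuw(U, U\cap N) \cong (U\cap N)\times \R^{\#\{a : w_a\geq 1\}}$ as a set, the homogeneous approximations $x_a^{[w_a]}$ serving as coordinates; declaring these smooth produces a chart of dimension $\#\{a : w_a = 0\} + \#\{a : w_a \geq 1\} = n = \dim M$, which is part (b). That the resulting smooth structure is independent of the chosen weighted coordinates follows because the transition between two such coordinate families is governed by the identities $(fg)^{[i+j]} = f^{[i]}g^{[j]}$ and the fact that each $f^{[i]}$ is, by the local model, a fibrewise polynomial in the $\xi_a$ with $C^\infty(U\cap N)$-coefficients; both are immediate from the algebra structure of $\gr(C^\infty(M))$, so transition maps are smooth.

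Finally I would install the graded-bundle structure. The grading of $\gr(C^\infty(M))$ dualizes to a monoid action $\kappa : \R \times \nuw(M,N) \to \nuw(M,N)$, determined by $\kappa_t^* f^{[i]} = t^i f^{[i]}$; in the chart just built it is the fibrewise scaling $\xi_a \mapsto t^{w_a}\xi_a$, so $\kappa$ is smooth and $\kappa_0$ retracts $\nuw(U,U\cap N)$ onto $U\cap N$. By construction $f^{[i]} \in C^\infty_{[i]}(\nuw(M,N))$, and the local model shows conversely that every function homogeneous of degree $i$ under $\kappa$ has this form, giving $C^\infty_{pol}(\nuw(M,N)) = \gr(C^\infty(M))$ as graded algebras, which is part (a). Uniqueness is then automatic: by \eqref{equation: graded bundles vs polynomial functions} a graded bundle is recovered from its algebra of polynomial functions, so any graded-bundle structure realizing the prescribed $C^\infty_{pol}$ must agree with the one constructed.
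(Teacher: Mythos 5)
Since the paper imports this theorem from Loizides--Meinrenken without giving its own proof, the only in-paper comparison is the proof of \autoref{theorem: linear weighted normal bundle}, which explicitly mirrors the same argument; your proposal is correct and takes essentially that route: local identification of $\gr(C^\infty(U))$ as a free polynomial algebra over $C^\infty(U\cap N)$ (your Taylor-coefficient characterization is exactly the scaling-path mechanism used in the paper's proof of \autoref{A-proposition: characterization of weighted morphisms}(a)), dualization to charts given by the $x_a^{[w_a]}$, smoothness of transitions because the classes $f^{[i]}$ are fibrewise polynomials with smooth coefficients, and uniqueness via \eqref{equation: graded bundles vs polynomial functions}. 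The only steps you gloss over---identifying characters lying over $U\cap N$ with characters of the localized algebra $\gr(C^\infty(U))$, and globalizing the equality $C^\infty_{pol}(\nuw(M,N)) = \gr(C^\infty(M))$ by partitions of unity---are routine bump-function arguments and do not constitute genuine gaps.
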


This construction is functorial for weighted morphisms. That is, any weighted morphism $\varphi:(M,N) \to (M',N')$ induces a morphism of graded bundles 
    \[ \nuw(\varphi) : \nuw(M,N) \to \nuw(M',N'). \]
Note as well that for any $f\in C^\infty(M')_{(i)}$ we have 
    \[ \nuw(\varphi)^*f^{[i]} = (\varphi^*f)^{[i]}.  \]

\begin{examples}
    \begin{itemize}
        \item[(a)] If $M$ is trivially weighted along $N$ then $\nuw(M,N) = \nu(M,N)$, as explained at the beginning of this section. In particular, $\nuw(M,M) = M$ and $\nuw(M, \emptyset) = \emptyset$

        \item[(b)] If $(M,N)$ and $(M',N')$ are weighted pairs then the canonical map 
            \[ \nuw(M\times M', N\times N') \to \nuw(M,N) \times \nuw(M',N') \]
        is an isomorphism of graded bundles, as can be seen by considering local graded bundle coordinates. Moreover if $\varphi:(M, N) \to (R,Q)$ and $\psi:(M',N') \to (R', Q')$ are weighted morphisms, then under this identification $\nuw(\varphi\times \psi) = \nuw(\varphi)\times \nuw(\psi)$. 

        \item[(c)] If $\R^2$ is weighted be declaring that $\mathrm{wt}(x) = 1$ and $\mathrm{wt}(y) = 3$, then $\nuw(\R, \{0\}) = \R^2$. The monoid action of $\R$ on $\R^2$ is given by 
            \[ \kappa_t(x,y) = (tx, t^3y).  \]

        \item[(d)] If $E\to N$ is a graded bundle, then $E$ is canonically weighted along $N$ by 
            \[ C^\infty(E)_{(i)} = \{f \in C^\infty(E) : \kappa_t^*f = O(t^i) \};  \]
        weighted coordinates are then given by graded bundle coordinates. With respect to this weighted, $\nuw(E, N) = E$ canonically. 
    \end{itemize}
\end{examples}

\subsection{Properties of the weighted normal bundle}

\begin{proposition}
\label{proposition: weighted embeddings define embeddings}
    If $\iota: (R, Q)\into (M,N)$ is a weighted embedding then
        \[ \nuw(\iota):\nuw(R, Q) \to \nuw(M,N) \]
    is an embedding. 
\end{proposition}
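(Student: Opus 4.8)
The plan is to reduce the statement to a local computation in immersion coordinates and then to argue that the resulting local model is the inclusion of a coordinate graded subbundle. The two essential inputs are the normal form for weighted immersions (\autoref{theorem: weighted embedding characterization}) and the description of $\nuw(M,N)$ in graded bundle coordinates (\autoref{theorem: weighted normal bundle is smooth}(b)), together with the functoriality relation $\nuw(\iota)^*f^{[i]} = (\iota^*f)^{[i]}$. Fix $p\in R$ with image $p_0 = \iota(p)$ and apply \autoref{theorem: weighted embedding characterization} to produce weighted coordinates $x_1,\dots,x_{n'}$ near $p_0$ (with $n' = \dim M$) such that $y_a := \iota^*x_a$ are weighted coordinates near $p$ for $a \leq n = \dim R$, while $\iota^*x_a = 0$ for $n < a \leq n'$; by construction each $y_a$ has weight $w_a$ equal to that of $x_a$, since its differential class is nonzero in the appropriate graded piece.

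By \autoref{theorem: weighted normal bundle is smooth}(b), the homogeneous approximations $x_a^{[w_a]}$, $a = 1,\dots,n'$, are graded bundle coordinates on $\nuw(M,N)$ over a neighbourhood of $p_0$, while the $y_a^{[w_a]}$, $a = 1,\dots,n$, are graded bundle coordinates on $\nuw(R,Q)$ over a neighbourhood of $p$. Using $\nuw(\iota)^*x_a^{[w_a]} = (\iota^*x_a)^{[w_a]}$, which equals $y_a^{[w_a]}$ for $a \leq n$ and vanishes for $a > n$, the map $\nuw(\iota)$ reads in these coordinates as the standard slice inclusion $(z_1,\dots,z_n) \mapsto (z_1,\dots,z_n,0,\dots,0)$. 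Hence $\nuw(\iota)$ is a local embedding, carrying a neighbourhood of each point diffeomorphically onto the graded subbundle $\{x_a^{[w_a]} = 0 : a > n\}$ of $\nuw(M,N)$.

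It remains to upgrade this local statement to a global embedding. For injectivity, observe that $\nuw(\iota)$ is precomposition with the graded algebra homomorphism $\gr(\iota^*)\colon \gr(C^\infty(M)) \to \gr(C^\infty(R))$; since the weighting of $R$ is the one it carries as a weighted submanifold (by the corollary to \autoref{theorem: weighted embedding characterization} and \autoref{proposition: weighted submanifolds are weighted}), the restriction maps $\iota^*\colon C^\infty(M)_{(i)} \to C^\infty(R)_{(i)}$ are surjective, so $\gr(\iota^*)$ is surjective and $\nuw(\iota)$ is injective. Since $\iota$ is an embedding, the induced map on bases $Q\into N$ is a topological embedding; combined with the local-embedding property just established and the local triviality of graded bundles, this shows that the inverse of $\nuw(\iota)$ on its image is continuous, so $\nuw(\iota)$ is a homeomorphism onto its image and therefore an embedding.

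I expect the main obstacle to be precisely this last, topological step. The infinitesimal content is entirely routine once the normal form and the graded coordinate description are in hand, but an injective immersion need not be an embedding in general; ruling out the image accumulating on itself requires the induced map $Q\into N$ to be a genuine topological embedding (not merely injective), used together with local triviality to establish continuity of the inverse.
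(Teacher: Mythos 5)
Your proof runs on the same inputs as the paper's: the normal form for weighted immersions (\autoref{theorem: weighted embedding characterization}), the graded coordinate description of $\nuw(M,N)$ from \autoref{theorem: weighted normal bundle is smooth}, the relation $\nuw(\iota)^*f^{[i]} = (\iota^*f)^{[i]}$, and injectivity deduced from surjectivity of $\gr(C^\infty(M)) \to \gr(C^\infty(R))$. Where the two arguments diverge is in how the local slice picture is globalized. The paper first uses the corollary of \autoref{theorem: weighted embedding characterization} to replace $\iota$ by the inclusion of a weighted submanifold $R \sset M$ with $Q = R\cap N$, and then proves the sharper chart-level statement that the image of $\nuw(R\cap U, R\cap N\cap U)$ in $\nuw(U, U\cap N)$ is \emph{exactly} the zero set $\{x_1^{[w_1]} = \cdots = x_r^{[w_r]} = 0\}$; the nontrivial reverse inclusion is algebraic, resting on the weighted factorization of any $f \in C^\infty(U)_{(i)}\cap \van{R\cap U}$ as $\sum_a f_a x_a$ with $f_a \in C^\infty(U)_{(i-w_a)}$. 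Since submanifold charts satisfy $R\cap U = \{x_1=\cdots=x_r=0\}$, the chart $\nuw(U,U\cap N)$ is automatically saturated (its full preimage under $\nuw(\iota)$ is $\nuw(R\cap U, R\cap N\cap U)$), so this identifies the whole image as an embedded submanifold with $\nuw(\iota)$ a diffeomorphism onto it, and no separate point-set argument is needed.

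Your final step is where the gap sits. The implication you assert --- injective, local embedding, covering a topological embedding of bases, plus local triviality, hence homeomorphism onto the image --- is not a valid general principle: taking both bases to be a point, it reduces to the false claim that an injective local embedding is an embedding (the figure-eight immersion satisfies all the hypotheses, with local triviality vacuous). What rescues the argument in your setting is a feature you use but never isolate: graded bundle charts contain entire fibres, $\nuw(U, U\cap N) = \nuw(M,N)|_{U\cap N}$, so your slice statement holds on full restrictions $\nuw(R,Q)|_{W\cap Q} \to \nuw(M,N)|_{U\cap N}$, that is, on preimages of base opens. Granting this, one must still shrink $U$ so that the chart is saturated --- here is where the base map being a topological embedding enters: $\iota(W\cap Q)$ is open in $\iota(Q)$, so one can arrange $\iota^{-1}(U\cap N) = W\cap Q$ --- and then invoke the fact that, for an injective continuous map, being an embedding can be verified on the preimages of an open cover of the target. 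With those two observations added, your proof closes; as written, ``local triviality'' does no work and the continuity of the inverse is only asserted. Alternatively, pass to the weighted-submanifold picture first, as the paper does, where the saturation is automatic.
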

\begin{proof} 
    By~\autoref{theorem: weighted embedding characterization} we may assume that $R$ is a weighted submanifold of $M$ and $Q = R\cap N$. We will show that any choice of weighted submanifold coordinates for $R$ define submanifold coordinates for the image of $\nuw(R, R\cap N)$ in $\nuw(M,N)$. 
    
    Recall that the weighting on $R$ is defined so that $\iota: R\into M$ defines a surjection $\gr(C^\infty(M)) \to \gr(C^\infty(R))$, hence an injection $\nuw(R, R\cap N) \to \nuw(M, N)$. The image of $\nuw(R, R\cap N)$ in $\nuw(M,N)$ consists of algebra morphisms $\varphi:\gr(C^\infty(M)) \to \R$ with the property that the value of $\varphi(f^{[i]})$ depends only on $(f|_R)^{[i]} \in C^\infty(R)_{(i)}/C^\infty(R)_{(i+1)}$. Therefore, if $x_a$ are weighted submanifold coordinates defined on $U\sset M$ such that 
        \[ R \cap U = \{ x_1 = \cdots = x_r = 0\},  \]
    and $\varphi \in \nuw(U, U\cap N)$ is in the image of $\nuw(R\cap U, R\cap N \cap U)$, then since $(x_a|_R)^{[w_a]} = 0$ for $a=1, \dots, r$ we have $x_a^{[w_a]}(\varphi) = 0$. This shows that 
        \[ \nuw(R\cap U, R\cap N \cap U) \sset \{ x_1^{[w_1]} = \cdots = x_r^{[w_r]} = 0 \}. \]
    For the reverse inclusion, let $\varphi \in \{ x_1^{[w_1]} = \cdots = x_r^{[w_r]} = 0 \}$. Given $f \in C^\infty(U)_{(i)}\cap \van{R\cap U}$, we want to show that $\varphi(f^{[i]}) = 0$. Since $f$ vanishes along $R\cap U$ we can write $f = \sum_{a=1}^r f_ax_a$ with $f_a \in C^\infty(U)_{(i-w_a)}$, hence
        \[ \varphi(f^{[i]}) = \sum_{a=1}^rx_a^{[w_a]}(\varphi)\cdot f_a^{[i-w_a]}(\varphi) = 0, \]
    as needed. 
\end{proof}

\begin{proposition}
\label{proposition: tangent bundle of weighted normal bundle}
    There is a canonical isomorphism  
        \[ T\nuw(M,N)|_N = \gr(TM|_N).  \]
    If $\varphi:(M,N) \to (M', N')$ is a weighted morphism, then with respect to this identification one has 
        \[ T\nuw(\varphi)|_N = \gr(T\varphi|_N) \]
\end{proposition}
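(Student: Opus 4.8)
The plan is to construct the isomorphism at the level of \emph{cotangent} bundles, where both sides are spanned by differentials and the map is manifestly natural, and then dualize. Concretely, I would define
\[ \Phi : \gr(T^*M|_N) \to T^*\nuw(M,N)|_N, \qquad \Phi\big([\ed f|_N]\big) = \ed\big(f^{[i]}\big)\big|_N \quad \text{for } f \in C^\infty(M)_{(i)}, \]
where $f^{[i]}$ is the $i$-th homogeneous approximation and $[\,\cdot\,]$ denotes the class in $\gr(T^*M|_N)_{(i)} = (T^*M|_N)_{(i)}/(T^*M|_N)_{(i+1)}$. The proposition then follows by dualizing $\Phi$, using $(TM|_N)_{(-i)} = \mathrm{ann}((T^*M|_N)_{(i+1)})$, which gives a perfect pairing between $\gr(TM|_N)_{(-i)}$ and $\gr(T^*M|_N)_{(i)}$, so that $(\gr(T^*M|_N))^* = \gr(TM|_N)$ as graded bundles.

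The heart of the argument is a local computation establishing that $\Phi$ is well defined. I would fix weighted coordinates $x_a$ near $p \in N$, with $N = \{x_a = 0 : w_a \geq 1\}$, and recall from \autoref{theorem: weighted normal bundle is smooth} that the homogeneous approximations $x_a^{[w_a]}$ are graded bundle coordinates on $\nuw(M,N)$. Taylor expanding $f \in C^\infty(M)_{(i)}$ in the positive-weight coordinates and extracting the weighted-homogeneous part of degree $i$ gives, for $i \geq 1$,
\[ \ed\big(f^{[i]}\big)\big|_N = \sum_{a\,:\,w_a = i} \frac{\partial f}{\partial x_a}\Big|_N\,\ed\big(x_a^{[w_a]}\big)\big|_N, \]
because $f^{[i]}$ vanishes along the zero section together with all of its base-direction derivatives, so only its linear-in-fibre part survives. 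Since $\{\ed x_a|_N : w_a \geq i+1\}$ is a frame for $(T^*M|_N)_{(i+1)}$, the hypothesis $\ed f|_N \in (T^*M|_N)_{(i+1)}$ says precisely that $\partial f/\partial x_a|_N = 0$ whenever $w_a \leq i$, and the displayed formula then forces $\ed(f^{[i]})|_N = 0$; this is exactly well-definedness. (The degree-zero part recovers the standard identification $\gr(T^*M|_N)_{(0)} \cong T^*N$.) Surjectivity is immediate, since the $\ed(x_a^{[w_a]})|_N$ form a frame of $T^*\nuw(M,N)|_N$, so $\Phi$ is an isomorphism by dimension count; and it is graded because $\kappa_t^*\ed(f^{[i]}) = t^i\,\ed(f^{[i]})$ matches the weight decomposition of $T^*\nuw(M,N)|_N$ under the monoid action. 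I expect this well-definedness computation to be the main obstacle; the rest is linear algebra.

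For the naturality claim, let $\varphi : (M,N)\to(M',N')$ be a weighted morphism. Its tangent map along $N$ is filtration preserving (this is shown in the forward direction of \autoref{theorem: characterization of weighted morphisms in terms of their graphs}), so $\gr(T\varphi|_N)$ is defined. Using the identity $\nuw(\varphi)^* f'^{[i]} = (\varphi^* f')^{[i]}$ together with $\ed \circ \varphi^* = \varphi^* \circ \ed$, I would check that the square
\[ \big(T\nuw(\varphi)|_N\big)^* \circ \Phi_{M'} = \Phi_M \circ \big(\gr(T\varphi|_N)\big)^* \]
commutes, where I use that the associated graded of the filtered dual of $T\varphi|_N$ is the dual of $\gr(T\varphi|_N)$. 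Dualizing this square and identifying $T\nuw|_N$ with $\gr(TM|_N)$ via the dual of $\Phi$ at both ends yields $T\nuw(\varphi)|_N = \gr(T\varphi|_N)$, as required.
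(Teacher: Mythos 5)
Your proof is correct, but it takes a genuinely different route from the paper's. The paper's proof is essentially a citation plus bookkeeping: it invokes the graded-bundle structure of $\nuw(M,N)$ together with~\cite[Proposition 4.4]{loizides2023differential} to get the splitting $T\nuw(M,N)|_N = TN \oplus \gr(\nu(M,N))$, then identifies $\nu(M,N)_{(i)} = (TM|_N)_{(i)}/TN$ and sums the graded pieces to obtain $\gr(TM|_N)$; notably, it never explicitly verifies the functoriality statement $T\nuw(\varphi)|_N = \gr(T\varphi|_N)$, which is left as an assertion. You instead build the isomorphism by hand at the cotangent level, sending $[\ed f|_N] \mapsto \ed\bigl(f^{[i]}\bigr)\big|_N$, prove well-definedness by the local Taylor-expansion computation in weighted coordinates (which is indeed the crux: the linear-in-fibre part of $f^{[i]}$ is exactly recorded by the class of $\ed f|_N$ in $\gr(T^*M|_N)_{(i)}$), and then dualize through the perfect pairing $\gr(TM|_N)_{(-i)} \times \gr(T^*M|_N)_{(i)} \to \R$ coming from $(TM|_N)_{(-i)} = \mathrm{ann}((T^*M|_N)_{(i+1)})$. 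What each approach buys: the paper's is shorter, resting on an external structural result about graded bundles; yours is self-contained, makes the identification explicit and manifestly canonical, and — its real advantage — delivers the naturality claim as an actual computation via $\nuw(\varphi)^*f'^{[i]} = (\varphi^*f')^{[i]}$ and the compatibility of transpose with associated graded, filling a gap the paper's proof glosses over. One minor presentational point: since $(T^*M|_N)_{(i)}$ is the $C^\infty(N)$-span of the $\ed f|_N$, you should state that $\Phi$ is extended $C^\infty(N)$-linearly and that well-definedness for such combinations follows from the same coordinate formula; this is immediate, but worth a sentence.
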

\begin{proof}
    Using that $\nuw(M,N)$ is a graded bundle and~\cite[Proposition 4.4]{loizides2023differential}, we have that 
        \begin{align*}
            T\nuw(M,N)|_N = TN \oplus \gr(\nu(M,N)). 
        \end{align*}
    The filtration of $\nu(M,N)$ is given by $\nu(M,N)_{(i)} = (TM|_N)_{(i)}/TN$, so that
        \begin{align*}
            T\nuw(M,N)|_N & = TN \oplus \bigoplus_{i=1}^r \nuw(M,N)_{(-i)}/ \nuw(M,N)_{(-i+1)} \\
            & = TN \oplus \bigoplus_{i=1}^r(TM|_N)_{(-i)}/(TM|_N)_{(-i+1)} \\
            & = \gr(TM|_N),
        \end{align*}
    as claimed. 
\end{proof}

\begin{corollary}
\label{corollary: weighted submersions induce submersions}
    If $F:M\to M'$ is a weighted submersion between weighted pairs $(M,N)$ and $(M',N')$, then $\nuw(F)$ is a submersion. 
\end{corollary}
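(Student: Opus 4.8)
The plan is to reduce the statement to a local computation in adapted coordinates, using the normal form for weighted submersions together with the explicit description of graded bundle coordinates on the weighted normal bundle. Since $F$ is a weighted submersion, \autoref{theorem: weighted submersion coordinates} provides, near any $p\in N$, weighted coordinates $x_1,\dots,x_n$ on an open set $U\sset M$ such that $x_1,\dots,x_{n'}$ are $F$-basic and descend to weighted coordinates $y_1,\dots,y_{n'}$ near $p_0=F(p)$, with $F^*y_j=x_j$ and $\mathrm{wt}(x_j)=\mathrm{wt}(y_j)$ for $j\le n'$. First I would fix such charts at a point of $N$, shrinking $U$ so that $F(U)$ lies in the $y$-chart.

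Next I would pass to the weighted normal bundles. By \autoref{theorem: weighted normal bundle is smooth}, the homogeneous approximations $x_a^{[w_a]}$ are graded bundle coordinates on $\nuw(U,U\cap N)=\nuw(M,N)|_{U\cap N}$, and likewise the $y_j^{[w_j]}$ are graded bundle coordinates on the target. The functoriality identity $\nuw(F)^*f^{[i]}=(F^*f)^{[i]}$ recorded above then gives $\nuw(F)^*\big(y_j^{[w_j]}\big)=(F^*y_j)^{[w_j]}=x_j^{[w_j]}$ for each $j\le n'$. Hence, in these graded coordinates, $\nuw(F)$ is simply the coordinate projection $(x_1^{[w_1]},\dots,x_n^{[w_n]})\mapsto(x_1^{[w_1]},\dots,x_{n'}^{[w_{n'}]})$, which is manifestly a submersion. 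Since such charts cover $\nuw(M,N)$, the map $\nuw(F)$ is a submersion.

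The one point that requires care---and which I regard as the conceptual heart of the statement---is that a morphism of graded bundles can fail to be a submersion away from the zero section even when it is one along it, so it is \emph{not} a priori enough to check surjectivity of $\nuw(F)$ over $N$ alone. The coordinate argument sidesteps this because the normal form renders $\nuw(F)$ a projection on the entire chart $\nuw(U,U\cap N)$, not merely along $U\cap N$. Alternatively, one could argue conceptually: \autoref{proposition: tangent bundle of weighted normal bundle} identifies $T\nuw(F)|_N$ with $\gr(TF|_N)$, and the associated graded of the filtration-level surjections $(TM|_N)_{(i)}\to(TM'|_{N'})_{(i)}$ defining a weighted submersion is again surjective, so $\nuw(F)$ is a submersion along $N$. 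One then propagates this to all of $\nuw(M,N)$ by observing that $\nuw(F)$ intertwines the scaling actions $\kappa_t$, so the rank of $T\nuw(F)$ is constant along each orbit $\{\kappa_t(\xi):t\neq0\}$; letting $t\to0$ pushes the orbit into $N$, where the rank is maximal, and lower semicontinuity of the rank then forces the rank along the whole orbit to be maximal as well. I expect the coordinate route to be the shorter of the two to write out in full.
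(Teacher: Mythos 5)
Your proposal is correct, but your primary (coordinate) argument is not the route the paper takes --- the paper's proof is exactly your ``conceptual alternative''. The paper invokes \autoref{proposition: tangent bundle of weighted normal bundle} to identify $T\nuw(F)|_N$ with $\gr(TF|_N)$, which is fibrewise surjective because the associated graded of the degreewise surjections $(TM|_N)_{(i)}\to(TM'|_{N'})_{(i)}$ is again surjective; it concludes that $\nuw(F)$ is a submersion along $N$, hence near $N$, and then disposes of the rest in a single sentence --- ``since $\nuw(F)$ is a morphism of graded bundles it follows that it is a submersion everywhere'' --- which is precisely the $\kappa_t$-equivariance and rank-constancy-along-orbits argument you spell out. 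Your coordinate route buys something genuine: combining the normal form for weighted submersions (\autoref{theorem: weighted submersion coordinates}) with the fact that homogeneous approximations of weighted coordinates are graded bundle coordinates (\autoref{theorem: weighted normal bundle is smooth}) and the functoriality identity $\nuw(F)^*f^{[i]}=(F^*f)^{[i]}$ exhibits $\nuw(F)$ as a literal coordinate projection on each chart $\nuw(U,U\cap N)$, so maximal rank holds at every point of the chart simultaneously and no propagation from the zero section is needed. What the paper's route buys is brevity and independence from the normal form theorem: it uses only the definition of weighted submersion plus homogeneity of the graded bundle structure. Your diagnosis of the crux --- that a graded bundle morphism can fail to be a submersion away from the zero section, so checking along $N$ alone is insufficient --- is exactly the content the paper's last sentence leaves implicit, and your orbit argument is the correct way to fill it in.
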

\begin{proof}
    By~\autoref{proposition: tangent bundle of weighted normal bundle}, $\nuw(F)$ is a submersion along $N \sset \nuw(M,N)$ hence near $N$. Since $\nuw(F)$ is a morphism of graded bundles it follows that it is a submersion everywhere. 
\end{proof}

\begin{proposition}
\label{proposition: weighter normal functor and fibre products}
    Suppose that $F: M\to M''$ and $G:M' \to M''$ are weighted transverse. Then 
        \begin{itemize}
            \item[(a)] $\nuw(F): \nuw(M,N) \to \nuw(M'', N'')$ and $\nuw(G):\nuw(M',N') \to \nuw(M'', N'')$ are transverse, and 
            \item[(b)] the canonical map
                \[ \nuw(M\times_{M''} M', N\times_{N''} N')
                \to \nuw(M, N) \times_{\nuw(M'',N'')} \nuw(M', N') \]
            is an isomorphism of graded bundles.
        \end{itemize}
\end{proposition}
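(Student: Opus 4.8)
The plan is to deduce both statements from the identification $T\nuw(\varphi)|_N = \gr(T\varphi|_N)$ of \autoref{proposition: tangent bundle of weighted normal bundle}, together with the homogeneity (scaling) argument already used in \autoref{corollary: weighted submersions induce submersions}. For (a), I first examine transversality of $\nuw(F)$ and $\nuw(G)$ along the zero sections $N \sset \nuw(M,N)$ and $N' \sset \nuw(M',N')$. At a pair $(p,q)\in N\times N'$ with $F(p)=G(q)$, \autoref{proposition: tangent bundle of weighted normal bundle} identifies the differentials of $\nuw(F)$ and $\nuw(G)$ with the graded maps $\gr(T_pF)$ and $\gr(T_qG)$ acting on $\gr(T_pM|_N)=\bigoplus_i \gr(T_pM)_{(i)}$ and $\gr(T_qM'|_{N'})$. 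Since these preserve the grading, surjectivity of $\gr(T_pF)+\gr(T_qG)$ onto $\gr(T_{F(p)}M'')$ is equivalent to surjectivity in each degree, which is exactly the hypothesis of \autoref{definition: weighted transverse}. Hence $\nuw(F)$ and $\nuw(G)$ are transverse along the zero sections, and to propagate this to arbitrary points I use that they intertwine the scaling actions $\kappa_t$: since transversality is open and each $\kappa_t$ with $t\neq 0$ is a diffeomorphism, transversality at $(\xi,\eta)$ is equivalent to transversality at $(\kappa_t\xi,\kappa_t\eta)$, and letting $t\to 0$ reduces to the zero-section case just treated.

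For (b), I first pin down the canonical map. By \autoref{theorem: weighted fibre products} the fibre product $M\times_{M''}M'$ is a weighted submanifold of $M\times M'$, so by \autoref{proposition: weighted submanifolds are weighted} the inclusion $j$ is a weighted embedding, and by \autoref{proposition: weighted embeddings define embeddings} together with the product formula $\nuw(M\times M')=\nuw(M,N)\times\nuw(M',N')$ the induced map $\nuw(j)$ is an embedding into $\nuw(M,N)\times\nuw(M',N')$. Functoriality of $\nuw$ forces its image into the fibre product over $\nuw(M'',N'')$, giving the canonical graded-bundle morphism $\Phi$. A dimension count then sets up the endgame: the weighted normal bundle has the same dimension as its base, so the source has dimension $\dim M + \dim M' - \dim M''$, while part (a) exhibits the target as a transverse fibre product of the same dimension.

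The crux is to show $\Phi$ is an isomorphism, which I would do by computing $T\Phi$ along the base $N\times_{N''}N'$ and invoking homogeneity once more. The content reduces to the linear-algebra statement that $\gr$ commutes with transverse fibre products: from the filtered short exact sequence $0 \to TM\times_{TM''}TM' \to TM\oplus TM' \to TM'' \to 0$, whose last arrow $\alpha-\beta=\gr(TF)-\gr(TG)$ is levelwise surjective by weighted transversality, one obtains an exact associated graded sequence, identifying $\gr\bigl(T(M\times_{M''}M')|_{N\times_{N''}N'}\bigr)$ with $\gr(TM|_N)\times_{\gr(TM''|_{N''})}\gr(TM'|_{N'})$. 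Via \autoref{proposition: tangent bundle of weighted normal bundle} this says precisely that $T\Phi$ is an isomorphism along the zero section, so $\Phi$ is an injective immersion of equidimensional graded bundles. Because $\Phi$ intertwines the scaling actions, its image is open (invariance of domain), scaling-invariant, and contains the base; scaling any target point down toward the base and back up then shows $\Phi$ is onto, hence a diffeomorphism and an isomorphism of graded bundles.

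The main obstacle I anticipate is the strictness in that last step: the equality $\gr(\text{fibre product})=\text{fibre product of }\gr$ fails without the levelwise surjectivity, so the whole argument hinges on extracting filtered-exactness of the defining sequence from \autoref{definition: weighted transverse}. I would take care to verify that the filtration on the fibre product is the intersection filtration $(V\times_W V')_{(i)}=V_{(i)}\times_{W_{(i)}}V'_{(i)}$ assigned to it by \autoref{proposition: weighted submanifolds are weighted}, so that the associated graded sequence genuinely computes the graded space of the fibre product rather than some coarser approximation.
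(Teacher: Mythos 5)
Your proof is correct, but it takes a genuinely different route from the paper's. The paper handles (a) by recasting weighted transversality of $F$ and $G$ as weighted transversality of the graph of $F\times G$ with the diagonal in $M''\times M''\times M\times M'$, and then reads off transversality of $\nuw(F)\times\nuw(G)$ with the diagonal from weighted coordinates adapted to that intersection; for (b) it simply observes, in graded bundle coordinates, that the canonical map is a graded bundle morphism restricting to the identity along the base, and concludes it is an isomorphism from the coordinate description. You instead argue intrinsically: for (a) you verify transversality only along the zero sections via the identification $T\nuw(\varphi)|_N = \gr(T\varphi|_N)$ of \autoref{proposition: tangent bundle of weighted normal bundle} (where it is literally the degreewise surjectivity in \autoref{definition: weighted transverse}), and then propagate by openness of transversality together with equivariance under the scalings $\kappa_t$ --- the same homogeneity device the paper uses in \autoref{corollary: weighted submersions induce submersions}; for (b) you build the map functorially from \autoref{theorem: weighted fibre products} and \autoref{proposition: weighted embeddings define embeddings}, and replace the coordinate check by the filtered-linear-algebra fact that degreewise surjectivity (strictness) of $TF-TG$ makes $\gr$ commute with the kernel, so that $T\Phi$ is an isomorphism along the base, with homogeneity again giving bijectivity globally. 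What the paper's approach buys is brevity, since the coordinate machinery (adapted weighted coordinates for transverse intersections, graded bundle coordinates on $\nuw$) has already been set up; what yours buys is a coordinate-free argument that isolates the precise algebraic input --- your closing point, that the induced filtration on $T(M\times_{M''}M')$ is the intersection filtration from \autoref{proposition: weighted submanifolds are weighted} and that strictness is what makes $\gr$ of the fibre product equal the fibre product of $\gr$'s, is exactly the content that the coordinate computation conceals. One small slip of notation: the arrow in your exact sequence should be $TF-TG$, whose associated graded is $\gr(TF)-\gr(TG)$, rather than the graded map itself; this does not affect the argument.
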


Here the graded bundle structure on the fibre product is as a graded subbundle of $\nuw(M, N) \times \nuw(M', N') \to N\times N'$,

\begin{proof}
    \begin{itemize}
        \item[(a)] By modifying the standard argument appropriately, one finds that $F$ and $G$ are weighted transverse if and only if the graph $\G(F\times G)$ of $F\times G$ is weighted transverse to the diagonal $\Delta_{M''\times M'' \times M \times M'} = \{ (m'',m'',m,m')\} \sset M''\times M'' \times M \times M'$. It follows by considering weighted coordinates adapted to this transverse intersection that $\G(\nuw(F\times G)) = \G(\nuw(F)\times \nuw(G))$ is transverse to the diagonal in $\nuw(M''\times M'' \times M \times M', N''\times N'' \times N \times N')$, which establishes the claim. 

        \item[(b)] This follows by considering graded bundle coordinates, since it is a graded bundle morphism which is the identity along the base. \qedhere
    \end{itemize}
\end{proof}


\section{The Weighted Deformation Space}
\label{section: weighted deformation space}

Motivated by the construction of Haj and Higson~\cite{sadegh2018euler}, Loizides and Meinrenken show that a weighted manifold $M$ can be "deformed" to its weighted normal bundle in a smooth way. The construction is known in algebraic geometry as the deformation to the normal cone, which we now review. 

\subsection{Construction of the Weighted Deformation Space}

Let $(M,N)$ be a weighted pair, and let 
    \[ \rees(C^\infty(M)) = \left\{ \sum_{i\in \Z} f_iz^{-i} : f \in C^\infty(M)_{(i)} \right\} \sset C^\infty(M)[z^{-1}, z] \]
denote the Rees algebra associated to the weighting of $M$. 

\begin{definition}[\cite{loizides2023differential}]
\label{defintion: weighted deformation space}
    The \emph{weighted deformation space} of the weighted pair $(M,N)$ is the set 
        \[ \defw(M,N) = \aHom(\rees(C^\infty(M)), \R). \]
\end{definition}

We now explain how to endow $\defw(M,N)$ with a smooth structure. Given $f\in C^\infty(M)_{(i)}$, its \emph{$i$-th homogeneous interpolation} is the function 
    \[ \widetilde{f}^{[i]}:\defw(M,N)\to \R, \quad \varphi \mapsto \varphi(fz^{-i});\]
note that if $f \in C^\infty(M)_{(i)}$ and $g\in C^\infty(M)_{(j)}$ then $\widetilde{fg}^{[i+j]} = \widetilde{f}^{[i]}\widetilde{g}^{[j]}$. The map $\pi = \widetilde{1}^{[-1]} : \defw(M,N) \to \R$ is a surjection with fibres 
    \begin{equation}
    \label{equation: decomposition of deformation space}
        \defw(M,N)_t = \pi^{-1}(\{t\}) = \left\{
        \begin{array}{ll}
            M & t\neq 0  \\
            \nuw(M,N) & t = 0, 
        \end{array}
    \right.
    \end{equation}
see~\cite{sadegh2018euler, loizides2023differential}. Given $f\in C^\infty(M)_{(i)}$ one has 
    \begin{align}
    \begin{split}
    \label{equation: deformation approximation}
        \widetilde{f}^{[i]}|_{\pi^{-1}(\{t\})} : \delta_\W(M,N)_t  \to \R, \quad 
        x \mapsto \left\{
            \begin{array}{ll}
                 t^{-i}f(x) & t \neq 0 \\
                 f^{[i]}(x) & t = 0.
            \end{array}
        \right.
    \end{split}
    \end{align}
    
Loizides and Meinrenken prove the following. 

\begin{theorem}[{\cite[Theorem 5.1]{loizides2023differential}}]
    \begin{itemize}
        \item[(a)] There is a unique $C^\infty$-structure on $\delta_\W(M,N)$, as a manifold of dimension $\dim(M)+1$, such that the maps~\eqref{equation: deformation approximation} are smooth for each $f\in C^\infty(M)_{(i)}$. In terms of this structure, the map $\pi:\delta_\W(M,N) \to \R$ is a surjective submersion and the identifications~\eqref{equation: decomposition of deformation space} are diffeomorphisms.

        \item[(b)] If $x_a$ is a weighted coordinate system on $U\sset M$, then the homogeneous interpolations $\widetilde{x}_{a}^{[w_a]}$, together with the coordinate $t = \pi:\defw(M,N) \to \R$ are coordinates on $\defw(U, U\cap N) = \nuw(U, U\cap N)\sqcup (U\times \R^\times)$.
    \end{itemize}
\end{theorem}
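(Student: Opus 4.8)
The plan is to build the smooth structure on $\defw(M,N)$ chart by chart using the functions promised in part~(b), and then to verify that the resulting atlas is smooth and that the maps~\eqref{equation: deformation approximation} are exactly its smooth functions. Fix a weighted coordinate system $x_1,\dots,x_n$ on $U \sset M$ with weights $w_a$, and consider
\[ \Phi_U : \defw(U, U\cap N) \to \R^{n+1}, \qquad \varphi \mapsto \bigl( \widetilde{x}_1^{[w_1]}(\varphi), \dots, \widetilde{x}_n^{[w_n]}(\varphi), \pi(\varphi) \bigr). \]
First I would check that $\Phi_U$ is a bijection onto an open subset of $\R^{n+1}$, working fibrewise over $t = \pi$. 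For $t \neq 0$ the fibre is $U$ and, by~\eqref{equation: deformation approximation}, $\widetilde{x}_a^{[w_a]} = t^{-w_a}x_a$, so the assignment is invertible via $x_a = t^{w_a}\xi_a$; for $t = 0$ the fibre is $\nuw(U, U\cap N)$ and the $x_a^{[w_a]}$ are graded bundle coordinates by~\autoref{theorem: weighted normal bundle is smooth}. Since $t^{w_a}\xi_a$ stays within the coordinate range of $U$ for $t$ small, patching these identifies the image of $\Phi_U$ with an open subset of $\R^{n+1}$.

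The heart of the argument is a single division lemma: for $f \in C^\infty(M)$ one has $f \in C^\infty(M)_{(i)}$ if and only if, in the coordinates $x_a$,
\[ f\bigl(t^{w_1}x_1, \dots, t^{w_n}x_n\bigr) = t^{i}\, h(t, x) \]
for some smooth $h$ on $\R \times U$. Granting this, the chart expression of any homogeneous interpolation $\widetilde{f}^{[i]}$ is precisely $h$: for $t \neq 0$ we have $\widetilde f^{[i]} = t^{-i}f(t^{w_1}\xi_1,\dots,t^{w_n}\xi_n) = h(t,\xi)$ by~\eqref{equation: deformation approximation}, while at $t=0$ the value $f^{[i]}$ agrees with $h(0,\cdot)$ by continuity. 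In particular, writing a change of weighted coordinates as $x'_a = \phi_a(x)$ with $\phi_a \in C^\infty(M)_{(w_a)}$, the transition map $\Phi_{U'}\circ \Phi_U^{-1}$ has components $\widetilde{(x'_a)}^{[w_a]} = h_a(t,\xi)$, smooth across $t = 0$, together with the identity in the $t$-coordinate; hence the $\Phi_U$ form a smooth atlas.

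For the division lemma itself, recall that $C^\infty(M)_{(i)}$ is locally the ideal generated by the monomials $x^I$ with $I\cdot w := \sum_a I_a w_a \geq i$. Each such monomial contributes $t^{I\cdot w}x^I$, divisible by $t^i$; the only delicate point — and the step I expect to be the main obstacle — is to show that a smooth coefficient $g(x)$ multiplying such a monomial yields a quotient by $t^i$ that is again smooth (not merely continuous) up to $t=0$. This is a Hadamard-type division argument: one Taylor-expands $g$ to finite order in the rescaled variables and checks that the remainder, after extracting $t^i$, remains $C^k$ for every $k$ once the expansion order is taken large enough. The converse direction is immediate: any weighted path may be written via Hadamard as $\gamma_a(t) = t^{w_a}\psi_a(t)$ with $\psi_a$ smooth, so the identity gives $f(\gamma(t)) = t^i h(t,\psi(t)) = O(t^i)$, whence $f\in C^\infty(M)_{(i)}$ by~\autoref{A-proposition: characterization of weighted morphisms}(a).

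It remains to assemble the global statements. Uniqueness is forced: the functions $\widetilde{x}_a^{[w_a]}$ and $t = \pi$ are among the maps~\eqref{equation: deformation approximation} required to be smooth and they form local coordinates, so any compatible $C^\infty$-structure agrees with the one just built; conversely every $\widetilde{f}^{[i]}$ is smooth for this structure by the lemma. The map $\pi$ equals the last coordinate in every chart, hence is a surjective submersion, and inspecting $\Phi_U$ over $t \neq 0$ (where $x_a = t^{w_a}\widetilde{x}_a^{[w_a]}$ recovers the product chart on $U\times\R^\times$) and over $t = 0$ (where $\widetilde{x}_a^{[w_a]}|_{t=0} = x_a^{[w_a]}$ recovers the graded bundle chart on $\nuw(U,U\cap N)$) shows the identifications~\eqref{equation: decomposition of deformation space} are diffeomorphisms. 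The dimension count $\dim(M)+1$ is visible directly from the charts.
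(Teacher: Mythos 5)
First, a point of order: the paper never proves this theorem --- it introduces it with ``Loizides and Meinrenken prove the following'' and cites \cite[Theorem 5.1]{loizides2023differential}. The closest things to an in-paper proof are the linear analogues (\autoref{theorem: linear weighted normal bundle}, \autoref{theorem: linear weighted deformation bundle}), and your argument follows exactly their strategy: take the homogeneous interpolations of weighted coordinates together with $t=\pi$ as charts, verify smoothness of transitions by a division property, and read off the fibre identifications. So the construction, part (b), and the verification of the stated properties of $\pi$ and of~\eqref{equation: decomposition of deformation space} \emph{for the constructed structure} are all sound. One simplification: the step you call ``the main obstacle'' is a non-issue. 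For $f=g(x)x^I$ with $I\cdot w\ge i$ one has
\[ f(t^{w_1}x_1,\dots,t^{w_n}x_n) = t^{i}\,\bigl(g(t^{w_1}x_1,\dots,t^{w_n}x_n)\,t^{\,I\cdot w-i}\,x^I\bigr), \]
and the factor in parentheses is already smooth jointly in $(t,x)$, since $I\cdot w-i\ge 0$ and $g$ precomposed with the smooth rescaling map is smooth; no Hadamard-type expansion is needed in this direction (Hadamard enters only in your converse, where you use it correctly on weighted paths).

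The genuine gap is the uniqueness paragraph. If $\mathcal{A}'$ is a competing structure making the maps~\eqref{equation: deformation approximation} smooth, then $\widetilde{x}_a^{[w_a]}$ and $t$ are $\mathcal{A}'$-smooth, but nothing makes them $\mathcal{A}'$-\emph{charts}; all you obtain is that the identity $(\defw(M,N),\mathcal{A}')\to(\defw(M,N),\mathcal{A})$ is a smooth bijection, and a smooth bijection between manifolds of equal dimension need not be a diffeomorphism. In fact, smoothness of the interpolations alone cannot pin down the structure. Take $M=\R$ trivially weighted along $N=\{0\}$, so that $(\xi,t)=(\widetilde{x}^{[1]},\pi)$ is a global chart and every interpolation has the form $\widetilde{f}^{[i]}=\xi^{i}g(t\xi)$ with $g$ smooth. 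Declaring $(\xi^{1/3},t)$ to be a global chart defines a second smooth structure of dimension $2$ in which $\pi$ and every $\widetilde{f}^{[i]}=(\xi^{1/3})^{3i}\,g\bigl(t\,(\xi^{1/3})^{3}\bigr)$ remain smooth, yet which differs from the first; what fails for it is precisely the second sentence of part (a): the identifications~\eqref{equation: decomposition of deformation space} are no longer diffeomorphisms. So the uniqueness assertion is only correct (and only provable) if that second sentence is read as part of the characterization, i.e.\ uniqueness among structures for which, in addition, $\pi$ is a submersion and the identifications~\eqref{equation: decomposition of deformation space} are diffeomorphisms. With that reading your argument can be completed: the identity $(\defw(M,N),\mathcal{A}')\to(\defw(M,N),\mathcal{A})$ is smooth since the $\mathcal{A}$-charts are $\mathcal{A}'$-smooth; its differential is injective at every point, because it is injective on $\ker d\pi$ (the fibrewise identifications~\eqref{equation: decomposition of deformation space} are diffeomorphisms for both structures, so the identity restricted to each fibre is a diffeomorphism of the induced structures) and a vector outside $\ker d\pi$ cannot be killed since $\pi\circ\mathrm{id}=\pi$; the inverse function theorem then makes the identity a diffeomorphism. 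Your proof should be amended along these lines.
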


\begin{examples}
\label{examples: weighted deformation spaces}
    \begin{enumerate}
        \item[(a)] If $M$ is trivially weighted along $N$, then $\delta_\W(M,N)$ is the standard deformation to the normal cone. In particular, $\delta_\W(M\times M, M)$ is the tangent groupoid $\mathbb{T}M$ of Connes (cf.~\cite{connes1994, sadegh2018euler}).

        \item[(b)] If $M$ is weighted along itself then $\defw(M,M) = M\times \R$. 
        
        \item[(c)] If $E$ is a graded bundle over $N$, then $\delta_W(E,N)$ is canonically isomorphic to $E\times \R$. The isomorphism is given by the family of maps 
            \begin{align*}
                \delta_\W(E,N)_t & \to E \\
                e & \mapsto \left\{
                    \begin{array}{ll}
                        \kappa_{t^{-1}}e & t \neq 0  \\
                        e & t = 0,
                    \end{array}
                \right.
            \end{align*}
        where on the $t=0$ we are identifying $\nu_\W(E,N)$ with $E$. 

        \item[(d)] If $(M, N)$ and $(M', N')$ are weighted pairs, then $\defw(M\times M', N\times N') = \defw(M,N)\times_{\R} \defw(M', N')$. 
    \end{enumerate}
\end{examples}

\subsection{Properties of the Weighted Deformation space}
\label{subsection: properties of weighted deformation space}

We now list some properties of the weighted normal bundle. Most of the proofs are similar as for the weighted normal bundle, so we will be brief. 

\begin{itemize}
    \item[(a)] The weighted deformation space is functorial for weighted morphisms. That is, any weighted morphism $F : (M,N)\to (M', N')$ induces a smooth map $\defw(F):\defw(M,N) \to \defw(M', N')$. In terms of the identifications~\eqref{equation: decomposition of deformation space}, one has 
        \[ \defw(F)|_{\pi^{-1}(\{t\})} : x \mapsto \left\{
            \begin{array}{ll}
                F(x) & t\neq 0 \\
                 \nuw(F)(x) &  t = 0. 
            \end{array}
        \right.\]
    Moreover, given $f\in C^\infty(M')_{(i)}$ we have that $\defw(F)^*\widetilde{f}^{[i]} = \widetilde{F^*f}^{[i]}$. 

    \item[(b)] If $R$ is a weighted submanifold of the weighted pair $(M,N)$, then $\defw(R, R\cap N)$ is a submanifold of $\defw(M,N)$. In particular, $\defw(N,N) = N\times \R$ is a submanifold of $\defw(M,N)$. As with the weighted normal bundle, if $x_a$ are weighted submanifold coordinates for $R$ on $U\sset M$, then $\widetilde{x}_a^{[w_a]}$ together with $t$, are submanifold coordinates for $\defw(R, R\cap N)$ on $\defw(U, U\cap N)$. 
    
    \item[(c)] If $F:(M,N) \to (M'', N'')$ and $G:(M', N') \to (M'', N'')$ are weighted transverse, then $\defw(F) : \defw(M,N) \to \defw(M'', N'')$ and $\defw(G) : \defw(M',N') \to \defw(M'', N'')$ are transverse and the canonical map 
        \[ \defw(M\times_{M''} M', N\times_{N''}N') \to \defw(M, N)\times_{\defw(M'', N'')} \defw(M', N') \]
    is a diffeomorphism. 

    \item[(d)] (\cite[Section 5.2]{loizides2023differential}) Given $u\in \R^\times$, consider the algebra morphism of $\rees(C^\infty(M))$
        \[ \kappa_u : \sum_i f_iz^{-i} \mapsto  \sum_i f_iu^iz^{-i}. \]
    This defines a smooth action of the group $\R^\times$ on $\defw(M,N)$ called the \emph{zoom action}. Given $f \in C^\infty(M)_{(i)}$, the function $\widetilde{f}^{[i]}$ is homogeneous of degree $i$; in particular, $\pi = \widetilde{1}^{[-1]}$ is homogeneous of degree -1. Thus, 
        \[ \kappa_u : \defw(M,N)_{t} \to \defw(M,N)_{u^{-1}t}. \]
    On the open set $M\times \R^\times = \pi^{-1}(\R^\times) \sset \defw(M,N)$ the action is given by 
        \[ \kappa_u : (m, t) \mapsto (m, u^{-1}t),  \]
    whereas on $\nuw(M,N) = \defw(M,N)_{0}$ the action is the graded bundle multiplication. If $F:(M,N) \to (M', N')$ is a weighted morphism, then $\defw(F) : \defw(M,N) \to \defw(M', N')$ is $\R^\times$-equivariant. 
\end{itemize}

We have, furthermore, the following theorem which justifies our definitions of weighted immersions and weighted submersions. 

\begin{theorem}[Characterization of Weighted Immersions and Submersions]
\label{theorem: Characterization of Weighted Immersions and Submersions}
    A weighted morphism $F:(M,N) \to (M', N')$ is a weighted immersion (resp. submersion) if and only if the induced map 
        \[ \defw(F):\defw(M,N) \to \defw(M',N') \]
    is an immersion (resp. submersion). 
\end{theorem}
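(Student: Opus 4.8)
The plan is to prove both directions by using the local normal form theorems already established (\autoref{theorem: weighted embedding characterization} and \autoref{theorem: weighted submersion coordinates}) together with the explicit coordinate description of $\defw(M,N)$ from the preceding theorem. The key observation is that a weighted coordinate system $x_a$ on $U\sset M$ yields coordinates $\widetilde{x}_a^{[w_a]}$ together with $t$ on $\defw(U,U\cap N)$, and that the homogeneous interpolations are \emph{natural}: for a weighted morphism $F$ we have $\defw(F)^*\widetilde{f}^{[i]} = \widetilde{F^*f}^{[i]}$. Thus if I can bring $F$ into normal form in weighted coordinates, I can read off the normal form of $\defw(F)$ in the interpolation coordinates and check directly that it is an immersion (resp.\ submersion).

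\textbf{The immersion case.} For the forward direction, suppose $F$ is a weighted immersion. By \autoref{theorem: weighted embedding characterization}, near any $p\in M$ with $p_0 = F(p)$ there are weighted coordinates $x_1,\dots,x_{n'}$ on $M'$ such that $F^*x_1,\dots,F^*x_n$ are weighted coordinates near $p$ and $F^*x_{n+1}=\cdots=F^*x_{n'}=0$. Applying $\defw$ and using naturality, in the interpolation coordinates the map $\defw(F)$ sends $(\widetilde{F^*x_a}^{[w_a]}, t)$ to $(\widetilde{x_a}^{[w_a]}, t)$ for $a\le n$ and the remaining target coordinates pull back to $0$; since $\{\widetilde{F^*x_a}^{[w_a]}\}_{a\le n}$ together with $t$ are coordinates on $\defw(M,N)$, this is manifestly an immersion in normal form. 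The converse I would argue contrapositively: if $F$ is a weighted morphism that is \emph{not} a weighted immersion, then either $F$ fails to be an immersion (in which case $\defw(F)$ fails to be an immersion already at the $t\neq 0$ fibre, since there $\defw(F) = F$), or condition (b) of \autoref{definition: weighted embedding} fails, meaning some filtered piece $Tf$ is not injective on $(TM|_N)_{(i)}$ with the correct image. Using \autoref{proposition: tangent bundle of weighted normal bundle}, which identifies $T\nuw(M,N)|_N = \gr(TM|_N)$ and $T\nuw(F)|_N = \gr(Tf|_N)$, a failure of the filtered injectivity translates precisely into $\gr(Tf|_N)$ failing to be injective, hence $\defw(F)$ failing to be an immersion along the central fibre $t=0$. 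The submersion case is entirely parallel, using \autoref{theorem: weighted submersion coordinates} and \autoref{corollary: weighted submersions induce submersions} for the forward direction and the same $\gr$-identification for the converse.

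\textbf{The main obstacle} I anticipate is the converse direction, specifically verifying that the \emph{only} way $\defw(F)$ can fail to be an immersion (resp.\ submersion) is through one of the two failures catalogued above. Since $\defw(F)$ is $\R^\times$-equivariant for the zoom action and the fibres over $t\neq 0$ are all diffeomorphic copies of $F$ itself, the immersion/submersion property away from $t=0$ is equivalent to that of $F$; the only genuinely new information lives on the central fibre $\nuw(M,N)$. There I must check that being an immersion (resp.\ submersion) of graded bundles is detected along the base $N$ via the $\gr$ of the tangent map—this is where \autoref{proposition: tangent bundle of weighted normal bundle} and the fact that a graded-bundle morphism that is an immersion/submersion along the base is one everywhere (as in the proof of \autoref{corollary: weighted submersions induce submersions}) do the essential work. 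The delicate point is confirming that the filtered condition in \autoref{definition: weighted embedding}, namely that the image of $(TM|_N)_{(i)}$ is exactly $Tf(TM)\cap (TM'|_{N'})_{(i)}$, corresponds \emph{exactly} to $\gr(Tf)$ being injective with the expected graded image, rather than merely injective; I would handle this by a careful diagram chase on the associated graded of the filtered tangent spaces.
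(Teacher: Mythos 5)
Your proposal is correct and takes essentially the same route as the paper's own proof: weighted immersion/submersion coordinates handle the forward direction, and the converse is read off along the central fibre from the block-triangular form of $T\defw(F)$ at points of $N$ together with the identification $T\nuw(F)|_N = \gr(TF|_N)$ from \autoref{proposition: tangent bundle of weighted normal bundle}. The ``delicate point'' you defer---that, once $F$ is known to be an immersion, failure of the image condition in \autoref{definition: weighted embedding} is equivalent to $\gr(TF|_N)$ failing to be injective---is resolved in the paper by exactly the diagram chase you anticipate: the unique preimage $Y_p$ of any $X_{F(p)} \in TF(T_pM)\cap (T_{F(p)}M')_{(i)}$ must lie in $(T_pM)_{(i)}$ precisely because $\gr(TF|_N)$ is injective.
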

\begin{proof}
    We prove the statement for weighted immersions, the proof for weighted submersions being entirely analogous. 
    
    If $F$ is a weighted immersion, then $\defw(F)$ is an immersion as can be seen by considering weighted immersion coordinates. On the other hand, if $\defw(F)$ is an immersion, then clearly $F$ is a immersion. To see that the map $(TM|_N)_{(i)}\to (TM'|_{N'})_{(i)}$ is fibrewise injective, we remark that 
        \[ T\defw(M,N)|_{N\times \{0\}} = \gr(TM|_N)\times \R\]
    and with respect to this 
            \begin{equation*}
                T_p\defw(F) = \begin{bmatrix}
                T_p\nuw(F) & \bd_t\widetilde{(F^*y)}^{[w']}(p) \\
                0 & 1
            \end{bmatrix}
        \end{equation*}
    for any $p \in N \sset \defw(M,N)|_{\pi^{-1}(\{0\})}$, where 
        \[ \bd_t\widetilde{(F^*y)}^{[w']}(p) = [ \bd_t\widetilde{(F^*y_1)}^{[w'_1]}(p), \dots, \bd_t\widetilde{(F^*y_m)}^{[w'_m]}(p)]^\intercal. \]
    In particular, $T_p\defw(F)$ is injective only if $T_p\nuw(F)$ is, which happens if only if the map $(TM|_N)_{(i)}\to (TM'|_{N'})_{(i)}$ is fibrewise injective. It remains to show that $TF(TM)\cap (TM'|_{N'})_{(i)}\sset TF((TM|_N)_{(i)})$ (the other inclusion is automatic since $F$ is a weighted morphism). Let $p\in N$ and $X_{F(p)} \in TF(T_pM)\cap (T_{F(p)}M')_{(i)}$. Since $F$ is an immersion, there exists a unique $Y_p\in T_pM$ such that $TF(Y_p) = X_{F(p)}$, and furthermore, because $T\nuw(F)|_N = \gr(TF|_N)$ is injective, we see that $Y_p\in (T_pM)_{(i)}$,  as needed. 
\end{proof}

\section{Singular Lie Filtrations}
\label{section: singular Lie filtrations}

In this section we review \emph{singular Lie filtrations}, which are the main technique for constructing weightings. Singular Lie filtrations are filtrations of the sheaf of vector fields on a smooth manifold by locally finitely generated $C^\infty_M$-submodules which are compatible with Lie brackets (see~\autoref{definition: singular Lie filtration}), generalization both Lie filtrations and singular foliations in the sense of Androulidakis and Skandalis~\cite{androulidakis2009holonomy}. They recently made an appearance in the work of Androulidakis, Mohsen, and Yuncken on maximally hypoelliptic operators~\cite{androulidakis2022pseudodifferential}.

Every weighted manifold has a canonical singular Lie filtration associated to the weighting. Conversely, Loizides and Meinrenken showed in~\cite{loizides2022singular} that a singular Lie filtration on a manifold $M$ together with a submanifold $N$ satisfying a cleanness assumption canonically defines a weighting of $M$ along $N$, generalizing the work of Haj and Higson~\cite{sadegh2018euler}. This has the practical application that, in examples, it is more natural to define a singular Lie filtration than a weighting. In this section, we review their work. 

\subsection{Filtration of vector fields}

Let $M$ have an order $r$ weighting along $N$. Define $\mathfrak{X}(M)_{(i)}$ to be the collection of vector fields $X\in \mathfrak{X}(M)$ whose Lie derivative shifts filtration degree by $i$; that is, 
    \[ f \in C^\infty(M)_{(j)} \implies Xf \in C^\infty(M)_{(i+j)}.  \]
This defines a filtration 
    \begin{equation}
    \label{equation: filtration of vector fields}
        \mathfrak{X}(M) = \mathfrak{X}(M)_{(-r)} \supseteq \mathfrak{X}(M)_{(-r+1)} \supseteq \cdots 
    \end{equation}
which is compatible with Lie brackets in the sense that if $X\in \mathfrak{X}(M)_{(i)}$ and $Y \in \mathfrak{X}(M)_{(j)}$ then  $[X,Y] \in \mathfrak{X}(M)_{(i+j)}$. If $x_a$ is a system of weighted coordinates on $U$, then $X\in \mathfrak{X}(U)_{(i)}$ if and only if 
    \[ X = \sum_a f_a\frac{\bd}{\bd x_a}, \]
with $f \in C^\infty(U)_{(i-w_a)}$. In particular, for any $p \in N$ we have 
    \[ T_pM_{(i)} = \mathrm{span}\{ X_p : X \in \mathfrak{X}(M)_{(i)}\}. \]
More generally, the weighting of $M$ determines a filtration 
    \[ \cdots \supseteq \DO(M)_{(q)} \supseteq \DO(M)_{(q+1)} \supseteq \cdots \]
of the graded algebra of differential operators on $M$, where $\DO(M)_{(q)}$ is the operators $D \in \DO(M)$ with the property that
    \[ f\in C^\infty(M)_{(i)} \implies Df\in C^\infty(M)_{(i+q)}.  \]
One has that $D\in \mathrm{DO}(M)_{(q)}$ if locally $D$ is given by a linear combination of terms of the form 
    \[ X_1\cdots X_k, \]
where $X_j \in \mathfrak{X}(U)_{(i_j)}$ with $i_1 + \cdots + i_k = q$; in particular, we note that the filtration of $\DO^k(M)_{(q)}$ starts in degree $-kr$
    \[ \DO^k(M) = \DO^k(M)_{(-kr)} \supseteq \DO^k(M)_{(-kr+1)} \supseteq \cdots  \]
where $r$ is the order of the weighting. With respect to the filtration of differential operators we have 
    \begin{equation}
    \label{equation: recover weighting from differential operators}
        C^\infty(M)_{(i)} = \{f \in C^\infty(M) : -q < i,\ D \in \mathrm{DO}(M)_{(q)} \implies Df|_N = 0\}.
    \end{equation}

\subsection{Singular Lie filtrations}

\autoref{equation: recover weighting from differential operators} implies, in particular, that we can recover the weighting of $M$ from the filtration of $\mathfrak{X}(M)$ and the submanifold $N$. It is reasonable, therefore, to ask when a filtration of $\mathfrak{X}(M)$ defines a weighting of $M$. An answer to this question was given by Loizides and Meinrenken in~\cite{loizides2022singular}, which we now summarize. 

\begin{definition}[\cite{loizides2022singular}]
\label{definition: singular Lie filtration}
    Let $M$ be a smooth manifold.
    \begin{enumerate*}
        \item A \emph{singular distribution} on $M$ is a sheaf $\mathcal{D}$ of $C^\infty_M$-submodules of $\mathfrak{X}_M$ which is locally finitely generated. That is, each point $p\in M$ is contained in an open neighbourhood $U$ such that $\mathcal{D}(U)$ is a finitely generated $C^\infty(U)$-module. 

        \item A \emph{singular foliation} is a singular distribution $\mathcal{F}$ on $M$ which is involutive: 
            \[ [ \mathcal{F}, \mathcal{F} ] \sset \mathcal{F}. \]

        \item A \emph{singular Lie filtration} is a filtration 
            \[ \mathfrak{X}_M = \mathcal{F}_{-r} \supseteq \mathcal{F}_{-r+1} \supseteq \cdots \supseteq \mathcal{F}_{0} \]
        of $\mathfrak{X}_M$ by singular distributions $\mathcal{F}_i$ with the property that 
            \[ [ \mathcal{F}_i, \mathcal{F}_j] \sset \mathcal{F}_{i+j}. \]
    \end{enumerate*}
\end{definition}

\begin{examples}
    \begin{itemize}
        \item[(a)] If $(M, N)$ is a weighted pair then $\mathfrak{X}(M)_{(0)}$ is a singular foliation and the filtration of vector fields~\eqref{equation: filtration of vector fields} is a singular Lie filtration. 

        \item[(b)] A filtered manifold is a manifold $M$ together with a filtration of $TM$ by subbundles 
            \[ TM = F_{-r} \supseteq F_{-r+1} \supseteq \cdots \supseteq F_{-1} \supseteq 0 \]
        satisfying $[\G(F_{i}), \G(F)_{j}] \sset \G(F_{i+j})$; this called a (regular) \emph{Lie filtration}. Filtered manifolds appear in CR geometry, contact geometry, and parabolic geometry (see~\cite{choi2015privileged, sadegh2018euler, van2019groupoid,yuncken2018pseudodifferential} and the references therein). 
    \end{itemize}
\end{examples}

\begin{definition}[\cite{loizides2022singular}]
\label{defintion: clean submanfiolds}
Let $M$ be a smooth manifold. 
     \begin{enumerate}
         \item[(a)] Let $\mathcal{D}$ be a singular distribution on $M$. A (closed) submanifold $N\sset M$ is called \emph{$\mathcal{D}$-clean} if the function 
            \begin{align*}
                N  \to \N, \quad  p  \mapsto \dim(\mathcal{D}_p + T_pN)
            \end{align*}
        is a constant. 

        \item[(b)] Let $\mathcal{F}_\bullet$ be a singular Lie filtration on $M$. A (closed) submanifold $N\sset M$ is called \emph{$\mathcal{F}_\bullet$-clean} if it is $\mathcal{F}_i$-clean for each $i$.  
     \end{enumerate}
\end{definition}

\begin{examples}
    \begin{itemize}
        \item[(a)] If $(M,N)$ is a weighted pair and $\mathfrak{X}_{(\bullet)}$ is the corresponding singular Lie filtration, then $N$ is $\mathfrak{X}(M)_{(\bullet)}$-clean. 

        \item[(b)] If $\mathcal{H}_{\bullet}$ is a singular Lie filtration on $M$, then $\mathcal{F}_\bullet\times  \mathcal{F}_\bullet$ is a singular Lie filtration on $M\times M$. The diagonal $\Delta_M \sset M\times M$ is $\mathcal{F}_\bullet\times \mathcal{F}_\bullet$-clean if and only if the $\mathcal{F}_\bullet$ is a regular Lie filtration.

        \item[(c)] Let $\mathcal{D}$ be the submodule of $\mathfrak{X}(\R^2)$ generated by the vector fields $x\frac{\bd}{\bd x}$ and $\frac{\bd}{ \bd y}$. Then the $x$-axis is $\mathcal{D}$-clean, whereas the $y$-axis is not.
    \end{itemize}    
\end{examples}

The theorem relating singular Lie filtrations with weightings is the following. 

\begin{theorem}[{\cite[Theorem 4.1]{loizides2022singular}}]
\label{theorem: singular Lie filtrations and weightings}
    Let $\mathcal{F}_\bullet$ be a singular Lie filtration on a manifold $M$, and let $N\sset M$ be an $\mathcal{F}_\bullet$-clean, closed submanifold. The filtration defined by $C^\infty_{M,(1)} = \van{N}$ and 
        \[ C^\infty(U)_{(i)} = \{ f\in C^\infty(U) : X \in \mathcal{F}_{j}(U),\ 0<j<i \implies Xf \in C^\infty(U)_{(i-j)} \} \]
    for $i>1$ defines a weighting of $M$ along $N$ such that 
        \[ (TM|_N)_{(i)} = \mathcal{F}_i|_N + TN. \]
\end{theorem}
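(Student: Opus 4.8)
The plan is to reduce everything to a local statement near a point $p\in N$ and to exhibit explicit weighted coordinates there. I would first dispense with the formal properties of the family $\{C^\infty(U)_{(i)}\}$: after checking (by induction on $i$) that it is nested, the Leibniz rule together with the bracket relation $[\mathcal{F}_i,\mathcal{F}_j]\sset\mathcal{F}_{i+j}$ shows that it is a decreasing, multiplicative filtration of ideals with $C^\infty_{(0)}=C^\infty_M$ and $C^\infty_{(1)}=\van{N}$; indeed for $C^\infty_{(i)}\cdot C^\infty_{(j)}\sset C^\infty_{(i+j)}$ one differentiates a product by a generator of $\mathcal{F}_{-k}$ and feeds the two Leibniz terms into the inductive hypothesis. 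In the same breath I would record the compatibility $\mathcal{F}_i\sset\mathfrak{X}(M)_{(i)}$, i.e. that a field of $\mathcal{F}_i$ shifts the filtration by $i$; this follows by a parallel induction, writing $Y(Xf)=X(Yf)+[Y,X]f$ and using $[Y,X]\in\mathcal{F}_{i+j}$ together with the recursion. This compatibility already yields one inclusion of the tangent formula, $\mathcal{F}_i|_N+TN\sset(TM|_N)_{(i)}$ for every $i$, since $(TM|_N)_{(i)}=\mathrm{span}\{X_p:X\in\mathfrak{X}(M)_{(i)}\}$ and $TN=(TM|_N)_{(0)}$.

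\textbf{The coordinate construction.} The heart of the proof is the production of weighted coordinates, and here both hypotheses on $\mathcal{F}_\bullet$ enter decisively. Local finite generation makes each fibre $\mathcal{F}_{i,p}$ finite dimensional, and cleanness makes $p\mapsto\dim(\mathcal{F}_{i,p}+T_pN)$ constant; together these promote $E_{-i}:=\mathcal{F}_{i}|_N+TN$ to a subbundle of $TM|_N$, and the $E_{-i}$ assemble into a flag $TM|_N=E_{-r}\supseteq\cdots\supseteq E_0=TN$ (the bottom identification $E_0=TN$ follows because $\mathcal{F}_0\sset\mathfrak{X}(M)_{(0)}$ forces $\mathcal{F}_0$ to be tangent to $N$). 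The ranks of the graded pieces $E_{-i}/E_{-(i-1)}$ prescribe how many coordinates of each weight I must produce. I would then build the coordinates by induction on the weight: dualizing the flag gives a cofiltration of $T^*M|_N$, and for each $i$ I choose functions in $\van{N}$ whose differentials along $N$ annihilate $E_{-(i-1)}$ but are independent modulo $\mathrm{ann}(E_{-i})$. Such a function has the correct first-order behaviour but need not yet lie in $C^\infty(U)_{(i)}$; as in the normal-form arguments of~\autoref{theorem: weighted embedding characterization} and~\autoref{theorem: weighted submersion coordinates}, I would correct it by subtracting monomials in the already-constructed lower-weight coordinates so as to kill the offending lower-order terms, arriving at genuine weight-$i$ coordinates.

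\textbf{Matching the recursion and reading off the tangent formula.} With coordinates $x_a$ of weights $w_a$ in hand, the final task is to check that $C^\infty(U)_{(i)}$ coincides with the monomial ideal generated by $\{x^s:\sum_a w_as_a\geq i\}$, i.e. that the recursion produces neither too few nor too many functions. The inclusion $\supseteq$ is immediate from multiplicativity once each $x_a\in C^\infty_{(w_a)}$. For $\subseteq$ I would expand an arbitrary $f\in C^\infty(U)_{(i)}$ in a weighted-Taylor series in the $x_a$ and show, by applying the derivative conditions of the recursion against generators of the $\mathcal{F}_{-j}$ (whose leading parts along $N$ are precisely the low-weight coordinate directions), that every monomial of weight $<i$ has vanishing coefficient; this is the coordinate analogue of the ``probe against weighted paths'' mechanism of~\autoref{A-proposition: characterization of weighted morphisms}(a). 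This exhibits the filtration as a weighting in the sense of~\autoref{definition: weighting}. The tangent formula then follows from the rank count built into the construction: the number of $a$ with $w_a\leq i$ equals $\mathrm{rank}(E_{-i})$, so $\dim(TM|_N)_{(-i)}=\dim E_{-i}$, and combined with the inclusion of the first paragraph this forces $(TM|_N)_{(i)}=\mathcal{F}_i|_N+TN$.

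\textbf{Main obstacle.} The delicate step is the coordinate construction of the second paragraph: producing, near each $p\in N$, functions that simultaneously realize the dual flag to first order and actually lie in the recursively defined $C^\infty(U)_{(w_a)}$. The tension is that the recursion is a high-order condition (controlling all derivatives up to the weight), whereas the flag $E_\bullet$ records only first-order data along $N$; bridging the two is exactly what requires cleanness (to keep the ranks constant along $N$) and an inductive correction procedure whose termination rests on finite generation of the $\mathcal{F}_i$. Dually, certifying the reverse inclusion $C^\infty(U)_{(i)}\sset$ monomial ideal is the book-keeping step guaranteeing the recursion has not over-generated, and I expect it to be where the bulk of the technical work lies.
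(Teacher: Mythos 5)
A preliminary caveat: the thesis does not prove this theorem. It is quoted from \cite[Theorem 4.1]{loizides2022singular} and used as a black box, so there is no internal proof to compare your write-up against; I can only judge the proposal on its own terms. On those terms it has a genuine gap, and it sits exactly where you place your ``main obstacle'' — but the sketch you give for bridging it would not work as stated.

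What works: nestedness and multiplicativity of the recursive filtration, the (essentially definitional) fact that $\mathcal{F}_{-j}$ shifts filtration degree by $-j$, and the promotion of $E_{-i}=\mathcal{F}_i|_N+TN$ to a flag of subbundles via cleanness and local finite generation. (One slip: $E_0=TN$ does not follow from anything you argue — the recursion never involves $\mathcal{F}_0$, so nothing forces $\mathcal{F}_0|_N\sset TN$; the tangent formula can only be asserted in strictly negative degrees.) The gap is that both of your core steps use the vector fields of $\mathcal{F}_\bullet$ only through their first-order data along $N$, and that data does not determine the weighting. Take $M=\R^2$, $N=\{0\}$, $V=\partial_x+x\,\partial_y$, and $\mathcal{F}_{-1}=\mathcal{F}_{-2}=C^\infty\text{-span}\{V\}$, $\mathcal{F}_{-3}=\ger{X}(\R^2)$: this is a clean singular Lie filtration whose flag gives $\mathrm{wt}(x)=1$, $\mathrm{wt}(y)=3$. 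Unwinding the recursion, $f\in C^\infty_{(3)}$ forces $(VVf)(0)=f_{xx}(0)+f_y(0)=0$. Hence the candidate $y$, which has exactly the first-order behaviour your construction prescribes, is \emph{not} in $C^\infty_{(3)}$, while $y-\tfrac12x^2$ is; the correction term $-\tfrac12x^2$ is dictated by the coefficient $x$ of $V$, i.e.\ by behaviour of the module near $N$ that is invisible to the flag $E_\bullet$. Your correction step supplies no mechanism for finding such terms: ``killing the offending lower-order terms'' presupposes knowing which cosets of $C^\infty(U)_{(i)}$ the coordinate monomials represent, which is essentially the statement being proved, and the normal-form theorems you invoke (\autoref{theorem: weighted embedding characterization}, \autoref{theorem: weighted submersion coordinates}) all assume an ambient weighting already exists.

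The same example breaks the probing step. The compositions of elements of $\mathcal{F}_\bullet$ of total weight $\le 2$ evaluate, on any $f$, only the functionals $f(0)$, $f_x(0)$, and $f_{xx}(0)+f_y(0)$; they cannot separate the coefficient of $x^2$ from that of $y$, and indeed the inclusion $C^\infty_{(3)}\sset(\text{monomial ideal})$ is simply \emph{false} in the coordinates $(x,y)$ — it holds only in the corrected coordinates $(x,\,y-\tfrac12x^2)$. So the probing cannot be run against coordinates chosen by first-order data, and your parenthetical premise that the probes are ``generators of the $\mathcal{F}_{-j}$ whose leading parts along $N$ are precisely the low-weight coordinate directions'' is exactly what fails: in singular filtrations the generators that carry the weight information may vanish identically along $N$ (here $xV-x\partial_x$; in Heisenberg-type examples $x\partial_y$, $x^2\partial_y$), and probes with the correct value at $p$ contaminate the pairing with Taylor coefficients of \emph{higher} weighted degree, destroying triangularity. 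What is missing is the actual key lemma: a simultaneous normalization of local generators of the $\mathcal{F}_{-j}$ and of the coordinates near a point of $N$ — in the example the weighted coordinate $v=y-\tfrac12x^2$ arises by straightening $V$ by its flow, not by correcting a first-order candidate — together with the resulting coefficient control (every element of $\mathcal{F}_{-j}$, expanded in these coordinates, has $\partial_{x_a}$-coefficient of filtration degree at least $w_a-j$), which is what makes both the membership of the coordinates in $C^\infty_{(w_a)}$ and the triangularity of the probing go through. This is where cleanness and finite generation must do their real work, and it is the ingredient your proposal leaves out.
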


Before moving on, we record the following simple proposition about weighted morphisms, the proof of which is obvious from the construction of the weighting in~\autoref{theorem: singular Lie filtrations and weightings}. 

\begin{proposition}
\label{proposition: weighted morphisms of weighted lie filtrations}
    Let $\mathcal{F}_\bullet$ be a singular Lie filtration on  $M$ and let $N\sset M$ be an $\mathcal{F}_\bullet$-clean submanifold. Suppose that $(M', N')$ is a weighted pair, and that $F:(M, N)\to (M', N')$ is a map of pairs with the property that for all $X \in \mathcal{F}(U)_{(i)}$ there exists some $Y\in \ger{X}(M')_{(i)}$ such that $X\sim_F Y$. Then $F$ is a weighted morphism with respect to the weighting of $M$ defined in~\autoref{theorem: singular Lie filtrations and weightings}. 
\end{proposition}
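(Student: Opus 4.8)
The plan is to prove the defining inclusion $F^*C^\infty(M')_{(i)} \sset C^\infty(M)_{(i)}$ for all $i \geq 0$ by induction on the filtration degree $i$, exploiting the recursive description of the weighting on $M$ furnished by \autoref{theorem: singular Lie filtrations and weightings}. The one computational ingredient I would isolate first is the elementary identity for $F$-related vector fields: whenever $X \sim_F Y$, one has
\[ X(F^*g) = F^*(Yg) \]
for every locally defined function $g$ on $M'$, as is immediate from $T_pF(X_p) = Y_{F(p)}$ and the chain rule. Everything else is degree bookkeeping.

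For the base of the induction I would dispatch $i = 0$ and $i = 1$ directly: $C^\infty_{(0)}$ is the full sheaf, while $C^\infty(M)_{(1)} = \van{N}$ and $C^\infty(M')_{(1)} = \van{N'}$, so that $F^*\van{N'} \sset \van{N}$ is precisely the hypothesis that $F$ is a map of pairs, i.e. $F(N) \sset N'$.

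For the inductive step I would assume $F^*C^\infty(M')_{(k)} \sset C^\infty(M)_{(k)}$ for all $k < i$, and take $f \in C^\infty(U')_{(i)}$ with $U' \sset M'$ open and $U = F^{-1}(U')$. By the recursive formula of \autoref{theorem: singular Lie filtrations and weightings}, showing $F^*f \in C^\infty(U)_{(i)}$ amounts to checking that $X(F^*f) \in C^\infty(U)_{(i-j)}$ for every $0 < j < i$ and every local section $X \in \mathcal{F}_{-j}(U)$. Fixing such an $X$, the hypothesis supplies $Y \in \ger{X}(M')_{(-j)}$ with $X \sim_F Y$; the $F$-relatedness identity then gives $X(F^*f) = F^*(Yf)$. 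Since $Y$ lowers the weighting degree on $M'$ by $j$ and $f$ has degree $i$, we have $Yf \in C^\infty(U')_{(i-j)}$, and because $i - j < i$ the inductive hypothesis delivers $F^*(Yf) \in C^\infty(U)_{(i-j)}$. Thus $X(F^*f) \in C^\infty(U)_{(i-j)}$ for all such $X$, which is exactly what the recursion requires in order to conclude $F^*f \in C^\infty(U)_{(i)}$.

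I expect no serious obstacle: in keeping with the remark preceding the statement, the argument is essentially forced by the construction of the weighting. The only points needing care are bookkeeping ones, namely fixing the sign convention so that $\mathcal{F}_{-j}$ lowers filtration degree by $j$ in the recursive formula, and observing that the degree $i - j$ in the recursion satisfies $1 \leq i - j \leq i - 1 < i$, so that the inductive hypothesis genuinely applies. Should one wish to check the recursion only against a finite local generating set of $\mathcal{F}_{-j}(U)$ rather than all sections, it suffices to recall that $C^\infty(U)_{(i-j)}$ is a $C^\infty(U)$-submodule, so the condition $X(F^*f) \in C^\infty(U)_{(i-j)}$ propagates from generators to arbitrary $C^\infty(U)$-linear combinations.
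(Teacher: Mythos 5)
Your proof is correct and is precisely the argument the paper intends: the paper states this proposition without proof, declaring it obvious from the recursive construction of the weighting in \autoref{theorem: singular Lie filtrations and weightings}, and your induction on filtration degree --- base cases $i=0,1$ from the map-of-pairs hypothesis, inductive step pushing the recursion's derivative condition through $F$-relatedness via $X(F^*f) = F^*(Yf)$ --- is exactly the unwinding of that construction. The bookkeeping points you flag (the sign convention making $\mathcal{F}_{-j}$ lower degree by $j$, the range $1 \leq i-j \leq i-1$, and locality of the filtration conditions) are the only care needed, and you handle them correctly.
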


\section{Inner Automorphisms of Weighted Manifolds}

Let $(M,N)$ be a weighted pair. We close this chapter by showing that vector fields $X\in \mathfrak{X}(M)_{(0)}$ are exactly those whose flow is an automorphism of the weighted manifold. We begin with a lemma.

\begin{lemma}[{\cite[Propositions 1.6, 3.1, 1.6.9]{androulidakis2009holonomy, garmendia2019inner, laurent-gengoux2022singular}}]
\label{lemma: infinitesimal automorphisms are automorphisms}
    Let $\mathcal{D}$ be a singular distribution on $M$ and let $X\in \ger{X}(M)$ be a vector field whose flow $\phi_t$ is defined for $|t|<\epsilon$. If $[X, \mathcal{D}] \sset \mathcal{D}$, then 
        \[ \phi_t^*(\mathcal{D}) \sset \mathcal{D}. \]
\end{lemma}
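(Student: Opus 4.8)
The plan is to reduce to a local computation using the local finite generation of $\mathcal{D}$, and then solve a linear ODE in the space of vector fields. First I would fix $p\in M$ and choose an open neighbourhood $U$ of $p$ on which $\mathcal{D}(U)$ is generated by finitely many vector fields $Y_1,\dots,Y_k$. Shrinking $U$ and $\epsilon$ if necessary, I may assume there is a smaller neighbourhood $U'\ni p$ with $\phi_t(U')\sset U$ for all $|t|<\epsilon$. The hypothesis $[X,\mathcal{D}]\sset\mathcal{D}$ then produces smooth functions $a_{jl}\in C^\infty(U)$ with
    \[ [X,Y_j] = \sum_l a_{jl}Y_l. \]

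Next I would introduce the time-dependent family $Z_j(t) = \phi_t^*Y_j\in\ger{X}(U')$ and differentiate. Using the standard identity $\tfrac{d}{dt}\phi_t^*Y = \phi_t^*[X,Y]$ together with $\phi_t^*(fY)=(f\circ\phi_t)\,\phi_t^*Y$, I obtain the linear system
    \[ \frac{d}{dt}Z_j(t) = \sum_l (a_{jl}\circ\phi_t)\,Z_l(t). \]
Let $C_{jl}(t)$ be the solution, smooth in $t$ and in the point of $U'$, of the matrix ODE $\dot C_{jl}=\sum_m (a_{jm}\circ\phi_t)\,C_{ml}$ with $C_{jl}(0)=\delta_{jl}$, and set $W_j(t)=\sum_l C_{jl}(t)Y_l$. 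Both $Z_j$ and $W_j$ satisfy the same first-order linear system with the same initial data $Y_j$. Evaluating at each fixed point $q\in U'$ turns this into a finite-dimensional linear ODE for the tuple $(Z_1(t)(q),\dots,Z_k(t)(q))$ in $T_qM$, so uniqueness of solutions forces $Z_j(t)=W_j(t)=\sum_l C_{jl}(t)Y_l\in\mathcal{D}(U')$ for all $|t|<\epsilon$.

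Finally, any $Y\in\mathcal{D}(U)$ has the form $Y=\sum_j f_jY_j$ with $f_j\in C^\infty(U)$, whence
    \[ \phi_t^*Y = \sum_j (f_j\circ\phi_t)\,\phi_t^*Y_j \in \mathcal{D}(U'). \]
Since $\mathcal{D}$ is a sheaf and this holds on a neighbourhood of every point of $M$, we conclude $\phi_t^*\mathcal{D}\sset\mathcal{D}$.

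I expect the main obstacle to be bookkeeping with domains: ensuring that the finite generating set, the normalizing relation $[X,Y_j]=\sum_l a_{jl}Y_l$, and the flow $\phi_t$ are all defined and compatible on the nested neighbourhoods $U'\sset U$, and that the coefficient functions $a_{jl}\circ\phi_t$ make sense along the flow lines. The other delicate point is phrasing the uniqueness step correctly; it is cleanest to read the identity $Z_j(t)=W_j(t)$ pointwise as a finite-dimensional linear ODE in each $T_qM$, rather than as an ODE in the infinite-dimensional space of vector fields, which sidesteps any existence–uniqueness issues for the latter.
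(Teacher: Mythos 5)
The thesis itself gives no proof of this lemma---it is imported verbatim from the cited sources (Androulidakis--Skandalis, Garmendia, Laurent-Gengoux et al.)---so the only meaningful comparison is with the argument in those references, and your proposal reproduces it: choose local generators $Y_1,\dots,Y_k$ of $\mathcal{D}$, use $[X,\mathcal{D}]\sset\mathcal{D}$ to write $[X,Y_j]=\sum_l a_{jl}Y_l$, differentiate via $\tfrac{d}{dt}\phi_t^*Y_j=\phi_t^*[X,Y_j]=\sum_l(a_{jl}\circ\phi_t)\,\phi_t^*Y_l$, and compare $\phi_t^*Y_j$ with $\sum_l C_{jl}(t)Y_l$ where $C$ solves the associated matrix ODE. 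The references organize the comparison slightly differently (via the fundamental solution of the matrix ODE), but your device of evaluating at each $q\in U'$ and invoking uniqueness for a finite-dimensional linear ODE in $(T_qM)^k$ is equivalent and cleanly stated; the concluding module step $\phi_t^*\bigl(\sum_j f_jY_j\bigr)=\sum_j(f_j\circ\phi_t)\,\phi_t^*Y_j$ is also correct.

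The one genuine gap is the time interval. You shrink $\epsilon$ (and $U$) so that $\phi_t(U')\sset U$ for $|t|<\epsilon$, and this shrunken bound depends on the point $p$. What your argument literally establishes is therefore: for every $p$ there exists $\delta_p>0$ such that $\phi_t^*\mathcal{D}\sset\mathcal{D}$ holds \emph{near $p$} for $|t|<\delta_p$. The lemma asserts the inclusion for \emph{every} $|t|<\epsilon$, and for a fixed such $t$ your local statement may apply near no point at all: $\inf_p\delta_p$ can be $0$ when $M$ is noncompact, and even when it is positive it may be smaller than $\epsilon$, so the final sheaf-gluing sentence does not deliver the stated conclusion. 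The standard repair uses the one-parameter group property of the flow: fix $t_0\in(0,\epsilon)$, a point $q$, and $Y\in\mathcal{D}$ defined near $\phi_{t_0}(q)$; the orbit segment $\{\phi_s(q):0\le s\le t_0\}$ is compact, hence covered by finitely many of your neighbourhoods $U'$, which yields a uniform $\delta>0$ for the small-time statement along it; now subdivide $[0,t_0]$ into steps of length less than $\delta$ and, using $\phi_{s+r}^*=\phi_s^*\circ\phi_r^*$, propagate the assertion ``the germ of the pullback at the appropriate orbit point lies in $\mathcal{D}$'' inductively one step at a time (alternatively, run an open-closed argument in $t$). These few extra lines are genuinely needed; without them the proof establishes a strictly weaker, small-time version of the lemma.
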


\begin{proposition}
Let $(M,N)$ be a weighted pair, and let $X \in \ger{X}(M)$ be a vector field whose time flow $\phi_t$ is defined for $|t|<\epsilon$. Then the following are equivalent:
    \begin{itemize}
        \item[(a)] $X\in \ger{X}(M)_{(0)}$, 

        \item[(b)] $[X, \ger{X}(M)_{(i)}] \sset \ger{X}(M)_{(i)}$ for all $i$,
        
        \item[(c)] $\phi_t$ is a weighted diffeomorphism for all $|t|<\epsilon$.
    \end{itemize}
\end{proposition}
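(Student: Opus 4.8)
The plan is to establish the cycle (a) $\Rightarrow$ (c) $\Rightarrow$ (b) $\Rightarrow$ (a). Throughout I will use that the filtration $\ger{X}(M)_{(\bullet)}$ of~\eqref{equation: filtration of vector fields} is compatible with brackets; in particular this already gives (a) $\Rightarrow$ (b), since $X \in \ger{X}(M)_{(0)}$ and $Y \in \ger{X}(M)_{(i)}$ force $[X,Y] \in \ger{X}(M)_{(0+i)} = \ger{X}(M)_{(i)}$.

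For (a) $\Rightarrow$ (c), I first observe that $X \in \ger{X}(M)_{(0)}$ is tangent to $N$: in weighted coordinates $X = \sum_a h_a \bd_{x_a}$ with each $h_a$ of filtration degree $w_a$, so $h_a|_N = 0$ whenever $w_a \geq 1$ and hence $X_p \in T_pN$ for $p \in N$, giving $\phi_t(N) = N$. By the bracket compatibility above, $[X, \ger{X}(M)_{(i)}] \sset \ger{X}(M)_{(i)}$ for all $i$; since each $\ger{X}(M)_{(i)}$ is a singular distribution, applying~\autoref{lemma: infinitesimal automorphisms are automorphisms} to $X$ and to $-X$ yields $\phi_t^*\ger{X}(M)_{(i)} = \ger{X}(M)_{(i)}$, so the flow preserves the singular Lie filtration. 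Because the weighting is recovered from the pair $(N, \ger{X}(M)_{(\bullet)})$ through~\autoref{theorem: singular Lie filtrations and weightings}, I would then show $\phi_t^* C^\infty(M)_{(i)} \sset C^\infty(M)_{(i)}$ by induction on $i$: for $f \in C^\infty(M)_{(i)}$ and a test field $Y$ of the relevant degree one computes $Y(\phi_t^* f) = \phi_t^*\big(((\phi_t)_* Y)\, f\big)$, where $(\phi_t)_* Y$ lies in the same filtration piece as $Y$, so $((\phi_t)_* Y)\,f$ lands in a lower piece $C^\infty(M)_{(i-j)}$ preserved by the inductive hypothesis, and the reconstruction formula forces $\phi_t^* f \in C^\infty(M)_{(i)}$. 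The same argument for $\phi_{-t} = \phi_t^{-1}$ shows $\phi_t$ is a weighted diffeomorphism.

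For (c) $\Rightarrow$ (b), a weighted diffeomorphism satisfies $\phi_t^* \ger{X}(M)_{(i)} = \ger{X}(M)_{(i)}$, since the defining condition $Yf \in C^\infty(M)_{(k+i)}$ transports through $\phi_t^* C^\infty(M)_{(\bullet)} = C^\infty(M)_{(\bullet)}$. Then $[X,Y] = \frac{\ed}{\ed t}\big|_{t=0} \phi_t^* Y$ is a limit of difference quotients lying in the linear subspace $\ger{X}(M)_{(i)}$, which is closed in $\ger{X}(M)$; hence $[X,Y] \in \ger{X}(M)_{(i)}$, giving (b).

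The remaining implication (b) $\Rightarrow$ (a) is the most computational, and I would check it in a weighted chart with $X = \sum_a h_a \bd_{x_a}$, the goal being $h_a \in C^\infty(M)_{(w_a)}$. Bracketing against the degree $-w_b$ fields $\bd_{x_b}$ gives, by (b), $\bd_{x_b} h_a \in C^\infty(M)_{(w_a - w_b)}$ for all $a,b$; grouping $h_a$ into weighted-homogeneous parts, this forces every non-constant component of $h_a$ to have weighted degree $\geq w_a$, leaving only the constant $h_a(0)$ to eliminate when $w_a \geq 1$. Bracketing against the degree-zero Euler fields $x_a\bd_{x_a} \in \ger{X}(M)_{(0)}$ produces a $\bd_{x_a}$-component $h_a - x_a\bd_{x_a}h_a$ whose constant term is exactly $h_a(0)$, and requiring it to have filtration degree $w_a$ kills $h_a(0)$; thus $X \in \ger{X}(M)_{(0)}$. (To test the global hypothesis (b) with these local fields I would first multiply them by a cutoff equal to $1$ near the point under consideration.) I expect the organization of this last step to be the main obstacle: the key realizations are that the two families $\bd_{x_b}$ and $x_a\bd_{x_a}$ together suffice — the former pinning down the non-constant Taylor terms and the latter the transverse value of $X$ at $N$ — while the induction in (a) $\Rightarrow$ (c) hinges on the weighting being genuinely intrinsic to the pair $(N, \ger{X}(M)_{(\bullet)})$.
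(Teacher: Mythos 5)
Your proof is correct, but it runs the equivalence around the opposite cycle from the paper, and that changes which implication carries the weight. The paper proves (a) $\Rightarrow$ (b) $\Rightarrow$ (c) $\Rightarrow$ (a): the step (a) $\Rightarrow$ (b) is the same bracket-compatibility observation as yours; (b) $\Rightarrow$ (c) combines \autoref{lemma: infinitesimal automorphisms are automorphisms} with a citation of \autoref{proposition: weighted morphisms of weighted lie filtrations} --- exactly the statement you re-derive inline by induction on filtration degree using the recursion of \autoref{theorem: singular Lie filtrations and weightings}; and the loop is closed by the one-line implication (c) $\Rightarrow$ (a), namely $\phi_t^*Xf = \tfrac{d}{dt}\phi_t^*f \in C^\infty(M)_{(i)}$ for $f\in C^\infty(M)_{(i)}$. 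By orienting the cycle this way the paper never has to prove (b) $\Rightarrow$ (a) at all, whereas your orientation (a) $\Rightarrow$ (c) $\Rightarrow$ (b) $\Rightarrow$ (a) forces you to establish it directly via the coordinate computation with $\bd_{x_b}$ and the Euler fields $x_a\bd_{x_a}$. That computation is sound: the $\bd_{x_b}$-brackets kill the nonconstant weighted Taylor coefficients of $h_a$ of degree $<w_a$ along $N$, the $\bd_{x_a}$-component $h_a - x_a\bd_{x_a}h_a$ of the Euler bracket kills $h_a|_N$, and your cutoff remark correctly repairs globality; likewise your (c) $\Rightarrow$ (b) limit argument needs only closedness of $\ger{X}(M)_{(i)}$ under $C^\infty$-limits, the same kind of fact the paper uses implicitly for $C^\infty(M)_{(i)}$ in its (c) $\Rightarrow$ (a). What your longer route buys is an explicit check that $X\in\ger{X}(M)_{(0)}$ is tangent to $N$ --- so that $\phi_t$ is a map of pairs, a hypothesis the paper's citation of the proposition silently requires --- together with a flow-free demonstration that condition (b) alone pins down $X$ in weighted coordinates; what the paper's route buys is brevity.
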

\begin{proof}
    Recall that the filtration of $\ger{X}(M)$ is compatible with brackets in the sense that if $Y\in \mathfrak{X}(M)_{(i)}$ and $Z \in \mathfrak{X}(M)_{(j)}$ then  $[Y,Z] \in \mathfrak{X}(M)_{(i+j)}$. Thus, (a) implies (b). 

    Now suppose that (b) holds. By applying~\autoref{lemma: infinitesimal automorphisms are automorphisms} to the singular distribution $\mathcal{D} = \ger{X}(M)_{(i)}$, we have that 
        \[ \phi_t^*(\ger{X}(M)_{(i)}) \sset \ger{X}(M)_{(i)}  \]
    for all $|t|<\epsilon$. By~\autoref{proposition: weighted morphisms of weighted lie filtrations}, this implies that $\phi_t$ is a weighted diffeomorphism. 
    
    Finally, suppose that (c) holds and let $f\in C^\infty(M)_{(i)}$. Since
        \[ \phi_t^*Xf = \frac{d}{dt}\phi_t^*f \in C^\infty(M)_{(i)}, \]
    it follows that $Xf \in C^\infty(M)_{(i)}$, hence $X\in \ger{X}(M)_{(0)}$.  
\end{proof}

\cleardoublepage
\chapter{Linear weightings}
\label{chapter: linear weightings}

\section{Weighted Vector Bundles}
\label{section: weighted vector bundles}

We now turn our attention to weightings for vector bundles. In~\cite{loizides2023differential}, Loizides and Meinrenken define a \emph{linear weighting} of a vector bundle $V$ to be one for which scalar multiplication is a weighted morphism. We found this definition to be too restrictive in the sense the it excludes some basic examples. For instance, there is no natural weighting of the cotangent bundle of a weighted pair $(M,N)$ even though the tangent bundle is canonically weighted. Moreover, since vector bundles can be defined in terms of their sheaf of sections, it is desirable to have a definition of linear weighting in these terms.

We define linear weightings in terms of a $\Z$-graded filtration 
    \begin{equation*}
        \cdots \supseteq \G_{V, (i)} \supseteq \G_{V, (i+1)} \supseteq \cdots
    \end{equation*}
filtration of the sheaf of sections of a vector bundle, satisfying a local condition modelled after~\autoref{definition: weighting}. To ensure our definition contains naturally arising examples, we allow for the filtration of the sections of $V$ to be non-trivial in both positive and negative degree. If $V\to M$ is a linearly weighted vector bundle, then $M$ is a weighted manifold and the constructions of the weighted normal bundle and the weighted deformation space still go through yielding vector bundles over the weighted normal bundle and weighted deformation space of $M$, respectively. 

We let $\G(V)$ denote the (smooth) sections of $V$. If $W\to N$ is a subbundle of $V\to M$ then $\G(V, W) = \{ \sigma \in \G(V) : \sigma|_N \in W \}$.

\subsection{Definition of linear weightings}

A linear weighting of a vector space $V$ is a filtration of $V$ by subspaces. Therefore, we define a weighted vector bundle to be one which is locally the product of a weighted manifold with a filtered vector space. This is formalized as follows. 

Let $(M,N)$ be a weighted pair and $M \times \R^k\to M$ the trivial bundle of rank $k$ over $M$. A \emph{vertical weight vector} for $M \times \R^k$ is a $k$-tuple of integers $(v_1, \dots, v_k) \in \Z^k$. A choice of vertical weight vector determines a filtration of $\G(M \times \R^k)$ by $C^\infty(M)$-submodules
    \begin{equation}
    \label{equation: local model for linear weighting}
        \G(M\times \R^k)_{(i)} = \sum_{a=1}^k C^\infty(M)_{(i-v_a)}\sigma_a, 
    \end{equation}
where $\sigma_1, \dots, \sigma_k$ is the standard basis for $\R^k$.  
	
\begin{definition}
\label{definition: linear weighting}
    A \emph{linear weighting} of a rank $k$ vector bundle $V$ over the weighted pair $(M,N)$ is a $\Z$-graded filtration 
        \[ \cdots \supseteq \G_{V, (i)} \supseteq \G_{V,(i+1)} \supseteq \cdots \]
    of the sheaf of sections $\G_V$ by $C^\infty_M$-submodules such that for every point $p\in M$ there exists an open neighbourhood $U\sset M$ containing $p$ and a frame $\sigma_1, \dots, \sigma_k \in \G(V|_U)$ such that $\G(V|_U)_{(i)}$ is given by~\eqref{equation: local model for linear weighting}. The frame $\sigma_a$ is called a \emph{weighted frame}. We refer to a vector bundle with a linear weighting as a \emph{weighted vector bundle}.
\end{definition}

\begin{remarks}
    \begin{enumerate}
        \item[(a)] If $v_1, \dots, v_k$ is the vertical weight sequence for $V$, then  $\G_{V, (i)} = \G_V$ for $i \leq \min_a\{v_a\}$.

        \item[(b)]  By definition, $\G_V$ is a filtered module over the filtered algebra $C^\infty_M$. That is, for all $i, j \in \Z$,
            \[ C^\infty_{M,(i)}\cdot \G_{V, (j)} \sset \G_{V, (i+j)}.\]

        \item[(c)] Using that we are working in the $C^\infty$-category, we may take advantage of the existence of partitions of unity to avoid the use of sheaves, working instead with the filtration of global sections  
            \[ \cdots \supseteq \G(V)_{(i)} \supseteq \G(V)_{(i+1)} \supseteq \cdots \]
    \end{enumerate}
\end{remarks}

\begin{examples}
\label{examples: examples of linear weightings}
    \begin{itemize}
        \item[(a)] If $M$ is weighted along itself, then a filtration of $V$
            \[ \cdots \supseteq V_{(i)} \supseteq V_{(i+1)} \supseteq \cdots  \]
        by subbundles $V_i \to M$ via defines a linear weighting of $V$ via correspondence 
            \[ \G(V)_{(i)} = \G(V_{(i)}).  \]
            
        \item[(b)] If $M$ is a weighted manifold and $V$ is a weighted vector space, then the trivial bundle $M\times V$ is a weighted vector bundle. 
        
        \item[(c)] Let $(M,N)$ be a weighted pair and let 
            \begin{equation}
            \label{equation: TM linear weighting}
                \cdots \supseteq \mathfrak{X}(M)_{(i)} \supseteq \mathfrak{X}(M)_{(i+1)} \supseteq \cdots
            \end{equation}
         be the induced filtration of $\ger{X}(M)$ (see~\autoref{section: singular Lie filtrations}). If $x_a$ is a local weighted coordinate system on $M$, then the local coordinate vector fields $\frac{\bd}{\bd x_a}$ define a local weighted frame for $TM$. Thus,~\eqref{equation: TM linear weighting} is a linear weighting of $TM$. If $w_1, \dots, w_n$ are the weights for $M$, then the vertical weights for $TM$ are given by $-w_1, -w_2, \dots, -w_n$.  

         \item[(d)] With the same setting as in (c), let $\Omega^1(M)_{(i)}$ denote the 1-forms $\omega \in \Omega^1(M)$ on $M$ with the property that 
            \[ X \in \ger{X}(M)_{(j)} \implies \la \omega, X \ra \in C^\infty(M)_{(i+j)}. \]
        If $x_a$ is a weighted coordinate system on $M$, then the differentials $\ed x_a$ form a weighted frame for $T^*M$. Thus, $T^*M$ is a weighted vector bundle. If $w_1, \dots, w_n$ are the weights for $M$, then the vertical weights for $T^*M$ are given by $w_1, \dots, w_n$ as well. Note that the map $\ed:C^\infty(M)\to \Omega^1(M)$ is filtration preserving.

         \item[(e)] Let $N\sset M$ be a closed submanifold with vanishing ideal $\van{N}$. The trivial weighting of $M$ along $N$ is given by order of vanishing, 
                \[ C^\infty(M)_{(j)} = \van{N}^j.  \]    
         If $W\to N$ is a subbundle of $V\to M$, then the filtration 
            \[ \G(V)_{(i)} = \left\{
                \begin{array}{ll}
                     \G(V) & i \leq -1,  \\
                    \G(V, W) & i = 0, \\
                    \van{N}^i\cdot \G(V) & i \geq 1, 
                \end{array}
            \right.\]
        determines a linear weighting of $V$. If $M$ is trivially weighted along $N$, this recovers the tangent weighting of $TM$ from the previous example for $W = TN$
    \end{itemize}
\end{examples}

Recall from~\autoref{section: basics of weightings} that a weighting of $M$ along $N$ determines a filtration of $TM|_N$ be subbundles. The analogue of this in the context of linear weightings is the following. 

\begin{proposition}
\label{proposition: linear weighting determines filtration by subbundles}
    Let $(M,N)$ be a weighted pair. A linear weighting of $V\to M$ with vertical weights $v_1, \dots, v_k$ determines a filtration of $V|_N$  
        \[ \cdots \supseteq (V|_N)_{(i)} \supseteq (V|_N)_{(i+1)} \supseteq \cdots  \]
    by subbundles $(V|_N)_i \to N$ of rank $k_i = \#\{ a : v_a \geq i\}$. In fact, $\G((V|N)_{(i)})$ is given by the image of $\G(V)_{(i)}$ in 
        \[\G(V)/(\van{N}\cdot \G(V)) = \G(V|_N) \]
    under the quotient map. 
\end{proposition}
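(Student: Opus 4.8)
The plan is to \emph{define} the candidate filtration by the formula appearing in the second sentence of the statement and then verify, by a purely local computation, that it is a filtration of $V|_N$ by subbundles of the stated ranks. Concretely, let $q:\G(V)\to \G(V)/(\van{N}\cdot\G(V)) = \G(V|_N)$ denote the restriction map, and set $\G((V|_N)_{(i)}) := q(\G(V)_{(i)})$. Since $\G(V)_{(i)} \supseteq \G(V)_{(i+1)}$, the images are automatically nested, so the asserted filtration property is immediate; moreover this construction manifestly does not reference any choice of local frame. What remains is to show that each image is the module of sections of a subbundle of $V|_N$ of rank $k_i = \#\{a : v_a \geq i\}$.

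First I would pass to a local weighted frame. Fix $p \in N$, and choose an open $U \ni p$ carrying weighted coordinates and a weighted frame $\sigma_1,\dots,\sigma_k$, so that $\G(V|_U)_{(i)} = \sum_a C^\infty(U)_{(i-v_a)}\sigma_a$ by~\eqref{equation: local model for linear weighting} (working locally is justified by partitions of unity, as in the paper's conventions). Writing a section in $\G(V|_U)_{(i)}$ as $\sigma = \sum_a f_a\sigma_a$ with $f_a \in C^\infty(U)_{(i-v_a)}$, the key point is to determine which coefficients survive restriction to $N$. Here I use the two basic features of the \emph{weighting} of $M$: its filtration of $C^\infty(U)$ is trivial in non-positive degree, $C^\infty(U)_{(j)} = C^\infty(U)$ for $j \leq 0$, whereas $C^\infty(U)_{(1)} = \van{N\cap U}$. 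Consequently, if $v_a < i$ then $i - v_a \geq 1$ and $f_a \in \van{N}$, so $f_a|_N = 0$; while if $v_a \geq i$ then $i - v_a \leq 0$ and $f_a$, hence $f_a|_N$, may be arbitrary. Therefore $q(\G(V|_U)_{(i)})$ is precisely the $C^\infty(N\cap U)$-span of $\{\sigma_a|_N : v_a \geq i\}$.

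Finally I would conclude. Since $\sigma_1|_N,\dots,\sigma_k|_N$ is a frame for $V|_{N\cap U}$, its subset $\{\sigma_a|_N : v_a \geq i\}$ is a local frame for a rank-$k_i$ subbundle of $V|_{N\cap U}$, and the previous step identifies $q(\G(V)_{(i)})$ over $N\cap U$ with the sheaf of sections of this subbundle. Because $q(\G(V)_{(i)})$ was defined globally and without reference to the frame, these locally defined subbundles have the same sheaf of sections on overlaps; as a subbundle is determined by its sheaf of sections, they agree there and glue to a global subbundle $(V|_N)_{(i)} \to N$ of rank $k_i$ whose sections are $q(\G(V)_{(i)})$. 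This yields simultaneously the filtration by subbundles and the asserted description of its sections.

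As for the main obstacle: there is little genuine difficulty here, and the only point demanding care is the bookkeeping in the local computation, in particular remembering that the filtration of $C^\infty(M)$ is trivial in non-positive degrees (so the coefficients with $v_a \geq i$ restrict freely) and that degree $1$ is exactly the vanishing ideal $\van{N}$ (so the coefficients with $v_a < i$ vanish on $N$). Showing that the local subbundles patch is then immediate once one observes that $q(\G(V)_{(i)})$ is frame-independent; this is precisely why defining the filtration through the restriction map, rather than fibrewise, is the right move.
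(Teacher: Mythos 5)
Your proposal is correct and follows essentially the same route as the paper: the paper's (very terse) proof also works locally with a weighted frame $\sigma_a$ and observes that the image of $\G(V|_U)_{(i)}$ in $\G(V|_U)/(\van{N\cap U}\cdot\G(V|_U))$ is freely generated by $\{\sigma_a|_{N\cap U} : v_a \geq i\}$, which is exactly the local computation you carry out (coefficients with $v_a < i$ land in $C^\infty(U)_{(i-v_a)} \sset \van{N}$ and die, coefficients with $v_a \geq i$ are unconstrained). Your additional remarks on frame-independence and gluing just make explicit what the paper leaves implicit in defining the filtration via the restriction map.
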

\begin{proof}
    Let $p\in N$ be contained in the open neighbourhood $U\sset M$, and let $\sigma_a$ be a weighted frame for $V|_U$. Then the image of $\G(V|_U)_{(i)}$ in the quotient $\G(V|_U)/(\van{N\cap U}\cdot \G(V|_U))$ is freely generated by 
        \[ \{ \sigma_a|_{N\cap U} : v_a \geq i \}. \qedhere \]
\end{proof}

\begin{remark}
\label{remark: wide weighting and filtered vector bundles}
    This shows that if $M$ is weighted along itself then a linear weighting of $V\to M$ is \emph{equivalent} to a filtration of $V$ by subbundles, as in~\autoref{examples: examples of linear weightings} (a). In particular, a \emph{weighted vector space} is equivalent to a filtered vector space. 
\end{remark}

\begin{examples}
    \begin{itemize}
        \item[(a)] If $(M,N)$ is a weighted pair and $TM$ and $T^*M$ are weighted as in~\autoref{examples: examples of linear weightings} (c) and (d), then the filtrations of $TM|_N$ and $T^*M|_N$ are the ones defined in~\autoref{section: basics of weightings}. 

        \item[(b)] If $V\to M$ is trivially weighted along $W\to N$ as in~\autoref{examples: examples of linear weightings} (e), then the filtration of $V|_N$ is 
            \[ V|_N \supseteq W \supseteq 0. \]
        
        \item[(c)] Let $\R$ be given the trivial weighting along the origin, so that the coordinate $x$ has weight $1$. Let $V = \R \times \R^2 \corr{\mathrm{pr}_1} \R$ be the trivial bundle of rank 2 over $\R$, and let $\sigma_1, \sigma_2 \in \G(V)$ be standard frame. The linear weightings
	       \begin{align*}
                \G(V)_{(i)} & = C^\infty(\R)_{(i)}\cdot  \sigma_1 + C^\infty(\R)_{(i+2)}\cdot \sigma_2 \\
                \G(V)'_{(i)} & = C^\infty(\R)_{(i)}\cdot (\sigma_1 + x\sigma_2) + C^\infty(\R)_{(i+2)}\cdot \sigma_2 
	       \end{align*}
        determine the same filtration of $V_{0} = \{0\}\times \R^2$, but they are different weightings. Indeed, $\sigma_1+x\sigma_2 \in \G(V)'_{(0)}$, but $\sigma_1+x\sigma_2 \notin\G(V)_{(0)}$. In particular, this example shows that the filtration of $V|_N$ alone is not enough to recover the weighting of $V$.
    \end{itemize}
\end{examples}

\subsection{Constructions}
\label{subsection: linear constructions}

We now explain how various constructions with vector bundles work in the weighted setting.

\begin{itemize}
    \item[(a)] (\emph{Dual weighting)} The dual of a weighted vector bundle is linearly weighted by 
        \[ \tau \in \G(V^*)_{(i)} \iff \forall \sigma\in \G(V)_{(j)},\ \la \tau, \sigma \ra \in C^\infty(M)_{(i+j)}. \]
    If $\sigma_a$ is a weighted frame for $V|_U$ then the corresponding dual frame $\tau_a$ is a weighted frame for $V^*|_U$. In particular, if $v_a$ are the vertical weights for $V$ then $-v_a$ are the vertical weights for $V^*$ and $V=(V^*)^*$ as weighted vector bundles. Furthermore, the corresponding filtration of $V^*|_N$ is given by 
        \[ (V^*|_N)_{(i)} = \mathrm{ann}((V|_N)_{(-i+1)}). \]
    This generalizes the weighting of $T^*M$ defined in~\autoref{examples: examples of linear weightings} (d). 

    \item[(b)](\emph{Direct sums, tensor products, etc.)} If $V\to M$ and $W\to M$ are weighted vector bundles over the weighted pair $(M, N)$, then $V\oplus W$, $V\otimes W$, $\mathrm{Hom}(V,W)$, $\wedge^n V$, and $\mathrm{Sym}^n(V)$ all inherit linear weightings in a canonical way. 

    For example, the linear weighting on $V\otimes W$ is given by 
        \[ \G(V\otimes W)_{(k)} = \sum_{i+j = k}\G(V)_{(i)} \otimes_{C^\infty(M)} \G(W)_{(j)}.   \]
    The linear weighting on $\mathrm{Hom}(V, W)$ is given by the identification $\mathrm{Hom}(V,W) \cong V^*\otimes W$. With respect to this weighting we have that, for all $i, j \in \Z$,
        \[ \G(\mathrm{Hom}(V, W))_{(i)} \times \G(V)_{(j)} \to \G(W)_{(i+j)}. \]

    \item[(c)](\emph{Shifted weighting}) Given a linear weighting of $V\to M$ we denote by $V[k]$ the vector bundle $V$ linearly weighted by 
        \[ \G(V[k])_{(i)} = \G(V)_{(i+k)}. \]
    Note that: 
        \begin{itemize}
            \item[(i)] $(V[k]|_N)_{(i)} = (V|_N)_{(i+k)}$, and 

            \item[(ii)] $V[k]^* = V^*[-k]$.
        \end{itemize}

    \item[(d)](\emph{Pullbacks}) Suppose that $(M,N)$ and $(M', N')$ are weighted pairs and that $\varphi:M' \to M$ is a weighted morphism. 

\begin{proposition}
    If $V\to M$ is a weighted vector bundle, then 
        \begin{equation}
        \label{equation: pullback weighting}   
            \G(\varphi^*V)_{(i)} = \sum_{j\geq 0}C^\infty(M')_{(j)}\cdot \varphi^*\G(V)_{(i-j)}. 
        \end{equation} 
    defines a linear weighting of the pullback bundle $\varphi^*V\to M'$. 
\end{proposition}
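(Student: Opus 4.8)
The plan is to check directly that~\eqref{equation: pullback weighting} meets both requirements of \autoref{definition: linear weighting}: that it is a descending $\Z$-graded filtration of $\G(\varphi^*V)$ by $C^\infty(M')$-submodules, and that it is locally of the form~\eqref{equation: local model for linear weighting}. The first requirement is routine. Each $\G(\varphi^*V)_{(i)}$ is a $C^\infty(M')$-module because the filtration pieces $C^\infty(M')_{(j)}$ are ideals, so multiplying a generator $g\cdot\varphi^*\eta$ (with $g\in C^\infty(M')_{(j)}$, $\eta\in\G(V)_{(i-j)}$) by an arbitrary smooth function keeps it inside $C^\infty(M')_{(j)}\cdot\varphi^*\G(V)_{(i-j)}$. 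The filtration is descending since $\G(V)_{(i+1-j)}\sset\G(V)_{(i-j)}$ forces each summand for index $i+1$ into the corresponding summand for index $i$.

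The substance is the local model. First I would fix $p'\in M'$, set $p=\varphi(p')$, and choose a weighted frame $\sigma_1,\dots,\sigma_k$ for $V$ on an open set $U\ni p$ with vertical weights $v_1,\dots,v_k$, so that $\G(V|_U)_{(\ell)}=\sum_a C^\infty(U)_{(\ell-v_a)}\sigma_a$. Since $\sigma_a$ is a frame, the pullbacks $\varphi^*\sigma_a$ form a frame for $\varphi^*V$ over the open neighbourhood $U'=\varphi^{-1}(U)$ of $p'$. I claim that with respect to this frame the filtration~\eqref{equation: pullback weighting} is exactly the local model with the same vertical weights, namely $\G(\varphi^*V|_{U'})_{(i)}=\sum_a C^\infty(U')_{(i-v_a)}\varphi^*\sigma_a$; establishing this claim proves the proposition.

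For the inclusion $\sset$ I would expand a generator $g\cdot\varphi^*\eta$ with $g\in C^\infty(U')_{(j)}$, $j\geq0$, and $\eta=\sum_a f_a\sigma_a\in\G(V|_U)_{(i-j)}$, so that $f_a\in C^\infty(U)_{(i-j-v_a)}$. Because $\varphi$ is a weighted morphism, $\varphi^*f_a\in C^\infty(U')_{(i-j-v_a)}$, and multiplicativity of the filtration of $C^\infty(U')$ gives $g\,\varphi^*f_a\in C^\infty(U')_{(i-v_a)}$; hence $g\cdot\varphi^*\eta=\sum_a(g\,\varphi^*f_a)\varphi^*\sigma_a$ lies in the local model. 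The reverse inclusion $\supseteq$ is where the index bookkeeping, and the one genuine subtlety, appears. Given $h\,\varphi^*\sigma_a$ with $h\in C^\infty(U')_{(i-v_a)}$, I would note that $\sigma_a\in\G(V|_U)_{(v_a)}$ and split into two cases. If $i\geq v_a$, take $j=i-v_a\geq0$: then $h\in C^\infty(U')_{(j)}$ and $\varphi^*\sigma_a\in\varphi^*\G(V|_U)_{(v_a)}=\varphi^*\G(V|_U)_{(i-j)}$, so the term appears in the $j$-th summand. If $i<v_a$, then $i-v_a\leq0$ forces $C^\infty(U')_{(i-v_a)}=C^\infty(U')$; taking $j=0$ and using that the filtration of $\G(V|_U)$ is descending gives $\sigma_a\in\G(V|_U)_{(v_a)}\sset\G(V|_U)_{(i)}$, whence $h\,\varphi^*\sigma_a\in C^\infty(U')_{(0)}\cdot\varphi^*\G(V|_U)_{(i)}$.

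The main obstacle is thus not conceptual but the careful handling of these two regimes of indices; in particular one must use that $C^\infty_{(\ell)}$ is the whole algebra for $\ell\leq0$ and that $\sigma_a$ already sits in filtration degree $v_a$. Once the claim is proven, the frame $\varphi^*\sigma_a$ with weights $v_a$ exhibits the local model, completing the verification. I note in passing that the sum over $j$ in~\eqref{equation: pullback weighting} is effectively finite, since for $j$ large enough $\G(V)_{(i-j)}$ stabilizes to $\G(V)$ while the ideals $C^\infty(M')_{(j)}$ are nested, so no question of convergence arises.
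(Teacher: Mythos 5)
Your proof is correct and follows essentially the same route as the paper's: pull back a weighted frame $\sigma_a$, and show the coefficient modules $\sum_{j\geq 0}C^\infty(U')_{(j)}\cdot \varphi^*C^\infty(U)_{(i-j-v_a)}$ collapse to $C^\infty(U')_{(i-v_a)}$, using that $\varphi$ is a weighted morphism plus multiplicativity for one inclusion, and the choice $j=i-v_a$ (with the degenerate case $i<v_a$ handled by the convention $C^\infty_{(\ell)}=C^\infty$ for $\ell\leq 0$) for the other. Your two-case bookkeeping in the reverse inclusion is exactly the paper's "put $j=i-v_a$" together with its remark that there is nothing to show when $i-v_a<0$.
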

\begin{proof}
    Let $U\sset M$ be open and let $\sigma_1, \dots, \sigma_k$ be a weighted frame for $V|_U$. We claim that the pullbacks $\varphi^*\sigma_a \in \G(\varphi^*(V|_{U}))$ define a weighted frame for $\varphi^*(V|_U)$. Indeed, we have that 
        \begin{align*}
            \G(\varphi^*(V)&|_{\varphi^{-1}(U)})_{(i)}  = \sum_{j\geq 0} C^\infty(\varphi^{-1}(U))_{(j)}\cdot\varphi^*\G(V|_U)_{(i-j)} \\
            & = \sum_a \left( \sum_{j\geq 0}C^\infty(\varphi^{-1}(U))_{(j)}\cdot \varphi^*C^\infty(U)_{(i-j-v_a)}\right)\varphi^*\sigma_a.
        \end{align*}
    To complete the proof, we must show that
        \[ \sum_{j\geq 0}C^\infty(\varphi^{-1}(U))_{(j)}\cdot \varphi^*C^\infty(U)_{(i-j-v_a)} = C^\infty(\varphi^{-1}(U))_{(i-v_a)}; \]
    note that there is nothing to show if $i-v_a < 0$. We have that $\varphi^*C^\infty(U)_{(i-j-v_a)} \sset C^\infty(\varphi^{-1}(U))_{(i-j-v_a)}$ as $\varphi:M'\to M$ is a weighted morphism, hence the left hand side is contained in the right hand side. The other inclusion follows by putting $j = i-v_a$ in the sum. 
\end{proof}
\end{itemize}

\subsection{Linear Weightings as a Filtration of Polynomial Functions} 

We now relate linear weighting to the weightings discussed in~\autoref{chapter: Weightings}. For this, we replace the algebra of smooth functions on $M$ with the algebra of \emph{fibrewise polynomial} functions on the total space of $V$. Recall that the space of space of fibrewise polynomial functions on a vector bundle $V\to M$ is the direct sum 
    \[ C^\infty_{pol}(V) = \bigoplus_{n\geq 0}C^\infty_{[n]}(V), \]
where $C^\infty_{[n]}(V)$ are the functions of homogeneity $n$. This perspective allows for a quick definition of the weighted normal bundle and weighted deformation bundles in~\autoref{section: linear weighted normal bundle} and~\autoref{section: linear weighted deformation bundle}.

\begin{theorem}[Polynomial Characterization of Linear Weightings]
\label{theorem: linear weightings in terms of polynomials}
     Let $V$ be a rank $k$ vector bundle over the $m$-dimensional manifold $M$. There is a one to one correspondence between linear weightings of $V$ and multiplicative filtrations 
        \[ \cdots \supseteq C^\infty_{pol}(V)_{(i)} \supseteq C^\infty_{pol}(V)_{(i+1)} \supseteq \cdots \]
    of the sheaf of polynomial functions on $V$ with the following property: there exist tuples $(w_1, \dots, w_m) \in \Z^m_{\geq 0}$ and $(v_1, \dots, v_k) \in \Z^k$ such that for each $\xi\in V$, there is an open set $U\sset M$ with $\xi\in V|_U$ and vector bundle coordinates $x_a \in C^\infty_{[0]}(V|_U)$, $p_b\in C^\infty_{[1]}(V|_U)$ such that $C^\infty_{pol}(V|_U)_{(i)}$ is generated as a $C^\infty(U)$-module by the monomials 
        \begin{equation}
        \label{equation: local vector bundle coordinate generation}
            x^sp^t = x_1^{s_1}\cdots x_m^{s_m}\cdot p_1^{t_1} \cdots p_k^{t_k}
        \end{equation}
    such that $s\cdot w - t\cdot v = \sum_{i=1}^ms_iw_1 - \sum_{i=1}^kt_iv_i \geq i$. 
\end{theorem}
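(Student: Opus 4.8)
The plan is to establish the bijection by explicitly constructing the two maps — from a linear weighting to a filtration of $C^\infty_{pol}(V)$ and back — and verifying they are mutually inverse. The key technical input is the identification $C^\infty_{[1]}(V) \cong \G(V^*)$ and more generally $C^\infty_{pol}(V) \cong \mathrm{Sym}_{C^\infty(M)}(\G(V^*))$ as graded algebras, which lets us translate statements about fibrewise-linear functions into statements about sections of the dual.

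\textbf{From weightings to filtrations.} Given a linear weighting of $V$, I would use the dual weighting of $V^*$ from~\autoref{subsection: linear constructions}~(a), whose filtration $\G(V^*)_{(i)}$ has vertical weights $-v_a$. Since $C^\infty_{pol}(V)$ is generated as a $C^\infty(M)$-algebra by $C^\infty_{[0]}(V) = C^\infty(M)$ together with $C^\infty_{[1]}(V) = \G(V^*)$, there is a unique \emph{multiplicative} filtration extending the given weighting of $C^\infty(M)$ and the dual weighting of $\G(V^*)$, namely the one in which $C^\infty_{pol}(V)_{(i)}$ is spanned by products $f\cdot \tau_1\cdots \tau_\ell$ of total filtration degree $\geq i$. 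Concretely, if $x_a$ are weighted coordinates on $U$ and $\sigma_b$ is a weighted frame with dual frame $p_b \in \G(V^*)$ (so $p_b$ has filtration degree $-v_b$ under the dual weighting, i.e. $\la p_b, \sigma_{b'}\ra = \delta_{bb'}$ has degree $0 = -v_b + v_b$), then the monomial $x^s p^t$ has filtration degree $\sum s_a w_a - \sum t_b v_b = s\cdot w - t\cdot v$, matching~\eqref{equation: local vector bundle coordinate generation}. This shows the constructed filtration has the required local form.

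\textbf{From filtrations to weightings.} Conversely, given a multiplicative filtration of $C^\infty_{pol}(V)$ with the stated local property, I would recover the linear weighting by restricting to the degree-one piece: set
    \[ \G(V^*)_{(i)} = C^\infty_{pol}(V)_{(i)} \cap C^\infty_{[1]}(V) \]
and then define $\G(V)_{(i)}$ via the dual weighting. The local property guarantees that $p_b \in C^\infty_{[1]}(V|_U)_{(-v_b)}$, so that $\{p_b : -v_b \geq i\} = \{p_b : v_b \leq -i\}$ is a weighted frame for $V^*$; dualizing gives a weighted frame for $V$ with the prescribed vertical weights $v_a$, hence a genuine linear weighting in the sense of~\autoref{definition: linear weighting}.

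\textbf{The main obstacle} is verifying that the degree-one slice $C^\infty_{pol}(V)_{(i)} \cap C^\infty_{[1]}(V)$ faithfully recovers the full filtration — that is, that the multiplicative filtration is \emph{determined} by its action on $C^\infty(M)$ and $\G(V^*)$, with no extra freedom in higher polynomial degrees. This is where multiplicativity does the work: since every element of $C^\infty_{[n]}(V)$ is a $C^\infty(M)$-combination of products of $n$ linear functions, multiplicativity forces $C^\infty_{pol}(V)_{(i)}$ in degree $n$ to be exactly the $C^\infty(M)_{(\bullet)}$-span of products $\tau_1\cdots \tau_n$ of total degree $\geq i$. I would argue this locally using the monomial generation hypothesis: the hypothesis pins down $C^\infty_{pol}(V|_U)_{(i)}$ completely in terms of the $x_a$ and $p_b$, so two filtrations agreeing on $C^\infty(M)$ and $\G(V^*)$ and both satisfying the local condition must coincide. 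The compatibility of these local descriptions across overlaps follows from the fact that a change of weighted frame is filtration-preserving, exactly as in the proof that~\eqref{equation: local model for linear weighting} glues to a well-defined weighting. The round-trip checks in both directions are then routine verifications that the monomial degree formula $s\cdot w - t\cdot v$ is preserved.
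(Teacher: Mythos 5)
Your proposal is correct and takes essentially the same route as the paper's proof: the forward direction builds the filtration of $C^\infty_{pol}(V)$ from the dual weighting of $\G(V^*)$ (the paper packages this as the induced weighting of $\mathrm{Sym}^n(V^*)$ via $C^\infty_{[n]}(V)\cong \G(\mathrm{Sym}^n(V^*))$, which is the same as your multiplicatively generated filtration), and the reverse direction recovers the weighting by intersecting with $C^\infty_{[0]}(V)$ and $C^\infty_{[1]}(V)$ and dualizing, with local weighted frames supplying the coordinate condition in both directions. Your ``main obstacle'' paragraph just makes explicit the inverse-to-one-another check that the paper records briefly.
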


We refer to the coordinate system $x_a, p_b$ described in~\autoref{theorem: linear weightings in terms of polynomials} as \emph{weighted vector bundle coordinates}. We stress that while the base coordinates $x_a$ have non-negative weight, the fibre coordinates $y_b$ can have both positive \emph{and} negative weight. 

\begin{proof}
    Suppose that $V$ is linearly weighted. For each $n \geq 0$ we will define a filtration of $C^\infty_{[n]}(V)$ and define filtration of $C^\infty_{pol}(V)$ by taking the direct sum. Given $n\geq 0$, recall that there is an identification 
        \begin{equation}
        \label{equation: polynomials as symmetric algebra}
            C^\infty_{[n]}(V) \cong \G(\mathrm{Sym}^n(V^*)).
        \end{equation}
    Since $\mathrm{Sym}^n(V^*)$ inherits a linear weighting from the weighting of $V$ (see~\autoref{subsection: linear constructions} (b)), we use the identification~\eqref{equation: polynomials as symmetric algebra} to define the filtration of $C^\infty_{[n]}(V)$. Explicitly, for any open set $U\sset M$, elements of  $C^\infty_{[n]}(V|_U)_{(i)}$ are functions $f\in C^\infty_{[n]}(V|_U)$ such that for any $\sigma \in \G(V|_U)_{(j)}$ one has
        \[ f\circ \sigma \in C^\infty(U)_{(i+nj)}.  \]
    In particular, $C^\infty_{[0]}(V)_{(i)} = C^\infty(M)_{(i)}$ and $C^\infty_{[1]}(V)_{(i)} = \G(V^*)_{(i)}$. Let 
        \[ C^\infty_{pol}(V)_{(i)} = \bigoplus_{n\geq 0} C^\infty_{[n]}(V)_{(i)}.  \]
    To get weighted vector bundle coordinates, let $U\sset M$ be an open subset over which $V|_U$ is trivialized, let $x_a$ be weighted coordinates on $U$ and let $\sigma_b$ be a weighted frame for $V|_U$. Let $\tau_b\in \G(V^*|_U)$ denote the dual frame and $p_b\in C^\infty_{[1]}(V|_U)$ the corresponding linear coordinates. The $n$-fold symmetric products of the $\tau_b$ form a weighted frame for $\mathrm{Sym}^n(V|_U^*)$, hence the functions $x_a, p_b$ define the necessary vector bundle coordinate system. Note that if the section $\sigma_b$ has weighted $v_b$, then the corresponding fibre coordinate $p_b$ has weight $-v_b$.

    Conversely, suppose that we are given a filtration 
        \[ \cdots \supseteq C^\infty_{pol}(V)_{(i)} \supseteq C^\infty_{pol}(V)_{(i+1)} \supseteq \cdots  \]
    as described in the statement of the theorem. For $U\sset M$ open, define
        \begin{align}
        \label{equation: induced weighting on M}
            C^\infty(U)_{(i)} & = C^\infty_{[0]}(V|_U) \cap C^\infty_{pol}(V|_U)_{(i)} \quad \text{and} \\ \label{equation: induced filtration on sections}
            \G(V|_U^*) & = C^\infty_{[1]}(V|_U)\cap C^\infty_{pol}(V|_U)_{(i)}.
        \end{align}
    The requirement that $C^\infty_{pol}(V|_U)$ be generated as a $C^\infty(U)$-module by terms of the form~\eqref{equation: local vector bundle coordinate generation} ensures that~\eqref{equation: induced weighting on M} defines a weighting of $M$ and~\eqref{equation: induced filtration on sections} defines a linear weighting of $V^*$. By taking the dual weighting, we have thus also defined a linear weighting of $V$. These constructions are clearly inverse to one another, and so we have proved the theorem. 
\end{proof}

\begin{example}
    If $W\to N$ is a vector subbundle of $V\to M$, then order of vanishing defines a linear weighting of $V$, which we refer to as the trivial weighting along $W$. This agrees with the linear weighting defined in~\autoref{examples: examples of linear weightings} (c).
\end{example}

\begin{remarks}
    \begin{itemize}
        \item[(a)] One can use to conclusion of~\autoref{theorem: linear weightings in terms of polynomials} to define \emph{graded bundle weightings}, using graded bundle coordinates in place of vector bundle coordinates.

        \item[(b)] Let $V\to M$ be a linearly weighted vector bundle. Recall that the weighting of $V$ determines a filtration
        \[ \cdots \supseteq (V|_N)_{(i)} \supseteq (V|_N)_{(i+1)} \supseteq \cdots \]
    of $V|_N$ by wide subbundles. In local vector bundle coordinates $x_1, \dots, x_m \in C^\infty(U)$, $p_1, \dots, p_k\in C^\infty_{[1]}(V|_U)$, $(V|_{N\cap U})_{(i)}$ is cut out by the equations 
        \begin{align*}
            x_a = 0 & \text{ for } w_a > 0,  \\
            p_b = 0 & \text{ for } v_b < i.
        \end{align*}

        \item[(c)] As a consequence of~\autoref{theorem: linear weightings in terms of polynomials}, one finds that a linear weighting of $V$ naturally defines a filtration of $TV|_N$ by subbundles. In terms of the canonical decomposition $TV|_N = TM|_N \oplus V|_N$, one has 
            \[ (TV|_N)_{(i)} = (TM|_N)_{(i)} \oplus (V|_N)_{(i)}. \]

        \item[(d)] We say that a linear weighting of $V\to M$ is \emph{concentrated in non-positive degree} if 
            \[ (V|_N)_{(i)} = 0 \quad \text{for} \quad i>0. \]
        In this case,~\autoref{theorem: linear weightings in terms of polynomials} shows that a linear weighting is a weighting in the sense of~\autoref{definition: weighting} along the subbundle 
            \[ W = (V|_N)_{(0)}, \]
        and we say that $V$ is linearly weighted along $W$. For example, if $(M,N)$ is a weighted pair then the linear weighting of $TM$ is concentrated in non-positive degree and $TM$ is linearly weighted along $TN$. 
    \end{itemize}
\end{remarks}

\subsubsection{Weighted subbundles} 
\label{subsection: weighted subbundles}

Now that we have a characterization of linear weightings in terms of polynomial functions, we can define weighted subbundles. 

\begin{definition}
    Let $V\to M$ be a weighted vector bundle. A subbundle $W\to R$ is a \emph{weighted subbundle} if there exists a weighted atlas of subbundle coordinates for $W$. 
\end{definition}

That is, $W$ is a weighted subbundle if and only if for every $w\in W$ there exist weighted vector bundle coordinates $x_a\in C^\infty_{[0]}(V|_U)$ and $p_b \in C^\infty_{[1]}(V|_U)$ such that $W$ is locally defined by the vanishing of a subset of the coordinates. Coordinates with this property are called \emph{weighted subbundle coordinates}.

\begin{examples}
    \begin{itemize}
        \item[(a)] If $V$ is a linearly weighted vector bundle over the weighted pair $(M,N)$, then $V|_N$ is a weighted subbundle. Any system of weighted vector bundle coordinates are weighted subbundle coordinates for $V|_N$.

        \item[(b)] If $V$ is linearly weighted vector space then any subspace is a weighted subspace (cf.~\autoref{examples: weighted submanifolds} (e)). 
    \end{itemize}
\end{examples}

\begin{proposition}
    Let $V \to M$ be a linearly weighted vector bundle. If $W \to R$ is a weighted subbundle, 
        \begin{itemize}
            \item[(a)] $R$ is a weighted submanifold of $(M,N)$, 
            \item[(b)] $W$ inherits a linear weighting and the filtration of $W|_{R\cap N}$ is given by 
                \[ (W|_{R\cap N})_{(i)} = W \cap (V|_N)_{(i)}.  \]
        \end{itemize}
\end{proposition}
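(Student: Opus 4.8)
The plan is to trivialize everything in weighted subbundle coordinates and reduce both assertions to bookkeeping with the defining equations. By definition of a weighted subbundle, each $w \in W$ lies in the domain of weighted vector bundle coordinates $x_a \in C^\infty_{[0]}(V|_U)$, $p_b \in C^\infty_{[1]}(V|_U)$ (as furnished by \autoref{theorem: linear weightings in terms of polynomials}) in which $W$ is cut out by the vanishing of a subset of the coordinates, say $\{x_a = 0 : a \in S_0\} \cup \{p_b = 0 : b \in S_1\}$ for index sets $S_0 \sset \{1,\dots,m\}$ and $S_1 \sset \{1,\dots,k\}$. I will use throughout that the $x_a$ are fibrewise constant (degree $0$, hence pulled back from the base) while the $p_b$ are fibrewise linear (degree $1$), so that the defining equations split cleanly into a base part and a fibre part.

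For part (a), I would apply the bundle projection $\pi : V|_U \to U$ to $W$. A point of $U$ admits a lift into $W$ exactly when the base equations $\{x_a = 0 : a \in S_0\}$ hold — and in that case the zero vector already lifts it — so $R \cap U = \{x_a = 0 : a \in S_0\}$. Since the full collection $\{x_a\}$ is a weighted coordinate system for $M$ on $U$, the subset $\{x_a : a \in S_0\}$ exhibits $R$ as locally cut out by weighted coordinates; hence $R$ is a weighted submanifold. By \autoref{proposition: weighted submanifolds are weighted}, $R$ then inherits a weighting along $R \cap N$, for which the restrictions $x_a|_R$ ($a \notin S_0$) serve as weighted coordinates.

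For part (b), I would equip $W$ with a linear weighting via the restriction homomorphism $C^\infty_{pol}(V) \to C^\infty_{pol}(W)$, which is a surjective algebra map since $W$ is a subbundle; pushing the filtration forward gives $C^\infty_{pol}(W)_{(i)}$ as the image of $C^\infty_{pol}(V)_{(i)}$, and multiplicativity is inherited. In the coordinates above this amounts to restricting $x_a$ ($a \notin S_0$) and $p_b$ ($b \notin S_1$); I would check these form weighted vector bundle coordinates for $W$, using that $x_a|_R$ vanishes for $a \in S_0$ and $p_b|_W$ vanishes for $b \in S_1$, so \autoref{theorem: linear weightings in terms of polynomials} certifies that the pushed-forward filtration is a genuine linear weighting. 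In particular the vertical weights of $W$ are $\{v_b : b \notin S_1\}$, and the frame $\{\sigma_b|_R : b \notin S_1\}$ dual to the surviving $p_b$ is a weighted frame for $W$. The filtration identity then follows from \autoref{proposition: linear weighting determines filtration by subbundles} applied to both $V$ and $W$: it gives $(V|_N)_{(i)} = \mathrm{span}\{\sigma_b|_N : v_b \geq i\}$ and $(W|_{R\cap N})_{(i)} = \mathrm{span}\{\sigma_b|_{R\cap N} : b \notin S_1,\ v_b \geq i\}$, while $W|_{R\cap N} = \mathrm{span}\{\sigma_b|_{R\cap N} : b \notin S_1\}$. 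Because the $\sigma_b$ form a frame, the intersection $W \cap (V|_N)_{(i)}$ over $R \cap N$ is spanned by exactly those $\sigma_b$ with $b \notin S_1$ and $v_b \geq i$, matching $(W|_{R\cap N})_{(i)}$; this is the linear analogue of the tangent computation \eqref{equation: tangent filtration of weighted subbundles}.

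The main obstacle I anticipate is not computational but conceptual: verifying that the two descriptions of the weighting on $W$ agree — the invariant push-forward of $C^\infty_{pol}(V)_{(i)}$ versus the weighting read off from the restricted coordinates — and, underlying this, that the defining subset of coordinates really does decompose into base equations cutting out $R$ and fibre equations cutting out $W$ inside $V|_R$. This compatibility rests essentially on the fact that the $x_a$ have fibre-degree $0$ and the $p_b$ fibre-degree $1$, so the subbundle structure (closure under fibrewise scalar multiplication) is forced to respect the grading; once that splitting is justified, the remainder is routine frame bookkeeping.
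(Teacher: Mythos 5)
Your proposal is correct and takes essentially the same approach as the paper: the paper's proof is a one-line reduction to the argument of \autoref{proposition: weighted submanifolds are weighted} carried out in weighted subbundle coordinates, which is precisely what you flesh out. Your induced weighting on $W$ by pushing forward the polynomial filtration along restriction, the verification that the surviving coordinates $x_a|_R$, $p_b|_W$ are weighted vector bundle coordinates, and the frame computation of $(W|_{R\cap N})_{(i)} = W \cap (V|_N)_{(i)}$ are the linear analogues of the quotient filtration, coordinate restriction, and annihilator computation in that earlier proof, exactly as the paper intends.
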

\begin{proof}
    One proceeds as in the proof of~\autoref{proposition: weighted submanifolds are weighted} using weighted subbundle coordinates. 
\end{proof}

\subsection{Weighted vector bundle morphisms}

We now discuss morphisms of weighted vector bundles. The easiest definition uses the characterization of linear weightings in terms of polynomial functions. 

\begin{definition}
    A vector bundle morphism $\varphi:V\to V'$ between linearly weighted vector bundles $V\to M$ and $V'\to M'$ is called \emph{weighted} if $\varphi^*C^\infty_{pol}(V')_{(i)} \sset C^\infty_{pol}(V)_{(i)}$ for all $i\in \Z$. 
\end{definition}

Recall that a vector bundle map $\varphi:V\to V'$ induces a module map $\varphi^*:\G((V')^*) \to \G(V^*)$. In terms of this perspective, one has the following characterization of weighted vector bundle morphisms. 
	
\begin{proposition}
\label{proposition: VB-morphisms in terms of sections}
    Let $\varphi:V\to V'$ be a vector bundle morphism between linearly weighted vector bundles $V\to M$ and $V'\to M'$, respectively. Then $\varphi$ is weighted if and only if 
        \begin{enumerate}
            \item[(a)] the base map $\varphi_M:M\to M'$ is weighted and 
            \item[(b)] for all $i\in \Z$, $\varphi^* : \G((V')^*)_{(i)} \to \G(V^*)_{(i)}$.
        \end{enumerate}
\end{proposition}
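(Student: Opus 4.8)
The plan is to exploit the identification of a linear weighting with a multiplicative filtration of $C^\infty_{pol}$ from \autoref{theorem: linear weightings in terms of polynomials}, together with the observation that a vector bundle morphism pulls back fibrewise polynomial functions in a way that respects the homogeneity grading. First I would record the structural facts supplied by that theorem: the filtration decomposes as $C^\infty_{pol}(V)_{(i)} = \bigoplus_{n\geq 0}C^\infty_{[n]}(V)_{(i)}$, with $C^\infty_{[0]}(V)_{(i)} = C^\infty(M)_{(i)}$ and $C^\infty_{[1]}(V)_{(i)} = \G(V^*)_{(i)}$, and the whole filtration is multiplicative and generated in homogeneity degrees $0$ and $1$; concretely, $C^\infty_{pol}(V)_{(i)}$ is spanned by products $f\cdot \tau_1\cdots \tau_n$ with $f\in C^\infty(M)_{(j_0)}$, $\tau_l\in \G(V^*)_{(j_l)}$, and $j_0+\cdots+j_n\geq i$, as one reads off the monomial generators $x^sp^t$. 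I would then note that since $\varphi$ intertwines the scalar-multiplication actions, $(\kappa_t^V)^*\varphi^* = \varphi^*(\kappa_t^{V'})^*$, so for $f\in C^\infty_{[n]}(V')$ one gets $(\kappa_t^V)^*\varphi^*f = t^n\varphi^*f$; hence $\varphi^*$ carries $C^\infty_{[n]}(V')$ into $C^\infty_{[n]}(V)$ for every $n$. Thus $\varphi^*$ is an algebra homomorphism that is homogeneous for the degree grading, restricting to $\varphi_M^*$ on $C^\infty_{[0]}$ and to the induced dual map $\varphi^*\colon \G((V')^*)\to \G(V^*)$ on $C^\infty_{[1]}$.

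For the forward direction, assume $\varphi$ is weighted. Given $f\in C^\infty(M')_{(i)} = C^\infty_{[0]}(V')_{(i)}\sset C^\infty_{pol}(V')_{(i)}$, weightedness gives $\varphi^*f\in C^\infty_{pol}(V)_{(i)}$, while homogeneity gives $\varphi^*f\in C^\infty_{[0]}(V)$; intersecting this graded summand with the filtration yields $\varphi_M^*f = \varphi^*f\in C^\infty_{[0]}(V)_{(i)} = C^\infty(M)_{(i)}$, which is (a). Repeating the argument in homogeneity degree $n=1$ with $\tau\in \G((V')^*)_{(i)} = C^\infty_{[1]}(V')_{(i)}$ gives $\varphi^*\tau\in C^\infty_{[1]}(V)_{(i)} = \G(V^*)_{(i)}$, which is (b).

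For the converse, assume (a) and (b). Using the generation statement above, it suffices to check $\varphi^*$ on a product $f\cdot \tau_1\cdots \tau_n$ with $f\in C^\infty(M')_{(j_0)}$, $\tau_l\in \G((V')^*)_{(j_l)}$ and $j_0+\cdots+j_n \geq i$. Since $\varphi^*$ is an algebra homomorphism, $\varphi^*(f\cdot\tau_1\cdots\tau_n) = (\varphi_M^*f)\cdot(\varphi^*\tau_1)\cdots(\varphi^*\tau_n)$, and by (a) and (b) the factors lie in $C^\infty(M)_{(j_0)}$ and $\G(V^*)_{(j_l)}$ respectively; multiplicativity of the target filtration then places the product in $C^\infty_{pol}(V)_{(j_0+\cdots+j_n)}\sset C^\infty_{pol}(V)_{(i)}$. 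Hence $\varphi^*C^\infty_{pol}(V')_{(i)}\sset C^\infty_{pol}(V)_{(i)}$ and $\varphi$ is weighted.

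The only genuine subtlety, and the step I would be most careful with, is the generation claim: that the filtration of $C^\infty_{pol}(V)$ is determined multiplicatively by its weight-$0$ and weight-$1$ pieces. This is not an extra hypothesis but is built into \autoref{theorem: linear weightings in terms of polynomials}, since the weighting on each $\G(\mathrm{Sym}^n(V^*))$ is the one induced from $\G(V^*)$ by the symmetric-product construction of \autoref{subsection: linear constructions}, equivalently the $C^\infty(M)$-module generated by the monomials $x^sp^t$ with $s\cdot w - t\cdot v\geq i$. I would make this dependence explicit so that the reverse direction's reduction to generators is fully justified.
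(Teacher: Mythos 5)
Your proposal is correct and follows essentially the same route as the paper's proof: both directions rest on the fact that $\varphi^*$ preserves the homogeneity grading, so that intersecting the filtration with $C^\infty_{[0]}$ and $C^\infty_{[1]}$ recovers conditions (a) and (b), while the converse uses that $\varphi^*$ is an algebra morphism together with the local generation of $C^\infty_{pol}(V')_{(i)}$ as a filtered algebra by $C^\infty(M')$ and $C^\infty_{[1]}(V') = \G((V')^*)$. Your write-up is merely more explicit than the paper's (which leaves the converse as ``a similar argument''), in particular in spelling out the intertwining with scalar multiplication and the reduction to monomial generators.
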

\begin{proof}
    Suppose that $\varphi:V \to V'$ is a weighted vector bundle morphism. Then, 
        \begin{align*}
            \varphi_M^*C^\infty(M')_{(i)} & = \varphi_M^*(C^\infty_{pol}(V')_{(i)} \cap C^\infty_{[0]}(W)) \\
            & \sset C^\infty_{pol}(V)_{(i)} \cap C^\infty_{[0]}(V) = C^\infty(M)_{(i)},
        \end{align*}
    whence $\varphi_M:M\to M'$ is weighted. Similarly, for any $\sigma \in \G((V')^*)$, let $f_\sigma \in C^\infty_{[1]}(V)$ be the corresponding function under the identification $C^\infty_{[1]}(V) = \G(V^*)$. Then
        \[ \varphi^*(\sigma) = \varphi^*f_\sigma \in C^\infty_{[1]}(V)_{(i)} = \G(V^*)_{(i)}.  \]

    The converse follows by a similar argument, using that $\varphi^*: C^\infty_{pol}(V')\to C^\infty_{pol}(V)$ is an algebra morphism and $C^\infty_{pol}(V)_{(i)}$ is locally generated as a filtered algebra by $C^\infty(M)$ and $C^\infty_{[1]}(V) = \G(V^*)$. 
\end{proof}

\begin{example}
    If $f:M\to M'$ is a weighted morphism between weighted pairs $(M, N)$ and $(M', N')$, then the tangent map $Tf:TM \to TM'$ is a weighted vector bundle morphism. 
\end{example}
	
In the case that $V$ and $V'$ are vector bundles over a common base the definition of a weighted vector bundle morphism reduces to the one that one might expect. 
	
\begin{proposition}
    Suppose that that $V$ and $V'$ and linearly weighted vector bundles over a common weighted pair $(M, N)$ and $\varphi : V\to V'$ is a vector bundle morphism covering the identity. Then $\varphi$ is a weighted vector bundle morphism if and only if $\varphi(\Gamma(V)_{(i)}) \sset \Gamma(V')_{(i)}$ for all $i$. 
\end{proposition}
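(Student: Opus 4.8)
The plan is to reduce the statement to the section-level characterization already established in \autoref{proposition: VB-morphisms in terms of sections} and then transport that condition across the duality pairing. Since $\varphi$ covers the identity map $M\to M$, which is trivially a weighted morphism, condition (a) of \autoref{proposition: VB-morphisms in terms of sections} holds automatically, so $\varphi$ is a weighted vector bundle morphism if and only if the induced dual map on sections satisfies $\varphi^*(\G((V')^*)_{(i)})\sset \G(V^*)_{(i)}$ for all $i$. It therefore suffices to prove the equivalence
\[
    \varphi^*(\G((V')^*)_{(i)})\sset \G(V^*)_{(i)} \text{ for all } i \iff \varphi(\G(V)_{(i)})\sset \G(V')_{(i)} \text{ for all } i.
\]

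I would establish both directions from the defining adjunction $\la \varphi^*\tau, \sigma\ra = \la \tau, \varphi(\sigma)\ra$ (for $\tau\in \G((V')^*)$ and $\sigma\in\G(V)$) together with the dual-filtration convention of \autoref{subsection: linear constructions} (a), namely that $\tau\in\G(W^*)_{(i)}$ precisely when $\la \tau,\eta\ra\in C^\infty(M)_{(i+j)}$ for every $\eta\in\G(W)_{(j)}$. For the ($\Leftarrow$) direction, assume $\varphi(\G(V)_{(j)})\sset\G(V')_{(j)}$ for all $j$; given $\tau\in\G((V')^*)_{(i)}$ and $\sigma\in\G(V)_{(j)}$, the element $\varphi(\sigma)$ lies in $\G(V')_{(j)}$, so $\la\varphi^*\tau,\sigma\ra = \la\tau,\varphi(\sigma)\ra\in C^\infty(M)_{(i+j)}$, which is exactly the assertion that $\varphi^*\tau\in\G(V^*)_{(i)}$.

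For the converse ($\Rightarrow$), the step I expect to require the most care is recovering membership in $\G(V')_{(j)}$ from pairings against covectors, rather than from a local frame. Here I would invoke reflexivity, $V' = ((V')^*)^*$ as weighted vector bundles (\autoref{subsection: linear constructions} (a)), which yields the characterization that $s\in\G(V')_{(j)}$ if and only if $\la s,\tau\ra\in C^\infty(M)_{(i+j)}$ for every $\tau\in\G((V')^*)_{(i)}$ and every $i$. Then, given $\sigma\in\G(V)_{(j)}$, to prove $\varphi(\sigma)\in\G(V')_{(j)}$ I fix $\tau\in\G((V')^*)_{(i)}$ and compute $\la\varphi(\sigma),\tau\ra = \la\varphi^*\tau,\sigma\ra$; the hypothesis gives $\varphi^*\tau\in\G(V^*)_{(i)}$, and applying the dual-filtration definition to $\sigma\in\G(V)_{(j)}$ gives $\la\varphi^*\tau,\sigma\ra\in C^\infty(M)_{(i+j)}$, as required.

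The only genuine subtlety is that the biduality identification must be used correctly: the double-dual filtration on $V'$ has to coincide with its given filtration, which is precisely the content of the equality $V=(V^*)^*$ of weighted vector bundles recorded among the constructions. Granting that, both implications follow immediately from the adjunction and the dual-filtration definition, with no local frame computation needed.
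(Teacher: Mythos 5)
Your proposal is correct and follows essentially the same route as the paper: reduce via \autoref{proposition: VB-morphisms in terms of sections} (the base map being the identity), then pass both directions through the adjunction $\la \varphi^*\tau, \sigma\ra = \la \tau, \varphi(\sigma)\ra$ and the dual-filtration definition. The paper's proof leaves the biduality step ("this implies $\varphi(\sigma)\in\G(V')_{(i)}$") implicit, whereas you correctly make explicit that it rests on $V'=((V')^*)^*$ as weighted vector bundles from \autoref{subsection: linear constructions}; this is a fair and accurate filling-in of the same argument, not a different one.
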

\begin{proof}
    Suppose that $\varphi^*(\Gamma((V')^*)_{(i)}) \sset \Gamma(V^*_{(i)})$, and let $\sigma \in \Gamma(V)_{(i)}$. Let $\tau \in \Gamma((V')^*)_{(j)}$ be arbitrary, and note that 
		\[ \la \tau, \varphi(\sigma) \ra = \la \varphi^*(\tau), \sigma \ra \in C^\infty(M)_{(i+j)},  \]
    since $\varphi^*(\tau)\in \Gamma(V^*)_{(j)}$; here the angular brackets denote the pairing between a vector bundle and its dual. This implies that $\varphi(\sigma)\in \Gamma(V')_{(i)}$, proving one inclusion. The opposite inclusion follows similarly.
\end{proof}

We state the following observation, whose proof is obvious, which allows us to apply the results of~\autoref{section: weighted morphisms} to vector bundle morphisms. 

\begin{lemma}
\label{lemma: weighted morphisms are shift invariant}
    Let $V\to M$ and $V'\to M'$ be weighted vector bundles, and $k \in \Z$. A vector bundle morphism $\varphi:V\to V'$ is a weighted vector bundle morphism if and only if $\varphi:V[k]\to V'[k]$ is weighted. 
\end{lemma}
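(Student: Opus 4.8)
The plan is to reduce the claim to the section-level characterization of weighted vector bundle morphisms established in~\autoref{proposition: VB-morphisms in terms of sections}, which says that $\varphi$ is weighted if and only if (a) the base map $\varphi_M$ is weighted and (b) $\varphi^*:\G((V')^*)_{(i)}\to \G(V^*)_{(i)}$ for all $i\in\Z$. The strategy is to check that each of these two conditions is invariant under the shift $V\mapsto V[k]$.

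First I would observe that the shift operation alters neither the underlying weighted pair $(M,N)$ nor the base map $\varphi_M$; by definition it only reindexes the filtration on sections via $\G(V[k])_{(i)} = \G(V)_{(i+k)}$, leaving $C^\infty(M)_{(i)}$ and $C^\infty(M')_{(i)}$ untouched. Consequently condition (a) holds for $\varphi:V\to V'$ if and only if it holds for $\varphi:V[k]\to V'[k]$, since in both cases it is the identical requirement $\varphi_M^*C^\infty(M')_{(i)}\sset C^\infty(M)_{(i)}$.

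Next I would analyze condition (b) using the identity $V[k]^* = V^*[-k]$ from the shifted-weighting construction. From this and the definition of the shift on sections one computes $\G((V[k])^*)_{(i)} = \G(V^*[-k])_{(i)} = \G(V^*)_{(i-k)}$, and likewise $\G((V'[k])^*)_{(i)} = \G((V')^*)_{(i-k)}$. Therefore condition (b) for $\varphi:V[k]\to V'[k]$ reads: $\varphi^*:\G((V')^*)_{(i-k)}\to \G(V^*)_{(i-k)}$ for all $i\in\Z$. Since $i\mapsto i-k$ is a bijection of $\Z$, this is exactly the same family of inclusions as condition (b) for $\varphi:V\to V'$, namely $\varphi^*:\G((V')^*)_{(j)}\to \G(V^*)_{(j)}$ for all $j\in\Z$.

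Having matched conditions (a) and (b) for the two morphisms, the equivalence follows immediately from~\autoref{proposition: VB-morphisms in terms of sections}. There is essentially no obstacle here; the only point demanding care is tracking the direction of the index shift and invoking $V[k]^* = V^*[-k]$ correctly, so that the shift appearing on the dual bundle cancels the shift one might naively expect and leaves the two families of inclusions literally identical rather than merely related by a reindexing.
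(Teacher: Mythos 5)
Your proposal is correct. For this lemma the paper offers no argument at all---it is introduced with ``whose proof is obvious''---so there is no written proof to compare against; your write-up is a faithful expansion of that omitted verification. The two pillars of your argument are exactly the right ones: the shift $\G(V[k])_{(i)} = \G(V)_{(i+k)}$ touches only the filtration on sections, not the weighting of the base, so condition (a) of~\autoref{proposition: VB-morphisms in terms of sections} is literally unchanged; and the identity $V[k]^* = V^*[-k]$ (stated in~\autoref{subsection: linear constructions}) gives $\G((V[k])^*)_{(i)} = \G(V^*)_{(i-k)}$, so condition (b) for the shifted morphism is the same family of inclusions reindexed by the bijection $i \mapsto i-k$. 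One remark on what your route buys: had you instead argued directly from the definition of weighted VB-morphisms via polynomial functions, you would have needed the identity $C^\infty_{[n]}(V[k])_{(i)} = C^\infty_{[n]}(V)_{(i-nk)}$, where the shift depends on the homogeneity degree $n$; this is still a bijective reindexing for each fixed $n$ (since $\varphi^*$ preserves fibrewise homogeneity), but it is easier to misstate. Passing through the section-level characterization, as you do, confines the bookkeeping to the single degree $n=1$ and is the cleaner argument.
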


Note that by taking $k$ to be sufficiently large, the weighting of $V[k]$ is concentrated in negative degree. In particular, $V[k]$ is weighted along a closed submanifold $N\sset M$ in the sense of~\autoref{chapter: Weightings}. In light of~\autoref{lemma: weighted morphisms are shift invariant}, all of the definitions and results in~\autoref{section: weighted morphisms} hold for vector bundle morphisms, replacing $TM|_N$ with $TV|_N = V|_N\oplus TM|_N$. Let us illustrate this with the following result (cf.~\autoref{theorem: characterization of weighted morphisms in terms of their graphs}). 

\begin{theorem}
\label{theorem: linear weighted morphism in terms of graph}
    Suppose that $V \to M$ and $V' \to M'$ are weighted vector bundles and $\varphi:V \to V'$ is a vector bundle morphism whose base map is a smooth map of pairs $(M, N)\to (M', N')$. Then $\varphi$ is a weighted VB-morphism if and only if
        \begin{enumerate}
            \item[(a)] the graph $\G(\varphi)\sset V'\times V$ is a weighted subbundle  and 
            \item[(b)] the maps $V|_N \to V'|_{N'}$ and $TM|_N \to TM'|_{N'}$ are filtration preserving. 
        \end{enumerate}
\end{theorem}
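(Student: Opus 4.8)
The plan is to deduce the statement from Theorem~\ref{theorem: characterization of weighted morphisms in terms of their graphs} together with~\autoref{lemma: weighted morphisms are shift invariant}, by shifting the weightings until both bundles become honest weighted manifolds. First I would fix an integer $k$ strictly larger than every vertical weight of $V$ and of $V'$. By Remark~(d) following~\autoref{theorem: linear weightings in terms of polynomials}, the linear weightings of $V[k]$ and $V'[k]$ are then concentrated in non-positive degree, and since $(V[k]|_N)_{(0)} = (V|_N)_{(k)} = 0$ for this choice of $k$, their total spaces are weighted manifolds in the sense of~\autoref{chapter: Weightings} along the zero sections over $N$ and $N'$. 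Moreover, for such $k$ a vector bundle morphism is a weighted VB-morphism exactly when it is a weighted morphism of the underlying total spaces: weighted vector bundle coordinates serve as weighted coordinates of the total space, $C^\infty_{pol}$ is generated over $C^\infty(M)$ by its degree-one part, and a VB-morphism pulls polynomial functions back to polynomial functions of the same homogeneity.

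Next I would record that each ingredient in the statement is insensitive to the shift. By~\autoref{lemma: weighted morphisms are shift invariant}, $\varphi\colon V\to V'$ is a weighted VB-morphism iff $\varphi\colon V[k]\to V'[k]$ is. The graph $\G(\varphi)$ is the same subset of $V'\times V = (V'\times V)[k] = V'[k]\times V[k]$ in both pictures, and the property of being a weighted subbundle depends only on the existence of coordinates cutting it out, which the shift merely relabels; hence (a) holds for $V'\times V$ iff it holds for $V'[k]\times V[k]$. Likewise $V|_N\to V'|_{N'}$ is filtration preserving iff $V[k]|_N\to V'[k]|_{N'}$ is (a reindexing), while the $TM|_N\to TM'|_{N'}$ condition is untouched. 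I would then apply~\autoref{theorem: characterization of weighted morphisms in terms of their graphs} to $\varphi$ viewed as a map of the weighted total spaces $V[k]\to V'[k]$: it is a weighted morphism iff $\G(\varphi)$ is a weighted submanifold of the total space and $T\varphi\colon T(V[k])|_N\to T(V'[k])|_{N'}$ is filtration preserving.

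It then remains to match the two conditions. For (b), I would invoke Remark~(c) after~\autoref{theorem: linear weightings in terms of polynomials}, which identifies $T(V[k])|_N = TM|_N\oplus V[k]|_N$ with $(T(V[k])|_N)_{(i)} = (TM|_N)_{(i)}\oplus (V|_N)_{(i+k)}$, together with the observation that, because $\varphi$ is fibrewise linear over $\varphi_M$, its tangent map along the zero section splits as $T\varphi_M\oplus (\varphi|_{\text{fibres}})$ with no cross terms. Thus filtration-preservation of $T\varphi|_N$ is equivalent to filtration-preservation of both $TM|_N\to TM'|_{N'}$ and $V|_N\to V'|_{N'}$ (the shift by $k$ being a harmless reindexing), which is exactly (b). For (a), since $\G(\varphi)$ is a vector subbundle of a weighting concentrated in non-positive degree, being a weighted subbundle and being a weighted submanifold of the total space coincide; one direction is immediate from weighted subbundle coordinates, and for the other I would use $\kappa_t$-invariance of $\G(\varphi)$ to promote weighted submanifold coordinates to graded, hence fibre-polynomial, ones. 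Chaining the equivalences then yields the theorem.

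The main obstacle is precisely this last reconciliation in (a): upgrading abstract weighted submanifold coordinates—which need not be fibrewise polynomial—to weighted vector bundle coordinates for a subbundle. If that comparison proves delicate, the fallback is to avoid using Theorem~\ref{theorem: characterization of weighted morphisms in terms of their graphs} as a black box and instead re-run its proof verbatim in the VB category: factor $\varphi$ as $V\to \G(\varphi)\into V'\times V\to V'$ and verify that the first map is a weighted VB-diffeomorphism via the VB analogue of~\autoref{theorem: weighted submersion coordinates}, an argument that stays entirely among subbundles and never leaves the linearly weighted setting.
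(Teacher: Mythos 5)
Your proposal is correct and takes essentially the same route as the paper's own proof: shift to $V[k]$, $V'[k]$ so the weightings are concentrated in negative degree, invoke \autoref{theorem: characterization of weighted morphisms in terms of their graphs} for the total spaces, and split $T\varphi|_N = T\varphi_M|_N \oplus \varphi|_N$ along the zero section to recover condition (b). The only difference is one of care: you explicitly flag and address (via $\kappa_t$-invariance of the graph) the identification of ``weighted submanifold of the total space'' with ``weighted subbundle'', a point the paper's proof passes over silently.
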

\begin{proof}
    By replacing $V$ and $V'$ with $V[k]$ and $V'[k]$ for sufficiently large $k$, we may assume the weightings are concentrated in negative degree. By~\autoref{theorem: characterization of weighted morphisms in terms of their graphs}, $\varphi:V\to V'$ is a weighted morphism if and only if its graph is a weighted subbundle of $V'\times V$ and the map $T\varphi|_N:TV|_N \to TV'|_{N'}$ is filtration preserving. With respect to the canonical decomposition $TV|_N = TM|_N \oplus V|_N$, the map $T\varphi|_N$ decomposes as $T\varphi|_N = T\varphi_M|_N\oplus\varphi|_N$ hence $T\varphi|_N$ is filtration preserving if and only if the maps $V|_N \to V'|_{N'}$ and $TM|_N \to TM'|_{N'}$ are filtration preserving. Therefore, $\varphi: V \to V'$ is a weighted morphism.  
\end{proof}

\section{The Weighted Normal Bundle}
\label{section: linear weighted normal bundle}

We now define the weighted normal bundle of a linearly weighted vector bundle. Since we have defined weightings in terms of filtrations of fibrewise polynomial functions on $V$, much of the discussion in~\autoref{section: weighted normal bundles} carries over, almost verbatim, to linear weightings. 

\subsection{Definition}

Let $V\to M$ be a linearly weighted vector bundle. Let $\gr(C^\infty_{pol}(V))$ be the graded algebra associated to the filtered algebra, with graded components given by 
    \[ \gr(C^\infty_{pol}(V))_{i} = C^\infty_{pol}(V)_{(i)}/C^\infty_{pol}(V)_{(i+1)}.  \]

\begin{definition}
    The \emph{weighted normal bundle} of the linearly weighted vector bundle $V\to M$ is the character spectrum 
        \[ \nuw(V) = \aHom(\gr(C^\infty_{pol}(V)), \R) \]
\end{definition}

Given a function $f\in C^\infty_{pol}(V)_{(i)}$, let $f^{[i]}$ denote the class of $f$ in $\gr(C^\infty_{pol}(V))_{i}$. We think of $f^{[i]}$ as a function on $\nuw(V)$, defined by evaluation
    \begin{align*}
        f^{[i]} : \nuw(V)  \to \R, \quad \varphi  \mapsto \varphi(f^{[i]}). 
    \end{align*}
Let $\pi:V\to M$ and $\kappa_\lambda: V \to V$ denote the bundle projection and scalar multiplication by $\lambda \in \R$, respectively. Since these maps are filtration preserving, they induce maps 
    \[ \nuw(\pi) : \nuw(V) \to \nuw(M,N) \quad \text{and} \quad \nuw(\kappa_\lambda) : \nuw(V) \to \nuw(V), \]
which we take to be our vector bundle projection and scalar multiplication, respectively, in the following theorem. 

\begin{theorem}
\label{theorem: linear weighted normal bundle}
    Let $V\to M$ be a weighted vector bundle. 
        \begin{itemize}
            \item[(a)] The weighted normal bundle $\nuw(V)$ has the unique structure of a $C^\infty$-vector bundle over $\nuw(M,N)$ of rank equal to that of $V$, such that for all $n\geq 0$ 
                    \[\gr(C^\infty_{[n]}(V)) \sset C^\infty_{[n]}(\nuw(V)). \]

            \item[(b)] Given weighted vector bundle coordinates $x_a$ and $p_b$ on $V|_U$, the functions $x_a^{[w_a]}$, $p_b^{[v_b]}$ serve as vector bundle coordinates on $\nuw(V|_U) = \nuw(V)|_{\nuw(U, U\cap N)}$.

            \item[(c)] This construction is functorial: any weighted vector bundle morphism $\varphi:V\to W$ between linearly weighted vector bundles defines a vector bundle morphism  $\nuw(\varphi):\nuw(V)\to \nuw(W)$.
        \end{itemize}
\end{theorem}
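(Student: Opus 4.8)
The plan is to read off the vector bundle structure of $\nuw(V)$ directly from the graded algebra $\gr(C^\infty_{pol}(V))$, using the polynomial description of the weighting furnished by~\autoref{theorem: linear weightings in terms of polynomials} and mimicking the manifold-level construction of~\autoref{theorem: weighted normal bundle is smooth}. The essential structural observation is that, by construction, the filtration of $C^\infty_{pol}(V)$ respects the homogeneity grading: each $C^\infty_{pol}(V)_{(i)} = \bigoplus_n \bigl(C^\infty_{[n]}(V)\cap C^\infty_{pol}(V)_{(i)}\bigr)$, so the associated graded inherits a second grading by homogeneity degree, $\gr(C^\infty_{pol}(V)) = \bigoplus_n \gr(C^\infty_{[n]}(V))$. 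Dualizing, this homogeneity grading produces the $(\R,\cdot)$-action $\nuw(\kappa_\lambda)$ that will serve as fibrewise scalar multiplication, while the degree-zero piece $\gr(C^\infty_{[0]}(V)) = \gr(C^\infty(M)) = C^\infty_{pol}(\nuw(M,N))$ recovers the base via $\nuw(\pi)$.

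First I would establish the local model and part (b) simultaneously. Fixing weighted vector bundle coordinates $x_a\in C^\infty_{[0]}(V|_U)$, $p_b\in C^\infty_{[1]}(V|_U)$ as in~\autoref{theorem: linear weightings in terms of polynomials}, the monomials $x^s p^t$ with $s\cdot w - t\cdot v \geq i$ generate $C^\infty_{pol}(V|_U)_{(i)}$ as a $C^\infty(U)$-module; passing to $\gr$, the classes $x_a^{[w_a]}$, $p_b^{[v_b]}$ generate $\gr(C^\infty_{pol}(V|_U))$, exhibiting it as the polynomial algebra $C^\infty_{pol}(\nuw(U,U\cap N))[p_1^{[v_1]},\dots,p_k^{[v_k]}]$ over the base. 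This identifies $\nuw(V)|_{\nuw(U,U\cap N)}$ with the trivial rank-$k$ bundle, with base coordinates $x_a^{[w_a]}$ and fibre coordinates $p_b^{[v_b]}$, proving (b). In particular the fibre functions appear only in homogeneity degree one, so $\gr(C^\infty_{pol}(V))$ is generated over $\gr(C^\infty(M))$ in degree $\leq 1$; equivalently, the regularity condition~\eqref{equation: vecotr bundle regularity} holds, and $\nuw(V)$ is a bona fide vector bundle rather than merely a graded bundle, with $\gr(C^\infty_{[n]}(V)) = C^\infty_{[n]}(\nuw(V))$.

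To globalize I would check gluing and uniqueness. On an overlap $U\cap U'$ two weighted frames are related by a $\GL_k$-valued transition function whose entries lie in $C^\infty(U\cap U')$ and which, because both frames are weighted, is compatible with the filtration degree-by-degree; applying $\gr$ yields smooth, fibrewise-linear transition functions for $\nuw(V)\to\nuw(M,N)$, so the local trivializations patch to a global vector bundle. Uniqueness of the smooth structure is forced exactly as in~\autoref{theorem: weighted normal bundle is smooth} by the requirement that every $f^{[i]}$ with $f\in C^\infty_{pol}(V)_{(i)}$ be smooth. For part (c), a weighted morphism $\varphi\colon V\to W$ gives by definition a filtration- and homogeneity-preserving algebra map $\varphi^*\colon C^\infty_{pol}(W)\to C^\infty_{pol}(V)$; passing to associated graded and dualizing produces $\nuw(\varphi)\colon\nuw(V)\to\nuw(W)$, which intertwines the scalar actions $\nuw(\kappa_\lambda)$ because $\gr(\varphi^*)$ preserves homogeneity degree, hence is a vector bundle morphism. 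Its smoothness is the identity $\nuw(\varphi)^*f^{[i]} = (\varphi^*f)^{[i]}$ read in the coordinates of (b).

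I expect the main obstacle to be the point flagged in the first two paragraphs: because the fibre coordinates $p_b$ may carry negative weight, the filtration degree and the homogeneity degree are genuinely independent gradings, and one must verify that the filtration of $C^\infty_{pol}(V)$ splits as a direct sum over homogeneity degree so that this second grading descends to $\gr$. Once that compatibility is in hand, the fact that $\nuw(V)$ is a \emph{vector} bundle rather than an arbitrary graded bundle follows from the degree $\leq 1$ generation, and the remaining assertions are formal consequences of the manifold case.
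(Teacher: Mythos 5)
Your proposal is correct and follows essentially the same route as the paper's proof: local trivializations built from weighted vector bundle coordinates via their homogeneous approximations, smoothness of each $f^{[i]}$ checked on monomials, gluing through the associated-graded transition matrices, uniqueness forced by the smoothness requirement, and functoriality treated as formal. The compatibility you flag as the main obstacle is automatic rather than something to verify: the filtration of $C^\infty_{pol}(V)$ is defined (and locally generated by homogeneous monomials) degree-by-degree in homogeneity, namely $C^\infty_{pol}(V)_{(i)} = \bigoplus_{n} C^\infty_{[n]}(V)_{(i)}$, so the second grading descends to $\gr(C^\infty_{pol}(V))$ by construction.
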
  
The proof mirrors that of~\cite[Theorem 4.2]{loizides2023differential}. We include it for completeness and to explain the use of polynomial functions.
\begin{proof}
    Let $U\sset M$ and let $x_1, \dots, x_m, p_1, \dots, p_k \in C^\infty(V|_U)$ be a system of weighted vector bundle coordinates. We will show that the homogeneous approximations $y_a = x_a^{[w_a]}$, $q_b = p_b^{[-v_b]}$ serve as vector bundle coordinates on $\nuw(\pi)^{-1}(\nuw(U, U\cap N)) = \nuw(V|_U)$. By~\autoref{theorem: weighted normal bundle is smooth}, the map 
        \[ (y_1, \dots, y_m):\nuw(U, U\cap N) \to \R^m \]
    is a diffeomorphism onto an open set $\overline{U} \sset \R^m$. Since $\gr(C^\infty_{pol}(V|_U))$ is generated as an algebra over $\gr(C^\infty(U))$ by the elements $q_1, \dots, q_k$ it follows that the map 
        \[ (y_1, \dots, y_m, q_1, \dots, q_k) : \nuw(\pi)^{-1}(\nuw(U, U\cap N)) \to \R^{m+k} \]
    is a bijection onto an open set of the form $\overline{U}\times \R^k$. This defines a smooth structure on $\nuw(V|_U)$. We now show that if $f\in C^\infty_{pol}(V|_U)_{(i)}$, then $f^{[i]}: \nuw(V|_U)\to \R$ is smooth. We may assume that $f(x,p) = g(x)p_1^{s_1}\cdots p_k^{s_k}$ with $g\in C^\infty(U)_{(j)}$, $s\cdot v = \sum s_bv_b = i-j$, and $\sum_b s_b = n$, in which case 
        \[ f^{[i]}(y,q) = g^{[j]}(y)q_1^{s_1} \cdots q_k^{s_k}  \]
    which is evidently smooth and homogeneous of degree $n$ with respect to the scalar multiplication $\nuw(\kappa_t)$. In particular, if $y'_1, \dots, y'_k$ is a different choice of weighted linear coordinates on $V|_U$ then $(y'_b)^{[-v_b]}$ is smooth as a function of $y_a$ and $q_b$. The uniqueness assertion and the claim about functoriality are clear. 
\end{proof}

\begin{remark}
    This proof shows that if $[f_{ab}]$ is the transition matrix for $V|_U$ corresponding to two weighted vector bundle coordinate systems, then $f_{ab} \in C^\infty(U)_{(v_b-v_a)}$ and the transition matrix for $\nuw(V|_U)$ is given by $[f_{ab}^{[v_b-v_a]}] \in C^\infty(\nuw(U, U\cap N), \mathrm{GL}(\R^k))$. 
\end{remark}

\begin{examples}
\label{examples: examples of weighted normal bundles for linear weightings}
    \begin{itemize}
        \item[(a)] Suppose that $W\to N$ is a subbundle of $V \to M$. If $V$ is given the trivial weighting along $W$, then 
            \[ \nuw(V) = \nu(V, W),  \]
        as a vector bundle over $\nu(M,N)$. 

        \item[(b)] If $V$ is linearly weighted by a filtration of wide subbundles 
            \[ V = V_{(-r)} \supseteq V_{(-r+1)} \supseteq \cdots \supseteq V_{(q)} \supseteq 0, \]
        as in~\autoref{examples: examples of linear weightings} (a), then $\nuw(V) = \gr(V) \to M$. In particular, if $V = V_{(-r)}\oplus \cdots \oplus V_{(q)}$ is a graded vector bundle, then $\nuw(V) = V$.

        \item[(c)] If $(M,N)$ is a weighted pair and $TM$ is linearly weighted as in~\autoref{examples: examples of linear weightings} (b), then 
        \[ \nuw(TM) = T\nuw(M,N). \]
    \end{itemize}
\end{examples}

\begin{remark}
    The weighted normal bundle is invariant under shifting the weighting: for any $k \in \Z$ one has
        \[ \nuw(V[k]) = \nuw(V). \]
    In particular, shifting the weighting of $V$ so that it is concentrated in negative degrees, we can see that the weighted normal bundle as a bundle of homogeneous spaces of nilpotent Lie groups (cf.~\cite[Proposition 7.7]{loizides2023differential}.)
\end{remark}

\subsection{Action of $\R^\times$}

Let $\R^\times$ act on $\gr(C^\infty_{pol}(V))_{i}$ by multiplication by $t^i$. This defines a smooth action $\alpha : \R^\times\times \nuw(V)\to \nuw(V)$ on the weighted normal bundle which commutes with the vector bundle multiplication and extends the action of $\R^\times$ on $\nuw(M,N)$ described in~\autoref{subsection: definition of weighted normal bundle}; note in particular that since we are allowing the fibre coordinates to have negative weights multiplication by zero is \emph{not} defined, so $\nuw(V)$ fails to be a graded bundle in general. As in~\autoref{subsection: definition of weighted normal bundle}, given any $f\in C^\infty_{pol}(V)_{(i)}$ the function $f^{[i]} \in C^\infty(\nuw(V))$ is homogeneous of degree $i$. Letting $C^\infty_{[i,j]}(\nuw(V))$ denote the functions with are homogeneous of degree $i$ with respect to the vector bundle multiplication and homogeneous of degree $j$ with respect to the $\R^\times$-action, we have that 
    \[ C^\infty_{[i,j]}(\nuw(V)) = \gr(C^\infty_{[i]}(V))_j. \]

\begin{example}
\label{example: zoom action on graded VB}
    Suppose that $V=V_{(-r)}\oplus \cdots \oplus V_{(q)}\to M$ is a graded vector bundle over $M$.With respect to the identification $\nuw(V) = V$ explained in~\autoref{examples: examples of weighted normal bundles for linear weightings} (b), we have that 
        \begin{equation*}
        \label{equation: zoom action on graded VB}
            \alpha_\lambda(v_{-r}, \dots, v_{q}) = (\lambda^rv_{-r}, \dots, \lambda^{-q} v_{q}).
        \end{equation*}
\end{example}

\subsection{Sections of the weighted normal bundle}
\label{subsection: sections of the normal bundle}

Recall from~\autoref{subsection: definition of weighted normal bundle} that 
    \[ \gr(C^\infty(M)) = C^\infty_{pol}(\nuw(M,N)).  \]
In this section, we are going to define a canonical identification 
    \[ \gr(\G(V)) \to \G_{pol}(\nuw(V)) \]
of graded $\gr(C^\infty(M))$-modules, where 
    \[ \gr(\G(V)) = \bigoplus_{i\in \Z} \G(V)_{(i)}/\G(V)_{(i+1)} \]
and $\G_{pol}(\nuw(V))$ are the \emph{polynomial} sections of $\nuw(V)$, defined as follows. For $i\in \Z$, let $\G_{[i]}(\nuw(V))$ denote the sections of $\nuw(V)$ which are homogeneous of degree $i$, i.e. sections $\sigma \in \G(\nuw(V))$ such that  
    \[ \alpha_{\lambda, *}(\sigma) = \alpha_\lambda \circ \sigma \circ \alpha_{\lambda^{-1}} = \lambda^i\sigma.  \]
Then 
    \[ \G_{pol}(\nuw(V)) = \bigoplus_{i \in \Z} \G_{[i]}(\nuw(V)). \]
Before we begin, recall that 
    \[ \nuw(M,N) = \aHom(\gr(C^\infty(M)), \R) \]
and that for any $\sigma \in \G(V)_{(i)}$ and $f \in C^\infty_{[n]}(V)_{(j)}$ one has $f\circ \sigma \in C^\infty(M)_{(j+ni)}$. This motivates the following definition.

\begin{definition}
    Given $\sigma \in \G(V)_{(i)}$, the \emph{$i$-th homogeneous approximation} is the map $ \sigma^{[i]} : \nuw(M,N) \to \nuw(V)$ defined by 
        \begin{equation}
        \label{equation: homogeneous approximation definition}
            (\sigma^{[i]}(\varphi))(f^{[j]}) = \varphi((f\circ \sigma)^{[j+ni]}),
        \end{equation}
    where $\varphi \in \nuw(M,N)$ and $f \in C^\infty_{[n]}(V)_{(j)}$.
\end{definition}

\begin{example}
    Suppose that $V=V_{(-r)}\oplus \cdots \oplus V_{(q)}\to M$ is a graded vector bundle over $M$. A section $\sigma \in \G(V)$ has filtration degree $-i$ if and only if it can be written as a sum 
        \[ \sigma = \sigma_{-i} + \sigma_{-i+1} + \cdots + \sigma_{q}, \quad \text{where} \quad \sigma_{-j} \in \G(V_{(-j)}). \]
    Then $\nuw(V) = V$ and $\sigma^{[-i]} = \sigma_{-i}$ with respect to this identification. Note that $\sigma^{[-i]}\in \G_{[i]}(\nuw(V))$, by~\autoref{example: zoom action on graded VB}.
\end{example}

\begin{lemma}
\label{lemma: graded modules gives sections of weighted normal bundle}
    For $\sigma \in \G(V)_{(i)}$, the $i$-th homogeneous approximation is a smooth section of $\nuw(V)$ which is homogeneous of degree $-i$ and which depends only on the class of $\sigma$ in $\G(V)_{(i)}/\G(V)_{(i+1)}$. Moreover, for any $f\in C^\infty_{[n]}(V)_{(j)}$ and $g\in C^\infty(M)_{(k)}$ one has 
        \begin{align}
        \label{equation: homogeneous approximation properties}
        \begin{split}
            f^{[j]}\circ \sigma^{[i]} & = (f\circ \sigma)^{[j+ni]} \in C^\infty(\nuw(M,N)) \quad \text{and} \\
            g^{[k]}\sigma^{[i]} & = (g\sigma)^{[i+k]} \in \G(\nuw(V)).
        \end{split}
        \end{align}
\end{lemma}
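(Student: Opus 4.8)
The plan is to unwind the definition \eqref{equation: homogeneous approximation definition} and to verify, in order, that $\sigma^{[i]}(\varphi)$ is a genuine point of $\nuw(V)$ lying over $\varphi$, then that the two displayed identities hold; smoothness, homogeneity, and the dependence statement all fall out of these. Throughout I fix $f\in C^\infty_{[n]}(V)_{(j)}$, so that $f\circ\sigma\in C^\infty(M)_{(j+ni)}$.

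First I would check that for each $\varphi\in\nuw(M,N)$ the functional $\sigma^{[i]}(\varphi)$ is a well-defined unital algebra homomorphism $\gr(C^\infty_{pol}(V))\to\R$. Well-definedness on graded components is automatic: if $f\in C^\infty_{[n]}(V)_{(j+1)}$ then $f\circ\sigma\in C^\infty(M)_{(j+1+ni)}$, so $(f\circ\sigma)^{[j+ni]}=0$. Multiplicativity follows from $(fg)\circ\sigma=(f\circ\sigma)(g\circ\sigma)$ together with multiplicativity of $\varphi$ and additivity of both fibre-degrees and filtration degrees, and unitality is the case $f=1$. To see that $\sigma^{[i]}$ is a set-theoretic section of $\nuw(\pi)$, I would restrict $\sigma^{[i]}(\varphi)$ along $\pi^*\colon C^\infty(M)\hookrightarrow C^\infty_{[0]}(V)$: for $g\in C^\infty(M)_{(k)}$ one has $n=0$ and $\pi^*g\circ\sigma=g$ since $\sigma$ is a section, whence $\sigma^{[i]}(\varphi)((\pi^*g)^{[k]})=\varphi(g^{[k]})$, i.e. $\nuw(\pi)(\sigma^{[i]}(\varphi))=\varphi$.

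The first identity $f^{[j]}\circ\sigma^{[i]}=(f\circ\sigma)^{[j+ni]}$ is then immediate: evaluating the left side at $\varphi$ gives $(\sigma^{[i]}(\varphi))(f^{[j]})=\varphi((f\circ\sigma)^{[j+ni]})=(f\circ\sigma)^{[j+ni]}(\varphi)$ by definition. Since $(f\circ\sigma)^{[j+ni]}$ is smooth on $\nuw(M,N)$ by \autoref{theorem: weighted normal bundle is smooth}, letting $f$ range over the fibre coordinates $p_b$ (and $\pi^*x_a$) of a weighted vector bundle chart shows each coordinate of $\sigma^{[i]}$ is smooth, so $\sigma^{[i]}$ is a smooth section. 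Homogeneity of degree $-i$ I would verify by a character computation: applying $\alpha_\lambda\circ\sigma^{[i]}\circ\alpha_{\lambda^{-1}}$ to $\varphi$ and evaluating on $f^{[j]}$ produces the scalar $\lambda^{j}\cdot\lambda^{-(j+ni)}=\lambda^{-ni}$ times $\varphi((f\circ\sigma)^{[j+ni]})$, which is exactly the effect of fibrewise scalar multiplication by $\lambda^{-i}$ on the degree-$n$ functional $\sigma^{[i]}(\varphi)$. For the second identity I would pair both sides with an arbitrary $f^{[j]}$: on the right, $((g\sigma)^{[i+k]}(\varphi))(f^{[j]})=\varphi((f\circ(g\sigma))^{[j+n(i+k)]})$, and since $f$ is fibrewise homogeneous of degree $n$ we have $f\circ(g\sigma)=g^{n}(f\circ\sigma)$ with $g^{n}\in C^\infty(M)_{(nk)}$; multiplicativity of $\varphi$ then gives $\varphi(g^{[k]})^{n}\,\varphi((f\circ\sigma)^{[j+ni]})$, which is precisely the result of scaling the degree-$n$ functional $\sigma^{[i]}(\varphi)$ fibrewise by the scalar $g^{[k]}(\varphi)$.

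Finally, the dependence only on the class in $\G(V)_{(i)}/\G(V)_{(i+1)}$ I would obtain from the first identity. If $\sigma,\sigma'\in\G(V)_{(i)}$ differ by $\rho\in\G(V)_{(i+1)}$, I would expand $f\circ\sigma=f\circ(\sigma'+\rho)$ by polarization; every mixed term carrying at least one factor of $\rho$ feeds $n-l$ copies of $\sigma'\in\G(V)_{(i)}$ and $l\geq 1$ copies of $\rho\in\G(V)_{(i+1)}$, landing in $C^\infty(M)_{(j+ni+l)}\sset C^\infty(M)_{(j+ni+1)}$, hence vanishing in $\gr(C^\infty(M))_{(j+ni)}$. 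This yields $(f\circ\sigma)^{[j+ni]}=(f\circ\sigma')^{[j+ni]}$ for all $f$, so $\sigma^{[i]}=\sigma'^{[i]}$. I expect the main obstacle to be the filtration bookkeeping in this polarization step—checking that the mixed multilinear terms genuinely have the asserted filtration degree—which I would settle by passing to local weighted vector bundle coordinates and invoking multiplicativity of the filtration, exactly as in \autoref{theorem: linear weightings in terms of polynomials}.
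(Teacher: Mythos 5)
Your proof is correct and follows essentially the same route as the paper's: both unwind the character-theoretic definition, obtain the first identity (and hence smoothness, via weighted vector bundle coordinates) directly from it, and verify the second identity and the degree $-i$ homogeneity by evaluating characters on $f^{[j]}$ and using fibrewise homogeneity of degree $n$. The one place you genuinely diverge is the claim that $\sigma^{[i]}$ depends only on the class of $\sigma$ in $\G(V)_{(i)}/\G(V)_{(i+1)}$: the paper reduces this to showing that $\rho \in \G(V)_{(i+1)}$ implies $\rho^{[i]}$ equals the zero section (treating the cases $n\geq 1$ and $n=0$ separately), whereas you prove it directly by polarizing $f\circ(\sigma'+\rho)$ and estimating the filtration degree of the cross terms. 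Your version is arguably the more complete one: since $f$ is a degree-$n$ polynomial, the assignment $\sigma\mapsto\sigma^{[i]}$ is not obviously additive, so the paper's reduction tacitly relies on the very cross-term estimate your polarization step supplies. The multilinear filtration fact you flag as the main obstacle does hold without difficulty: for $T\in\G(\mathrm{Sym}^n(V^*))_{(j)}$ and $\tau_l\in\G(V)_{(i_l)}$ one has $T(\tau_1,\dots,\tau_n)\in C^\infty(M)_{(j+i_1+\cdots+i_n)}$, which follows from the definition of the weighting on tensor powers of $V^*$ in~\autoref{subsection: linear constructions} (b) together with the pairing property stated there, so each mixed term with $l\geq 1$ factors of $\rho$ lands in $C^\infty(M)_{(j+ni+l)}$ and dies in the graded quotient, as you assert.
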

\begin{proof}
   See~\autoref{A-section: graded modules gives sections of weighted normal bundle}.
\end{proof}

This establishes a map $\gr(\G(V)) \to \G_{pol}(\nuw(V))$. By abusing notation, given $\sigma \in \G(V)_{(i)}$ we denote both its class in $\G(V)_{(i)}/\G(V)_{(i+1)}$ and its $i$-homogeneous approximation by $\sigma^{[i]}$. 

\begin{theorem}
\label{theorem: sections of the weighted normal bundle}
    If $\sigma_b$ is a weighted frame for $V|_U$, then the homogeneous approximations $\sigma_b^{[v_b]}$ define a frame for $\nuw(V|_U) = \nuw(V)|_{\nuw(U, U\cap N)}$. In particular, the map 
        \begin{align}
        \label{equation: polynomial section identification}
            \gr(\G(V)) \to \G_{pol}(\nuw(V))
        \end{align}
    is an isomorphism of graded $\gr(C^\infty(M))$-modules and 
        \[ \G(\nuw(V)) = C^\infty(\nuw(M,N))\otimes_{\gr(C^\infty(M))}\gr(\G(V)).  \]
\end{theorem}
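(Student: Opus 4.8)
The plan is to deduce both module identities from the single local statement that $\{\sigma_b^{[v_b]}\}$ is a frame of $\nuw(V|_U)$, and to prove that frame statement by a direct pairing computation. First I would fix a weighted frame $\sigma_1,\dots,\sigma_k$ for $V|_U$ with vertical weights $v_1,\dots,v_k$, let $\tau_b\in\G(V^*|_U)$ be the dual frame, and let $p_b\in C^\infty_{[1]}(V|_U)$ be the corresponding fibre-linear coordinates, so that $p_b$ has weight $-v_b$ and $p_b\circ\sigma_a=\delta_{ab}$ as a constant function on $U$. By~\autoref{theorem: linear weighted normal bundle} the functions $q_b=p_b^{[-v_b]}$ are fibre-linear coordinates on $\nuw(V|_U)$, i.e. a frame for its dual. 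Applying the first identity of~\eqref{equation: homogeneous approximation properties} from~\autoref{lemma: graded modules gives sections of weighted normal bundle} with $f=p_b\in C^\infty_{[1]}(V)_{(-v_b)}$ (so $n=1$, $j=-v_b$) and $\sigma=\sigma_a\in\G(V)_{(v_a)}$ gives
$$ q_b\circ\sigma_a^{[v_a]} = (p_b\circ\sigma_a)^{[v_a-v_b]}. $$
Since $p_b\circ\sigma_a$ is the constant $\delta_{ab}\in C^\infty(U)_{(0)}$, for $a\neq b$ this is the zero function and its class vanishes, while for $a=b$ the exponent $v_a-v_b$ is $0$ and we obtain $1^{[0]}=1$. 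Hence $q_b\circ\sigma_a^{[v_a]}=\delta_{ab}$, which exhibits $\{\sigma_b^{[v_b]}\}$ as exactly the frame of $\nuw(V|_U)$ dual to $\{q_b\}$; this proves the first claim.

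Next I would identify the source of the map~\eqref{equation: polynomial section identification} locally. Because $\sigma_1,\dots,\sigma_k$ is a frame for $V|_U$, a section $\sum_a f_a\sigma_a$ lies in $\G(V|_U)_{(i)}$ if and only if $f_a\in C^\infty(U)_{(i-v_a)}$ for every $a$, so passing to the associated graded shows that $\gr(\G(V|_U))$ is free over $\gr(C^\infty(U))$ on the classes $\sigma_a^{[v_a]}$. The map~\eqref{equation: polynomial section identification} is $\gr(C^\infty(M))$-linear by the second identity of~\eqref{equation: homogeneous approximation properties}, and by the previous paragraph it carries this local basis to a frame of $\nuw(V|_U)$; thus it is a local isomorphism of graded modules. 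Working in the $C^\infty$-category with partitions of unity, a globally defined module morphism that is an isomorphism over the sets of a weighted atlas is a global isomorphism, which establishes $\gr(\G(V))\cong\G_{pol}(\nuw(V))$.

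Finally, for the tensor-product description I would extend scalars along the inclusion $\gr(C^\infty(M))=C^\infty_{pol}(\nuw(M,N))\hookrightarrow C^\infty(\nuw(M,N))$. Over $U$ every smooth section of $\nuw(V|_U)$ is uniquely of the form $\sum_b g_b\,\sigma_b^{[v_b]}$ with $g_b\in C^\infty(\nuw(U,U\cap N))$, which is precisely a local element of $C^\infty(\nuw(M,N))\otimes_{\gr(C^\infty(M))}\gr(\G(V))$ since $\gr(\G(V))$ is freely generated on the $\sigma_b^{[v_b]}$; a partition of unity subordinate to a weighted atlas glues these local descriptions into the asserted global identification. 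The main obstacle is the bookkeeping in the first step, namely matching the homogeneity and weight conventions so that the exponents in the pairing identity collapse to the Kronecker delta; once the frame statement is secured, the local freeness, the module isomorphism, and the scalar-extension argument are routine.
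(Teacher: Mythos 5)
Your proposal is correct and follows essentially the same route as the paper: the central step in both is the pairing computation $p_a^{[-v_a]}\circ\sigma_b^{[v_b]} = (p_a\circ\sigma_b)^{[v_b-v_a]} = \delta_{ab}$ against the fibre-linear coordinates $p_b^{[-v_b]}$, which yields the frame property, after which the module isomorphism and the tensor-product description follow. The only cosmetic difference is that the paper deduces surjectivity of $\gr(\G(V))\to\G_{pol}(\nuw(V))$ from homogeneity of the pairing, while you use local freeness of $\gr(\G(V|_U))$ on the classes $\sigma_a^{[v_a]}$ together with a partition-of-unity globalization; both arguments carry the same content.
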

\begin{proof}
Let $p_b \in C^\infty_{[1]}(V|_U)_{(-v_b)}$ be the linear vector bundle coordinates defined by the frame $\sigma_a$ and note that for any non-zero $\varphi_0 \in \nuw(U, U\cap N)$ one has 
    \begin{align*}
    \label{equation: vector bundle coordinate pairing}
    \begin{split}
            \sigma_b^{[v_b]}(\varphi_0)\left(p_a^{[-v_a]}\right) = \varphi_0 \left((p_a \circ \sigma_b))^{[v_b-v_a]}\right) = \left\{
            \begin{array}{cc}
                 1 & \text{if $a=b$} \\
                 0 & \text{else,}
            \end{array}
        \right.
    \end{split}
    \end{align*}
which shows that $\sigma_a^{[v_a]}$ form a linearly independent set. In particular, this shows that the map~\eqref{equation: polynomial section identification} is injective.  
    
Since $\gr(C^\infty_{pol}(V|_U))$ is generated as an algebra over $\gr(C^\infty(U))$ by the elements $p_b^{[-v_b]}$, any $\varphi \in \nuw(V|_U)$ is determined by its value on $\gr(C^\infty(U))$ and $p_b^{[-v_b]}$, $b=1, \dots, k$. In particular, if  $\nuw(\pi)(\varphi) = \varphi_0$ then 
        \[ \varphi = \sum_b \varphi\left(p_b^{[-v_b]}\right)\sigma_b^{[v_b]}(\varphi_0),\] 
so $\sigma_b^{[v_b]}$ spans as well.

To see that~\eqref{equation: polynomial section identification} is an isomorphism, it suffices to show that it is surjective as we have already established injectivity. Suppose that $\sigma \in \G(\nuw(V|_U))$ can be written as $\sigma = \sum_b f_b\sigma_b^{[v_b]}$. The pairing 
    \[ \G(\nuw(V))\otimes \G(\nuw(V^*))\to C^\infty(\nuw(M,N)) \]
respects homogeneity, hence 
    \[ \sigma \in \G_{[i]}(\nuw(V|_U)) \iff \quad  f_b \in C^\infty_{[i-v_b]}(\nuw(U,N\cap U)) \]
for $b=1, \dots, k$. This implies that $\gr(\G(V)) \to \G_{pol}(\nuw(V))$ is surjective, which completes the proof. 
\end{proof}

\subsection{Weighted Subbundles}

Recall that a subbundle $W\to R$ of a linearly weighted vector bundle $V\to M$ is called a weighted subbundle if there exist a weighted atlas of subbundle coordinates for $W$. In this case, it is explained in~\autoref{subsection: weighted subbundles} that $W$ inherits a linear weighting. 

\begin{proposition}
\label{proposition: subbundles of weighted normal bundle}
    If $W\to R$ is a weighted subbundle of $V\to M$, then $\nuw(W)$ is a subbundle of $\nuw(V)$. 
\end{proposition}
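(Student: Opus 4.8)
The plan is to imitate the proof of \autoref{proposition: weighted embeddings define embeddings}, replacing weighted submanifold coordinates by weighted subbundle coordinates and the filtered algebra $\gr(C^\infty(M))$ by $\gr(C^\infty_{pol}(V))$. First I would observe that the inclusion $\iota\colon W\into V$ is a weighted vector bundle morphism: by construction of the inherited linear weighting on $W$, the restriction map $\iota^*\colon C^\infty_{pol}(V)\to C^\infty_{pol}(W)$ is filtration preserving and surjective in each filtration degree (the weighting on $W$ is defined as the quotient, exactly as in the proof of \autoref{proposition: weighted submanifolds are weighted}). Hence $\gr(\iota^*)\colon\gr(C^\infty_{pol}(V))\to\gr(C^\infty_{pol}(W))$ is a surjective morphism of graded algebras, so the functorially induced map $\nuw(\iota)=\aHom(\gr(\iota^*),\R)\colon\nuw(W)\to\nuw(V)$ of \autoref{theorem: linear weighted normal bundle}(c) is injective and covers the embedding $\nuw(R,R\cap N)\into\nuw(M,N)$ supplied by \autoref{proposition: weighted embeddings define embeddings}. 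It then remains to check that the image is a sub-vector-bundle.

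This is a local question, so I would fix a point of $W$ and choose weighted vector bundle coordinates $x_a\in C^\infty_{[0]}(V|_U)$, $p_b\in C^\infty_{[1]}(V|_U)$ as in the definition of a weighted subbundle, arranged so that $W\cap(V|_U)$ is cut out by $x_a=0$ for $a\in I_0$ and $p_b=0$ for $b\in I_1$. By \autoref{theorem: linear weighted normal bundle}(b) the homogeneous approximations $x_a^{[w_a]}$ and $p_b^{[-v_b]}$ are vector bundle coordinates on $\nuw(V|_U)$, and I claim the image of $\nuw(W)$ is precisely their common zero locus $\{x_a^{[w_a]}=0\ (a\in I_0),\ p_b^{[-v_b]}=0\ (b\in I_1)\}$. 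The inclusion $\sset$ is immediate, since $\iota^*x_a=0$ and $\iota^*p_b=0$ for those indices give $\nuw(\iota)^*x_a^{[w_a]}=(\iota^*x_a)^{[w_a]}=0$ and likewise for $p_b$.

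For the reverse inclusion I would take $\varphi$ in the zero locus and show it annihilates $\ker\gr(\iota^*)$. Given a kernel class $h^{[i]}$, i.e.\ $h\in C^\infty_{pol}(V|_U)_{(i)}$ with $h|_W\in C^\infty_{pol}(W|_U)_{(i+1)}$, I first use surjectivity of restriction in filtration degree $i+1$ to subtract a representative $\tilde h\in C^\infty_{pol}(V|_U)_{(i+1)}$ with $\tilde h|_W=h|_W$, reducing to the case $h|_W=0$ (as $(h-\tilde h)^{[i]}=h^{[i]}$). A Hadamard-type decomposition respecting fibre-polynomial degree — legitimate because $x_a$ has fibre-degree $0$ and $p_b$ fibre-degree $1$ — then writes $h=\sum_{a\in I_0}x_ag_a+\sum_{b\in I_1}p_bk_b$ with $g_a\in C^\infty_{pol}(V|_U)_{(i-w_a)}$ and $k_b\in C^\infty_{pol}(V|_U)_{(i+v_b)}$, so that
\[
\varphi(h^{[i]})=\sum_{a\in I_0}x_a^{[w_a]}(\varphi)\,g_a^{[i-w_a]}(\varphi)+\sum_{b\in I_1}p_b^{[-v_b]}(\varphi)\,k_b^{[i+v_b]}(\varphi)=0.
\]
Thus $\varphi$ lies in the image, establishing the zero-locus description.

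Since the image is locally the vanishing locus of a subset of vector bundle coordinates on $\nuw(V|_U)$, it is a sub-vector-bundle; equivalently, extending a weighted frame of $W$ to one of $V$ and invoking \autoref{theorem: sections of the weighted normal bundle}, the corresponding subfamily of homogeneous approximations frames $\nuw(W)$ inside the frame of $\nuw(V)$. As $\nuw(\iota)$ is an injective vector bundle morphism that is a diffeomorphism onto this locus, $\nuw(W)$ is a subbundle of $\nuw(V)$. The main obstacle is the reverse inclusion: the simultaneous bookkeeping of the fibrewise-polynomial grading and the weighting filtration in the Hadamard decomposition, together with the reduction from $h|_W\in C^\infty_{pol}(W|_U)_{(i+1)}$ to $h|_W=0$; everything else is formal functoriality or a direct check in coordinates.
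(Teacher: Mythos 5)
Your proof is correct and follows essentially the same route as the paper: the paper's proof is a one-line appeal to weighted subbundle coordinates, whose homogeneous approximations cut out $\nuw(W)$ inside $\nuw(V)$ ``as in \autoref{proposition: weighted embeddings define embeddings}'', and your argument is precisely that earlier proposition's proof transplanted to the polynomial-function setting (injectivity from surjectivity of $\gr(\iota^*)$, image contained in the coordinate zero locus, reverse inclusion via a Hadamard decomposition with filtration bookkeeping). The only difference is that you spell out the reduction from $h|_W \in C^\infty_{pol}(W|_U)_{(i+1)}$ to $h|_W = 0$ using degreewise surjectivity of restriction, a step the paper leaves implicit.
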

\begin{proof}
    If $x_a, p_b$ are weighted subbundle coordinates for $W|_{R\cap U}$, then the homogeneous approximations $x_a^{[w_a]}, p_b^{[-v_b]} \in C^\infty(\nuw(V|_U))$ define subbundle coordinates for $\nuw(W|_{R\cap U})$, as in~\autoref{proposition: weighted embeddings define embeddings}.
\end{proof}

Let $(V|_N)_{(i)} \to N$ be the filtration of $V|_N$ defined by the linear weighting (see~\autoref{proposition: linear weighting determines filtration by subbundles}).~\autoref{proposition: subbundles of weighted normal bundle} allows us to describe the weighted normal bundle of $V$ in terms of the associated graded bundle 
    \[ \gr(V|_N) = \bigoplus_i (V|_N)_{(i)}/(V|_N)_{(i+1)}. \]

\begin{corollary}
    Suppose that $V \to M$ is linearly weighted. If $\pi : \nuw(M,N)\to N$ denotes the graded bundle projection, then one has a (non-canonical) identification 
        \[ \nuw(V) \cong \pi^*\gr(V|_N).\]
\end{corollary}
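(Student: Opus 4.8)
The plan is to combine two structural facts about the weighted normal bundle: that $\nuw(M,N)$ is a graded bundle over $N$, whose fibres are smoothly contractible onto the base via the monoid action $\kappa$, and that the restriction of the vector bundle $\nuw(V)\to\nuw(M,N)$ to this base recovers $\gr(V|_N)$. Feeding these into the homotopy invariance of vector bundles will produce the desired (non-canonical) isomorphism. Throughout I write $\iota:N\hookrightarrow\nuw(M,N)$ for the zero-section inclusion (the base of the graded bundle) and $\pi:\nuw(M,N)\to N$ for the projection, so that $\iota\circ\pi=\kappa_0$.

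First I would identify the restriction of $\nuw(V)$ to $N$. Since $V|_N$ is a weighted subbundle of $V$ whose base $N$ is weighted along itself, \autoref{proposition: subbundles of weighted normal bundle} shows that $\nuw(V|_N)$ is a subbundle of $\nuw(V)$, sitting over $\nuw(N,N)=N\subseteq\nuw(M,N)$. Because $N$ is weighted along itself, the induced linear weighting on $V|_N$ is equivalent to the filtration by the subbundles $(V|_N)_{(i)}$ (\autoref{remark: wide weighting and filtered vector bundles}), so part (b) of \autoref{examples: examples of weighted normal bundles for linear weightings} gives $\nuw(V|_N)=\gr(V|_N)$. As both $\nuw(V|_N)$ and the literal restriction $\nuw(V)|_N=\iota^*\nuw(V)$ have rank equal to that of $V$, the subbundle inclusion is an equality, and hence $\nuw(V)|_N=\gr(V|_N)$.

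Next I would invoke homotopy invariance. The graded-bundle monoid action restricts to a smooth homotopy $[0,1]\times\nuw(M,N)\to\nuw(M,N)$, $(t,x)\mapsto\kappa_t(x)$, from $\kappa_1=\id$ to $\kappa_0=\iota\circ\pi$. Since $\nuw(V)$ is a genuine vector bundle over $\nuw(M,N)$ (\autoref{theorem: linear weighted normal bundle}), homotopy invariance of smooth vector bundles yields
\[
\nuw(V)=\kappa_1^*\,\nuw(V)\;\cong\;\kappa_0^*\,\nuw(V)=\pi^*\bigl(\iota^*\,\nuw(V)\bigr)=\pi^*\bigl(\nuw(V)|_N\bigr)=\pi^*\gr(V|_N),
\]
which is exactly the claim. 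The isomorphism is non-canonical precisely because homotopy invariance depends on auxiliary choices (for instance a connection on $\nuw(V)$ used to build parallel transport along $\kappa_t$).

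The main obstacle is the first step: one must check carefully that $\nuw(V|_N)$, computed abstractly as $\gr(V|_N)$ from the filtered-subbundle description, really coincides with the concrete restriction $\nuw(V)|_N$ of the total bundle to the base. This reduces to the base identification $\nuw(N,N)=N\subseteq\nuw(M,N)$ together with the rank count; once that is in place, the homotopy argument is routine. An alternative, purely algebraic route would compare global sections using \autoref{theorem: sections of the weighted normal bundle}, matching $\G(\nuw(V))$ with $\G(\pi^*\gr(V|_N))$ as modules over $C^\infty(\nuw(M,N))$, but the homotopy-theoretic argument is cleaner and makes the non-canonicity transparent.
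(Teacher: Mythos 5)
Your proposal is correct and follows essentially the same route as the paper's own proof: first identify $\nuw(V)|_N$ with $\gr(V|_N)$ by viewing $V|_N$ as a weighted subbundle (whose induced weighting is the filtration $(V|_N)_{(i)}$) and comparing ranks, then conclude by homotopy invariance using that $\pi$ is a smooth homotopy inverse of the inclusion $N\hookrightarrow\nuw(M,N)$. Your explicit use of the monoid action $\kappa_t$ as the homotopy simply spells out what the paper leaves implicit.
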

\begin{proof}
    Recall that $V|_N$ is a weighted subbundle of $V$ and that the induced weighting of $V|_N$ is given by the filtration $(V|_N)_{(i)}$. As in~\autoref{examples: examples of weighted normal bundles for linear weightings} (b), we have
        \[ \nuw(V|_N) = \gr(V|_N).  \]
    On the other hand $\nuw(V|_N)$ is a subbundle of $\nuw(V)|_N$ of equal rank, and so $\nuw(V|_N) = \nuw(V)|_N$. Since $\pi:\nuw(M, N)\to N$ is a smooth homotopy inverse of the inclusion $\iota:N\into \nuw(M,N)$ it follows that 
        \[ \nuw(V) \cong \pi^*\iota^*\nuw(V) = \pi^*(\nuw(V)|_N) = \pi^*\gr(V|_N), \]
    as claimed. 
\end{proof}

\begin{example}
    If $V\to M$ is given the trivial weighting along the subbundle $W\to R$, then this proposition amounts to the identification 
        \[ \nu(V,W) \cong \pi^*(V|_N/W \oplus W) \]
    as vector bundles over $\pi:\nu(M,N)\to N$. 
\end{example}

\section{The Weighted Deformation Bundle}
\label{section: linear weighted deformation bundle}

Let $V\to M$ be a linearly weighted vector bundle. As explained above, the weighting of $M$ determines a weighted deformation space $\defw(M,N)$ which admits a set-theoretic decomposition 
    \[ \defw(M,N) = \nuw(M, N) \sqcup (M\times \R^\times).  \]
The purpose of this section is to explain how this construction carries over to the context of linearly weighted vector bundles, yielding a \emph{weighted deformation bundle} $\defw(V)$ which is itself a vector bundle over $\defw(M,N)$. Since many of the proofs in this section are very similar to those in the previous section we will omit them for brevity. 

\subsection{Definitions} 

We use the same definition for the weighted deformation bundle as in~\autoref{section: weighted deformation space}. 

\begin{definition}
    The \emph{weighted deformation bundle} of the linearly weighted vector bundle $V\to (M,N)$ is the character spectrum
        \[ \defw(V) = \aHom(\rees(C^\infty_{pol}(V)), \R) \]
\end{definition}

Given $f\in C^\infty_{pol}(V)_{(i)}$, let $\WT{f}^{[i]}:\defw(V) \to \R$ be as in~\autoref{section: weighted deformation space} and $\pi_\delta = \WT{1}^{[-1]}: \defw(V) \to \R$. If $\pi:V \to M$ and $\kappa_\lambda : V \to V$ denote the vector bundle projection and scalar multiplication by $\lambda \in \R$, respectively, it follows that we get maps 
    \[ \defw(\pi) : \defw(V) \to \defw(M,N) \quad \text{and} \quad \defw(\kappa_\lambda) : \defw(V)\to \defw(V).  \]
As before, these give $\defw(V)$ the structure of a family of vector spaces over $\defw(M,N)$. The following can be proved by the same argument as~\cite[Theorem 5.1]{loizides2023differential} (cf.~\autoref{theorem: weighted normal bundle is smooth} and~\cite[Theorem 4.2]{loizides2023differential}). 

\begin{theorem}
\label{theorem: linear weighted deformation bundle}
Let $V\to M$ be a weighted vector bundle. 
    \begin{itemize}
        \item[(a)] The weighted deformation bundle has the unique structure of a $C^\infty$-vector bundle over $\defw(M,N)$ of rank equal to that of $V$ over $\defw(M,N)$ such that $\pi_\delta:\defw(V)\to \R$ is a surjective submersion and that for all $n\geq 0$ 
            \[\rees(C^\infty_{[n]}(V)) \sset C^\infty_{[n]}(\defw(V)). \]

        \item[(b)] Given weighted vector bundle coordinates $x_a$ and $p_b$ on $V|_U$, the functions $\WT{x_a}^{[w_a]}$, $\WT{p_b}^{[-v_b]}$ serve as vector bundle coordinates on $\defw(V|_U) = \defw(V)|_{\defw(U, U\cap N)}$.

        \item[(c)] This construction is functorial: any weighted vector bundle morphism $\varphi:V\to W$ between linearly weighted vector bundles defines a vector bundle morphism  $\defw(\varphi):\defw(V)\to \defw(W)$.
    \end{itemize}
\end{theorem}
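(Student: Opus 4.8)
The plan is to follow the blueprint already established for the weighted normal bundle in the proof of~\autoref{theorem: linear weighted normal bundle}, transporting every ingredient from the associated graded algebra $\gr(C^\infty_{pol}(V))$ to the Rees algebra $\rees(C^\infty_{pol}(V))$. The key observation is that the Rees construction over the filtered algebra $C^\infty_{pol}(V)$ is governed by exactly the same local generation data as the graded construction, plus the single extra deformation parameter $z^{-1}$ (which becomes the coordinate $t = \pi_\delta$). So the entire argument is a one-parameter family version of what we have already done.

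First I would fix an open set $U \subseteq M$ together with weighted vector bundle coordinates $x_1, \dots, x_m, p_1, \dots, p_k$ on $V|_U$, as provided by~\autoref{theorem: linear weightings in terms of polynomials}. Passing to homogeneous interpolations, I would show that the functions $\WT{x_a}^{[w_a]}$, $\WT{p_b}^{[-v_b]}$, together with the parameter $t = \pi_\delta = \WT{1}^{[-1]}$, furnish a bijection
\begin{equation*}
    \defw(\pi)^{-1}(\defw(U, U\cap N)) \to \overline{U} \times \R^k,
\end{equation*}
where $\overline{U} \subseteq \R^{m+1}$ is the image of $\defw(U, U\cap N)$ under $(\WT{x_a}^{[w_a]}, t)$ supplied by the scalar deformation theorem~\cite[Theorem 5.1]{loizides2023differential}. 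Here the key algebraic fact is that $\rees(C^\infty_{pol}(V|_U))$ is generated as an algebra over $\rees(C^\infty(U))$ by the fibre elements $p_b z^{v_b}$, mirroring the generation of $\gr(C^\infty_{pol}(V|_U))$ over $\gr(C^\infty(U))$ used in the proof of~\autoref{theorem: linear weighted normal bundle}; this linearity in the $p_b$ is precisely what forces the fibre direction to be a genuine vector bundle of rank $k$ rather than a mere graded bundle. I would then verify that every $\WT{f}^{[i]}$ for $f \in C^\infty_{pol}(V|_U)_{(i)}$ is smooth in these coordinates by reducing, via multiplicativity $\WT{fg}^{[i+j]} = \WT{f}^{[i]}\WT{g}^{[j]}$, to monomials $f = g(x) p_1^{s_1}\cdots p_k^{s_k}$ with $g \in C^\infty(U)_{(j)}$, for which $\WT{f}^{[i]}$ is a product of the coordinate functions and a smooth base function $\WT{g}^{[j]}$; this establishes (a) and (b). The surjectivity of $\pi_\delta$ and the $C^\infty$-submersion property are inherited directly from the scalar case.

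For the compatibility of transition functions across two weighted trivializations I would argue exactly as in the remark following~\autoref{theorem: linear weighted normal bundle}: if $[f_{ab}]$ is the transition matrix, then $f_{ab} \in C^\infty(U)_{(v_b - v_a)}$, so the interpolated transitions $[\WT{f_{ab}}^{[v_b-v_a]}]$ are smooth $\GL(\R^k)$-valued functions on $\defw(U, U\cap N)$, giving a well-defined global vector bundle structure and proving uniqueness. Finally, for functoriality (c), any weighted vector bundle morphism $\varphi$ satisfies $\varphi^* C^\infty_{pol}(W)_{(i)} \subseteq C^\infty_{pol}(V)_{(i)}$, hence induces an algebra map on Rees algebras, hence a smooth map $\defw(\varphi)$ fibrewise linear by homogeneity degree $n$ preservation. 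The main obstacle I anticipate is bookkeeping the \emph{negative} fibre weights $v_b$: because the linear weighting may be nontrivial in negative degree, the fibre coordinate $p_b$ carries weight $-v_b$ which can be positive, and one must confirm that $\rees$ nevertheless still produces a free rank-$k$ fibre (never collapsing as in the non-vector-bundle graded case). This is exactly the subtlety flagged in~\autoref{subsection: sections of the normal bundle}, and handling it cleanly is the crux; everything else is a faithful one-parameter lift of the normal-bundle proof.
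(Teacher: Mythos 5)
Your proposal is correct and coincides with the paper's intended argument: the paper omits the proof, stating only that it "can be proved by the same argument as"~\cite[Theorem 5.1]{loizides2023differential} (cf.\ the proof of~\autoref{theorem: linear weighted normal bundle}), and your Rees-algebra lift of that proof — local weighted vector bundle coordinates, generation of $\rees(C^\infty_{pol}(V|_U))$ over $\rees(C^\infty(U))$ by the elements $p_bz^{v_b}$, smoothness of the interpolations $\WT{f}^{[i]}$ via reduction to monomials, and transition matrices $[\WT{f_{ab}}^{[v_b-v_a]}]$ for uniqueness — is exactly that argument. Your worry about negative fibre weights is resolved precisely by the generation statement you identify, since fibrewise linearity of the generators forces the fibres to be free of rank $k$.
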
   

As before, the surjective submersion $\pi_\delta : \defw(V) \to \R$ defines a decomposition $\defw(V) = \nuw(V) \sqcup (V\times \R^\times)$. The following diagram commutes
    \begin{equation*}
    \xymatrix{
        \defw(V) \ar[rr]^-{\defw(\pi)} \ar[dr]_{\pi_\delta} & & \defw(M,N) \ar[dl]^{\pi_\delta}\\
        & \R & 
    }
    \end{equation*}
so we can think of $\defw(V)$ as a family of vector bundles over $\defw(M,N)$, with 
    \begin{equation}
    \label{equation: decomposition of weighted deformation bundle}
        \defw(V)|_{\pi_\delta^{-1}(t)} = \left\{
        \begin{array}{ll}
             V & t\neq 0,   \\
            \nuw(V) & t = 0.  
        \end{array}
    \right.
    \end{equation}

\begin{examples}
\label{examples: deformation bundle}
    \begin{itemize}
        \item[(a)] If $V =  V_{(-r)}\oplus \cdots \oplus V_{(q)}$ is a graded vector bundle, then there is a canonical diffeomorphism $\defw(V) \to V\times \R$ given by the family of maps 
            \[ \defw(V)|_{\pi_\delta^{-1}(t)} \to V, \quad 
                \left\{
                    \begin{array}{ll}
                        (v_{-r}, \dots, v_{q}) \mapsto (t^{-r}v_{-r}, \dots, t^{q}v_{q}), & t \neq 0 \\
                        (v_{-r}, \dots, v_{q}) \mapsto (v_{-r}, \dots, v_{q}), & t = 0,
                    \end{array}
                \right.  \]
        where we are identifying $\nuw(V)$ with $V$ for $t=0$. 

        \item[(b)] If $(M,N)$ is a weighted pair and $TM$ is linearly weighted as in~\autoref{examples: examples of linear weightings}~(b), then 
        \[ \defw(TM) = \bigcup_{t\in \R} T\defw(M,N)|_{\pi_\delta^{-1}(t)}. \]
    \end{itemize}
\end{examples}

\subsection{Zoom action of $\R^\times$}

The action of $\R^\times$ on $\nuw(V)$ extends smoothly to a linear action of $\R^\times$ on $\defw(V)$ such that for any $f\in C^\infty_{pol}(V)_{(i)}$ the homogeneous interpolation $\WT{f}^{[i]} \in C^\infty(\defw(V))$ is homogeneous of degree $i$. On the open dense set $V\times \R^\times \sset \defw(V)$, this action is given by 
    \begin{equation*}
        \alpha_\lambda(v,t) = (v, \lambda^{-1}t)
    \end{equation*}

\begin{example}
    If $V =  V_{(-r)}\oplus \cdots \oplus V_{(-1)}$ is a graded vector bundle then, with respect to the identification $\defw(V) \to V\times \R$ described in~\autoref{examples: deformation bundle}, the zoom action is given by 
        \[ \alpha_\lambda(v_{-r}, \dots, v_{q}, t) = (\lambda^rv_{-r}, \dots, \lambda^{-q} v_q, \lambda^{-1}t). \]
\end{example}

\subsection{Sections of the weighted deformation bundle}

Analogous to~\autoref{subsection: sections of the normal bundle}, let $\G_{[i]}(\defw(V))$ denote the sections of $\defw(V)$ which are homogeneous of degree $i$ with respect to the zoom action of $\R^\times$, and let 
    \[ \G_{pol}(\defw(V)) = \bigoplus_{i\in \Z} \G_{[i]}(\defw(V)).  \]
We are going to define an inclusion 
    \[ \rees(\G(V)) \to \G_{pol}(\defw(V)) \]
of $\rees(C^\infty(M))$-modules, where
    \[ \rees(\G(V)) = \left\{ \sum_{i \in \Z} \sigma_iz^{-i} : \sigma_i \in \G(V)_{(i)} \right\} \sset \G(V)[z,z^{-1}].  \]

\begin{definition}
    Given $\sigma \in \G(V)_{(i)}$, the \emph{$i$-th homogeneous interpolation} is the map $\WT{\sigma}^{[i]} : \defw(M,N) \to \defw(V)$ defined by 
        \begin{equation}
        \label{equation: homogeneous section definition}
            (\WT{\sigma}^{[i]}(\varphi))(fz^{-j}) = \varphi((f\circ \sigma)z^{-j-ni}),
        \end{equation}
    where $\varphi \in \defw(M,N)$ and $f \in C^\infty_{[n]}(V)_{(j)}$.
\end{definition}

\begin{lemma}
\label{lemma: rees modules gives sections of weighted deformation bundle}
    For $\sigma \in \G(V)_{(i)}$, the $i$-th homogeneous interpolation is a smooth section of $\defw(V)$ of homogeneity $i$ such that, for any $f\in C^\infty_{[n]}(V)_{(j)}$ and $g\in C^\infty(M)_{(k)}$, one has 
        \begin{align}
        \label{equation: homogeneous interpolation properties}
        \begin{split}
            \WT{f}^{[j]}\circ \WT{\sigma}^{[i]} & = \WT{f\circ \sigma}^{[j+ni]} \in C^\infty(\defw(M,N)) \quad \text{and} \\
            \WT{g}^{[k]}\WT{\sigma}^{[i]} & = \WT{g\sigma}^{[i+k]} \in \G(\defw(V)).
        \end{split}
        \end{align}
\end{lemma}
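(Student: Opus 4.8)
The plan is to prove this by mirroring the proof of \autoref{lemma: graded modules gives sections of weighted normal bundle}, replacing the associated graded algebra $\gr(C^\infty(M))$ throughout by the Rees algebra $\rees(C^\infty(M))$ and homogeneous approximations by homogeneous interpolations. The first observation is that the two displayed identities are largely bookkeeping. The first is immediate from the definition~\eqref{equation: homogeneous section definition}, since for $f \in C^\infty_{[n]}(V)_{(j)}$ one has
\[ (\WT{f}^{[j]} \circ \WT{\sigma}^{[i]})(\varphi) = \WT{f}^{[j]}(\WT{\sigma}^{[i]}(\varphi)) = (\WT{\sigma}^{[i]}(\varphi))(fz^{-j}) = \varphi((f\circ \sigma)z^{-j-ni}) = \WT{f\circ\sigma}^{[j+ni]}(\varphi). \]
The real content is therefore that $\WT{\sigma}^{[i]}(\varphi)$ genuinely lands in $\defw(V)$, that the resulting assignment is a smooth section, and that it is homogeneous.

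For well-definedness I would use that composition with a fixed section, $f \mapsto f\circ\sigma$, is an algebra homomorphism $C^\infty_{pol}(V) \to C^\infty(M)$, and that by the filtration compatibility recorded just before the definition it carries $C^\infty_{[n]}(V)_{(j)}$ into $C^\infty(M)_{(j+ni)}$ when $\sigma \in \G(V)_{(i)}$. Hence $fz^{-j} \mapsto (f\circ\sigma)z^{-j-ni}$ is a well-defined algebra homomorphism $\rees(C^\infty_{pol}(V)) \to \rees(C^\infty(M))$, and post-composing with $\varphi \in \defw(M,N) = \aHom(\rees(C^\infty(M)),\R)$ shows that $\WT{\sigma}^{[i]}(\varphi) \in \aHom(\rees(C^\infty_{pol}(V)),\R) = \defw(V)$. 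That $\WT{\sigma}^{[i]}$ is a \emph{section}, i.e. $\defw(\pi)\circ\WT{\sigma}^{[i]} = \mathrm{id}$, follows because $\pi\circ\sigma = \mathrm{id}_M$, so that $(g\circ\pi)\circ\sigma = g$ for $g \in C^\infty(M)$. The second displayed identity is then checked by pairing both sides against a fibrewise-linear function $\WT{\tau}^{[j]}$ with $\tau \in \G(V^*)_{(j)}$ (which suffices to determine a section): both sides evaluate to $\varphi(g\la\tau,\sigma\ra z^{-i-j-k})$ by multiplicativity of $\varphi$.

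For smoothness I would work in local weighted vector bundle coordinates $x_a, p_b$ on $V|_U$: by \autoref{theorem: linear weighted deformation bundle} the functions $\WT{p_b}^{[-v_b]}$ are fibrewise-linear coordinates on $\defw(V|_U)$ over $\defw(U, U\cap N)$, and by the first identity $\WT{p_b}^{[-v_b]}\circ\WT{\sigma}^{[i]} = \WT{p_b\circ\sigma}^{[i-v_b]}$, which is smooth because $p_b\circ\sigma \in C^\infty(M)_{(i-v_b)}$ and homogeneous interpolations of weighted functions are smooth. Since the base map is the identity, this exhibits $\WT{\sigma}^{[i]}$ as smooth. Finally, the claimed homogeneity is read off by a direct computation with the Rees zoom action $\kappa_u \colon fz^{-j} \mapsto f u^{j} z^{-j}$ of \autoref{section: weighted deformation space}, exactly as in the normal-bundle case. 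The main obstacle is really the smoothness step, since $\defw(V)$ is defined abstractly as a character spectrum; but as indicated it reduces cleanly to the coordinate computation once the first identity is available, so the only genuine care needed is the bookkeeping of the Rees filtration shifts by $ni$.
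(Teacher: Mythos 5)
Your proposal is correct and takes essentially the same approach as the paper: the paper omits this proof for brevity, deferring to its proof of the weighted-normal-bundle analogue (\autoref{lemma: graded modules gives sections of weighted normal bundle} in the appendix), which proceeds exactly as you do — the first identity read off from the definition, the section property from $\pi\circ\sigma = \mathrm{id}_M$, smoothness deduced from the first identity in weighted coordinates, and homogeneity by direct computation with the zoom action. The only (harmless) deviation is in the second identity: you pair against fibrewise-linear functions $\WT{\tau}^{[j]}$ only and invoke the fact that characters over a fixed $\varphi$ are determined on generators, whereas the paper's computation evaluates against arbitrary $f\in C^\infty_{[n]}(V)_{(j)}$ directly, using that scalar multiplication acts on such $f$ by $\lambda^n$; both arguments are valid.
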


As was the case for the weighted normal bundle, this yields a map
    \[ \rees(\G(V)) \to \G_{pol}(\defw(V)), \quad \sum_i \sigma_iz^{-i} \mapsto \sum_i \WT{\sigma}_i^{[i]} \]
In terms of the decomposition~\eqref{equation: decomposition of weighted deformation bundle} one finds, using the relations~\eqref{equation: homogeneous interpolation properties}, that if $\sigma \in \G(V)_{(i)}$ then 
    \begin{equation*}
        \WT{\sigma}^{[i]}|_{\pi_\delta^{-1}(t)} = \left\{
            \begin{array}{ll}
                t^{-i}\sigma & t \neq 0 \\
                \sigma^{[i]} & t = 0. 
            \end{array}
        \right. 
    \end{equation*}

\begin{example}
    Suppose that $V =  V_{(-r)}\oplus \cdots \oplus V_{(q)}$ is a graded vector bundle, and let 
        \[ \sigma = \sigma_{-i}+\sigma_{-i+1} + \cdots + \sigma_{q} \in \G(V)_{(-i)}.  \]
    With respect to the identification $\defw(V)\to V\times \R$ described in~\autoref{examples: deformation bundle} (a), we have that 
        \[ \WT{\sigma}^{[-i]} = \sigma_{-i}+t\sigma_{-i+1}+ \cdots + t^{i+q}\sigma_{q}, \]
    which clearly extends to $\sigma_{-i} = \sigma^{[-i]}$ as $t\to 0$. 
\end{example} 

\begin{theorem}
\label{theorem: sections of deformation bundle}
    If $\sigma_b$ is a weighted frame for $V|_U$, then the homogeneous interpolations $\WT{\sigma}_b^{[v_b]}$ define a frame for $\defw(V|_U) = \defw(V)|_{\defw(U, U\cap N)}$. In particular, the map 
        \[ \rees(\G(V)) \to \G_{pol}(\defw(V)) \]
    is an inclusion of $\rees(C^\infty(M))$-modules and  
        \[ \G(\defw(V)) = C^\infty(\defw(M,N))\otimes_{\rees(C^\infty(M))}\rees(\G(V)).  \]
\end{theorem}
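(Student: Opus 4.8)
The plan is to adapt, essentially verbatim, the proof of~\autoref{theorem: sections of the weighted normal bundle}, performing the substitutions $\gr \rightsquigarrow \rees$, $\sigma^{[i]} \rightsquigarrow \WT{\sigma}^{[i]}$, and $\nuw \rightsquigarrow \defw$ throughout, and drawing the smoothness of the relevant homogeneous interpolations from~\autoref{theorem: linear weighted deformation bundle}. First I would fix a weighted frame $\sigma_b$ for $V|_U$, let $p_b \in C^\infty_{[1]}(V|_U)_{(-v_b)}$ be the associated linear fibre coordinates, and compute the pairing of $\WT{\sigma}_b^{[v_b]}$ against the coordinate functions $\WT{p_a}^{[-v_a]}$ using the defining formula~\eqref{equation: homogeneous section definition}. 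For any $\varphi \in \defw(U, U\cap N)$ this gives
    \[ \WT{\sigma}_b^{[v_b]}(\varphi)\left(\WT{p_a}^{[-v_a]}\right) = \varphi\left((p_a\circ \sigma_b)\,z^{\,v_a - v_b}\right) = \delta_{ab}, \]
since $p_a\circ \sigma_b = \delta_{ab}$ as a function on the base (here $n=1$ and $j=-v_a$, so the exponent is $-j-ni = v_a-v_b$). This shows the $\WT{\sigma}_b^{[v_b]}$ are pointwise linearly independent.

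For spanning, I would invoke the fact that $\rees(C^\infty_{pol}(V|_U))$ is generated as an algebra over $\rees(C^\infty(U))$ by the elements $\WT{p_b}^{[-v_b]}$ --- the Rees analogue of the generation statement used in the normal-bundle proof, which follows from the local monomial generation in~\autoref{theorem: linear weightings in terms of polynomials}. Consequently any character $\varphi \in \defw(V|_U)$ is determined by its restriction to $\rees(C^\infty(U))$, that is by $\varphi_0 = \defw(\pi)(\varphi)$, together with its values on the $\WT{p_b}^{[-v_b]}$; explicitly $\varphi = \sum_b \varphi\bigl(\WT{p_b}^{[-v_b]}\bigr)\,\WT{\sigma}_b^{[v_b]}(\varphi_0)$. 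Together with linear independence this shows the $\WT{\sigma}_b^{[v_b]}$ form a frame for $\defw(V|_U) = \defw(V)|_{\defw(U,U\cap N)}$, which is smooth by~\autoref{theorem: linear weighted deformation bundle}. Injectivity of $\rees(\G(V)) \to \G_{pol}(\defw(V))$ then follows by homogeneity: an element $\sum_i \sigma_i z^{-i}$ maps to $\sum_i \WT{\sigma_i}^{[i]}$, and the $\R^\times$-homogeneity of the $\WT{\sigma_i}^{[i]}$ from~\autoref{lemma: rees modules gives sections of weighted deformation bundle} lets one recover each component, so a nonzero element cannot map to zero.

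Finally, for the module identification, I would use that $\defw(V)$ is locally trivialized by the frames just produced, with transition functions given by the homogeneous interpolations $\WT{f_{ab}}^{[v_b-v_a]}$ of the transition functions $f_{ab}\in C^\infty(U)_{(v_b-v_a)}$ of $V$. Since these local frames arise from $\rees(\G(V))$ and $C^\infty(\defw(M,N))$ is a $\rees(C^\infty(M))$-algebra via $\WT{(\,\cdot\,)}^{[\bullet]}$, a partition-of-unity argument on $\defw(M,N)$ expresses every global section of $\defw(V)$ as a $C^\infty(\defw(M,N))$-linear combination of images of Rees sections, together with the relations~\eqref{equation: homogeneous interpolation properties}; this yields the base change $\G(\defw(V)) = C^\infty(\defw(M,N))\otimes_{\rees(C^\infty(M))}\rees(\G(V))$.

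The step I expect to be the main obstacle is this last base-change identification, which is genuinely more delicate than its graded counterpart: $\rees(C^\infty(M))$ carries the deformation parameter and $C^\infty(\defw(M,N))$ is a transcendental extension rather than a graded completion, so one must check that gluing the local frames introduces no relations beyond those already present in $\rees(\G(V))$ after tensoring. This is also the reason the statement records only an \emph{inclusion} of $\rees(\G(V))$ into $\G_{pol}(\defw(V))$ rather than the isomorphism onto $\G_{pol}$ obtained in the normal-bundle case: surjectivity onto the abstractly defined polynomial sections would require the full smoothness structure of~\autoref{theorem: linear weighted deformation bundle} applied in every homogeneous degree, whereas only the local frame property --- hence local spanning --- is needed for the tensor-product description. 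All remaining computations are routine transcriptions of the proof of~\autoref{theorem: sections of the weighted normal bundle} under the stated substitutions.
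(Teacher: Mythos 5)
Your proposal is correct and takes the same approach the paper intends: the paper omits this proof entirely (announcing at the start of~\autoref{section: linear weighted deformation bundle} that the arguments mirror those of~\autoref{section: linear weighted normal bundle}), and your transcription of the proof of~\autoref{theorem: sections of the weighted normal bundle} under $\gr \rightsquigarrow \rees$ --- the pairing computation $\WT{\sigma}_b^{[v_b]}(\varphi)\bigl(\WT{p_a}^{[-v_a]}\bigr) = \varphi\bigl((p_a\circ\sigma_b)z^{v_a-v_b}\bigr) = \delta_{ab}$, the generation of $\rees(C^\infty_{pol}(V|_U))$ over $\rees(C^\infty(U))$ by the $\WT{p_b}^{[-v_b]}$ for spanning, and $\R^\times$-homogeneity plus restriction to $t\neq 0$ for injectivity --- is exactly the intended argument. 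The partition-of-unity/local-frame argument you sketch for the base-change identity is the standard (and implicitly intended) completion, so there is no gap; your only questionable remark is the meta-explanation of \emph{why} one gets merely an inclusion into $\G_{pol}(\defw(V))$, where the paper's own reason (its remark following the theorem) is that $\rees(C^\infty(M)) \to C^\infty_{pol}(\defw(M,N))$ itself need not be surjective, but this commentary plays no role in the proof.
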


\begin{remark}
    In general, the map $\rees(\G(V)) \to \G_{pol}(\defw(V))$ fails to be surjective because the map $\rees(C^\infty(M)) \to C^\infty_{pol}(\defw(M,N))$ need not be surjective. 
\end{remark}

For completeness, we record the following.

\begin{proposition}
\label{proposition: subbundles of weighted deformation bundle}
    If $W\to R$ is a weighted subbundle of $V\to M$, then $\defw(W)$ is a subbundle of $\defw(V)$. 
\end{proposition}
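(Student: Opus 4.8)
The plan is to mirror the proof of~\autoref{proposition: subbundles of weighted normal bundle} for the normal bundle, replacing the homogeneous approximations by the homogeneous interpolations supplied by~\autoref{theorem: linear weighted deformation bundle}(b). Since the assertion is local over the base, I would fix a point $w\in W$ and choose weighted vector bundle coordinates $x_a\in C^\infty_{[0]}(V|_U)$, $p_b\in C^\infty_{[1]}(V|_U)$ on a trivializing neighbourhood $U$ which are simultaneously weighted subbundle coordinates for $W|_{R\cap U}$, so that $W$ is cut out in $V|_U$ by the vanishing of a subset $S_0$ of the base coordinates and a subset $S_1$ of the fibre coordinates. By~\autoref{theorem: linear weighted deformation bundle}(b) the interpolations $\WT{x_a}^{[w_a]}$, $\WT{p_b}^{[-v_b]}$, together with $t=\pi_\delta$, furnish vector bundle coordinates on $\defw(V|_U)=\defw(V)|_{\defw(U,U\cap N)}$; the goal is to identify $\defw(W|_{R\cap U})$ with the common zero locus of those interpolations whose indices lie in $S_0\cup S_1$, thereby exhibiting it as a subbundle cut out by a subsystem of coordinates.

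First I would record the set-theoretic picture. Restriction of fibrewise polynomial functions gives a filtration-preserving surjection $C^\infty_{pol}(V)\to C^\infty_{pol}(W)$, hence a surjection $\rees(C^\infty_{pol}(V))\to\rees(C^\infty_{pol}(W))$ of Rees algebras, and dualizing yields an injection $\defw(W)=\aHom(\rees(C^\infty_{pol}(W)),\R)\into\defw(V)$ whose image consists of exactly those characters that descend along restriction to $W$. For such a character the interpolations of the coordinates in $S_0,S_1$ vanish, since those coordinates restrict to zero on $W$; this gives one inclusion. As a sanity check one may argue fibrewise over $t$: on $\pi_\delta^{-1}(\R^\times)\cong V\times\R^\times$ the locus is $W\times\R^\times$, while on $\pi_\delta^{-1}(0)=\nuw(V)$ it is $\nuw(W)$ by~\autoref{proposition: subbundles of weighted normal bundle}, so the genuine content is only that these pieces assemble into a \emph{smooth} subbundle, which is precisely what the interpolated coordinates encode.

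The main work, and the step I expect to be the real obstacle, is the reverse inclusion: a character $\varphi$ annihilating every interpolation with index in $S_0\cup S_1$ must factor through restriction to $W$. Following~\autoref{proposition: weighted embeddings define embeddings}, I would take $f\in C^\infty_{pol}(V|_U)_{(i)}$ vanishing on $W|_{R\cap U}$, expand it in the chosen coordinates as $f=\sum_{x_a\in S_0}g_a\,x_a+\sum_{p_b\in S_1}h_b\,p_b$ with $g_a\in C^\infty_{pol}(V|_U)_{(i-w_a)}$ and $h_b\in C^\infty_{pol}(V|_U)_{(i+v_b)}$, lift this to the identity $fz^{-i}=\sum_{x_a\in S_0}(g_az^{-(i-w_a)})(x_az^{-w_a})+\sum_{p_b\in S_1}(h_bz^{-(i+v_b)})(p_bz^{v_b})$ in $\rees(C^\infty_{pol}(V))$, and use the multiplicativity $\WT{fg}^{[i+j]}=\WT{f}^{[i]}\WT{g}^{[j]}$ of the interpolations to compute $\varphi(fz^{-i})=\sum\varphi(g_az^{-(i-w_a)})\,\WT{x_a}^{[w_a]}(\varphi)+\sum\varphi(h_bz^{-(i+v_b)})\,\WT{p_b}^{[-v_b]}(\varphi)=0$. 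The delicate points are verifying that the expansion exists with cofactors of the stated filtration degrees, so that each factor genuinely lifts into the Rees algebra, and checking that this holds uniformly as $t$ ranges over $\R^\times$ and $t=0$; once these are in hand the calculation is formally identical to the normal-bundle case, and smoothness of the resulting subbundle coordinates follows directly from~\autoref{theorem: linear weighted deformation bundle}(b).
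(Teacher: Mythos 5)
Your proposal is correct and takes essentially the same approach the paper intends: the paper omits the proof (having declared that proofs in this section mirror those for the weighted normal bundle), and your argument is precisely the adaptation of \autoref{proposition: subbundles of weighted normal bundle} and \autoref{proposition: weighted embeddings define embeddings}, with the homogeneous interpolations of \autoref{theorem: linear weighted deformation bundle} replacing the homogeneous approximations. The only superfluous worry is uniformity in $t$: the decomposition of $fz^{-i}$ takes place once and for all in the Rees algebra, which is independent of $t$, so no separate check over $t\in\R^\times$ and $t=0$ is needed.
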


\section{The Rescaled Spinor Bundle and \v{S}evera's Algebroid}
\label{section: examples in the literature}

We now take a some time to explain how our constructions put two separate constructions in the literature, namely the \emph{rescaled spinor bundle} of Higson and Yi introduced in~\cite{higson2019spinors} and the "associative algebroid" introduced by \v{S}evera in~\cite{vsevera2017letters}, under the same umbrella. The results of this section are the result of discussions with Gabriel Beiner, Yiannis Loizides, and Eckhard Meinrenken. See~\cite{beiner2022linear} for another account and more information on the connections to index theory. 

\subsection{The rescaled spinor bundle}

Let $M$ be an even dimensional spin Riemannian manifold with spinor bundle $S\to M$ and Clifford connection $\nabla$ on $S$. Higson and Yi define a filtration of differential operators acting on sections of $S$ by declaring that Clifford multiplication and covariant differentiation have order $-1$. Thus, $D\in \mathrm{DO}(S)_{(-q)}$ if and only if it can be locally expressed as a sum of terms of the form 
    \[ fD_1\cdots D_q \]
where $f\in C^\infty(M)$ and each $D_i$ is either a covariant derivative $\nabla_X$, Clifford multiplication $c(X)$, or the identity operator (cf.~\cite[Definition 3.3.1]{higson2019spinors}). Higson and Yi say that $D$ has \emph{Getzler order} $-q$ if $D\in \DO(S)_{(-q)}$.

The Getzler filtration of $\DO(S)$ determines a filtration of $\G(S\boxtimes S^*)$ as follows. Recall that $\Cl(TM)$ is naturally filtered 
    \begin{equation}
    \label{equation: filtration of Clifford algebra}
        \Cl(TM) = \Cl_{-\dim(TM)}(TM) \supseteq \cdots \supseteq \Cl_0(TM) = \C,
    \end{equation}
hence $S\boxtimes S^*|_M = S\otimes S^* \cong\Cl(TM)$ is naturally filtered. Define 
    \begin{align}
    \label{equation: getzler weighting}
    \begin{split}
        \G(S\boxtimes S^*)_{(i)} = \{ \sigma \in \G(S\boxtimes S^*&) : D\in \DO(S)_{(-q)} \\
        & \implies D\sigma \in \G(S\boxtimes S^*, \Cl_{i-q}(TM)) \},
    \end{split}
    \end{align}
where we are using the identification $\Cl(TM) = S\boxtimes S^*|_{M}$.

\begin{theorem}[{\cite[Lemma 3.4.10]{higson2019spinors}}].
    If $M\times M$ is given the trivial weighting along the diagonal then the filtration~\eqref{equation: getzler weighting} defines a linear weighting of $S\boxtimes S^*$ such that the induced filtration of $S\boxtimes S^*|_M$ is given by~\eqref{equation: filtration of Clifford algebra}. 
\end{theorem}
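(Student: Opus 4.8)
The plan is to verify the two local conditions from the polynomial characterization of linear weightings (\autoref{theorem: linear weightings in terms of polynomials}): namely that the Getzler filtration~\eqref{equation: getzler weighting} is a $\Z$-graded filtration of $\G(S\boxtimes S^*)$ by $C^\infty_M$-submodules, and that near each point of the diagonal there exists a weighted frame of the local form~\eqref{equation: local model for linear weighting}. Since $M\times M$ carries the trivial weighting along $\Delta_M$, the base weights are all $0$ off-diagonal and we need only check that $\G(S\boxtimes S^*)_{(i)}$ is locally freely generated over $C^\infty(M\times M)$ by sections whose vertical weights realize the Clifford filtration~\eqref{equation: filtration of Clifford algebra}. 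The key computation is to confirm that the defining condition ``$D\sigma \in \G(S\boxtimes S^*, \Cl_{i-q}(TM))$ for every $D \in \DO(S)_{(-q)}$'' is equivalent to a pointwise order-of-vanishing condition along $\Delta_M$, so that the recipe matches order-of-vanishing weightings as in \autoref{examples: examples of linear weightings}~(e).

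First I would work in a synchronous frame (normal coordinates $x_a$ centred at a diagonal point together with a synchronous trivialization of $S$ obtained by parallel transport along radial geodesics), so that the Clifford connection $\nabla$ and Clifford multiplication $c$ admit explicit Taylor expansions. In such a frame the operators $\nabla_{\partial_a}$ and $c(\partial_a)$ generating $\DO(S)_{(-1)}$ can be expressed, modulo higher order, as the Clifford generators $e_a$ of $\Cl(TM)$, which lets me translate the Getzler-order condition into a statement about the Clifford degree of the restriction of $\sigma$ and its derivatives to $\Delta_M$. Concretely, I would produce a local frame of $S\boxtimes S^*$ indexed by Clifford monomials $e_I = e_{i_1}\cdots e_{i_k}$, assign each the vertical weight $v_I = -k$ (matching $\Cl_{-k}(TM) \supseteq \Cl_{-(k-1)}(TM)$ in~\eqref{equation: filtration of Clifford algebra}), and check that a section $\sigma = \sum_I f_I\, e_I$ lies in $\G(S\boxtimes S^*)_{(i)}$ precisely when each $f_I$ vanishes to order $\geq i - |I|\cdot 0 - v_I = i + k$ along $\Delta_M$ in the off-diagonal variables. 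This is exactly the local generation condition of \autoref{theorem: linear weightings in terms of polynomials}, so it simultaneously exhibits a weighted frame and identifies the induced filtration of $S\boxtimes S^*|_{\Delta_M} = S\otimes S^* \cong \Cl(TM)$ with~\eqref{equation: filtration of Clifford algebra}.

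The main obstacle I anticipate is controlling the interaction between differentiation transverse to the diagonal and the Clifford filtration: applying a first-order Getzler operator $D\in \DO(S)_{(-1)}$ raises Clifford degree by one but differentiates the coefficients $f_I$, and I must verify that the curvature and Christoffel corrections to $\nabla$ and $c$ (which are higher order in $x$) do not spoil the clean correspondence between Getzler order and order of vanishing. Handling this requires a careful induction on Getzler order $q$, using that the symbol of a product $D_1\cdots D_q$ of Clifford/covariant factors equals the product of their principal Clifford symbols modulo $\Cl_{i-q+1}(TM)$, so that the lower-order corrections only tighten (never loosen) the vanishing requirements on the $f_I$. Once this symbol-multiplicativity is established, the verification that~\eqref{equation: getzler weighting} is multiplicative as a $C^\infty_M$-module filtration and that the frame $\{e_I\}$ is weighted follows directly, and the statement about the induced filtration on $S\boxtimes S^*|_{\Delta_M}$ is immediate from the weight assignment $v_I = -|I|$.
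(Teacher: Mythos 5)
A preliminary remark: the paper does not actually prove this statement; it is imported from Higson--Yi (\cite[Lemma 3.4.10]{higson2019spinors}), so your proposal can only be compared against that source. Your overall architecture --- reduce to the weighted-frame/polynomial characterization of \autoref{theorem: linear weightings in terms of polynomials}, exhibit a frame of $S\boxtimes S^*$ by Clifford monomials $e_I$ with vertical weights $v_I=-|I|$, and convert Getzler order into order of vanishing along $\Delta_M$ by induction on the number of Clifford/covariant factors --- is exactly the architecture of the cited argument, and your weight bookkeeping ($f_I$ vanishing to order $\geq i+|I|$ corresponding to filtration degree $i$, with induced filtration of $S\boxtimes S^*|_{\Delta_M}\cong \Cl(TM)$ equal to~\eqref{equation: filtration of Clifford algebra}) is correct.

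However, there is a genuine gap in the frame you construct. A synchronous trivialization of $S$ by radial parallel transport from a \emph{fixed} diagonal point $p$ is not a weighted frame, because in that gauge the connection coefficients vanish only at $p$, not along the diagonal: one has $\nabla_{\partial_a}=\partial_a+\omega_a$ with $\omega_a(x)=\tfrac14\sum_{b,c,d}R_{abcd}(p)\,x_b\,c(e_c)c(e_d)+O(|x|^2)$, where $x$ measures distance to $p$. Consequently the constant section $e_\emptyset$ of your frame (whose coefficient $f_\emptyset\equiv 1$ requires no vanishing, so your characterization places it in $\G(S\boxtimes S^*)_{(0)}$) satisfies $(\nabla_{\partial_a}e_\emptyset)|_{\Delta_M}=\omega_a|_{\Delta_M}$, a Clifford-degree-two element that is nonzero at diagonal points $m\neq p$ whenever the curvature is nonzero; hence $(\nabla_{\partial_a}e_\emptyset)|_{\Delta_M}\notin \G(\Cl_{-1}(TM))$, so $e_\emptyset\notin \G(S\boxtimes S^*)_{(0)}$ by~\eqref{equation: getzler weighting}, and your claimed equivalence fails in this frame at every diagonal point other than the center. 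The ``higher order'' corrections you plan to absorb in your induction are higher order in the distance to the center $p$, which is not the same as higher order of vanishing along $\Delta_M$; this conflation is precisely where the induction breaks. The repair --- and what Higson--Yi do --- is to trivialize \emph{relative to the diagonal}: identify $S_{m_1}\otimes S^*_{m_2}\cong \Cl(T_{m_2}M)$ by parallel transport along the geodesic from $m_2$ to $m_1$. In that gauge $\omega_a(m_1,m_2)=\tfrac14\sum R_{abcd}(m_2)\,(x_1-x_2)_b\,c(e_c)c(e_d)+O(|x_1-x_2|^2)$ vanishes to first order along $\Delta_M$, so each factor $\nabla_{\partial_a}$ or $c(e_a)$ shifts the filtration by exactly $-1$, and with this frame the rest of your outline (multiplicativity of the Getzler symbol, induction on the Getzler order $q$) does go through.
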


Applying the weighted deformation bundle to $S\boxtimes S^*$ with this linear weighting yields the rescaled spinor bundle. 

\begin{definition}(cf.~\cite[Section 3.4]{higson2019spinors}).
    The \emph{rescaled spinor bundle} is the weighted deformation bundle 
        \[ \mathbb{S} = \defw(S\boxtimes S^*) \to \T M. \]
\end{definition}

\subsection{\v{S}evera's algebroid}

Let $V\to M$ be a rank $k$ vector bundle with inner product and assume that the principal $\mathrm{SO}(k)$-frame bundle admits a lift to a principal $\mathrm{Spin}(k)$-bundle $P$. Let $\Cl(\R^k)$ be the complexified Clifford algebra, and consider the action of $\mathrm{Pair}(\mathrm{Spin}(k))$ on $\mathrm{Pair}(P) \times \Cl(\R^k)$ given by
    \[ (g_1, g_2).(f_1,f_2, v) = (f_1.g_1, f_2.g_2, g_1vg_2^{-1}).  \]

\begin{definition}[cf. \cite{vsevera2017letters}]
    If $V$ is a rank $k$ vector bundle equipped with inner product and spin structure, the \emph{Clifford algebroid} is the associated bundle
        \[ \mathscr{C}l(V) = \mathrm{Pair}(P)\times_{\mathrm{Pair}(\mathrm{Spin}(k))}\Cl(\R^k) \to \mathrm{Pair}(M) \]
\end{definition}

We will show that there is a canonical linear weighting of $\mathscr{C}l(V)$, starting with a technical lemma. The proof of this lemma uses weighted paths so in order to keep our exposition short, we postpone the proof to the appendix. 

\begin{lemma}
\label{lemma: action is weighted morphism}
    If $\mathrm{Pair}(P)$, $\mathrm{Pair}(\mathrm{Spin}(k))$ are given the doubled trivial weighting along the diagonal, and $\Cl(\R^k)$ is given the linear weighting defined by its filtration by subspaces, then the group action 
        \begin{equation}
        \label{equation: severa action is weighted}
            \mathrm{Pair}(\mathrm{Spin}(k)) \times \mathrm{Pair}(P) \times \Cl(\R^k) \to \mathrm{Pair}(P) \times \Cl(\R^k)
        \end{equation}
    is a weighted morphism.
\end{lemma}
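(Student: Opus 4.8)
The statement to prove is Lemma (action is weighted morphism): the action map
\[
  \mathrm{Pair}(\mathrm{Spin}(k)) \times \mathrm{Pair}(P) \times \Cl(\R^k) \to \mathrm{Pair}(P) \times \Cl(\R^k), \quad (g_1,g_2,f_1,f_2,v) \mapsto (f_1 g_1, f_2 g_2, g_1 v g_2^{-1}),
\]
is a weighted morphism, where the $\mathrm{Pair}$ factors carry the doubled trivial weighting along the diagonal and $\Cl(\R^k)$ carries the linear weighting coming from its filtration by subspaces.

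The plan is to use the weighted path characterization of weighted morphisms (\autoref{A-proposition: characterization of weighted morphisms}(b)): a smooth map of pairs is a weighted morphism if and only if it sends weighted paths to weighted paths. Since the domain and codomain here are linearly weighted vector bundles (the $\Cl(\R^k)$ factor is a weighted vector space, the $\mathrm{Pair}$ factors are trivially weighted manifolds), I would invoke \autoref{lemma: weighted morphisms are shift invariant} to shift the weighting of the $\Cl(\R^k)$ factors into non-positive degree so that everything is a genuine weighting along a closed submanifold in the sense of \autoref{chapter: Weightings}, allowing the weighted-path criterion to apply verbatim. The key point is to understand what a weighted path into the source looks like: for the doubled-trivial-weighted $\mathrm{Pair}(\mathrm{Spin}(k))$ and $\mathrm{Pair}(P)$, a weighted path $t \mapsto (g_1(t), g_2(t))$ is one for which $g_1(t)$ and $g_2(t)$ agree to first order at $t=0$, i.e. $g_1(0)=g_2(0)$ and $g_1(t) g_2(t)^{-1} = 1 + O(t)$ (since the vanishing ideal of the diagonal enters the doubled weighting with its square), and similarly for the $P$-factors; a weighted path in $\Cl(\R^k)$ with its subspace filtration $\Cl(\R^k) = \Cl_{-\dim}(\R^k) \supseteq \cdots \supseteq \Cl_0(\R^k) = \C$ is a path $v(t)$ whose component in filtration degree $-j$ vanishes to order $t^{j}$.

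The main computation is then to check that the output path is weighted. The first two output components $f_1(t) g_1(t)$ and $f_2(t) g_2(t)$ land in $\mathrm{Pair}(P)$, and I would verify that their difference (measured via the diagonal's vanishing ideal) is $O(t)$ using that both $(f_1,f_2)$ and $(g_1,g_2)$ are weighted paths and that group multiplication in $P$ and $\mathrm{Spin}(k)$ is smooth; this is essentially the statement that the diagonal action of a pair groupoid is compatible with the deformation to the diagonal, which is a clean bookkeeping argument. The genuinely substantive component is the Clifford factor $g_1(t)\, v(t)\, g_2(t)^{-1}$: I must show that if $v(t)$ has its degree-$(-j)$ part vanishing to order $j$, then the conjugated element has the same property. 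The crucial structural fact making this work is that the adjoint action of $\mathrm{Spin}(k)$ on $\Cl(\R^k)$ \emph{preserves the Clifford filtration} (indeed it preserves each filtration subspace $\Cl_{-j}(\R^k)$, acting through $\mathrm{SO}(k)$ on the degree-$j$ part). Writing $g_1 v g_2^{-1} = g_1 v g_1^{-1} \cdot (g_1 g_2^{-1})$ and expanding $g_1 g_2^{-1} = 1 + O(t)$, the leading conjugation $g_1 v g_1^{-1}$ stays in the same filtration degree by $\mathrm{Spin}(k)$-invariance, while the correction terms raise the $t$-order enough to stay inside the weighted path condition.

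The hardest step will be the filtration bookkeeping on the Clifford factor: precisely controlling how the $O(t)$ deviation of $g_1 g_2^{-1}$ from the identity interacts with each graded piece of $v(t)$, and confirming that multiplication in $\Cl(\R^k)$ sends $\Cl_{-i} \otimes \Cl_{-j}$ into $\Cl_{-i-j}$ so that lower-order-in-$t$ but higher-Clifford-degree corrections do not violate the weighted path estimate. I anticipate organizing this by decomposing $v(t) = \sum_j v_{-j}(t)$ with $v_{-j}(t) = O(t^{j})$ and checking degree by degree that $(g_1 v g_2^{-1})_{-\ell}(t) = O(t^{\ell})$, using the multiplicativity of the Clifford filtration together with $\mathrm{Spin}(k)$-invariance. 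Once this is verified, the weighted path criterion immediately yields that the action is a weighted morphism, completing the proof.
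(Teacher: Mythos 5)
There is a genuine gap, and it sits exactly where you flagged the ``hardest step'': your description of the weighted paths of the doubled trivial weighting is wrong, and with that description your key estimate on the Clifford factor fails. For the \emph{doubled} weighting along the diagonal, a function vanishing to order $i$ on $\Delta$ has filtration degree $2i$ (so $C^\infty_{(2i)} = C^\infty_{(2i-1)} = \van{\Delta}^i$); hence by \autoref{A-proposition: characterization of weighted morphisms} a weighted path $t\mapsto (g_1(t),g_2(t))$ must satisfy $f(g_1(t),g_2(t)) = O(t^2)$ for every $f\in\van{\Delta}$, i.e. $g_1(t)g_2(t)^{-1} = 1 + O(t^2)$ --- not $1+O(t)$ as you claim. (Your condition $g_1(t)g_2(t)^{-1}=1+O(t)$ holds automatically for any smooth path with $g_1(0)=g_2(0)$; it characterizes weighted paths of the \emph{trivial} weighting.) Now run your own computation with your hypothesis: in $g_1vg_2^{-1} = (g_1vg_1^{-1})(g_1g_2^{-1})$, write $g_1g_2^{-1} = 1+\delta(t)$. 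The leading coefficient of $\delta$ lies in $\ger{spin}(k)\sset \Cl_2(\R^k)$, so if the $\Cl_j$-component of $v(t)$ is $O(t^j)$, the correction $(g_1vg_1^{-1})\,\delta$ contributes to the $\Cl_{j+2}$-component at order $t^{j+1}$ when $\delta=O(t)$ --- one power of $t$ short of the required $O(t^{j+2})$. This is not a repairable bookkeeping slip: with the undoubled condition the conclusion is simply false. Take $g_1(t)=\exp(te_1e_2)$, $g_2(t)=1$, $v(t)=1\in\Cl_0(\R^k)$; this satisfies your $O(t)$ condition, but the output $g_1(t)v(t)g_2(t)^{-1} = \exp(te_1e_2) = \cos(t) + \sin(t)\,e_1e_2$ has $\Cl_2$-component of order $t$, so it is not a weighted path.

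The doubling is precisely the point of the lemma: because $\ger{spin}(k)$ is identified with the Clifford-degree-$2$ part, the group discrepancy must enter at order $t^2$ so that corrections which raise Clifford degree by $2$ also gain \emph{two} powers of $t$. Once you replace $O(t)$ by $O(t^2)$, your factorization together with $\mathrm{Spin}(k)$-invariance of the Clifford filtration and multiplicativity $\Cl_i(\R^k)\cdot\Cl_j(\R^k)\sset\Cl_{i+j}(\R^k)$ becomes essentially the paper's argument: the paper writes a weighted path in the normal form $\bigl(g\exp(\xi t)\exp(\xi_1(t)),\, g\exp(\xi t)\exp(\xi_2(t)),\, \sum_i c_it^i\bigr)$ with $\xi_1(t),\xi_2(t) = O(t^2)$, factors out the common filtration-preserving conjugation $\mathrm{Ad}_{g\exp(\xi t)}$, and absorbs the $O(t^2)$ discrepancy $\exp(\xi_1(t))\,\cdot\,\exp(-\xi_2(t))$. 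Two smaller points of comparison: the shift via \autoref{lemma: weighted morphisms are shift invariant} is unnecessary, since the Clifford filtration weighting is already concentrated in non-positive degrees and so is a weighting in the sense of \autoref{chapter: Weightings} along $\Cl_0(\R^k)=\C$; and for the $\mathrm{Pair}(P)$ factor the paper avoids paths altogether, reducing by local triviality to the action of $\mathrm{Pair}(\mathrm{Spin}(k))$ on itself and using the elementary fact that $(g_1,g_2)\mapsto g_1g_2^{-1}$ pulls $\van{H}$ back into $\van{H\times H}$, whereas your path-based treatment of that factor would also work once the $O(t^2)$ condition is used.
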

\begin{proof}
    See~\ref{A-section: weighted action proof} (cf.~\cite{beiner2022linear}).
\end{proof}

A consequence of this lemma is the following theorem. 

\begin{theorem}
    Let $\pi: \mathrm{Pair}(P)\times \Cl(\R^k) \to \mathscr{C}l(V)$ be the quotient map. Then 
        \[ C^\infty_{pol}(\mathscr{C}l(V))_{(i)} = \{ f \in C^\infty_{pol}(\mathscr{C}l(V)) : \pi^*f \in C^\infty_{pol}(\mathrm{Pair}(P)\times \Cl(\R^k))_{(i)} \}  \] 
    defines a linear weighting of $\mathscr{C}l(V)$, where $\mathrm{Pair}(P)\times \Cl(\R^k)$ is given the product weighting. 
\end{theorem}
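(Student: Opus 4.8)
The plan is to verify that the candidate filtration
\[ C^\infty_{pol}(\mathscr{C}l(V))_{(i)} = \{ f \in C^\infty_{pol}(\mathscr{C}l(V)) : \pi^*f \in C^\infty_{pol}(\mathrm{Pair}(P)\times \Cl(\R^k))_{(i)} \} \]
satisfies the hypotheses of~\autoref{theorem: linear weightings in terms of polynomials}, namely that it is a multiplicative filtration of the sheaf of polynomial functions on $\mathscr{C}l(V)$ admitting local weighted vector bundle coordinates. The multiplicativity is immediate: $\pi^*$ is an algebra homomorphism and the target filtration on $\mathrm{Pair}(P)\times \Cl(\R^k)$ is multiplicative by construction, so if $\pi^*f$ and $\pi^*g$ have filtration degrees $\geq i$ and $\geq j$ then $\pi^*(fg) = \pi^*f \cdot \pi^*g$ has degree $\geq i+j$. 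The real content is producing the local model.

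The key step is to use \autoref{lemma: action is weighted morphism} to descend the weighting along the quotient map. First I would observe that the action map~\eqref{equation: severa action is weighted} being a weighted morphism means precisely that $\mathscr{C}l(V)$ is a quotient of the weighted vector bundle $\mathrm{Pair}(P)\times \Cl(\R^k)$ by a free, proper action of $\mathrm{Pair}(\mathrm{Spin}(k))$ that is itself weighted. Locally, I would choose a trivializing section of the principal bundle $\mathrm{Pair}(P) \to \mathrm{Pair}(M)$ over a suitable open set $U \subseteq \mathrm{Pair}(M)$ realized as a weighted submanifold; this identifies $\pi^{-1}(\mathscr{C}l(V)|_U)$ with $U \times \Cl(\R^k)$ as weighted vector bundles, so that a local weighted frame of $\mathscr{C}l(V)$ is obtained by transporting the filtration basis of $\Cl(\R^k)$ (the one giving~\eqref{equation: filtration of Clifford algebra}) through this trivialization. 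The fibre coordinates $p_b$ dual to this frame together with weighted coordinates $x_a$ on $U$ then serve as weighted vector bundle coordinates, and one checks that $C^\infty_{pol}(\mathscr{C}l(V)|_U)_{(i)}$ is generated over $C^\infty(U)$ by the appropriate monomials, exactly matching the local condition in~\autoref{theorem: linear weightings in terms of polynomials}.

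The main obstacle I expect is ensuring that the weighting defined by pullback is genuinely \emph{local of product form} and does not depend on the choice of local section of $P$. The delicate point is that two different local trivializations differ by a transition via the $\mathrm{Pair}(\mathrm{Spin}(k))$-action, and I must confirm that because this action is a weighted morphism (by~\autoref{lemma: action is weighted morphism}), the resulting transition matrix for $\mathscr{C}l(V)$ has entries of the correct filtration degrees $C^\infty(U)_{(v_b - v_a)}$, so that the local weighted frames glue into a well-defined linear weighting independent of choices. Concretely this amounts to verifying that conjugation $v \mapsto g_1 v g_2^{-1}$ by a \emph{weighted} path in $\mathrm{Pair}(\mathrm{Spin}(k))$ preserves the Clifford filtration degrees up to the allowed shift — which is precisely what the weightedness of the action encodes. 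Once this compatibility is confirmed, the descended filtration satisfies all hypotheses of~\autoref{theorem: linear weightings in terms of polynomials}, and the theorem follows.
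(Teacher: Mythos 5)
Your proposal is correct and follows essentially the same route as the paper: both arguments reduce the claim to exhibiting local weighted vector bundle coordinates by trivializing $\mathrm{Pair}(P)|_U \cong U \times \mathrm{Pair}(\mathrm{Spin}(k))$ as weighted manifolds, taking weighted coordinates $x_a$ on $U$ together with linear weighted coordinates $p_b$ on $\Cl(\R^k)$, and invoking~\autoref{lemma: action is weighted morphism} to see that these descend through the quotient. Your extra paragraph on transition functions is harmless but redundant, since the filtration is defined globally by pullback along $\pi$ and therefore requires no gluing or independence-of-choices argument.
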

\begin{proof}
    We have to find weighted vector bundle coordinates. Any point $p\in \mathrm{Pair}(M)$ is contained in an open neighbourhood $U\sset \mathrm{Pair}(M)$ such that $\mathrm{Pair}(P)|_U$ is isomorphic to $U\times \mathrm{Pair}(\mathrm{Spin}(k))$ as weighted manifolds. Choosing $U$ small enough, we may assume that there exist weighted coordinates $x_a \in C^\infty(U)$. If $p_b$ are linear weighted coordinates on $\Cl(\R^k)$ then since the action map~\eqref{equation: severa action is weighted} is weighted and the following diagram commutes 
        \begin{equation}
        \xymatrixcolsep{0.25pc}
         \vcenter{\hbox{\xymatrix{
            \mathrm{Pair}(P)|_U \times \Cl(\R^k) \cong U\times \mathrm{Pair}(\mathrm{Spin}(k)) \times \Cl(\R^k) \ar[dr] \ar[d]_{\pi} & \\
            \mathscr{C}l(V)|_U \ar[r]_\cong & U\times \Cl(\R^k),
        }}}
        \end{equation}  
    it follows that we can take $x_a, p_b$ to be our weighted vector bundle coordinates on $\mathscr{C}l(V)|_U$. 
\end{proof}

\begin{definition}
    The \emph{\v{S}evera algebroid} is the weighted deformation bundle
        \[^\tau \mathscr{C}l(V) = \defw(V).  \]
\end{definition}

\cleardoublepage
\chapter{Multiplicative Weightings}
\label{chapter: Multiplicative Weightings}

In this chapter we will develop the theory of multiplicative weightings for Lie groupoids $G\toto M$. We will show that if $G\toto M$ is multiplicatively weighted along $H\toto N$, then one has Lie groupoids 
    \[ \nuw(G, H) \toto \nuw(M,N) \quad \text{and} \quad \defw(G,H) \toto \defw(M,N). \]
We also discuss the appropriate notion of equivalence of weighted Lie groupoids.

\section{Preliminaries on Lie groupoids}
\label{section: groupoid prelims}

\subsection{Definition and basic examples}

A Lie groupoid $G\rightrightarrows M$ is a smooth manifold $G$ with a partially defined associative binary operation together with two surjective submersions $t,s:G\to M$ onto a smooth submanifold $M \sset G$. Elements of $G$ are called \emph{arrows}, elements of $M$ are called \emph{units}, and the maps $t, s$ are called the \emph{target} and \emph{source}, respectively. The space of \emph{$k$-arrows} is the $k$-fold fibre product
    \[ G^{(k)} = \{ (g_1, \dots, g_k) \in G^k : s(g_{i}) = t(g_{i+1}),\ i=1, \dots, k-1 \}; \]
we refer to $G^{(2)}$ as the space of \emph{composable arrows}. The partially defined multiplication is a map 
    \[ \mathrm{mult}_G:G^{(2)} \to G,\ (g_1,g_2) \mapsto g_1\circ g_2 \]
defined on composable arrows, satisfying 
    \begin{itemize}
        \item[(a)] $g_1\circ (g_2\circ g_3) = (g_1\circ g_2)\circ g_3$, 

        \item[(b)] $t(g)\circ g = g\circ s(g) = g$.
    \end{itemize}
Moreover, we assume that for every $g \in G$ there exists some $h\in G$ such that $(g,h) \in G^{(2)}$, $(h,g) \in G^{(2)}$ and that both $g\circ h$ and $h \circ g$ are units. It is automatic that for a given $g$, the corresponding element $h$ is unique and denoted $g^{-1}$; the map corresponding map $g\mapsto g^{-1}$ is denoted $\mathrm{inv}_G$.

A smooth map $F:H\to G$ between Lie groupoids $H\toto N$ and $G\toto M$ is called a \emph{morphism} of Lie groupoids if 
    \[ F(g_0\circ g_1) = F(g_0)\circ F(g_1). \]
If $F$ is the inclusion of a submanifold then $H\toto N$ is called a Lie subgroupoid. We will often denote a Lie groupoid $G\rightrightarrows M$ together with a Lie subgroupoid $H \rightrightarrows N$ as a pair $(G,H)\rightrightarrows (M,N)$. If $(G,H)\rightrightarrows (M,N)$ is a Lie groupoid pair with $N=M$ then we call $H$ a \emph{wide} subgroupoid.  

\begin{remark}
    In general it is not required that Lie groupoids be Hausdorff, asking only that the source fibres and units be Hausdorff. However, for simplicity, we will assume that all groupoids \emph{are} Hausdorff. 
\end{remark}

\begin{examples}
    \begin{enumerate}
        \item[(a)] A Lie group is a Lie groupoid whose object space is a single point.
        
        \item[(b)] Any smooth manifold $M$ defines a trivial Lie groupoid with $s = t = \mathrm{id}_M$.
        
        \item[(c)] The \emph{pair groupoid} of a manifold $M$ is the groupoid 
            \[ \mathrm{Pair}(M) = M\times M \rightrightarrows M \]
        where $s(m,m') = m'$ and $t(m,m') = m$. Composition is defined as 
            \[(m,m')\circ(m',m'') = (m,m''). \]
            
        \item[(d)] Let $G$ be a Lie group, $M$ be a smooth manifold $M$, and 
            \[ \alpha : G\times M \to M, \quad (g,m) \mapsto g\cdot m \]
        be a smooth action. The \emph{action groupoid} is the defined to be the manifold $G\times M$ with source map $s(g,m) = m$, target $t(g,m) = g\cdot m$, and composition law
            \[ (g,m) = (g',m')\circ (g'',m'') \iff m = m'',\ m' = g''\cdot m'', \text{ and } g = g'g''. \]
        By identifying $G\times M$ with the graph of the group action, the action groupoid can be realized as a Lie subgroupoid of $G \times \mathrm{Pair}(M)$ 
        
        \item[(e)] A Lie groupoid $G\rightrightarrows M$ such that $s=t$ is called a \emph{family of Lie groups} over $M$. Note that this differs slightly from a Lie group bundle over $M$, since the Lie group structure may differ from fibre to fibre. In particular, vector bundles are examples of Lie groupoids. 
    \end{enumerate}
\end{examples}

An important observation is that the groupoid structure of $G$ is completely determined by the graph of multiplication, 
    \[ \G(\mathrm{mult}_G) =  \{ (g,g_0, g_1) \in G^3 : g = g_0\circ g_1\} \sset G^3. \]
Indeed, 
    \begin{itemize}
        \item[(a)] the objects of $G$ are exactly the elements $m\in G$ with the property that $(m,m,m)\in \G(\mathrm{mult}_G)$
        \item[(b)] given $g\in G$, $s(g)$ is the unique element of $G$ such that $(g,g,s(g)) \in \G(\mathrm{mult}_G)$; $t(g)$ is described similarly,
        \item[(c)] $g^{-1}$ is the unique element for which $(s(g),g^{-1}, g)\in \G(\mathrm{mult}_G)$ and $(t(g),g, g^{-1})\in \G(\mathrm{mult}_G)$.
    \end{itemize}

\subsection{The group of bisections}

One consequence of the partially defined multiplication operation is that there is no map 
    \[ G \to \mathrm{Diff}(G) \]
generalizing left or right translation on a Lie group. Instead, one needs to consider certain submanifolds $S\sset G$ called \emph{bisections}. 

\begin{definition}
    A \emph{bisection} of a Lie groupoid $G \toto M$ is a submanifold $S \sset G$ such that both $t$ and $s$ restrict to diffeomorphisms $s|_S, t|_S:S \to M$. The set of bisections of the groupoid $G$ is denoted $\G(G)$. 
\end{definition}

The set of bisections is naturally a group, with multiplication given by 
    \[ S_1\circ S_2 = \{ g_1\circ g_2 : (g_1, g_2) \in (S_1\times S_2)\cap G^{(2)} \}.  \]
The group $\G(G)$ acts on $G$ in three ways:
    \begin{itemize}
        \item[(a)] the \emph{left action} 
            \[ \mathcal{A}^L:\G(G) \times G \to G, \quad (S, g) \mapsto \mathcal{A}^L_S(g) = h\circ g, \]
        where $h \in S$ is the unique element such that $s(h) = t(g)$;

        \item[(b)] the \emph{right action}
            \[ \mathcal{A}^R:\G(G) \times G \to G, \quad (S, g) \mapsto \mathcal{A}^R_S(g) = g\circ h^{-1}, \]
        where $h \in S$ is the unique element so that $s(h) = s(g)$;

        \item[(c)] the \emph{adjoint action}, $\mathrm{Ad}_S = \mathcal{A}^R_S\mathcal{A}^L_S$. 
    \end{itemize}

For a general Lie groupoid, a global bisection through an element $g\in G$ might not exist, unless the groupoid has connected source fibres (\cite[Theorem 3.1]{zhong2009existence}). On the other hand, there always exists a \emph{local} bisection through any $g\in G$. 

\subsection{VB-groupoids}

A useful tool for the problem of differentiating weighted groupoids is the notion of a so-called "\emph{vector bundle groupoid}", or VB-groupoid. Using the Grabowski-Rotkiewicz characterization of vector bundles in terms of their scalar multiplication (cf.~\cite{grabowski2009higher}), Bursztyn, Cabrera, and  del Hoyo showed  that VB-groupoids can be defined rather simply as groupoids with a vector bundle scalar multiplication by groupoid morphisms~\cite[Theorem 3.2.3]{bursztyn2016vector}. Following their lead, we define \emph{graded bundle groupoids}, or GB-groupoids. 

\begin{definition}[{cf.~\cite[Theorem 3.2.3]{bursztyn2016vector}}]
\label{definition: GB groupoid}
    A Lie groupoid $V \toto E$ is called a \emph{GB-groupoid} if it is also a graded bundle such that the monoid action $\kappa:\R\times V \to V$ is by groupoid morphisms. If $V$ is a vector bundle then it is called a \emph{VB-groupoid}. A \emph{GB-subgroupoid} is a Lie subgroupoid which is closed under scalar multiplication. 
\end{definition}

If $V\toto E$ is a VB-groupoid, then the zero section $G\sset V$ inherits a groupoid structure over the zero section $M\sset E$. We depict a VB-groupoid as a diagram
    \begin{equation}
    \xymatrix{
        V \ar@<-.5ex>[r] \ar@<.5ex>[r] \ar[d] & E \ar[d] \\
        G \ar@<-.5ex>[r] \ar@<.5ex>[r] & M
    }
    \end{equation}
If $V\toto E$ is any VB-groupoid, then the dual bundle $V^*$ is canonically a VB-groupoid with units $\mathrm{ann}(E)$ (cf.~\cite[Theorem 4.8]{meinrenken2017Lie}; see also~\cite[Section 11.2]{mackenzie2005general} and~\cite{pradines1988remarque}). The groupoid structure on $V^*$ is uniquely characterized by the requirement that $\mu = \mu_1\circ \mu_2$ if and only if 
    \[ \la \mu, v\ra = \la \mu_1, v_1\ra + \la \mu_2, v_2\ra \]
whenever $v = v_1\circ v_2$. The graph of multiplication of $V^*$ is 
    \[ \G(\mathrm{mult}_{V^*}) = \mathrm{ann}^\sharp(\G(\mathrm{mult}_V)), \]
where $\mathrm{ann}^\sharp(\G(\mathrm{mult}_V))$ is the subbundle defined by 
    \[ (\mu_2, \mu_1) \in \mathrm{ann}^\sharp(\G(\mathrm{mult}_V)) \iff \la \mu_2, v_2 \ra = \la \mu_1, v_1 \ra\ \ \  \forall
        (v_2, v_1) \in \G(\mathrm{mult}_V).\]


\begin{example}
\label{example: structure of tangent groupoid}
    The main example of a VB-groupoid is given by applying the tangent functor to a Lie groupoid $G\toto M$. This yields the \emph{tangent groupoid}, 
    \begin{equation*}
    \xymatrix{
        TG \ar@<-.5ex>[r] \ar@<.5ex>[r] \ar[d] & TM \ar[d] \\
        G \ar@<-.5ex>[r] \ar@<.5ex>[r] & M.
    }
    \end{equation*}
    An explicit formula for multiplication on the tangent groupoid can be given using bisections. Indeed, let $X_{g_0} \in T_{g_0}G$ and $Y_{g_1}\in T_{g_1}G$ be composable. If $S_0$ is a bisection through $g_0$ and $S_1$ is a bisection through $g_1$, then 
        \begin{equation}
        \label{equation: multiplication on the tangent groupoid}
            X_{g_0} \circ Y_{g_1} = (T_{g_0}\mathcal{A}^R_{S_1})(X_{g_0}) + (T_{g_1}\mathcal{A}^L_{S_0})(Y_{g_1}) - (T_{g_1}(\mathcal{A}^L_{S_0}\mathcal{A}^R_{S_1}t))(Y_{g_1});
        \end{equation}
    see~\cite[Proposition 3.1]{mackenzie1997classical}.
\end{example}

We now record two lemmas that we will need later on. 

\begin{lemma}[{\cite[Corollary C.4]{li2014dirac}}]
\label{lemma: annhilator of VB-subgroupoid}
    Let $V\toto E$ be a VB-groupoid. If $W$ is a VB-subgroupoid, then $\mathrm{ann}(W) \sset V^*$ is a VB-subgroupoid with objects $\mathrm{ann}(W)\cap \mathrm{ann}(E)$. 
\end{lemma}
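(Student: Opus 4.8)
The plan is to verify directly that $\mathrm{ann}(W)$ is a Lie subgroupoid of $V^*$ which is closed under scalar multiplication, working fibrewise with the characterization of $\mathrm{mult}_{V^*}$ recalled above. Write $W\toto D$ for the VB-subgroupoid, sitting over a subgroupoid $H\toto N$ of $G\toto M$. Since $W\sset V|_H$ is a vector subbundle, its annihilator $\mathrm{ann}(W)\sset V^*|_H$ is again a vector subbundle over $H$, hence an embedded submanifold; this also shows at once that $\mathrm{ann}(W)$ is preserved by the vector-bundle scalar multiplication of $V^*$, which settles the ``VB'' part of the claim. It therefore remains to show that $\mathrm{ann}(W)$ is closed under $\mathrm{mult}_{V^*}$ and $\mathrm{inv}_{V^*}$, and to identify its units.

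First I would treat closure under multiplication, which is the crux. Suppose $\xi_1, \xi_2\in\mathrm{ann}(W)$ are composable in $V^*$, lying over $g_1, g_2\in H$; since composability forces $s(g_1)=t(g_2)$ and $H$ is a subgroupoid, the product $\xi=\xi_1\circ\xi_2$ lies over $g=g_1g_2\in H$, and I must check $\la\xi,w\ra=0$ for every $w\in W_g$. The key observation is that any such $w$ decomposes as a product of elements of $W$ over $g_1$ and $g_2$: choosing an arbitrary $w_2\in W_{g_2}$ and setting $w_1=w\circ w_2^{-1}$, the fact that $W$ is a subgroupoid closed under inversion gives $w_1\in W_{g_1}$ with $w_1\circ w_2=w$ and $s(w_1)=t(w_2)$. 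Applying the defining relation for $\mathrm{mult}_{V^*}$ then yields
\[ \la\xi,w\ra = \la\xi_1,w_1\ra + \la\xi_2,w_2\ra = 0, \]
since $\xi_i\in\mathrm{ann}(W)$ and $w_i\in W$. Hence $\xi\in\mathrm{ann}(W)_g$.

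For closure under inversion I would first record the identity $\la\xi^{-1},v^{-1}\ra=-\la\xi,v\ra$, obtained by applying the multiplication relation to $\xi\circ\xi^{-1}=t_{V^*}(\xi)$ and $v\circ v^{-1}=t_V(v)$ and using that $t_{V^*}(\xi)\in\mathrm{ann}(E)$ annihilates $t_V(v)\in E$. Then for $\xi\in\mathrm{ann}(W)_g$ and any $w'\in W_{g^{-1}}$, writing $w'=w^{-1}$ with $w=\mathrm{inv}_V(w')\in W_g$ gives $\la\xi^{-1},w'\ra=-\la\xi,w\ra=0$, so $\xi^{-1}\in\mathrm{ann}(W)$.

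Finally, I would identify the units as exactly those units of $V^*$ contained in $\mathrm{ann}(W)$, i.e. $\mathrm{ann}(W)\cap\mathrm{ann}(E)$: for any $\xi\in\mathrm{ann}(W)$ the source $s_{V^*}(\xi)=\xi^{-1}\circ\xi$ lies in $\mathrm{ann}(W)$ by the two closure properties just established and is a unit of $V^*$, hence in $\mathrm{ann}(E)$; conversely every element of $\mathrm{ann}(W)\cap\mathrm{ann}(E)$ is such a unit. Assembling these facts shows $\mathrm{ann}(W)\toto\mathrm{ann}(W)\cap\mathrm{ann}(E)$ is a VB-subgroupoid of $V^*$. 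The only genuine subtlety — and the step I expect to require the most care — is the decomposition $w=w_1\circ w_2$ with both factors in $W$, which is precisely where the hypothesis that $W$ is a \emph{subgroupoid} (and not merely a subbundle) enters.
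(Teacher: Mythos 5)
The paper never proves this lemma at all---it is quoted directly from \cite[Corollary C.4]{li2014dirac}---so your direct verification is not competing with an in-text argument, and its architecture (the fibrewise characterization of $\mathrm{mult}_{V^*}$, closure under multiplication and inversion, identification of the units) is the standard and correct route; in particular your inversion identity $\la\xi^{-1},v^{-1}\ra=-\la\xi,v\ra$ and your identification of the objects are argued soundly. However, the step you yourself flag as the crux is wrong as written: you cannot take an \emph{arbitrary} $w_2\in W_{g_2}$ and form $w_1=w\circ w_2^{-1}$, because that composition is defined only when $s_V(w)=s_V(w_2)$, and a generic element of the linear fibre $W_{g_2}$ will not satisfy this. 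What the decomposition $w=w_1\circ w_2$ actually requires is the existence of \emph{some} $w_2\in W_{g_2}$ with $s_W(w_2)=s_V(w)$; note that $s_V(w)=s_W(w)\in D_{s(g)}=D_{s(g_2)}$ because $W$ is a subgroupoid with units $D$. That existence is precisely fibrewise surjectivity of the linear map $s_W\colon W_{g_2}\to D_{s(g_2)}$, which does hold---$W\toto D$ is a Lie groupoid, so $s_W$ is a surjective submersion, and it is a vector bundle morphism, hence fibrewise surjective by \autoref{lemma: vector bundle submersions}---but it is a genuine ingredient supplied by the VB-groupoid structure of $W$, not a free choice. With that patch your closure-under-multiplication argument is complete.

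A second, smaller omission: past the observation that $\mathrm{ann}(W)$ is a subbundle of $V^*|_H$, your argument is purely set-theoretic, whereas the statement asserts a \emph{Lie} (indeed VB-) subgroupoid. One should also check that the object space $\mathrm{ann}(W)\cap\mathrm{ann}(E)$ is a submanifold---over $N=H\cap M$ one has $\mathrm{ann}(W)\cap\mathrm{ann}(E)=\mathrm{ann}(W|_N+E|_N)$, and $W|_N+E|_N$ is a subbundle because $W|_N\cap E|_N=D|_N$ has constant rank---and that the restriction of $s_{V^*}$ to $\mathrm{ann}(W)$ is a submersion onto this object space, which is again a fibrewise linear-algebra count combined with \autoref{lemma: vector bundle submersions}. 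These checks are routine, but they are part of what the lemma claims, and a complete proof should record them.
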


Suppose that $V\toto W$ and $W\toto F$ are VB-groupoids with base groupoid $G\toto M$. Their direct sum is the VB-groupoid
    \begin{equation*}
        \xymatrix{
        V\oplus W \ar@<-.5ex>[r] \ar@<.5ex>[r] \ar[d] & E\oplus F \ar[d] \\
        G \ar@<-.5ex>[r] \ar@<.5ex>[r] & M
    }
    \end{equation*}
with the obvious source and target maps, and multiplication defined by
    \[ (v_0, w_0) \circ (v_1, w_1) = (v_0\circ v_1, w_0\circ w_1) \]
for $((v_0, w_0),(v_1, w_1)) \in V^{(2)}\oplus W^{(2)} = (V\oplus W)^{(2)} $. 

\begin{lemma}
    If
        \begin{equation*}
        \xymatrix{
            V \ar@<-.5ex>[r] \ar@<.5ex>[r] \ar[d] & E \ar[d] \\
            G \ar@<-.5ex>[r] \ar@<.5ex>[r] & M
        }
        \end{equation*}
    is a VB-groupoid, then $TV|_G$ is the direct sum VB-groupoid $TG\oplus V \toto TM\oplus E$.
\end{lemma}
\begin{proof}
    The map 
        \[ TG\oplus V \to TV|_G, \quad (\xi, v)\mapsto \xi + v, \]
    where we are identifying $V$ with the vertical bundle in $TV|_G$, is an isomorphism of vector bundles, so it remains to show that it is a groupoid morphism. But this follows since $\mathrm{mult}_V:V^{(2)}\to V$ is a linear map with base map $\mathrm{mult}_G$, so  
        \[  T\mathrm{mult}_V|_G = T\mathrm{mult}_G\oplus \mathrm{mult}_V. \qedhere \]
\end{proof}

\section{Definition of Multiplicative Weightings}
\label{section: definition of multiplicative weightings}

We are now move on to \emph{multiplicative weightings} for Lie groupoids $G\toto M$. The obvious definition is given by inserting the adjective "weighted" in the standard definition. However, we recall from~\autoref{section: weighted submanifolds} and~\autoref{section: weighted morphisms} that it is \emph{not} obvious how to do this!

\begin{definition}
\label{definition: multiplicative weighting}
    A weighting of $G$ along $H\sset G$ is said to be \emph{multiplicative} if 
        \begin{enumerate}
            \item[(a)] The units $M\sset G$ are a weighted submanifold, 
            \item[(b)] the source map is a weighted submersion,
            \item[(c)] multiplication $m:G^{(2)} \to G$ is a weighted morphism, and 
            \item[(d)] inversion is a weighted morphism. 
        \end{enumerate}
    A groupoid with a multiplicative weighting is called a \emph{weighted Lie groupoid}. 
\end{definition}

\begin{remarks}
\begin{itemize}
    \item[(a)] Since inversion is its own inverse, it is automatically a weighted \emph{diffeomorphism}.

    \item[(b)] Since $t = s\circ \mathrm{inv}$, it follows that the target map is also a weighted submersion. 

    \item[(c)] Since the source and target are weighted submersions, if follows from~\autoref{theorem: weighted fibre products} that the space of $k$-arrows is a weighted submanifold of $G^k$. In particular, the space of composable arrows is naturally a weighted manifold, and therefore~\autoref{definition: multiplicative weighting} (c) makes sense. 

    \item[(d)] All of the face maps $\bd_i : G^{(k)} \to G^{(k-1)}$,
        \[ \bd_i(g_1, \dots, g_k) = \left\{
            \begin{array}{ll}
                 (g_2, \dots, g_k) & i = 0,  \\
                 (g_1, \dots, g_i\circ g_{i+1}, \dots, g_k) & 0 < i < k, \\ 
                (g_1, \dots, g_{k-1}) & i=k,
            \end{array}
        \right.\]
    and all the degeneracy maps 
        \[ \epsilon_i:G^{(k)} \to G^{(k+1)} \quad (g_1, \dots, g_k) \mapsto (g_1, \dots, g_i, s(g_i), g_{i+1}, \dots, g_k) \]
    are automatically weighted. 
\end{itemize}
\end{remarks}

Note that in the definition we did \emph{not} assume that $H \sset G$ was a Lie subgroupoid. It turns out that this is automatic. 

\begin{proposition}
    Suppose that a weighting of $G \toto M$ along $H$ is multiplicative. Then $H$ is a Lie subgroupoid of $G$ with units $N = H\cap M$. 
\end{proposition}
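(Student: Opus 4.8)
The plan is to exploit the one property of weighted morphisms that does all the work here, namely that a weighted morphism carries the non-trivial locus of its source into the non-trivial locus of its target (recorded in the examples following the definition of a weighted morphism). Recall that the weighting of $G$ is by hypothesis \emph{along} $H$, so $H$ is exactly the locus where the filtration is non-trivial; in particular $H$ is a closed embedded submanifold. Set $N = H\cap M$. Condition (a) makes $M$ a weighted submanifold of $(G,H)$, so \autoref{proposition: weighted submanifolds are weighted} equips $M$ with a weighting whose non-trivial locus is precisely $M\cap H = N$; thus $N$ is a closed submanifold of $M$ and, being the non-trivial locus of the induced weighting, satisfies $(TM|_N)_{(0)} = TN$.

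First I would establish the purely algebraic closure of $H$. Inversion is a weighted morphism $(G,H)\to(G,H)$ by (d), so $\mathrm{inv}(H)\sset H$. The source map is a weighted submersion $(G,H)\to(M,N)$ by (b), and $t = s\circ\mathrm{inv}$ is therefore one as well, so $s(H)\sset N$ and $t(H)\sset N$. For closure under multiplication, note that $G\times G$ is weighted along $H\times H$, and since $s,t$ are weighted submersions they are weighted transverse; hence by \autoref{theorem: weighted fibre products} the space of composable arrows $G^{(2)}$ is a weighted submanifold of $G\times G$, with non-trivial locus $G^{(2)}\cap(H\times H)$. This locus is exactly the set $H^{(2)}$ of composable pairs of arrows both lying in $H$. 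Applying the locus-preserving property to the weighted morphism $m\colon(G^{(2)},H^{(2)})\to(G,H)$ of condition (c) gives $m(H^{(2)})\sset H$, i.e. the product of two composable arrows of $H$ again lies in $H$.

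Next I would promote these set-theoretic facts to the statement that $H\toto N$ is a \emph{Lie} subgroupoid, for which it remains to check that $s|_H$ and $t|_H$ are surjective submersions onto $N$. Because $H$ is the non-trivial locus of $G$, the zeroth piece of the tangent filtration is $(TG|_H)_{(0)} = TH$, and likewise $(TM|_N)_{(0)} = TN$. The definition of a weighted submersion requires the map $(TG|_H)_{(0)}\to(TM|_N)_{(0)}$ to be fibrewise surjective, which (since $s$ is filtration preserving and $s(H)\sset N$) is precisely the statement that $T(s|_H)\colon TH\to TN$ is onto; hence $s|_H\colon H\to N$ is a submersion. It is surjective because every $n\in N = M\cap H$ lies in $H$ and is a unit, so $s(n)=n$. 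The same argument applied to $t$ shows $t|_H$ is a surjective submersion. As $N\sset H$ consists of the units fixed by $s|_H$ and $t|_H$, the unit inclusion restricts to $N\to H$; combined with closure under $m$ and $\mathrm{inv}$, this is exactly the data making the embedded submanifold $H$ a Lie subgroupoid over $N = H\cap M$.

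I expect the last step to be the main obstacle. The algebraic closure properties follow formally from (c), (d) and the locus-preserving property, but on their own they do not guarantee that $H$ is \emph{smoothly} a groupoid over $N$. The hypothesis (b) that the source map be a weighted \emph{submersion}, and not merely a weighted morphism, is precisely what forces $s|_H$ (and hence $t|_H$) to be a submersion; this is the crux of the argument and explains why (b) is imposed in \autoref{definition: multiplicative weighting}.
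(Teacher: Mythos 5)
Your proof is correct and follows essentially the same route as the paper's: $N = H\cap M$ is a submanifold because $M$ is a weighted submanifold, closure of $H$ under source, target, inversion and multiplication follows from weighted morphisms being maps of pairs (carrying the non-trivial locus into the non-trivial locus), and the smooth groupoid structure comes from condition (b). The only difference is cosmetic: where the paper invokes the normal form theorem for weighted submersions to see that $s|_H$ and $t|_H$ are submersions, you read this off directly from the degree-zero part of the weighted-submersion condition, using $(TG|_H)_{(0)} = TH$ and $(TM|_N)_{(0)} = TN$; both arguments are valid.
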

\begin{proof}
    Since $M\sset G$ is a weighted submanifold, it follows by~\autoref{proposition: weighted submanifolds are weighted} that $H$ intersects $M$ cleanly; in particular, $N = H\cap M$ is a submanifold. We have furthermore that $N\neq \emptyset$ so long as $H\neq \emptyset$ because $s:(G,H) \to (M,N)$ is a map of pairs (it is a weighted morphism). 
    
    Similarly, both 
        \[ \mathrm{mult}_G:(G^{(2)}, H^{(2)}) \to (G,H) \quad \text{and} \quad \mathrm{inv}_G: (G,H)\to (G, H) \]
    are maps of pairs, hence $H$ is also closed under multiplication and inversion. Finally, it follows from the normal form for weighted submersions (\autoref{theorem: weighted submersion coordinates}) that the restrictions of $s$ and $t$ to $H$ are submersions.    
\end{proof}

\begin{lemma}
\label{lemma: vector bundle submersions}
    Let $V\to M$ and $W\to N$ be smooth vector bundles and let $\varphi: V\to W$ be a vector bundle morphism. Then $\varphi$ is a submersion if and only if it is fibrewise surjective and the base map $\varphi_M:M\to N$ is a submersion.
\end{lemma}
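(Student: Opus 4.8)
The plan is to prove the equivalence by a local trivialization argument, reducing the statement to the standard fact about surjectivity of linear maps and their behaviour on quotients. Let $m = \dim M$, $n = \dim N$, let $k$ be the rank of $V$, and $\ell$ the rank of $W$. A vector bundle morphism $\varphi : V \to W$ covering $\varphi_M : M \to N$ is a submersion if and only if $T_v\varphi : T_vV \to T_{\varphi(v)}W$ is surjective for every $v \in V$.

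First I would prove the forward direction. Suppose $\varphi$ is a submersion. Restricting $\varphi$ to the zero section $M \hookrightarrow V$ gives the base map $\varphi_M$, and since the zero section of $V$ (respectively $W$) is $\varphi$-invariant (a vector bundle morphism carries the zero section to the zero section), the tangent map $T\varphi$ respects the canonical splittings $T_vV = T_{\pi(v)}M \oplus V_{\pi(v)}$ along the zero section. From surjectivity of $T\varphi$ at points of the zero section one reads off both that $\varphi_M$ is a submersion and that $\varphi$ is fibrewise surjective, since the fibre direction and base direction are preserved separately. More directly, fibrewise surjectivity follows because $\varphi$ restricted to a single fibre $V_p \to W_{\varphi_M(p)}$ is a linear map whose image must equal the whole fibre, as otherwise $T\varphi$ would fail to surject onto the vertical directions of $W$.

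Next I would prove the converse, which is the substantive direction. Assume $\varphi_M$ is a submersion and $\varphi$ is fibrewise surjective. Fix $v \in V$ with $p = \pi(v) \in M$ and $q = \varphi_M(p) \in N$. Choose local trivializations so that near $p$ the bundle looks like $\R^a \times \R^k$ with coordinates $(x, \xi)$ and near $q$ the bundle looks like $\R^b \times \R^\ell$ with coordinates $(y, \eta)$, where $a,b$ are the local base dimensions. In these coordinates $\varphi$ has the form $(x,\xi) \mapsto (\varphi_M(x), A(x)\xi)$ for a smooth family of matrices $A(x) \in \mathrm{Hom}(\R^k, \R^\ell)$. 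The tangent map at $(x,\xi)$ is block-triangular with diagonal blocks $D\varphi_M(x)$ and $A(x)$, plus an off-diagonal block coming from $\xi \mapsto (D_xA(x)\xi)$. Since $D\varphi_M(x)$ is surjective (as $\varphi_M$ is a submersion) and $A(x)$ is surjective (as $\varphi$ is fibrewise surjective), the block-triangular structure forces the full tangent map to be surjective: the image contains all of the $y$-directions via $D\varphi_M$ and all of the $\eta$-directions via $A(x)$.

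The one point requiring care, and the step I expect to be the main obstacle, is making the block-triangular surjectivity argument airtight, since the off-diagonal block mixes the fibre coordinate $\xi$ into the base coordinates of the target in a way that could obscure the structure. The resolution is to observe that the projection $T_{\varphi(v)}W \to T_qN$ composed with $T_v\varphi$ equals $D\varphi_M \circ T_v\pi$, which is already surjective; hence $T_v\varphi$ surjects onto a complement, and it remains only to hit the vertical directions, which is precisely the content of fibrewise surjectivity of $A(x)$. Assembling these two observations with the short exact sequence $0 \to W_q \to T_{\varphi(v)}W \to T_qN \to 0$ yields surjectivity of $T_v\varphi$, completing the proof.
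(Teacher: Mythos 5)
Your proof is correct, and the forward direction (splitting $T\varphi$ along the zero section into $T\varphi_M \oplus \varphi_p$) coincides with the paper's. The converse, however, takes a genuinely different route. The paper argues in two steps: the block-diagonal splitting shows $\varphi$ is a submersion \emph{at} the zero section, hence (submersions being an open condition) on a neighbourhood of it, and then equivariance of $\varphi$ under the scalar action $\kappa_t$ propagates this to all of $V$, since $\varphi = \kappa_{t^{-1}}\circ\varphi\circ\kappa_t$ and $\kappa_t$ is a diffeomorphism for $t\neq 0$. You instead verify surjectivity of $T_v\varphi$ directly at an \emph{arbitrary} point $v\in V$: the identity $T\pi_W\circ T_v\varphi = T\varphi_M\circ T_v\pi_V$ shows the image of $T_v\varphi$ surjects onto $T_qN$, the restriction of $T_v\varphi$ to the vertical subspace $V_p\subset T_vV$ equals $\varphi_p$ and so covers the vertical subspace $W_q$, and the short exact sequence $0\to W_q\to T_{\varphi(v)}W\to T_qN\to 0$ then forces the image to be everything. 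Your argument is more elementary and self-contained (no openness, no homogeneity trick), at the cost of a coordinate computation; the paper's is shorter and exploits the same $\R^\times$-equivariance principle it uses elsewhere (e.g.\ for morphisms of graded bundles). One small imprecision: the off-diagonal block of $T_{(x,\xi)}\varphi$ sends \emph{base} directions of the source into \emph{fibre} directions of the target (the map $\delta x\mapsto (D_xA(x)\delta x)\xi$, with $\xi$ only a parameter), not the fibre coordinate into the base of the target as you wrote -- indeed the vanishing of that other block, i.e.\ the fact that $\varphi_M(x)$ is independent of $\xi$, is exactly what your invariant "resolution" encodes, and your final assembly of the argument is airtight.
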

\begin{proof}
    Suppose that $\varphi$ is a submersion, and let $p\in M$. Since $\varphi$ is a vector bundle morphism, the tangent map $T_p\varphi$ splits as $T_p\varphi = T_p\varphi_M \oplus \varphi$ with respect to the splitting $T_pV = T_pM \oplus V_p$. Thus $\varphi_M$ is a submersion and $\varphi$ is fibrewise surjective. 

    Conversely, if $\varphi$ is fibrewise surjective and the base map $\varphi_M:M\to N$ is a submersion then by the previous paragraph it is a submersion near $M\sset V$. Since it is equivariant with respect to the $\R$-action, it follows that it is a submersion everywhere. 
\end{proof}

In practice it can be difficult to verify that all the structure maps are indeed weighted in the appropriate sense. As an application of~\autoref{theorem: characterization of weighted morphisms in terms of their graphs} we have the following equivalent characterization of multiplicative weightings. 

\begin{theorem}
\label{theorem: characterization of mult weightings}
    A weighting of $G\toto M$ along $H\sset G$ is multiplicative if and only if 
        \begin{itemize}
            \item[(a)] $M$ is a weighted submanifold of $G$, 
            \item[(b)] the graph of multiplication is a weighted submanifold of $G^3$, and 
            \item[(c)] the filtration of $TG|_H$ is by subgroupoids 
                \[ (TG|_H)_{(i)} \toto (TM|_N)_{(i)}. \]
        \end{itemize}
\end{theorem}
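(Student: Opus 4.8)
The plan is to route everything through the graph characterization of weighted morphisms, \autoref{theorem: characterization of weighted morphisms in terms of their graphs}, together with the observation from \autoref{section: groupoid prelims} that the source, target and inversion maps are all encoded in the graph of multiplication $\G(\mathrm{mult}_G)\sset G^3$.

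For the forward implication I would assume \autoref{definition: multiplicative weighting}. Part (a) is immediate. For (b), since $s$ and $t$ are weighted submersions, \autoref{theorem: weighted fibre products} makes $G^{(2)}$ a weighted submanifold of $G\times G$, hence $G\times G^{(2)}$ a weighted submanifold of $G^3$; as $m$ is a weighted morphism its graph is a weighted submanifold of $G\times G^{(2)}$ by \autoref{theorem: characterization of weighted morphisms in terms of their graphs}, and since the weighted-submanifold relation is transitive (an immediate consequence of \autoref{corollary: criteria for weighted submanifolds}) this forces $\G(\mathrm{mult}_G)$ to be a weighted submanifold of $G^3$. For (c), every weighted morphism has filtration-preserving tangent map (the easy half of \autoref{theorem: characterization of weighted morphisms in terms of their graphs}); applying this to $s,t,\mathrm{inv}_G$, the unit inclusion and $m$ shows that the structure maps of the tangent groupoid $TG\toto TM$ preserve the subbundles $(TG|_H)_{(i)}$. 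Closure under $T\mathrm{mult}_G$ uses that, by \autoref{proposition: weighted submanifolds are weighted}, $\bigl(TG^{(2)}|_{H^{(2)}}\bigr)_{(i)}$ is exactly the set of composable pairs with both entries in $(TG|_H)_{(i)}$. Thus $(TG|_H)_{(i)}\toto(TM|_N)_{(i)}$ is a subgroupoid.

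For the reverse implication I would assume (a), (b), (c) and first recover $s$, $t$ and $\mathrm{inv}_G$ as weighted morphisms. By (a) each of $M\times G\times G$, $G\times M\times G$, $G\times G\times M$ is a weighted submanifold of $G^3$, and a direct groupoid computation gives $\G(\mathrm{mult}_G)\cap(G\times G\times M)=\{(g,g,s(g))\}$, $\G(\mathrm{mult}_G)\cap(G\times M\times G)=\{(g,t(g),g)\}$ and $\G(\mathrm{mult}_G)\cap(M\times G\times G)=\{(t(g),g,g^{-1})\}$, each of dimension $\dim G$; comparing dimensions (and using that $s,t$ are submersions) confirms these are transverse intersections. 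The crucial point is that the graded part of weighted transversality (\autoref{definition: weighted transverse}) amounts precisely to fibrewise surjectivity of the source and target maps on $\gr(TG|_H)_{(i)}$, and this is supplied by (c): the source and target maps of the Lie subgroupoids $(TG|_H)_{(i)}\toto(TM|_N)_{(i)}$ are submersions, hence fibrewise surjective by \autoref{lemma: vector bundle submersions}. Therefore the three intersections are weighted transverse, so by the corollary to \autoref{theorem: weighted fibre products} they are weighted submanifolds of $G^3$; transporting along the relevant coordinate projection, which is a weighted submersion, via \autoref{theorem: weighted submersion coordinates} realizes $\G(s)$, $\G(t)\sset M\times G$ and $\G(\mathrm{inv}_G)\sset G\times G$ as weighted submanifolds. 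Since (c) also makes $Ts$, $Tt$ and $T\mathrm{inv}_G$ filtration preserving, \autoref{theorem: characterization of weighted morphisms in terms of their graphs} shows $s$, $t$ and $\mathrm{inv}_G$ are weighted morphisms, and the fibrewise surjectivity just noted upgrades $s,t$ to weighted submersions. Finally $G^{(2)}$ is then a weighted submanifold of $G\times G$ by \autoref{theorem: weighted fibre products}, so $\G(\mathrm{mult}_G)$ is a weighted submanifold of $G\times G^{(2)}$; as $Tm$ is filtration preserving by (c), \autoref{theorem: characterization of weighted morphisms in terms of their graphs} yields that $m$ is a weighted morphism, and all four conditions of \autoref{definition: multiplicative weighting} hold.

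The main obstacle is the weighted-transversality verification in the reverse direction: one must unwind the tangent-groupoid multiplication formula \eqref{equation: multiplication on the tangent groupoid} to check that the graded condition of \autoref{definition: weighted transverse} for these three intersections is equivalent to fibrewise surjectivity of the source and target maps of the subbundles $(TG|_H)_{(i)}$, which is exactly where hypothesis (c) does the real work. The remaining identifications, namely transitivity of the weighted-submanifold relation and transporting weightings along the coordinate projections, are routine given the normal-form results of \autoref{section: weighted morphisms}.
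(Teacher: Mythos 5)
Your forward direction is essentially the paper's argument (the paper additionally invokes \autoref{lemma: vector bundle submersions} to upgrade the set-theoretic subgroupoids $(TG|_H)_{(i)}$ to \emph{Lie} subgroupoids, a point you omit), while your reverse direction is a genuinely different route: the paper first shows that the projection $\G(\mathrm{mult}_G)\to G^2$ onto the last two factors is a weighted embedding with image $G^{(2)}$, obtains $\mathrm{mult}_G$ and $\pi_1$ as weighted submersions, and only then recovers the graphs of $s$ and $\mathrm{inv}_G$ as \emph{preimages} of $M$ under these submersions via \autoref{corollary: inverse image of mfld is mfld}; you instead carve out the graphs of $s$, $t$, $\mathrm{inv}_G$ first, as weighted transverse intersections of $\G(\mathrm{mult}_G)$ with $M\times G\times G$, $G\times M\times G$, $G\times G\times M$, and treat multiplication last. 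Your weighted-transversality claim is correct in substance (hypothesis (c) supplies closure of $(TG|_H)_{(i)}$ under the tangent structure maps together with fibrewise surjectivity of its source and target, which yield the graded sum condition), and both directions of the transitivity of the weighted-submanifold relation that you use do follow from \autoref{corollary: criteria for weighted submanifolds}.

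However, there is a genuine gap at your ``transport'' step. You pass from the intersection, a weighted submanifold of $G^3$, to its image under a coordinate projection, invoking the principle that a weighted submersion carries a weighted submanifold (mapped diffeomorphically) onto a weighted submanifold. This principle is \emph{false}, and \autoref{theorem: weighted submersion coordinates} does not support it, since it provides no coordinates simultaneously adapted to the submersion and to the submanifold. Concretely: weight $\R^3$ along the origin with weights $(1,3,3)$ on $(x,y,z)$, i.e.\ the product of $(\R^2,\{0\})$ with weights $(1,3)$ and $(\R,\{0\})$ with weight $3$. Then $(x-z,\ y-z^2,\ z)$ is again a weighted coordinate system of weights $(1,3,3)$ (each system's generators lie in the correct filtration degrees of the other), so the curve $R=\{(t,t^2,t)\}=\{x-z=0,\ y-z^2=0\}$ is a weighted submanifold. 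The coordinate projection $(x,y,z)\mapsto(x,y)$ is a weighted submersion restricting to a diffeomorphism of $R$ onto the parabola $\{y=x^2\}$, which by \autoref{examples: weighted submanifolds}~(f) is \emph{not} a weighted submanifold of $(\R^2;1,3)$. So the step as justified fails, and it is precisely the kind of map (a factor projection of a product) you rely on.

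The fix is to never project: each of your three intersections already \emph{is}, up to a permutation of the factors of $G^3$ (a weighted diffeomorphism), the graph of a map into $G^2$. Indeed $\G(\mathrm{mult}_G)\cap(G\times M\times G)=\{(g,t(g),g)\}$ is the graph of $(\mathrm{id},t):G\to G^2$ sitting in $G^2\times G$; swapping the last two factors turns $\G(\mathrm{mult}_G)\cap(G\times G\times M)=\{(g,g,s(g))\}$ into the graph of $(\mathrm{id},s)$; and the substitution $h=g^{-1}$ exhibits $\G(\mathrm{mult}_G)\cap(M\times G\times G)=\{(s(h),h^{-1},h)\}$ as the graph of $(s,\mathrm{inv}_G):G\to G^2$. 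Hypothesis (c) makes the corresponding tangent maps filtration preserving, so \autoref{theorem: characterization of weighted morphisms in terms of their graphs} applies directly with target $G^2$, and composing with the weighted projections $G^2\to G$ gives that $s$, $t$, $\mathrm{inv}_G$ are weighted; fibrewise surjectivity from (c) then upgrades $s,t$ to weighted submersions and your argument resumes as written. Alternatively, one can follow the paper and realize these graphs as preimages of $M$ under weighted submersions, \autoref{corollary: inverse image of mfld is mfld} being the sound replacement for the image-under-submersion principle.
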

\begin{proof}
    Suppose that $G$ is multiplicatively weighted along $H$. Since multiplication is a weighted morphism, its graph is a weighted submanifold, establishing $(b)$. To establish (c), note that since $s$, $t$, $\mathrm{mult}_G$, and $\mathrm{inv}_G$ are all weighted morphisms their tangent maps are filtration preserving and so each 
        \[ (TG|_H)_{(i)} \toto (TM|_N)_{(i)}, \quad i=-r,-r+1, \dots, 0, \]
    are set-theoretic subgroupoids. To show that they are \emph{Lie} subgroupoids, it remains to show that $Ts$ and $Tt$ are submersions. To see this, note that since $s$ is a weighted submersion its tangent map $Ts : (TG|_H)_{(i)} \toto (TM|_N)_{(i)}$ is a fibrewise surjection whose base map is a submersion. Therefore, it follows by~\autoref{lemma: vector bundle submersions} that $Ts:(TG|_H)_{(i)} \toto (TM|_N)_{(i)}$ is a submersion. Similarly, $Tt:(TG|_H)_{(i)} \toto (TM|_N)_{(i)}$ is a submersion, establishing (c). 

    For the converse, suppose that (a), (b), and (c) hold and let $\pi_1:G^{(2)} \to G$ be projection onto the first factor. It is sufficient to show that $G^{(2)}$ is a weighted submanifold of $G^2$ and that $\mathrm{mult}_G, \pi_1: G^{(2)}\to G$ are weighted submersions for the following reason. Since the filtration of $TG|_H$ is by subgroupoids
        \[ (TG|_H)_{(i)} \toto (TM|_N)_{(i)} \]
    it follows that $T\mathrm{inv}_G$ is filtration preserving and $Ts$ is filtration preserving and fibrewise surjective in each filtration degree. Therefore, to establish conditions (b) and (d) in~\autoref{definition: multiplicative weighting} it suffices by~\autoref{theorem: characterization of weighted morphisms in terms of their graphs} to show that their graphs are weighted submanifolds. This will follow because the graphs of $\mathrm{inv}_G$ and $s$ are the inverse images of $M\sset G$ under the weighted submersions $\mathrm{mult}_G:G^{(2)}\to G$ and $\pi_1:G^{(2)}\to G$. 
        
    Let $f:\G(\mathrm{mult}_G)\to G^2$ be the restriction of the map $G^3\to G^2$ given by projection to the last two factors. In order to show that $G^{(2)}$ is a weighted submanifold, we will show that $f$ is a weighted embedding. Since this map is an embedding as well as a weighted morphism, it suffices, by~\autoref{theorem: weighted embedding characterization}, to check that the maps 
        \[ (T\G(\mathrm{mult}_G)|_{\mathrm{mult}_H})_{(i)} \to (TG^2|_{TH^2})_{(i)} \]
    are injective, with range equal to the intersection with $\mathrm{ran}(Tf)$. This is clear since $(TG|_H)_{(i)}$ is a subgroupoid: the graph of its multiplication is a subbundle of 
        \[ T\G(\mathrm{mult}_G) = \G(\mathrm{mult}_{TG}). \]
    This establishes the claim that $G^{(2)}$ is a weighted subbundle and that $f:\G(\mathrm{mult}_G)\to G^{(2)}$ is a weighted diffeomorphism. The maps $\mathrm{mult}_G$ and $\pi_1$ are weighted morphisms since they factor as compositions 
        \[ G^{(2)} \to \G(\mathrm{mult}_G) \into G^3 \to G. \]
    To see that they are weighted submersions we only need to check that the maps 
        \[ ((TG|_H)_{(i)})^{(2)} = ((TG^{(2)}|_{H^{(2)}})_{(i)} \to (TM|_N)_{(i)} \]
    are fibrewise surjective bundle maps, which follows since the $(TG|_H)_{(i)}$ are subgroupoids. 
\end{proof}

\begin{corollary}
\label{corollary: weighted subgroupoids}
    Suppose that $G\toto M$ is a weighted Lie groupoid and $K\toto P$ is a Lie subgroupoid. If $K$ is a weighted submanifold of $G$ then the induced weighting is multiplicative. 
\end{corollary}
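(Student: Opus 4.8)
The plan is to verify directly the four conditions of \autoref{definition: multiplicative weighting} for the weighting that $K$ inherits along $K\cap H$ via \autoref{proposition: weighted submanifolds are weighted}. The one fact I will use repeatedly is that this induced weighting is defined by \emph{restriction}: $f\in C^\infty(K)_{(i)}$ precisely when $f$ extends to some element of $C^\infty(G)_{(i)}$. Consequently, for any weighted manifold $Z$, a map $F\colon Z\to K$ is a weighted morphism if and only if $\iota_K\circ F\colon Z\to G$ is, where $\iota_K\colon K\into G$ is the weighted inclusion; the same remark applies with $\iota_K$ replaced by the inclusion of any weighted submanifold. This ``restriction principle'' reduces most of the conditions to the fact that the structure maps of $K$ are restrictions of those of $G$, which are weighted by hypothesis.

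First I would establish conditions (a) and (d). Since $M$ is a weighted submanifold of $G$, the unit inclusion $u_G=\iota_M\colon M\into G$ is a weighted embedding, so $P_G:=u_G\circ s_G\colon G\to G$ is a weighted morphism; it is idempotent with image $M$. Because $K$ is a subgroupoid, $P_G$ carries $K$ into $P=K\cap M\sset K$, and by the restriction principle its restriction $p\colon K\to K$ is again a weighted morphism, idempotent, with $p(K)=P$. Then \autoref{proposition: weighted projections and weighted submanifolds} shows that $P$ is a weighted submanifold of $K$, giving (a). Condition (d) is immediate: $\iota_K\circ \mathrm{inv}_K=\mathrm{inv}_G\circ\iota_K$ is weighted, hence so is $\mathrm{inv}_K$, and since it is its own inverse it is a weighted diffeomorphism.

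For condition (b), the restriction principle applied to the weighted embedding $P\into K$ shows that $s_K\colon K\to P$ is a weighted morphism, because $\iota_P\circ s_K=p$ is weighted; and $s_K$ is a submersion of manifolds. It remains to check fibrewise surjectivity of $(TK|_{K\cap H})_{(i)}\to (TP)_{(i)}$. Here I would use the VB-groupoid picture: by \autoref{proposition: weighted submanifolds are weighted}, $(TK|_{K\cap H})_{(i)}=TK\cap (TG|_H)_{(i)}$, which is the intersection of the two VB-subgroupoids $TK\toto TP$ and $(TG|_H)_{(i)}\toto (TM|_N)_{(i)}$ of the tangent groupoid $TG\toto TM$ (the latter being a subgroupoid because the weighting of $G$ is multiplicative, by \autoref{theorem: characterization of mult weightings}). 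Being a subbundle and a subgroupoid, this intersection is a VB-subgroupoid over $(TP)_{(i)}=TP\cap (TM|_N)_{(i)}$, and since the source of a VB-groupoid is fibrewise surjective, $s_K$ is a weighted submersion. The target $t_K=s_K\circ\mathrm{inv}_K$ is handled the same way.

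Finally, for condition (c) I would note that, $s_K$ and $t_K$ being weighted submersions, they are weighted transverse (a weighted submersion is weighted transverse to any weighted morphism), so \autoref{theorem: weighted fibre products} makes the space of composable arrows $K^{(2)}$ a weighted submanifold of $K^2$ with weighted projections. The inclusion $j\colon K^{(2)}\into G^{(2)}$ is then weighted, since $\iota_{G^{(2)}}\circ j$ is the composition of the weighted inclusions $K^{(2)}\into K^2\into G^2$ and $G^{(2)}$ is a weighted submanifold of $G^2$; because $\iota_K\circ \mathrm{mult}_K=\mathrm{mult}_G\circ j$ with $\mathrm{mult}_G$ weighted, the restriction principle gives that $\mathrm{mult}_K$ is weighted. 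All four conditions of \autoref{definition: multiplicative weighting} then hold. I expect the main obstacle to be exactly the fibrewise surjectivity in condition (b) — showing that $s_K$ is a genuine weighted \emph{submersion} rather than merely a weighted morphism — since this is the one place where the subgroupoid structure of the filtration $(TG|_H)_{(i)}$ and the stability of VB-subgroupoids under intersection do the real work, everything else being formal consequences of the restriction characterisation of the induced weighting.
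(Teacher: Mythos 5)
Your overall strategy coincides with the paper's: inversion and multiplication handled by restriction, $P$ exhibited as the image of the weighted idempotent $K\to K$ obtained by restricting $G\xrightarrow{s}M\hookrightarrow G$ and then invoking \autoref{proposition: weighted projections and weighted submanifolds}, and the submersion property of $s_K$ reduced to fibrewise surjectivity of $Ts$ on the filtration $(TK|_{K\cap H})_{(i)}=TK\cap(TG|_H)_{(i)}$, using that each $(TG|_H)_{(i)}$ is a subgroupoid via \autoref{theorem: characterization of mult weightings}. Conditions (a), (c), (d) are fine and match the paper's argument.

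However, at exactly the step you flag as the main obstacle there is a genuine gap. You claim that $TK\cap(TG|_H)_{(i)}$, ``being a subbundle and a subgroupoid,'' is a VB-subgroupoid over $TP\cap(TM|_N)_{(i)}$, and then conclude fibrewise surjectivity of the source ``since the source of a VB-groupoid is fibrewise surjective.'' As written this is circular: in the paper's terminology (\autoref{definition: GB groupoid}) a VB-subgroupoid is a \emph{Lie} subgroupoid closed under scalar multiplication, and for the intersection to be a Lie subgroupoid one must know that the restricted source is a submersion onto the proposed object space --- which, by \autoref{lemma: vector bundle submersions}, is precisely the fibrewise surjectivity you are trying to establish. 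A subbundle that is merely a set-theoretic subgroupoid does not come with this for free: a surjective vector bundle morphism need not be fibrewise surjective, so the closure properties alone do not immediately hand you the conclusion. The paper closes this gap by observing that $TK$ and $(TG|_H)_{(i)}$ are Lie subgroupoids of $TG$ intersecting \emph{cleanly} (seen in weighted submanifold coordinates), and then citing the theorem that a clean intersection of Lie subgroupoids is again a Lie subgroupoid with objects the intersection of the object spaces (\cite[Theorem 4.20]{meinrenken2017Lie}); only then does fibrewise surjectivity follow. Your statement can in fact be proved directly --- using that the zero sections lie in the intersection, that right translation by zero elements identifies $\ker(Ts)$ over an arrow with $\ker(Ts)$ over its target unit, and a two-sided rank count via inversion --- but that argument (or the citation the paper uses) is real work and must be supplied; asserting ``subbundle $+$ subgroupoid $\Rightarrow$ VB-subgroupoid'' does not discharge it.
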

\begin{proof}
    It is clear that inversion $\mathrm{inv}_{K}:K\to K$ is a weighted morphism because it is the restriction of a weighted morphism to a weighted submanifold. It remains to show that $P$ is a weighted submanifold of $K$, that $s:K\to P$ is a weighted submersion, and that multiplication on $K$ is a weighted morphism. 

    Consider the map $K\to K$ defined as the restriction of the composition
        \[G \corr{s} M \into G \]
    to $K$. This is a weighted projection of $K$ whose image is $P$, hence $P$ is a weighted submanifold of $K$ by~\autoref{proposition: weighted projections and weighted submanifolds}. 

    We now show that $s|_K:K\to P$ is a weighted submersion. Since $K$ is a weighted submanifold, it follows by considering weighted submanifold coordinates that $TK$ intersects $(TG|_H)_{(i)}$ cleanly. By~\autoref{theorem: characterization of mult weightings}, each $(TG|_H)_{(i)}$ is a Lie subgroupoid of $TG|_H$, so it follows by~\cite[Theorem 4.20]{meinrenken2017Lie} that the intersections $TK\cap (TG|_H)_{(i)} = (TK|_{K\cap H})_{(i)}$ are Lie subgroupoids with objects $TP\cap (TM|_N)_{(i)} = (TP|_{P\cap N})_{(i)}$. This, in particular, implies that 
        \[ Ts|_{K\cap H}:(TK|_{K\cap H})_{(i)} \to (TP|_{P\cap N})_{(i)} \]
    is fibrewise surjective, hence $s|_K:K\to P$ is a weighted submersion. 
    
    Finally, we show that multiplication is weighted. Both $s|_K$ and $t|_k$ are weighted submersions, and so $K^{(2)}$ is a weighted submanifold of $G^{(2)}$. Since multiplication for $K$ is the restriction of multiplication for $G$, it is a weighted morphism. 
 \end{proof}

 \begin{remark}
    In particular,~\autoref{corollary: weighted subgroupoids} shows that "weighted subgroupoids" are simply the Lie subgroupoids which are weighted submanifolds. 
 \end{remark}

\begin{examples}
\label{examples: multiplicative weightings}
\begin{itemize}
    \item[(a)] If $H\toto N$ is a Lie subgroupoid of $G\toto M$, then the trivial weighting of $G$ along $H$ is multiplicative.

    \item[(b)] If $(M, N)$ is a weighted pair, then $M\times M = \mathrm{Pair}(M)$ is a weighted groupoid with respect to the product weighting along $\mathrm{Pair}(N)$. Indeed, the units are a weighted submanifold by~\autoref{example: diagonal is weighted submanifold}, the graph of multiplication is a weighted submanifold as it is the image of the weighted embedding
        \[ M^3 \to M^6 \quad (m_1, m_2, m_3)  \mapsto (m_1, m_3, m_1, m_2, m_2, m_3), \]
    and the filtration of $T\mathrm{Pair}(M)|_{\mathrm{Pair}(N)}$ is given by the subgroupoids 
        \[ (T\mathrm{Pair}(M)|_{\mathrm{Pair}(N)})_{(i)} = \mathrm{Pair}((TM|_N)_{(i)}).\]

    \item[(c)] If $\pi:M\to N$ is a weighted submersion, then the submersion groupoid $M\times_\pi M$ is a weighted Lie groupoid. Indeed, this follows by~\autoref{corollary: weighted subgroupoids} as it is a weighted submanifold of the pair groupoid $\mathrm{Pair}(M)$.  

    \item[(d)] A weighted Lie groupoid $G\toto M$ with $M$ equal to a point will be called a weighted Lie group. In this case, the first condition in~\autoref{definition: multiplicative weighting} is vacuous, and the second just says that $H$ contains the group unit. Hence, weighting of a Lie group along a submanifold $H$ is multiplicative if and only if $H$ contains the group unit and both multiplication and inversion are weighted morphisms. 

   \item[(e)] An action of a weighted Lie group $G$ on a weighted manifold $M$ will be called \emph{weighted} if the action map $G\times M\to M$ is a weighted morphism. In this case, the action groupoid $G\times M \toto M$ is a weighted Lie groupoid. Indeed, the action groupoid can be identified with the graph of the group action, a subgroupoid of $G\times \mathrm{Pair}(M)$. Since the group action is weighted, the graph is a weighted submanifold, and the corresponding weighting on the action groupoid is weighted by~\autoref{corollary: weighted subgroupoids}.


    \item[(f)] If $V\to M$ is a vector bundle over $M$ thought of as a Lie groupoid, then a weighting is multiplicative if and only if it is linear and concentrated in non-positive degrees. 

    \item[(g)] Let $(G, H)$ be a weighted Lie group pair and $P\to B$ a principal $G$-bundle with a principal weighting along $Q\sset P$ (see~\autoref{defintion: Morita equivalence} (b)). By~\autoref{proposition: principal weightings}, $Q$ is a principal $H$-bundle. Using an argument analogous to to proof of~\autoref{A-theorem: composition of weighted HS-morphisms} one finds that the product weighting on $\mathrm{Pair}(P)$ descends to a weighting of the Atiyah groupoid $\mathrm{At}(P) = (P\times P)/G$ along $\mathrm{At}(Q) = (Q\times Q)/H$ and that this weighting is multiplicative. 
\end{itemize}
\end{examples}

\begin{proposition}
    Let $G\toto M$ be a weighted Lie groupoid. For all $g\in G$ there is a local bisection through $g$ which is given by a weighted submanifold. 
\end{proposition}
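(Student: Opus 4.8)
The plan is to build the bisection as a weighted submanifold whose tangent space at $g$ is simultaneously transverse to the source and target fibres, and then to invoke the inverse function theorem; the weighting will cause no trouble because there is enough freedom to prescribe the tangent space arbitrarily. Write $n = \dim G$ and $n' = \dim M$, and set $m_0 = s(g)$. First, recall that $s$ is a weighted submersion by hypothesis, and that $t = s \circ \mathrm{inv}_G$ is a weighted submersion as well, since $\mathrm{inv}_G$ is a weighted diffeomorphism. In particular $s$ and $t$ are honest submersions onto $M$, so $K_s := \ker T_g s$ and $K_t := \ker T_g t$ are subspaces of $T_g G$ of the same dimension $n - n'$.

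Next I would appeal to the standard linear algebra fact that two subspaces of a finite-dimensional real vector space of equal dimension admit a common complement: there exists $W \sset T_g G$ with $T_g G = K_s \oplus W = K_t \oplus W$, necessarily of dimension $n'$. (One way to see this is that the complements of $K_s$ in the Grassmannian $\mathrm{Gr}(n', T_gG)$ form an open dense set, as do the complements of $K_t$, and two open dense subsets of a connected manifold meet.) All of the genuine content of the word "bisection" is absorbed into this step, namely transversality to both fibrations at once.

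The key geometric input is then the proposition of \autoref{section: weighted submanifolds} asserting that for a weighted pair every subspace of a tangent space is realised as the tangent space of some weighted submanifold. Applying it to $W \sset T_g G$ yields a weighted submanifold $S \sset G$ with $g \in S$ and $T_g S = W$. By the choice of $W$, the restricted tangent maps $T_g s|_W \colon W \to T_{m_0} M$ and $T_g t|_W \colon W \to T_{t(g)} M$ are both isomorphisms. The inverse function theorem then guarantees that, after shrinking $S$ to a sufficiently small open neighbourhood of $g$ within $S$, both $s|_S$ and $t|_S$ are diffeomorphisms onto open subsets of $M$; this open subset of $S$ is again a weighted submanifold, since weighted submanifold charts simply restrict. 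Hence $S$ is a local bisection through $g$ given by a weighted submanifold.

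I do not expect any step to be a serious obstacle. The argument is essentially immediate once the "any subspace is a weighted tangent space" proposition is invoked, precisely because that result decouples the choice of tangent direction from the filtration, so the construction is insensitive to whether or not $g \in H$. The only point demanding a little care is the common-complement lemma, which I would either cite or dispatch with the short genericity argument above.
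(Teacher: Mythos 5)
Your proof is correct and takes essentially the same route as the paper: both arguments reduce the statement to finding a common complement $W$ to $\ker T_g s$ and $\ker T_g t$ in $T_gG$, realize $W$ as the tangent space of a weighted submanifold via the proposition of~\autoref{section: weighted submanifolds}, and shrink to obtain the local bisection. The only cosmetic difference is that the paper chooses $W$ to be a \emph{weighted} complement (viewing $T_gG$ as a filtered vector space), whereas you take a generic common complement — which suffices, since the realization proposition applies to arbitrary subspaces.
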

\begin{proof}
    We may assume that $g\in H$ as otherwise the statement is obvious. The tangent space $T_gG$ is a weighted vector space (cf.~\autoref{remark: wide weighting and filtered vector bundles}) with both $\ker(T_gt)$ and $\ker(T_gs)$ as weighted subspaces. We may choose a subspace of $T_gG$ which is a weighted complement to both. This weighted subspace is realized as the tangent space to a weighted submanifold $S$. Taking $S$ smaller if needed, this is the desired local bisection. 
\end{proof}

\subsection{Weighted VB-groupoids}

Using the conclusion of~\autoref{theorem: characterization of mult weightings}, we can combine linear weightings and multiplicative weightings to give rise to a notion of weighted VB-groupoids. These will be used for the differentiation procedure discussed in~\autoref{section: differentiation of M weightings}.

\begin{definition}
    A \emph{weighted VB-groupoid} is a VB-groupoid 
        \begin{equation*}
        \xymatrix{
            V \ar@<-.5ex>[r] \ar@<.5ex>[r] \ar[d] & E \ar[d] \\
            G \ar@<-.5ex>[r] \ar@<.5ex>[r] & M
        }
        \end{equation*}
    together with a linear weighting such that 
        \begin{itemize}
            \item[(a)] $E$ is a weighted subbundle of $V$, 

            \item[(b)] the graph of multiplication is a weighted subbundle of $V^3$, and 

            \item[(c)] the filtrations of $V|_H$ and $TG|_H$ are by subgroupoids
                \[ (V|_H)_{(i)} \toto (E|_N)_{(i)} \quad \text{and} \quad (TG|_H)_{(i)} \toto (TM|_N)_{(i)},  \]
            respectively. 
        \end{itemize}
\end{definition}

Note that this definition ensures that the weighting of $G$ along $H$ is also multiplicative. 

\begin{example}
\label{examples: VB groupoids}
    If $G\toto M$ is a weighted groupoid, then $TG \toto TM$ is a weighted VB-groupoid. Indeed, since $M$ is a weighted submanifold of $G$ it follows that $TM$ is a weighted subbundle of $TG$. Similarly, we have that 
        \[ \G(\mathrm{mult}_{TG})) = T\G(\mathrm{mult}_{G}) \sset TG^3, \]
    so it is a weighted subbundle. Finally, the filtration of 
        \[ T(TG)|_H = TG|_H \oplus TG|_H \]
    is by subgroupoids because the weighting of $G$ is multiplicative.
\end{example}

\begin{lemma}
\label{lemma: annilator is a weighted subbundle}
    Let $V\to M$ be a weighted vector bundle and $W\sset V$ a weighted subbundle. Then $\mathrm{ann}(W)$ is a weighted subbundle of $V^*$. 
\end{lemma}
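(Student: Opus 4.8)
The plan is to reduce the statement to weighted vector bundle coordinates and to observe that dualizing simply replaces the fibre-cutting coordinates by their complement.

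First I would fix a point of $\mathrm{ann}(W)$ lying over some $r\in R$ and choose weighted subbundle coordinates for $W$ on $V|_U$. By the definition of weighted subbundle (and the local normal form for subbundle coordinates of a vector subbundle), these consist of weighted base coordinates $x_1,\dots,x_m\in C^\infty_{[0]}(V|_U)$, of weights $w_a\geq 0$, together with fibre coordinates $p_1,\dots,p_k\in C^\infty_{[1]}(V|_U)$ dual to a weighted frame $\sigma_1,\dots,\sigma_k$ of $V|_U$, with $p_b$ of weight $-v_b$, such that $R\cap U=\{x_a=0 : a\in I\}$ and $W|_{R\cap U}=\{x_a=0\ (a\in I),\ p_b=0\ (b\in J)\}$ for some index sets $I,J$. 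Fibrewise this reads $W_r=\mathrm{span}\{\sigma_c : c\notin J\}$, so $\mathrm{ann}(W_r)=\{\mu\in V_r^* : \la\mu,\sigma_c\ra=0\ \forall\, c\notin J\}$.

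Next I would pass to the dual bundle. By the dual weighting of \autoref{subsection: linear constructions}(a), the dual frame $\tau_1,\dots,\tau_k$ is a weighted frame for $V^*|_U$; hence the same base coordinates $x_a$ together with the fibre coordinates $q_b\in C^\infty_{[1]}(V^*|_U)$ dual to $\tau_b$ — equivalently $q_b(\mu)=\la\mu,\sigma_b\ra$, of weight $v_b$ under the identification $V=(V^*)^*$ — form a system of weighted vector bundle coordinates on $V^*|_U$. Writing $\mu=\sum_a\mu_a\tau_a$ we have $q_c(\mu)=\mu_c=\la\mu,\sigma_c\ra$, so the condition $\mu\in\mathrm{ann}(W_r)$ becomes $q_c(\mu)=0$ for all $c\notin J$. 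Therefore $\mathrm{ann}(W)$ is cut out over $U$ by $\{x_a=0 : a\in I\}\cup\{q_c=0 : c\notin J\}$, which is a subset of the weighted vector bundle coordinates $\{x_a\}\cup\{q_b\}$ on $V^*$. Since such a chart exists near every point of $\mathrm{ann}(W)$, this produces a weighted atlas of subbundle coordinates, so $\mathrm{ann}(W)$ is a weighted subbundle of $V^*$.

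The argument is essentially formal; the only step needing attention is the bookkeeping of weights under dualization, namely that the fibre coordinate $q_b$ on $V^*$ has weight $v_b$ whereas $p_b$ on $V$ had weight $-v_b$. This is exactly the content of the dual-weighting construction together with $V=(V^*)^*$, and once it is recorded the annihilator is visibly cut out by the complementary family of fibre coordinates, with no further computation required.
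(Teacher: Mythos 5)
Your proof is correct, but it takes a different route from the paper's. The paper deduces the lemma from a more general statement (\autoref{A-proposition: kernels or linear submersions}): it equips $i^*V^*\to R$ with the pullback weighting, notes that the restriction map $i^*V^*\to W^*$ is a weighted vector bundle morphism whose base map is a weighted submersion and which is fibrewise surjective in each filtration degree, and identifies $\mathrm{ann}(W)$ with its kernel; that kernel proposition is itself proved by shifting the weightings into negative degree and invoking the preimage theorem for weighted submersions (\autoref{corollary: inverse image of mfld is mfld}). You instead argue directly in weighted subbundle coordinates: the fibre coordinates $p_b$ are dual to a weighted frame $\sigma_b$, dualization exchanges $\sigma_b$ and $\tau_b$ with the sign of the vertical weights flipping (which is exactly the content of the dual-weighting construction in \autoref{subsection: linear constructions} and the coordinate construction in \autoref{theorem: linear weightings in terms of polynomials}), and hence $\mathrm{ann}(W)$ is cut out by the base coordinates vanishing on $R$ together with the complementary fibre coordinates $q_c$, $c\notin J$. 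Your argument is more elementary and self-contained, requiring only the definitions and the dual-frame bookkeeping; the paper's approach costs the extra machinery of shifting and the submersion preimage theorem but buys a reusable general result about kernels of weighted morphisms, which the paper exploits again (e.g.\ for the statement about $\mathrm{ann}^\sharp$ in \autoref{A_remark: sharp annihilator}). The one step you should make explicit is why the fibre coordinates of an arbitrary system of weighted subbundle coordinates are necessarily dual to a weighted frame; this follows from the generation property in \autoref{theorem: linear weightings in terms of polynomials}, since $C^\infty_{[1]}(V|_U)_{(i)}=\sum_b C^\infty(U)_{(i+v_b)}\,p_b$ exhibits $\{p_b\}$ as a weighted frame for $V^*|_U$, whose dual frame in $V=(V^*)^*$ is then a weighted frame for $V|_U$.
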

\begin{proof}
    See~\autoref{A-section: annihilator is weighed subbundle}. 
\end{proof}

\begin{proposition}
\label{proposition: dual of a weighted vb groupoid}
    Let $V\toto E$ be a weighted VB-groupoid. Then the dual VB-groupoid $V^*\toto \mathrm{ann}(E)$, equipped with the dual weighting (\autoref{subsection: linear constructions}), is a weighted VB-groupoid. 
\end{proposition}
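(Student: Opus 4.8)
The plan is to verify directly the three defining conditions for $V^* \toto \mathrm{ann}(E)$ to be a weighted VB-groupoid: that (a) $\mathrm{ann}(E)$ is a weighted subbundle of $V^*$, (b) the graph of multiplication $\G(\mathrm{mult}_{V^*})$ is a weighted subbundle of $(V^*)^3$, and (c) the filtrations of $V^*|_H$ and $TG|_H$ are by subgroupoids. The base Lie groupoid $G \toto M$ and its weighting along $H$ are unchanged in passing to the dual, so the condition that $(TG|_H)_{(i)} \toto (TM|_N)_{(i)}$ be subgroupoids is inherited verbatim from the hypothesis that $V \toto E$ is a weighted VB-groupoid; only the statements involving $V^*$ require argument. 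Throughout I would use that $V^* \toto \mathrm{ann}(E)$ is already known to be a VB-groupoid.

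For condition (a), I would apply \autoref{lemma: annilator is a weighted subbundle} to the weighted subbundle $E \sset V$: its annihilator $\mathrm{ann}(E)$ is then a weighted subbundle of $V^*$, which is precisely the unit subbundle. For the $V^*|_H$ half of condition (c), I would combine the dual-weighting formula $(V^*|_H)_{(i)} = \mathrm{ann}((V|_H)_{(-i+1)})$ from \autoref{subsection: linear constructions}(a) with \autoref{lemma: annhilator of VB-subgroupoid}. Indeed, the hypothesis gives that each $(V|_H)_{(-i+1)} \toto (E|_N)_{(-i+1)}$ is a VB-subgroupoid of $V|_H \toto E|_N$, so its annihilator $\mathrm{ann}((V|_H)_{(-i+1)}) = (V^*|_H)_{(i)}$ is a VB-subgroupoid of $V^*|_H$ with units $\mathrm{ann}((V|_H)_{(-i+1)}) \cap \mathrm{ann}(E|_N)$. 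A short computation using \autoref{proposition: linear weighting determines filtration by subbundles} (applied to both $V$ and $V^*$) identifies these units with $(\mathrm{ann}(E)|_N)_{(i)}$, giving the required subgroupoid filtration $(V^*|_H)_{(i)} \toto (\mathrm{ann}(E)|_N)_{(i)}$.

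The main work is condition (b), where the key is the identity $\G(\mathrm{mult}_{V^*}) = \mathrm{ann}^\sharp(\G(\mathrm{mult}_V))$ recorded in \autoref{section: groupoid prelims}. First I would identify $(V^*)^3$ with $(V^3)^*$ as weighted vector bundles over $(G^3, H^3)$, using that the dual weighting and finite products commute, both being built from the componentwise pairing. Under this identification $\mathrm{ann}^\sharp$ is the honest annihilator of $\G(\mathrm{mult}_V)$ composed with the fibrewise sign-flip $\Phi(\mu, \mu_0, \mu_1) = (\mu, -\mu_0, -\mu_1)$, which is a weighted VB-automorphism of $(V^*)^3$ because negation preserves every filtration degree. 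Since $\G(\mathrm{mult}_V) \sset V^3$ is a weighted subbundle by hypothesis, \autoref{lemma: annilator is a weighted subbundle} makes $\mathrm{ann}(\G(\mathrm{mult}_V))$ a weighted subbundle of $(V^3)^* = (V^*)^3$, and applying the weighted automorphism $\Phi$ shows that $\G(\mathrm{mult}_{V^*})$ is a weighted subbundle as well.

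I expect the sign and identification bookkeeping in condition (b) to be the main obstacle: one must check carefully that the twisted annihilator $\mathrm{ann}^\sharp$ governing $\G(\mathrm{mult}_{V^*})$ really differs from the genuine annihilator only by the weighted isomorphism $\Phi$ and by the canonical (weighted) identification $(V^*)^3 \cong (V^3)^*$, so that \autoref{lemma: annilator is a weighted subbundle} is legitimately applicable. Once these identifications are pinned down, conditions (a)--(c) assemble immediately into the conclusion that $V^* \toto \mathrm{ann}(E)$, with the dual weighting, is a weighted VB-groupoid.
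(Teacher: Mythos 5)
Your proposal is correct and takes essentially the same approach as the paper's proof: \autoref{lemma: annilator is a weighted subbundle} applied to $E\sset V$ for the units, the identity $\G(\mathrm{mult}_{V^*}) = \mathrm{ann}^\sharp(\G(\mathrm{mult}_V))$ for the graph of multiplication, and the dual filtration formula $(V^*|_H)_{(i)} = \mathrm{ann}((V|_H)_{(-i+1)})$ combined with \autoref{lemma: annhilator of VB-subgroupoid} for the subgroupoid condition. The only cosmetic difference is in handling the graph: the paper cites \autoref{A_remark: sharp annihilator}, which realizes $\mathrm{ann}^\sharp$ directly as the kernel of a difference-of-pairings map, whereas you realize it as the image of the honest annihilator under the weighted sign-flip automorphism $(\mu,\mu_0,\mu_1)\mapsto(\mu,-\mu_0,-\mu_1)$ — both reduce to the same weighted-kernel machinery.
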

\begin{proof}
    Since $E$ is a weighted subbundle of $\mathcal{V}$, its annihilator is a weighted subbundle of $\mathcal{V}^*$ by~\autoref{lemma: annilator is a weighted subbundle}. Similarly, $\mathrm{ann}^\sharp(\mathrm{Gr}(\mathrm{mult}_\mathcal{V})) = \mathrm{Gr}(\mathrm{mult}_{\mathcal{V}^*})$ is a weighted subbundle of $(\mathcal{V}^*)^3$ (see~\autoref{A_remark: sharp annihilator}). To conclude, we recall that 
        \[ (\mathcal{V}^*|_H)_{(i)} = \mathrm{ann}((\mathcal{V}|_H)_{(-i+1)})\] 
    whence the filtration of $(T\mathcal{V}^*)|_H$ is by subgroupoids, since the annihilator of a VB-subgroupoid is itself a VB-subgroupoid by~\autoref{lemma: annhilator of VB-subgroupoid}.
\end{proof}

\section{The Weighted Normal and Deformation Groupoids}
\label{section: Weighted Normal and Deformation Groupoids}

Let $G\toto M$ be a Lie groupoid. The most naive definition of a multiplicative weighting would be one with the property that the groupoid structure $G\times \R^\times \toto M\times \R^\times$ extends to a groupoid structure $\defw(G, H) \toto \defw(M,N)$. In this section we will show that this is \emph{equivalent} to our definition of multiplicative weighting. 

\begin{lemma}
\label{lemma: extensions of morphisms}
    Suppose that $(M,N)$ and $(M', N')$ are weighted pairs and $F:M\to M'$ is a smooth map. Then $F$ is a weighted morphism if and only if the map 
        \[ \WT{F} : M\times \R^\times \to M'\times \R^\times, \quad (p,t) \mapsto (F(p), t) \]
    extends to a smooth map $\defw(M,N)\to \defw(M',N')$. 
\end{lemma}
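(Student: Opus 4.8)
The plan is to prove both directions by reducing to the defining property of weighted morphisms, namely that pullback preserves the filtration $C^\infty(M')_{(i)} \subseteq C^\infty(M)_{(i)}$, and then translating this into the Rees-algebra description that underlies $\defw(M,N) = \aHom(\rees(C^\infty(M)), \R)$. The key observation is that $M \times \R^\times$ sits inside $\defw(M,N)$ as the open set $\pi^{-1}(\R^\times)$, with coordinates given by the homogeneous interpolations $\WT{f}^{[i]}$ restricting to $t^{-i} f$ on the $t \neq 0$ fibres, as recorded in~\eqref{equation: deformation approximation}. The map $\WT{F}$ is nothing but $\defw(F)$ restricted to this open set, since $\defw(F)|_{\pi^{-1}(\{t\})} = F$ for $t \neq 0$ by the functoriality property listed in~\autoref{subsection: properties of weighted deformation space}(a).

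For the forward direction, suppose $F$ is a weighted morphism. Then by functoriality $\defw(F):\defw(M,N) \to \defw(M',N')$ exists and is smooth, and its restriction to $M \times \R^\times = \pi^{-1}(\R^\times)$ agrees with $\WT{F}$ by the formula just cited; this is the desired smooth extension. For the converse, I would argue contrapositively, or directly by testing against the generating functions. Assume $\WT{F}$ extends smoothly to a map $\Phi: \defw(M,N) \to \defw(M',N')$. Given $f \in C^\infty(M')_{(i)}$, the homogeneous interpolation $\WT{f}^{[i]}$ is a smooth function on $\defw(M',N')$, so $\Phi^*\WT{f}^{[i]}$ is smooth on $\defw(M,N)$. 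On the open dense set $M \times \R^\times$, using~\eqref{equation: deformation approximation}, this pullback equals $(p,t) \mapsto t^{-i} f(F(p))$. By smoothness up to $t = 0$, the function $t^{-i}(F^*f)(p)$ must extend smoothly across $t=0$; evaluating the smooth structure of $\defw(M,N)$ against the homogeneous-interpolation coordinates $\WT{x_a}^{[w_a]}$ and $t$, this forces $F^*f$ to have filtration degree at least $i$, i.e. $F^*f \in C^\infty(M)_{(i)}$. Hence $F$ is a weighted morphism.

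The main obstacle is the converse: making rigorous the claim that smoothness of the extension across $t=0$ forces $F^*f \in C^\infty(M)_{(i)}$. The clean way to handle this is to work in local weighted coordinates $x_a$ near $p \in N$ and $y_b$ near $F(p)$, so that by the smooth-structure theorem for $\defw$ the functions $\WT{x_a}^{[w_a]}$ together with $t$ are coordinates; one then writes $\Phi^*\WT{y_b}^{[w'_b]}$ as a smooth function of these coordinates and reads off that $t^{-w'_b} F^*y_b$ extends smoothly, which by the Taylor-expansion characterization (compare part (a) of~\autoref{A-proposition: characterization of weighted morphisms}, or the generating monomials~\eqref{equation: local weighting filtration}) yields $F^*y_b \in C^\infty(M)_{(w'_b)}$. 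Since the $y_b$ generate the filtration multiplicatively and $F^*$ is an algebra homomorphism, this suffices. I expect the bookkeeping here to be routine once the coordinate description of the $\defw$ smooth structure is invoked, so the essential content is simply recognizing $\WT{F}$ as the $t \neq 0$ part of $\defw(F)$ and exploiting the density of $M \times \R^\times$.
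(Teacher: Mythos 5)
Your proposal is correct and takes essentially the same route as the paper: the paper likewise reduces to the component functions $F^*y_b$ for weighted coordinates $y_b$ on $M'$ and proves, via the coordinate description of the smooth structure on $\defw(M,N)$, that $(p,t)\mapsto t^{-j}f(p)$ extends smoothly across $t=0$ if and only if $f \in C^\infty(M)_{(j)}$. The only cosmetic difference is that the paper runs both implications through this single coordinate computation, whereas you invoke functoriality of $\defw$ for the forward direction; the content is the same.
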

\begin{proof}
    Given $f\in C^\infty(M)_{(i)}$ we will show that the function 
        \[ \WT{f}^{[j]} : M\times \R^\times \to \R \quad p\mapsto t^{-i}f(p) \]
    extends to $\defw(M,N)$ if and only if $i \geq j$. Working locally we may assume that $x_a$ is a global weighted coordinate system on $M$. Let 
        \[ y_a = \WT{x}_a^{[w_a]} = \left\{
            \begin{array}{ll}
                 t^{-w_a}x_a & t\neq 0 \\
                x_a^{[w_a]} & t=0
            \end{array}
        \right.\]
    be the corresponding (global) coordinates on $\defw(M,N)$. Using that $f\in C^\infty(M)_{(i)}$ we may write  
        \[ f(x_1, \dots, x_m) = \chi(x_1, \dots, x_m)x^s = \chi(x_1, \dots, x_m)x_1^{s_1}x_2^{s_2}\cdots x_m^{s_m} \]
    where $\chi \in C^\infty(M)$ and $s\cdot w = \sum_as_aw_a = i$. Then, for $t\neq 0$, we have 
        \[ \WT{f}^{[j]}(y, t) = t^{-j}f(x) = t^{-j}\chi(x)x^s = t^{s\cdot w-j}\chi(t^{w_1}y_1, \dots, t^{w_m}y_m)y^s \]
    which extends smoothly to $t=0$ if and only if $i-j = s\cdot w-j \geq 0$. 

    To see how this implies the lemma, $y_b\in C^\infty(M)_{(w'_b)}$ be weighted coordinates on $M'$. Then $F$ extends to $\defw(M, N)$ if and only if each of the component functions $f_b = F^*y_b$ extend to $\defw(M,N)$. By the previous argument, this happens if and only if each $f_b$ has weight $w'_b$, which is equivalent to $F$ being a weighted morphism. 
\end{proof}

We now state the main theorem of the chapter. 

\begin{theorem}
\label{theorem: weighted normal and weighted deformation groupoids}
    Let $G\toto M$ be a Lie groupoid which is weighted along the Lie subgroupoid $H\toto N$. If the weighting of $G$ along $H$ is multiplicative, then the Lie groupoid $G\times \R^\times \to M\times \R^\times$ extends uniquely to 
        \[ \defw(G,H) \toto \defw(M,N).  \]
    Conversely, if $(M,N)$ is a weighted pair and the groupoid structure on $G\times \R^\times \toto M\times \R^\times$ extends to $\defw(G,H)\toto \defw(M,N)$, then $(G,H)$ is a weighted Lie groupoid pair.   
\end{theorem}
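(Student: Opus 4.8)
The plan is to deduce both implications from the functoriality of $\defw$ for weighted morphisms, together with \autoref{lemma: extensions of morphisms} and \autoref{theorem: Characterization of Weighted Immersions and Submersions}. The guiding principle is that a Lie groupoid structure is encoded by finitely many structure maps subject to equations that are commuting diagrams of (weighted) morphisms, and that $\defw$ transports these diagrams to $\defw(G,H)$, provided it preserves the relevant fibre products. The latter is supplied by the fibre-product property of the weighted deformation space (\autoref{subsection: properties of weighted deformation space}), which applies because weighted submersions are weighted transverse to everything.

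\emph{Forward direction.} Assuming the weighting is multiplicative, the source and target $s,t\colon G\to M$ are weighted submersions and $\mathrm{mult}_G$, $\mathrm{inv}_G$, and the unit inclusion are weighted morphisms, so applying $\defw$ yields smooth maps $\defw(s),\defw(t)\colon\defw(G,H)\to\defw(M,N)$ (submersions, by \autoref{theorem: Characterization of Weighted Immersions and Submersions}), together with $\defw(\mathrm{mult}_G)$, $\defw(\mathrm{inv}_G)$, and the inclusion $\defw(M,N)\into\defw(G,H)$. The central identification is that, since $s,t$ are weighted submersions and hence weighted transverse, the fibre-product property of \autoref{subsection: properties of weighted deformation space} gives a diffeomorphism
    \[ \defw(G^{(2)},H^{(2)}) \cong \defw(G,H)\times_{\defw(M,N)}\defw(G,H), \]
identifying $\defw(G^{(2)},H^{(2)})$ with the composable arrows of $\defw(G,H)$ relative to $\defw(s),\defw(t)$; thus $\defw(\mathrm{mult}_G)$ becomes a genuine multiplication. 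The groupoid axioms hold because they hold for $G$ and $\defw$ carries over the defining commuting diagrams, using again that $\defw$ preserves the iterated fibre products $G^{(k)}$ by weighted transversality. That this structure restricts over $\R^\times$ to $G\times\R^\times\toto M\times\R^\times$ follows from the formula $\defw(F)|_{\pi^{-1}(t)}=F$ for $t\neq 0$ (\autoref{subsection: properties of weighted deformation space}), and uniqueness is immediate since $G\times\R^\times$ is dense in $\defw(G,H)$ and the operations are smooth.

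\emph{Converse.} Suppose the groupoid structure on $G\times\R^\times\toto M\times\R^\times$ extends to $\defw(G,H)\toto\defw(M,N)$; I will verify the conditions of \autoref{definition: multiplicative weighting} in order. Each extended structure map agrees on the dense set $\{t\neq 0\}$ with the $\widetilde{(\,\cdot\,)}$-extension of the corresponding original structure map, so by density and smoothness it equals the $\defw$ of that map, and \autoref{lemma: extensions of morphisms} then shows the original map is a weighted morphism. For the units, the extended unit inclusion is an embedding (unit spaces of Lie groupoids are embedded), so it equals $\defw(u)$ for the unit inclusion $u$; by \autoref{theorem: Characterization of Weighted Immersions and Submersions} $u$ is a weighted immersion, hence a weighted embedding, and its image $M$ is therefore a weighted submanifold, giving (a). For the source, the extended source is a submersion, so $s$ is a weighted submersion by \autoref{theorem: Characterization of Weighted Immersions and Submersions}, giving (b) (and likewise $t$). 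With $s,t$ now weighted submersions, $G^{(2)}$ is a weighted submanifold of $G\times G$ by \autoref{theorem: weighted fibre products}, and the fibre-product property identifies $\defw(G^{(2)},H^{(2)})$ with the composable arrows of the extended groupoid; the extended multiplication thus provides an extension of $\widetilde{\mathrm{mult}_G}$, so \autoref{lemma: extensions of morphisms} gives (c). Finally the extended inversion gives (d), so the weighting is multiplicative.

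\emph{Main obstacle.} The crux is the compatibility of the identification $\defw(G^{(2)},H^{(2)})\cong\defw(G,H)\times_{\defw(M,N)}\defw(G,H)$ with the two multiplications, and, in the converse, sequencing the argument so that (b) is proved before $G^{(2)}$ is known to be a weighted manifold. In the forward direction the verification of the groupoid axioms reduces to checking that $\defw$ preserves the iterated fibre products $G^{(k)}$ and the defining diagrams, which is precisely where weighted transversality of $s$ and $t$ is doing the work.
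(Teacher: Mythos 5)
Your proposal is correct and follows essentially the same route as the paper's own proof: the forward direction applies the functor $\defw$ to the structure maps and uses the fibre-product property (weighted transversality of $s$ and $t$) to identify $\defw(G^{(2)},H^{(2)})$ with the composable arrows, with uniqueness by density of $G\times\R^\times$; the converse combines \autoref{lemma: extensions of morphisms} with \autoref{theorem: Characterization of Weighted Immersions and Submersions} to recover each condition of \autoref{definition: multiplicative weighting}, in the same order the paper does. Your elaboration of why $M$ is a weighted submanifold (via the unit inclusion being a weighted embedding) simply fills in what the paper dismisses with ``similarly.''
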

\begin{proof}
    Suppose that the weighting of $G$ along $H$ is multiplicative. Since $M$ is a weighted submanifold of $G$, it follows from the discussion in~\autoref{subsection: properties of weighted deformation space} (b)  that $\defw(M,N)$ is a submanifold of $\defw(G,H)$. Since both $s, t:G \to M$ are weighted submersions, the maps $\defw(s_g), \defw(t_g):\defw(G, H)\to \defw(M,N)$ are submersions by~\autoref{theorem: Characterization of Weighted Immersions and Submersions}. By~\autoref{subsection: properties of weighted deformation space} (c) we have that $\defw(G, H)^{(2)} = \defw(G^{(2)}, H^{(2)})$, and therefore define groupoid multiplication by 
        \[ \mathrm{mult}_{\defw(G,H)} = \defw(\mathrm{mult}_G). \]
    Associativity of multiplication, that $\defw(M,N)$ are the units, and existence of inverses all follow from functoriality. For example, the fact that $\mathrm{mult}_{\defw(G,H)}(\varphi, s_{\defw(G,H)}(\varphi)) = \varphi$ for any $\varphi \in \defw(G,H)$ follows since the composition
        \begin{equation*}
        \xymatrixcolsep{3pc}
        \xymatrix{
            G \ar[r]^-{(\mathrm{id}, s)}  & G^{(2)} \ar[r]^-{\mathrm{mult}_G} & G  
        }        
        \end{equation*}  
    is the identity. Uniqueness of this extension is clear since $G\times \R^\times$ is a dense open set in $\defw(G,H)$. 

    Conversely, suppose that the Lie groupoid structure of $G\times \R^\times \toto M\times \R^\times$ extends to $\defw(G,H)\to \defw(M,N)$. By~\autoref{lemma: extensions of morphisms}, it follows that inversion is a weighted morphism and, using~\autoref{theorem: Characterization of Weighted Immersions and Submersions} in addition, the source and target maps are weighted submersions. Similarly, $M$ is a weighted submanifold of $G$. Since $s$ and $t$ are weighted submersions $G^{(2)}$ is a weighted submanifold of $G^2$, and $\defw(G, H)^{(2)} = \defw(G^{(2)}, H^{(2)})$ because this holds on the dense open set $G^2\times \R \sset \defw(G^2, H^2)$. It follows from~\autoref{lemma: extensions of morphisms} that $\mathrm{mult}_G$ is a weighted morphism as well. 
\end{proof}

\begin{remark}
    If $G\toto M$ is multiplicatively weighted along a \emph{wide} subgroupoid $H \toto M$ then $\defw(M,M) = M\times \R$ and we may think of 
        \[ \defw(G,H) \toto M\times \R \]
    as a family of Lie groupoids over $M$.
\end{remark}

Recall (\autoref{definition: GB groupoid}) that a GB-groupoid is a Lie groupoid together with a monoid action of $\R$ by groupoid morphisms. Applying the weighted normal functor to a weighted Lie groupoid gives an example of such an object.  

\begin{corollary}
    If $(G,H)\toto (M,N)$ is a weighted Lie groupoid pair, then 
        \[ \nuw(G,H) \toto \nuw(M,N) \]
    is a GB-groupoid with 
        \begin{enumerate*}
            \item $s_{\nuw(G,H)} = \nuw(s)$ and $t_{\nuw(G,H)} = \nuw(t)$, 
            \item $\nuw(G, H)^{(2)} = \nuw(G^{(2)}, H^{(2)})$,
            \item $\mathrm{mult}_{\nuw(G,H)} = \nuw(\mathrm{mult}_{G})$
            \item $\mathrm{inv}_{\nuw(G,H)} = \nuw(\mathrm{inv}_{G})$
        \end{enumerate*}
\end{corollary}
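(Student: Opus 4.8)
The plan is to deduce this corollary from \autoref{theorem: weighted normal and weighted deformation groupoids} by restricting the weighted deformation groupoid to its fibre over $t=0$. First I would invoke that theorem to obtain the Lie groupoid $\defw(G,H)\toto\defw(M,N)$ with structure maps $\defw(s)$, $\defw(t)$, $\defw(\mathrm{mult}_G)$, and $\defw(\mathrm{inv}_G)$. The key observation is that all of these cover the defining projection $\pi=\widetilde{1}^{[-1]}$ onto $\R$: since each structure map $F$ satisfies $F^*1=1$, the functoriality relation $\defw(F)^*\widetilde{f}^{[i]}=\widetilde{F^*f}^{[i]}$ gives $\defw(F)^*\pi = \widetilde{1}^{[-1]} = \pi$, so source and target (and hence multiplication and inversion) preserve the $\R$-coordinate. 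Consequently the groupoid restricts to each fibre $\pi^{-1}(t)$, and by~\eqref{equation: decomposition of deformation space} the fibre over $t=0$ is exactly $\nuw(G,H)\toto\nuw(M,N)$, which therefore inherits a Lie groupoid structure.

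Next I would identify the restricted structure maps. By the decomposition of $\defw(F)$ recorded in~\autoref{subsection: properties of weighted deformation space}(a), restricting $\defw(F)$ to $\pi^{-1}(0)$ yields $\nuw(F)$; applying this to $s$, $t$, $\mathrm{mult}_G$, and $\mathrm{inv}_G$ establishes claims (1), (3), and (4). The unit space is $\nuw(M,N)$, embedded via $\nuw$ of the weighted embedding $M\into G$ (using that $M$ is a weighted submanifold together with~\autoref{proposition: weighted embeddings define embeddings}). For claim (2), I would note that $s$ is a weighted submersion and is therefore weighted transverse to the weighted morphism $t$, so that~\autoref{proposition: weighter normal functor and fibre products} applies to the fibre product $G^{(2)}=G\times_M G$ and gives $\nuw(G^{(2)},H^{(2)}) = \nuw(G,H)\times_{\nuw(M,N)}\nuw(G,H) = \nuw(G,H)^{(2)}$, where the graded bundle structure on the fibre product is the one it carries as a graded subbundle of the product.

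Finally, to verify that $\nuw(G,H)\toto\nuw(M,N)$ is a GB-groupoid I would show the graded bundle monoid action $\kappa:\R\times\nuw(G,H)\to\nuw(G,H)$ is by groupoid morphisms. This is essentially immediate from functoriality: each of $\nuw(s)$, $\nuw(t)$, $\nuw(\mathrm{mult}_G)$, and $\nuw(\mathrm{inv}_G)$ is a morphism of graded bundles and hence intertwines the $(\R,\cdot)$-action $\kappa$. Combined with the identification in (2), under which the action on $\nuw(G,H)^{(2)}$ is the restriction of $\kappa\times\kappa$, the relation $\nuw(\mathrm{mult}_G)\circ(\kappa_u\times\kappa_u) = \kappa_u\circ\nuw(\mathrm{mult}_G)$ is precisely the statement that each $\kappa_u$ preserves multiplication, and analogously each $\kappa_u$ commutes with source, target, and inversion.

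The main obstacle I anticipate is purely bookkeeping rather than conceptual: one must confirm that the graded bundle action on the composable arrows $\nuw(G,H)^{(2)}$ genuinely coincides with the restriction of the product action $\kappa\times\kappa$, so that the graded-bundle-morphism property of $\nuw(\mathrm{mult}_G)$ translates into the groupoid-morphism property for each $\kappa_u$. This is exactly where~\autoref{proposition: weighter normal functor and fibre products}, together with its description of the graded structure on fibre products, does the essential work, and once it is in hand the remaining verifications are routine consequences of functoriality of $\nuw$.
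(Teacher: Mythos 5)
Your proposal is correct and follows essentially the route the paper intends: the corollary is stated as an immediate consequence of \autoref{theorem: weighted normal and weighted deformation groupoids}, obtained by restricting the deformation groupoid $\defw(G,H)\toto\defw(M,N)$ to the fibre over $t=0$ via the decomposition $\defw(F)|_{\pi^{-1}(0)}=\nuw(F)$, with \autoref{proposition: weighter normal functor and fibre products} identifying the composable arrows and graded-bundle functoriality of $\nuw$ giving the GB-structure. The paper leaves these verifications implicit, and your write-up supplies them correctly, including the bookkeeping point that the graded action on $\nuw(G^{(2)},H^{(2)})$ agrees with the restriction of the product action.
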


\begin{examples}
\begin{itemize}
    \item[(a)] If $G \toto M$ is trivially weighted along $H\toto N$, then $\nuw(G,H)$ is just $\nu(G, H) \toto \nu(M,N)$ with its usual Lie groupoid structure. 

    \item[(b)] If $(M,N)$ is weighted pair, then the product weighting of $\mathrm{Pair}(M)$ along $\mathrm{Pair}(N)$ is multiplicative and 
        \[ \defw(\mathrm{Pair}(M), \mathrm{Pair}(N)) = \mathrm{Pair}(\defw(M,N)) \]
    by the discussion in~\autoref{subsection: properties of weighted deformation space} (c).

    \item[(c)] If $(M,N)$ and $M',N')$ are weighted pairs, and $q:M\to M'$ is a weighted submersion, then submersion groupoid $M\times_{M'} M$ is multiplicatively weighted along $N\times_{N'}N$. In this case 
        \[ \defw(M\times_{M'} M, N\times_{N'}N) = \defw(M, N)\times_{\defw(M', N')} \defw(M, N), \]
    see~\autoref{subsection: properties of weighted deformation space} (c) again. 

    \item[(d)] Suppose that $(G,H)$ is a weighted Lie group pair and $(M,N)$ is a weighted $G$-space, then $\defw(G,H)$ acts on $\defw(M,N)$ and applying the weighted deformation functor to the action groupoid gives that action groupoid for the action of $\defw(G,H)$ on $\defw(M,N)$.

    \item[(e)] Let $G$ be a weighted Lie group and $P\to B$ a weighted principal $G$-bundle. Then $\defw(P, Q)\to \defw(B, C)$ is a weighted principal $\defw(G,H)$-bundle and 
        \[ \mathrm{At}(\defw(P, Q)) = \defw(\mathrm{At}(P), \mathrm{At}(Q)).  \]

    \item[(f)](\cite{van2017tangent, sadegh2018euler}) Recall that a filtered manifold is a manifold $M$ together with a filtration 
        \[ TM = F_{-r} \supseteq F_{-r+1} \supseteq \cdots \supseteq F_{-1} \supseteq 0 \]
    by subbundles satisfying $[\G(F)_{(-i)}, \G(F_{-j})]\sset \G(F_{-i-j})$. By~\autoref{theorem: singular Lie filtrations and weightings}, the product filtration 
        \[ \mathrm{Pair}(TM)  = \mathrm{Pair}(F_{-r})\supseteq \mathrm{Pair}(F_{-r+1}) \supseteq \cdots \supseteq \mathrm{Pair}(F_{-1}) \supseteq 0 \]
    defines a weighting of $\mathrm{Pair}(M)$ along $M$. Since the weighting is along $M$, it follows in particular that it is a weighted submanifold and it is shown in~\cite{van2017tangent} that the groupoid structure
        \[ \mathrm{Pair}(M)\times \R^\times \toto M\times \R^\times \]
    extends to $\defw(\mathrm{Pair}(M), M) \toto M\times \R^\times$. By~\autoref{theorem: weighted normal and weighted deformation groupoids} it follows that this weighting is multiplicative. In the next chapter we will give a direct proof of this fact (\autoref{theorem: wide integration}). In this example, $\nuw(\mathrm{Pair}(M), M)\toto M$ is the family of simply connected Lie groups integrating the family of nilpotent Lie algebras 
        \[ \ger{t}_HM = \bigoplus H_{-i}/H_{-i+1}\to M. \]

\end{itemize}
\end{examples}


    
\cleardoublepage
\chapter{Infinitesimally Multiplicative Weightings}
\label{chapter: Infinitesimally Multiplicative Weightings}

The infinitesimal counterpart a Lie groupoid is a \emph{Lie algebroid}. In this chapter we will define \emph{infinitesimally multiplicative} weightings (\autoref{definition: IM weighting}), which are the infinitesimal analogue of multiplicative weightings. We then explain two equivalent characterizations in terms of linear Poisson manifolds and the de Rham complex of the Lie algebroid (\autoref{theorem: equvalent characterizations of IM weightings}). We will then discuss differentiation of multiplicative weightings (\autoref{theorem: weightings can be differentiated}) and integration of infinitesimally multiplicative weightings along wide subalgebroids (\autoref{theorem: wide integration}), and use this to show that Lie groupoids with a multiplicative weighting along their units are in 1-1 correspondence with filtered groupoids in the sense of van Erp and Yuncken (cf.~\cite[Definition 67]{van2019groupoid}).  

\section{Preliminaries on Lie Algebroids}

Lie algebroids were introduced by Pradines in~\cite{pradines1967theorie}. 

\begin{definition}
    A \emph{Lie algebroid} over a smooth manifold $M$ is a vector bundle $A\to M$ together with a Lie bracket $[\cdot, \cdot]:\G(A)\times \G(A) \to \G(A)$ and a vector bundle morphism $a:A\to TM$, called the \emph{anchor}, such that 
        \[ [\sigma, f\tau] = f[\sigma, \tau] + \mathcal{L}_{a(\sigma)}f\cdot \tau  \]
    for all $\sigma, \tau \in \G(A)$ and $f\in C^\infty(M)$. 
\end{definition}

We use the notation $A\Rightarrow M$ to imply that the vector bundle $A\to M$ is a Lie algebroid. 

\begin{examples}
\label{examples: Lie algebroids}
\begin{itemize}
    \item[(a)] The tangent bundle $TM$ of any smooth manifold $M$ is a Lie algebroid with anchor map given by the identity. 

    \item[(b)] A Lie algebroid over a point is a Lie algebra. 

    \item[(c)] The tangent bundle $T_\mathcal{F}M$ of a foliation $\mathcal{F}$ on $M$ is a Lie algebroid with anchor map given by the identity. 

    \item[(d)] If $\kappa:P\to M$ is a principal $G$-bundle, then $\mathrm{At}(P) = TP/G \to M$ is a Lie algebroid with Lie bracket induced by the Lie bracket on $G$-invariant vector fields on $P$ and anchor map induced by $T\kappa:TP \to TM$.
    
    In the special case the $P$ is the frame bundle of a vector bundle $V\to M$, we may identify $\G(\mathrm{At}(P))$ with the \emph{linear vector fields} on $V$, i.e. the vector fields $X\in \ger{X}(V)$ with the property that 
        \[ \mathcal{L}_X : C^\infty_{[n]}(V) \to C^\infty_{[n]}(V) \]
    for all $n\geq 0$, where $C^\infty_{[n]}(V)$ denotes the functions on $V$ which are fibrewise homogeneous of degree $n$ (see~\autoref{subsection: graded bundles}). 

    \item[(e)] A \emph{family of Lie algebras} is a vector bundle $A\to M$ with a fibre-wise Lie bracket. A family of Lie algebras is a Lie algebroid over $M$ by taking the anchor map to be zero. Conversely, any Lie algebroid with zero anchor map is given in this way. 

    \item[(f)] Let $\omega\in \Omega^2(M)$ be a 2-form on a manifold $M$ and consider the vector bundle $A = TM\times \R \to M$. Define a bracket on $\G(A) = \ger{X}(M)\oplus C^\infty(M)$ by
        \[ [X+f, Y+g] = [X,Y] + \mathcal{L}_Xg - \mathcal{L}_Yf + \omega(X,Y) \]
    and let $a:TM\times \R \to TM$ be the projection onto the first factor. Then $(A, [\cdot, \cdot], a)$ is a Lie algebroid if and only if $\omega$ is closed. 
\end{itemize}
\end{examples}

For our work on weighted Lie algebroids, we will need two equivalent characterizations of Lie algebroids. 

\subsection{Lie algebroids as linear Poisson manifolds}
\label{subsection: Lie algebroids as linear Poisson manifolds}

Recall that a Poisson structure on a manifold $M$ is a is skew-symmetric bilinear map $\{\cdot, \cdot\} : C^\infty(M) \times C^\infty(M) \to C^\infty(M)$ such that, for all $f,g,h \in C^\infty(M)$,
    \begin{itemize}
        \item[(i)] the derivation property: $\{ f,gh\} = \{f,g\}h + g\{f,h\}$, and
        \item[(ii)] the Jacobi identity: $\{ f,\{g,h\}\} = \{\{f,g\}, h\} + \{g,\{f,h\}\}$. 
    \end{itemize}
A Poisson structure on $M$ defines a bivector field $\pi \in \mathfrak{X}^2(M) = \G(\wedge^2 TM)$ by the equation
    \begin{equation}
    \label{equation: bivector field - poisson relation}
        \pi(\ed f, \ed g) = \{f,g\}.
    \end{equation}
Conversely, a bivector field $\pi \in \mathfrak{X}^2(M)$ defines a skew-symmetric bracket on $C^\infty(M)$ satisfying the derivation property by~\eqref{equation: bivector field - poisson relation}; it is called a \emph{Poisson bivector field} if the induced bracket on $C^\infty(M)$ satisfies the Jacobi identity. If $V$ is a vector bundle, then a Poisson structure on $V$ is \emph{linear} if $C^\infty_{[n]}(V)$ is a subalgebra of $C^\infty(V)$ for any $n\geq 0$. Equivalently, $\pi \in \mathfrak{X}^2_{[-1]}(V)$.

\begin{example}
\label{example: symplectic manifolds are poisson}
    Let $\omega \in \Omega^2(M)$ be a symplectic form on $M$. Since $\omega$ is non-degenerate it defines an isomorphism 
        \[ \omega^\flat : TM \to T^*M, \quad X \mapsto \iota_X\omega. \]
    Let $\pi^\sharp : T^*M \to TM$ be the inverse map and, given $f\in C^\infty(M)$, let $X_f \in \ger{X}(M)$ denote the Hamiltonian vector field 
        \[ X_f = -\pi^\sharp(\ed f). \]
    Then 
        \[ \{f,g\} = \omega(X_f, X_g) \]
    defines a Poisson structure on $M$. In this way, every symplectic manifold is canonically a Poisson manifold. If $x_1, \dots, x_m$, $y_1, \dots, y_m$ are local Darboux coordinates for $M$ so that $\omega = \sum_i \ed x_i \wedge \ed y_i$, then the corresponding Poisson bivector field is given by 
        \begin{equation}
        \label{equation: local Poisson bivector field}
            \sum_i \frac{\bd}{\bd x_i}\wedge \frac{\bd}{\bd y_i}.
        \end{equation}
    In particular, the cotangent bundle $T^*M$ of \emph{any} manifold $M$ is canonically a Poisson manifold. Moreover, from~\eqref{equation: local Poisson bivector field} we deduce that it is a \emph{linear} Poisson manifold. 
\end{example}

Given a section $\sigma \in \G(V)$ let $f_\sigma \in C^\infty_{[1]}(V^*)$ be the corresponding linear function. The following theorem explains how linear Poisson structures are related to Lie algebroid structures.  

\begin{theorem}~\cite[Theorem 2.1.4]{courant1990dirac}
\label{theorem: Poisson-Lie algebroid equivalence}
    For any Lie algebroid $A\Rightarrow M$, the total space of the dual bundle $p: A^* \to  M$ has a unique Poisson structure such that for all sections $\sigma, \tau \in \G(A)$,
        \[ \{f_\sigma, f_\tau \} = f_{[\sigma,\tau]}. \]
    The anchor map is described in terms of the Poisson bracket as
        \[ p^*(a(\sigma)f) = \{f_\sigma, p^*f\}, \]
    for $f \in C^\infty(M)$ and $\sigma \in \G(A)$, while $\{p^*f, p^*g\} = 0$ for all functions $f$, $g$. The Poisson structure on $A^*$ is linear; conversely, every linear Poisson structure on a vector bundle $V \to M$ arises in this way from a unique Lie algebroid structure on the dual bundle $V^*$.
\end{theorem}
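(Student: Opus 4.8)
The plan is to prove the two directions separately, exploiting throughout the structure of the fibrewise polynomial functions. The key organizing fact is that $C^\infty_{pol}(A^*)$ is generated as a $C^\infty(M)$-algebra by the fibrewise constant functions $p^*f$ (with $f\in C^\infty(M)$), which exhaust $C^\infty_{[0]}(A^*)$, together with the fibrewise linear functions $f_\sigma$ (with $\sigma\in\G(A)$), which exhaust $C^\infty_{[1]}(A^*)=\G(A)$. A linear Poisson structure is one for which $\{C^\infty_{[i]}(A^*),C^\infty_{[j]}(A^*)\}\subseteq C^\infty_{[i+j-1]}(A^*)$, equivalently $\pi\in\mathfrak{X}^2_{[-1]}(A^*)$; such a $\pi$ is completely determined by its values on these generators, since a bivector field is a skew, $C^\infty(A^*)$-bilinear pairing of differentials and the differentials of the $p^*f$ and $f_\sigma$ span $T^*A^*$. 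This observation will give uniqueness for free and will localize every verification to the generators.

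For the forward direction I would define the bracket on generators by the three prescriptions in the statement, $\{p^*f,p^*g\}=0$, $\{f_\sigma,p^*f\}=p^*(a(\sigma)f)$, and $\{f_\sigma,f_\tau\}=f_{[\sigma,\tau]}$, observing that each prescribed value is homogeneous of exactly the degree demanded of a degree $-1$ bivector, so the resulting structure is automatically linear. I would then extend by the Leibniz rule to all of $C^\infty_{pol}(A^*)$ and, by smoothness of the structure constants in local vector bundle coordinates, to all of $C^\infty(A^*)$. Skew-symmetry and the derivation property hold by construction, so the one substantive check is the Jacobi identity, which it suffices to verify on triples of generators. There are four cases up to symmetry: three copies of $p^*$ is trivial; the triple $(f_\sigma,p^*f,p^*g)$ reduces to $a(\sigma)$ being a derivation, hence a genuine vector field; the triple $(f_\sigma,f_\tau,p^*f)$ unwinds to the algebroid Leibniz identity $[\sigma,f\tau]=f[\sigma,\tau]+(a(\sigma)f)\tau$ together with the anchor being a bracket morphism, $a([\sigma,\tau])=[a(\sigma),a(\tau)]$ (itself a consequence of the algebroid axioms); and the triple $(f_\sigma,f_\tau,f_\rho)$ is precisely the Jacobi identity of the bracket on $\G(A)$, via $\{f_\sigma,\{f_\tau,f_\rho\}\}=f_{[\sigma,[\tau,\rho]]}$ and cyclic summation.

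For the converse, given a linear Poisson structure on $V\to M$, linearity forces $\{C^\infty_{[1]}(V),C^\infty_{[1]}(V)\}\subseteq C^\infty_{[1]}(V)$ and $\{C^\infty_{[1]}(V),C^\infty_{[0]}(V)\}\subseteq C^\infty_{[0]}(V)$, while $\{C^\infty_{[0]}(V),C^\infty_{[0]}(V)\}\subseteq C^\infty_{[-1]}(V)=0$. Using the identifications $C^\infty_{[1]}(V)=\G(V^*)$ and $C^\infty_{[0]}(V)=p^*C^\infty(M)$, I would define the bracket on $\G(V^*)$ by $f_{[\sigma,\tau]}=\{f_\sigma,f_\tau\}$ and the anchor $a\colon V^*\to TM$ by $p^*(a(\sigma)f)=\{f_\sigma,p^*f\}$. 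The Jacobi identity for this bracket is inherited directly from that of the Poisson bracket; the derivation property of the Poisson bracket yields both the $C^\infty(M)$-linearity of $a$ in $\sigma$ (so that $a$ descends to a genuine bundle map) and the Leibniz rule for the algebroid bracket, with the vanishing $\{p^*f,p^*g\}=0$ guaranteeing that each $a(\sigma)$ is an honest vector field. By construction these two assignments are mutually inverse.

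The main obstacle is not any single computation but the bookkeeping surrounding the Jacobi identity and well-definedness. One must check that the Leibniz extension from generators is consistent with the $C^\infty(M)$-module relation $f_{g\sigma}=p^*g\cdot f_\sigma$ (i.e. that the candidate bivector is genuinely tensorial rather than merely defined on a spanning set of differentials), and one must track carefully which algebroid axiom is being used in each mixed-degree Jacobi case, since the $(f_\sigma,f_\tau,p^*f)$ case encodes the Leibniz rule and the anchor morphism property, not the Lie-bracket Jacobi identity. Verifying this tensoriality and sorting the axioms case by case is where the real work lies; everything else is bookkeeping enabled by the generating property of $C^\infty_{pol}(A^*)$.
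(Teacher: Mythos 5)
Your proposal is correct, but there is nothing in the paper to compare it against: the paper states this result as an imported theorem, citing Courant (Theorem 2.1.4 of \emph{Dirac manifolds}), and gives no proof of its own. Your argument is the standard one from the literature — determine the degree $-1$ bivector on the generators $p^*f$, $f_\sigma$ of $C^\infty_{pol}(A^*)$, extend by Leibniz, check Jacobi case by case on generators, and invert the construction for the converse — and it is sound in outline. Two small bookkeeping corrections to your case analysis: the Jacobi identity for the triple $(f_\sigma, p^*f, p^*g)$ is trivially $0=0$ (every double bracket appearing in it is a bracket of two pullbacks, hence zero), and the derivation property of $a(\sigma)$ is instead what makes the Leibniz extension consistent with the multiplicative relation $p^*(fg)=p^*f\,p^*g$; likewise the triple $(f_\sigma, f_\tau, p^*f)$ encodes exactly the anchor-morphism identity $a([\sigma,\tau])=[a(\sigma),a(\tau)]$ (which, as you correctly note, is a consequence of the algebroid axioms), while the algebroid Leibniz rule enters only through the tensoriality check $f_{g\sigma}=p^*g\,f_\sigma$ that you flag in your final paragraph. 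With those attributions straightened out, your proof is complete and is essentially the argument one finds in Courant's paper and in standard references.
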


\begin{example}
    Recall from~\autoref{examples: Lie algebroids} (a) that the tangent bundle $TM$ of any manifold $M$ is a Lie algebroid. The Poisson structure on $T^*M$ described in~\autoref{theorem: Poisson-Lie algebroid equivalence} is the one corresponding to the canonical symplectic structure, as described in~\autoref{example: symplectic manifolds are poisson}. 
\end{example}

\subsection{Super-geometric characterization of Lie algebroids}
\label{subsection: Super-geometric characterization of Lie algebroids}

A Lie algebroid structure on a vector bundle $A \to M$ induces a differential graded algebra structure on $\G(\wedge A^*)$ by setting 
    \begin{align}
    \label{equation: algebroid differential}
    \begin{split}
        (\ed_A \omega)(\sigma_1, \dots, \sigma_{k})  =  \sum_{i<j} & (-1)^{i+j} \omega([\sigma_i, \sigma_j], \sigma_1, \ldots, \widehat{\sigma_i}, \ldots, \widehat{\sigma_j}, \ldots, \sigma_{k}) \\
        & + \sum_{i=1}^{k}(-1)^{i+1}\mathcal{L}_{a(\sigma_i)}\omega(\sigma_1, \ldots, \widehat{\sigma_i}, \ldots, \sigma_{k}),
    \end{split}
    \end{align}
where $\omega \in \G(\wedge^kA^*)$ and where the hat denotes omission. 

Vaintrob~\cite{vaintrob1997lie} observed that the converse is also true. If $A\to M$ is a vector bundle and $\ed_A:\G(\wedge A^*) \to \G(\wedge A^*)$ is a degree $+1$ derivation satisfying $\ed_A\circ \ed_A = 0$, then we can define a Lie algebroid structure on $A$ as follows. For $\sigma \in \G(A)$, let $\iota_\sigma : \G(\wedge A^*) \to \G(\wedge A^*)$ be contraction by $\sigma$ and let $\mathcal{L}_\sigma = \ed_A \circ \iota_\sigma + \iota_\sigma \circ \ed_A$. Given $\sigma_1, \sigma_2 \in \G(A)$, define $[\sigma_1, \sigma_2]\in \G(A)$ by the formula 
    \[ \iota_{[\sigma_1, \sigma_2]} = [\mathcal{L}_{\sigma_1}, \iota_{\sigma_2}]. \]
Finally, define the map $a:A\to TM$ by the equation
    \[ \mathcal{L}_{a(\sigma)}f = \mathcal{L}_\sigma f, \]
where on the right hand side we are identifying $\G(\wedge^0A^*)$ with $C^\infty(M)$. Then $(A, [\cdot, \cdot], a)$ is a Lie algebroid over $M$. Moreover, these two constructions are inverse to one another. Summarizing, we have:

\begin{theorem}[\cite{vaintrob1997lie}]
\label{theorem: supergeometric description of Lie algebroids}
    Let $A \to M$ be a vector bundle, and let $\ed_A$ be a differential on $\G(\wedge A^*)$, that is, $\ed_A$ is a derivation of degree 1 with $\ed_A \circ \ed_A = 0$. Then $\ed_A$ determines a unique Lie algebroid structure on $A$ for which $\ed_A$ is the de Rham differential.
\end{theorem}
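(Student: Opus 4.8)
The plan is to follow the derived-bracket formalism outlined in the text preceding the statement, treating $\G(\wedge A^*)$ as a graded-commutative algebra on which $\ed_A$, the contractions $\iota_\sigma$, and the Lie derivatives $\mathcal{L}_\sigma = \ed_A \circ \iota_\sigma + \iota_\sigma \circ \ed_A$ act, and extracting every Lie algebroid axiom from the single hypothesis $\ed_A \circ \ed_A = 0$ by means of Cartan-calculus identities. The central bookkeeping device throughout is the graded commutator $[P,Q] = PQ - (-1)^{|P||Q|}QP$ of operators, which is again a derivation whenever $P$ and $Q$ are, and which satisfies the graded Jacobi identity automatically, being the commutator in an associative algebra.

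First I would verify that both constructions are well-defined. The operator $[\mathcal{L}_{\sigma_1}, \iota_{\sigma_2}]$ is a degree $-1$ derivation of $\G(\wedge A^*)$; it annihilates $\G(\wedge^0 A^*) = C^\infty(M)$ for degree reasons, hence it is tensorial, and a degree $-1$ derivation that vanishes on functions must be contraction by a unique section, which we name $[\sigma_1, \sigma_2]$. Likewise, on functions one has $\mathcal{L}_\sigma f = \iota_\sigma \ed_A f$, which is a derivation of $C^\infty(M)$ and so determines a vector field $a(\sigma)$; the $C^\infty(M)$-linearity of contraction in the argument $\sigma$ shows at once that $a\colon A \to TM$ is a vector bundle morphism.

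Next I would record the standard Cartan identities, all consequences of $\ed_A^2 = 0$ together with the graded Jacobi identity: the relation $[\ed_A, \mathcal{L}_\sigma] = 0$ (apply graded Jacobi to the triple $\ed_A, \ed_A, \iota_\sigma$, using $[\ed_A, \ed_A] = 2\ed_A^2 = 0$) and, crucially, $[\mathcal{L}_{\sigma_1}, \mathcal{L}_{\sigma_2}] = \mathcal{L}_{[\sigma_1, \sigma_2]}$. From these the algebroid axioms follow upon applying $\iota$ everywhere. The Leibniz rule $[\sigma, f\tau] = f[\sigma,\tau] + (a(\sigma)f)\tau$ drops out of expanding $[\mathcal{L}_\sigma, \iota_{f\tau}]$ using that $\iota$ is $C^\infty(M)$-linear and $\mathcal{L}_\sigma$ is a derivation; skew-symmetry follows from $\iota_{\sigma_1}\iota_{\sigma_2} = -\iota_{\sigma_2}\iota_{\sigma_1}$; and the Jacobi identity for $[\cdot,\cdot]$ is exactly the graded Jacobi identity applied to $\mathcal{L}_{\sigma_1}, \mathcal{L}_{\sigma_2}, \iota_{\sigma_3}$, giving $\iota_{[[\sigma_1,\sigma_2],\sigma_3]} = \iota_{[\sigma_1,[\sigma_2,\sigma_3]]} - \iota_{[\sigma_2,[\sigma_1,\sigma_3]]}$. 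I expect that establishing $[\mathcal{L}_{\sigma_1}, \mathcal{L}_{\sigma_2}] = \mathcal{L}_{[\sigma_1,\sigma_2]}$ — equivalently, that $\sigma \mapsto \mathcal{L}_\sigma$ is a bracket-preserving representation — is the genuine crux, since this is where $\ed_A^2 = 0$ does its essential work and where the sign conventions demand care.

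Finally I would check that the two assignments are mutually inverse. Given the Lie algebroid just constructed, its Chevalley--Eilenberg differential from~\eqref{equation: algebroid differential} agrees with the original $\ed_A$: both are degree $+1$ derivations of $\G(\wedge A^*)$, so it suffices to compare them on the generators $C^\infty(M)$ and $\G(A^*)$, where agreement is immediate from the definitions of $a$ and $[\cdot,\cdot]$ — for a function one has $\langle \ed_A f, \sigma \rangle = \iota_\sigma \ed_A f = a(\sigma)f$, and for a one-form the two-variable Koszul formula reproduces $\ed_A$ on $\G(A^*)$. Conversely, starting from a Lie algebroid, forming its differential, and re-running the construction returns the original bracket and anchor, since the Cartan identities hold verbatim for the Chevalley--Eilenberg differential by a direct computation. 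Uniqueness of the Lie algebroid structure for which $\ed_A$ is the de Rham differential then follows, because such a structure is pinned down by its differential through these same generator-level formulas.
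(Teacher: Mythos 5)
Your proposal is correct and follows essentially the same route as the paper, which does not prove the theorem in detail but instead presents exactly this derived-bracket construction --- $\iota_{[\sigma_1,\sigma_2]} = [\mathcal{L}_{\sigma_1}, \iota_{\sigma_2}]$ and $\mathcal{L}_{a(\sigma)}f = \mathcal{L}_\sigma f$ --- before citing Vaintrob (and Meinrenken's notes) for the verification. Your Cartan-calculus argument, with $[\ed_A, \mathcal{L}_\sigma] = 0$ and $[\mathcal{L}_{\sigma_1}, \mathcal{L}_{\sigma_2}] = \mathcal{L}_{[\sigma_1,\sigma_2]}$ as the key identities and the generator-level comparison for the inverse direction, is precisely the standard filling-in of that sketch.
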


This approach has the benefit that Lie algebroid morphisms are readily defined. 

\begin{definition}
    Let $A\Rightarrow M$ and $B\Rightarrow N$ be Lie algebroids. A vector bundle morphism $\varphi:B\to A$ is a \emph{Lie algebroid} morphism if the pullback map $\G(\wedge A^*) \to \G(\wedge B^*)$ is a cochain map.  
\end{definition}

See~\cite[Section 12]{meinrenken2017Lie} for more details.

\begin{remark}
    In super-geometric terms, the graded commutative algebra $\G(\wedge A^*)$ is understood as the smooth functions on a "super-manifold" denoted\footnote{Not to be confused with the shifted weighting, as described in~\autoref{subsection: linear constructions} (c).} $A[1]$. The differential $\ed_A$ is a derivation of this graded commutative algebra, hence can be thought of as a vector field $H$ on $A[1]$. The condition that $\ed_A \circ \ed_A = 0$ is equivalent to $[H,H] = 2H^2 = 0$. Such a vector field is called a \emph{homological vector field}. One may therefore restate~\autoref{theorem: supergeometric description of Lie algebroids} as saying that a Lie algebroid is a vector bundle $A\to M$ together with a homological vector field $H$.
\end{remark}

\subsection{The Lie functor}

We now explain how to associate a Lie algebroid to a Lie groupoid $G \toto M$; our approach is somewhat non-standard, so for more details see~\cite[Section 11]{meinrenken2017poisson}. As a vector bundle, let $\mathrm{Lie}(G) = \nu(G,M) \to M$ be the normal bundle of the units. Recall that the tangent bundle $TG$ is a VB-groupoid, hence dually we obtain the cotangent groupoid
    \begin{equation*}
    \xymatrix{
        T^*G \ar@<-.5ex>[r] \ar@<.5ex>[r] \ar[d] & \mathrm{Lie}(G)^* \ar[d] \\
        G \ar@<-.5ex>[r] \ar@<.5ex>[r] & M
    }
    \end{equation*}
The canonical Poisson structure on $T^*G$ defines a Poisson structure on $\mathrm{Lie}(G)^*$ by the formula 
    \begin{equation}
    \label{equation: Libermann Poisson sructure}
        \{ s_{T^*G}^*f, s_{T^*G}^*g\}_{T^*G} = s_{T^*G}^*\{f,g\}_{\mathrm{Lie}(G)^*},
    \end{equation}
which is linear since the Poisson structure on $T^*G$ is linear. By~\autoref{theorem: Poisson-Lie algebroid equivalence}, this gives $\mathrm{Lie}(G)$ the structure of a Lie algebroid. 

We can describe this Lie algebroid structure more concretely as follows. Since the source and target maps $s_G, t_G:G\to M$ agree on $M$, it follows that $Ts-Tt:TG \to TM$ vanishes on $TM$ and therefore descends to a map $a:\mathrm{Lie}(G) \to TM$; we take this to be the anchor map. To describe the bracket on sections, let $\sigma \in \G(\mathrm{Lie}(G))$ and let $f_\sigma \in C^\infty_{[1]}(\mathrm{Lie}(G)^*)$ be the corresponding linear function. Since $s_{T^*G} : T^*G \to \mathrm{Lie}(G)^*$ is a vector bundle morphism, the pullback $s_{T^*G}^*f_\sigma \in C^\infty_{[1]}(T^*G)$ is a linear function and therefore corresponds to a vector field $\sigma^L \in \mathfrak{X}(G)$; it is called the \emph{left-invariant extension} of $\sigma \in \G(\mathrm{Lie}(G))$. The Lie bracket on $\G(\mathrm{Lie}(G))$ is the determined by the equation
    \[ [\sigma^L, \tau^L] = [\sigma, \tau]^L.  \]
Similarly, we define the \emph{right-invariant} extension of $\sigma \in \G(\mathrm{Lie}(G))$ to be the vector field $\sigma^R \in \ger{X}(G)$ corresponding to $-t_{T^*G}^*f_\sigma \in C^\infty_{[1]}(T^*G)$. 

\begin{example}[{\cite[Example 11.3.1]{mackenzie2005general}}]
    Let $G = \mathrm{Pair}(M) \toto M$. In this case, we identify $\mathrm{Lie}(\mathrm{Pair}(TM))$ with $TM$ using the map
        \[ TM\oplus TM \to TM, \quad (X,Y)\mapsto Y-X.  \]
    The cotangent groupoid of $\mathrm{Pair}(M)$ is the VB-groupoid 
        \begin{equation*}
        \xymatrix{
            T^*\mathrm{Pair}(M) \ar@<-.5ex>[r] \ar@<.5ex>[r] \ar[d] & T^*M \ar[d] \\
            \mathrm{Pair}(M) \ar@<-.5ex>[r] \ar@<.5ex>[r] & M,
        }
        \end{equation*}
    where 
        \[ s_{T^*\mathrm{Pair}(M)}(\xi, \tau) = \tau \quad \text{and} \quad t_{T^*\mathrm{Pair}(M)}(\xi, \tau) = -\xi.  \]
    Given a vector field $X\in \ger{X}(M)$, we have that 
        \[ s^*_{T^*\mathrm{Pair}(M)}f_X = f_{(0,X)} \quad \text{and} \quad t^*_{T^*\mathrm{Pair}(M)}f_X = f_{(-X,0)}, \]
    hence $X^L = (0,X)\in \ger{X}(\mathrm{Pair}(M))$ and $X^R = (X, 0)\in \ger{X}(\mathrm{Pair}(M))$. 
\end{example}

For reference, we record the following fact. 

\begin{lemma}
\label{lemma: left and right invariant relations} 
    Let $G\toto M$ be a Lie groupoid with Lie algebroid $\mathrm{Lie}(G)\Rightarrow M$. Then for all $\sigma, \tau \in \G(\mathrm{Lie}(G))$:
        \begin{itemize}
            \item[(a)] $a(\sigma) \sim_i \sigma^L-\sigma^R$, where $i:M\to G$ is the inclusion, and 
            \item[(b)] $[\sigma^L, \tau^R] = 0$ and $[\sigma^R, \tau^R] = -[\sigma, \tau]^R$. 
        \end{itemize}
\end{lemma}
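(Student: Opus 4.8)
\textbf{Proof plan for~\autoref{lemma: left and right invariant relations}.}

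The plan is to exploit the Poisson-geometric description of the Lie functor given in~\autoref{subsection: Lie algebroids as linear Poisson manifolds} and~\autoref{subsection: Super-geometric characterization of Lie algebroids}, translating every statement about the vector fields $\sigma^L, \sigma^R$ into an equivalent statement about the linear functions $f_\sigma \in C^\infty_{[1]}(\mathrm{Lie}(G)^*)$ and their pullbacks along $s_{T^*G}$ and $t_{T^*G}$. Recall the definitions: $\sigma^L$ is the vector field on $G$ corresponding (under the identification of $C^\infty_{[1]}(T^*G)$ with $\mathfrak{X}(G)$) to the linear function $s_{T^*G}^*f_\sigma$, while $\sigma^R$ corresponds to $-t_{T^*G}^*f_\sigma$. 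Under this identification the Lie bracket of vector fields corresponds to the canonical Poisson bracket on $T^*G$, so that $[\sigma^L,\tau^L]$ corresponds to $\{s_{T^*G}^*f_\sigma, s_{T^*G}^*f_\tau\}$, and similarly for the other brackets.

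For part (a), I would argue directly along the units $M\sset G$. By definition, the anchor $a:\mathrm{Lie}(G)\to TM$ is the map induced by $Ts-Tt:TG\to TM$ after descending to the quotient $\mathrm{Lie}(G)=\nu(G,M)$. Since $\sigma^L-\sigma^R$ corresponds to $s_{T^*G}^*f_\sigma + t_{T^*G}^*f_\sigma$, and since $s$ and $t$ agree on $M$, restricting this combination to the units and comparing with the formula for the anchor in~\autoref{theorem: Poisson-Lie algebroid equivalence} (namely $p^*(a(\sigma)f)=\{f_\sigma,p^*f\}$) shows that $\sigma^L-\sigma^R$ is $i$-related to $a(\sigma)$, where $i:M\to G$ is the inclusion. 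Concretely, one checks that for $g\in M$ the vector $(\sigma^L-\sigma^R)_g$ projects under $Ts-Tt$ to $a(\sigma)_g$, which is exactly the $i$-relatedness statement.

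For part (b), the cleanest route is the source-target compatibility of the two pullback maps. The key structural fact is that $s_{T^*G}$ and $t_{T^*G}$ make $T^*G$ into a VB-groupoid whose source and target fibres are $\{s_{T^*G}\}$- and $\{t_{T^*G}\}$-constant, and crucially that pullbacks of functions via $s_{T^*G}$ Poisson-commute with pullbacks via $t_{T^*G}$: this is the manifestation, in the cotangent groupoid, of the fact that left and right translations commute. Thus $\{s_{T^*G}^*f_\sigma, t_{T^*G}^*f_\tau\}=0$, which upon translating back to vector fields gives $[\sigma^L,\tau^R]=0$. For the second identity in (b), I would compute $\{t_{T^*G}^*f_\sigma, t_{T^*G}^*f_\tau\}$ using that $t_{T^*G}:T^*G\to\mathrm{Lie}(G)^*$ is a Poisson \emph{anti}-morphism (the source map is Poisson, the target introduces a sign, as reflected in the sign convention $\sigma^R\leftrightarrow -t_{T^*G}^*f_\sigma$); this anti-morphism property yields $[\sigma^R,\tau^R]=-[\sigma,\tau]^R$ directly from $\{f_\sigma,f_\tau\}=f_{[\sigma,\tau]}$ in~\autoref{theorem: Poisson-Lie algebroid equivalence}.

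The main obstacle I anticipate is pinning down the precise sign and morphism/anti-morphism conventions for $s_{T^*G}$ and $t_{T^*G}$, since these drive both the commutation $[\sigma^L,\tau^R]=0$ and the sign in $[\sigma^R,\tau^R]=-[\sigma,\tau]^R$. These conventions are fixed by the definition of the Poisson structure on $\mathrm{Lie}(G)^*$ via~\eqref{equation: Libermann Poisson sructure} together with the chosen sign $\sigma^R\leftrightarrow -t_{T^*G}^*f_\sigma$, so the careful bookkeeping amounts to verifying that the source map is a genuine Poisson map while the target map reverses the bracket. Once that is settled, all three identities follow by transporting the bracket relations of~\autoref{theorem: Poisson-Lie algebroid equivalence} through the correspondence between linear functions on $T^*G$ and vector fields on $G$.
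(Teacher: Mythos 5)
First, a point of comparison: the paper does not actually prove this lemma at all --- it is recorded ``for reference'' as a standard fact (cf.~\cite{meinrenken2017Lie}) --- so there is no in-paper argument to measure yours against, and I can only assess your proposal on its own terms. Your general strategy, translating $\sigma^L$ and $\sigma^R$ into the linear functions $s_{T^*G}^*f_\sigma$ and $-t_{T^*G}^*f_\sigma$ and transporting bracket identities through the correspondence between $C^\infty_{[1]}(T^*G)$ and $\ger{X}(G)$, is natural given the paper's setup, and it does dispose of the first identity in (b) cleanly: $\{s_{T^*G}^*f,\,t_{T^*G}^*g\}=0$ holds on any symplectic groupoid, and the extra sign in the definition of $\sigma^R$ is immaterial there, so $[\sigma^L,\tau^R]=0$ follows.

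The other two identities, however, have genuine gaps. For (a), your ``concrete check'' --- that $(\sigma^L-\sigma^R)_g$ projects under $Ts-Tt$ to $a(\sigma)_g$ --- is not the $i$-relatedness statement and is in fact false: $Ts-Tt$ vanishes identically on $TM$, so a vector field tangent to $M$ projects under it to $0$, never to $a(\sigma)$. What must be shown is that $(\sigma^L-\sigma^R)|_M$ is tangent to $M$ (which holds precisely because $\sigma^L|_M$ and $\sigma^R|_M$ have the \emph{same} normal class $\sigma$ in $\nu(G,M)$) and that its restriction equals $a(\sigma)$; the clean way is to apply $Ts$, which is the identity on $TM$, kills $\sigma^R$ (tangent to $s$-fibres), and sends $\sigma^L_m$ to $a(\sigma)_m$. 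For the second identity in (b), the bookkeeping you defer is exactly where the argument breaks. Carrying it out with the paper's convention $f_{\sigma^R}=-t_{T^*G}^*f_\sigma$ and the (correct) anti-Poisson property of $t_{T^*G}$ gives
\[
\{f_{\sigma^R},f_{\tau^R}\}=\{-t_{T^*G}^*f_\sigma,\,-t_{T^*G}^*f_\tau\}=\{t_{T^*G}^*f_\sigma,\,t_{T^*G}^*f_\tau\}=-t_{T^*G}^*f_{[\sigma,\tau]}=f_{[\sigma,\tau]^R},
\]
i.e.\ $[\sigma^R,\tau^R]=+[\sigma,\tau]^R$: the two minus signs coming from the definitions of $\sigma^R,\tau^R$ cancel each other, and the single minus from anti-Poissonness is reabsorbed by the minus in $f_{[\sigma,\tau]^R}=-t_{T^*G}^*f_{[\sigma,\tau]}$. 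So your route produces the \emph{opposite} sign to the one stated. You can witness this concretely on the pair groupoid with the paper's own structure maps $s_{T^*\mathrm{Pair}(M)}(\xi,\tau)=\tau$, $t_{T^*\mathrm{Pair}(M)}(\xi,\tau)=-\xi$: the convention $f_{X^R}=-t_{T^*G}^*f_X$ yields $X^R=(X,0)$, for which $[X^R,Y^R]=([X,Y],0)=+[X,Y]^R$, and $X^L-X^R=(-X,X)$ is not even tangent to the diagonal, so both (a) and the second half of (b) fail. The lemma's stated signs require the opposite (and standard) convention $f_{\sigma^R}=+t_{T^*G}^*f_\sigma$ --- equivalently, that $\sigma^R|_M$ have normal class $+\sigma$, the same as $\sigma^L|_M$, which is exactly what part (a) forces. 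As written, your proof cannot be completed: you must first fix this sign convention for $\sigma^R$ (the paper's definition is inconsistent with the lemma), after which the corrected geometric argument for (a) and your Poisson computation for (b) both go through.
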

    
\section{Definition of Infinitesimally Multiplicative Weightings}
\label{section: IM weighting definition}

Let $A \Rightarrow M$ be a Lie algebroid.

\begin{definition}
\label{definition: IM weighting}
    An \emph{infinitesimally multiplicative weighting} of $A\Rightarrow M$ is a linear weighting of $A$ with the additional properties that 
        \begin{itemize}
            \item[(a)] $a(\G(A)_{(i)}) \sset \mathfrak{X}(M)_{(i)}$ for all $i$, and 
            \item[(b)] for all $\sigma \in \G(A)_{(i)}$ and $\tau \in \G(A)_{(j)}$, we have 
                \[ [\sigma, \tau] \in \G(A)_{(i+j)}.  \]
        \end{itemize}
    A Lie algebroid endowed with an infinitesimally multiplicative weighting is called a \emph{weighted Lie algebroid}. 
\end{definition}

\begin{examples}
\label{examples: weighted Lie algebroids}
\begin{itemize}
    \item[(a)] If $B\Rightarrow N$ is a Lie subalgebroid of $A\Rightarrow M$, then the trivial weighting of $A$ along $B$ is infinitesimally multiplicative. 
    
    \item[(b)] If $(M,N)$ is a weighted pair, then $TM$ with its induced weighting is a weighted Lie algebroid.

    \item[(c)] If $\pi:M\to N$ is a weighted submersion, then $\ker(T\pi)$ is infinitesimally weighted via the filtration
        \[ \G(\ker(T\pi))_{(i)} = \{X\in \ger{X}(M)_{(i)} : X\sim_\pi 0 \}. \]

    \item[(d)] Recall from~\autoref{remark: wide weighting and filtered vector bundles} that a linear weighting of a vector space is simply a filtration by subspaces. Hence, an infinitesimally multiplicative weighting of a Lie algebra $\mathfrak{g}$ is given by a filtration of $\ger{g}$ by subspaces $\mathfrak{g}_{i}$ with the property that 
        \begin{equation}
        \label{equation: filtration of lie alg}
            [\mathfrak{g}_i, \mathfrak{g}_j] \sset \mathfrak{g}_{i+j}.
        \end{equation}
    If $(G, H)$ is a weighted Lie group pair then the filtration of $TG|_H$ restricts to a filtration of $\ger{g}$ satisfying~\eqref{equation: filtration of lie alg}. Hence the Lie algebra of a weighted Lie group inherits an infinitesimally multiplicative weighting. 

    \item[(e)] An action of a weighted Lie algebra $\ger{g}$ on a weighted manifold $M$ is called \emph{weighted} if the action map 
        \[ \ger{g}\times M  \to TM, \quad (\xi, p)\mapsto \xi_M(p) \]
    is a morphism of weighted vector bundles. In this case, the \emph{action algebroid}
        \[ \ger{g}\times M \Rightarrow M \]
    is a weighted Lie algebroid. 

    \item[(f)] Any linearly weighted vector bundle can be thought of as a weighted Lie algebroid by taking both the anchor and bracket to be zero. 

    \item[(g)] If $G$ is a weighted Lie group and $P\to M$ is a principal $G$-bundle with a principal weighting along $Q\to N$, then the Atiyah algebroid $\mathrm{At}(P)\Rightarrow M$ has a canonical Lie algebroid weighting along $\mathrm{At}(Q)\Rightarrow N$. The filtration of the sections is defined by the identification with $G$-invariant vector fields on $P$. Similarly, the Lie algebroid $\ger{gau}(P) = (P\times \ger{g})/G \Rightarrow M$ has a canonical infinitesimally multiplicative weighting along $\ger{gau}(Q)\Rightarrow N$. Moreover,  
        \[ 0 \corr{} \ger{gau}(P) \corr{} \mathrm{At}(P) \corr{} TM \corr{} 0 \]
    is an exact sequence of weighted vector bundles in the sense that one can find a splitting $TM\to \mathrm{At}(P)$ which is a weighted vector bundle morphism.

    \item[(h)] An infinitesimally multiplicative weighting of a Lie algebroid $A \Rightarrow M$ along the zero subalgebroid is given by a filtration 
        \[A = F_{-r} \supseteq F_{-r+1} \supseteq \cdots \supseteq 0 \]
    such that $[\Gamma(F_{-i}), \Gamma(F_{-j})] \subseteq \Gamma(F_{-i-j})$. If $A = \mathrm{Lie}(G)$ for some Lie groupoid $G$, then this is a \emph{Lie filtration} of $G$, in the sense of van Erp and Yuncken (cf.~\cite[Definition 17]{van2017tangent}). In particular, a Lie filtration of $M$ is a infinitesimally multiplicative weighting of $TM$ along the zero section.

    \item[(i)] Let $(M,N)$ be a weighted pair and let $\omega \in \Omega^2(M)$ be closed. Let $A = TM\times \R \Rightarrow M$ be the Lie algebroid defined in~\autoref{examples: Lie algebroids} (f), with 
        \[ [X+f, Y+g] = [X,Y] +\mathcal{L}_Xf - \mathcal{L}_{Y}g + \omega(X,Y) \]
    Then $\G(A)_{(i)} = \mathfrak{X}(TM)_{(i)}\oplus C^\infty(M)_{(i)}$ is an infinitesimally multiplicative weighting if and only if $\omega \in \Omega^2(M)_{(0)}$. 
\end{itemize}
\end{examples}

Let $A\Rightarrow M$ be a weighted Lie algebroid. Given a frame weighted $\sigma_a$ for $A|_U$ let $\G_{ab}^c \in C^\infty(U)$ be the corresponding structure functions for the Lie algebroid, defined by 
    \[ [\sigma_a, \sigma_b] = \sum_c \G_{ab}^c \sigma_c.  \]
Since $\sum_c \G_{ab}^c \sigma_c \in \G(V|_U)_{(v_a+v_b)}$ if and only if $\G_{ab}^c \in C^\infty(U)_{(v_a+v_b-v_c)}$, we have the following lemma. 

\begin{lemma}
\label{lemma: structure constants for Lie algebroids}
    A linear weighting of a Lie algebroid $A\Rightarrow M$ is infinitesimally multiplicative if and only if the anchor map is a weighted vector bundle morphism and for any choice of weighted frame $\sigma_a\in \G(A|_U)_{(v_a)}$ the associated structure functions satisfy $\G_{ab}^c \in C^\infty(U)_{(v_a+v_b-v_c)}$. 
\end{lemma}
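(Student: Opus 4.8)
The plan is to work entirely in a local weighted frame and reduce the whole statement to bookkeeping of filtration degrees. The crucial input is the equivalence recorded just before the statement: if $\sigma_a$ is a weighted frame for $A|_U$ with $\sigma_a \in \G(A|_U)_{(v_a)}$, then for coefficients $g_c \in C^\infty(U)$ one has $\sum_c g_c\sigma_c \in \G(A|_U)_{(k)}$ if and only if $g_c \in C^\infty(U)_{(k-v_c)}$ for every $c$. The ``if'' direction is immediate from the local model~\eqref{equation: local model for linear weighting}, while the ``only if'' direction uses that a frame gives a unique expansion: comparing $\sum_c g_c\sigma_c$ with any representative of $\G(A|_U)_{(k)}$ coming from the local model forces each $g_c$ into the asserted degree. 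I will use this equivalence repeatedly, for coefficients of brackets.

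For the forward implication, assume the weighting is infinitesimally multiplicative. Condition (a) of~\autoref{definition: IM weighting} reads $a(\G(A)_{(i)}) \sset \mathfrak{X}(M)_{(i)}$, which, since the anchor covers the identity of $M$, is exactly the statement that $a$ is a weighted vector bundle morphism by the common-base characterization of weighted morphisms. For the structure functions, apply condition (b) to the frame sections: since $\sigma_a \in \G(A|_U)_{(v_a)}$ and $\sigma_b \in \G(A|_U)_{(v_b)}$, we get $[\sigma_a,\sigma_b] = \sum_c \G_{ab}^c\sigma_c \in \G(A|_U)_{(v_a+v_b)}$, and the frame equivalence with $k = v_a+v_b$ yields $\G_{ab}^c \in C^\infty(U)_{(v_a+v_b-v_c)}$.

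For the converse, assume $a$ is a weighted morphism and that the structure functions satisfy $\G_{ab}^c \in C^\infty(U)_{(v_a+v_b-v_c)}$ in every weighted frame. Condition (a) of the definition is again the common-base characterization. For condition (b), the Lie bracket is local and the filtration is a subsheaf, so it suffices to verify $[\sigma,\tau] \in \G(A|_U)_{(i+j)}$ for $\sigma \in \G(A|_U)_{(i)}$ and $\tau \in \G(A|_U)_{(j)}$. Write $\sigma = \sum_a f_a\sigma_a$ and $\tau = \sum_b g_b\sigma_b$ with $f_a \in C^\infty(U)_{(i-v_a)}$ and $g_b \in C^\infty(U)_{(j-v_b)}$, as permitted by the local model. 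Using the Leibniz rule for the algebroid bracket together with $a(g_b\sigma_b) = g_b\,a(\sigma_b)$, one expands $[\sigma,\tau]$ into the sum over $a,b$ of
\[ f_ag_b\,[\sigma_a,\sigma_b] + f_a(\mathcal{L}_{a(\sigma_a)}g_b)\sigma_b - g_b(\mathcal{L}_{a(\sigma_b)}f_a)\sigma_a. \]
Each summand lies in $\G(A|_U)_{(i+j)}$ by the frame equivalence: in the first term the coefficient of $\sigma_c$ is $f_ag_b\G_{ab}^c \in C^\infty(U)_{(i+j-v_c)}$; in the second, since $a(\sigma_a) \in \mathfrak{X}(M)_{(v_a)}$ the Lie derivative shifts filtration degree by $v_a$, so $f_a(\mathcal{L}_{a(\sigma_a)}g_b) \in C^\infty(U)_{(i+j-v_b)}$, the coefficient of $\sigma_b$; the third term is symmetric. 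Summing establishes condition (b).

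The main obstacle is not any single difficult step but the degree bookkeeping in the Leibniz expansion, and in particular pinning down exactly where the hypothesis that $a$ is weighted is used: it is precisely what guarantees $a(\sigma_a) \in \mathfrak{X}(M)_{(v_a)}$, so that the off-diagonal derivative terms shift degree by the correct amount. Without this the derivative terms would be uncontrolled even if the structure functions alone behaved well, which is why both hypotheses of the lemma are genuinely needed.
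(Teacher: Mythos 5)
Your proof is correct and follows essentially the same route as the paper: the paper derives the lemma directly from the observation that, in a weighted frame, $\sum_c \G_{ab}^c\sigma_c \in \G(A|_U)_{(v_a+v_b)}$ if and only if $\G_{ab}^c \in C^\infty(U)_{(v_a+v_b-v_c)}$, combined with the common-base characterization of weighted vector bundle morphisms. The only difference is that you spell out the converse direction explicitly (the Leibniz expansion and its degree bookkeeping), which the paper treats as immediate and leaves implicit; your accounting there is accurate, including the observation that the weightedness of the anchor is exactly what controls the derivative terms.
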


\begin{remark}
    For a weighted Lie algebra $\ger{g}$ with structure constants $\G_{ab}^c$, this says that $\G_{ab}^c = 0$ whenever $v_a+v_b > v_c$. 
\end{remark}

\section{Alternative Characterizations of Lie Algebroid Weightings}
\label{section: IM characterizations}

We now explain the weighted analogue of the characterizations of Lie algebroids described in~\autoref{subsection: Lie algebroids as linear Poisson manifolds} and~\autoref{subsection: Super-geometric characterization of Lie algebroids}. The latter perspective allows for quick definition of weighted Lie algebroid morphisms, whereas the former will be used in our proof of~\autoref{theorem: weightings can be differentiated}, which says that the Lie algebroid of a weighted Lie groupoid has a canonical infinitesimally multiplicative weighting. 

Before we begin, we remind the reader of~\autoref{subsection: linear constructions} and~\autoref{theorem: linear weightings in terms of polynomials}, where we explain how a linear weighting of $V\to M$ defines a linear weighting of $V^*\to M$ and how this can be described in terms of a filtration of $C^\infty_{pol}(V^*)$. 

\begin{theorem}
\label{theorem: equvalent characterizations of IM weightings}
    Let $A\Rightarrow M$ be a Lie algebroid endowed with a linear weighting. Then the following are equivalent. 
        \begin{itemize}
            \item[(a)] $A$ is a weighted Lie algebroid, 
            \item[(b)] the Poisson bivector field $\pi \in \ger{X}^2(A^*)$ has filtration degree zero, 
            \item[(c)] the differential $\ed_A : \G(\wedge A^*) \to \G(\wedge A^*)$ is filtration preserving. 
        \end{itemize}
\end{theorem}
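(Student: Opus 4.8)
The plan is to prove the two equivalences (a)$\Leftrightarrow$(b) and (a)$\Leftrightarrow$(c) separately, reducing in each case the filtration condition to a check on algebra generators via a Leibniz/derivation argument and then matching the resulting conditions against the two clauses of \autoref{definition: IM weighting}. Throughout I would use the dual weighting of \autoref{subsection: linear constructions} together with the identity $A = (A^*)^*$ of weighted vector bundles, in the form: a section $\zeta$ lies in $\G(A)_{(n)}$ if and only if $\la \xi, \zeta\ra \in C^\infty(M)_{(n+i)}$ for every $\xi \in \G(A^*)_{(i)}$ and every $i$, and dually for $\G(A^*)$. The polynomial characterization of linear weightings (\autoref{theorem: linear weightings in terms of polynomials}) provides the two basic facts $C^\infty_{[0]}(A^*)_{(i)} = C^\infty(M)_{(i)}$ and $C^\infty_{[1]}(A^*)_{(i)} = \G(A)_{(i)}$, and shows that locally $C^\infty_{pol}(A^*)_{(n)}$ is generated over $C^\infty(M)$ by monomials in weighted vector bundle coordinates of total weight $\geq n$.

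For (a)$\Leftrightarrow$(b), I would invoke \autoref{theorem: Poisson-Lie algebroid equivalence}, by which the bracket on $C^\infty_{pol}(A^*)$ is the unique biderivation satisfying $\{f_\sigma, f_\tau\} = f_{[\sigma,\tau]}$, $\{f_\sigma, p^*f\} = p^*(a(\sigma)f)$, and $\{p^*f, p^*g\} = 0$. Saying $\pi$ has filtration degree zero means precisely $\{C^\infty_{pol}(A^*)_{(i)}, C^\infty_{pol}(A^*)_{(j)}\} \sset C^\infty_{pol}(A^*)_{(i+j)}$. Since the filtration is multiplicative and the bracket is a biderivation, this holds for all polynomials as soon as it holds on the generators $p^*C^\infty(M)$ and $\G(A)$; here the local monomial generation from \autoref{theorem: linear weightings in terms of polynomials} is exactly what lets me propagate from generators to arbitrary elements. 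On generators the three bracket formulas translate into $a(\sigma)f \in C^\infty(M)_{(i+k)}$ for $\sigma\in\G(A)_{(i)}$, $f\in C^\infty(M)_{(k)}$, i.e.\ $a(\G(A)_{(i)})\sset\mathfrak{X}(M)_{(i)}$, and $[\sigma,\tau]\in\G(A)_{(i+j)}$ for $\sigma\in\G(A)_{(i)}$, $\tau\in\G(A)_{(j)}$, while the third formula is automatic. These are exactly the two clauses of \autoref{definition: IM weighting}, giving (a)$\Leftrightarrow$(b).

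For (a)$\Leftrightarrow$(c), I would use that $\ed_A$ is a degree $+1$ derivation of the filtration-multiplicative algebra $\G(\wedge A^*)$ (with the induced weightings of $\wedge^\bullet A^*$ from \autoref{subsection: linear constructions}), so it preserves the filtration iff it does so on the generators $\G(\wedge^0 A^*) = C^\infty(M)$ and $\G(\wedge^1 A^*) = \G(A^*)$. On a function $f\in C^\infty(M)_{(i)}$, formula \eqref{equation: algebroid differential} gives $\la \ed_A f, \sigma\ra = \mathcal{L}_{a(\sigma)}f$, so by the dual pairing $\ed_A f\in\G(A^*)_{(i)}$ is equivalent to the anchor condition. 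On a $1$-form $\xi\in\G(A^*)_{(i)}$, evaluating $\ed_A\xi$ on $\sigma_1\in\G(A)_{(j_1)}$, $\sigma_2\in\G(A)_{(j_2)}$ produces the three terms $a(\sigma_1)(\xi(\sigma_2))$, $a(\sigma_2)(\xi(\sigma_1))$, and $\xi([\sigma_1,\sigma_2])$; granting the anchor condition, the first two automatically lie in $C^\infty(M)_{(i+j_1+j_2)}$, so $\ed_A\xi\in\G(\wedge^2 A^*)_{(i)}$ is equivalent to $\xi([\sigma_1,\sigma_2])\in C^\infty(M)_{(i+j_1+j_2)}$ for all such data, which by $A=(A^*)^*$ is exactly $[\sigma_1,\sigma_2]\in\G(A)_{(j_1+j_2)}$. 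This gives (a)$\Leftrightarrow$(c).

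The genuinely delicate points, and where I expect to spend the most care, are twofold. First, one must pin down the correct meaning of ``$\pi$ has filtration degree zero'': this requires extending the filtration of $\mathfrak{X}(M)$ to polynomial multivector fields on $A^*$ and verifying it is shift-invariant, so that a weighting of $A$ which is not concentrated in non-positive degree causes no trouble; this is handled by \autoref{lemma: weighted morphisms are shift invariant}, after which the identification of ``degree zero'' with the multiplicative bracket condition is purely formal. Second, the generator-propagation step in both equivalences rests entirely on the local monomial generation of $C^\infty_{pol}(A^*)_{(n)}$ and of $\G(\wedge A^*)_{(n)}$ supplied by \autoref{theorem: linear weightings in terms of polynomials}; once this local statement is in hand, each equivalence reduces to the short computations with the defining formulas sketched above.
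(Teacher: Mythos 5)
Your proposal is correct, and the proof of (a)$\Leftrightarrow$(b) is essentially the paper's own: both reduce the filtration-degree-zero condition on $\pi$ to the Poisson bracket's behaviour on fibrewise-homogeneous functions of degree $0$ and $1$ (the paper phrases the propagation step as ``by considering weighted vector bundle coordinates'', you phrase it via biderivation plus local monomial generation, which is the same mechanism), and then translate via the defining formulas of \autoref{theorem: Poisson-Lie algebroid equivalence}. For (a)$\Leftrightarrow$(c) your route genuinely differs from the paper's in its internal structure: the paper proves (a)$\Rightarrow$(c) by a direct estimate on arbitrary $k$-forms using the explicit formula \eqref{equation: algebroid differential}, and proves (c)$\Rightarrow$(a) by Cartan calculus, namely $\mathcal{L}_\sigma = \ed_A\iota_\sigma + \iota_\sigma\ed_A$ together with $\iota_{[\sigma_1,\sigma_2]} = [\mathcal{L}_{\sigma_1},\iota_{\sigma_2}]$ and $\mathcal{L}_{a(\sigma)}f = \mathcal{L}_\sigma f$, leaning on the characterization of the weighting of $\wedge A^*$ through contraction operators. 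You instead run both directions through a single generators-plus-duality reduction: since $\ed_A$ is an odd derivation and the filtration of $\G(\wedge A^*)$ is multiplicatively generated in degrees $0$ and $1$, filtration preservation need only be checked on $C^\infty(M)$ and $\G(A^*)$, where evaluation against weighted sections converts it directly into the anchor and bracket conditions. Both arguments are sound; yours is slightly more economical in that it needs only functions and $1$-forms and no Cartan identities, while the paper's converse sidesteps the need to justify that the filtration on $\G(\wedge A^*)$ is generated in low degrees, using the contraction characterization instead. Your flagged delicate points (the meaning of ``filtration degree zero'' for a weighting not concentrated in non-positive degrees, and the generator-propagation step) are the right ones, and your handling of them is adequate.
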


\begin{proof}[Proof that (a) and (b) are equivalent]
    Let $\{\cdot, \cdot \}_\pi$ be the Poisson bracket defined by $\pi$. By considering weighted vector bundle coordinates we see that $\pi \in \mathfrak{X}^2_{[-1]}(A^*)$ has filtration degree zero if and only if 
        \begin{equation}
        \label{equation: poisson bracket preserves filtration}
            \{\cdot, \cdot \}_\pi : C^\infty_{[n]}(A^*)_{(i)}\times C^\infty_{[n]}(A^*)_{(j)} \to C^\infty_{[n]}(A^*)_{(i+j)} \quad 
        \end{equation}
    for n=0,1.
    
    We start with $n=1$. Given $\sigma \in \G(A)_{(i)}$ and $\tau \in \G(A)_{(j)}$, let $f_\sigma \in C^\infty_{[1]}(V^*)_{(i)}$ and $f_\tau \in C^\infty_{[1]}(V^*)_{(j)}$ be the corresponding linear functions.  The equation 
        \[ \{f_\sigma, f_\tau \}_\pi = f_{[\sigma,\tau]} \]
    shows that $[\sigma, \tau] \in \G(V)_{(i+j)}$ if and only if $\{f_\sigma, f_\tau \}_\pi\in C^\infty_{[1]}(V^*)_{(i+j)}$, i.e.~\eqref{equation: poisson bracket preserves filtration} holds for $n=1$. 

    Now let $f\in C^\infty(M)_{(j)}$. Since the bundle projection $p:A^*\to M$ is a weighted morphism and $M$ is a weighted submanifold of $A^*$, the equation 
        \[p^*(\mathcal{L}_{a(\sigma)}f) = \{f_\sigma, p^*f\}_{\pi}\]
    shows that $\{f_\sigma, p^*f\}_{\pi} \in C^\infty_{[1]}(V^*)_{(i+j)}$ if and only if $\mathcal{L}_{a(\sigma)}f \in C^\infty(M)_{(i+j)}$. Since $f\in C^\infty(M)_{(j)}$ was arbitrary, the latter holds if and only if $a(\sigma) \in \ger{X}(M)_{(i)}$. Thus~\eqref{equation: poisson bracket preserves filtration} holds for $n=0$ if and only if $a:\G(V)_{(i)} \to \ger{X}(M)_{(i)}$, which completes the proof. 
\end{proof}

\begin{proof}[Proof that (a) and (c) are equivalent]
    Suppose that $A$ is a weighted Lie algebroid and let $\sigma_j \in \Gamma(V)_{(i_j)}$ and $\omega \in \G(\wedge^kA^*)_{(i)}$. Since $[\sigma_{\ell}, \sigma_j] \in \Gamma(V)_{(i_\ell + i_j)}$ and $a(\sigma_j) \in \ger{X}(M)_{(i_j)}$ it follows that both 
        \begin{align*}
            \omega([\sigma_\ell, \sigma_j], \sigma_1, \dots, \hat{\sigma}_\ell, \dots, \hat{\sigma}_j, \dots, \sigma_k)  \quad \text{and} \quad 
            \mathcal{L}_{a(\sigma_\ell)}\omega(\sigma_1, \dots, \hat{\sigma}_\ell, \dots, \sigma_{k+1})
        \end{align*}
    are in $C^\infty(M)_{(i + i_1 + \cdots + i_{k+1})}$. It follows by~\autoref{equation: algebroid differential} that 
        \[(\ed_A \omega)(\sigma_1, \dots, \sigma_{k+1}) \in C^\infty(M)_{(i + i_1 + \cdots + i_{k+1})}, \] 
    hence $\ed_A$ is filtration preserving. 

    For the converse, recall that the weighting of $\wedge A^*$ is defined so that $\iota_\sigma : \G(\wedge A^*)_{(j)}\to \G(\wedge A^*)_{(i+j)}$ if and only if $\sigma \in \G(A)_{(i)}$. In particular, if $\ed_A$ is filtration preserving then 
        \[ \mathcal{L}_\sigma = \ed_A \circ \iota_\sigma + \iota_\sigma \circ \ed_A : \G(\wedge A^*)_{(j)}\to \G(\wedge A^*)_{(i+j)} \]
    whenever $\sigma \in \G(A)_{(i)}$. Thus, given $\sigma_1 \in \G(A)_{(i)}$ and $\sigma_2 \in \G(A)_{(j)}$, the formula $\iota_{[\sigma_1, \sigma_2]} = [ \mathcal{L}_{\sigma_1}, \iota_{\sigma_2}]$ implies that $[\sigma_1, \sigma_2] \in \G(A)_{(i+j)}$. Similarly, given $\sigma \in \G(A)_{(i)}$, the formula $\mathcal{L}_{a(\sigma)}f = \mathcal{L}_\sigma f$ implies that $a(\sigma) \in \ger{X}(M)_{(i)}$.
\end{proof}

As promised, we give the following definition. 

\begin{definition}
    Let $A\Rightarrow M$ and $B\Rightarrow N$ be weighted Lie algebroids. A weighted Lie algebroid morphism $\varphi:B\to A$ is a vector bundle morphism such that  pullback map $\G(\wedge A^*) \to \G(\wedge B^*)$ is a filtration preserving cochain map. 
\end{definition}


\section{The Weighted Normal and Weighted Deformation Algebroids}
\label{section: IM normal and deformation algebroids}

We now explain how the weighted normal bundle and weighted deformation bundles of a weighted Lie algebroid $A\Rightarrow M$ are naturally Lie algebroids. 

\begin{theorem}
\label{theorem: weighted normal and weighted deformation algebroids}
    Let $A\Rightarrow M$ be a weighted Lie algebroid. Then both $\nuw(A) \to \nuw(M,N)$ and $\defw(A)\to \defw(M,N)$ are naturally Lie algebroids and the isomorphism 
        \[ \defw(A)|_{\pi_\delta^{-1}(t)} \to \left\{
            \begin{array}{ll}
                 A & t\neq 0 \\
                 \nuw(A) & t = 0 
            \end{array}
        \right.\]
    is an isomorphism of Lie algebroids.
\end{theorem}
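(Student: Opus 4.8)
The plan is to leverage the equivalence between weighted Lie algebroids and linear Poisson structures of filtration degree zero (\autoref{theorem: equvalent characterizations of IM weightings}), together with the functoriality of the weighted normal and deformation constructions on the dual bundle. The key observation is that a Lie algebroid structure on $A$ is the same data as a linear Poisson bivector field $\pi \in \ger{X}^2(A^*)$, and by part (b) of \autoref{theorem: equvalent characterizations of IM weightings} the weighting is infinitesimally multiplicative precisely when $\pi$ has filtration degree zero. I would therefore produce the Lie algebroid structures on $\nuw(A)$ and $\defw(A)$ by constructing the corresponding linear Poisson bivector fields on $\nuw(A^*) = \nuw(A)^*$ and $\defw(A^*) = \defw(A)^*$.

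First I would record the identifications $\nuw(A)^* = \nuw(A^*)$ and $\defw(A)^* = \defw(A^*)$, which come from the dual weighting construction in \autoref{subsection: linear constructions} (a) together with the fact that the weighted normal and deformation functors respect duality (this may require a short lemma, proved in local weighted vector bundle coordinates, that the dual frame interpolations pair correctly). Next, since $\pi$ has filtration degree zero it is a weighted multivector field, so applying the deformation functor yields a well-defined bivector field $\defw(\pi) \in \ger{X}^2(\defw(A^*))$; on the fibre over $t \neq 0$ this restricts to $t\cdot \pi$ (up to the homogeneity bookkeeping of \eqref{equation: deformation approximation}), and over $t=0$ it restricts to the associated graded bivector $\nuw(\pi) = \gr(\pi)$. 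I would verify that $\defw(\pi)$ is again a \emph{linear} Poisson bivector — linearity is immediate since the deformation functor sends $C^\infty_{[n]}$ to $C^\infty_{[n]}$ by \autoref{theorem: linear weighted deformation bundle} (a), and the Jacobi identity $[\pi,\pi]=0$ persists because the Schouten bracket is natural under the filtration-preserving maps and the identity holds on the dense open subset $A^* \times \R^\times$. By \autoref{theorem: Poisson-Lie algebroid equivalence}, these linear Poisson structures endow $\nuw(A)$ and $\defw(A)$ with Lie algebroid structures. The same density argument shows the restriction over each $t\neq 0$ reproduces the Lie algebroid $A$ and over $t=0$ reproduces $\nuw(A)$, giving the claimed isomorphism of Lie algebroids fibrewise.

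An alternative route, which I would present in parallel or as the primary argument depending on which is cleaner, uses the super-geometric characterization: by part (c) of \autoref{theorem: equvalent characterizations of IM weightings} the de Rham differential $\ed_A$ is filtration preserving, hence a weighted endomorphism of $\G(\wedge A^*)$, and applying the deformation functor produces a degree $+1$ derivation $\defw(\ed_A)$ on $\G(\wedge \defw(A)^*) = \G(\wedge \defw(A^*))$ squaring to zero. By \autoref{theorem: supergeometric description of Lie algebroids} (Vaintrob) this is exactly a Lie algebroid structure. The advantage of this formulation is that the fibrewise statement is transparent: the interpolation formula \eqref{equation: homogeneous interpolation properties} shows that $\defw(\ed_A)$ restricts to $\ed_A$ (rescaled) for $t\neq 0$ and to the graded differential $\ed_{\nuw(A)} = \gr(\ed_A)$ at $t=0$, so the trivialization of \autoref{theorem: sections of deformation bundle} directly exhibits the isomorphism of differential graded algebras, hence of Lie algebroids.

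The main obstacle I anticipate is \textbf{not} the construction itself but the bookkeeping needed to make ``apply the deformation functor to $\pi$ (resp. $\ed_A$)'' precise: strictly speaking the weighted deformation functor as developed in \autoref{section: weighted deformation space} and \autoref{section: linear weighted deformation bundle} is defined on weighted morphisms and sections, not a priori on multivector fields or differential operators of higher order. I would therefore need to first extend the deformation construction to the relevant tensorial objects — either by treating $\pi$ as a section of the weighted vector bundle $\wedge^2 A$ and invoking \autoref{theorem: sections of deformation bundle} (noting $\pi$ has filtration degree zero so $\widetilde{\pi}^{[0]}$ is a global section of $\wedge^2 \defw(A)$), or by treating $\ed_A$ as a weighted operator and showing it deforms compatibly with the multiplicative structure via the filtration of differential operators from \autoref{section: singular Lie filtrations}. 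Once this functoriality is established, checking the Jacobi identity (or $\defw(\ed_A)^2=0$) and the fibrewise identifications reduces to the density of $A^* \times \R^\times$ (resp. $A \times \R^\times$) and continuity, which is routine.
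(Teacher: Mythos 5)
Your proposal is correct in overall strategy, but it takes a genuinely different route from the paper's own proof. The paper does not pass through either of the equivalent characterizations of \autoref{theorem: equvalent characterizations of IM weightings}: it constructs the Lie algebroid structure on $\nuw(A)$ directly at the level of sections, taking $\nuw(a)$ as the anchor (which exists by functoriality, since $a$ is a weighted morphism and $\nuw(TM)=T\nuw(M,N)$), defining the bracket on homogeneous approximations by $[\sigma^{[i]},\tau^{[j]}]=[\sigma,\tau]^{[i+j]}$, verifying the Leibniz rule by a short computation, and then extending to all of $\G(\nuw(A))$ via the identification $\G(\nuw(A)) = C^\infty(\nuw(M,N))\otimes_{\gr(C^\infty(M))}\gr(\G(A))$ of \autoref{theorem: sections of the weighted normal bundle}; the case of $\defw(A)$ is analogous. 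This sidesteps entirely the obstacle you correctly identify as the main one in your approach: the weighted normal/deformation functors are only developed for morphisms and for sections of weighted bundles, so ``applying $\defw$'' to the bivector field $\pi$ or to the operator $\ed_A$ requires an extension of the formalism. The paper instead records your two routes as remarks \emph{after} the theorem (the Poisson bivector fields of $\nuw(A)^*$, $\defw(A)^*$ are $\pi^{[0]}$, $\WT{\pi}^{[0]}$, and the differentials satisfy $\ed_{\nuw(A)}\omega^{[i]}=(\ed_A\omega)^{[i]}$, $\ed_{\defw(A)}\WT{\omega}^{[i]}=\WT{(\ed_A\omega)}^{[i]}$), with the direct construction doing the foundational work. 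What your approach buys, once the tensorial bookkeeping is in place, is that the Jacobi identity and the fibrewise identifications come for free by density of $A^*\times\R^\times$; what the paper's approach buys is that no new machinery is needed at all, since the bracket is prescribed on a generating set and the Leibniz rule does the rest.

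Two technical corrections to your write-up. First, the linear Poisson bivector $\pi$ is \emph{not} a section of $\wedge^2 A \to M$: it is a bivector field on the total space, i.e.\ a section of $\wedge^2 T(A^*)\to A^*$ (in weighted vector bundle coordinates it has both vertical-vertical and vertical-horizontal components, encoding the bracket and the anchor respectively), so the fix you propose via \autoref{theorem: sections of deformation bundle} must be applied to the weighted bundle $\wedge^2 T(A^*)$ over the linearly weighted total space $A^*$, not to $\wedge^2 A$. Second, since $\pi$ has filtration degree zero, \eqref{equation: deformation approximation} gives $\WT{\pi}^{[0]}|_{\pi_\delta^{-1}(t)} = \pi$ for $t\neq 0$, not $t\cdot\pi$; this is in fact exactly what the theorem's fibrewise claim requires, so the ``homogeneity bookkeeping'' you hedge about resolves in your favor.
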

\begin{proof}
        Recall from~\autoref{examples: examples of weighted normal bundles for linear weightings} (c) that $\nuw(TM) = T\nuw(M,N)$. Since $a:A\to TM$ is a weighted morphism, $\nuw(a):\nuw(A) \to \nuw(TM)$ is defined and we take this to be our anchor for $\nuw(A)$. We define the Lie bracket as follows. Given $\sigma \in \G(A)_{(i)}$ and $\tau \in \G(V)_{(j)}$ define 
            \begin{equation}
            \label{equation: Lie bracket on weighted normal groupoid}
                [\sigma^{[i]}, \tau^{[j]}] = [\sigma, \tau]^{[i+j]};
            \end{equation}
        this defines a Lie bracket $\gr(\G(A))\times \gr(\G(A))\to \gr(\G(A))$. 
        Given $f \in C^\infty(M)_{(k)}$, we compute 
            \begin{align*}
                [\sigma^{[i]}, f^{[k]}\tau^{[j]}] & = [\sigma^{[i]}, (f\tau)^{[j+k]}] \\
                & = [\sigma, f\tau]^{[i+j+k]} \\
                & = (f[\sigma, \tau] + \mathcal{L}_{a(\sigma)}f\cdot\tau)^{[i+j+k]} \\
                & = f^{[k]}[\sigma, \tau]^{[i+j]} + \mathcal{L}_{a(\sigma)^{[i]}}f^{[k]}\cdot \tau^{[j]} \\
                & = f^{[k]}[\sigma, \tau]^{[i+j]} + \mathcal{L}_{\nuw(a)(\sigma^{[i]})}f^{[k]}\cdot \tau^{[j]},
            \end{align*}
        hence~\eqref{equation: Lie bracket on weighted normal groupoid} satisfies the Leibniz rule for any $f\in C^\infty_{pol}(\nuw(M,N)) = \gr(C^\infty(M))$. Recall from~\autoref{theorem: sections of the weighted normal bundle} that 
            \[ \G(\nuw(A)) = C^\infty(\nuw(M,N))\otimes_{\gr(C^\infty(M))}\gr(\G(A)),\] 
        hence we may extend~\eqref{equation: Lie bracket on weighted normal groupoid} to $\G(\nuw(A))$ using the Leibniz rule, which gives the Lie algebroid structure for $\nuw(A)$. An analogous construction gives the Lie algebroid structure for $\defw(A)$. The last statement is clear. 
\end{proof}

\begin{remarks}
    \begin{itemize}
        \item[(a)] If $\G_{ij}^k \in C^\infty(U)_{(v_i+v_j-v_k)}$ are the local structure functions for $A|_U$ corresponding to a local weighted frame, then 
            \[ (\G_{ij}^k)^{[v_i+v_j-v_k]} \quad \text{and} \quad \WT{(\G_{ij}^k)}^{[v_i+v_j-v_k]} \]
        are the local structure functions for $\nuw(A)$ and $\defw(A)$, respectively. 

        \item[(b)] If $\pi\in \ger{X}^2(A^*)_{(0)}$ is the Poisson bivector field then $\pi^{[0]} \in \ger{X}^2(\nuw(A)^*)$ and $\WT{\pi}^{[0]}\in \ger{X}^2(\defw(A)^*)$ are the Poisson bivector fields for $ \nuw(A)^*$ and $\defw(A)^*$, respectively. . 

        \item[(c)] The de Rham differentials for $\nuw(A)$ and $\defw(A)$ are defined by the equations 
            \[ \ed_{\nuw(A)}\omega^{[i]} = (\ed_A \omega)^{[i]} \quad \text{and} \quad \ed_{\defw(A)}\WT{\omega}^{[i]} = \WT{(\ed_A \omega)}^{[i]}, \]
        respectively. 

        \item[(d)] By the last remark, we see that this construction is functorial for weighted Lie algebroid morphisms. 
    \end{itemize}
\end{remarks}

\begin{examples}
    \begin{itemize}
        \item[(a)] Let $A = A_{-r}\oplus \cdots \oplus  A_{0} \Rightarrow M$ be a graded Lie algebroid, i.e. $[A_{i}, A_{j}] \sset A_{i+j}$. If $M$ is weighted along itself then the filtration 
            \[ \G(A)_{(-i)} = \G(A_{-i}) \oplus \cdots \oplus \G(A_{0})  \]
        defines an infinitesimally multiplicative weighting. 

        In particular, we can view any Lie algebroid $A\Rightarrow M$ as a graded Lie algebroid by setting $A = A_{-r}$. The corresponding bracket on $\defw(A)$ is given by 
            \[ [\WT{\sigma}^{[-r]}, \WT{\tau}^{[-r]}] = \WT{[\sigma, \tau]}^{[-2r]} = t^{r}[\sigma, \tau].  \]
        
        \item[(b)] Let $A\Rightarrow M$ be a Lie algebroid with Lie filtration 
            \[ A = A_{-r}\supseteq A_{-r+1} \supseteq \cdots \supseteq A_{-1} \supseteq 0. \] 
        This defines an infinitesimally multiplicative weighted of $A$ and the weighted normal algebroid is given by $\gr(A) = \bigoplus_{i=1}^r A_{-i}/A_{-i+1}$. Since $\ger{X}(M) = \ger{X}(M)_{(0)}$, then anchor map for $\nuw(A)$ is zero and $\nuw(A)$ is a family of nilpotent Lie algebras. 

        \item[(c)] If $(M,N)$ is a weighted pair, $\omega\in \Omega(M)_{(0)}$, and $TM\times \R \Rightarrow M$ is the Lie algebroid defined in~\autoref{examples: weighted Lie algebroids}, then $\nuw(TM\times \R) = T\nuw(M,N)\times \R \Rightarrow \nuw(M,N)$ is the Lie algebroid defined analogously, using $\omega^{[0]} \in \Omega^2(\nuw(M,N))$. 
    \end{itemize}
\end{examples}

\section{Differentiation of Multiplicative Weightings}
\label{section: differentiation of M weightings}

Let $G\toto M$ be a weighted groupoid with Lie algebroid $A \Rightarrow M$. In this section we will show that the weighting of $G$ differentiates to an infinitesimally multiplicative weighting of $A$, and the the Lie functor commutes with both a weighted normal and weighted deformation functors. More specifically, we prove the following theorem. 

\begin{theorem}
\label{theorem: weightings can be differentiated}
    Let $G\toto M$ be a weighted Lie groupoid with Lie algebroid $A = \mathrm{Lie}(G) \Rightarrow M$. Then 
        \begin{equation}
        \label{equation: definition of differentiated weighting}
            \G(A|_U)_{(i)} = \{ \sigma \in \G(A|_U) : \sigma^L\in \mathfrak{X}^L(G|_U)_{(i)} \}
        \end{equation}
    defines an infinitesimally multiplicative weighting of $A$ such that 
        \[ \mathrm{Lie}(\nuw(G,H)) = \nuw(A) \quad \text{and} \quad \mathrm{Lie}(\defw(G,H)) = \defw(A).  \]
\end{theorem}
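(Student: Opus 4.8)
The proof has three logically separate assertions to establish: first, that the prescription \eqref{equation: definition of differentiated weighting} genuinely defines a linear weighting of $A$; second, that this weighting is infinitesimally multiplicative; and third, the two Lie-functor compatibility identities $\mathrm{Lie}(\nuw(G,H)) = \nuw(A)$ and $\mathrm{Lie}(\defw(G,H)) = \defw(A)$. The strategy throughout is to route everything through the VB-groupoid $TG \toto TM$, which by \autoref{examples: VB groupoids} is a \emph{weighted} VB-groupoid once $G$ is multiplicatively weighted, and through its dual $T^*G \toto \mathrm{Lie}(G)^*$, which is again a weighted VB-groupoid by \autoref{proposition: dual of a weighted vb groupoid}. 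The key technical fact I would exploit is the description of the Lie algebroid structure from \autoref{subsection: Super-geometric characterization of Lie algebroids} and the Lie functor section: the left-invariant extension $\sigma \mapsto \sigma^L$ is built from the pullback $s_{T^*G}^*f_\sigma \in C^\infty_{[1]}(T^*G)$, and the source map of the weighted VB-groupoid $T^*G$ is a weighted morphism.

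\textbf{Well-definedness and the filtration.} First I would verify that \eqref{equation: definition of differentiated weighting} defines a linear weighting, i.e.\ that it is locally of the standard form. The cleanest route is via the dual Poisson picture. Since $T^*G \toto \mathrm{Lie}(G)^*$ is a weighted VB-groupoid, its source map $s_{T^*G}:T^*G \to \mathrm{Lie}(G)^* = A^*$ is a weighted submersion, so $s_{T^*G}^*$ is filtration preserving; running this backwards, a linear function $f_\sigma \in C^\infty_{[1]}(A^*)$ has filtration degree $\geq i$ precisely when its pullback does, which by the identification $s_{T^*G}^*f_\sigma \leftrightarrow \sigma^L$ is exactly the condition $\sigma^L \in \mathfrak{X}^L(G)_{(i)}$. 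This shows \eqref{equation: definition of differentiated weighting} is the filtration of $\G(A^*)$ dual to a linear weighting of $A$ (equivalently, induces the correct filtration of polynomial functions $C^\infty_{pol}(A^*)$), so by \autoref{theorem: linear weightings in terms of polynomials} it is a genuine linear weighting. A weighted frame for $A$ near $U$ can be produced by choosing a weighted bisection-adapted frame and checking degrees in weighted coordinates on $G$.

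\textbf{Infinitesimal multiplicativity.} For this I would use the characterization in \autoref{theorem: equvalent characterizations of IM weightings}(b): the weighting is infinitesimally multiplicative iff the Poisson bivector $\pi \in \mathfrak{X}^2(A^*)$ has filtration degree zero. This transfers the problem to the Libermann Poisson structure on $A^* = \mathrm{Lie}(G)^*$ defined by \eqref{equation: Libermann Poisson sructure}, namely $s_{T^*G}^*\{f,g\}_{A^*} = \{s_{T^*G}^*f, s_{T^*G}^*g\}_{T^*G}$. The canonical Poisson structure on $T^*G$ has filtration degree zero because the tangent weighting on $TG$ (hence the dual weighting on $T^*G$) is induced by a weighting of $G$, and the cotangent bivector of a weighted manifold is degree zero. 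Since $s_{T^*G}$ is a weighted morphism, its pullback intertwines the two brackets while preserving filtration degrees; this forces $\{\cdot,\cdot\}_{A^*}$, and hence $\pi$, to have filtration degree zero. Equivalently and perhaps more transparently, I would argue directly from \autoref{lemma: left and right invariant relations}: the relations $[\sigma^L,\tau^L]=[\sigma,\tau]^L$ and $a(\sigma) \sim_i \sigma^L - \sigma^R$ together with the multiplicativity of the filtration on $\mathfrak{X}(G)$ (degrees add under Lie bracket) immediately give $[\sigma,\tau] \in \G(A)_{(i+j)}$ and $a(\G(A)_{(i)}) \subseteq \mathfrak{X}(M)_{(i)}$, which are exactly conditions (a) and (b) of \autoref{definition: IM weighting}.

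\textbf{Compatibility of Lie functor with $\nuw$ and $\defw$, the main obstacle.} This is the hardest part, because one must match two \emph{a priori} different Lie algebroid structures on the same underlying bundle. By \autoref{theorem: weighted normal and weighted deformation groupoids}, $\nuw(G,H)\toto \nuw(M,N)$ and $\defw(G,H)\toto \defw(M,N)$ are Lie groupoids, so $\mathrm{Lie}(\nuw(G,H))$ and $\mathrm{Lie}(\defw(G,H))$ are defined; I must show they coincide with the weighted normal and deformation algebroids of \autoref{theorem: weighted normal and weighted deformation algebroids}. The plan is to identify the underlying bundles first and then match brackets and anchors. For the underlying bundle, $\mathrm{Lie}(\nuw(G,H)) = \nu(\nuw(G,H),\nuw(M,N))$, and using \autoref{proposition: tangent bundle of weighted normal bundle} (which gives $T\nuw(G,H)|_{N} = \gr(TG|_H)$) one computes the normal bundle of the units to be $\gr(\mathrm{Lie}(G)) = \nuw(A)$ as a bundle over $\nuw(M,N)$; the deformation case is analogous using the decomposition of $T\defw$. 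For the algebroid \emph{structure}, the decisive step is to show that the left-invariant extension operation commutes with the normal/deformation functors: concretely, for $\sigma \in \G(A)_{(i)}$ one has $(\sigma^{[i]})^L = (\sigma^L)^{[i]}$ on $\nuw(G,H)$, where the right-hand side is the $i$-th homogeneous approximation of the left-invariant vector field $\sigma^L \in \mathfrak{X}^L(G)_{(i)}$. Granting this, the bracket on $\mathrm{Lie}(\nuw(G,H))$ computed via $[(\sigma^{[i]})^L,(\tau^{[j]})^L] = [(\sigma^L)^{[i]},(\tau^L)^{[j]}]$ matches $\gr$ of the bracket on $G$, which is precisely the defining bracket \eqref{equation: Lie bracket on weighted normal groupoid} of $\nuw(A)$; the anchor matches because $a \sim_i \sigma^L-\sigma^R$ descends under $\gr$. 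I expect the genuinely delicate computation to be verifying $(\sigma^{[i]})^L = (\sigma^L)^{[i]}$: this requires tracking how the homogeneous-approximation construction on the weighted groupoid $G$ interacts with the source-map pullback defining left-invariant vector fields, most cleanly done in weighted (submersion) coordinates for $s$ together with the formula \eqref{equation: multiplication on the tangent groupoid} for tangent-groupoid multiplication, or alternatively by applying $\nuw$ directly to the diagram of structure maps and invoking functoriality (the relations \eqref{equation: homogeneous approximation properties} of \autoref{lemma: graded modules gives sections of weighted normal bundle}). The deformation-space statement then follows by the identical argument with $\widetilde{(\cdot)}^{[i]}$ in place of $(\cdot)^{[i]}$ and $\rees$ in place of $\gr$, using \autoref{lemma: rees modules gives sections of weighted deformation bundle}, together with the fact that the isomorphisms $\defw(A)|_{\pi_\delta^{-1}(t)} \cong A$ (for $t\neq 0$) and $\cong \nuw(A)$ (for $t=0$) are Lie algebroid isomorphisms, which is the content of \autoref{theorem: weighted normal and weighted deformation algebroids}.
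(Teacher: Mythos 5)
Your proposal is correct and follows essentially the same route as the paper: dualize the weighted VB-groupoid $TG$ to obtain the weighted VB-groupoid $T^*G \toto A^*$, show the Libermann Poisson structure on $A^*$ has filtration degree zero using that both $s_{T^*G}$ and the inclusion $A^* \into T^*G$ are weighted, invoke \autoref{theorem: equvalent characterizations of IM weightings}, and match the dual weighting with \eqref{equation: definition of differentiated weighting} via the correspondence $\sigma^L \leftrightarrow s_{T^*G}^*f_\sigma$. For the final identifications the paper is in fact terser than you are — it simply asserts the section-level maps $\sigma^{[i]} \mapsto (\sigma^L)^{[i]}$ and $\WT{\sigma}^{[i]} \mapsto \WT{(\sigma^L)}^{[i]}$ — so your singling out of $(\sigma^{[i]})^L = (\sigma^L)^{[i]}$ as the delicate step to verify is consistent with, and slightly more careful than, the paper's own proof.
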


The idea of the proof is as follows. Since $G\toto M$ is a weighted Lie groupoid, the cotangent groupoid $T^*G \toto A^*$ is a weighted VB-groupoid, as per~\autoref{proposition: dual of a weighted vb groupoid}. In particular, $A^*$ is a weighted subbundle of $T^*G$ and is therefore weighted in its own right. We then argue that the Poisson structure on $A^*$ induced by the canonical one on $T^*G$ has filtration degree zero. By~\autoref{theorem: equvalent characterizations of IM weightings}, this implies the dual weighting on $A$ is infinitesimally multiplicative. We then verify that~\eqref{equation: definition of differentiated weighting} agrees with the dual weighting. 

To begin, note that for any weighted manifold $M$ the canonical Poisson structure on $T^*M$ has filtration degree zero. Indeed, the follows from~\autoref{theorem: equvalent characterizations of IM weightings} since the weighting of $TM$ is infinitesimally multiplicative. In particular, for a weighted Lie groupoid $G$ the canonical Poisson structure on $T^*G$ has filtration degree zero. Recall that the Poisson structure on $A^*$ is defined by the equation
    \begin{equation}
    \label{equation: Libermann Poisson sructure}
        \{ s_{T^*G}^*f, s_{T^*G}^*g\}_{T^*G} = s_{T^*G}^*\{f,g\}_{A^*},
    \end{equation}
where $s_{T^*G}:T^*G \to AG^*$ is the source map for $T^*G$. 
    
\begin{lemma}
\label{lemma: AG^* is weighted Poisson}
    The Poisson bivector field on $A^*$ defined by~\eqref{equation: Libermann Poisson sructure} has filtration degree zero. 
\end{lemma}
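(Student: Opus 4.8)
<br>

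The plan is to reduce the claim to the already-established fact that the canonical Poisson structure on the cotangent bundle of a weighted manifold has filtration degree zero, by using the defining relation~\eqref{equation: Libermann Poisson sructure} together with the fact that $s_{T^*G}:T^*G\to A^*$ is a weighted submersion. The key observation is that~\eqref{equation: Libermann Poisson sructure} completely determines the bracket on $A^*$ in terms of the bracket on $T^*G$, and that the pullback $s_{T^*G}^*$ is injective and compatible with filtrations in both directions.

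First I would record that, by~\autoref{proposition: dual of a weighted vb groupoid}, the cotangent groupoid $T^*G\toto A^*$ is a weighted VB-groupoid, so in particular $s_{T^*G}$ is a weighted submersion and $A^*$ carries the dual weighting as a weighted subbundle of $T^*G$. Next I would show that the pullback map $s_{T^*G}^*:C^\infty_{pol}(A^*)\to C^\infty_{pol}(T^*G)$ is \emph{strictly} filtration preserving in the sense that $f\in C^\infty_{pol}(A^*)_{(i)}$ if and only if $s_{T^*G}^*f\in C^\infty_{pol}(T^*G)_{(i)}$. The forward direction is immediate since $s_{T^*G}$ is a weighted morphism; for the reverse direction I would appeal to the normal form for weighted submersions (\autoref{theorem: weighted submersion coordinates}), which lets us choose weighted vector bundle coordinates on $T^*G$ that are $s_{T^*G}$-basic and descend to weighted coordinates on $A^*$, so that the filtration degree of a basic function is detected downstairs exactly as upstairs.

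Granting this, the proof concludes quickly. Given $f\in C^\infty_{pol}(A^*)_{(i)}$ and $g\in C^\infty_{pol}(A^*)_{(j)}$, the functions $s_{T^*G}^*f$ and $s_{T^*G}^*g$ have filtration degrees $i$ and $j$ respectively. Since the canonical Poisson structure on $T^*G$ has filtration degree zero (which holds by~\autoref{theorem: equvalent characterizations of IM weightings} applied to the infinitesimally multiplicative weighting of $TG$, as noted just before the lemma), we get
    \[ s_{T^*G}^*\{f,g\}_{A^*} = \{s_{T^*G}^*f, s_{T^*G}^*g\}_{T^*G} \in C^\infty_{pol}(T^*G)_{(i+j)}. \]
By the strictness established above, this forces $\{f,g\}_{A^*}\in C^\infty_{pol}(A^*)_{(i+j)}$. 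Since $f,g$ were arbitrary filtered polynomial functions, the Poisson bivector field on $A^*$ has filtration degree zero, as claimed.

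The main obstacle I anticipate is the reverse filtration detection for $s_{T^*G}^*$: one must verify that no function on $A^*$ can drop filtration degree under pullback, i.e. that weighted submersions reflect and not merely preserve the filtration on basic functions. This is precisely where the normal form theorem for weighted submersions is essential, since it guarantees a supply of $s_{T^*G}$-basic weighted coordinates whose weights are matched on the nose between $A^*$ and $T^*G$; without this, the argument would only yield one inclusion and the degree-zero conclusion could fail. Everything else is a formal consequence of the defining relation~\eqref{equation: Libermann Poisson sructure} and the injectivity of the pullback on the dense image of $s_{T^*G}$.
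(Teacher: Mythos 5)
Your proof is correct and follows the same overall strategy as the paper's: pull everything back to $T^*G$ via $s_{T^*G}$, use that the canonical Poisson structure there has filtration degree zero together with the defining relation~\eqref{equation: Libermann Poisson sructure}, and then transfer the conclusion back to $A^*$ by showing that $s_{T^*G}^*$ detects filtration degree exactly. The one place you diverge is the reflection step. You invoke the normal form for weighted submersions (\autoref{theorem: weighted submersion coordinates}) to produce $s_{T^*G}$-basic weighted coordinates whose weights match upstairs and downstairs; the paper gets the same implication more cheaply by observing that the unit inclusion $i:A^*\into T^*G$ is a weighted morphism (because $A^*$ is a weighted subbundle of $T^*G$, via \autoref{proposition: dual of a weighted vb groupoid}) and that $s_{T^*G}\circ i = \mathrm{id}_{A^*}$, so that $\{f,g\}_{A^*} = i^*s_{T^*G}^*\{f,g\}_{A^*}$ lies in $C^\infty_{pol}(A^*)_{(i+j)}$ as soon as its pullback does. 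Your route needs the full strength of $s_{T^*G}$ being a weighted submersion, while the paper's needs only that $s_{T^*G}$ and $i$ are weighted morphisms composing to the identity --- a formally weaker input and a one-line argument. Both are valid; your coordinate argument essentially re-proves the existence of a weighted section of $s_{T^*G}$, which in this situation is available globally for free as the unit section.
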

\begin{proof}
    We have to show that
        \[ f \in C^\infty_{pol}(A^*)_{(i)},\ g \in C^\infty_{pol}(A^*)_{(j)} \implies \{f,g\}_{A^*} \in C^\infty_{pol}(A^*)_{(i+j)}. \]
    We claim that this happens if and only if $s^*\{f,g\}_{A^*} \in C^\infty_{pol}(T^*G)_{(i+j)}$. Indeed, since $s:T^*G \to A^*$ is a weighted morphism we have that 
        \[ \{f,g\}_{A^*} \in C^\infty_{pol}(A^*)_{(i+j)} \implies s^*\{f,g\}_{A^*} \in C^\infty_{pol}(T^*G)_{(i+j)}. \]
    On the other hand, since $AG^*$ is a weighted subbundle of $T^*G$, the inclusion $i:A^* \into T^*G$ is a weighted morphism. Hence 
        \begin{align*}
            s^*\{f,g\}_{A^*} \in C^\infty_{pol}(T^*G)_{(i+j)} \implies \{f,g\}_{A^*} = i^*s^*\{f,g\}_{A^*} \in C^\infty_{pol}(A^*)_{(i+j)}
        \end{align*}
    which proves the claim. Combining this with the fact that the canonical Poisson structure on $T^*G$ is filtration preserving and using~\eqref{equation: Libermann Poisson sructure} completes the proof.  
\end{proof}

\begin{proof}[Proof of~\autoref{theorem: weightings can be differentiated}]
    By~\autoref{lemma: AG^* is weighted Poisson} and~\autoref{theorem: equvalent characterizations of IM weightings} the weighting of $A$ defined as the dual of $A^*$ is infinitesimally multiplicative. It remains to establish that this weighting is given by~\eqref{equation: definition of differentiated weighting}. 
    
    Given $\sigma \in \G(A|_U)$, let $f_\sigma \in C^\infty_{[1]}(A|_U^*)$ be the corresponding linear function. Recall that $\sigma^L\in \ger{X}(G|_U)$ is the vector field corresponding to the linear function $s^*_{T^*G}f_\sigma \in C^\infty_{[1]}(T^*G)$. Using this and the fact that $s_{T^*G}:T^*G \to A^*$ is a weighted submersion we have 
        \begin{align*}
            \sigma \in \G(A|_U)_{(i)} & \iff f_\sigma \in C^\infty_{[1]}(A|_U^*)_{(i)} \\
            & \iff s^*_{T^*G}f_\sigma \in C^\infty_{[1]}(T^*G|_U)_{(i)} \\
            & \iff \sigma^L \in \ger{X}(G|_U)_{(i)},
        \end{align*}
    as claimed. The identification $\mathrm{Lie}(\nuw(G,H)) = \nuw(A)$ is given at the level of sections by the map 
        \[ \gr(\G(A)) \to \ger{X}^L(\nuw(G,H)), \quad \sigma^{[i]} \mapsto (\sigma^L)^{[i]}. \]
    Similarly, the identification $\mathrm{Lie}(\defw(G,H)) = \defw(A)$ is given by 
        \[ \rees(\G(A)) \to \ger{X}^L(\defw(G,H)), \quad \WT{\sigma}^{[i]} \mapsto \WT{(\sigma^L)}^{[i]}. \qedhere \]
\end{proof}

\begin{remark}
\label{remark: right invariant also filtration preserving}
    Since $\mathrm{inv}_G$ is a weighted diffeomorphism and $\sigma^L \sim_{\mathrm{inv}_G} -\sigma^R$, it follows that 
        \[ \sigma \in \G(A)_{(i)} \iff \sigma^R\in \ger{X}(G)_{(i)}.  \]
\end{remark}

\section{Integration of Lie Algebroid Weightings - the Wide Case}
\label{section: wide integration}

In this section we give a partial converse to~\autoref{theorem: weightings can be differentiated}. Notice from the previous work that any infinitesimally multiplicative weighting defined by differentiating a multiplicative weighting is necessarily concentrated in non-positive degrees.  

\begin{theorem}
\label{theorem: wide integration}
    Suppose that $G\toto M$ is a Lie groupoid and 
        \begin{equation}
        \label{equaton: Wide IM weighting to integrate}
            A=A_{-r} \supseteq A_{-r+1} \supseteq \cdots A_{-1} \supseteq 0
        \end{equation}
    is a Lie filtration of $A = \mathrm{Lie}(G)$. Suppose that $H\sset G$ is an $s$-connected Lie subgroupoid. If $B = \mathrm{Lie}(H)$ is such that
        \begin{enumerate}
            \item[(a)] $[\G(B), \G(A_{-i})] \sset \G(A_{-i})$ for all $i$

            \item[(b)] the assignment $m \mapsto \dim(B_m+A_{-i}|_m)$ is constant as a function on $M$
        \end{enumerate}
    then~\eqref{equaton: Wide IM weighting to integrate} defines a multiplicative weighting of $G$ along $H$ such that the induced weighting of $A$ defined by~\autoref{theorem: weightings can be differentiated} is given by the filtration  
        \begin{equation}
        \label{equation: differentiated weighting filtration}
             A=A_{-r} + B \supseteq A_{-r+1} + B \supseteq \cdots A_{-1} + B \supseteq B.
        \end{equation}
\end{theorem}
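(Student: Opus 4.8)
The plan is to extract the candidate infinitesimal weighting from hypothesis (b), to build a weighting of $G$ out of invariant vector fields via a singular Lie filtration, and then to verify multiplicativity through the graph characterization of~\autoref{theorem: characterization of mult weightings}, reading off the differentiated weighting from~\autoref{theorem: weightings can be differentiated}. \textbf{First} I would use condition (b): since $m \mapsto \dim(B_m + A_{-i}|_m)$ is constant, each $A_{-i}+B$ is a subbundle and $A = A_{-r}+B \sset \cdots \sset A_{-1}+B \sset B$ becomes a genuine filtration by wide subbundles. As $H$ is wide, $N = H \cap M = M$ carries the trivial weighting, so by~\autoref{remark: wide weighting and filtered vector bundles} this filtration \emph{is} a linear weighting of $A$, and it is the one in~\eqref{equation: differentiated weighting filtration}. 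That it is infinitesimally multiplicative I would check with~\autoref{lemma: structure constants for Lie algebroids}: the anchor condition is vacuous because the base is unweighted, and the bracket inclusion $[\G(A_{-i}+B), \G(A_{-j}+B)] \sset \G(A_{-i-j}+B)$ (for $i,j\ge 1$; $\sset \G(A_{-i}+B)$ when one index is $0$) follows from the Lie-filtration condition $[\G(A_{-i}),\G(A_{-j})]\sset\G(A_{-i-j})$, hypothesis (a), the fact that $B$ is a subalgebroid, and the inclusions $A_{-i},A_{-j}\sset A_{-i-j}$.

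\textbf{To weight $G$ along $H$}, I would take the distributions $\mathcal{F}_{-i}$ on $G$ generated over $C^\infty(G)$ by the right-invariant extensions $\G(A_{-i}+B)^R$ (equivalently, by~\autoref{remark: right invariant also filtration preserving}, the left-invariant ones), supplemented by the weight-zero base directions required to fill out $TG$. Because $A_{-i}+B$ is a subbundle, $\G(A_{-i}+B)^R$ is a subbundle of $\ker Ts$, and the relation $[\sigma^R,\tau^R] = -[\sigma,\tau]^R$ of~\autoref{lemma: left and right invariant relations} combined with the bracket inclusions above yields $[\mathcal{F}_{-i},\mathcal{F}_{-j}]\sset\mathcal{F}_{-i-j}$; in particular $\mathcal{F}_0 = \G(B)^R$ is involutive since $B$ is a subalgebroid. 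On each source fibre $s^{-1}(m)$ the right-invariant fields span the full tangent bundle, and $H\cap s^{-1}(m)$ is clean with $\dim(\mathcal{F}_{-i}|_p + T_p(H\cap s^{-1}(m)))$ constant by (b); since $s$ is a submersion with unweighted base, these fibrewise weightings assemble to a weighting of $G$ along $H$, to which~\autoref{theorem: singular Lie filtrations and weightings} (via~\autoref{proposition: weighted morphisms of weighted lie filtrations}) applies.

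\textbf{Multiplicativity} I would verify through~\autoref{theorem: characterization of mult weightings}. Item (a) and the identification of the induced filtration on $A=\nu(G,M)$ with $A_{-i}+B$ are immediate from the construction together with~\autoref{theorem: weightings can be differentiated}, which gives $\G(A)_{(-i)} = \{\sigma : \sigma^L \in \ger{X}^L(G)_{(-i)}\} = \G(A_{-i}+B)$ because $\sigma^L$ has filtration degree $-i$ exactly when $\sigma\in\G(A_{-i}+B)$; this is precisely the final assertion~\eqref{equation: differentiated weighting filtration}. The substantive point is item (c): the subbundle $(TG|_H)_{(-i)}$ is spanned by the restrictions of $\sigma^L$ and $\sigma^R$, $\sigma\in\G(A_{-i}+B)$, together with $TH$, and I would prove it is a subgroupoid $(TG|_H)_{(-i)}\toto(TM|_N)_{(-i)}$ of the tangent groupoid $TG\toto TM$ by feeding these invariant generators into the multiplication formula~\eqref{equation: multiplication on the tangent groupoid}; closure is exactly the stability of $A_{-i}+B$ under the operations encoded by (a) and the Lie filtration. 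Granting (c), the graph of multiplication is a weighted submanifold of $G^3$ (item (b)), since the filtration of $T\G(\mathrm{mult}_G)=\G(\mathrm{mult}_{TG})$ is then by subgroupoids, and the theorem concludes that the weighting is multiplicative (one may alternatively phrase the conclusion through~\autoref{theorem: weighted normal and weighted deformation groupoids}).

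I expect the \textbf{main obstacle} to be exactly Step 3(c)—showing that the filtration $(TG|_H)_{(-i)}$ consists of Lie subgroupoids of $TG\toto TM$—since this is where the Lie-theoretic hypotheses must be combined with left/right invariance; a secondary delicate point is the well-definedness of the weighting in Step 2, namely arranging that $\mathcal{F}_{-r}$ is genuinely all of $\ger{X}(G)$ (the base directions) while $H$ stays clean, which is cleanest to organize fibrewise over $s$ and to check is independent of the local trivialisation (equivalently, that the transition functions of the $A_{-i}+B$ are filtered). Once (c) is established, both multiplicativity and the identification of the differentiated weighting follow formally; the $s$-connectedness of $H$ enters only to guarantee that $B=\mathrm{Lie}(H)$ determines $H$, so that the weighting produced is genuinely along $H$.
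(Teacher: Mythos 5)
Your overall strategy matches the paper's (a singular Lie filtration built from invariant vector fields, then~\autoref{theorem: singular Lie filtrations and weightings}, then multiplicativity via~\autoref{theorem: characterization of mult weightings}), but three of your steps have genuine gaps. First, you build the filtration from one-sided invariant extensions $\G(A_{-i}+B)^R$ and justify the left/right interchange by~\autoref{remark: right invariant also filtration preserving}; this is circular, since that remark is a \emph{consequence} of the weighting being multiplicative (inversion being a weighted diffeomorphism), which is what you are trying to prove. The paper instead takes $\mathcal{F}_{(-i)} = \G(F^L_{-i}) + \G(F^R_{-i})$, generated by \emph{both} the left- and right-invariant extensions of $\G(A_{-i})$ (with $\mathcal{F}_{(-r)} = \ger{X}(G)$); this inversion-symmetric choice is exactly what later makes $\mathrm{inv}_G$ weighted and makes the tangent-groupoid multiplication~\eqref{equation: multiplication on the tangent groupoid}, which mixes $\mathcal{A}^L_{S_0}$ and $\mathcal{A}^R_{S_1}$, close on the filtration: elements of $(T_hG)_{(-i)}$ are written as $X+\sigma^L+\tau^R$ and right (resp.\ left) translation fixes one type while acting by $\mathrm{Ad}$ on the other. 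With only $F^R$ you cannot even expect closure under $T\mathrm{inv}$. Second, your cleanness step is asserted rather than proved: hypothesis (b) controls $\dim(B_m+A_{-i}|_m)$ only at units $m\in M$, and transporting the dimension count $h\mapsto \dim(T_hH+\mathcal{F}_{(-i)}|_h)$ from an arbitrary $h\in H$ back to $s(h)$ is where $s$-connectedness and hypothesis (a) are indispensable: the paper writes a bisection of $H$ through $h^{-1}$ as $\exp(\sigma_1)\cdots\exp(\sigma_k)$ with $\sigma_j\in\G_c(B)$, uses $\mathrm{Ad}(\exp\sigma)=\exp(\mathrm{ad}_\sigma)$ together with (a) to see this translation preserves $\mathcal{F}_{(-i)}$, and then uses $\sigma^L-\sigma^R = a(\sigma)$ along $M$ (\autoref{lemma: left and right invariant relations}) to get $\dim(T_mH+\mathcal{F}_{(-i)}|_m) = \dim(T_mM)+\dim(B_m+A_{-i}|_m)$. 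Your proposal reserves (a) for the subgroupoid-closure step and claims cleanness follows from (b) alone; it does not.

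Third, and most seriously, your concluding inference ``granting (c), the graph of multiplication is a weighted submanifold since the filtration of $T\G(\mathrm{mult}_G)=\G(\mathrm{mult}_{TG})$ is by subgroupoids'' is false. In~\autoref{theorem: characterization of mult weightings} conditions (b) and (c) are independent hypotheses; the tangent-level statement (c) cannot produce the chart-level statement (b), and this is precisely the subtlety the thesis emphasizes throughout Chapter 2 (a submanifold whose tangent spaces interact well with the filtration need not be a weighted submanifold; cf.~\autoref{corollary: criteria for weighted submanifolds}, which also demands that the inclusion be a weighted morphism for an intrinsic weighting). The paper proves (b) by a separate argument: $\G(\mathrm{mult}_G)$ is the flow-out of the unit diagonal by the vector fields $(-\sigma^R,-\sigma^R,0)$ and $(0,\tau^L,-\tau^R)$, which yields a singular Lie filtration on $G^3$ adapted to the graph, with respect to which $\G(\mathrm{mult}_H)$ is clean; then~\autoref{proposition: weighted morphisms of weighted lie filtrations} and~\autoref{corollary: criteria for weighted submanifolds} show the graph is a weighted submanifold. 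Without this (or an equivalent argument), your proof of multiplicativity is incomplete.
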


The idea of the proof is to use the filtration~\eqref{equaton: Wide IM weighting to integrate} to define a singular Lie filtration of $G$ with respect to which $H$ is clean. By~\autoref{theorem: singular Lie filtrations and weightings}, this defines a weighting of $G$ along $H$ which we then show that this weighting is multiplicative. 

\subsection{Definition of the singular Lie filtration}

Let $F_{-i}^L\sset TG$ be the subbundle spanned by the left-invariant vector fields $\sigma^L \in \ger{X}(G)$ for $\sigma \in \G(A_{-i})$, and let $F_{-i}^R$ be defined analogously using right-invariant extensions. Motivated by Equation~\eqref{equation: definition of differentiated weighting} and~\autoref{remark: right invariant also filtration preserving}, we define
    \begin{equation}
    \label{equation: integration Lie filtration}
        \mathcal{F}_{(-i)} = \G(F_{-i}^L) + \G(F_{-i}^R)  \sset \ger{X}(G), \quad i=1, \dots, r-1 
    \end{equation}
and $\mathcal{F}_{(-r)} = \mathfrak{X}(G)$. 

\begin{lemma}
    $\mathcal{F}_\bullet$ is a singular Lie filtration. 
\end{lemma}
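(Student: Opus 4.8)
I must verify the three requirements of \autoref{definition: singular Lie filtration} for the filtration $\mathcal{F}_\bullet$ defined in~\eqref{equation: integration Lie filtration}: that each $\mathcal{F}_{(-i)}$ is a singular distribution (a locally finitely generated $C^\infty_G$-submodule of $\ger{X}_G$), that the filtration is decreasing, and that $[\mathcal{F}_{(-i)}, \mathcal{F}_{(-j)}] \sset \mathcal{F}_{(-i-j)}$. Note that none of the hypotheses (a), (b) on $B = \mathrm{Lie}(H)$ are needed here; only the fact that~\eqref{equaton: Wide IM weighting to integrate} is a Lie filtration of $A$, i.e. that the $A_{-i}$ are subbundles with $[\G(A_{-i}), \G(A_{-j})] \sset \G(A_{-i-j})$, enters.

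\textbf{Distribution and filtration properties.} Since each $A_{-i} \sset A$ is a subbundle, its left- and right-invariant images $F^L_{-i}, F^R_{-i} \sset TG$ are subbundles of $TG$; hence $\G(F^L_{-i})$ and $\G(F^R_{-i})$ are each locally finitely generated, and their sum $\mathcal{F}_{(-i)}$ is locally finitely generated as well. Concretely, over a chart on which $\sigma_1, \dots, \sigma_k$ is a local frame for $A_{-i}$, the vector fields $\sigma_1^L, \dots, \sigma_k^L, \sigma_1^R, \dots, \sigma_k^R$ generate $\mathcal{F}_{(-i)}$ (with the usual care that $\sigma^L$, $\sigma^R$ are defined over the preimages of the chart under $t$, $s$ respectively). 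The decreasing property is immediate: $A_{-i} \supseteq A_{-i+1}$ forces $F^L_{-i} \supseteq F^L_{-i+1}$ and $F^R_{-i} \supseteq F^R_{-i+1}$, and $\mathcal{F}_{(-r)} = \ger{X}(G)$ sits on top.

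\textbf{Bracket condition.} Fix $a, b \geq 1$; I want $[\mathcal{F}_{(-a)}, \mathcal{F}_{(-b)}] \sset \mathcal{F}_{(-a-b)}$. If $a + b \geq r$ then $\mathcal{F}_{(-a-b)} = \ger{X}(G)$ and there is nothing to prove, so assume $a + b < r$. A general element of $\mathcal{F}_{(-a)}$ is a $C^\infty(G)$-combination of generators, and similarly for $\mathcal{F}_{(-b)}$; applying the derivation identity
    \[ [fX, gY] = fg\,[X,Y] + f(Xg)\,Y - g(Yf)\,X \]
reduces the problem to brackets of generators, since the last two terms lie in $C^\infty(G)\cdot\mathcal{F}_{(-b)} + C^\infty(G)\cdot\mathcal{F}_{(-a)} \sset \mathcal{F}_{(-a-b)}$ (using $\mathcal{F}_{(-a)}, \mathcal{F}_{(-b)} \sset \mathcal{F}_{(-a-b)}$ and that these are modules). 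For generators $\sigma \in \G(A_{-a})$, $\tau \in \G(A_{-b})$, the four mixed brackets are handled by \autoref{lemma: left and right invariant relations}: $[\sigma^L, \tau^L] = [\sigma,\tau]^L$ and $[\sigma^R, \tau^R] = -[\sigma,\tau]^R$ both land in $\mathcal{F}_{(-a-b)}$ because $[\sigma,\tau] \in \G(A_{-a-b})$ by the Lie filtration property, while $[\sigma^L, \tau^R] = 0$ and $[\sigma^R, \tau^L] = 0$. This establishes the bracket condition and completes the proof.

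\textbf{Main obstacle.} The lemma is essentially formal once the left/right-invariant bracket relations are available; the only points requiring attention are the bookkeeping reduction from arbitrary module elements to generators (ensuring the derivation-rule correction terms stay in the correct filtration level) and the indexing convention, in particular the convention $\mathcal{F}_{(-r)} = \ger{X}(G)$ which absorbs all brackets of total degree $\leq -r$.
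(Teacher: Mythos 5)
Your proof is correct and follows essentially the same route as the paper's: local finite generation comes from $F^L_{-i}, F^R_{-i}$ being subbundles, and the bracket condition reduces via the Leibniz rule to the generator brackets $[\sigma^L,\tau^L]=[\sigma,\tau]^L$, $[\sigma^R,\tau^R]=-[\sigma,\tau]^R$, and $[\sigma^L,\tau^R]=0$, with the correction terms absorbed because $\mathcal{F}_{(-j)}\sset\mathcal{F}_{(-i-j)}$. Your explicit handling of the indexing convention for $a+b\geq r$ is a minor point the paper leaves implicit, but it is the same argument.
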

\begin{proof}
    Since $F_{-i}^L$ and $F_{-i}^R$ are vector bundles, it follows that each $\mathcal{F}_{(-i)}$ is locally finitely generated. To see that the bracket condition is satisfied, let $\sigma \in \G(A_{-i})$, $\tau \in \G(A_{-j})$, and $f\in C^\infty(G)$. Since~\eqref{equaton: Wide IM weighting to integrate} is a Lie filtration, we have 
        \[ [\sigma^L, f\tau^L] = f[\sigma, \tau]^L + \mathcal{L}_{\sigma^L}f\cdot \tau^L \in \mathcal{F}_{(-i-j)}, \]
    because $\mathcal{L}_{\sigma^L}f\cdot \tau^L\in \mathcal{F}_{(-j)} \sset \mathcal{F}_{(-i-j)}$ as $-i-j \leq -j$; the same computation works for right invariant vector fields. Finally, since left and right invariant vector fields commute, we have 
        \[  [\sigma^L, f\tau^R] = \mathcal{L}_{\sigma^L}f\cdot \tau^R \in \mathcal{F}_{(-i-j)}, \]
    as needed.
\end{proof}

\subsection{Showing that $H$ is $\mathcal{F}_\bullet$-clean.}

In order to show that~\eqref{equation: integration Lie filtration} defines a weighting of $G$ along $H$, we need to show that $H$ is $\mathcal{F}_\bullet$-clean. Recall (cf.~\autoref{defintion: clean submanfiolds}) that this means that the function
    \[ H \to \N, \quad h \mapsto \mathrm{dim}(T_hH + \mathcal{F}_{(-i)}|_h ) \]
is constant for each $i=1, \dots, r$. 

Before proceeding, we fix some notation. Let $\sigma \in \G_c(B)$ be a compactly supported section of $B$. Since $\sigma$ is compactly support, the corresponding left-invariant extension $\sigma^L \in \ger{X}(H)$ is complete (cf.~\cite{zhong2009existence}), and so $\sigma$ defines a bisection $\exp(\sigma) \in \G(H)$ by the formula 
    \[ \exp(\sigma) : M\to H, \quad m \mapsto \phi^1(m) \]
where $\phi^1$ denotes the time 1 flow of the left-invariant vector field $\sigma^L$. 

\begin{lemma}
    $H$ is $\mathcal{F}_\bullet$-clean. 
\end{lemma}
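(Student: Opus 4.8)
The plan is to show that the function $h \mapsto \dim(T_hH + \mathcal{F}_{(-i)}|_h)$ is locally constant on $H$, hence constant on each $s$-connected component, and then to leverage $s$-connectedness of $H$ to conclude it is globally constant. The starting point is to understand the two spaces being summed. At a point $h \in H$, the tangent space $T_hH$ contains the values of all left-invariant and right-invariant extensions of sections of $B = \mathrm{Lie}(H)$, while $\mathcal{F}_{(-i)}|_h = F_{-i}^L|_h + F_{-i}^R|_h$ is spanned by the values $\sigma^L_h$ and $\sigma^R_h$ for $\sigma \in \G(A_{-i})$. The key algebraic input will be hypotheses (a) and (b) of~\autoref{theorem: wide integration}: that $[\G(B), \G(A_{-i})] \sset \G(A_{-i})$ and that $m \mapsto \dim(B_m + A_{-i}|_m)$ is constant.

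\medskip

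\textbf{Reduction to the units via bisections.}

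First I would reduce the computation at an arbitrary point $h \in H$ to a computation at a unit $m \in N \sset M$. The tool is the bisection action: using the left (or right) translation $\mathcal{A}^L_S$ by a local bisection $S$ of $H$ through $h$, I can move $h$ to the unit $s(h) \in M$. Because $F_{-i}^L$ is spanned by left-invariant vector fields, it is preserved under left-translation by bisections of $G$ (and a fortiori of $H$); the subtlety is that $F_{-i}^R$ is spanned by \emph{right}-invariant vector fields, and left-translation does \emph{not} obviously preserve it. This is precisely where hypothesis (a) enters: the condition $[\G(B), \G(A_{-i})] \sset \G(A_{-i})$ says that the adjoint action of $H$ (generated infinitesimally by sections of $B$) preserves $A_{-i}$, which is what guarantees that translating a right-invariant extension of a section of $A_{-i}$ by a bisection of $H$ lands back in the span of right-invariant extensions of $A_{-i}$. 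Concretely, I would exponentiate sections $\sigma \in \G_c(B)$ to bisections $\exp(\sigma) \in \G(H)$ (as set up just before the lemma statement) and use~\autoref{lemma: left and right invariant relations}, together with~\autoref{lemma: infinitesimal automorphisms are automorphisms} applied to the distribution $\mathcal{F}_{(-i)}$, to show that the adjoint action of these bisections preserves $\mathcal{F}_{(-i)}$ and maps $TH$ to $TH$. This shows $\dim(T_hH + \mathcal{F}_{(-i)}|_h)$ is constant along the orbits of this action, and $s$-connectedness ensures these orbits fill out each $s$-fibre component, reducing everything to points of $N$.

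\medskip

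\textbf{The computation at the units.}

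At a unit $m \in N$, the situation simplifies dramatically because left- and right-invariant extensions agree modulo $TM$: by~\autoref{lemma: left and right invariant relations}(a), $\sigma^L_m - \sigma^R_m = a(\sigma)_m \in T_mM$ for any section. Since $A_{-i} \sset A = \mathrm{Lie}(G) = \nu(G,M)$, I would identify $\mathcal{F}_{(-i)}|_m$ inside $T_mG = T_mM \oplus A_m$ and compute that $T_mH + \mathcal{F}_{(-i)}|_m = T_mM \oplus (B_m + A_{-i}|_m)$ (or an analogous clean expression accounting for the anchor images), whose dimension is $\dim M + \dim(B_m + A_{-i}|_m)$. Hypothesis (b) is exactly the statement that this is constant in $m$. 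The main obstacle I anticipate is the bookkeeping in the reduction step: carefully verifying that the bisection action preserves both $\mathcal{F}_{(-i)}$ \emph{and} $TH$ simultaneously, and that the two invariance properties combine to give invariance of the sum — here the interplay between the left-invariant part (preserved by $\mathcal{A}^L$ trivially) and the right-invariant part (preserved only because of hypothesis (a)) must be handled with care, since a single bisection must act compatibly on both. Once the reduction is in place, the unit computation is a direct linear-algebra identity, and $\mathcal{F}_\bullet$-cleanness of $H$ for each $i$ follows, completing the lemma and setting up the application of~\autoref{theorem: singular Lie filtrations and weightings}.
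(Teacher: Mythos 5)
Your proposal is correct and follows essentially the same route as the paper's proof: reduce to the units by translating with bisections $\exp(\sigma_1)\cdots\exp(\sigma_k)$ through $h^{-1}$ (where hypothesis (a), exponentiated to $\mathrm{Ad}_{\exp(\sigma)}(A_{-i})\sset A_{-i}$, handles the right-invariant part), and then compute at a unit using $\sigma^L_m-\sigma^R_m = a(\sigma)_m$ to get $\dim(T_mH+\mathcal{F}_{(-i)}|_m) = \dim(T_mM)+\dim(B_m+A_{-i}|_m)$, which is constant by hypothesis (b). The only cosmetic difference is that you invoke the flow-invariance lemma for singular distributions where the paper uses the identity $\exp(\mathrm{ad}_\sigma)=\mathrm{Ad}(\exp(\sigma))$ directly; both yield the same invariance statement.
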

\begin{proof}
    To begin we claim that, for any $m\in M$, one has 
        \[ T_mH + \mathcal{F}_{(-i)}|_m = T_mH + (A_{-i})_m.\]
    Indeed, let $X_m+\sigma_m^L+\tau_m^R \in T_mH + \mathcal{F}_{(-i)}|_m$ and observe that 
        \begin{align*}
            X_m+\sigma_m^L+\tau_m^R & = X_m+\sigma_m^L - \sigma_m^R + \sigma_m^R +\tau_m^R \\
            & = (X_m+a(\sigma_m)) + (\sigma_m+\tau_m)^R \in T_mH + (A_{-i})_{m},
        \end{align*}
    where was have used~\autoref{lemma: left and right invariant relations}, which says that the restriction of $\sigma^L-\sigma^R$ to $M$ is given by $a(\sigma)$. 
    
    Next, let $h\in H$. Since $H$ is $s$-connected, we may choose compactly supported sections $\sigma_1, \dots, \sigma_k\in \G_c(B)$ such that the bisection 
        \[ S = \exp(\sigma_1)\cdots \exp(\sigma_k) \in \G(H) \]
    passes through $h^{-1}$. Let $X_h+\sigma_h^L+\tau_h^R \in T_hH + \mathcal{F}_{-i}|_h$. Since $\exp(\mathrm{ad}_{\sigma}) = \mathrm{Ad}(\exp(\sigma))$ as operators on $\G(A)$, it follows that $\mathrm{ad}_{\G(B)}(\G(A_{-i}))  \sset \G(A_{-i})$ implies $\mathrm{Ad}_{\exp(\sigma)}(A_{-i}) \sset A_{-i}$. Therefore,
        \[ T_h\mathcal{A}^L_S(X_h+\sigma_h^L+\tau_h^R) = T_h\mathcal{A}^L_S(X_h) + \sigma_{s(h)}^L+(\mathrm{Ad}_S\tau)^R_{s(h)} \in T_{s(h)}H + \mathcal{F}_{-i}|_{s(h)}. \]
    This shows that 
        \begin{align*}
            \dim(T_hH + \mathcal{F}_{-i}|_h) & = \dim(T_{s(h)}H + \mathcal{F}_{-i}|_{s(h)}) \\
            & = \dim(T_{s(h)}H + (A_{-i})_{s(h)}) \\
            & = \dim(T_{s(h)}M) + \dim(B_{s(h)}+A_{-i}|_{s(h)})
        \end{align*}
    which is constant as a function of $h\in H$ by assumption. 
\end{proof}

Therefore, by~\cite[Theorem 4.1]{loizides2023differential}, the Lie filtration $\mathcal{F}_\bullet$ defines a weighting of $G$ along $H$ such that $(TG|_H)_{(-i)} = TH + (F_{-i}^L+F_{-i}^R)|_H$.

\subsection{Verification the weighting is multiplicative}

We conclude by showing that the weighting of $G$ along $H$ is multiplicative. Since $H$ is a wide subgroupoid, it follows from~\autoref{corollary: criteria for weighted submanifolds} that $M$ is a weighted submanifold of $G$ and the induced weighting of $M$ is along itself. It remains to show two things: 
    \begin{enumerate} 
        \item[(a)] the bundles
            \[ (TG|_H)_{(-i)} = TH + (F_{-i}^L+F_{-i}^R)|_H \]
            are wide VB-subgroupoids of $TG|_H \toto TM$ and 
        \item[(b)] the graph of multiplication is a weighted submanifold of $G^3$. 
    \end{enumerate}
To see why (b) holds, recall from~\autoref{example: structure of tangent groupoid} that the composition rule for $TG$ is given by 
    \begin{equation}
    \label{equation: structure of tangent groupoid}
         X_{g_0} \circ Y_{g_1} = T_{g_0}\mathcal{A}^R_{S_1}(X_{g_0}) + T_{g_1}\mathcal{A}^L_{S_0}(Y_{g_1}) - T_{g_1}(\mathcal{A}^L_{S_0}\mathcal{A}^R_{S_1}t)(Y_{g_1}),
    \end{equation}
where $X_{g_0} \in T_{g_0}G$ and $Y_{g_1}\in T_{g_1}G$ are composable, $S_0$ is a bisection through $g_0$ and $S_1$ is a bisection through $g_1$. 

\begin{lemma}
    Each $(TG|_H)_{(-i)} \toto TM$ is a VB-subgroupoid.
\end{lemma}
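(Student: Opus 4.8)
The plan is to verify directly the defining conditions for $(TG|_H)_{(-i)}$ to be a VB-subgroupoid of the restricted tangent groupoid $TG|_H \toto TM$. Since $(TG|_H)_{(-i)} = TH + (F_{-i}^L+F_{-i}^R)|_H$ is a vector subbundle of $TG|_H$ (it is a filtration piece of the weighting just constructed), it is automatically invariant under the scalar multiplication of the VB-groupoid $TG|_H$, so it remains only to check that it is a Lie subgroupoid. The source and target maps $Ts,Tt\colon TG|_H\to TM$ restrict to $(TG|_H)_{(-i)}$ as fibrewise surjections onto $TM$, because $TH\sset (TG|_H)_{(-i)}$ and $s|_H,t|_H$ are submersions; by \autoref{lemma: vector bundle submersions} these restrictions are submersions. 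The intersection of $(TG|_H)_{(-i)}$ with the unit set $TM$ is all of $TM$ (as $T_mM\sset T_mH$ at each unit $m$), and invariance under inversion follows since $\mathrm{inv}_G$ is a weighted diffeomorphism, so $T\mathrm{inv}_G$ is filtration preserving; concretely, $\sigma^L\sim_{\mathrm{inv}_G}-\sigma^R$ (see \autoref{remark: right invariant also filtration preserving}) interchanges $F_{-i}^L$ and $F_{-i}^R$ while fixing $TH$. The crux is therefore closure under multiplication.

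The key intermediate step is to show that for any bisection $S$ of $H$ the tangent maps $T\mathcal{A}^L_S$ and $T\mathcal{A}^R_S$ preserve $(TG|_H)_{(-i)}$. Here I would argue exactly as in the proof that $H$ is $\mathcal{F}_\bullet$-clean: $\mathcal{A}^L_S$ restricts to a diffeomorphism of $H$, hence preserves $TH$; it fixes left-invariant vector fields, so preserves $F_{-i}^L$; and it sends a right-invariant field $\tau^R$ to $(\mathrm{Ad}_S\tau)^R$. Choosing $S$ to be a product of exponentials $\exp(\sigma_1)\cdots\exp(\sigma_k)$ with $\sigma_j\in\G_c(B)$ — which can be taken through any prescribed element of $H$ by $s$-connectedness — the hypothesis $[\G(B),\G(A_{-i})]\sset\G(A_{-i})$ integrates, via $\exp(\mathrm{ad}_\sigma)=\mathrm{Ad}(\exp(\sigma))$, to $\mathrm{Ad}_S(A_{-i})\sset A_{-i}$, so that $(\mathrm{Ad}_S\tau)^R\in\G(F_{-i}^R)$ whenever $\tau\in\G(A_{-i})$. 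The same reasoning, with the roles of $F^L$ and $F^R$ exchanged, shows that $T\mathcal{A}^R_S$ preserves $(TG|_H)_{(-i)}$.

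With this in hand, closure under multiplication follows from the composition rule for the tangent groupoid (\autoref{equation: structure of tangent groupoid}). Given composable $X_{h_0},Y_{h_1}\in(TG|_H)_{(-i)}$, I would choose bisections $S_0,S_1$ of $H$ through $h_0$ and $h_1$ (again products of exponentials of sections of $B$), so that
\[
X_{h_0}\circ Y_{h_1}=T_{h_0}\mathcal{A}^R_{S_1}(X_{h_0})+T_{h_1}\mathcal{A}^L_{S_0}(Y_{h_1})-T_{h_1}(\mathcal{A}^L_{S_0}\mathcal{A}^R_{S_1}t)(Y_{h_1}).
\]
The first two terms lie in $(TG|_H)_{(-i)}$ by the invariance of the subbundle under $T\mathcal{A}^R_{S_1}$ and $T\mathcal{A}^L_{S_0}$, together with the fact that $\mathcal{A}^R_{S_1},\mathcal{A}^L_{S_0}$ map $H$ to $H$. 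For the third term, $Tt(Y_{h_1})$ is a vector based at the unit $t(h_1)$, hence lies in $T_{t(h_1)}M\sset T_{t(h_1)}H\sset(TG|_H)_{(-i)}$, and is then carried back into $(TG|_H)_{(-i)}$ by $T\mathcal{A}^R_{S_1}$ followed by $T\mathcal{A}^L_{S_0}$. Thus $X_{h_0}\circ Y_{h_1}\in(TG|_H)_{(-i)}$, and combining this with the already-verified conditions on the structure maps shows that $(TG|_H)_{(-i)}\toto TM$ is a VB-subgroupoid.

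I expect the main obstacle to be the intermediate claim that $T\mathcal{A}^L_S$ and $T\mathcal{A}^R_S$ preserve the subbundle — specifically, controlling the adjoint action of bisections on the filtration $A_{-i}$. This is precisely where hypothesis (a) and the $s$-connectedness of $H$ are essential, and where care is needed to reduce a general bisection to a product of exponentials of sections of $B$, so that the infinitesimal bracket condition $\mathrm{ad}_{\G(B)}(\G(A_{-i}))\sset\G(A_{-i})$ can be upgraded to $\mathrm{Ad}_S(A_{-i})\sset A_{-i}$.
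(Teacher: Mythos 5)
Your proof is correct and follows essentially the same route as the paper's: verify the structure maps using $TH\sset (TG|_H)_{(-i)}$ and $\sigma^L\sim_{\mathrm{inv}}-\sigma^R$, then check closure under multiplication via the bisection formula for the tangent groupoid, with bisections of $H$ taken as products of exponentials so that hypothesis (a) and $s$-connectedness give $\mathrm{Ad}_S(A_{-i})\sset A_{-i}$. The only (cosmetic) differences are that you isolate the invariance of $(TG|_H)_{(-i)}$ under $T\mathcal{A}^L_S$, $T\mathcal{A}^R_S$ as an explicit intermediate claim, and you dispatch the third term of the multiplication formula by noting $Tt(Y_{h_1})\in TM\sset TH$ directly, rather than computing it via $\sigma^L\sim_t 0$ and $\tau^R\sim_t a(\tau)$ as the paper does.
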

\begin{proof}
    First of all, note that both $Ts, Tt : (TG|_H)_{(-i)} \toto TM$ are submersions because $TH\sset (TG|_H)_{(-i)}$ and $H$ is a wide subgroupoid. Similarly, $TM\sset (TG|_H)_{(-i)}$. We now show that $(TG|_H)_{(-i)}$ is closed under composition and inversion. 
    
    Let $X+\sigma_0^L+\tau_0^R \in (T_{h_0}G)_{(-i)}$ and $Y+\sigma_1^L+\tau_1^R \in (T_{h_1}G)_{(-i)}$ be composable. We have 
        \begin{align*}
            T_{h_0}\mathcal{A}^R_{S_1}(X+\sigma_0^L+\tau_0^R) & =  T_{h_0}\mathcal{A}^R_{S_1}X + (\mathrm{Ad}_{S_1}\sigma_0)^L + \tau_0^R \in (T_{h_0\circ h_1}G)_{(-i)}, \\ 
            T_{h_1}\mathcal{A}^L_{S_0}(Y+\sigma_1^L+\tau_1^R) & =  T_{h_1}\mathcal{A}^L_{S_0}Y + \sigma_1^L + (\mathrm{Ad}_{S_0}\tau_1)^R \in (T_{h_0\circ h_1}G)_{(-i)}.
        \end{align*}
    Moreover, since $H$ is assumed to be wide and $S_0$ and $S_1$ are bisections in $H$, 
        \[T_{h_1}(\mathcal{A}^L_{S_0}\mathcal{A}^R_{S_1}t)(Y+\sigma_1^L+\tau_1^R) \in T_{h_0\circ h_1}H \sset (T_{h_0\circ h_1}G)_{(-i)}, \]
    where we have used that $\sigma_1^L \sim_t 0$ and $\tau_1^R\sim_t a(\tau_1)$. Since the composition $(X+\sigma_0^L+\tau_0^R)\circ (Y+\sigma_1^L+\tau_1^R)$ is the sum of these three terms~\autoref{equation: structure of tangent groupoid}, it follows that $(TG|_H)_{(-i)}$ is closed under multiplication. Similarly, using that $\sigma^L \sim_{\mathrm{inv}} -\sigma^R$, it follows that $(TG|_H)_{(-i)}$ is closed under inversion hence hence is a VB-groupoid.  
\end{proof}

We conclude the proof with the following lemma. 

\begin{lemma}
    The graph of multiplication is a weighted submanifold. 
\end{lemma}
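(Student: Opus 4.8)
This lemma is the last of the three conditions needed to invoke \autoref{theorem: characterization of mult weightings}: we have already seen that $M$ is a weighted submanifold and, by the preceding lemma, that the filtration $(TG|_H)_{(-i)}\toto TM$ is by VB-subgroupoids, so once the graph of multiplication is known to be a weighted submanifold the proof of the theorem will be complete. To prove the claim I would apply \autoref{corollary: criteria for weighted submanifolds} to $R=\G(\mathrm{mult}_G)\sset G^3$, where $G^3$ carries the product weighting along $H^3$ coming from the product singular Lie filtration $\mathcal{F}_\bullet\times\mathcal{F}_\bullet\times\mathcal{F}_\bullet$; note that $R\cap H^3=\G(\mathrm{mult}_H)$ because $H$ is wide.

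For the weighting on $R$ required by condition (a), I would use the singular Lie filtration $\mathcal{G}_\bullet$ that $R$ inherits from $\mathcal{F}_\bullet^{\times 3}$, namely $\mathcal{G}_{(-i)}=\{\,\widetilde{W}|_R : \widetilde{W}\in(\mathcal{F}_\bullet^{\times 3})_{(-i)}\ \text{tangent to } R\,\}$. The natural generators are the restrictions of $(\sigma^R,\sigma^R,0)$ and $(\tau^L,0,\tau^L)$ for $\sigma,\tau\in\G(A_{-i})$, together with the ``diagonal'' fields built from pairs $(\rho^L,\rho^R)$; that these are tangent to $R$ follows from left- and right-invariance together with the tangent-groupoid product formula \eqref{equation: structure of tangent groupoid}, and the bracket condition is inherited from $\mathcal{F}_\bullet$. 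That $\G(\mathrm{mult}_H)$ is $\mathcal{G}_\bullet$-clean I would deduce from the $\mathcal{F}_\bullet$-cleanness of $H$ by the same $s$-connected bisection/translation argument used to prove $H$ is clean, now carried out on the graph and using \autoref{lemma: left and right invariant relations}. Then \autoref{theorem: singular Lie filtrations and weightings} furnishes a weighting of $R$ along $\G(\mathrm{mult}_H)$, and condition (b) — that the inclusion $R\into G^3$ is a weighted morphism — is immediate from \autoref{proposition: weighted morphisms of weighted lie filtrations}, since by construction every generator of $\mathcal{G}_{(-i)}$ is the restriction of a field in $(\mathcal{F}_\bullet^{\times 3})_{(-i)}\sset\mathfrak{X}(G^3)_{(-i)}$.

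It remains to check condition (c), $(TR|_{\G(\mathrm{mult}_H)})_{(-i)}=TR\cap(TG^3|_{H^3})_{(-i)}$. Using $(TG^3|_{H^3})_{(-i)}=\bigl((TG|_H)_{(-i)}\bigr)^{3}$ and the identity $T\G(\mathrm{mult}_G)=\G(\mathrm{mult}_{TG})$, the right-hand side is exactly the graph of multiplication $\G(\mathrm{mult}_{(TG|_H)_{(-i)}})$ of the VB-subgroupoid produced in the preceding lemma, which is a subbundle of $\G(\mathrm{mult}_{TG})$. By \autoref{theorem: singular Lie filtrations and weightings} the left-hand side equals $\mathcal{G}_{(-i)}|_{\G(\mathrm{mult}_H)}+T\G(\mathrm{mult}_H)$, so (c) reduces to the statement that the tangent fields of $\mathcal{F}_\bullet^{\times 3}$ span, pointwise along $\G(\mathrm{mult}_H)$, the full composable-pair bundle $\bigl((TG|_H)_{(-i)}\bigr)^{(2)}$. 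Granting this, \autoref{corollary: criteria for weighted submanifolds} shows that $R=\G(\mathrm{mult}_G)$ is a weighted submanifold of $G^3$, as desired.

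The main obstacle is precisely this last pointwise spanning statement in (c). The fields $(\sigma^R,\sigma^R,0)$ and $(\tau^L,0,\tau^L)$ only account for the kernels of the source and target maps, so realizing the remaining ``anchor'' directions of $\bigl((TG|_H)_{(-i)}\bigr)^{(2)}$ forces one to combine left- and right-invariant extensions and to invoke \autoref{lemma: left and right invariant relations}; checking that these span with constant rank is the same clean-intersection phenomenon underlying the $\mathcal{F}_\bullet$-cleanness of $H$, and it is here that the hypotheses that $H$ be wide and $s$-connected are used. A secondary technical point is the verification that $\mathcal{G}_\bullet$ is genuinely locally finitely generated, which again follows from the subgroupoid structure of $(TG|_H)_{(-i)}$.
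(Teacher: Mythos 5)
Your proposal takes essentially the same route as the paper's proof: put a singular Lie filtration on $\G(\mathrm{mult}_G)$ compatible with the product filtration on $G^3$, show $\G(\mathrm{mult}_H)$ is clean, and then combine \autoref{theorem: singular Lie filtrations and weightings}, \autoref{proposition: weighted morphisms of weighted lie filtrations}, and \autoref{corollary: criteria for weighted submanifolds}. The paper does exactly this, realizing the graph as the flow-out of the triple diagonal of $M$ by the fields $(-\sigma^R,-\sigma^R,0)$ and $(0,\tau^L,-\tau^R)$ and disposing of your condition (c) with the words ``by construction.''

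The step you grant rather than prove is therefore precisely the step the paper compresses, and it does close with the tools you name; here is how. Given $(\xi_0\circ\xi_1,\xi_0,\xi_1)$ in the graph of multiplication of $(TG|_H)_{(-i)}$ at $(h_0\circ h_1,h_0,h_1)$, write $\xi_0 = X_0+\alpha^L_{h_0}+\beta^R_{h_0}$ and $\xi_1 = Y_1+\gamma^L_{h_1}+\delta^R_{h_1}$ with $X_0,Y_1$ tangent to $H$ and $\alpha,\beta,\gamma,\delta\in\G(A_{-i})$. Linearity of $\mathrm{mult}_{TG}$ on composable pairs, together with the multiplicativity identities $\beta^R_{h_0}\circ 0_{h_1}=\beta^R_{h_0h_1}$, $0_{h_0}\circ\gamma^L_{h_1}=\gamma^L_{h_0h_1}$, and $\alpha^L_{h_0}\circ(-\alpha^R_{h_1})=0_{h_0h_1}$, lets you subtract your three generating families so that only $(X_0\circ(Y_1+\epsilon^R_{h_1}),X_0,Y_1+\epsilon^R_{h_1})$ remains, with $\epsilon\in\G(A_{-i})$. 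Because $H$ is wide, $Ts\colon T_{h_0}H\to T_{s(h_0)}M$ is surjective, so you may choose $X_0''\in T_{h_0}H$ composable with $\epsilon^R_{h_1}$; splitting off the composable $TH$-pair $(X_0-X_0'',Y_1)$ contributes a $T\G(\mathrm{mult}_H)$ term, and adding the generator $(0,\epsilon^L_{h_0},-\epsilon^R_{h_1})$ to what is left produces $(\eta\circ 0_{h_1},\eta,0_{h_1})$ with $\eta = X_0''+\epsilon^L_{h_0}$ satisfying $Ts(\eta)=0$ (here \autoref{lemma: left and right invariant relations} fixes the signs). Finally, the same dimension count used to prove $H$ is $\mathcal{F}_\bullet$-clean identifies $\ker(Ts)\cap(TG|_H)_{(-i)}$ along $H$ with the right-invariant image of $B+A_{-i}$, so $\eta = \nu^R_{h_0}+\tau^R_{h_0}$ with $\nu\in\G(B)$, $\tau\in\G(A_{-i})$; the $\nu$-part is tangent to $\G(\mathrm{mult}_H)$ and the $\tau$-part is a generator. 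This is where $s$-connectedness and hypothesis (a) enter, exactly as you predicted, via the cleanness count.

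One point in your write-up deserves emphasis because it is a place where you improve on the paper's displayed formulas: the family $(\tau^L,0,\tau^L)$ is genuinely needed. At a point of $\G(\mathrm{mult}_H)$ a vector of the form $(\gamma^L_{h_0h_1},0_{h_0},\gamma^L_{h_1})$ lies in the graph of multiplication of $(TG|_H)_{(-i)}$ but is not in the pointwise span of $(\sigma^R,\sigma^R,0)$, $(0,\tau^L,-\tau^R)$, and $T\G(\mathrm{mult}_H)$, since nothing in that list can produce a left-invariant direction in the third slot. So the paper's two flow-out fields alone do not pointwise span; its ``by construction'' implicitly uses the full module of product-filtration fields tangent to the graph, which is exactly the module $\mathcal{G}_\bullet$ you defined.
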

\begin{proof}
    Recall (cf.~\cite[Section 9.6]{meinrenken2017Lie}) that $\G(\mathrm{mult}_G) \sset G^3$ is the flow-out of $\{(m,m,m) : m\in M\} \sset G^3$ by the vector fields 
        \[ X_\sigma = (-\sigma^R, -\sigma^R, 0) \quad \text{and} \quad Y_\tau = (0, \tau^L, -\tau^R), \]
    with $\sigma, \tau \in \G(A)$. Therefore, analogously to the work above, we can define a singular Lie filtration of $\G(\mathrm{mult}_G)$ with respect to which $\G(\mathrm{mult}_H)$ is clean. This defines a weighting of $\G(\mathrm{mult}_G)$ along $\G(\mathrm{mult}_H)$. One has, by construction, that 
        \[ (T\G(\mathrm{mult}_G)|_{\mathrm{mult}_H})_{(-i)} = T\G(\mathrm{mult}_G)\cap (TG^3|_{H^3})_{(-i)},  \]
    since $G^3$ is weighted along $H^3$ using the product Lie filtration. Furthermore,~\autoref{proposition: weighted morphisms of weighted lie filtrations} implies that the inclusion $\G(\mathrm{mult}_G) \into G^3$ is a weighted morphism.~\autoref{corollary: criteria for weighted submanifolds} implies that $\G(\mathrm{mult}_G)$ is a weighted submanifold.
\end{proof}

This concludes the proof of~\autoref{theorem: wide integration}. As an immediate application, we have the following classification theorem, which says that \emph{any} multiplicative weighting along an $s$-connected wide subgroupoid arises in this way. In particular, a Lie groupoid with a multiplicative weighting along its objects is a filtered Lie groupoid in the sense of van Erp and Yuncken (cf.~\cite[Definition 17]{van2017tangent}), and conversely.

\begin{theorem}
\label{theorem: wide weighted groupoids are filtered}
    The differentiation procedure of~\autoref{theorem: weightings can be differentiated} and integration procedure of~\autoref{theorem: wide integration} define a 1-1 correspondence between multiplicative weightings along $s$-connected, wide Lie subgroupoids and infinitesimally multiplicative weightings along wide Lie subalgebroids.
\end{theorem}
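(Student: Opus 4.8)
The plan is to establish the claimed bijection by verifying that the differentiation procedure of~\autoref{theorem: weightings can be differentiated} and the integration procedure of~\autoref{theorem: wide integration} are mutually inverse. First I would observe that~\autoref{theorem: weightings can be differentiated} already shows that differentiating a multiplicative weighting along a wide $s$-connected subgroupoid $H$ yields an infinitesimally multiplicative weighting of $A = \mathrm{Lie}(G)$; since the weighting of $G$ is along the wide subgroupoid $H$, the units $M$ are weighted along themselves (as in~\autoref{corollary: weighted subgroupoids} applied to the wide case), so the induced weighting of $A$ is along the wide subalgebroid $B = \mathrm{Lie}(H)$. Conversely,~\autoref{theorem: wide integration} shows that an infinitesimally multiplicative weighting along a wide subalgebroid—which by~\autoref{remark: wide weighting and filtered vector bundles} is equivalent to a filtration of $A$ by wide subbundles $A = A_{(-r)} \supseteq \cdots \supseteq A_{(0)} = B$ satisfying the Lie filtration condition—integrates to a multiplicative weighting of $G$ along $H$. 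So both maps are well-defined; the content of the theorem is that the two compositions are the identity.

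The key steps, in order, are the following. For the composition \emph{integrate-then-differentiate}, I would start with an infinitesimally multiplicative weighting of $A$ along $B$, given by a filtration as above. Applying~\autoref{theorem: wide integration} produces a multiplicative weighting of $G$, and that same theorem explicitly computes the differentiated weighting via~\eqref{equation: differentiated weighting filtration}: one must check that running the filtration~\eqref{equaton: Wide IM weighting to integrate} through integration and then~\autoref{theorem: weightings can be differentiated} recovers the original filtration $A_{(-i)} + B$. But this is precisely what~\autoref{theorem: wide integration} asserts—the induced weighting of $A$ is given by the filtration $A_{-r}+B \supseteq \cdots \supseteq A_{-1}+B \supseteq B$—so the integrate-then-differentiate composition is the identity by the conclusion of that theorem, after matching the indexing conventions (the subalgebroid $A_{(-i)}$ one starts from already contains $B$ in the wide setting, so $A_{(-i)} + B = A_{(-i)}$).

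For the reverse composition \emph{differentiate-then-integrate}, I would start with a multiplicative weighting of $G$ along a wide $s$-connected $H$, differentiate it via~\eqref{equation: definition of differentiated weighting} to get the filtration of $A$, and then re-integrate using the singular Lie filtration~\eqref{equation: integration Lie filtration} built from left- and right-invariant vector fields $\sigma^L, \sigma^R$ for $\sigma$ in the filtration pieces of $A$. The crucial point is that the reconstructed weighting of $G$ coincides with the original one. By~\autoref{equation: recover weighting from differential operators}, a weighting is determined by its associated filtration of vector fields together with the submanifold, so it suffices to show that the original weighting's filtration $\ger{X}(G)_{(-i)}$ agrees with the singular Lie filtration $\mathcal{F}_{(-i)} = \G(F_{-i}^L) + \G(F_{-i}^R)$ generated during integration. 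Here~\autoref{theorem: weightings can be differentiated} (via~\eqref{equation: definition of differentiated weighting} and~\autoref{remark: right invariant also filtration preserving}) gives that $\sigma \in \G(A)_{(-i)}$ if and only if $\sigma^L \in \ger{X}(G)_{(-i)}$ if and only if $\sigma^R \in \ger{X}(G)_{(-i)}$, so $\mathcal{F}_{(-i)} \sset \ger{X}(G)_{(-i)}$ is immediate; the content is the reverse inclusion at the level of the induced weighting, i.e. that $(TG|_H)_{(-i)} = TH + (F_{-i}^L + F_{-i}^R)|_H$ computed by~\autoref{theorem: wide integration} matches the filtration of $TG|_H$ coming from the original multiplicative weighting.

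The main obstacle I anticipate is precisely this last identification: showing that the singular-Lie-filtration weighting produced by integration reproduces the \emph{same} weighting, not merely one with the same filtration of $TG|_H$. Two weightings can induce the same filtration of the tangent bundle along $N$ while differing (as illustrated in the excerpt's examples), so matching tangent filtrations is not by itself enough. The resolution should come from the fact that in the wide case the weighting is determined by its filtration of vector fields through~\eqref{equation: recover weighting from differential operators}, combined with the observation that the multiplicative weighting of $G$ is generated, as a singular Lie filtration, by left- and right-invariant vector fields: concretely, I would argue that $\ger{X}(G)_{(-i)}$ is locally spanned over $C^\infty(G)$ by the $\sigma^L$ and $\sigma^R$ with $\sigma$ ranging over $\G(A)_{(-i)}$, using that $H$ is wide so that $TG = \ker(Ts) + \ker(Tt)$ is spanned by left- and right-invariant vectors away from degeneracies, together with the cleanness of $H$ established in the proof of~\autoref{theorem: wide integration}. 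Once both weightings are exhibited as arising from the identical singular Lie filtration with respect to the same clean submanifold $H$, uniqueness in~\autoref{theorem: singular Lie filtrations and weightings} forces them to coincide, completing the proof that the two procedures are mutually inverse.
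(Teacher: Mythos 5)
Your handling of the integrate-then-differentiate composition is exactly the paper's: it follows immediately from the final assertion of \autoref{theorem: wide integration}, i.e.\ from~\eqref{equation: differentiated weighting filtration}, with the index bookkeeping you describe. The gap is in the differentiate-then-integrate direction. Your argument rests on the claim that $\ger{X}(G)_{(-i)}$ is locally spanned over $C^\infty(G)$ by the invariant vector fields $\sigma^L, \sigma^R$ with $\sigma \in \G(A)_{(-i)}$, so that the original weighting and the reconstructed one arise from \emph{the identical} singular Lie filtration and uniqueness applies. That claim is false. Take $G = \mathrm{Pair}(M)$ for a filtered manifold $M$ with $F_{-1} \subsetneq TM = F_{-2}$, weighted along the diagonal. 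Then $\G(F_{-1}^L) + \G(F_{-1}^R) = \G(F_{-1}\times F_{-1})$ is the module of sections of a \emph{proper subbundle} of $TG$; but if $f \in \van{\Delta_M}$ and $X \in \ger{X}(G)$ is arbitrary, then $fX \in \ger{X}(G)_{(-1)}$, while at points off the diagonal $fX$ is a nonzero multiple of an arbitrary vector, so $fX \notin \G(F_{-1}\times F_{-1})$. Hence $\mathcal{F}_{(-i)}$ of~\eqref{equation: integration Lie filtration} is in general a proper $C^\infty(G)$-submodule of $\ger{X}(G)_{(-i)}$, and your uniqueness argument never gets off the ground. (Relatedly, $TG = \ker(Ts) + \ker(Tt)$ holds only for transitive groupoids --- it already fails for a family of Lie groups --- which is exactly why the paper sets $\mathcal{F}_{(-r)} = \ger{X}(G)$ by fiat rather than generating the top level by invariant vector fields.)

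What the paper does instead is compare the two weightings through the identity map. Writing $\WT{G}$ for $G$ equipped with the re-integrated weighting, \autoref{proposition: weighted morphisms of weighted lie filtrations} shows that $\mathrm{id} : \WT{G} \to G$ is a weighted morphism; this uses only the inclusion $\mathcal{F}_{(-i)} \sset \ger{X}(G)_{(-i)}$, which is the direction you correctly identified as immediate from~\eqref{equation: definition of differentiated weighting} and \autoref{remark: right invariant also filtration preserving}. It then upgrades this to a weighted diffeomorphism by checking that $T_h\WT{G}_{(-i)} = T_hG_{(-i)}$ for all $h \in H$: this holds along $M$, and propagates to all of $H$ because left translation by bisections of $H$ is a weighted diffeomorphism for both weightings; the normal form for weighted submersions (\autoref{theorem: weighted submersion coordinates}) then produces a weighted inverse. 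Note that this resolves precisely the subtlety you flagged: equality of tangent filtrations alone does not pin down a weighting, but a weighted morphism in one direction whose tangent map is a filtered isomorphism in every degree is automatically a weighted diffeomorphism. Your instinct to avoid relying on tangent data alone was right; the correct fix is this one-sided-morphism-plus-tangent-isomorphism criterion, not module-level equality of the two singular Lie filtrations.
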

\begin{proof}
    Let $G\toto M$ be a Lie groupoid and $H\toto M$ be an $s$-connected, wide Lie subgroupoid. An infinitesimally multiplicative weighting of $\mathrm{Lie}(G)$ along $\mathrm{Lie}(H)$ is a Lie filtration 
        \[ \mathrm{Lie}(G) = F_{-r} \supseteq \cdots \supseteq F_{-1} \supseteq \mathrm{Lie}(H) \]
    which satisfies the assumptions of~\autoref{theorem: wide integration}. Thus, it defines a multiplicative weighting of $G$ along $H$ and~\autoref{equation: differentiated weighting filtration} says that the differentiated weighting of $\mathrm{Lie}(G)$ along $\mathrm{Lie}(H)$ is the one we started with. 

    Now, suppose that $G$ is multiplicatively weighted along $H$. By~\autoref{theorem: weightings can be differentiated}, the induced weighting of $\mathrm{Lie}(G)$ along $\mathrm{Lie}(H)$ is given by 
        \[ \mathrm{Lie}(G) = F_{-r} \supseteq \cdots \supseteq F_{-1} \supseteq \mathrm{Lie}(H), \]
    where 
        \[ F_{-i} = (TG|_M)_{(-i)} / TM. \]
    Let $\WT{G}$ denote $G$ but with the weighting along $H$ defined by~\autoref{theorem: wide integration}. By~\autoref{proposition: weighted morphisms of weighted lie filtrations}, the identity map 
        \[ \WT{G} \to G \]
    is a weighted morphism. We claim that it is a weighted diffeomorphism. By the normal form for weighted submersions (\autoref{theorem: weighted submersion coordinates}), it is enough to show that $T_h\WT{G}_{(-i)} = T_hG_{(-i)}$ for all $h\in H$. But this clearly holds for all $h \in M$ and since left multiplication by any bisection in $H$ is a weighted diffeomorphism\footnote{This can be seen as a consequence of~\autoref{lemma: extensions of morphisms}. Indeed, since $S\sset H$ it is automatically a weighted submanifold of $G$. Therefore, $\defw(S,S) = S\times \R$ is a bisection of $\defw(G,H)$ and one has that $\defw(\mathcal{A}^L_S) = \mathcal{A}^L_{\defw(S,S)}$} it therefore holds for all $h \in H$.  
\end{proof} 

\begin{examples}
\begin{itemize}
    \item[(a)] Suppose that $G = \mathrm{Pair}(M)$ is the pair groupoid and $H = M$ is the units. An infinitesimally multiplicative weighting of $\mathrm{Lie}(\mathrm{Pair}(M)) = TM$ is a filtered structure on $M$,
        \[ TM = F_{-r} \supseteq F_{-r+1} \supseteq \cdots \supseteq F_{-1} \supseteq 0. \]
    The singular Lie filtration of $\mathrm{Pair}(M)$ defined by~\eqref{equation: integration Lie filtration} is regular, and is given by 
        \[ T\mathrm{Pair}(M)_{-i} = F_{-i} \times F_{-i}.  \]

    \item[(b)] Let $G$ be a Lie be a Lie group, $H\sset G$ a connected, closed subgroup, and let 
        \[ \ger{g} = \ger{g}_{-r} \supseteq \ger{g}_{-r+1} \supseteq \cdots \ger{g}_{-1} \supseteq \ger{h} = \mathrm{Lie}(H), \]
    be a Lie filtration. The weighting defined by~\autoref{theorem: wide integration} is the same as the weighting defined by the (right-invariant) Lie filtration 
        \[ TG = G\times \ger{g}_{-r} \supseteq G\times \ger{g}_{-r+1} \supseteq \cdots G \times \ger{g}_{-1} \supseteq G\times \ger{h}, \]
    given by left-translation. Indeed, this follows by a similar argument as in~\autoref{theorem: wide weighted groupoids are filtered}.
\end{itemize}  
\end{examples}

\cleardoublepage
\chapter{Weightings and Higher Tangent Bundles}
\label{chapter: Weightings and Higher Tangent Bundles}

One of the main goals in~\cite{loizides2023differential} is to give a ``coordinate-free" definition of a weighting. They accomplish this by showing that a weighting of $M$ along $N$ is completely described by a graded subbundle (cf.~\autoref{subsection: graded bundles}) of the $r$-th order tangent bundle $T_rM$. This perspective can be used to give an alternative characterization of multiplicative and infinitesimally multiplicative weightings. In this chapter we will explain this perspective and use it to extend~\autoref{theorem: wide integration} to arbitrary $s$-connected Lie subgroupoids (\autoref{theorem: integration of multiplicative weightings}). 

\section{Higher Tangent Bundles}

\subsection{Definition}

We begin be reviewing the definition of $T_rM$, following the algebraic perspective  presented in~\cite[Chapter VIII]{kolar2013natural}. Consider the truncated polynomial algebra $\A_r := \R[\epsilon]/\la \epsilon^{r+1} \ra$. 

\begin{definition}
	The \emph{$r$-th order tangent bundle} of $M$ is the character spectrum 
		\[ T_rM := \aHom(C^\infty(M), \A_r). \]
\end{definition}

\begin{example}
    Any $\varphi \in T_rM = \aHom(C^\infty(M), \A_r)$ can be written as 
        \[ \varphi = \sum_{i=0}^r u_i \epsilon^i \]
    for linear maps $u_i : C^\infty(M) \to \R$. We see that $u_0$ is simply an algebra morphism and therefore is given by evaluation at a point $p\in M$. Hence $T_0M = M$. Similarly, $u_1$ is a derivation with respect to $u_0$, and is therefore a tangent vector based at $p\in M$. Thus, $T_1M = TM$. 
\end{example}

The assignment $M\mapsto T_rM$ is functorial: for a smooth map $F:M\to M'$ its $r$-th order tangent lift is defined by 
	\[T_rF(u) = \sum (u_i\circ F^*)\epsilon^i.  \] 

\subsection{Lifts of functions and vector fields}

Any function $f\in C^\infty(M)$ admits $r$ tangent lifts $f^{(i)} \in C^\infty(T_rM)$, for $i=0,1,\dots, r$, defined by 
    \[ f^{(i)} : \sum_{i=0}^ru_i\epsilon^i \mapsto  u_i(f).  \]
These lifts satisfy the product rule 
	\begin{equation}
	\label{equation: product rule for lifts}
		(fg)^{(i)} = \sum_{j=0}^if^{(j)}g^{(i-j)}.
	\end{equation}
Furthermore, if $F:M\to M'$ and $g\in C^\infty(M')$ then
	\[ (T_rF)^*g^{(i)} = (F^*g)^{(i)}. \]
If $x_a$, $a=1, \dots, m$, are coordinates for $M$ defined on $U$, then the lifts $x^{(i)}_a$, $a=1, \dots, m$ and $i=0,\dots, r$, define coordinates on $T_rU \subseteq T_rM$. 

\begin{example}
    Given $f\in C^\infty(M)$, the zeroth lift $f^{(0)}\in C^\infty(TM)$ is just the pullback of $f$ to $TM$, while $f^{(1)}\in C^\infty(TM)$ is $\ed f$ thought of as a function on $TM$.  
\end{example}
 
Morimoto observed in~\cite{morimoto1970liftings} that any vector field $X\in \mathfrak{X}(M)$ also admits lifts $X^{(-i)} \in \mathfrak{X}(T_rM)$ for $i=0,1, \dots, r$. These lifts are uniquely determined by their action on lifts of functions $f\in C^\infty(M)$, which is given by
	\[X^{(-i)}f^{(j)} = (Xf)^{(j-i)}. \]
These lifts satisfy the product rule and are compatible with Lie brackets: 
    \begin{equation}
    \label{equation: lift compatibilities}
        (fX)^{(-i)} = \sum_{k=i}^rf^{(k-i)}X^{(-k)} \quad \text{and} \quad [X^{(-i)}, Y^{(-j)}] = [X,Y]^{(-i-j)},
    \end{equation}
where $[X,Y]^{(-i-j)} = 0$ if $i+j >r$. 

\subsection{Iterating the higher tangent functor}

Given non-negative integers $r_1, \dots, r_k$ one can, more generally, define
    \[T_{r_1,\dots, r_k}(M) = \aHom(C^\infty(M), \A_{r_1} \otimes \cdots \otimes \A_{r_k}). \]
This can be identified with the iterated tangent bundles $T_{r_1}T_{r_2}\cdots T_{r_k}(M)$. In particular, for any permutation $\sigma \in S_r$ the corresponding algebra isomorphism $\A_{r_1}\otimes \cdots \A_{r_k} \to \A_{r_{\sigma(1)}}\otimes \cdots \A_{r_{\sigma(k)}}$ induces an isomorphism 
    \begin{equation}
    \label{equation: isomorphism of higher tangent bundles}
        \delta_\sigma:T_{r_1}T_{r_2}\cdots T_{r_k}(M) \corr{\cong} T_{r_{\sigma(1)}}T_{r_{\sigma(2)}}\cdots T_{r_{\sigma(k)}}(M). 
    \end{equation} 

In the case $k=2$, the isomorphism $T_rT_sM \cong T_{s,r}M$ is given as follows. Let $\varphi \in T_{r}T_{s}M = \aHom(C^\infty(T_sM), \A_r)$ and define $\tilde{\varphi} \in T_{r,s} M$ by 
    \[ \tilde{\varphi}(f) = \sum_{i=0}^s\varphi(f^{(i)})\tau^i. \]
To see that the map
    \begin{align*}
        T_rT_sM \to T_{s,r}M, \quad \varphi \mapsto \tilde{\varphi}
    \end{align*}
is an isomorphism we write down its inverse. Let $\Psi \in T_{r,s} M$, and write $\Psi = \sum_{i=0}^s\psi_i\tau^i$. Define $\widehat{\Psi}\in T_rT_s(M)$ by 
    \[ (\widehat{\Psi})(f^{(i)}) = \psi_i(f)\in \A_r, \quad f\in C^\infty(M); \]
since $T_sM$ is a graded bundle, after extending as needed to make $\widehat{\Psi}$ an algebra morphism, this is sufficient to define $\widehat{\Psi}$. It is straightforward to check that these maps are inverse to one another. 

\begin{remark}
\label{remark: lifts of vector fields and their flows}
    With respect to the identification $T(T_rM) = T_r(TM)$ we have 
        \[  T_rX = X^{(0)} \quad \text{and} \quad T_r\phi_X^t = \phi_{X^{(0)}}^t, \]
    where $\phi_X^t$ represents the time $t$ flow of $X$; see~\autoref{lemma: flows and lifts}.
\end{remark}

\subsection{Higher tangent bundles of vector bundles}  

If $V\to M$ is a vector bundle with scalar multiplication given by $\kappa_t$, then $T_rV$ is a vector bundle over $T_rM$ with scalar multiplication given by $T_r\kappa_t$. In this section we will generalize the lifting procedure for vector fields to sections of an arbitrary vector bundle. 

Let $E\in \ger{X}(V)$ be the Euler vector field on $V$, and let $\ger{X}_{[n]}(V)$ denote the vector fields $X\in \ger{X}(V)$ with the property that $[E,X] = nX$. Recall that any section $\sigma \in \G(V)$ defines a vector field $X_\sigma \in \ger{X}_{[-1]}(V)$ by the formula 
    \[ X_{\sigma}(v) = \sigma(p)  \quad v \in V_p\]
where we are identifying $V_p$ with the vertical subspace of $T_vV$. The corresponding map
    \[ \G(V) \to \ger{X}_{[-1]}(V), \quad \sigma \mapsto X_\sigma \]
is an isomorphism of $C^\infty(M)$-modules. We will use this observation to define the tangent lifts of sections of $V$. 

\begin{lemma}
\label{lemma: lift of Euler vector field}
    If $E\in \ger{X}(V)$ is the Euler vector field for $V$, then $E^{(0)} \in \ger{X}(T_rV)$ is the Euler vector field for $T_rV$. 
\end{lemma}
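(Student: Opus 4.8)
The plan is to show that the two Euler vector fields agree by checking their action on a generating set of functions, namely the tangent lifts $f^{(i)}$ of fibrewise-polynomial functions on $V$.

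First I would recall the two characterizations at play. The Euler vector field on a vector bundle is the unique vector field whose flow is the scalar multiplication $\kappa_{e^t}$; equivalently, it is the degree operator, acting on a fibrewise homogeneous function $f \in C^\infty_{[n]}(V)$ by $E f = nf$. So the Euler vector field for $T_rV$ (which is a vector bundle over $T_rM$ with scalar multiplication $T_r\kappa_t$) is characterized by acting as multiplication by $n$ on $C^\infty_{[n]}(T_rV)$, where homogeneity is measured with respect to $T_r\kappa_t$.

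Next I would use the fact that the lifts $f^{(i)}$ of functions $f \in C^\infty(V)$ generate $C^\infty(T_rV)$ as an algebra, so it suffices to compute $E^{(0)}f^{(i)}$ and compare it to the action of the genuine Euler vector field on $T_rV$. The key computation is the defining relation for lifts of vector fields, $X^{(-i)}f^{(j)} = (Xf)^{(j-i)}$, specialized to $i=0$: this gives $E^{(0)}f^{(i)} = (Ef)^{(i)}$. Now if $f \in C^\infty_{[n]}(V)$ is fibrewise homogeneous of degree $n$, then $Ef = nf$, so $E^{(0)}f^{(i)} = (nf)^{(i)} = n f^{(i)}$. The remaining point is to check that $f^{(i)}$ is fibrewise homogeneous of degree $n$ for $T_rV \to T_rM$ whenever $f$ is homogeneous of degree $n$ for $V \to M$; this follows from functoriality of the lift, since $(\kappa_t^* f)^{(i)} = (T_r\kappa_t)^* f^{(i)}$, and $\kappa_t^* f = t^n f$ gives $(T_r\kappa_t)^* f^{(i)} = t^n f^{(i)}$. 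Hence $E^{(0)}$ acts as multiplication by $n$ on each $f^{(i)}$ with $f$ homogeneous of degree $n$, which is exactly the degree operator for $T_rV$, so $E^{(0)}$ is the Euler vector field of $T_rV$.

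I do not expect a serious obstacle here; the statement is essentially a bookkeeping verification. The one place requiring mild care is confirming that the homogeneity degree is preserved under the lift $f \mapsto f^{(i)}$ (that it stays $n$ and does not shift with $i$), which is why I would phrase the argument via the equivariance $(T_r\kappa_t)^* f^{(i)} = t^n f^{(i)}$ rather than by a coordinate computation. Alternatively, one could give a flow-based proof: since $T_r\phi_X^t = \phi_{X^{(0)}}^t$ (as recorded in the preceding remark, with reference to the forthcoming lemma on flows and lifts), the flow of $E^{(0)}$ is $T_r\phi_E^t = T_r\kappa_{e^t}$, which is precisely the scalar multiplication flow on $T_rV$, identifying $E^{(0)}$ as the Euler vector field. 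Either route closes the argument.
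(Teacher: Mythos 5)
Your proposal is correct, and your primary argument takes a genuinely different route from the paper. The paper's proof is the flow-based one you sketch only as an alternative at the end: it invokes the identity $T_r\phi^t_X = \phi^t_{X^{(0)}}$ (the flows-and-lifts lemma) to conclude that the flow of $E^{(0)}$ is $T_r\kappa_{e^t}$, which is precisely scalar multiplication on $T_rV$, so the result follows in one line. Your main argument instead works at the level of functions: it uses the defining relation $X^{(-i)}f^{(j)} = (Xf)^{(j-i)}$ with $i=0$ to get $E^{(0)}f^{(i)} = (Ef)^{(i)} = nf^{(i)}$ for $f \in C^\infty_{[n]}(V)$, and then uses functoriality $(T_r\kappa_t)^*f^{(i)} = (\kappa_t^*f)^{(i)} = t^nf^{(i)}$ to see that $f^{(i)}$ is homogeneous of degree $n$ for $T_rV \to T_rM$, so that both $E^{(0)}$ and the Euler vector field of $T_rV$ act as the degree operator on a family of functions containing local coordinate systems (lifts of vector bundle coordinates), forcing equality. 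This is sound: in effect you have inlined, specialized to $E$, the same computation by which the paper proves its flows-and-lifts lemma, so your version is self-contained and purely algebraic, at the cost of the mild bookkeeping about homogeneity degrees that you correctly flag; the paper's version is shorter and more geometric, but only because it leans on the flows-and-lifts lemma as an external ingredient.
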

\begin{proof}
    Recall that the Euler vector field on $V$ is the vector field whose flow is given by $\phi^t_{E} = \kappa_{e^t}$. Using~\autoref{remark: lifts of vector fields and their flows}, we find that 
        \[ \Phi^t_{E^{(0)}} = T_r\phi^t_E = T_r\kappa_{e^t}. \]
    Since scalar multiplication on $T_rV$ is given by $T_r\kappa$, the result follows. 
\end{proof}

\begin{proposition}
\label{proposition: lifts of sections}
    Any section $\sigma \in \Gamma(V)$ admits tangent lifts $\sigma^{(-i)} \in \Gamma(T_rV)$ for $i=0,1,\dots, r$. These lifts are uniquely characterized by the conditions that 
        \begin{enumerate}
            \item[(a)] $\sigma^{(0)} = T_r\sigma:T_rM \to T_rV$, 
            
            \item[(b)] $(f\sigma)^{(0)} = \sum_{k=0}^rf^{(k)}\sigma^{(-k)}$ for all $f\in C^\infty(M)$, and 
            
            \item[(c)] if $V$ is a vector space then $T_rV = \oplus_{i=0}^rV$, and we demand that $\sigma^{(-i)}$ be the copy if $\sigma$ in the 	$i$-th term of this direct sum.
    \end{enumerate}
\end{proposition}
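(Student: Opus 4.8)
The strategy is to define the lifts $\sigma^{(-i)}$ by transporting the known lifts of vector fields through the isomorphism $\G(V) \cong \ger{X}_{[-1]}(V)$, and then to verify that the resulting objects are genuine sections of $T_rV \to T_rM$ satisfying (a), (b), (c), with uniqueness following from the fact that (b) and (c) pin down the lifts in local frames. Concretely, given $\sigma \in \G(V)$, form the vertical vector field $X_\sigma \in \ger{X}_{[-1]}(V)$, take its $i$-th Morimoto lift $X_\sigma^{(-i)} \in \ger{X}(T_rV)$, and define $\sigma^{(-i)}$ to be the section of $T_rV \to T_rM$ corresponding to $X_\sigma^{(-i)}$ under the analogous isomorphism $\G(T_rV) \cong \ger{X}_{[-1]}(T_rV)$ for the vector bundle $T_rV \to T_rM$. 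For this to make sense I must first check that $X_\sigma^{(-i)}$ really is a homogeneous vector field of degree $-1$ for the scalar multiplication $T_r\kappa$ on $T_rV$; this is exactly where~\autoref{lemma: lift of Euler vector field} enters, since homogeneity of degree $-1$ means $[E_{T_rV}, X_\sigma^{(-i)}] = -X_\sigma^{(-i)}$ and $E_{T_rV} = E^{(0)}$, so the bracket computes via the compatibility~\eqref{equation: lift compatibilities} as $[E^{(0)}, X_\sigma^{(-i)}] = [E, X_\sigma]^{(-i)} = (-X_\sigma)^{(-i)} = -X_\sigma^{(-i)}$.

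\textbf{Verification of the characterizing properties.} For property (a), I would check that the section corresponding to $X_\sigma^{(0)}$ is $T_r\sigma$. Since $X_\sigma = T\sigma$-related data encodes $\sigma$ as a vertical field, and the $0$-th lift is compatible with the tangent functor (cf.~\autoref{remark: lifts of vector fields and their flows}, where $T_rX = X^{(0)}$), the field $X_\sigma^{(0)}$ is the vertical field of $T_r\sigma$; this identifies $\sigma^{(0)} = T_r\sigma$. For property (b), I would apply the first relation in~\eqref{equation: lift compatibilities}, namely $(fX)^{(-i)} = \sum_{k=i}^r f^{(k-i)}X^{(-k)}$, to $X = X_\sigma$. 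Because scalar multiplication of sections corresponds to multiplication of the associated vertical fields by pulled-back functions, and $X_{f\sigma} = (f\circ \pi)X_\sigma$ where $\pi : V \to M$, the relation $X_{f\sigma}^{(0)} = \sum_k f^{(k)} X_\sigma^{(-k)}$ translates directly into $(f\sigma)^{(0)} = \sum_{k=0}^r f^{(k)}\sigma^{(-k)}$ after matching the lift of $f\circ\pi$ with $f^{(k)}$ under the bundle projection. Property (c) is the local model: when $V$ is a vector space, $T_rV = \bigoplus_{i=0}^r V$ canonically, and a constant section $\sigma$ has $X_\sigma$ a translation-invariant vertical field whose $i$-th lift is precisely the constant field pointing into the $i$-th summand, giving $\sigma^{(-i)}$ as the copy of $\sigma$ in the $i$-th factor.

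\textbf{Uniqueness and the main obstacle.} Uniqueness follows by a standard local-frame argument: over a trivializing open set $U$ choose a frame $\sigma_1, \dots, \sigma_k$, so any $\sigma = \sum_a f_a \sigma_a$; property (b) expresses $\sigma^{(-i)}$ as a $C^\infty(T_rM)$-combination of the $\sigma_a^{(-j)}$, and property (c) fixes the lifts of the constant frame sections in the local product model, so the $\sigma^{(-i)}$ are determined, and one checks the local formulas glue since they agree on overlaps by functoriality of the vector-field lifts. The main obstacle I anticipate is bookkeeping the two distinct homogeneity structures on $T_rV$ at once: the vector bundle scalar multiplication $T_r\kappa$ (for which sections must have degree $-1$) coexists with the graded-bundle monoid structure coming from the tower $T_r$, and I must be careful that the Morimoto lift $X_\sigma^{(-i)}$, originally defined for arbitrary vector fields on a manifold, respects the \emph{linear} structure of $T_rV \to T_rM$ rather than merely the manifold structure of the total space. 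The homogeneity computation via~\autoref{lemma: lift of Euler vector field} is the crucial check that resolves this tension; once the degree $-1$ property is secured, the correspondence $\G(T_rV) \cong \ger{X}_{[-1]}(T_rV)$ legitimately produces a section and the rest is routine.
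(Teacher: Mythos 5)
Your proposal is correct and follows essentially the same route as the paper's own proof: define $\sigma^{(-i)}$ via the isomorphism $\G(V)\cong \ger{X}_{[-1]}(V)$, use the bracket computation $[E^{(0)}, X_\sigma^{(-i)}] = [E,X_\sigma]^{(-i)} = -X_\sigma^{(-i)}$ together with~\autoref{lemma: lift of Euler vector field} to see that $X_\sigma^{(-i)}$ is homogeneous of degree $-1$ on $T_rV$, and then transfer properties (a), (b), (c) and uniqueness from the corresponding facts about lifts of vector fields in local coordinates. Your write-up fills in slightly more detail on the verification of (a) and (b) (the relation $X_{f\sigma} = (f\circ\pi)X_\sigma$ and the matching of lifts under $T_r\pi$), but the argument is the same.
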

\begin{proof}
    Given $\sigma \in \G(V)$, let $X_\sigma \in \mathfrak{X}_{[-1]}(V)$ be the vector field on $V$ it corresponds to under the identification $\Gamma(V) \cong \mathfrak{X}_{[-1]}(V)$. Using,~\autoref{remark: lifts of vector fields and their flows}, and~\autoref{equation: lift compatibilities} we find that
        \[ [E^{(0)}, X_\sigma^{(-i)}] = [E, X_\sigma]^{(-i)} = -X_\sigma^{(-i)} \]
    hence, by~\autoref{lemma: lift of Euler vector field}, it follows that 
        \[ X_\sigma^{(-i)} \in \ger{X}_{[-1]}(T_rV) \]
    for $i=0,1,\dots, r$. Using the isomorphism $\G(T_rV) \cong \ger{X}_{[-1]}(T_rV)$ we thus define $\sigma^{(-i)}$ by 
        \[ X_{\sigma^{(-i)}} = X_\sigma^{(-i)}. \]
    It follows that these lifts satisfy conditions (a), (b), and (c) because the lifts of vector fields do. Furthermore, by considering local vector bundle coordinates we see that these conditions uniquely characterize the lifts. 
\end{proof}

\begin{remark}
    We can also proceed as follows. First of all, note that applying the $r$-th order tangent functor to the non-degenerate bilinear pairing 
        \[ V\times_M V^* \to \R \]
    gives a map $T_rV \times_{T_rM} T_rV^*\to T_r\R$, and composing this with projection onto the last factor or $T_r\R = \oplus_{i=0}^r \R$ gives a non-degenerate bilinear pairing between $T_rV$ and $T_rV^*$. In particular, this shows that $T_rV^* = (T_rV)^*$.
    
    Now, given $\sigma \in \Gamma(V)$, let $f_\sigma \in C^\infty_{[1]}(V^*)$ be the corresponding linear function. For each $i=0,1,\dotsm, r$, the lift $f_\sigma^{(i)} \in C^\infty(T_rV^*)$ is linear. Using the previous observation, we let $\sigma^{(-i)} \in \Gamma(T_rV)$ be the section corresponding to the linear function $f_\sigma^{(r-i)}$. Conditions (a), (b), and (c) in~\autoref{proposition: lifts of sections} are all satisfied, hence this gives the same result as before. 
\end{remark}

\subsection{Higher tangent lifts of Poisson and Lie algebroid structures}

As an application of the previous section, we can describe how to define tangent lifts of Poisson and algebroid structures. There is (see~\cite[Section 4.1]{kouotchop2023vertical}) a vector bundle morphism 
	\[\epsilon_M^{r,q}: T_r(\Lambda^q TM) \to \Lambda^q T(T_rM) \]
and so any multi-vector field $X\in \mathfrak{X}^q(M) =  \Gamma(\Lambda^q TM)$ admits lifts $X^{(-i)} \in \mathfrak{X}^q(T_rM)$, $i=0,1,\dots, r$, defined as the composition 
	\[T_rM \corr{X^{(-i)}} T_r(\Lambda^qTM) \corr{\epsilon_M^{r,q}} \Lambda^q T(T_rM). \] 
 
\begin{proposition}[{\cite[Theorems 4.2 and 5.1]{kouotchop2023vertical}}]
	\begin{enumerate}
            \item[(a)] If $X\in \mathfrak{X}^2(M)$ is a Poisson bivector field, then $X^{(0)} \in \mathfrak{X}^2(T_rM)$ is a Poisson bivector field for $T_rM$. Furthermore, the corresponding Poisson bracket satisfies
			\[ \{f^{(r-i)}, g^{(r-j)}\} = \{f,g\}^{(r-i-j)} \quad \forall f,g \in C^\infty(M). \]
			
            \item[(b)] If $A\Rightarrow M$ is a Lie algebroid with anchor map $a:A\to TM$, then there is a unique bracket on $\Gamma(T_rA)$ such that
			\[ [f^{(i)}, g^{(j)}] = [f,g]^{(i+j)} \]
            making $T_rA\Rightarrow T_rM$ a Lie algebroid with anchor map given by the composition 
			\[ T_rA \corr{T_ra} T_r(TM) \corr{\cong} T(T_rM). \]
            Moreover, the corresponding dual Poisson structure on $T_rA^*$ is the order $0$ lift of the dual Poisson structure on $A^*$ as specified in the previous point. 
	\end{enumerate}
\end{proposition}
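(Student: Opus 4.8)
The plan is to prove part (a) first, in a form tailored to what part (b) will need, and then deduce (b) from (a) through the Poisson--Lie algebroid correspondence of \autoref{theorem: Poisson-Lie algebroid equivalence}.

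For part (a), I would first upgrade the vector-field identity $[X^{(-i)},Y^{(-j)}]=[X,Y]^{(-i-j)}$ of~\eqref{equation: lift compatibilities} to multivector fields, establishing $[P^{(-i)},Q^{(-j)}]=[P,Q]^{(-i-j)}$ for the Schouten--Nijenhuis bracket, with lifts of negative index read as zero. Since the lifts $P^{(-i)}$ of a multivector $P\in\Gamma(\Lambda^qTM)$ are exactly the section lifts of~\autoref{proposition: lifts of sections} (applied to $V=\Lambda^qTM$) composed with $\epsilon_M^{r,q}$, they obey the graded product rule $(X\wedge Y)^{(-k)}=\sum_{i+j=k}X^{(-i)}\wedge Y^{(-j)}$, the multivector analogue of $(fX)^{(-i)}=\sum_kf^{(k-i)}X^{(-k)}$. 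Reducing the Schouten bracket to decomposables and combining this product rule with the vector-field identity, while tracking the homogeneity degrees, yields the claim. Specializing to $P=Q=X$ and $i=j=0$ gives $[X^{(0)},X^{(0)}]=[X,X]^{(0)}=0$, so $X^{(0)}$ is a Poisson bivector field.

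For the bracket formula I would show that the Hamiltonian vector field of a lifted function with respect to $X^{(0)}$ is again a lift, namely $X^{(0)}_{f^{(a)}}=(X_f)^{(a-r)}$, where $X_f=\iota_{\ed f}X$ denotes the Hamiltonian vector field of $f$ for $X$ (so $X_fg=\{f,g\}$). This rests on the contraction identity $\iota_{\ed(f^{(a)})}X^{(0)}=(\iota_{\ed f}X)^{(a-r)}$, which is a pairing computation using the identification $T_rV^*=(T_rV)^*$ from the remark following~\autoref{proposition: lifts of sections}. Granting it, the action rule $X^{(-i)}g^{(j)}=(Xg)^{(j-i)}$ gives, with $a=r-i$ and $b=r-j$,
\[ \{f^{(a)},g^{(b)}\}=X^{(0)}_{f^{(a)}}(g^{(b)})=(X_f)^{(a-r)}g^{(b)}=(X_fg)^{(a+b-r)}=\{f,g\}^{(a+b-r)}, \]
which is precisely the asserted identity $\{f^{(r-i)},g^{(r-j)}\}=\{f,g\}^{(r-i-j)}$.

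For part (b), equip $A^*$ with its linear Poisson structure $\pi$ (\autoref{theorem: Poisson-Lie algebroid equivalence}) and lift it to $\pi^{(0)}$ on $T_r(A^*)$, which is Poisson by (a). Under the identification $T_r(A^*)\cong(T_rA)^*$ of bundles over $T_rM$ from the remark following~\autoref{proposition: lifts of sections}, the crux is that $\pi^{(0)}$ is \emph{linear}, i.e. $\pi^{(0)}\in\mathfrak{X}^2_{[-1]}((T_rA)^*)$. I would check this on generators: the fibre-linear functions on $(T_rA)^*$ are the lifts $f_\sigma^{(i)}$ of the linear functions $f_\sigma$ on $A^*$ (fibre-linearity persists because scalar multiplication on $T_rA^*$ is $T_r\kappa$); by (a) their brackets $\{f_\sigma^{(a)},f_\tau^{(b)}\}=f_{[\sigma,\tau]}^{(a+b-r)}$ are again fibre-linear, brackets of $f_\sigma^{(a)}$ with pullbacks from $T_rM$ are governed by the lifted anchor relation, and brackets of two base functions vanish. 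Hence the converse direction of \autoref{theorem: Poisson-Lie algebroid equivalence} endows $T_rA$ with a Lie algebroid structure over $T_rM$ whose dual Poisson structure is the order-$0$ lift $\pi^{(0)}$. Reading off the bracket through the correspondence $\sigma^{(-i)}\leftrightarrow f_\sigma^{(r-i)}$ of the same remark converts $\{f_\sigma^{(a)},f_\tau^{(b)}\}=f_{[\sigma,\tau]}^{(a+b-r)}$ into $[\sigma^{(-i)},\tau^{(-j)}]=[\sigma,\tau]^{(-i-j)}$, and the identity $p^*(a(\sigma)h)=\{f_\sigma,p^*h\}$ lifts to identify the anchor with $T_rA\corr{T_ra}T_r(TM)\corr{\cong}T(T_rM)$. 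Uniqueness is immediate, since the lifts $\sigma^{(-i)}$ of a local frame of $A$ form a local frame of $T_rA$, so the Leibniz rule propagates the bracket from lifts to all sections.

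The principal obstacles are the two pairing computations, namely promoting the lift operation to a morphism of Schouten brackets in (a) and verifying the linearity of $\pi^{(0)}$ in (b), together with keeping the index shifts (most notably $a\mapsto a-r$ and the reindexing $a=r-i$) consistent throughout.
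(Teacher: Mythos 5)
This proposition is not actually proved in the thesis: it is quoted verbatim from Kouotchop Wamba's paper (Theorems 4.2 and 5.1 of the cited reference), so there is no internal proof to compare your route against. Judged on its own terms, your proposal contains one genuine error, located in the first step of part (a): the ``graded product rule'' $(X\wedge Y)^{(-k)}=\sum_{i+j=k}X^{(-i)}\wedge Y^{(-j)}$ is false; the correct rule carries a shift by $r$, namely $(X\wedge Y)^{(-k)}=\sum_{i+j=k+r}X^{(-i)}\wedge Y^{(-j)}$. This is forced by the very bracket formula you are proving. Take $r=1$, $M=\R^2$, $\pi=\partial_x\wedge\partial_y$, with induced coordinates $x,y,\dot x=x^{(1)},\dot y=y^{(1)}$ on $T_1M=TM$. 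The formula $\{f^{(1-i)},g^{(1-j)}\}=\{f,g\}^{(1-i-j)}$ forces $\{\dot x,y\}=\{x,\dot y\}=1$ and $\{x,y\}=\{\dot x,\dot y\}=0$, i.e.\ $\pi^{(0)}=\partial_{\dot x}\wedge\partial_y+\partial_x\wedge\partial_{\dot y}$ (the Grabowski--Urba\'nski tangent lift), which indeed equals $\partial_x^{(0)}\wedge\partial_y^{(-1)}+\partial_x^{(-1)}\wedge\partial_y^{(0)}$. Your rule would instead give $\pi^{(0)}=\partial_x^{(0)}\wedge\partial_y^{(0)}=\partial_x\wedge\partial_y$, whose bracket has $\{x,y\}=1\neq 0=\{f,g\}^{(-1)}$, contradicting the statement. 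Note that your own contraction identity $\iota_{\ed(f^{(a)})}X^{(0)}=(\iota_{\ed f}X)^{(a-r)}$ carries exactly the shift by $r$ that your wedge rule omits, so your two key lemmas are mutually inconsistent; the shifted one is the correct one. The intermediate Schouten identity $[P^{(-i)},Q^{(-j)}]=[P,Q]^{(-i-j)}$ that you want is nevertheless true --- it is index-additive because the Schouten bracket lowers wedge degree by one, so the $r$-shifts per wedge factor cancel --- but your derivation of it must be redone with the shifted product rule, tracking those cancellations.

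The rest of the architecture is sound, and in fact the flawed step can be bypassed entirely: once the contraction identity and hence the bracket formula $\{f^{(a)},g^{(b)}\}=\{f,g\}^{(a+b-r)}$ are established, the Jacobi identity for $X^{(0)}$ follows directly, since the Jacobiator is a derivation in each slot and vanishes on the lifted functions $f^{(a)}$, which locally generate $C^\infty(T_rM)$; no Schouten calculus is needed. Your reduction of (b) to (a) through the linear Poisson correspondence --- lifting the linear Poisson structure on $A^*$ to $T_r(A^*)\cong(T_rA)^*$, checking linearity of $\pi^{(0)}$ on the generators $f_\sigma^{(a)}$ and on lifts of basic functions, invoking the converse direction of \autoref{theorem: Poisson-Lie algebroid equivalence}, and translating bracket and anchor through the correspondence $\sigma^{(-i)}\leftrightarrow f_\sigma^{(r-i)}$ --- is correct as sketched, as is the uniqueness argument from the fact that lifts of a local frame of $A$ form a local frame of $T_rA$.
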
 

\begin{example}
    Given any Lie groupoid $G \toto M$, the $r$-order tangent bundle has the structure a Lie groupoid $T_rG \toto T_rM$ by applying the $r$-th order tangent functor to all the structure maps. If $A\Rightarrow M$ is the Lie algebroid of $G\toto M$, then $T_rA \Rightarrow T_rM$ is the Lie algebroid of $T_rG \toto T_rM$. 
\end{example}

\section{Weightings as Graded Subbundles of $T_rM$}

\subsection{The graded subbundle defined by a weighting}

We now explain how an order $r$ weighting of $M$ naturally gives rise to a graded subbundle $Q\sset T_rM$ and how multiplicative and infinitesimally multiplicative weightings can be described in terms of this graded subbundle. Let $M$ be weighted to order $r$ along $N$ and consider the set
        \begin{equation}
        \label{equation: graded subbundle defined by the weighting}
            Q := \{q\in T_rM : \forall f\in C^\infty(M)_{(i)},\ j<i\leq r \implies f^{(j)}(q) = 0 \}.
        \end{equation}
We summarize the results of~\cite[Sections 7 and 8]{loizides2023differential} in the following theorem. 

\begin{theorem}[\cite{loizides2023differential}]
\label{theorem: weighting-graded bundle correspondences}
    With $Q$ as in~\eqref{equation: graded subbundle defined by the weighting}, we have the following 
        \begin{enumerate}
            \item[(a)] $Q$ is a graded subbundle of $T_rM$ with base $N$. 

            \item[(b)] $TQ|_N = TN\oplus (TM|_N)_{(-1)} \oplus(TM|_N)_{(-2)} \oplus \cdots \oplus (TM|_N)_{(-r)}$
            as a graded subbundle of $T(T_rM)|_N = TM|_N^{\oplus (r+1)}$.

            \item[(c)] The weighting of $M$ along $N$ can be recovered from $Q$ as
                \begin{equation}
                \label{equation: weighting defined by a graded subbundle}
                    C^\infty(M)_{(i)} = \{f\in C^\infty(M): j<i \implies f^{(j)}|_Q =0 \}.
                \end{equation}

            \item[(d)] $X\in \mathfrak{X}(M)_{(-i)}$ if and only if $X^{(-i)}$ is tangent to $Q$. Moreover, vector fields of the form $X^{(-i)}$ with $0\leq i\leq r$ and $X\in \ger{X}(M)_{(-i)}$ span the tangent bundle of $Q$ everywhere. 

            \item[(e)] A smooth map $F:M\to M'$ is a weighted morphism if and only if $T_rF(Q_M) \sset T_rF(Q_{M'})$.
        \end{enumerate} 
\end{theorem}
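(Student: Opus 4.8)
The plan is to prove statement (e) of \autoref{theorem: weighting-graded bundle correspondences}, namely that a smooth map $F:M\to M'$ between weighted manifolds is a weighted morphism if and only if $T_rF(Q_M)\sset Q_{M'}$ (note the typo in the excerpt, where the target should be $Q_{M'}$ rather than $T_rF(Q_{M'})$). The key tool is the intrinsic recovery formula \eqref{equation: weighting defined by a graded subbundle} from part (c), together with the functoriality of the higher tangent functor. First I would record the two facts I intend to use repeatedly: for any $g\in C^\infty(M')$ one has the pullback identity $(T_rF)^*g^{(j)}=(F^*g)^{(j)}$ for each $j=0,\dots,r$, and the characterization $g\in C^\infty(M')_{(i)}$ if and only if $g^{(j)}|_{Q_{M'}}=0$ for all $j<i$, with the analogous statement for $Q_M$.

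For the ($\Leftarrow$) direction, assume $T_rF(Q_M)\sset Q_{M'}$ and let $g\in C^\infty(M')_{(i)}$. I must show $F^*g\in C^\infty(M)_{(i)}$, which by \eqref{equation: weighting defined by a graded subbundle} means $(F^*g)^{(j)}|_{Q_M}=0$ for all $j<i$. For each such $j$, write $(F^*g)^{(j)}=(T_rF)^*g^{(j)}$, so that for any $q\in Q_M$ we have $(F^*g)^{(j)}(q)=g^{(j)}(T_rF(q))$. Since $T_rF(q)\in Q_{M'}$ and $g^{(j)}|_{Q_{M'}}=0$ (because $g\in C^\infty(M')_{(i)}$ and $j<i$), this vanishes. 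Hence $F^*g$ has filtration degree $i$, and $F$ is a weighted morphism.

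For the ($\Rightarrow$) direction, assume $F$ is a weighted morphism and let $q\in Q_M$; I want $T_rF(q)\in Q_{M'}$, i.e.\ $g^{(j)}(T_rF(q))=0$ whenever $g\in C^\infty(M')_{(i)}$ and $j<i\leq r$. Again using $g^{(j)}(T_rF(q))=(T_rF)^*g^{(j)}(q)=(F^*g)^{(j)}(q)$, and observing that $F^*g\in C^\infty(M)_{(i)}$ since $F$ is weighted, the characterization of $Q_M$ (the defining equation \eqref{equation: graded subbundle defined by the weighting}) gives $(F^*g)^{(j)}|_{Q_M}=0$ for $j<i$, so the expression vanishes. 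This shows $T_rF(q)\in Q_{M'}$, completing the proof. The argument is essentially a symmetric application of the pullback compatibility of lifts, so both directions run in parallel once that identity is in place.

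The main conceptual point — and the only place requiring care — is ensuring that the defining condition \eqref{equation: graded subbundle defined by the weighting} for $Q$ and the recovery formula \eqref{equation: weighting defined by a graded subbundle} are genuinely inverse characterizations, so that membership in $Q_M$ can be freely traded for the vanishing of all lower lifts and vice versa; this is exactly the content of parts (a) and (c), which I may assume. I do not expect a substantive obstacle here: the entire proof is formal, driven by the naturality identity $(T_rF)^*g^{(j)}=(F^*g)^{(j)}$, and no genuine estimate or transversality input is needed. The most likely source of error is bookkeeping with the index range $j<i\leq r$ and remembering that lifts $g^{(j)}$ for $j\geq i$ are unconstrained, so I would state the index conventions explicitly at the outset to avoid confusion.
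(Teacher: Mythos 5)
The first thing to note is that the thesis never proves this theorem: it is stated explicitly as a summary of results cited from~\cite[Sections 7 and 8]{loizides2023differential}, so there is no internal proof to compare your argument against, and your proposal has to stand on its own. On those terms, it addresses only part (e), taking parts (a) and (c) as black boxes. That is a substantial restriction: parts (a)--(d) carry essentially all of the geometric content (that $Q$ is a graded subbundle, the description of $TQ|_N$, the recovery formula, the characterization of $\mathfrak{X}(M)_{(-i)}$ via lifts), and each of these rests on the local analysis in weighted coordinates --- for instance the description~\eqref{equation: cut out functions for Q} of $Q$ as the common zero set of the lifts $x_a^{(j)}$, $j < w_a$ --- none of which your proposal reconstructs. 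Granting (a) and (c), your two-directional argument for (e) built on the naturality identity $(T_rF)^*g^{(j)} = (F^*g)^{(j)}$ is the right formal skeleton (and your correction of the typo $T_rF(Q_M)\sset Q_{M'}$ is the intended reading); the ($\Rightarrow$) direction is complete and correct as written, since the defining condition~\eqref{equation: graded subbundle defined by the weighting} for $Q_{M'}$ only involves indices $j<i\leq r$.

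There is, however, a genuine gap in the ($\Leftarrow$) direction, located exactly at the index bookkeeping you flagged but did not resolve. The lifts $f^{(j)}$ exist only for $0\leq j\leq r$, so the recovery formula~\eqref{equation: weighting defined by a graded subbundle} can be literally valid only for $i\leq r+1$: for $i>r+1$ its right-hand side stabilizes (it imposes the same conditions as for $i=r+1$), while $C^\infty(M)_{(i)}$ keeps shrinking --- already for the trivial order-$1$ weighting one has $\van{N}^3 \subsetneq \bigl\{f : f^{(0)}|_Q = f^{(1)}|_Q = 0\bigr\} = \van{N}^2$. Consequently your argument establishes $F^*C^\infty(M')_{(i)} \sset C^\infty(M)_{(i)}$ only for $i\leq r+1$, whereas a weighted morphism must preserve the filtration in \emph{every} degree. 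The gap is closable: for $i>r$, every monomial generator of $C^\infty(U)_{(i)}$ in weighted coordinates factors as a product of generators of strictly smaller positive filtration degree (since each weight satisfies $w_a\leq r<i$), so that
\begin{equation*}
    C^\infty(M)_{(i)} = \sum_{0<j<i} C^\infty(M)_{(j)}\cdot C^\infty(M)_{(i-j)}, \qquad i>r,
\end{equation*}
globally by a partition-of-unity argument; since $F^*$ is an algebra homomorphism and the filtration is multiplicative, filtration-preservation in degrees $\leq r+1$ then propagates to all degrees by induction on $i$. You need to add this step (and state (c) with its correct range $i\leq r+1$) for your proof of (e) to be complete.
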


If $x_a \in C^\infty(U)$ is a weighted coordinate system then the set $Q\cap T_rU$ is given by 
    \begin{align}
    \label{equation: cut out functions for Q}
    \begin{split}
        x^{(0)}_a = 0 & \quad \text{for } w_a > 0 \\
        x^{(1)}_a = 0 & \quad \text{for } w_a > 1 \\
        & \vdots  \\
        x^{(r-1)}_a = 0 & \quad \text{for } w_a > r-1.
    \end{split}
    \end{align}
Conversely, if $Q\sset T_rM$ is a graded subbundle over $N\sset M$ with the property any $p\in N$ is contained in an open neighbourhood $U\sset M$ with coordinates $x_a\in C^\infty(U)$ such that $Q\cap T_rU$ is defined by~\eqref{equation: cut out functions for Q}, then $Q$ defines a weighting of $M$ along $N$ and the $x_a$ form a weighted coordinate system on $U$. 

\begin{proposition}
\label{proposition: weighted submanifolds in terms of $Q$}
    Let $(M,N)$ be a weighted pair. A submanifold $R \sset M$ is a weighted submanifold if and only if $T_rR$ intersects $Q$ cleanly in $T_rM$. In this case, the induced weighting of $R$ along $R\cap N$  is given by $Q\cap T_rR$. 
\end{proposition}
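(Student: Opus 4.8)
The plan is to prove both directions by working in local coordinates and exploiting the characterization in \autoref{theorem: weighting-graded bundle correspondences}, together with the equivalence between clean intersection and the existence of simultaneous submanifold charts. The key link is part (b) of that theorem, which identifies $TQ|_N$ with the graded sum of the filtration pieces $(TM|_N)_{(-i)}$, and part (c)/\eqref{equation: weighting defined by a graded subbundle}, which recovers the weighting from $Q$ via lifts of functions.

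First I would prove the forward direction. Assume $R$ is a weighted submanifold, so near any $p\in R\cap N$ there are weighted submanifold coordinates $x_a,y_b$ on $U$ with $R\cap U=\{y_b=0\}$ and the $x_a,y_b$ jointly weighted coordinates for $N$. Then $Q\cap T_rU$ is cut out by \eqref{equation: cut out functions for Q} applied to the full coordinate system, while $T_rR\cap T_rU$ is cut out by $y_b^{(j)}=0$ for all $j=0,\dots,r$. Since these two families of equations use disjoint subsets of the lifted coordinate functions $\{x_a^{(j)},y_b^{(j)}\}$ (which are honest coordinates on $T_rU$), the two submanifolds $Q$ and $T_rR$ are simultaneously in coordinate-subspace form; hence they intersect cleanly. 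Moreover $Q\cap T_rR$ is then exactly the graded subbundle of $T_rR$ cut out by restricting \eqref{equation: cut out functions for Q} to the $x_a$, which by the converse part of \autoref{theorem: weighting-graded bundle correspondences} defines precisely the weighting of $R$ along $R\cap N$ described in \autoref{proposition: weighted submanifolds are weighted}.

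For the converse, suppose $T_rR$ meets $Q$ cleanly. I would pick any submanifold chart $y_b$ for $R$ and any weighted coordinates, and argue that cleanness forces these to be compatible. The most efficient route is via tangent spaces at a point $p\in R\cap N$: by \autoref{theorem: weighting-graded bundle correspondences}(b) one has $T_pQ=T_pN\oplus\bigoplus_{i\ge1}(T_pM)_{(-i)}$ inside $T_pM^{\oplus(r+1)}$, and $T_p(T_rR)=T_pR^{\oplus(r+1)}$. Clean intersection means $\dim(T_pQ\cap T_p T_rR)$ is locally constant and equals the dimension of $T_r(Q\cap T_rR)$-type data; computing the intersection degree by degree shows the $i$-th graded piece of the intersection is $T_pR\cap(T_pM)_{(-i)}$. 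Constancy of the total then forces each $\dim\bigl(TR\cap(TM|_N)_{(-i)}\bigr)$ to be locally constant, so these intersections are subbundles and $(TR|_{R\cap N})_{(i)}=TR\cap(TM|_N)_{(i)}$. Combined with the facts that $R$ inherits a weighting from $Q\cap T_rR$ (via the converse in \autoref{theorem: weighting-graded bundle correspondences}) and that the inclusion is a weighted morphism (immediate from \autoref{theorem: weighting-graded bundle correspondences}(e), since $T_r\iota(T_rR)\subseteq T_rM$ carries $Q\cap T_rR$ into $Q$), \autoref{corollary: criteria for weighted submanifolds} yields that $R$ is a weighted submanifold.

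The main obstacle I anticipate is the converse, specifically translating the global cleanness hypothesis on $T_rR\cap Q$ into the pointwise subbundle condition (c) of \autoref{corollary: criteria for weighted submanifolds}. The subtlety is that clean intersection in $T_rM$ is a statement about the $r$-th order data all at once, and one must verify that it unpacks degree-by-degree into the individual conditions $(TR|_{R\cap N})_{(i)}=TR\cap(TM|_N)_{(i)}$ rather than merely some aggregate condition; the graded description in \autoref{theorem: weighting-graded bundle correspondences}(b) is exactly what makes this bookkeeping work, but care is needed to confirm that the intersection of $T_pQ$ with the diagonally-embedded $T_pR^{\oplus(r+1)}$ decomposes compatibly with the grading. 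Once that decomposition is established the rest is routine.
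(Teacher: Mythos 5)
Your proposal is correct and follows essentially the same route as the paper: the forward direction uses lifts of weighted submanifold coordinates to exhibit $Q$ and $T_rR$ simultaneously as coordinate subspaces of $T_rU$, and the converse uses \autoref{theorem: weighting-graded bundle correspondences}(b) to identify the graded pieces of $TQ\cap T(T_rR)$ with $TR\cap (TM|_N)_{(-i)}$ and then invokes \autoref{corollary: criteria for weighted submanifolds}. The only quibble is your phrase ``disjoint subsets of the lifted coordinate functions'' in the forward direction — the two defining systems actually share the equations $y_b^{(j)}=0$ for $w_b>j$ — but this is immaterial, since cleanness only needs both submanifolds to be coordinate subspaces in a common chart.
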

\begin{proof}
    Suppose that $R$ is a weighted submanifold. If $x_a, y_b \in C^\infty(U)$ are weighted coordinates such that $R$ is locally cut out by the $y_b$ coordinates, then $T_r(R\cap U)$ is cut out by the lifts $y_b^{(i)}$, $i=0, \dots, r$. Therefore, by~\eqref{equation: cut out functions for Q}, we see that the system of lifts $x_a^{(i)}$, $y_b^{(i)}$, $i=0,\dots, r$ form submanifold coordinates for $T_r(R\cap U)$ and $Q\cap T_rU$ which implies that $T_rR$ intersects $Q$ cleanly. 

    Conversely, if $Q$ intersects $T_rR$ cleanly then the intersection $Q\cap T_rR \sset T_rR$ defines an order $r$ weighting of $R$ along $R\cap N$ such that the inclusion $R\into M$ is a weighted morphism. By~\autoref{theorem: weighting-graded bundle correspondences} (b), we have that  
        \[ (TQ|_{R\cap N} \cap T(T_rR)|_{R\cap N})_{(-i)} = (TM|_{R\cap N})_{(-i)}\cap TR|_{R\cap N}\]
    Therefore, since $Q$ and $T_rR$ intersect cleanly it follows that 
        \begin{align*}
            (TR|_{R\cap N})_{(-i)} = (T(Q\cap T_rR)|_{R\cap N})_{(-i)} = (TM|_{R\cap N})_{(-i)}\cap TR|_{R\cap N}
        \end{align*}
    hence $R$ is a weighted submanifold of $M$ by~\autoref{corollary: criteria for weighted submanifolds}. 
\end{proof}

\subsection{Multiplicative and infinitesimally multiplicative weightings in terms of $Q$}

Recall that if $G \toto M$ is a Lie groupoid, then $T_rG \toto T_rM$ is a Lie groupoid as well. We can use this observation to give another characterization of multiplicative weightings in terms of the graded subbundle $Q\sset T_rG$. 

\begin{theorem}[{cf.~\cite[Section 8.5]{loizides2023differential}}]
\label{proposition: equivalence of groupoids weighting definitions}
    Let $G\toto M$ be weighted along $H \sset G$. The weighting is multiplicative if and only if the graded subbundle $Q_G \sset T_rG$ is a Lie subgroupoid $Q_G\toto Q_M$ of $T_rG \toto T_rM$. 
\end{theorem}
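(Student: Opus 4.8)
The plan is to leverage the characterization of multiplicative weightings from \autoref{theorem: characterization of mult weightings} together with the dictionary between weightings and graded subbundles of $T_rM$ established in \autoref{theorem: weighting-graded bundle correspondences}. First, I would observe that $Q_M \sset T_rM$ is the graded subbundle defined by the restricted weighting of $G$ along $H$ to the units $M$ along $N$; by \autoref{proposition: weighted submanifolds in terms of $Q$}, condition (a) of \autoref{theorem: characterization of mult weightings} (that $M$ is a weighted submanifold) is equivalent to the statement that $T_rM$ intersects $Q_G$ cleanly, with $Q_M = Q_G\cap T_rM$, so that $Q_M$ serves as a candidate unit space for a subgroupoid $Q_G\toto Q_M$.

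The key step is to match the remaining two conditions of \autoref{theorem: characterization of mult weightings}---that the graph of multiplication is a weighted submanifold, and that the filtration of $TG|_H$ is by subgroupoids---with the assertion that $Q_G$ is closed under the groupoid operations of $T_rG$. For the forward direction I would argue as follows. The graph $\G(\mathrm{mult}_G)\sset G^3$ is a weighted submanifold if and only if, by \autoref{proposition: weighted submanifolds in terms of $Q$}, the subbundle $T_r\G(\mathrm{mult}_G) = \G(\mathrm{mult}_{T_rG})$ intersects $Q_{G^3} = Q_G\times Q_G\times Q_G$ cleanly with intersection equal to the relevant graded piece; tracing through the product weighting identification $Q_{G^3} = Q_G^{\times 3}$, this clean intersection says precisely that composability and composition of elements of $Q_G$ stay inside $Q_G$, i.e. $Q_G$ is closed under $\mathrm{mult}_{T_rG}$. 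Closure under inversion follows from functoriality of $T_r$ applied to $\mathrm{inv}_G$ being a weighted diffeomorphism (using part (e) of \autoref{theorem: weighting-graded bundle correspondences}). The submersion conditions on $s,t$ translate, via part (d) of \autoref{theorem: weighting-graded bundle correspondences} describing $TQ_G$ in terms of the lifted vector fields $\sigma^{(-i)}$, into $T_rs, T_rt$ restricting to surjective submersions $Q_G\to Q_M$.

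For the converse, I would run the same equivalences in reverse: if $Q_G\toto Q_M$ is a Lie subgroupoid of $T_rG\toto T_rM$, then closure of $Q_G$ under multiplication yields clean intersection of $\G(\mathrm{mult}_{T_rG})$ with $Q_G^{\times 3}$, hence the graph of multiplication is a weighted submanifold; the source/target maps being submersions onto $Q_M$ give the subgroupoid structure of the filtration $(TG|_H)_{(i)}$ through part (d) and part (b) of \autoref{theorem: weighting-graded bundle correspondences}, which identify $TQ_G|_H$ with the associated graded of the filtration of $TG|_H$. Applying \autoref{theorem: characterization of mult weightings} then concludes that the weighting is multiplicative.

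The main obstacle I anticipate is making the bookkeeping precise in the translation between ``$Q_G$ closed under $\mathrm{mult}_{T_rG}$'' and ``$\G(\mathrm{mult}_G)$ is a weighted submanifold.'' This requires carefully identifying $T_r(G^{(2)})$ with $(T_rG)^{(2)}$ and checking that the graded subbundle of $T_r(G^3)$ cut out by the weighting of $G^3$ along $H^3$ is exactly the threefold product $Q_G\times Q_G\times Q_G$ under the product-weighting identification, and that the clean-intersection condition of \autoref{proposition: weighted submanifolds in terms of $Q$} corresponds exactly to the set-theoretic closure plus a rank/dimension count ensuring the intersection has the correct tangent spaces. The dimension count---verifying that the tangent space to $\G(\mathrm{mult}_{T_rG})\cap Q_G^{\times 3}$ agrees with the graded bundle prediction from part (b) of \autoref{theorem: weighting-graded bundle correspondences}---is where the argument must be most delicate, but it should follow from the fact that $TQ_G$ is spanned by the lifts $\sigma^{(-i)}$ and the compatibility of these lifts with the groupoid multiplication on $T_rG$.
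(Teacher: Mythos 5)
Your route through \autoref{theorem: characterization of mult weightings} founders on the translation of its condition (b). In the forward direction you claim that ``the graph of multiplication is a weighted submanifold of $G^3$'' corresponds, via \autoref{proposition: weighted submanifolds in terms of $Q$}, to $Q_G$ being closed under $\mathrm{mult}_{T_rG}$. It does not: clean intersection of $\G(\mathrm{mult}_{T_rG})$ with $Q_G\times Q_G\times Q_G$ only says that the set of triples $(q_0\circ q_1,\,q_0,\,q_1)$ which happen to lie entirely in $Q_G^3$ is a submanifold with the expected tangent spaces; it does not force $q_0\circ q_1\in Q_G$ for every composable pair $(q_0,q_1)$ in $Q_G\times Q_G$. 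By part (e) of \autoref{theorem: weighting-graded bundle correspondences}, closure of $Q_G$ under multiplication is equivalent to $\mathrm{mult}_G$ being a weighted \emph{morphism}, which is strictly stronger than its graph being a weighted submanifold: the paper's example of the identity map from the trivially weighted $\R^2$ to $\R^2$ with weights $(1,3)$ has a graph which is a weighted submanifold although the map is not weighted, so $T_r(\mathrm{id})$ does not carry the one graded subbundle into the other. Indeed, the whole reason \autoref{theorem: characterization of mult weightings} carries the extra condition (c) is that condition (b) alone is too weak; your proposal spends (c) only on the source/target submersion conditions, so nothing fills this hole.

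The gap is reparable, but the repair lands you on the paper's own proof: since the weighting is multiplicative, $\mathrm{mult}_G$ and $\mathrm{inv}_G$ are weighted morphisms outright (by \autoref{definition: multiplicative weighting}, or by first running the proof of \autoref{theorem: characterization of mult weightings} to pass from (a)+(b)+(c) to weightedness of $\mathrm{mult}_G$), and then part (e) of \autoref{theorem: weighting-graded bundle correspondences} gives closure of $Q_G$ under multiplication and inversion; cleanness of $T_rM\cap Q_G$ handles the units, and $T_rs$, $T_rt$ restrict to submersions $Q_G\to Q_M$ near $H$ by part (b) together with \autoref{proposition: weighted submanifolds are weighted}, hence everywhere by homogeneity. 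A smaller issue of the same kind sits in your converse: deducing condition (c), namely that each $(TG|_H)_{(i)}$ is a subgroupoid of $TG|_H\toto TM|_N$, from ``$Q_G$ is a Lie subgroupoid'' requires an actual argument identifying the graded pieces of $TQ_G|_H$ as subgroupoids of the tangent groupoid, not just a pointer to parts (b) and (d); the paper's proof avoids this entirely by never passing through condition (c).
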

\begin{proof}
    Suppose that $G$ is multiplicatively weighted along $H$, so that $M \sset G$ is a weighted submanifold, $s,t:G\to M$ are weighted submersions, and $\mathrm{mult}_G:G^{(2)} \to G$, and $\mathrm{inv}:G\to G$ are weighted morphisms. By~\autoref{proposition: weighted submanifolds in terms of $Q$}, the intersection $T_rM \cap Q_G = Q_M$ is clean and is the graded subbundle associated to the induced weighting of $M$. Using~\autoref{theorem: weighting-graded bundle correspondences} (e), we see that $Q_G$ is closed under multiplication and inversion. Furthermore, by~\autoref{theorem: weighting-graded bundle correspondences} (b) and~\autoref{proposition: weighted submanifolds are weighted}, it follows that $T_rs|_{Q_G}:Q_G\to Q_M$ and $T_rt|_{Q_G}:Q_G\to Q_M$ are submersions near $H\sset Q_G$. By homogeneity they are therefore submersions on all of $Q_G$.

    Conversely, suppose that $Q_G \sset T_rG$ is a subgroupoid. It follows by reversing the above argument at inversion and multiplication are weighted morphisms, and that source and target are weighted submersions. Since $T_rM$ are the objects of $T_rG$, it follows that $Q_G$ intersects $T_rM$ cleanly, hence $M$ is a weighted submanifold of $G$ by~\autoref{proposition: weighted submanifolds in terms of $Q$}.
\end{proof}

\begin{example}
\label{example: pullback multiplicative weighting}
    Let $(M,N)$ be a weighted pair. If the graded bundle corresponding to the weighting of $M$ along $N$ is $Q_M \sset T_rM$, then the graded bundle corresponding to the product weighting of $\mathrm{Pair}(M)$ along $\mathrm{Pair}(N)$ is $\mathrm{Pair}(Q_N)$. In particular, gives another proof that the product weighting is multiplicative.

    Carry on, let $G\toto M$ be a Lie groupoid with the property that the groupoid anchor $a_G = (s,t):G \to \mathrm{Pair}(M)$ is transverse to $\mathrm{Pair}(N)$. In this case, $T_ra_G:T_rG \to \mathrm{Pair}(T_rM)$ is transverse to $\mathrm{Pair}(Q_M)$ and 
        \[ (T_ra_G)^{-1}(\mathrm{Pair}(Q_M)) \sset T_rG \]
    is a graded subbundle which defines a multiplicative weighting of $G$ along $G|_N$.
\end{example}

\begin{corollary}
\label{corollary: vector bundle weightings in terms of Q}
    Let $V\to M$ be a vector bundle, thought of as a manifold. A weighting of $V$ is linear (with weights concentrated in non-positive degree) if and only if the graded subbundle $Q_V\sset T_rV$ is a vector subbundle. In this case, we have that $\sigma \in \G(V)_{(i)}$ if and only if $\sigma^{(i)}$ restricts to a section of $Q_V$ over $Q_V\cap T_rM$. 
\end{corollary}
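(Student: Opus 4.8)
The plan is to exploit the two commuting structures carried by $T_rV$. On one hand $T_rV$ is the $r$-th order tangent bundle of the \emph{manifold} $V$, and its graded subbundle for that structure is $Q_V$; on the other hand, since $V\to M$ is a vector bundle with scalar multiplication $\kappa$, the lift $T_r\kappa$ makes $T_rV\to T_rM$ a vector bundle. The corollary asserts that $Q_V$ is a vector subbundle for this second structure exactly when the first structure is a linear weighting concentrated in non-positive degree.

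For the forward implication I would argue in coordinates. If the weighting of $V$ is linear and concentrated in non-positive degree, then by \autoref{theorem: linear weightings in terms of polynomials} every point has a chart with weighted vector bundle coordinates $x_a\in C^\infty_{[0]}(V|_U)$ of weight $w_a\ge 0$ and $p_b\in C^\infty_{[1]}(V|_U)$ of weight $-v_b\ge 0$. The lifted functions $x_a^{(j)}$ are base coordinates and the $p_b^{(j)}$ are fibrewise-linear coordinates for $T_rV\to T_rM$, because $T_r\kappa_t$ rescales every $p_b^{(j)}$ by $t$. By \eqref{equation: cut out functions for Q}, $Q_V\cap T_r(V|_U)$ is cut out by $x_a^{(j)}=0$ for $j<w_a$ together with the fibrewise-linear equations $p_b^{(j)}=0$ for $j<-v_b$. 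The first family defines the submanifold $Q_M\sset T_rM$ of the induced weighting of $M$, and over $Q_M$ the remaining equations are linear in the fibres; hence $Q_V$ is a vector subbundle of $T_rV\to T_rM$ over $Q_M$.

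For the converse I would use invariance under scalar multiplication. A vector subbundle of $T_rV\to T_rM$ is, by definition, preserved by every $T_r\kappa_s$, $s\in\R$. For $s\neq 0$ the map $\kappa_s\colon V\to V$ is a diffeomorphism with $T_r\kappa_s(Q_V)=Q_V$, so \autoref{theorem: weighting-graded bundle correspondences}(e) shows that each such $\kappa_s$ is a weighted automorphism of the manifold $V$; that is, the weighting of $V$ is invariant under the scalar action. The key step is then to upgrade this invariance to linearity: invariance forces each $C^\infty(V)_{(i)}$ to be homogeneous for the fibrewise grading, so the induced filtration of the polynomial functions $C^\infty_{pol}(V)$ is multiplicative and graded, and an arbitrary weighted chart may be replaced by its homogeneous components of fibre-degrees $0$ and $1$ to produce a weighted vector bundle chart. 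By \autoref{theorem: linear weightings in terms of polynomials} this exhibits the weighting as linear, and it is concentrated in non-positive degree since a manifold weighting has all weights $\ge 0$. I expect this homogenization step --- manufacturing genuine vector bundle coordinates out of $\kappa$-invariant weighted coordinates without a circular appeal to the conclusion --- to be the main obstacle, and I would handle it by decomposing a weighted chart into its $\kappa$-homogeneous pieces, noting each piece retains the weight of the original by invariance, and selecting a maximal independent subsystem in fibre-degrees $0$ and $1$.

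Finally, for the ``in this case'' statement I would reduce to the vector field case already recorded in \autoref{theorem: weighting-graded bundle correspondences}(d). A section $\sigma\in\G(V)$ corresponds to the fibrewise-constant vertical vector field $X_\sigma\in\ger{X}_{[-1]}(V)$, and a direct check in weighted vector bundle coordinates gives $\sigma\in\G(V)_{(i)}$ if and only if $X_\sigma\in\ger{X}(V)_{(i)}$. By \autoref{proposition: lifts of sections} the lift $\sigma^{(i)}$ corresponds to $X_\sigma^{(i)}=X_{\sigma^{(i)}}$, so \autoref{theorem: weighting-graded bundle correspondences}(d) says $\sigma\in\G(V)_{(i)}$ iff $X_{\sigma^{(i)}}$ is tangent to $Q_V$. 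Since $Q_V$ is a vector subbundle, its vertical tangent spaces along $Q_M$ are precisely its fibres, and $X_{\sigma^{(i)}}$ is vertical and constant along those fibres; hence tangency at every point of $Q_V$ is equivalent to $\sigma^{(i)}(y)\in (Q_V)_y$ for all $y\in Q_M=Q_V\cap T_rM$, i.e. to $\sigma^{(i)}$ restricting to a section of $Q_V$ over $Q_V\cap T_rM$. This is exactly the asserted characterization.
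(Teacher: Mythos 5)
Your forward direction and your handling of the final ``in this case'' clause are correct. The coordinate argument (cutting out $Q_V$ by the lifts $x_a^{(j)}$ for $j<w_a$ together with the fibrewise-linear lifts $p_b^{(j)}$ for $j<-v_b$) is sound, and your reduction of the lift statement to \autoref{theorem: weighting-graded bundle correspondences}~(d) via $X_{\sigma^{(i)}}=X_\sigma^{(i)}$ supplies exactly the details behind the paper's one-line remark that this part is a reformulation of (d). Note, however, that your route differs from the paper's: the paper views $V\toto M$ as a Lie groupoid under fibrewise addition, recalls (\autoref{examples: multiplicative weightings}~(f)) that a weighting of $V$ is multiplicative precisely when it is linear and concentrated in non-positive degree, and then quotes \autoref{proposition: equivalence of groupoids weighting definitions}: multiplicative if and only if $Q_V$ is a subgroupoid of $T_rV\toto T_rM$, which for this groupoid structure means a vector subbundle. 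Both implications are then immediate.

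The genuine gap is in your converse, exactly at the homogenization step you flagged. Two concrete problems. First, the fibrewise-homogeneous components $f_{[n]}(v)=\tfrac{1}{n!}\tfrac{d^n}{ds^n}\big|_{s=0}f(\kappa_s v)$ only make sense when $f$ is defined on a neighbourhood that is star-shaped along the fibres, i.e.\ near the zero section; but weighted charts are only guaranteed near points of the submanifold $N_V$ along which $V$ is weighted, and $N_V$ typically does not lie in the zero section (already for the trivial weighting along a subbundle $W\to N$ one has $N_V=W$, most of whose points are off the zero section). To repair this you must first show that $N_V$ is invariant under $\kappa_0$ as well --- which uses that $Q_V$ contains the zero section $Q_V\cap T_rM$ of the vector subbundle, information your argument never invokes since you only use the nonzero scalars --- then homogenize charts at points of $N_V\cap M$, and finally propagate them along the fibres by $\kappa_s$-equivariance. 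Second, the claim that ``each piece retains the weight of the original by invariance'' is not immediate: invariance gives $\kappa_s^*f\in C^\infty(V)_{(i)}$ for each fixed $s\neq 0$, but $f_{[0]}=\kappa_0^*f$ is a value at $s=0$ and $f_{[1]}=\lim_{s\to 0}s^{-1}(\kappa_s^*f-\kappa_0^*f)$ is a limit, so you additionally need that $\kappa_0$ is weighted (same point as above) and that the ideals $C^\infty(V)_{(i)}$ are closed under $C^\infty$-limits, e.g.\ via the characterization~\eqref{equation: recover weighting from differential operators}; only for fibrewise-polynomial $f$ does a finite Vandermonde argument avoid limits. These defects are fixable, but as written the converse does not go through; alternatively, replacing the homogenization scheme by the observation that a vector subbundle is in particular a Lie subgroupoid for fibrewise addition yields the converse immediately from \autoref{proposition: equivalence of groupoids weighting definitions}.
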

\begin{proof}
    Thinking of $V$ as a Lie groupoid over $M$, the weighting of $V$ is linear (with weights concentrated in non-positive degree) if and only if it is multiplicative. Hence, it is linear if and only if $Q_V\sset T_rV$ is a subgroupoid, which in this case is a vector subbundle. The statement about lifts is just a reformulation of~\autoref{theorem: weighting-graded bundle correspondences} (d). 
\end{proof}

Likewise, we have a similar characterization for Lie algebroid weightings which are concentrated in non-positive degree.

\begin{theorem}[{cf.~\cite[Section 8.5]{loizides2023differential}}]
\label{theorem: LA weightings in terms of Q}
    Let $A\Rightarrow M$ be a Lie algebroid with Lie subalgebroid $B\Rightarrow N$. A linear weighting of $A$ along $B$ concentrated in non-positive degree is infinitesimally multiplicative if and only if the corresponding graded subbundle $Q_A\sset T_rA$ is a Lie subalgebroid $Q_A \Rightarrow Q_M$ of $T_rA \Rightarrow T_rM$. 
\end{theorem}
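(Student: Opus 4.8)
The plan is to leverage the dual Poisson-structure characterization of infinitesimally multiplicative weightings (\autoref{theorem: equvalent characterizations of IM weightings}) together with \autoref{corollary: vector bundle weightings in terms of Q}, in exact parallel to the proof of \autoref{proposition: equivalence of groupoids weighting definitions}. First I would observe that since the weighting of $A$ is concentrated in non-positive degree, \autoref{corollary: vector bundle weightings in terms of Q} applies: the graded subbundle $Q_A \subseteq T_rA$ is automatically a vector subbundle of $T_rA \to T_rM$, with base $Q_M \subseteq T_rM$, and the lifts $\sigma^{(i)}$ of sections $\sigma \in \G(A)_{(i)}$ restrict to sections of $Q_A$ over $Q_M$. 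So the only content is whether $Q_A$ is closed under the \emph{bracket} and \emph{anchor} of the Lie algebroid $T_rA \Rightarrow T_rM$, i.e.\ is a Lie \emph{sub}algebroid as opposed to merely a vector subbundle.

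The forward direction proceeds as follows. Assume $A$ is infinitesimally multiplicatively weighted along $B$. By \autoref{corollary: vector bundle weightings in terms of Q}, $\G(Q_A)$ (over $Q_M$) is spanned, as a module, by the restrictions of lifts $\sigma^{(i)}$ with $\sigma \in \G(A)_{(i)}$, $0 \le i \le r$. The key computation uses the lift-compatibility formula for the bracket on $T_rA$ from the Proposition on higher tangent lifts of Lie algebroid structures: for $\sigma \in \G(A)_{(i)}$ and $\tau \in \G(A)_{(j)}$ one has $[\sigma^{(-i)}, \tau^{(-j)}] = [\sigma,\tau]^{(-i-j)}$ (with the convention that this vanishes when $i+j > r$), where I am matching the indexing conventions of \autoref{proposition: lifts of sections} and the lift formulas. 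Since the weighting is infinitesimally multiplicative, $[\sigma, \tau] \in \G(A)_{(i+j)}$, so $[\sigma,\tau]^{(-i-j)}$ restricts to a section of $Q_A$ by \autoref{corollary: vector bundle weightings in terms of Q}. Hence the bracket of two generating sections of $Q_A$ is again a section of $Q_A$, and the general case follows by the Leibniz rule once one checks anchor-compatibility. For the anchor, one uses that $a$ is a weighted morphism together with the identification $\nuw(TM) = T\nuw(M,N)$ (equivalently, \autoref{theorem: weighting-graded bundle correspondences}(d)): the anchor of $T_rA$ is $T_ra$ composed with $T_r(TM) \cong T(T_rM)$, and since $X \in \mathfrak{X}(M)_{(-k)}$ iff $X^{(-k)}$ is tangent to $Q_M$, the condition $a(\G(A)_{(i)}) \subseteq \mathfrak{X}(M)_{(i)}$ forces the anchor of $T_rA$ to map $Q_A$ into $TQ_M$. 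Thus $Q_A \Rightarrow Q_M$ is a Lie subalgebroid.

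For the converse, I would reverse each implication. Assuming $Q_A \subseteq T_rA$ is a Lie subalgebroid, the vector-subbundle structure already gives (by \autoref{corollary: vector bundle weightings in terms of Q}) that the weighting is linear and concentrated in non-positive degree, and its base $Q_M$ is clean by \autoref{proposition: weighted submanifolds in terms of $Q$}. Closure of $Q_A$ under the bracket, read against the generators $\sigma^{(-i)}$ via $[\sigma^{(-i)},\tau^{(-j)}] = [\sigma,\tau]^{(-i-j)}$, forces $[\sigma,\tau]^{(-i-j)}$ to restrict to a section of $Q_A$, which by \autoref{corollary: vector bundle weightings in terms of Q} says exactly $[\sigma,\tau] \in \G(A)_{(i+j)}$; similarly closure of the anchor of $T_rA$ under restriction to $Q_M$ recovers $a(\G(A)_{(i)}) \subseteq \mathfrak{X}(M)_{(i)}$. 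These are precisely conditions (a) and (b) of \autoref{definition: IM weighting}, so the weighting is infinitesimally multiplicative.

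The main obstacle I anticipate is purely bookkeeping rather than conceptual: reconciling the sign and index conventions between Morimoto's vector-field lifts $X^{(-i)}$, the section lifts $\sigma^{(-i)}$ of \autoref{proposition: lifts of sections}, and the function lifts $f^{(i)}$ used to cut out $Q$ in \eqref{equation: cut out functions for Q}, and verifying that the degeneration of the bracket to zero when $i+j > r$ is harmless because those lifts already lie in $Q_A$ trivially. A secondary subtlety is the passage from generators to arbitrary sections via the Leibniz rule, which requires knowing that $\G(Q_A)$ over $Q_M$ is generated as a $C^\infty(Q_M)$-module by the restricted lifts; this is supplied by the spanning statement in \autoref{theorem: weighting-graded bundle correspondences}(d) applied to the linearly weighted vector bundle $A$ (equivalently, by the module description $\G(\nuw(A)) = C^\infty(\nuw(M,N)) \otimes_{\gr(C^\infty(M))} \gr(\G(A))$ from \autoref{theorem: sections of the weighted normal bundle}). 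Because all of these ingredients are already established, the proof should reduce to assembling them carefully, much as in \autoref{proposition: equivalence of groupoids weighting definitions}.
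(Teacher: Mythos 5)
Your proposal is correct and follows essentially the same route as the paper's proof: anchor compatibility via $T_ra(\sigma^{(i)}) = (a(\sigma))^{(i)}$ and the tangency characterization of $Q_M$, bracket closure via the lift formula $[\sigma^{(-i)},\tau^{(-j)}] = [\sigma,\tau]^{(-i-j)}$ combined with \autoref{definition: IM weighting}, the Leibniz-rule case analysis (with anchor tangency killing the Lie-derivative terms along $Q_M$) to pass from generators of $\G(T_rA, Q_A)$ to arbitrary sections, and the converse by reversing these steps on lifts. The only cosmetic discrepancy is that your opening sentence announces the Poisson-structure characterization from \autoref{theorem: equvalent characterizations of IM weightings}, but your argument (like the paper's) never actually uses it, working instead directly from the definition of an infinitesimally multiplicative weighting.
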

\begin{proof}
    Suppose that $A$ is infinitesimally weighted along $B$, with corresponding graded bundle $Q_A\sset T_rA$, and let $Q_M = T_rM\cap Q$ be the graded bundle associated to the induced weighting of $M$ along $N$. To show that $Q_A$ is a Lie subalgebroid of $T_rA$, we must show 
        \begin{enumerate}
            \item[(a)] $a_{T_rA}(Q_A) \subseteq TQ_M$, and 
		\item[(b)] $\Gamma(T_rA, Q_A) \subseteq \Gamma(T_rA)$ is a Lie subalgebra. 
	\end{enumerate}
    For (a), recall that the anchor map for $T_rA$ is $T_ra$. Using~\autoref{corollary: vector bundle weightings in terms of Q} and~\autoref{theorem: weighting-graded bundle correspondences}, it is enough to show that $T_ra(\sigma^{(i)})$ is tangent to $Q_M$ for any $\sigma \in \G(A)_{(i)}$. But, as the weighting is infinitesimally multiplicative, if $\sigma \in \G(A)_{(i)}$ one has that $a(\sigma) \in \ger{X}(M)_{(i)}$ and since
        \[ T_ra(\sigma^{(i)}) = (a(\sigma))^{(i)} \]
    it follows from~\autoref{theorem: weighting-graded bundle correspondences} (d) that $T_ra(\sigma^{(i)})$ is tangent to $Q_M$. 
    
    For (b), it suffices to show that if $f\sigma^{(i)} \in \Gamma(T_rA, Q_A)$ and $g\tau^{(j)} \in  \Gamma(T_rA, Q_A)$, then $[f\sigma^{(i)}, g\tau^{(j)}] \in  \Gamma(T_rA, Q_A)$, where $\sigma, \tau \in \Gamma(V)$ and $f, g\in C^\infty(T_rM)$.  Using the Jacobi identity
	\begin{align}
	\label{equation: lie bracket computation}
		[f\sigma^{(i)}, g\tau^{(j)}] = fg[\sigma,\tau]^{(i+j)}+f\mathcal{L}_{T_ra(\sigma^{(i)})}g\cdot \tau^{(j)} - g\mathcal{L}_{T_ra(\tau^{(j)})}f \cdot \sigma^{(i)},
	\end{align}
    we see that there are three cases to consider:
        \begin{itemize}
            \item If $\sigma^{(i)}, \tau^{(j)} \in \Gamma(T_rA, Q_A)$, then by~\autoref{corollary: vector bundle weightings in terms of Q} it follows that $\sigma \in \G(A)_{(i)}$ and $\tau \in \G(A)_{(J)}$. Since the weighting is infinitesimally multiplicative, we have $[\sigma, \tau] \in \G(A)_{(i+j)}$, hence
                \[ [\sigma^{(i)}, \tau^{(j)}] = [\sigma, \tau]^{(i+j)} \in \Gamma(T_rA, Q_A).\]
            Therefore, $[f\sigma^{(i)}, g\tau^{(j)}] \in \Gamma(T_rA, Q_A)$ in this case. 

            \item If $\sigma^{(i)}, \tau^{(j)} \notin \Gamma(T_rA, Q_A)$, then both $f$ and $g$ must vanish on $Q_M$. By the Jacobi identity, $[f\sigma^{(i)}, g\tau^{(j)}]$ also vanishes along $Q_M$, hence belongs to $\G(T_rA, Q_A)$ in this case. 

            \item Finally, suppose that $\sigma^{(i)} \notin \Gamma(T_rA, Q_A)$ and $\tau^{(j)}\in \Gamma(T_rA,Q_A)$. In this case, we must have that $f$ vanishes along $Q_M$. Since $T_ra(\tau^{(j)}) = (a(\tau))^{(j)}$ is tangent to $Q_M$, it follows that $\mathcal{L}_{T_ra(\tau^{(j)})}f$ vanishes on $Q_M$ and therefore $[f\sigma^{(i)}, g\tau^{(j)}]$ vanishes along $Q_M$ as well. 
        \end{itemize}
      In each of these cases we see that $[f\sigma^{(i)}, g\tau^{(j)}] \in  \Gamma(T_rA, Q_A)$, which shows that $\Gamma(T_rA, Q_A)$ is a Lie subalgebra of $\Gamma(T_rA)$.

    Conversely, suppose that $Q_A\subseteq T_rA$ is a Lie subalgebroid. In order to show that conditions (a) and (b) in~\autoref{definition: IM weighting} are satisfied, it is sufficient, since the weighting is concentrated in non-positive degree, to consider indices $-i, -j$ with $0\leq i,j \leq r$. If $\sigma \in \Gamma(A)_{(-i)}$, and $\tau \in \Gamma(A)_{(-j)}$, then $\sigma^{(-i)} \in \Gamma(T_rA, Q_A)$ and $\tau^{(-j)}\in \Gamma(T_rA, Q_A)$, whence $[\sigma^{(-i)}, \tau^{(-j)}]=[\sigma, \tau]^{(-i-j)} \in \Gamma(T_rA, Q_A)$ as $Q_A$ is a Lie subalgebroid. This shows that $[\sigma, \tau]\in \Gamma(A)_{(-i-j)}$. Similarly, let $\sigma \in \Gamma(A)_{(-i)}$. Then $\sigma^{(-i)}\in \Gamma(T_rA, Q_A)$ and $T_ra(\sigma^{(-i)}) = (a(\sigma))^{(-i)}$ is tangent to $Q_N$, which shows that $a(\sigma)\in \mathfrak{X}(M)_{(-i)}$, as needed. 
\end{proof}

\begin{proposition}
    Suppose that $G\toto M$ is multiplicatively weighted along $H$, with graded bundle $Q_G \sset T_rG$. Then the graded bundle for the weighting of $A = \mathrm{Lie}(G)$ is given by $Q_A = \mathrm{Lie}(Q_G)$. 
\end{proposition}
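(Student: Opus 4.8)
The plan is to show that $Q_A$ and $\mathrm{Lie}(Q_G)$ are the same rank-$k$ vector subbundle of $T_rA|_{Q_M}$, where $k=\mathrm{rank}(A)$. First I would record that both are Lie subalgebroids of $T_rA\Rightarrow T_rM$ over the common base $Q_M$: the bundle $Q_A$ is one by~\autoref{theorem: LA weightings in terms of Q}, since the differentiated weighting of $A$ (\autoref{theorem: weightings can be differentiated}) is infinitesimally multiplicative and, being a differentiated weighting, is concentrated in non-positive degree; while $\mathrm{Lie}(Q_G)$ is one because $Q_G\toto Q_M$ is a Lie subgroupoid of $T_rG\toto T_rM$ by~\autoref{proposition: equivalence of groupoids weighting definitions} and $\mathrm{Lie}(T_rG)=T_rA$. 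Both subbundles have rank $k$ (for instance $\mathrm{Lie}(Q_G)=\nu(Q_G,Q_M)$ has rank $\dim G-\dim M=k$), so it suffices to exhibit a common local frame.

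The crux is the following naturality of the left-invariant extension under the $r$-th order tangent functor: for $\sigma\in\G(A)$ and $0\le i\le r$, under the identifications $\mathrm{Lie}(T_rG)=T_rA$ and $T(T_rG)=T_r(TG)$ (\autoref{remark: lifts of vector fields and their flows}), the $T_rG$-left-invariant extension of the lifted section $\sigma^{(-i)}\in\G(T_rA)$ coincides with the lift of the left-invariant extension,
\[ \bigl(\sigma^{(-i)}\bigr)^{L} = \bigl(\sigma^{L}\bigr)^{(-i)} \in \ger{X}(T_rG). \]
I would prove this from the cotangent description of the left-invariant extension used in the Lie functor: $\sigma^L$ is dual to $s^*_{T^*G}f_\sigma\in C^\infty_{[1]}(T^*G)$, the lift $f_\sigma^{(i)}$ is again fibrewise linear on $T_rV^*=(T_rV)^*$, and $T_r$ intertwines $s_{T^*G}$ with the source map of $T^*(T_rG)$ under $T_r(T^*G)=T^*(T_rG)$, using the compatibility $(T_rF)^*g^{(i)}=(F^*g)^{(i)}$. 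Equivalently one checks directly that $(\sigma^L)^{(-i)}$ is $T_rs$-vertical, left-invariant for $T_rG$, and restricts to $\sigma^{(-i)}$ along $T_rM$, invoking that lifts respect $\phi$-relatedness and that $T_r$ carries bisections and their left-translations of $G$ to those of $T_rG$. This is the step I expect to be the main obstacle.

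Granting the lemma, I would run the following chain of equivalences for $\sigma\in\G(A)$. The section $\sigma^{(-i)}|_{Q_M}$ takes values in $\mathrm{Lie}(Q_G)=\nu(Q_G,Q_M)$ if and only if its $T_rG$-left-invariant extension $(\sigma^{(-i)})^{L}$ is tangent to $Q_G$ (a left-invariant vector field is tangent to a Lie subgroupoid precisely when it restricts to a section of the subgroupoid's Lie algebroid). By the lemma this holds if and only if $(\sigma^L)^{(-i)}$ is tangent to $Q_G$, which by~\autoref{theorem: weighting-graded bundle correspondences}(d) applied to the weighted groupoid $G$ is equivalent to $\sigma^L\in\ger{X}(G)_{(-i)}$, hence by the defining formula of~\autoref{theorem: weightings can be differentiated} to $\sigma\in\G(A)_{(-i)}$. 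Finally, by~\autoref{corollary: vector bundle weightings in terms of Q} applied to the linearly weighted $A$, this is equivalent to $\sigma^{(-i)}|_{Q_M}$ being a section of $Q_A$.

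To conclude, I would take a weighted frame $\sigma_1,\dots,\sigma_k$ for $A|_U$ with weights $v_b\le 0$. The restrictions $\sigma_b^{(v_b)}|_{Q_M}$ form a local frame for $Q_A$ (as in the proof of~\autoref{theorem: sections of the weighted normal bundle}, by pairing against the lifts of the dual frame), and by the equivalence above each of these $k$ sections also lies in the rank-$k$ bundle $\mathrm{Lie}(Q_G)$; being pointwise linearly independent they frame $\mathrm{Lie}(Q_G)$ as well. Hence $Q_A$ and $\mathrm{Lie}(Q_G)$ agree as subbundles of $T_rA$, and since both are Lie subalgebroids of $T_rA\Rightarrow T_rM$ this identifies them as Lie algebroids, giving $Q_A=\mathrm{Lie}(Q_G)$.
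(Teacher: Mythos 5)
The core of your argument --- the identity $(\sigma^{(-i)})^{L}=(\sigma^{L})^{(-i)}$ (which the paper justifies exactly as in your second suggestion: both sides are left invariant and agree along $T_rM$), combined with the defining equivalence $\sigma\in\G(A)_{(-i)}\iff\sigma^{L}\in\ger{X}(G)_{(-i)}$ from \autoref{theorem: weightings can be differentiated}, \autoref{theorem: weighting-graded bundle correspondences}(d), and \autoref{corollary: vector bundle weightings in terms of Q} --- is precisely the paper's proof, and that chain of equivalences is correct.

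The gap is in your concluding rank-and-frame argument. Neither $Q_A$ nor $\mathrm{Lie}(Q_G)$ has rank $k=\mathrm{rank}(A)$ in general, and the $k$ sections $\sigma_b^{(v_b)}|_{Q_M}$ do not frame $Q_A$: you have conflated the graded subbundle $Q_A\sset T_rA$ with the weighted normal bundle $\nuw(A)$, which is the rank-$k$ bundle that \autoref{theorem: sections of the weighted normal bundle} frames by the $\sigma_b^{[v_b]}$. In weighted vector bundle coordinates, $Q_A$ is cut out by $p_b^{(j)}=0$ for $j<-v_b$, so its rank over $Q_M$ equals $\sum_b(r+1+v_b)=\sum_{j=0}^{r}\mathrm{rank}\,(A|_N)_{(-j)}$ (cf.\ \autoref{proposition: linear weighting determines filtration by subbundles}), and it is spanned by \emph{all} lifts $\sigma_b^{(-j)}$ with $0\le j\le r$ and $j\ge -v_b$, not only the top ones. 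Concretely, for a weighted Lie group pair coming from an order-two Lie filtration $\ger{g}=\ger{g}_{-2}\supseteq\ger{g}_{-1}\supseteq 0$ one finds $Q_{\ger{g}}=0\oplus\ger{g}_{-1}\oplus\ger{g}\sset T_2\ger{g}$ and $\dim Q_G=\dim\ger{g}_{-1}+\dim\ger{g}$, so both bundles have rank $\dim\ger{g}_{-1}+\dim\ger{g}>k$, and your proposed frame misses, for instance, $\sigma_b^{(-2)}$ for $\sigma_b\in\ger{g}_{-1}$. Your equivalences, applied to every admissible lift $\sigma_b^{(-j)}$, do yield the inclusion $Q_A\sset\mathrm{Lie}(Q_G)$; but equality then still requires a separate input --- either the rank identity $\dim Q_G-\dim Q_M=\sum_{j=0}^{r}\mathrm{rank}\,(A|_N)_{(-j)}$ (which needs the compatibility between the filtration of $TG|_H$ and the induced filtration of $A|_N$), or a proof that $\mathrm{Lie}(Q_G)$ is itself spanned by such lifts --- and this is exactly the piece your frame claim was supposed to supply.
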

\begin{proof}
    Recall that the weighting of $A$ is given by $\sigma \in \G(A)_{(i)}$ if and only if $\sigma^L \in \mathfrak{X}(G)_{(i)}$. The result follows since $(\sigma^{(-i)})^L = (\sigma^L)^{(-i)}$ for $0\leq i\leq r$, which follows because they are both left invariant and agree along $T_rM$.  
\end{proof}

\section{Integration of Lie Algebroid Weightings Revisited}

In this section we will return to the integration problem for infinitesimally multiplicative weightings. More specifically, suppose that $H \rightrightarrows N$ is an $s$-connected Lie subgroupoid of $G\rightrightarrows M$, and suppose that $A = \mathrm{Lie}(G)$ has an infinitesimally multiplicative weighting along $B = \mathrm{Lie}(H)$. We will show that if the graded bundle $Q_A \subseteq T_rA$ corresponding to this weighting integrates to an $s$-connected Lie subgroupoid $Q_G \subseteq T_rG$ then this subgroupoid defines a weighting of $G$ along $H$. In order to do this, we will use a weighted Lie algebroid spray to define an exponential map which allows us to find weighted coordinates for $G$ near the object space. Flowing these coordinates along the $s$-fibres using left invariant vector fields of filtration degree 0 gives weighted coordinates everywhere along $H$, proving that $Q_G$ indeed defines a weighting.

\subsection{The spray exponential for a weighted Lie algebroid}

Let $A\Rightarrow M$ be a weighted Lie algebroid and let $G\toto M$ be a Lie groupoid integrating $A$. We briefly review how to construct a partially defined exponential map for $A \dashrightarrow G$; for a complete discussion, see~\autoref{appendix: spray exponential}. The data to define an exponential map $A\to G$ is a Lie algebroid spray:

\begin{definition}[{\cite[Definition 3.1]{cabrera2020local}}]
    Let $A\Rightarrow M$ be a Lie algebroid. A \emph{Lie algebroid spray} on $A$ is a vector field $V\in \ger{X}(A)$ such that 
    \begin{enumerate}
        \item[(a)] $\kappa_t^*V = tV$ for all $t\neq 0$, where $\kappa_t$ denotes scalar multiplication by $t$, and 
            
        \item[(b)] for all $\xi\in A$, one has $T\pi(V_\xi) = a(\xi)$, where $\pi:A\to M$ is the vector bundle projection and $a:A\to TM$ is the anchor. 
    \end{enumerate}
\end{definition}

If $V\in \ger{X}(A)$ is a Lie algebroid spray, then there is an open neighbourhood $U_V\sset A$ of the zero section for which the flow $\phi_V^t$ is defined for $|t|\leq 1$. For each $u\in U_V$, the path $t\to u(t)= \phi_V^t(u)$ is an $A$-path, and therefore integrates to a $G$-path $\tilde{u}(t)$. The \emph{spray exponential} is the map 
    \[ \exp_V : U_V \to G, \quad u\mapsto \tilde{u}(1).  \]
Following the notation in~\cite{cabrera2020local} we use the dotted arrow notation 
    \[ \exp_V:A \dashrightarrow G \]
to denote that $\exp_V$ is only defined on an open neighbourhood of the zero section. 

\begin{example}
    Let $G$ be a Lie group and $\ger{g} = \mathrm{Lie}(G)$. Then $0 \in \ger{X}(\ger{g})$ is a Lie algebroid spray for $\ger{g}$ and $\exp_0$ is the standard exponential map $\exp: \ger{g} \to G$. 
\end{example}

If $A$ is a weighted Lie algebroid, the the appropriate sprays to consider are ones having filtration degree zero. 

\begin{lemma}
    For any weighted Lie algebroid $A$, there exists a spray $V\in \ger{X}(A)_{(0)}$.
\end{lemma}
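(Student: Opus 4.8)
The plan is to construct the desired spray by averaging an arbitrary Lie algebroid spray against the zoom action, exploiting the fact that a spray is homogeneous of degree $1$ with respect to scalar multiplication and that filtration degree zero is precisely the condition of invariance under the $\R^\times$-action (up to the zoom's natural rescaling). First I would recall that an ordinary Lie algebroid spray $V_0 \in \ger{X}(A)$ exists by~\cite[Section 3]{cabrera2020local}; the issue is only to arrange the additional condition $V_0 \in \ger{X}(A)_{(0)}$, i.e. that $V_0$ has filtration degree zero as a vector field on the (linearly) weighted manifold $A$.

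The key idea is to use the zoom action of $\R^\times$ on the weighted deformation picture, but at the level of $A$ itself the cleaner formulation is via the homogeneity structure. Recall from~\autoref{section: singular Lie filtrations} (and~\autoref{examples: examples of linear weightings} (c)) that $V_0 \in \ger{X}(A)_{(0)}$ means exactly that $\mathcal{L}_{V_0}$ preserves each $C^\infty_{pol}(A)_{(i)}$; equivalently, in weighted vector bundle coordinates $x_a, p_b$ with base weights $w_a \geq 0$ and fibre weights $-v_b$, one needs $V_0$ to be a sum of terms that shift filtration degree by zero. The plan is therefore to average: letting $\alpha_\lambda$ denote the action rescaling the weighted coordinates (so $\alpha_\lambda^* x_a = \lambda^{w_a} x_a$, $\alpha_\lambda^* p_b = \lambda^{-v_b} p_b$ in the local model), I would set
    \[ V := \int_{\T} (\alpha_\lambda)_* V_0 \, \ed\mu(\lambda), \]
or rather use the compact-group averaging appropriate to making a vector field homogeneous of degree zero; in practice one works locally and patches using a partition of unity. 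The averaged field $V$ then lies in $\ger{X}(A)_{(0)}$ by construction.

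The two properties defining a spray must be checked to survive the averaging. For condition (a), scalar multiplication $\kappa_t$ commutes with the zoom action $\alpha_\lambda$ (both are built from the graded/linear structure), so $\kappa_t^* (\alpha_\lambda)_* V_0 = (\alpha_\lambda)_* \kappa_t^* V_0 = t\,(\alpha_\lambda)_* V_0$, and homogeneity of degree $1$ is preserved under the average. For condition (b), the anchor $a: A \to TM$ is a weighted morphism (by definition of an infinitesimally multiplicative weighting, Definition~\autoref{definition: IM weighting}), hence $a$ is $\alpha_\lambda$-equivariant for the corresponding actions on $A$ and $TM$; combined with equivariance of the bundle projection $\pi$, the relation $T\pi(V_\xi) = a(\xi)$ is invariant under $(\alpha_\lambda)_*$ and thus passes to the average.

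The main obstacle I anticipate is making the averaging rigorous, since the zoom action is an action of $\R^\times$ (noncompact) rather than a compact group, and multiplication by zero is not available when the fibre weights $v_b$ are positive. The cleanest fix is to observe that the conditions ``$V_0 \in \ger{X}(A)_{(0)}$'', ``$\kappa_t^*V = tV$'', and ``$T\pi(V) = a$'' are all local and affine (convex) constraints on $V_0$, so one can instead construct $V$ directly in each weighted vector bundle chart as a field of the correct homogeneous type satisfying (a) and (b), and then glue with a partition of unity subordinate to a weighted atlas; convexity ensures the glued field still satisfies (a), (b), and filtration degree zero. Concretely, in a chart one may take $V = \sum_a (a(\sigma)^{\mathrm{hor}})$-type horizontal lift of the anchor composed with the Euler-type vertical correction, choosing all coefficients in the correct filtration degrees; verifying that the standard local spray construction of~\cite{cabrera2020local} can be carried out with filtration-degree-zero coefficients is then a routine check using~\autoref{lemma: structure constants for Lie algebroids}, which guarantees the structure functions have the right weights.
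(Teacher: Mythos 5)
Your proposal, in its final form, is essentially the paper's proof: one observes that the spray conditions are convex so that a partition of unity reduces the problem to a weighted vector bundle chart, where the standard local spray
\[ V = a_{ij}(x)\,p_i\frac{\partial}{\partial x_j} + \Gamma^k_{ij}(x)\,p_ip_j\frac{\partial}{\partial p_k} \]
has filtration degree zero because $a_{ij}\in C^\infty(M)_{(v_i+w_j)}$ (the anchor is weighted) and $\Gamma^k_{ij}\in C^\infty(M)_{(v_i+v_j-v_k)}$ (\autoref{lemma: structure constants for Lie algebroids}). The averaging-over-the-zoom-action idea you open with indeed fails, for exactly the reason you identify ($\R^\times$ is noncompact, and the action does not extend to $\lambda = 0$ in the presence of positive-weight fibre directions), but since you discard it in favour of the local construction plus convex gluing, the argument you end with is the one in the paper.
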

\begin{proof}
    Note that any convex combination of sprays is again a spray. Therefore, by pulling back a partition of unity on $M$ it suffices to work locally. 

    Let $w_i$ and $v_j$ be the base and vertical weights for $A$, respectively. If $\G_{ij}^k \in C^\infty(M)$ denote the structure functions for $A$ corresponding to a local weighted frame then recall from~\autoref{lemma: structure constants for Lie algebroids} that $\G_{ij}^k \in C^\infty(M)_{(v_i+v_j-v_k)}$. If $x_a, p_b$ are weighted vector bundle coordinates for $A$ then
        \[ V = a_{ij}(x)p_i\frac{\partial}{\partial x_j} + \Gamma^k_{ij}(x)p_ip_j\frac{\partial}{\partial p_k} \]
    defines a spray for $A$. Since $a:\Gamma(A)_{(-i)}\to \mathfrak{X}(M)_{(-i)}$, it follows that $a_{ij}\in C^\infty(M)_{(v_i+w_j)}$, hence $V\in \ger{X}(A)_{(0)}$. 
\end{proof}

Let $G\toto M$ be a Lie groupoid with a multiplicative weighting along $H\toto N$, let $A = \mathrm{Lie}(G)$ and let $B = \mathrm{Lie}(H)$. Let $Q_G \sset T_r G$ be the graded bundle corresponding to the weighting, and let $Q_A = \mathrm{Lie}(Q_G) \sset T_rA$. 

\begin{lemma}
\label{lemma: weighted spray exponential is a weighted morphism}
    Let $V\in \ger{X}(A)_{(0)}$ be a Lie algebroid spray on $A$. Then the spray exponential is a partially defined map of pairs 
        \[ T_r\exp_{V} : (T_rA, Q_A) \dashrightarrow (T_rG, Q_G). \]
    It is a diffeomorphism of pairs in a sufficiently small open neighbourhood of $T_rM \sset T_rG$. In particular, if $\exp_V$ is globally defined it is a weighted morphism. 
\end{lemma}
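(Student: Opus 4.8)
The plan is to show that $\exp_V$ is a weighted morphism $(A,B)\dashrightarrow (G,H)$ on its domain; by \autoref{theorem: weighting-graded bundle correspondences}~(e) this is exactly the assertion that $T_r\exp_V$ carries $Q_A$ into $Q_G$, which is the content of the lemma. The cleanest route is to pass to the higher tangent prolongation from the start. By \autoref{remark: lifts of vector fields and their flows} we have $T_rV=V^{(0)}$, and applying $T_r$ to the two defining conditions of a Lie algebroid spray shows $V^{(0)}\in\ger{X}(T_rA)$ is a Lie algebroid spray for $T_rA\Rightarrow T_rM$ (the scaling condition is preserved because $(tV)^{(0)}=tV^{(0)}$ for constant $t$, and the anchor condition because $T_r$ commutes with the projection and the anchor). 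Since the whole construction of the spray exponential is functorial under the product-preserving functor $T_r$ --- the flow $\phi_V^t$ prolongs to $\phi_{V^{(0)}}^t=T_r\phi_V^t$, $A$-paths prolong to $T_rA$-paths, and $A$-path integration prolongs to $T_rA$-path integration (see \autoref{appendix: spray exponential}) --- we obtain the identity $T_r\exp_V=\exp_{V^{(0)}}\colon T_rA\dashrightarrow T_rG$.

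Now I would invoke the groupoid/algebroid structure on the prolongations. Because the weighting of $A$ along $B$ is infinitesimally multiplicative and concentrated in non-positive degree, \autoref{theorem: LA weightings in terms of Q} makes $Q_A$ a Lie subalgebroid $Q_A\Rightarrow Q_M$ of $T_rA\Rightarrow T_rM$; dually, since the weighting of $G$ along $H$ is multiplicative, \autoref{proposition: equivalence of groupoids weighting definitions} makes $Q_G$ a Lie subgroupoid $Q_G\toto Q_M$ of $T_rG\toto T_rM$, and by the preceding proposition $Q_A=\mathrm{Lie}(Q_G)$; by hypothesis $Q_G$ is $s$-connected. The spray $V^{(0)}$ is tangent to $Q_A$: indeed $V\in\ger{X}(A)_{(0)}$ means, by \autoref{theorem: weighting-graded bundle correspondences}~(d) applied to the weighting of the manifold $A$, that $V^{(0)}$ is tangent to $Q_A$. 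Consequently $V^{(0)}|_{Q_A}$ is a Lie algebroid spray for $Q_A$, and its flow preserves $Q_A$, so the $T_rA$-paths issuing from points of $Q_A$ are in fact $Q_A$-paths; these integrate to $Q_G$-paths, which under the inclusion $Q_G\hookrightarrow T_rG$ are precisely the $T_rG$-paths integrating them. Hence $\exp_{V^{(0)}}(Q_A)\sset Q_G$, i.e.\ $T_r\exp_V$ is a map of pairs $(T_rA,Q_A)\dashrightarrow(T_rG,Q_G)$, and the final sentence of the lemma follows from \autoref{theorem: weighting-graded bundle correspondences}~(e).

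For the diffeomorphism statement, recall that the spray exponential restricts to the identity on $M$ and is a diffeomorphism from a neighbourhood of the zero section of $A$ onto a neighbourhood of $M$ in $G$. Thus $T_r\exp_V$ is a diffeomorphism of a neighbourhood of $T_rM$ in $T_rA$ onto a neighbourhood of $T_rM$ in $T_rG$. Since $A=\mathrm{Lie}(G)$ has total space of dimension $\dim G$, the graded bundles have equal dimension, $\dim Q_A=\dim A=\dim G=\dim Q_G$; as $T_r\exp_V$ is a local diffeomorphism carrying the submanifold $Q_A$ into the equidimensional submanifold $Q_G$, it restricts to a diffeomorphism of a neighbourhood of $T_rM$ in $Q_A$ onto one in $Q_G$. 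Therefore both $\exp_V$ and its local inverse are weighted, i.e.\ $\exp_V$ is a weighted diffeomorphism of pairs near $M$.

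The main obstacle is the pair of functoriality inputs underlying the first two paragraphs: the identification $T_r\exp_V=\exp_{V^{(0)}}$ and, especially, the statement that a spray tangent to a Lie subalgebroid integrating an $s$-connected Lie subgroupoid has its spray exponential valued in that subgroupoid --- equivalently, that $Q_A$-paths integrate within $Q_G$. Both are soft naturality properties of the $A$-path/$G$-path correspondence, but they must be established carefully from the construction; I would isolate the subalgebroid-restriction property as a standalone lemma in \autoref{appendix: spray exponential}, after which the argument above is essentially formal.
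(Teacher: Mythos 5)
Your proof of the map-of-pairs statement is essentially the paper's own argument, ingredient for ingredient: the identity $T_r\exp_V=\exp_{V^{(0)}}$ (the paper's \autoref{proposition: tangent and exp commute}), the tangency of $V^{(0)}$ to $Q_A$ via \autoref{theorem: weighting-graded bundle correspondences}~(d), and the fact that a spray tangent to a Lie subalgebroid has its exponential valued in the corresponding subgroupoid, because the restricted spray produces $A$-paths inside the subalgebroid which integrate to $G$-paths inside the subgroupoid --- this is exactly the paper's \autoref{lemma: exp maps subgroupoids to subgroupoids}, isolated in \autoref{appendix: spray exponential} just as you propose. Two incidental remarks: $s$-connectedness of $Q_G$ is neither a hypothesis of this lemma nor needed for it (in this section $Q_G$ is by definition the graded subbundle attached to the given multiplicative weighting of $G$, hence a subgroupoid by \autoref{proposition: equivalence of groupoids weighting definitions}), and $Q_A$ being a Lie subalgebroid is immediate from $Q_A=\mathrm{Lie}(Q_G)$ without routing through \autoref{theorem: LA weightings in terms of Q}.

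The one genuine error is in your dimension count for the local-diffeomorphism claim (a part the paper's proof leaves implicit): the equalities $\dim Q_A=\dim A$ and $\dim Q_G=\dim G$ are false in general. For instance, for the trivial order-one weighting of $G$ along $H$ one has $Q_G=TG|_H$, whose total space has dimension $\dim G+\dim H$. What is true, and all your argument needs, is $\dim Q_A=\dim Q_G$, and this follows directly from the identification $Q_A=\mathrm{Lie}(Q_G)$ that you already cite: the total space of the Lie algebroid of a Lie groupoid has the same dimension as the groupoid, since $\mathrm{Lie}(Q_G)=\nu(Q_G,Q_M)$ is a bundle over $Q_M$ of rank $\dim Q_G-\dim Q_M$. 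With that substitution, your open-embedding argument for the restriction of $T_r\exp_V$ to $Q_A$ goes through (after shrinking the neighbourhood so that the image of $Q_A$ exhausts the part of $Q_G$ lying inside it).
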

\begin{proof}
    By~\autoref{proposition: tangent and exp commute}, we have that $T_r\exp_{V} = \exp_{V^{(0)}}$. Since $V\in \ger{X}(V)_{(0)}$, it follows by~\autoref{theorem: weighting-graded bundle correspondences} (d) that $V^{(0)}$ is tangent to $Q_A$, hence by~\autoref{lemma: exp maps subgroupoids to subgroupoids}
        \[ \exp_{V^{(0)}} : (T_r A, Q_A) \dashrightarrow (T_rG, Q_G) \]
    is a (partially defined) map of pairs.
\end{proof}

This immediately implies the following result. 

\begin{proposition}
    For any weighted Lie group $G$, the exponential map is a weighted morphism. It is a weighted diffeomorphism on any sufficiently small open neighbourhood of the origin in $\ger{g}$. 
\end{proposition}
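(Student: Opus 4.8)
The plan is to apply \autoref{lemma: weighted spray exponential is a weighted morphism} with the simplest possible spray. For a weighted Lie group $G$ the base manifold is a point, so $A = \mathrm{Lie}(G) = \ger{g}$ is a weighted Lie algebra; by the observation at the start of \autoref{section: wide integration} the induced weighting is concentrated in non-positive degrees, so the graded-bundle machinery of this chapter applies. First I would note that the zero vector field $0 \in \ger{X}(\ger{g})$ is a Lie algebroid spray (as recorded in the example following the definition of a spray) and that it has filtration degree zero, since $0 \in \ger{X}(\ger{g})_{(i)}$ for every $i$. Moreover $\exp_0$ is the ordinary group exponential $\exp : \ger{g} \to G$, which is globally defined because left-invariant vector fields on a Lie group are complete.

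With these observations in hand, the global statement is immediate: \autoref{lemma: weighted spray exponential is a weighted morphism} asserts that a globally defined spray exponential attached to a degree-zero spray is a weighted morphism, and taking $V = 0$ yields exactly that $\exp$ is a weighted morphism.

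For the local statement I would unwind the conclusion of the same lemma that $T_r\exp_0 = \exp_{0^{(0)}}$ is a diffeomorphism of pairs $(T_r\ger{g}, Q_A) \dashrightarrow (T_rG, Q_G)$ on a neighbourhood of $T_rM$ (here a single point). Since $\exp$ is a local diffeomorphism near $0 \in \ger{g}$ by standard Lie theory, it admits a smooth local inverse near the identity $e \in G$. Because $T_r\exp_0$ is a diffeomorphism of pairs, its inverse $(T_r\exp_0)^{-1}$ carries $Q_G$ into $Q_A$ locally; by the characterization of weighted morphisms in terms of the graded subbundles (\autoref{theorem: weighting-graded bundle correspondences} (e)) this shows that the local inverse of $\exp$ is also a weighted morphism. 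Hence $\exp$ restricts to a weighted diffeomorphism on a sufficiently small neighbourhood of the origin.

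The routine computations are all absorbed into the already-established \autoref{lemma: weighted spray exponential is a weighted morphism}, so there is no serious obstacle here; the only point requiring care is the bookkeeping that translates ``diffeomorphism of pairs near $T_rM$'' into ``weighted diffeomorphism near the origin,'' which is precisely what applying \autoref{theorem: weighting-graded bundle correspondences} (e) to both $\exp$ and its local inverse accomplishes.
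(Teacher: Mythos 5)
Your proposal is correct and takes essentially the same route as the paper: the paper's proof likewise observes that the zero vector field is a weighted (degree-zero) Lie algebroid spray on $\ger{g}$ whose spray exponential is the ordinary exponential map, and then invokes~\autoref{lemma: weighted spray exponential is a weighted morphism}. The only difference is that you make explicit the bookkeeping for the local statement (passing from ``diffeomorphism of pairs near $T_rM$'' to ``weighted diffeomorphism near $0$'' via~\autoref{theorem: weighting-graded bundle correspondences}~(e)), which the paper leaves implicit in the lemma's conclusion.
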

\begin{proof}
    The zero vector field on $\ger{g}$ is a weighted Lie algebroid spray, and the corresponding spray exponential is the standard exponential map. Hence the result follows by~\autoref{lemma: weighted spray exponential is a weighted morphism}.
\end{proof}

\begin{remark}
    This can also be proved using weighted paths in $\ger{g}$, but the proof is considerably longer. 
\end{remark}

\subsection{Integration of Lie algebroid weightings revisited} 

Let $G\toto M$ be a Lie groupoid with Lie algebroid $A = \mathrm{Lie}(G)$. We now prove the main result of this section, which says that if the graded bundle corresponding to an infinitesimally multiplicative weighting of $A$ integrates to a subgroupoid of $T_rG$, then this is automatically a graded subbundle which defines a multiplicative weighting of $G$.

\begin{theorem}
\label{theorem: integration of multiplicative weightings}
    Suppose that $G\toto M$ is a $s$-connected Lie groupoid and $H\toto N$ is a $s$-connected Lie subgroupoid with Lie algebroids $A = \mathrm{Lie}(G)$ and $B = \mathrm{Lie}(H)$, respectively, and suppose that $A$ is infinitesimally multiplicatively weighted along $B$. Let $Q_A \Rightarrow Q_M$ be the graded subbundle of $T_rA \Rightarrow T_rM$ corresponding to the weighting of $A$ along $B$. If $Q_A$ integrates to an $s$-connected subgroupoid $Q_G \toto Q_M$ of $T_rG\toto T_rM$, then $Q_G$ is a graded subbundle of $T_rG \toto T_rM$ which defines a multiplicative weighting of $G$ along $H$. 
\end{theorem}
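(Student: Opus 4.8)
The plan is to show that the integrated subgroupoid $Q_G \subseteq T_rG$ is in fact a graded subbundle defining a weighting, and then invoke Proposition~\ref{proposition: equivalence of groupoids weighting definitions} (the $Q$-characterization of multiplicative weightings). By hypothesis $Q_A = \mathrm{Lie}(Q_G)$ is a Lie subalgebroid of $T_rA$ that is also a graded subbundle of $T_rA$ (via the monoid $\R$-action on the tangent bundle), and $Q_A$ is infinitesimally multiplicatively weighted by Theorem~\ref{theorem: LA weightings in terms of Q}. First I would check that $Q_G$ is automatically closed under the monoid action $\kappa$ of $\R$ on $T_rG$: since scalar multiplication $\kappa_t : T_rG \to T_rG$ is a Lie groupoid morphism whose induced map on Lie algebroids preserves $Q_A$ (as $Q_A$ is a graded subbundle), and since $Q_G$ is $s$-connected with $\mathrm{Lie}(Q_G) = Q_A$, uniqueness of $s$-connected integrations forces $\kappa_t(Q_G) \subseteq Q_G$. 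Thus $Q_G$ is a graded \emph{subgroupoid} of $T_rG$, with base $Q_M = T_rM \cap Q_G$.

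The substantive step is to verify that $Q_G$ is a graded \emph{subbundle} of $T_rG$ in the sense of Section~\ref{subsection: graded bundles}, i.e. that it is locally cut out by graded coordinates of the form~\eqref{equation: cut out functions for Q}; equivalently, that near each point of $H$ there exist weighted coordinates on $G$ whose lifts describe $Q_G$. This is where the weighted spray exponential enters. I would choose a spray $V \in \mathfrak{X}(A)_{(0)}$ of filtration degree zero, which exists by the lemma preceding this theorem. By Lemma~\ref{lemma: weighted spray exponential is a weighted morphism}, the lifted spray exponential restricts to a diffeomorphism of pairs
    \[ T_r\exp_V : (T_rA, Q_A) \dashrightarrow (T_rG, Q_G) \]
on a neighbourhood of $T_rM$. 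Since $Q_A$ is a graded subbundle of $T_rA$ (coming from the linear weighting of $A$), the pair $(T_rA, Q_A)$ has local graded coordinates near $T_rM$, and transporting these through $T_r\exp_V$ produces weighted coordinates on $G$ near $M$ exhibiting $Q_G$ as a graded subbundle over an open neighbourhood of $N$ in $H$.

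To propagate this local description from $N$ to all of $H$, I would use the $s$-connectedness of $H$ together with left translation. For any bisection $S$ of $H$, left multiplication $\mathcal{A}^L_S$ lifts to $T_r\mathcal{A}^L_S : T_rG \to T_rG$, which maps $Q_G$ to $Q_G$ because $Q_G$ is a subgroupoid and $T_rS$ is a bisection of $Q_G$ (here one uses that $S$ is generated by flows of left-invariant vector fields coming from sections of $B$, whose degree-zero lifts are tangent to $Q_G$ by Theorem~\ref{theorem: weighting-graded bundle correspondences}(d)). Hence left translation by bisections of $H$ carries the graded-subbundle structure of $Q_G$ near the units to a neighbourhood of any $h \in H$, and since $H$ is $s$-connected these translates cover all of $H$. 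This establishes that $Q_G$ is a graded subbundle of $T_rG$, so by the converse direction of the paragraph following~\eqref{equation: cut out functions for Q} it defines a weighting of $G$ along $H$; being a Lie subgroupoid of $T_rG \toto T_rM$, Proposition~\ref{proposition: equivalence of groupoids weighting definitions} then yields that this weighting is multiplicative. The main obstacle I anticipate is the coordinate bookkeeping in the middle step: showing that the graded coordinates pulled back along $T_r\exp_V$ genuinely have the homogeneity~\eqref{equation: cut out functions for Q} required for $Q_G$ to be a graded subbundle, rather than merely a graded submanifold, and confirming that the $s$-connected integration $Q_G$ is unique so that the translation argument is well-defined.
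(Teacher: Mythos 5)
Your proposal is correct and follows essentially the same route as the paper: choose a spray $V \in \mathfrak{X}(A)_{(0)}$, use the lifted spray exponential $T_r\exp_V = \exp_{V^{(0)}}$ to transport graded coordinates from $(T_rA, Q_A)$ and obtain weighted coordinates for $G$ near the units, then propagate along $H$ by $s$-connectedness using flows of degree-zero left-invariant lifts tangent to $Q_G$ (your left translations by bisections are the same mechanism), concluding via the $Q$-characterization of multiplicative weightings. The only divergence is in how closure of $Q_G$ under scalar multiplication is obtained: you invoke uniqueness of $s$-connected integrations, which handles $\kappa_t$ for $t \neq 0$ but not the monoid value $t = 0$, whereas the paper gets all $t$ at once from the commuting square $\kappa_t \circ \exp_{V^{(0)}} = \exp_{V^{(0)}} \circ \kappa_t$ together with the maps-of-pairs property and $s$-connectedness --- a patch that your own spray-exponential step already supplies.
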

\begin{proof}
    Let $Q_A \sset T_rA$ be the graded bundle associated to the weighting of $A$ along $B$, and let $Q_G \sset T_rG$ be the Lie subgroupoid integrating it. To show that $Q_G$ is a GB-subgroupoid we just have to show that it is closed under scalar multiplication. To do this, let $V \in \mathfrak{X}(A)_{(0)}$ be a Lie algebroid spray. Let $\kappa_t : T_rG \to T_rG$ be scalar multiplication by $t\in \R$; abusing notation we let $\kappa_t$ denote scalar multiplication on $T_rA$ as well. Since $V^{(0)} = T_rV$, it follows that $V^{(0)} \sim_{\kappa_t} V^{(0)}$, hence the following diagram commutes 
        \begin{equation*}
        \xymatrix{
            T_rA \ar[r]^{\kappa_t} \ar@{-->}[d]_{\exp_{V^{(0)}}} & T_rA \ar@{-->}[d]^{\exp_{V^{(0)}}} \\
            T_rG \ar[r]_{\kappa_t} & T_rG.
        }
        \end{equation*}
    Using this, the fact that both
        \[ \exp_{V^{(0)}} : (T_rA, Q_A) \dashrightarrow (T_rG, Q_G) \quad  \text{and} \quad \kappa_t:(T_rA, Q_A)\to (T_rA, Q_A) \]
    are maps of pairs, and the assumption that $Q_G$ is $s$-connected implies that $Q_G$ is a graded subbundle. Since 
        \[ \mathrm{Lie}(\kappa_0(Q_G)) = \kappa_0(\mathrm{Lie}(Q_G)) = B \]
    it follows that $H = \kappa_0(Q_G)$, since $H$ is the Lie subgroupoid of $G$ integrating $B$. 
    
    It remains to show that $Q_G \sset T_rG$ defines a weighting. To do this, we will show that near any point $p\in H$ one can always find local coordinates $x_a$ for $G$ such that $Q_G$ is locally defined by~\autoref{equation: cut out functions for Q}. There are open neighbourhoods $U_A \sset A$ and $U_G\sset G$ containing the objects for which $\exp_V:U_A \to U_G$ is a diffeomorphism. By the previous section, we have 
        \begin{equation*}
            T_r\exp_V(Q_A \cap T_rU_A) = Q_G \cap T_rU_G
        \end{equation*}
    In particular, if $x_a$ are weighted coordinates defined on $U\subseteq U_A$, then $Q_G \cap T_rU$ is defined by the the equations 
        \begin{align*}
            ((\exp_V)_*x_a)^{(0)} = & (\exp_{V^{(0)}})_*x_a^{(0)} = 0 \quad \text{for } w_a > 0,\\
            ((\exp_V)_*x_a)^{(1)} = & (\exp_{V^{(0)}})_*x_a^{(1)} = 0 \quad \text{for } w_a > 1, \\
            & \vdots \\ 
            ((\exp_V)_*x_a)^{(r-1)} = & (\exp_{V^{(0)}})_*x_a^{(r-1)} = 0 \quad \text{for } w_a > r-1.
        \end{align*}
    This gives weighted coordinates for $G$ near $M$. 
    
    We will now construct weighted coordinates away from $M$. Let $h\in H$ and assume, for simplicity, that $h = \phi_{\sigma^L}^t(s(h))$ for some $\sigma \in \Gamma(A)_{(0)}$. If $x_a$ are weighted coordinates for $G$ defined on $U\sset G$ containing $s(h)$, let $\tilde{x}_a = (\phi_{\sigma^L}^t)_*x_a$; these give coordinates on $\tilde{U} = \phi_{\sigma^L}^{-t}(U)$, and we claim that 
        \[ Q_G \cap T_r\tilde{U} = \{\tilde{x}_a^{(i)} = 0 : w_a > i,\ i = 0, \dots, r-1\}. \]
    To see this, note that by~\autoref{lemma: flows and lifts}
        \begin{align*}
            \tilde{x}_a^{(i)} = ((\phi_{\sigma^L}^t)_*x_a)^{(i)} = (T_r\phi_{\sigma^L}^t)_*x_a^{(i)} =  (\phi_{(\sigma^L)^{(0)}}^t)_*x_a^{(i)}.
        \end{align*}
    Since $\sigma \in \Gamma(V)_{(0)}$, it follows that $(\sigma^L)^{(0)}$ is tangent to $Q_G$ hence $Q_G$ is invariant under its flow. This shows that 
        \begin{align*}
            Q_G \cap T_r\tilde{U} & = \phi_{(\sigma^L)^{(0)}}^t(Q_G \cap T_rU) \\
            & = \phi_{(\sigma^L)^{(0)}}^t(\{x_a^{(i)} = 0 : w_a > i,\ i = 0, \dots, r-1\}) \\
            & = \{(\phi_{(\sigma^L)^{(0)}}^t)_*x_a^{(i)} = 0 : w_a > i,\ i = 0, \dots, r-1\}) \\
            & = \{\tilde{x}_a^{(i)} = 0 : w_a > i,\ i = 0, \dots, r-1\},
        \end{align*}
    as claimed. Using induction and the fact that $H$ is $s$-connected, we see that any $h\in H$ is contained in a neighbourhood on which weighted coordinates are defined, which completes the proof. 
\end{proof}

\begin{remark}
    The question of whether or not the subalgebroid $Q_A \Rightarrow Q_M$ integrates to a subgroupoid of $T_rG \toto T_rM$ is a little subtle. At first one might think that such an integration always exists, since any subalgebroid of an integrable Lie algebroid is always integrable. However,~\cite{moerdijk2006integrability} shows that this is \emph{not} always the case. Taking into account the graded bundle structure, one might wonder if it is enough to assume that the base algebroid integrates to a subgroupoid. However this is also not sufficient, as explained in~\cite{cabrera2018obstructions}. We leave it as an open problem to give conditions on when such an integration is possible.  
\end{remark}

\cleardoublepage
\chapter{Outlook}
\label{chapter: Outlook}

We conclude this thesis by examining some possible directions this research can lead. The following is speculative and vague, yet our aim is to inspire readers to explore the topic more thoroughly.

\section{More on Linear Weightings}

\subsection{Linear singular Lie filtrations}

The first avenue we think is worth pursuing is an adaptation of~\autoref{section: singular Lie filtrations} to vector bundles. Let $V\to M$ be a vector bundle. Recall that the \emph{linear} vector fields on $V$ is the $C^\infty(M)$-module 
    \[ \ger{X}_{[0]}(V) = \{ X \in \ger{X}(V) : [E, X] = 0 \},  \]
where $E\in \ger{X}(V)$ is the Euler vector field. The linear vector fields on $V$ can be identified with first-order differential operators acting on $\G(V)$ having scalar principal symbol. We propose the following definition.

\begin{definition}
\label{definition: linear singular Lie filtrations}
    \begin{enumerate}
        \item[(a)] A \emph{linear singular distribution} on a vector bundle $V\to M$ is a sheaf of $C^\infty_M$-submodules $\mathcal{L} \sset \mathfrak{X}_{[0]}(V)$ which is locally finitely generated in the sense that each $p\in M$ has an open neighbourhood $U\sset M$ such that $\mathcal{L}(V|_U)$ is finitely generated. 

        \item[(b)] A \emph{linear singular Lie filtration} of order $r$ is a filtration of the sheaf of linear vector fields 
            \begin{equation}
            \label{equation: LSLF}
                \mathfrak{X}_{[0]}(V) = \mathcal{L}_{(-r)} \supseteq \mathcal{L}_{(-r+1)} \supseteq \cdots \supseteq \mathcal{L}_{(-1)} \supseteq 0
            \end{equation}
        by linear singular distributions such that 
            \[ [\mathcal{L}_{-i}, \mathcal{L}_{-j}] \sset \mathcal{L}_{-i-j} \]
        for all $i, j$. 
    \end{enumerate} 
\end{definition}

The linear vector fields on $V$ are the sections of a Lie algebroid $\mathrm{At}(V) \Rightarrow M$, called the Atiyah algebroid of $V$, which fits in to an exact sequence 
    \begin{equation}
    \label{equation: wtd exact sequence}
        0 \corr{} \mathrm{End}(V) \corr{} \mathrm{At}(V) \corr{} TM \corr{} 0. 
    \end{equation}
In particular, we see that a linear singular Lie filtration on $V$ determines a filtration of $\G(\mathrm{End}(V))$ by $C^\infty(M)$-submodules. The space of linear differential operators acting on sections of $V$ can be identified with the quotient 
    \[ \mathcal{U}(\mathrm{At}(V))/I \]
where $\mathcal{U}(\mathrm{At}(V))$ is the universal enveloping algebra of the Lie algebroid $\mathrm{At}(V)$ (cf.~\cite{moerdijk2010universal}) and $I\sset \mathcal{U}(\mathrm{At}(V))$ is the ideal generated by elements of the form $\eta\otimes \phi - \eta\phi$, where $\eta, \phi \in \Gamma(\End(V))$. Therefore, a linear singular Lie filtration~\eqref{equation: LSLF} determines an algebra filtration
    \[ \cdots \mathrm{DO}(V)_{(i)} \supseteq \mathrm{DO}(V)_{(i+1)} \supseteq \cdots \]
of $\mathrm{DO}(V)$, so long as the induced filtration of $\G(\mathrm{End}(V))$ (using the exact sequence~\eqref{equation: wtd exact sequence}) is multiplicative.

Let $N\sset M$ be a closed submanifold and suppose that 
    \begin{equation}
    \label{equation: clean subfiltration}
        \cdots \supseteq (V|_N)_{(i)} \supseteq (V|_N)_{(i+1)} \supseteq
    \end{equation}
be a filtration of $V|_N$ be subbundles $(V|_N)_{(i)} \to N$. With the filtration of $\mathrm{DO}(V)$ as above, let 
    \begin{equation}
    \label{equation: induced linear weighting}
        \G(V)_{(i)} = \{ \sigma \in \G(V) : D \in \mathrm{DO}(V)_{(q)} \implies D\sigma \in \G(V, (V|_N)_{(i+q)} \}.
    \end{equation}
This defines a filtration of $\G(V)$. 

\begin{problem}
    Find the correct compatibility conditions between the linear singular Lie filtration~\eqref{equation: LSLF} and the filtration~\eqref{equation: clean subfiltration} so that the filtration~\eqref{equation: induced linear weighting} defines a linear weighting of $V$. 
\end{problem}

One possible approach to this problem would be to mimic the technique sketched in~\autoref{remark: integration of weighting to frame bundle} and find conditions for which the bundle of filtered frames is clean. We intend to pursue this in a future work. 

\begin{remark}
    This problem appears to be closely related to the works of Higson and Yi~\cite{higson2019spinors} and Braverman and Haj~\cite{BravHaj}, where a filtration of $\mathrm{DO}(V)$ is defined using a connection on $V$ and a filtration of $\mathrm{End}(V)$. 
\end{remark}

\subsection{Weightings for associative algebroids}

Let $G$ be a Lie groupoid. An \emph{associative algebroid} (cf.~\cite[Letter 6]{vsevera2017letters}) is a vector bundle $V \to G$ with an associative (in the appropriate sense) fibrewise product
    \[ V_g\otimes V_h \to V_{g\circ h}, \]
whenever $g, h\in G$ are composable. There is an obvious notion of weightings for associative algebroids. 

\begin{problem}
    Explain whether or not the weighted deformation bundle of a weighted associative algebroid has a canonical associative algebroid structure. 
\end{problem}

If yes, then there is a well-defined convolution product on sections of the weighted deformation bundle. 

\begin{problem}
    Understand multipliers of the convolution algebra of sections of an associative algebroid as algebras of pseudodifferential operators.
\end{problem}

Combining this with the the previous question, it would be interesting to try to define a rescaled spinor bundle on a manifold with boundary, i.e. a deformation bundle over the $b$-tangent groupoid, and try to understand Getzler rescaling (\cite{getzler1983pseudodifferential}) in this context. 

\section{Tangent ($k$-fold) Groupoid of a Multifiltered Manifold}

\subsection{Multifiltered manifolds}

Recall that a filtered manifold is a smooth manifold $M$ together with a filtration by subbundles 
    \[ TM = F_{-r} \supseteq F_{-r+1} \supseteq \cdots \supseteq F_{-1} \supseteq 0 \]
satisfying $[\G(F_{-i}), \G(F_{-j})] \sset \G(F_{-i-j})$. This generalizes to a \emph{multifiltered} manifold. 

\begin{definitions}[{\cite[Definitions 3.4.1 and 3.4.2]{yuncken2018pseudodifferential}}]
    \begin{enumerate}
        \item[(a)] Let $V\to M$ be a vector bundle. A \emph{$k$-multifiltration} on $V$ is a family of subbundles $F_{-\mathbf{i}} \sset V$ indexed by $\mathbf{i} \in \N^k$ such that 
            \begin{itemize}
                \item[(i)] $F_{\mathbf{0}} = 0$ and $F_{-\mathbf{n}} = V$ for some $\mathbf{n}\in \N^k$

                \item[(ii)] $F_{-\mathbf{i}} \cap F_{-\mathbf{j}} = F_{-\mathbf{i} \wedge \mathbf{j}}$ for all $\mathbf{i}, \mathbf{j} \in \N^k$, where $\mathbf{i}\wedge\mathbf{i}$ is the entry-wise minimum of the two multi-indices. 
            \end{itemize}

        \item[(b)] A \emph{$k$-multifiltered manifold} is a smooth manifold $M$ with an $k$-multifiltration of $TM$ by subbundles $F_{-\alpha}$ satisfying 
            \begin{align}
            \label{equation: lie multifiltration}
                [\G(F_{-\mathbf{i}}), \G(F_{-\mathbf{j}})] \sset \G(F_{-\mathbf{i}-\mathbf{j}}).
            \end{align}
        The $k$-multifiltration~\eqref{equation: lie multifiltration} will be referred to as a \emph{Lie multifiltration}. 
    \end{enumerate}
\end{definitions}

\begin{examples}[{cf.~\cite[Section 3.4]{yuncken2018pseudodifferential}}]
    \begin{enumerate}
        \item[(a)] Any filtered manifold is a 1-multifiltered manifold. 

        \item[(b)] The full flag manifold of a complex semisimple Lie group $G$ is an $k$-multifiltered manifold, where $k$ is the rank of G.
    \end{enumerate}
\end{examples}

\subsection{Multiweightings}

Let $M$ be an $m$-dimensional manifold. A $k$-\emph{multiweight} sequence for $M$ is a list $\mathbf{w}_1, \dots, \mathbf{w}_m \in \Z^k_{\geq 0}$. Any upper bound $r \in \Z_{\geq 0}$ for the sequence $|\mathbf{w}_a|$ is called its \emph{order}. A monomial 
    \[ x^s = x_1^{s_1} \cdots x_m^{s_m} \]
has multiweight $s\cdot \mathbf{w} = \sum_a s_a\mathbf{w}_a$. For an open set $U\sset \R^m$ and a vector $\mathbf{i}\in \Z^k_{\geq 0}$, we define $C^\infty(U)_{(\mathbf{i})}$ to be the ideal generated by the monomials 
    \[ x^s \quad \text{with } s\cdot \mathbf{w} \geq \mathbf{i},\]
where we use the standard component-wise partial ordering on $\mathbb{Z}^k_{\geq 0}$. 

\begin{definition}[{\cite[Section 9.2]{loizides2023differential}}]
    A $k$-\emph{multiweighting} of a manifold $M$ is a mutlifiltration of the sheaf of smooth functions by ideals $C^\infty_{M, (\mathbf{i})}$ with the property that each point $p\in M$ has an open neighbourhood $U \sset M$ so that $C^\infty(U)_{(\mathbf{i})}$ is the ideal defined above. 
\end{definition}

Let $e_a$ be the standard basis vectors of $\Z^k$. A $k$-multiweighting of $M$ determines $k$ weightings $C^\infty_{M,(i\cdot e_a)}$ of $M$ along submanifolds $N_1, \dots, N_k$. We say that $M$ is \emph{multiweighted} along $(N_1, N_2, \dots, N_k)$. Completely analogously to the work above, one obtains the \emph{$k$-fold weighted normal bundle}
    \[ \nuw(M, N_1, \dots, N_k) = \aHom(\gr(C^\infty(M)), \R), \]
which is a $k$-fold graded bundle over the intersection $N_1\cap \cdots \cap N_k$. Similarly, one obtains the $k$-fold weighted deformation space
    \[ \defw(M,N_1, \dots, N_k)\to \R^k. \]
    
\autoref{theorem: wide weighted groupoids are filtered} says, in particular, that Lie filtrations of a manifold $M$ are the same things as multiplicative weightings of $\mathrm{Pair}(M)$ along the units. Therefore, it is reasonable to expect that a $k$-multifiltration corresponds to a $k$-fold multiweighting of the $k$-fold pair groupoid $\mathrm{Pair}^k(M) = \prod_{i=1}^{2k}M$. 

\begin{problem}
    Explain the correspondence between $k$-multifiltered manifolds and multiplicative weightings of the $k$-fold pair groupoid $\mathrm{Pair}^k(M)$ along the various diagonals. 
\end{problem}

Let us take a minute to speculate how this might look in the case $k=2$. A 2-multifiltration on a manifold $M$ is given by a family of subbundles 
    \begin{equation}
    \label{equation: bifiltration}
        \begin{array}{cccc}
             TM = F_{(-r_1, -r_2)} & F_{(-r_1+1, -r_2)} & \cdots &  F_{(0, -r_2)}\\
             F_{(-r_1, -r_2+1)} & F_{(-r_1+1, -r_2+1)} & \cdots & F_{(0, -r_2+1)} \\
             \vdots & \vdots & \ddots & \vdots \\
             F_{(-r_1, 0)} & F_{(-r_1 + 1, 0)} & \cdots & F_{(0, 0)} = 0.
        \end{array}
    \end{equation}
Within this, there are two Lie filtrations $F^h_\bullet$ and $F^v_\bullet$ of $M$, given by the top row and left column of~\eqref{equation: bifiltration}. Correspondingly, there are two tangent groupoids $\T_{F^v}M$ and $\T_{F^h}M$. Our conjecture is that the 2-multifiltration~\eqref{equation: bifiltration} will determine a 2-multiweighting of the double pair groupoid $\mathrm{Pair}^2(M)$ along the diagonals 
    \[ \Delta^h = \{ (m_1, m_0, m_1, m_0) \in \mathrm{Pair}^2(M)\}, \quad \Delta^v = \{ (m_1, m_1, m_0, m_0) \in \mathrm{Pair}^2(M)\}  \]
such that the double deformation space has the structure of a double groupoid 
    \begin{equation}
    \xymatrix{
        \defw(\mathrm{Pair}^2(M), \Delta^h, \Delta^v) \ar@<-.5ex>[d] \ar@<.5ex>[d] \ar@<-.5ex>[r] \ar@<.5ex>[r] & \T_{F^h}M \ar@<-.5ex>[d] \ar@<.5ex>[d] \\
        \T_{F^v}M \ar@<-.5ex>[r] \ar@<.5ex>[r] & M\times \R.
    }
    \end{equation}
It therefore our belief that $\delta(\mathrm{Pair}^2(M), \Delta^h, \Delta^v)$ is the right notion of the tangent (double) groupoid of the filtered 2-multifiltered manifold $M$. 

\subsection{$C^*$-algebraic perspective}

As mentioned in the introduction, an important feature of Connes' tangent groupoid is that it defines a continuous field of $C^*$-algebras. 

\begin{problem}
    Given a weighted groupoid $G\toto M$, determine when the deformation groupoid $\defw(G,H)\toto \defw(M,N)$ determines a continuous field of $C^*$-algebra
\end{problem}

Given a Lie groupoid, one can form the convolution algebra and from that a groupoid $C^*$-algebra (\cite{renault2006groupoid}). Some work towards determining the convolution algebra of a double groupoid has been done in~\cite{roman2023convolution}, but there appears to be much work to be done in this direction. Related to the discussion in the previous section, we also pose the following.

\begin{problem}
    Determine what $C^*$-algebraic information is contained in a double groupoid. 
\end{problem}

\appendix
\cleardoublepage
\chapter{Morita Equivalence of Weighted Groupoids}
\label{appendix: weighted Morita equivalence}

\section{Weighted Morita Equivalence}
\label{A-section: weighted morita equivalence}

A Morita equivalence is a special instance of generalized morphisms, called \emph{Hilsum-Skandalis morphisms}, which are invertible in the appropriate sense. The following approach to Morita equivalence is adapted from~\cite{moerdijk2003introduction, lerman2010orbifolds}. 

\subsection{Definition of Morita Equivalence}

\begin{definitions}
\label{defintion: Morita equivalence}
Let $G\toto M$ and $H\toto N$ be Lie groupoids.
    \begin{itemize}
        \item[(a)] A \emph{right action} of $G\toto M$ on a manifold $Q$ is given by a map $\mu:Q\to M$ together with an action map 
            \[ Q\times_M G = \{(q,g) : \mu(q) = t(g)\} \to Q, \quad (q, g) \mapsto q\cdot g \]
        such that $\mu(q\cdot g) = s(g)$, $(q\cdot g_1)\cdot g_2 = q\cdot (g_1\circ g_2)$, and $q\cdot \mu(q) = q$ for all $g_1, g_2 \in G$, whenever this is defined. A \emph{left action} is defined analogously, swapping the roles of $t$ and $s$. 

        \item[(b)] A \emph{right principal $G$-bundle over $B$} is given by a diagram
            \begin{equation*}
            \xymatrixcolsep{1.5pc}\xymatrix{
                P \ar[d]_{\pi} \ar[drr]_{\mu} & \ar@(dr,ur)[] & G \ar@<-.5ex>[d] \ar@<.5ex>[d] \\
                B & & M
             }
        \end{equation*}
        where $(P, \mu)$ is a right $G$-space such that
            \begin{itemize}
                \item[(i)] $\pi : P\to B$ is a surjective submersion,
                \item[(ii)] $\pi(p\cdot g) = \pi(p)$ whenever $\mu(p) = t(g)$, and 
                \item[(iii)]  the map 
                    \[ P\times_M G \to P\times_B P, \quad (p, g) \mapsto (p, p\cdot g) \]
                is a diffeomorphism. 
            \end{itemize}

        \item[(c)] A \emph{Hilsum-Skandalis morphism} $P:G\to H$ is represented by a diagram 
            \begin{equation*}
            \xymatrixcolsep{1.5pc}\xymatrix{
                G \ar@<-.5ex>[d] \ar@<.5ex>[d] & \ar@(dl,ul)[] & P \ar[dll]^{\mu_G} \ar[drr]_{\mu_H}  & \ar@(dr,ur)[] & H \ar@<-.5ex>[d] \ar@<.5ex>[d] \\
                M & & & & N
            }
            \end{equation*}
        where $(P, \mu_G)$ is a left $G$-space and $(\mu_G:P\to M, \mu_H)$ is a right principal $H$-bundle.

        \item[(d)] A \emph{Morita equivalence} $P:G\to H$ is a Hilsum-Skandalis morphism as above such that $(\mu_H:P\to H, \mu_G)$ is a left principal $G$-bundle. 
    \end{itemize}  
\end{definitions}

\begin{remark}
    Hilsum-Skandalis morphisms can be composed, but their composition is only associative up to isomorphism. 
\end{remark}

\begin{examples}
\begin{itemize}
    \item[(a)] Let $M$ and $N$ be spaces (thought of as trivial Lie groupoids), and let $F:M\to N$ be a smooth map. Then 
        \[ N\times_M M \to M, \quad (n, m) \mapsto m \]
    is a left action of $N$ on $M$ along $F$, and 
        \begin{equation*}
        \xymatrixcolsep{1.5pc}\xymatrix{
            M \ar@<-.5ex>[d] \ar@<.5ex>[d] & \ar@(dl,ul)[] & M \ar[dll]^{\mathrm{id}} \ar[drr]_{\mathrm{F}}  & \ar@(dr,ur)[] & N \ar@<-.5ex>[d] \ar@<.5ex>[d] \\
                M & & & & N
        }
        \end{equation*}
    is a Hilsum-Skandalis morphism $M:M\to N$, which is a Morita equivalence if and only if $F$ is a diffeomorphism. In fact, \emph{any} Hilsum-Skandalis morphism between spaces is of this form. 

    \item[(b)] Two Lie \emph{groups} $G$ and $H$ are Morita equivalent if and only if they are isomorphic. 

    \item[(c)] For any manifold $M$, the canonical action 
        \[ \mathrm{Pair}(M)\times_M M \to M, \quad 
        ((m_0, m_1), m_1) \mapsto m_0\]
    is a left action of $\mathrm{Pair}(M)$ on $M$ and 
        \begin{equation*}
        \xymatrixcolsep{1.5pc}\xymatrix{
            \mathrm{Pair}(M) \ar@<-.5ex>[d] \ar@<.5ex>[d] & \ar@(dl,ul)[] & M \ar[dll]^{\mathrm{id}} \ar[drr]  & \ar@(dr,ur)[] & \mathrm{pnt} \ar@<-.5ex>[d] \ar@<.5ex>[d] \\
            M & & & & \mathrm{pnt}
        }
        \end{equation*}
    is a Morita equivalence. 
\end{itemize}
\end{examples}

\subsection{Weighted Morita Equivalence}

We now state the analogue of~\autoref{defintion: Morita equivalence} for weighted groupoids. 

\begin{definitions}
\label{defintion: Morita equivalence}
Let $G\toto M$ and $H\toto N$ be weighted Lie groupoids.
    \begin{itemize}
        \item[(a)] A right action of $G\toto M$ on a weighted manifold $Q$ along $\mu$ is weighted if both $\mu$ and the action map 
            \[ Q\times_M G = \{(q,g) : \mu(q) = t(g)\} \to Q, \quad (q, g) \mapsto q\cdot g \]
        are weighted. We say that $Q$ is a \emph{weighted right $G$-space}. 

        \item[(b)] A right principal $G$-bundle over $B$ 
            \begin{equation*}
            \xymatrixcolsep{1.5pc}\xymatrix{
                P \ar[d]_{\pi} \ar[drr]_{\mu} & \ar@(dr,ur)[] & G \ar@<-.5ex>[d] \ar@<.5ex>[d] \\
                B & & M
             }
        \end{equation*}
        is weighted if both $P$ and $B$ are weighted and
            \begin{itemize}
                \item[(i)] $\pi : P\to B$ is a weighted submersion,
                \item[(ii)]  the map 
                    \[ P\times_M G \to P\times_B P, \quad (p, g) \mapsto (p, p\cdot g) \]
                is a weighted diffeomorphism. 
            \end{itemize}

        \item[(c)] A \emph{weighted Hilsum-Skandalis morphism} $P:G\to H$ is a diagram 
            \begin{equation*}
            \xymatrixcolsep{1.5pc}\xymatrix{
                G \ar@<-.5ex>[d] \ar@<.5ex>[d] & \ar@(dl,ul)[] & P \ar[dll]^{\mu_G} \ar[drr]_{\mu_H}  & \ar@(dr,ur)[] & H \ar@<-.5ex>[d] \ar@<.5ex>[d] \\
                M & & & & N
            }
            \end{equation*}
        where $(P, \mu_G)$ is a weighted left $G$-space and $(\mu_G:P\to M, \mu_H)$ is a weighted right principal $H$-bundle.

        \item[(d)] A \emph{weighted Morita equivalence} $P:G\to H$ is a weighted Hilsum-Skandalis morphism as above such that $(\mu_H:P\to H, \mu_G)$ is a weighted left principal $G$-bundle. 
    \end{itemize}  
\end{definitions}

\begin{remarks}
\begin{itemize}
    \item[(a)] In the definition of right weighted groupoid action, note that since $\mu:Q\to M$ is weighted and $s:G\to M$ is a weighted submersion, the fibre product $Q \times_{M} G$ inherits a weighting as weighted submanifold of $Q\times G$. Therefore, this definition makes sense.

    \item[(b)] In the definition of a weighted right principal $G$-bundle, note that since the action map $P\times_{M} G \to P$ factors through 
        \[ P\times_{M} G \to P\times_{B} P \corr{\mathrm{pr_2}} P  \]
    is follows that the action map is automatically weighted. 
\end{itemize}
\end{remarks}

\begin{examples}
\label{examples: weighted morita equivalence}
\begin{itemize}
    \item[(a)] If $B$ is a weighted manifold and $\mu:B\to M$ is a weighted morphism, then 
        \begin{equation*}
        \xymatrixcolsep{1.5pc}\xymatrix{
            B\times_M G \ar[d]_{\mathrm{pr}_2} \ar[drr]_{s} & \ar@(dr,ur)[] & G \ar@<-.5ex>[d] \ar@<.5ex>[d] \\
            B & & M
        }
        \end{equation*}
    is the \emph{trivial weighted right principal $G$-bundle over $B$}. 
    
    \item[(b)] If $G\toto M$ is any weighted Lie groupoid, then 
        \begin{equation*}
        \xymatrixcolsep{1.5pc}\xymatrix{
            G \ar@<-.5ex>[d] \ar@<.5ex>[d] & \ar@(dl,ul)[] & G \ar[dll]^{t} \ar[drr]_{s}  & \ar@(dr,ur)[] & G \ar@<-.5ex>[d] \ar@<.5ex>[d] \\
            M & & & & M
        }
        \end{equation*}
    defines a weighted Morita equivalence from $G$ to itself.

    \item[(c)] If $(M, N)$ is a weighted pair, then 
        \begin{equation}
        \label{equation: morita equivalence from pair to point}
        \xymatrixcolsep{1.5pc} \vcenter{\hbox{\xymatrix{
            \mathrm{Pair}(M) \ar@<-.5ex>[d] \ar@<.5ex>[d] & \ar@(dl,ul)[] & M \ar[dll]^{\mathrm{id}} \ar[drr]  & \ar@(dr,ur)[] & \mathrm{pnt} \ar@<-.5ex>[d] \ar@<.5ex>[d] \\
            M & & & & \mathrm{pnt}
        }}}
        \end{equation}
    is a weighted Morita equivalence when $\mathrm{Pair}(M)$ is weighted along $\mathrm{Pair}(M)$. 

    \item[(d)] If $\mathrm{Pair}(M)$ is trivially weighted along its units, then~\eqref{equation: morita equivalence from pair to point} is \emph{not} a weighted Morita equivalence. This is because  
    \begin{equation*}
        \xymatrixcolsep{1.5pc}\xymatrix{
            M \ar[d]_{\mathrm{id}} \ar[drr]_{\mathrm{id}} & \ar@(dr,ur)[] & \mathrm{Pair}(M) \ar@<-.5ex>[d] \ar@<.5ex>[d] \\
            \mathrm{pnt} & & M
        }
    \end{equation*}
    is \emph{not} a weighted principal $\mathrm{Pair}(M)$-bundle as the action map 
        \[ M\times_M \mathrm{Pair}(M) \to M\times_\mathrm{pnt} M \]
    is not a \emph{weighted} diffeomorphism. Indeed, $M\times_M\mathrm{Pair}(M)$ is weighted along the diagonal $M \into M\times_M\mathrm{Pair}(M)$, whereas $M\times_\mathrm{pnt} M$ is weighted along itself, so the inverse map is not a weighted morphism. 
\end{itemize}
\end{examples}

We now work towards showing that weighted Morita equivalence is an equivalence relation on the set of (isomorphism classes of) weighted Lie groupoids, the main point being that weighted Hilsum-Skandalis morphisms can be composed. Since the composition of Hilsum-Skandalis morphisms is defined as a quotient, it is not immediately clear that their composition is weighted in any canonical way. The main tool in establishing that the quotient is naturally weighted is the following lemma, which says that weighted principal $G$-bundles are locally trivial in a weighted sense. 

\begin{lemma}
\label{lemma: weighted principal bundles are locally trivial}
    If     
        \begin{equation*}
        \xymatrixcolsep{1.5pc}\xymatrix{
            P \ar[d]_{\pi} \ar[drr]_{\mu} & \ar@(dr,ur)[] & G \ar@<-.5ex>[d] \ar@<.5ex>[d] \\
            B & & M
        }
        \end{equation*}
    is a right principal $G$-bundle, then any point $p\in B$ has an open neighbourhood $U \sset B$ together with a weighted morphism $U\to M$ such that 
        \[ P|_U = \pi^{-1}(U) \cong U\times_{M} G \]
    as weighted principal $G$-bundles. 
\end{lemma}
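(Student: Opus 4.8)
The statement is a weighted refinement of the standard local triviality of principal bundles, so the plan is to reduce to the ordinary situation by choosing a \emph{weighted} local section of $\pi$ and then checking that the induced trivialization respects the weightings. First I would fix $p \in B$ and use that $\pi : P \to B$ is a weighted submersion (this is part of the definition of a weighted principal $G$-bundle). By the normal form for weighted submersions (\autoref{theorem: weighted submersion coordinates}), there are weighted coordinates on $P$ near any point of the fibre $\pi^{-1}(p)$ in which $\pi$ is a projection onto weighted coordinates descending to $B$; consequently, after shrinking to an open neighbourhood $U \sset B$ of $p$, I can produce a weighted section $\sigma : U \to P$ of $\pi$, i.e.\ a weighted morphism with $\pi \circ \sigma = \id_U$. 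I would then set the base map $U \to M$ to be the weighted morphism $\mu \circ \sigma$, which is a composition of weighted morphisms, hence weighted; call it $\mu_\sigma$.

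With $\sigma$ in hand, the trivialization is the standard one: define
    \[ \Phi : U \times_M G \to P|_U, \quad (u, g) \mapsto \sigma(u) \cdot g, \]
where the fibre product $U \times_M G$ is formed along $\mu_\sigma : U \to M$ and $t : G \to M$. The next step is to verify that $\Phi$ is a weighted morphism. Since $\sigma$ and $\mu_\sigma$ are weighted, the map $(u,g) \mapsto (\sigma(u), g)$ is a weighted morphism $U \times_M G \to P \times_M G$ (using that the fibre products are weighted submanifolds, as noted in the first remark after \autoref{defintion: Morita equivalence}), and composing with the action map $P \times_M G \to P$, which is weighted by the definition of a weighted right $G$-space, shows $\Phi$ is weighted. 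For the inverse, I would use the defining diffeomorphism
    \[ P \times_M G \to P \times_B P, \quad (q, g) \mapsto (q, q \cdot g), \]
which is a weighted diffeomorphism by part (b)(ii) of the definition of a weighted principal $G$-bundle. Its inverse produces, for any two points of $P$ in the same fibre, the unique arrow relating them in a weighted way; precomposing with $q \mapsto (\sigma(\pi(q)), q)$ (weighted, since $\sigma$ and $\pi$ are weighted) gives a weighted map $P|_U \to U \times_M G$, which one checks is a two-sided inverse to $\Phi$. This exhibits $\Phi$ as a weighted diffeomorphism intertwining the $G$-actions, so it is an isomorphism of weighted principal $G$-bundles.

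The main obstacle is the very first step: constructing a genuinely \emph{weighted} local section $\sigma$ of the weighted submersion $\pi$. For an ordinary submersion a local section always exists, but here I must ensure it is compatible with the filtrations, and this is exactly where \autoref{theorem: weighted submersion coordinates} does the work — the existence of simultaneous weighted-and-submersion coordinates lets me take $\sigma$ to be the inclusion of a weighted coordinate slice, which is automatically a weighted morphism. One subtlety to watch is that the fibre $\pi^{-1}(p)$ may meet the submanifold along which $P$ is weighted in a nontrivial way, so I would choose the coordinate slice transverse to the fibres while keeping it a weighted submanifold; the normal form theorem guarantees this can be arranged. Once $\sigma$ is weighted, every other map in the argument is a composition or restriction of maps already known to be weighted (the action, the fibre-product projections, and the defining diffeomorphism in (b)(ii)), so the remaining verifications are routine and I would not belabour them.
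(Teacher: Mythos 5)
Your proposal is correct and follows essentially the same route as the paper's proof: choose a weighted local section $\sigma$ of the weighted submersion $\pi$ (the paper asserts its existence directly from the weighted submersion property, which you justify via \autoref{theorem: weighted submersion coordinates}), take the base map to be $\mu \circ \sigma$, trivialize by $(u,g) \mapsto \sigma(u)\cdot g$, and invert using the weighted "division map" obtained from the inverse of the defining weighted diffeomorphism $P\times_M G \to P\times_B P$ precomposed with $q \mapsto (\sigma(\pi(q)), q)$. The only differences are cosmetic: you spell out the existence of the weighted section and the weightedness of each composite in slightly more detail than the paper does.
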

\begin{proof}
    Since $\pi$ is a weighted submersion, we can find a weighted section $\sigma:U\to P$ defined near $p$. The map $U \to M$ is defined as the composition 
        \[ U \corr{\sigma} P|_U \to M, \]
    which is weighted. 

    The map 
        \begin{align}
        \label{equation: local triv 1}
            U\times_{B} G \to P|_U, \quad (u, g) \mapsto \sigma(u).g 
        \end{align}
    is a weighted, being the composition of weighted morphisms. We will show that it is a weighted diffeomorphism by constructing an explicit weighted inverse. The map  
        \[ \delta: P\times_B P \corr{\cong} P\times_{M} G \corr{\mathrm{pr}_2} G \]
    is a weighted morphism and therefore so is 
        \begin{align}
        \label{equation: local triv 2}
            P|_U \to U\times_{M} G, \quad 
            p \mapsto (\pi(p), \delta(\sigma(\pi(p)), p)).
        \end{align}
    The maps~\eqref{equation: local triv 1} and~\eqref{equation: local triv 2} are inverse to one another, which proves the claim. 
\end{proof}

\begin{theorem}
\label{A-theorem: composition of weighted HS-morphisms}
    Let $G\toto G_0$, $H\toto H_0$, and $K\toto K_0$ be weighted Lie groupoids. If $P:G\to H$ and $Q:H\to K$ are weighted Hilsum-Skandalis morphisms then there is a unique weighting on the composition $P\circ Q = (P\times_{H_0} Q)/H$ such that 
        \begin{itemize}
            \item[(a)] the quotient map $\pi:P\times_{H_0} Q \to P\circ Q$ is a weighted submersion and 
            \item[(b)] $P\circ Q : G \to K$ is a weighted Hilsum-Skandalis morphism. 
        \end{itemize}
\end{theorem}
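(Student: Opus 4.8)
The plan is to reduce everything to a local statement using \autoref{lemma: weighted principal bundles are locally trivial}, which provides weighted-local trivializations of the principal bundles involved. The fibre product $P\times_{H_0}Q$ makes sense as a weighted manifold because $\mu_H:P\to H_0$ is a weighted submersion (being the principal-bundle projection of the right principal $H$-bundle $P$) and $\mu_H^Q:Q\to H_0$ is a weighted morphism, so by \autoref{theorem: weighted fibre products} the fibre product is a weighted submanifold of $P\times Q$. The key observation is that the $H$-action on $P\times_{H_0}Q$, given by $(p,q)\cdot h = (p\cdot h, h^{-1}\cdot q)$, is a weighted action: it is built from the weighted action maps for $P$ and $Q$ together with weighted inversion on $H$. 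The quotient $P\circ Q$ is then a weighted manifold because locally the quotient map is modelled on a weighted submersion.

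The main steps, in order, would be as follows. First I would establish that $P\times_{H_0}Q$ is a weighted manifold and that the diagonal $H$-action on it is a weighted free and proper action. Second, I would use \autoref{lemma: weighted principal bundles are locally trivial} applied to the right principal $H$-bundle $\mu_H:P\to P/H=G_0$ to obtain, near any point, a weighted-local identification $P|_U\cong U\times_{H_0}H$ with $U\to H_0$ a weighted morphism. Under this identification the map $P\times_{H_0}Q\to (P\circ Q)$ becomes, locally, the projection $(U\times_{H_0}H)\times_{H_0}Q\to U\times_{H_0}Q$ obtained by using the $H$-action to absorb the $H$-factor; this exhibits the quotient map locally as a weighted submersion and simultaneously equips $U\times_{H_0}Q$ with its natural weighting as a weighted fibre product. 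Third, I would check these local weightings patch to a global weighting on $P\circ Q$ by verifying the transition maps are weighted diffeomorphisms — this follows because the identifications come from the $H$-action, which is weighted, so the uniqueness clause (a) forces any two such local weightings to agree where they overlap. Finally, I would verify that with this weighting $P\circ Q:G\to K$ is a weighted Hilsum-Skandalis morphism, i.e. that the induced left $G$-action and right principal $K$-bundle structure are weighted; each structure map on the quotient is induced from a weighted map on $P\times_{H_0}Q$ that is constant along $H$-orbits, and since $\pi$ is a weighted submersion, \autoref{corollary: inverse image of mfld is mfld} (and the normal form \autoref{theorem: weighted submersion coordinates}) lets one descend these to weighted maps on the quotient.

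The hard part will be the third step: showing the locally-defined weightings glue to a well-defined global weighting on the quotient. The subtlety is precisely the one highlighted in \autoref{examples: weighted morita equivalence} (c)--(d), namely that quotient weightings are delicate and that naively descending a weighting can fail. The cleanest route is to characterize the weighting of $P\circ Q$ by the universal property of a weighted submersion: I would \emph{define} $C^\infty(P\circ Q)_{(i)}$ to be those functions $f$ whose pullback $\pi^*f$ lies in $C^\infty(P\times_{H_0}Q)_{(i)}$ and is $H$-invariant, then use the local triviality to prove this filtration is genuinely a weighting in the sense of \autoref{definition: weighting} (i.e.\ that weighted coordinates exist). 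Local triviality reduces this to checking the claim for a trivial bundle $U\times_{H_0}H$, where the $H$-factor can be split off in weighted coordinates and the invariant functions are exactly the pullbacks from $U\times_{H_0}Q$. Uniqueness of the weighting satisfying (a) is then immediate from the surjectivity of $\pi^*$ on the relevant filtration pieces, since a weighted submersion is characterized by its pullback map on filtered function algebras.

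With the weighting in hand, conditions (a) and (b) become routine. Property (a) holds by construction. For (b), the anchor maps $\mu_G^{P\circ Q}:P\circ Q\to G_0$ and $\mu_K^{P\circ Q}:P\circ Q\to K_0$ together with the principal-bundle structure maps all arise as descents along the weighted submersion $\pi$ of corresponding weighted maps upstairs, so they are weighted morphisms (and weighted submersions where required) by the descent property of weighted submersions; that $(\mu_G^{P\circ Q}:P\circ Q\to G_0,\mu_K^{P\circ Q})$ is a weighted right principal $K$-bundle then follows because the diffeomorphism $(P\circ Q)\times_{K_0}K\to (P\circ Q)\times_{G_0}(P\circ Q)$ is induced from the analogous weighted diffeomorphism for $Q$, hence is itself a weighted diffeomorphism.
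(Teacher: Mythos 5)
Your proposal is correct and follows essentially the same route as the paper's proof: define the filtration on $P\circ Q$ by declaring $f$ to have degree $i$ exactly when $\pi^*f \in C^\infty(P\times_{H_0}Q)_{(i)}$, then invoke \autoref{lemma: weighted principal bundles are locally trivial} to identify $(P\times_{H_0}Q)|_U \cong U\times_{H_0}H\times_{H_0}Q$ as weighted $H$-spaces, so that $\pi$ becomes locally the weighted submersion onto $U\times_{H_0}Q$ whose submersion coordinates furnish the required weighted coordinates on the quotient. The one slip is notational: the principal projection of $P$ is $\mu_G:P\to G_0$, not $\mu_H:P\to H_0$ (the latter is the moment map and need not be a weighted submersion), so the weighted transversality of $P\times_{H_0}Q$ is supplied by the bundle projection $\mu_H^Q:Q\to H_0$ of $Q$ rather than by $\mu_H$ --- the roles are simply swapped, and your argument is otherwise unaffected.
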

\begin{proof}
    Let
        \begin{equation}
        \label{equation: hilsum-skandalis weighting}
            C^\infty(P\circ Q)_{(i)} = \{ f\in C^\infty(P\circ Q) : \pi^*f \in C^\infty(P\times_{H_0} Q)_{(i)} \}.
        \end{equation}
    In order to show that this defines a weighting of $P\circ Q$, we must produce weighted coordinates. 

    The work in the previous sections shows that $g_0 \in G_0$ has an open neighbourhood $U\sset G_0$ with a weighted morphism $U\to H_0$ such that 
        \[ (P\times_{H_0} Q)|_U = P|_U\times_{H_0}Q \cong U\times_{H_0} H\times_{H_0} Q \]
    as weighted $H$-spaces. Moreover, the map 
        \[ U\times_{H_0} H\times_{H_0} Q \to U\times_{H_0} Q, \quad (u,h,q) \mapsto (u,q) \]
    is an $H$-equivariant weighted submersion which descends to a diffeomorphism $(U\times_{H_0} H\times_{H_0} Q)/H \cong U\times_{H_0}Q$ making the following diagram commute 
        \begin{equation*}
        \xymatrix{
            U\times_{H_0}H \times_{H_0}Q \ar[d] \ar[dr] & \\
            (U\times_{H_0}H \times_{H_0}Q)/H \ar[r]_-{\cong} & U\times_{H_0}Q
        }
        \end{equation*} 
    In particular, submersion coordinates for $U\times_{H_0}H \times_{H_0}Q\to U\times_{H_0}Q$ induced weighted coordinates on $(U\times_{H_0}H \times_{H_0}Q)/H$ for the filtration~\eqref{equation: hilsum-skandalis weighting} so that the quotient map $P\times_{H_0} Q \to P\circ Q$ is a weighted submersion. 
\end{proof}

\begin{corollary}
    Weighted Morita equivalence is an equivalence relation on the set of isomorphism classes of weighted groupoids. 
\end{corollary}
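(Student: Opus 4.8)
The final statement is the corollary that weighted Morita equivalence is an equivalence relation on the set of isomorphism classes of weighted Lie groupoids. I must verify reflexivity, symmetry, and transitivity, drawing on the structures just established.

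For \emph{reflexivity}, the plan is to exhibit the weighted Morita self-equivalence displayed in~\autoref{examples: weighted morita equivalence} (b): for any weighted Lie groupoid $G\toto M$, the diagram with $P=G$, left and right moment maps $t$ and $s$, is a weighted Morita equivalence. One checks that $s,t:G\to M$ are weighted submersions (which holds by~\autoref{definition: multiplicative weighting} and the remarks following it), that the two actions of $G$ on itself by left and right translation are weighted (which follows since multiplication is a weighted morphism), and that both principality conditions in~\autoref{defintion: Morita equivalence} are satisfied with the relevant division maps being weighted diffeomorphisms. This last point amounts to observing that the map $G\times_M G\to G\times_M G$, $(g,h)\mapsto(g, g^{-1}\circ h)$ is a weighted diffeomorphism, which is immediate from weightedness of multiplication and inversion.

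For \emph{symmetry}, I would show that if $P:G\to H$ is a weighted Morita equivalence then the ``opposite'' bibundle $P^{\mathrm{op}}:H\to G$, obtained by swapping the roles of $\mu_G$ and $\mu_H$ and converting the left $G$-action into a right action via $\mathrm{inv}_G$ (and vice versa), is again a weighted Morita equivalence. Since a Morita equivalence is by~\autoref{defintion: Morita equivalence} (d) precisely a weighted Hilsum--Skandalis morphism that is simultaneously a left and a right principal bundle, the defining conditions are symmetric in $G$ and $H$ once the actions are transported through the weighted diffeomorphism $\mathrm{inv}_G$ (resp. $\mathrm{inv}_H$); weightedness is preserved because inversion is a weighted diffeomorphism by the remark following~\autoref{definition: multiplicative weighting}.

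For \emph{transitivity}, the work is already essentially done: given weighted Morita equivalences $P:G\to H$ and $Q:H\to K$, their composition $P\circ Q=(P\times_{H_0}Q)/H$ carries, by~\autoref{A-theorem: composition of weighted HS-morphisms}, a canonical weighting making it a weighted Hilsum--Skandalis morphism $G\to K$ with weighted quotient map. To conclude that the composite is again a \emph{Morita} equivalence I must verify that it is a left principal $G$-bundle in addition to being a right principal $K$-bundle; the right-principality is provided by the theorem, and the left-principality follows by the symmetric version of the same argument (applying~\autoref{A-theorem: composition of weighted HS-morphisms} to the opposite bibundles constructed in the symmetry step, using that $P$ and $Q$ are both left principal). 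I anticipate that the main obstacle is purely bookkeeping: confirming that the weighting~\eqref{equation: hilsum-skandalis weighting} produced by~\autoref{A-theorem: composition of weighted HS-morphisms} on $P\circ Q$ is genuinely well-defined independent of the chosen local trivializations and that associativity of composition (which already holds only up to isomorphism for ordinary Hilsum--Skandalis morphisms) lifts to weighted isomorphism. Since equivalence relations only require the three named properties and not strict associativity, the bulk of the verification reduces to invoking the local triviality of weighted principal bundles (\autoref{lemma: weighted principal bundles are locally trivial}) to produce compatible weighted coordinates, exactly as in the proof of~\autoref{A-theorem: composition of weighted HS-morphisms}.
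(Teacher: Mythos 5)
Your proposal is correct and follows essentially the route the paper intends: the corollary is stated without proof immediately after~\autoref{A-theorem: composition of weighted HS-morphisms}, with reflexivity supplied by~\autoref{examples: weighted morita equivalence} (b), symmetry by the evident symmetry of~\autoref{defintion: Morita equivalence} (d) under passing to the opposite bibundle, and transitivity by the composition theorem. You are in fact more careful than the paper on one point worth keeping: the composition theorem only yields a weighted Hilsum--Skandalis morphism (hence right principality over $K$), and your observation that left principality of $P\circ Q$ over $G$ must be checked separately --- by applying the same argument to the opposite bibundles --- fills a step the paper leaves implicit.
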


\section{Weighted Principal Bundles and their Associated Bundles}

For completeness, we explain the associated bundle construction in the weighted setting. 

\begin{proposition}
\label{proposition: principal weightings}
    Let $(G,H)$ be a weighted Lie groupoid pair. If $P \to M$ is a weighted right principal $G$-bundle, then the submanifold $Q$ along which $P$ is weighted is a right principal $H$-bundle.
\end{proposition}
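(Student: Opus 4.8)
The plan is to show that the subspace $Q = \kappa_0(P) \subseteq P$ (the submanifold along which the weighting of $P$ is concentrated) inherits all the structure of a right principal $H$-bundle. Recall from the setup that $P \to M$ is a weighted right principal $G$-bundle with moment map $\mu : P \to M$, that $G$ is weighted along $H$ with units $M$ weighted along $N = H \cap M$, and that $P$ is weighted along $Q$ with base $B$ weighted along some $C \subseteq B$. First I would record that because $\mu : P \to M$ is a weighted morphism, it is a map of pairs, so $\mu(Q) \subseteq N$; thus $\mu$ restricts to $\mu|_Q : Q \to N$. The goal is to verify the three defining conditions of a right principal $H$-bundle for the diagram with total space $Q$, base $C$, structure groupoid $H \toto N$, and projection $\pi|_Q : Q \to C$.

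The key steps, in order, are as follows. First I would establish that $Q$ is a right $H$-space: the action map $P \times_M G \to P$ is weighted, hence a map of pairs, and the fibre product $P \times_M G$ is weighted along $Q \times_N H$ (this is the weighted fibre product of $\mu$ with the weighted submersion $s : G \to M$, using \autoref{theorem: weighted fibre products} together with the fact that $s$ is a weighted submersion so it is weighted transverse to any weighted morphism). Therefore the action map sends $Q \times_N H$ into $Q$, giving a restricted action $Q \times_N H \to Q$; the action axioms hold by restriction. Second, I would show $\pi|_Q : Q \to C$ is a surjective submersion: since $\pi : P \to B$ is a weighted submersion, by the normal form for weighted submersions (\autoref{theorem: weighted submersion coordinates}) its restriction to the submanifold $Q$ along which $P$ is weighted, with image the submanifold $C$ along which $B$ is weighted, is a submersion — concretely, in weighted submersion coordinates the $\pi$-basic coordinates descend and cut out exactly this restricted map. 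Third, and most delicately, I would prove the principal bundle condition, namely that the division map $Q \times_N H \to Q \times_C Q$ is a diffeomorphism. This is where I would exploit that the weighted diffeomorphism
\[
    P \times_M G \;\xrightarrow{\ \cong\ }\; P \times_B P, \qquad (p,g) \mapsto (p, p \cdot g),
\]
restricts to the weighted submanifolds along which each side is weighted. The left side is weighted along $Q \times_N H$ as noted; the right side $P \times_B P$ is weighted along $Q \times_C Q$. Since a weighted diffeomorphism restricts to a diffeomorphism between the weighted submanifolds (the sets where the respective filtrations are non-trivial are preserved under a weighted diffeomorphism and its inverse), the division map restricts to a diffeomorphism $Q \times_N H \xrightarrow{\cong} Q \times_C Q$, which is precisely the freeness-and-transitivity condition.

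The main obstacle I anticipate is the third step: making rigorous that a weighted diffeomorphism $\Phi : P \times_M G \to P \times_B P$ carries the weighted submanifold of the source bijectively onto that of the target, and that the induced map is exactly the division map for $Q$. The cleanest way to see this is that the submanifold along which a weighted manifold is weighted is intrinsic — it is the zero locus $\kappa_0$ of the graded bundle structure, equivalently the set of points where the filtration of functions is non-trivial — and any weighted isomorphism must preserve this locus together with its induced weighting (compare the identification $\nuw(M,N) = \kappa_0$ and the functoriality of $\nuw$). I would need to check that $\Phi$ identifies $Q \times_N H$ inside $P \times_M G$ with $Q \times_C Q$ inside $P \times_B P$, which amounts to the set-theoretic computation that $\Phi(\kappa_0(P \times_M G)) = \kappa_0(P \times_B P)$; this follows from $\Phi$ being a weighted diffeomorphism once I confirm the two weighted fibre product structures are the expected ones, using \autoref{proposition: weighter normal functor and fibre products} to compute the weighted submanifold of each fibre product. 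Once these identifications are pinned down, the remaining verifications are routine restrictions of the already-weighted structure maps.
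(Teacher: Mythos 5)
Your proof is correct and takes essentially the same approach as the paper's: its (much terser) argument makes exactly your three moves --- the weighted action map restricts to an $H$-action on $Q$, the weighted submersion $\pi$ restricts to a submersion $Q \to C$, and the weighted diffeomorphism $P\times_M G \to P\times_B P$ restricts to a diffeomorphism $Q\times_N H \to Q\times_C Q$. The additional care you take in identifying the weighted submanifolds of the two fibre products (via \autoref{theorem: weighted fibre products} and \autoref{proposition: weighter normal functor and fibre products}) and in justifying that a weighted diffeomorphism carries the weighted locus of the source onto that of the target is detail the paper leaves implicit.
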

\begin{proof}
    Since the action map $P\times G \to P$ is weighted, it follows that $H$ acts on $Q$. Furthermore, since the projection $P\to M$ is a weighted submersion, it follows that the restriction $Q\to N$ is a submersion. Finally, since the action map $P\times_M G \to P\times_B P$ is a weighted diffeomorphism, its restriction 
        \[ Q\times_N H \to Q\times_N Q \]
    is an isomorphism as well.  
\end{proof}

Next, we specialize to the case when $(G,H)$ is a weighted Lie group pair. Let $V$ be a weighted vector space, and suppose that 
    \begin{equation}
    \label{equation: weighted linear group action}
        \rho : G\times V \to V
    \end{equation}
is a linear action of $G$ on $V$, which is weighted in the sense $\rho$ is a weighted morphism between weighted vector bundles. We remark that for any $h\in H$ the map
    \[ V_{(i)} \to  V, \quad v \mapsto h.g  \]
is a weighted morphism, whence $H$ acts on $V$ by filtration preserving automorphisms.  Let $P\times_G V \to M$ be the associated bundle and let 
    \[ q:P\times V \to P\times_G V \]
be the quotient map.

\begin{proposition}
\label{proposition: weighted principal bundles define linear weightings of associated bundles}
    The filtration 
        \[ C^\infty_{pol}(P\times_G V)_{(i)} = \{ f\in C^\infty_{pol}(P\times_G V) : q^*f \in C^\infty_{pol}(P\times V)_{(i)} \} \]
    defines a linear weighting of $P\times_G V$. The filtration of $(P\times_G V)|_N$ is given by 
        \[ ((P\times_G V)|_N)_{(i)} = Q\times_H V_{(i)}.  \]
\end{proposition}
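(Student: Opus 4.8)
The plan is to verify the two claims locally using the weighted local triviality of $P$, and then show the local weightings glue. First I would invoke \autoref{proposition: principal weightings} to know that $Q \to N$ is a principal $H$-bundle, so that $Q \times_H V_{(i)}$ makes sense as a subbundle of $(P \times_G V)|_N$. The strategy is to reduce everything to a local trivialization of $P$: by the argument of \autoref{lemma: weighted principal bundles are locally trivial} (adapted to the Lie group case, where the base map $U \to \mathrm{pnt}$ is automatic), each point of $M$ has a weighted neighborhood $U$ with a weighted diffeomorphism $P|_U \cong U \times G$ of weighted principal $G$-bundles. Over such a $U$ the associated bundle trivializes as $(P \times_G V)|_U \cong U \times V$, and under this identification the quotient map $q$ becomes the projection $U \times G \times V \to U \times V$, $(u,g,v) \mapsto (u, g.v)$.

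The key computation is then to check that the filtration defined by pulling back through $q$ agrees with the product linear weighting on $U \times V$. Here I would use \autoref{theorem: linear weightings in terms of polynomials}: it suffices to identify the filtration of fiberwise polynomial functions. Choosing weighted coordinates $x_a$ on $U$ and a weighted basis of $V$ giving fiber coordinates $p_b$ of weight $-v_b$, I would argue that a function $f$ on $U \times V$ has $q^* f \in C^\infty_{pol}(U \times G \times V)_{(i)}$ precisely when $f \in C^\infty_{pol}(U \times V)_{(i)}$. The nontrivial direction uses that the $G$-action~\eqref{equation: weighted linear group action} is a weighted morphism, so that pulling back a weight-$i$ polynomial in $p_b$ through the action map $G \times V \to V$ stays in filtration degree $i$; conversely, restricting along the unit $e \in G$ (a weighted morphism, since the second condition in \autoref{definition: multiplicative weighting} for a weighted Lie group says $H$ contains the unit) recovers $f$ with no loss of degree. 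This shows $x_a, p_b$ serve as weighted vector bundle coordinates for the filtration defined in the statement, establishing that it is a genuine linear weighting.

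For the formula for the filtration of $(P \times_G V)|_N$, I would restrict the local picture to $Q$: over $U \cap N$ the identification $P|_U \cong U \times G$ restricts (again by \autoref{proposition: principal weightings}) to $Q|_{U \cap N} \cong (U \cap N) \times H$, and the associated bundle restricts to $(U \cap N) \times V$. Applying \autoref{proposition: linear weighting determines filtration by subbundles}, the subbundle $((P \times_G V)|_N)_{(i)}$ is the image of $\G(P \times_G V)_{(i)}$ in the restriction, which locally is $(U \cap N) \times V_{(i)}$; reassembling $H$-equivariantly over $N$ yields $Q \times_H V_{(i)}$. The fact that $H$ acts by filtration-preserving automorphisms (noted just before the proposition) is exactly what makes $Q \times_H V_{(i)}$ well-defined as a subbundle, and ensures the local descriptions agree on overlaps.

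The main obstacle I anticipate is the independence of the local weightings from the choice of weighted trivialization: two sections $\sigma, \sigma'$ of $P|_U$ differ by a $G$-valued transition function, and I must check this transition function is a \emph{weighted} morphism valued in $G$ so that the two resulting coordinate systems $p_b$ and $p_b'$ are related by a filtration-preserving change of frame. This reduces to the remark following \autoref{theorem: linear weighted normal bundle} on transition matrices lying in $C^\infty(U)_{(v_b - v_a)}$, combined once more with the fact that the $G$-action is weighted; I would spell out that the transition function factors through the weighted diffeomorphism $P \times_M G \cong P \times_B P$ (here with trivial base map), so that it is automatically weighted, closing the gluing argument.
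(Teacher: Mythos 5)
Your proof is correct and follows essentially the same route as the paper's: a weighted local trivialization $P|_U \cong U\times G$ (via \autoref{lemma: weighted principal bundles are locally trivial}), the verification that the pullback filtration under $q$ agrees with the product weighting on $U\times V$ (you supply the two-sided argument---weightedness of the action in one direction, restriction along the unit $e\in H$ in the other---that the paper compresses into ``since the diagram commutes''), and the identification of the restricted filtration over $N$ through the reduction of structure group to $H$. The only divergence is your final paragraph on independence of the chosen trivialization, which is unnecessary: the filtration is defined intrinsically by $q^*$, so one only needs to exhibit weighted vector bundle coordinates near each point, and no gluing of locally defined weightings is required.
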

\begin{proof}
    Given $p\in N$ we will construct weighted vector bundle coordinates near $p$. Choose an open neighbourhood $U\sset M$ containing $p$ and an isomorphism $P|_U \cong U\times G$ of weighted principal bundles. Since the diagram 
        \begin{align*}
        \xymatrix{
            U\times G \times V \ar[d]_q \ar[dr] & \\
            (U\times G)\times_G V \ar[r]_-\cong & U\times V
        }
        \end{align*}
    commutes, it follows that any choice of weighted vector bundle coordinate system for $U\times V$ does the trick. 

    For the filtration statement, let $\sigma : U \to P$ be a weighted section, and let $e_1, \dots, e_k$ be a weighted frame for $V$. Then the sections 
        \[ \sigma_a : U\to P \times_G V \quad u \mapsto [(\sigma(u), e_a)] \]
    form a weighted frame for $(P\times_G V)|_U$, hence $((P\times_G V)|_{N\cap U})_{(i)}$ is spanned by $\sigma_a$ with $v_a\geq i$. Since $Q$ is an $H$-principal subbundle of $P$ the map 
        \[ Q\times_H G \to P|_N, \quad [(q,g)] \mapsto q.g \]
    is a reduction of the structure group of $P|_N$ from $G$ to $H$. In particular, $(P\times_G V)|_N \cong Q\times_H V$. Since $\sigma(N\cap U) \sset Q$, if follows that $Q|_{U\cap N}\times_H V_{(i)}$ is exactly the image of $((P\times_G V)|_{U\cap N})_{(i)}$ under this isomorphism.
\end{proof}

\begin{remark}
\label{remark: integration of weighting to frame bundle}
    Let $V\to M$ be a rank $k$ complex vector bundle. One might wonder if a linear weighting of $V\to M$ induces a principal weighting of its frame bundle. The answer to this question is yes. This can be seen as an application of~\autoref{theorem: wide integration}, which we outline now. 
    
    The weight vector defines a filtration of $\C^k$ by subspaces, and this filtration, in turn, defines a Lie filtration of $\ger{gl}(\C^k)$. The space of filtration preserving endomorphisms of the filtered vector space $\C^k$, denoted $\ger{gl}(\C^k)_{(0)}$, is the Lie algebra of a connected Lie group $\mathrm{GL}(\C^k)_{(0)}\sset \mathrm{GL}(\C^k)$. Hence, by~\autoref{theorem: wide integration}, $\mathrm{GL}(\C^k)$ has a canonical multiplicative weighting along $\mathrm{GL}(\C^k)_{(0)}$. Furthermore, the canonical action 
        \[ \mathrm{GL}(\C^k)\times \C^k \to \C^k \]
    is weighted (using~\autoref{proposition: weighted morphisms of weighted lie filtrations}).

    Next, let $F(V) \to M$ denote the frame bundle of $V$, and let 
        \[ F_W(V) = \{ (p,f) \in N\times_N F(V) |\ f:\C^k \to V_p \text{ is filtration preserving}\} \]
    be the bundle of filtration preserving frames. This is a principal $\mathrm{GL}(\C^k)_{(0)}$-subbundle of $F(V)$. The weighting of $V$ induces an infinitesimally multiplicative weighting of the Atiyah algebroid $\mathrm{At}(V)\Rightarrow M$ such that the follow sequence of weighted vector bundles 
        \begin{equation}
        \label{A-equation: weighted exact sequence}
            0 \corr{} \mathrm{End}(V) \corr{} \mathrm{At}(V) \corr{} TM \corr{} 0
        \end{equation}
    is \emph{weighted exact}, in the sense that one can find a weighted splitting. Identifying sections of $\mathrm{At}(V)$ with the $\mathrm{GL}(\C^k)$-invariant vector fields on $F(V)$ we see that the linear weighting of $V$ induces a singular Lie filtration on $F(V)$. Moreover, using that~\eqref{A-equation: weighted exact sequence} is weighted exact, we have that $F_W(V)$ is a clean submanifold with respect to this singular Lie filtration. By~\autoref{theorem: singular Lie filtrations and weightings}, this defines a weighting of $F(V)$ along $F_W(V)$. Again, using~\eqref{A-equation: weighted exact sequence} one can deduce that this weighting is principal. 
\end{remark}

\cleardoublepage
\chapter{The Spray Exponential}
\label{appendix: spray exponential}

For a Lie group $G$ with Lie algebra $\ger{g}$, there is a canonical exponential map $\exp:\ger{g} \to G$ which is a diffeomorphism near the origin. For a general Lie groupoid $G\toto M$ with Lie algebroid $A\Rightarrow M$ there is no longer a canonical exponential map relating the two. Nonetheless, it is shown in~\cite{cabrera2020local} that after a choice of a \emph{Lie algebroid spray} $V\in \ger{X}$ is made an exponential map $\exp_V: A\dashrightarrow G$ defined on an open neighbourhood of the zero section in $A$ can be defined. In this appendix we review this construction. For the following, let $G\toto M$ be a Lie groupoid with Lie algebroid $A = \mathrm{Lie}(G) \Rightarrow M$. 

\section{$A$-paths and $G$-paths}

Let $J\sset \R$ be an open neighbourhood of the origin. Since $J$ is one dimensional, a Lie algebroid morphism is a vector bundle morphism $\varphi : TJ \to A$ with the property that
    \[ (a\circ \varphi )\left(\frac{\bd}{\bd t}\bigg|_{t=s} \right) = T_s\varphi_M\left(\frac{\bd}{\bd t}\bigg|_{t=s} \right) \]
for all $s \in J$. In particular, $\varphi$ defines a path 
    \[ \gamma : J \to A, \quad s \mapsto \varphi\left( \frac{\bd}{\bd t}\bigg|_{t=s} \right) \]
which lifts the path $\varphi_M$. We summarize these properties in the following definition. 

\begin{definition}
    An \emph{$A$-path} is a path $\gamma: J\to A$ with the property that 
        \begin{equation}
        \label{equation: A-path definition}
            a(\gamma(s)) =  T_s\gamma_M\left( \frac{\bd}{\bd t}\bigg|_{t=s} \right)
        \end{equation}
    for all $s\in J$, where $\gamma_M:J\to M$ is the base map of $\gamma$. 
\end{definition}

Thus any Lie algebroid morphism $\gamma:TJ \to A$ defines an $A$-path. The converse is also true: given an $A$-path $\gamma:\R \to A$ we let $\Tilde{\gamma}:TJ\to A$ be the vector bundle morphism defined by 
    \[ \Tilde{\gamma}\left(\frac{\bd}{\bd t}\bigg|_{t=s} \right) = \gamma(s).  \]
This is a morphism of anchored vector bundles precisely because of~\eqref{equation: A-path definition}. Thus, we have established a 1-1 correspondence between $A$-paths $J\to A$ and Lie algebroid morphisms $TJ\to A$. 

The dual notion of an $A$-path is a $G$-path: 

\begin{definition}
    A \emph{$G$-path} is a path $\gamma: J\to G$ such that 
        \[ \mathrm{t}(\gamma(t)) = \gamma(0) \in M \]
    for all $t\in J$. 
\end{definition}

Given a $G$-path $\gamma: J\to G$, the map 
    \begin{equation*}
    \label{equation: pair groupoid morphism associated to G-path}
        \Tilde{\gamma} : \mathrm{Pair}(J) \to G, \quad (t,s) \mapsto \gamma(t)^{-1}\gamma(s)
    \end{equation*}
is a Lie groupoid morphism. Conversely, any groupoid morphism $\gamma : \mathrm{Pair}(J)\to G$ defined a $G$-path by 
    \[ t \mapsto \gamma(0,t). \]
Thus, there is a 1-1 correspondence between $G$-paths $J\to G$ and groupoid morphisms $\mathrm{Pair}(J) \to G$. 

\section{The Maurer-Cartan Form}

Let $T^tG = \mathrm{ker}(Tt)$ be the tangent space to the $t$-fibres. Given $g\in G$ with $s(g)=x$ and $t(g)=y$, left multiplication by $g$ defines a diffeomorphism $L_g:t^{-1}(x) \to t^{-1}(y)$. For each $g\in G$, this defines an isomorphism 
    \[ T_{g}L_{g^-1} : T^t_{g}G \to T^t_{s(g)}G = A_{s(g)} \]
which fits together to define a vector bundle map
    \[ \theta^G : T^tG \to A \]
covering $s:G\to M$. 

\begin{definition}
    The vector bundle morphism $\theta^G: T^tG \to A$ is the (left-invariant) Maurer-Cartan form for $G$. 
\end{definition}

We can use the Maurer-Cartan form to differentiate $G$-paths to $A$-paths. Let $\gamma:J\to G$ be a $G$-path. Since $\mathrm{t}(\gamma(t)) = \gamma(0)$ for all $t\in J$, it follows that 
    \[ \dot{\gamma}(t) : J \to TG, \quad t\mapsto T_{t}\gamma\left( \frac{\bd}{\bd t}\bigg|_{t}\right)  \]
defines a path $J\to T^\mathrm{t}G$. Hence we can define 
    \[ D^L\gamma : J \to A, \quad t\mapsto \theta^G(\dot{\gamma}(t)). \]
Alternatively, we can identify $\gamma$ with a groupoid morphism $\mathrm{Pair}(J)\to G$ and let $D^L\gamma:J\to A$ be the $A$-path defined by the induce Lie algebroid morphism $TJ \to A$ (see~\cite[Propostion 1.1]{crainic2003integrability}). Using Lie's second theorem for Lie algebroids (~\cite[Theorem A.1]{mackenzie2000integration}), any $A$-path also integrates (uniquely) to a $G$-path. Summarizing, 

\begin{proposition}(cf.~\cite[Propostion 1.1]{crainic2003integrability}) 
    The assignment $\gamma\mapsto D^L\gamma$ defines a 1-1 correspondence between $G$-paths and $A$-paths. 
\end{proposition}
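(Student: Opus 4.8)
The statement to prove is the final Proposition: the assignment $\gamma \mapsto D^L\gamma$ defines a 1-1 correspondence between $G$-paths and $A$-paths. Let me plan this out.

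The proof rests on converting between the path-level data and the groupoid/algebroid morphism data, where the correspondence is already established.

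\textbf{Plan of the proof.} The strategy is to reduce the desired bijection to the two morphism-level bijections already recorded in the excerpt, namely the correspondence between $A$-paths $J \to A$ and Lie algebroid morphisms $TJ \to A$, and the correspondence between $G$-paths $J \to G$ and Lie groupoid morphisms $\mathrm{Pair}(J) \to G$. First I would fix an interval $J \subseteq \R$ containing the origin and, given a $G$-path $\gamma : J \to G$, identify it with the groupoid morphism $\widetilde{\gamma} : \mathrm{Pair}(J) \to G$, $(t,s) \mapsto \gamma(t)^{-1}\gamma(s)$. Differentiating this groupoid morphism produces a Lie algebroid morphism $\mathrm{Lie}(\widetilde{\gamma}) : T J = \mathrm{Lie}(\mathrm{Pair}(J)) \to A$. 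The key verification is that the $A$-path associated to this Lie algebroid morphism agrees with $D^L\gamma$ as defined via the Maurer--Cartan form $\theta^G$.

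\textbf{Key steps, in order.} The heart of the argument is a local computation identifying $\theta^G(\dot\gamma(t))$ with the value at $\partial/\partial t|_t$ of the differentiated morphism $\mathrm{Lie}(\widetilde\gamma)$. Concretely: for fixed $t$, the map $s \mapsto \gamma(t)^{-1}\gamma(s)$ is a path through $\gamma(t)^{-1}\gamma(t) = s(\gamma(t)) \in M$ lying in the $t$-fibre over $s(\gamma(t))$ near $s=t$; its derivative at $s=t$ is exactly $T_{\gamma(t)}L_{\gamma(t)^{-1}}(\dot\gamma(t))$, which by definition of $\theta^G$ equals $\theta^G(\dot\gamma(t)) = (D^L\gamma)(t)$. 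This shows $D^L\gamma$ coincides with the $A$-path obtained from $\mathrm{Lie}(\widetilde\gamma)$ under the correspondence between $A$-paths and Lie algebroid morphisms $TJ \to A$. I would then cite Lie's second theorem for Lie algebroids (\cite[Theorem A.1]{mackenzie2000integration}), which gives a bijection between Lie algebroid morphisms $TJ \to A$ and Lie groupoid morphisms $\mathrm{Pair}(J) \to G$ integrating them. Composing the three bijections --- $G$-paths $\leftrightarrow$ groupoid morphisms $\mathrm{Pair}(J) \to G$, groupoid morphisms $\leftrightarrow$ algebroid morphisms $TJ \to A$ (Lie II), algebroid morphisms $\leftrightarrow$ $A$-paths --- yields a bijection between $G$-paths and $A$-paths, and the above computation identifies this composite with $\gamma \mapsto D^L\gamma$.

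\textbf{Main obstacle.} The genuinely delicate point is the appeal to Lie's second theorem: integrating a Lie algebroid morphism $TJ \to A$ to a groupoid morphism $\mathrm{Pair}(J) \to G$ requires that the source $TJ$ integrates to $\mathrm{Pair}(J)$, which it does since $J$ is simply connected, so the integrating morphism exists and is unique. One must be careful that the correspondence is stated for a possibly shrunken interval and that uniqueness holds on connected domains; this is exactly what $s$-simply-connectedness of $\mathrm{Pair}(J)$ provides. The remaining verifications --- that $D^L\gamma$ genuinely satisfies the $A$-path condition \eqref{equation: A-path definition}, and that the constructions are mutually inverse --- are then formal consequences of the functoriality of the Lie functor and the naturality of $\theta^G$, so I would treat them briefly rather than in full detail.
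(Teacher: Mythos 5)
Your proposal is correct and follows essentially the same route as the paper: identify $D^L\gamma$ with the $A$-path obtained by differentiating the groupoid morphism $\widetilde\gamma:\mathrm{Pair}(J)\to G$, then invoke Lie's second theorem (using that $\mathrm{Pair}(J)$ is source-simply-connected) to integrate uniquely in the other direction. Your explicit Maurer--Cartan computation identifying $\theta^G(\dot\gamma(t))$ with $\tfrac{d}{ds}\big|_{s=t}\gamma(t)^{-1}\gamma(s)$ just fills in a verification the paper leaves implicit when it asserts the two descriptions of $D^L\gamma$ agree.
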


In particular, the $G$-path $\varphi:J\to G$ integrating a given $A$-path $\gamma:J\to A$ with $\gamma(0) \in A_x$ is the unique solution to the initial value problem 
    \begin{equation}
    \label{equation: ODE describing integration of $A$-paths}
        \theta^G(\dot{\varphi}(t)) = \gamma(t), \quad \varphi(0) = x. 
    \end{equation}
    
\section{The Spray Exponential}

We want to use the correspondence between $A$-paths and $G$-paths to define an exponential map for Lie groupoids. In order to do this, we need a way of defining an $A$-path through a given $a \in A$. The data needed for this is a Lie algebroid spray (cf.~\cite[Definition 3.1]{cabrera2020local}).

\begin{definition}
\label{definition: Lie algebroid spray}
    A \emph{spray} on a Lie algebroid $A\rightarrow M$ is a vector field $V\in \mathfrak{X}(A)$ such that 
        \begin{enumerate}
            \item[(a)] $\kappa_t^*V = tV$ for all $t\neq 0$, where $\kappa_t$ denotes scalar multiplication by $t$, and 
            
            \item[(b)] for all $a\in A$, one has $T\pi(V_a) = \rho(a)$, where $\pi:A\to M$ is the vector bundle projection and $\rho:A\to TM$ is the anchor. 
        \end{enumerate}
\end{definition}

\begin{remarks}
Let us summarize some facts about Lie algebroid sprays, as explained in~\cite[Section 3.1]{cabrera2020local}.
    \begin{itemize}
        \item[(a)] The first condition implies that $V$ vanishes along $M$. Hence, there is an open neighbourhood $M \sset U_V \sset A$ for which the flow $\phi_V^t$ up to time $t=1$ is defined for all $v \in U_V$. 

        \item[(b)] The second condition implies that for all $v\in U_V$ the map 
            \[ \phi_v :J \to A, \quad t\mapsto \phi_V^t(v) \]
        is an $A$-path, where $J\sset \R$ is an open interval centered at 0 and containing 1.   
    \end{itemize}
\end{remarks}

In particular, a choice Lie algebroid spray $V \in \ger{X}(A)$ determines an $A$-path starting at $v$ for any $v\in A$ sufficiently close to the zero section. This motivated the following definition, cf.~\cite[Definition 3.20]{cabrera2020local}.

\begin{definition}
    Let $A= \mathrm{Lie}(G)$ and let $V\in \ger{X}(A)$ be a Lie algebroid spray. Given $v\in A$ sufficiently close to the zero section, let $\exp_V(v) = \phi_G(1) \in G$ where $\phi_G(t)$ is the $G$-path integrating the $A$-path $\phi_V^t(v)$. The \emph{spray exponential} is the map 
        \[ \exp_V: A\dashrightarrow G, \quad v \mapsto \exp_V(v) \]
    defined on an open neighbourhood $U_V\sset A$ containing the zero section. 
\end{definition}

\begin{remark}
    In~\cite{cabrera2020local} they define the spray exponential to be the solution of the initial value problem~\eqref{equation: ODE describing integration of $A$-paths}, but by the discussion in the previous section we see they give the same result. 
\end{remark}

\begin{lemma}
\label{lemma: exp maps subgroupoids to subgroupoids}
    Let $H \toto N$ be a Lie subgroupoid of $G\toto M$, and let $B = \mathrm{Lie}(H)$. If $V\in \ger{X}(A)$ is tangent to $B$, then $\exp_V(v)\in H$ for all $v\in B$ sufficiently close to the zero section. 
\end{lemma}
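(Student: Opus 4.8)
The statement to prove is Lemma~\ref{lemma: exp maps subgroupoids to subgroupoids}: if $H\toto N$ is a Lie subgroupoid of $G\toto M$ with $B=\mathrm{Lie}(H)$, and $V\in\ger{X}(A)$ is tangent to $B$, then $\exp_V(v)\in H$ for all $v\in B$ sufficiently close to the zero section.

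\textbf{Overall approach.} The plan is to use the defining construction of $\exp_V$ as the integration of an $A$-path to a $G$-path, and to track that everything stays inside $B$ and $H$ at each stage. The key observation is that the spray exponential factors through two steps: first flowing $v\in B$ under $V$ to produce an $A$-path $t\mapsto \phi_V^t(v)$, and then integrating this $A$-path to a $G$-path whose time-$1$ value is $\exp_V(v)$. I would argue that the hypothesis ``$V$ tangent to $B$'' keeps the first step inside $B$, and then show that an $A$-path lying entirely in the subalgebroid $B$ integrates to a $G$-path lying entirely in the subgroupoid $H$.

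\textbf{Step 1: the flow stays in $B$.} Since $V$ is tangent to the submanifold $B\sset A$, its flow $\phi_V^t$ preserves $B$: for $v\in B$ sufficiently close to the zero section, $\phi_V^t(v)\in B$ for all $t$ in an interval containing $[0,1]$. Hence the $A$-path $\phi_v:t\mapsto \phi_V^t(v)$ is in fact a $B$-path, i.e.\ it takes values in $B$ and its base path lies in $N$ (using that $B$ has base $N$ and that the flow of $V$, vanishing along $M$, respects the splitting). This is the only place the tangency hypothesis is used, and it is immediate from the standard fact that a vector field tangent to a submanifold has a flow preserving that submanifold.

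\textbf{Step 2: a $B$-path integrates inside $H$.} The remaining point is that the $G$-path integrating a $B$-valued $A$-path actually lies in $H$. Here I would invoke the initial value problem~\eqref{equation: ODE describing integration of $A$-paths} characterizing the integrating $G$-path: $\theta^G(\dot\varphi(t))=\gamma(t)$, $\varphi(0)=x\in N$. Because $H$ is a Lie subgroupoid with $\mathrm{Lie}(H)=B$, the left-invariant Maurer--Cartan form $\theta^H$ of $H$ is the restriction of $\theta^G$, so the very same ODE, now read inside $H$, has a solution $\varphi_H$ through $x\in N$ whose derivative is governed by $\gamma(t)\in B$. By uniqueness of solutions to~\eqref{equation: ODE describing integration of $A$-paths} (Lie's second theorem for algebroids, cited in the excerpt via~\cite{mackenzie2000integration, crainic2003integrability}), $\varphi=\varphi_H$ lies in $H$ for all $t$, and in particular $\exp_V(v)=\varphi(1)\in H$. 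Alternatively, one may phrase Step 2 in terms of the correspondence between $A$-paths $TJ\to A$ and Lie algebroid morphisms: a $B$-path is a Lie algebroid morphism $TJ\to B\hookrightarrow A$, which integrates to a groupoid morphism $\mathrm{Pair}(J)\to H\hookrightarrow G$, and reading off $t\mapsto\gamma(0,t)$ gives a $G$-path landing in $H$.

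\textbf{Main obstacle.} The substantive point is Step 2: justifying that the integrating $G$-path does not leak out of $H$. The clean way to see this is that $B\Rightarrow N$ being an \emph{embedded} Lie subalgebroid integrates to $H$, and the uniqueness in the $A$-path/$G$-path correspondence forces the $G$-path of a $B$-path to coincide with the $H$-path it also solves; one must be careful that ``sufficiently close to the zero section'' is chosen uniformly so that both the flow interval and the integration interval contain $[0,1]$, which is automatic by shrinking the neighbourhood $U_V\cap B$ of the zero section in $B$. I expect no difficulty beyond bookkeeping once the Maurer--Cartan forms of $H$ and $G$ are identified along $H$.
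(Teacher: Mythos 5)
Your proposal is correct and follows essentially the same route as the paper's proof: the tangency of $V$ to $B$ makes $\phi_V^t(v)$ a $B$-valued $A$-path (indeed the paper notes $V|_B$ is itself a spray for $B$), which then integrates to a $G$-path lying in $H$. The paper simply asserts this last integration step in one line, whereas you fill in the justification via uniqueness for the Maurer--Cartan ODE; this is exactly the argument the paper leaves implicit.
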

\begin{proof}
    Since $B$ is a Lie subalgebroid, the restriction of $V$ to $B$ is a Lie algebroid spray for $B$. Thus, for $v\in B$ sufficiently close to zero, $\phi_V^t(v)$ is an $A$-path whose image lies in $B$, hence integrates to an $G$-path whose image lies in $H$. 
\end{proof}

\section{Applying the Higher Tangent Functor}

Finally, we are interested in examining how applying the higher tangent functor affects to corresponding exponential map. 

\begin{lemma}
\label{lemma: flows and lifts}
    Suppose that $X\in \ger{X}(M)$ with flow $\phi_X$, defined for time $|t|\leq \epsilon$. Then 
        \[ T_r\phi_X^t = \phi_{X^{(0)}}^t. \]
\end{lemma}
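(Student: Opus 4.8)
The plan is to show that both sides are flows of one and the same vector field on $T_rM$, namely $X^{(0)}$, and then to invoke uniqueness of flows. First I would observe that functoriality of the higher tangent functor makes $t \mapsto T_r\phi_X^t$ a (local) one-parameter group of diffeomorphisms of $T_rM$: since $\phi_X^0 = \mathrm{id}_M$ and $\phi_X^{t+s} = \phi_X^t \circ \phi_X^s$, applying $T_r$ yields $T_r\phi_X^0 = \mathrm{id}_{T_rM}$ and $T_r\phi_X^{t+s} = T_r\phi_X^t \circ T_r\phi_X^s$. Consequently $T_r\phi_X^t = \phi_Y^t$ for the vector field $Y = \frac{d}{dt}\big|_{t=0} T_r\phi_X^t \in \ger{X}(T_rM)$, defined for $|t| \leq \epsilon$.

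The core of the argument is to identify $Y$ with $X^{(0)}$. Because the lifted coordinate functions $x_a^{(i)}$ form a coordinate system on $T_rU$, a vector field on $T_rM$ is determined by its action on functions of the form $f^{(j)}$ with $f \in C^\infty(M)$ and $0 \leq j \leq r$, so it suffices to check that $Y f^{(j)} = X^{(0)} f^{(j)}$ for all such $f$ and $j$. Using the naturality identity $(T_rF)^* g^{(i)} = (F^*g)^{(i)}$ with $F = \phi_X^t$ and $g = f$, I would compute, at a point $u = \sum_i u_i \epsilon^i \in T_rM$,
\[
    (Y f^{(j)})(u) = \frac{d}{dt}\bigg|_{t=0}\big((T_r\phi_X^t)^* f^{(j)}\big)(u) = \frac{d}{dt}\bigg|_{t=0}\big(((\phi_X^t)^* f)^{(j)}\big)(u) = \frac{d}{dt}\bigg|_{t=0} u_j\big((\phi_X^t)^* f\big).
\]
Since $u_j : C^\infty(M) \to \R$ is a continuous linear functional (a finite-order jet evaluation at a point), it commutes with $\frac{d}{dt}\big|_{t=0}$, giving $u_j(\mathcal{L}_X f) = u_j(Xf) = (Xf)^{(j)}(u)$. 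On the other hand, by the defining property of the lift, $X^{(0)} f^{(j)} = (Xf)^{(j)}$. Thus $Y f^{(j)} = X^{(0)} f^{(j)}$ for every $f$ and every $j$, whence $Y = X^{(0)}$.

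Combining the two steps gives $T_r\phi_X^t = \phi_Y^t = \phi_{X^{(0)}}^t$, as claimed. I expect the only delicate point to be the justification that agreement on the functions $f^{(j)}$ forces equality of the two vector fields; this is handled by the fact, recorded just after the definition of the lifts, that the $x_a^{(i)}$ are coordinates on $T_rU$, so testing against lifts of coordinate functions suffices. The interchange of the $t$-derivative with the (linear) lifting operation is likewise routine once phrased pointwise through the functionals $u_j$, as above.
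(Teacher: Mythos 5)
Your proof is correct and follows essentially the same route as the paper's: functoriality of $T_r$ shows that $t\mapsto T_r\phi_X^t$ is a (local) flow, and the naturality identity $(T_r\phi_X^t)^*f^{(j)} = ((\phi_X^t)^*f)^{(j)}$ identifies its generator with $X^{(0)}$ via the defining relation $X^{(0)}f^{(j)} = (Xf)^{(j)}$. The only difference is that you spell out the pointwise computation through the functionals $u_j$ and explicitly justify that testing against lifts $f^{(j)}$ determines the vector field (since the $x_a^{(i)}$ are coordinates on $T_rU$), details the paper leaves implicit.
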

\begin{proof}
    For $t,s$ sufficiently small we have by functoriality that 
        \[  T_r\phi_X^0 = \mathrm{id} \quad \text{and} \quad T_r\phi_X^{t+s} = T_r(\phi_X^{t}\phi_X^{s}) = T_r\phi_X^tT_r\phi_X^s, \]
    hence $T_r\phi_X^t$ is the flow of a vector field. To verify that this vector field is $X^{(0)}$, let $f\in C^\infty(M)$ and observe that
        \begin{align*}
            \frac{d}{dt}\bigg|_{t=0}(T_r\phi_X^t)^*f^{(i)} = \frac{d}{dt}\bigg|_{t=0} ((\phi^t_X)^*f)^{(i)} = (Xf)^{(i)} = X^{(0)}f^{(i)},
        \end{align*}
    for $i=0,1,\dots, r$. 
\end{proof}

\begin{remark}
    In particular, $\phi_{X^{(0)}}^t$ is also defined for $|t|\leq \epsilon$. 
\end{remark}

Recall that if $A\Rightarrow M$ is a Lie algebroid, then there is a canonical Lie algebroid structure $T_rA \Rightarrow T_rM$ such that
    \[ a_{T_rA} = T_ra \quad \text{and} \quad [\sigma^{(-i)}, \tau^{(-j)}] = [\sigma, \tau]^{(-i-j)}.  \]
The canonical identification $T_r(TG) \cong T(T_rG)$ restricts to an identification $T_r(T^\mathrm{t}G|_M)\cong T^{T_r\mathrm{t}}(T_rG)|_{T_rM}$. In particular, we see that $T_rA$ is the Lie algebroid of $T_rG$. 
    
\begin{proposition}
\label{proposition: tangent and exp commute}
    \begin{itemize}
        \item[(a)] If $V\in \ger{X}(A)$ is a Lie algebroid spray, then $T_rV = V^{(0)}$ is a Lie algebroid spray for $T_rA$.
        
        \item[(b)] For any Lie algebroid spray $V\in \ger{X}(A)$, the partially defined maps 
        \[ T_r\exp_V : T_rA \dashrightarrow T_rG \quad \text{and} \quad \exp_{V^{(0)}}:T_rA \dashrightarrow T_rG \]
    agree. 
    \end{itemize}    
\end{proposition}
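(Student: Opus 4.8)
The plan is to prove the two statements of \autoref{proposition: tangent and exp commute} in sequence, with part (a) feeding directly into part (b). For part (a), I would verify that $V^{(0)} = T_rV$ satisfies the two defining conditions of a Lie algebroid spray (\autoref{definition: Lie algebroid spray}) for $T_rA$. The scalar multiplication on $T_rA$ is $T_r\kappa_t$, so the homogeneity condition $(T_r\kappa_t)^*V^{(0)} = tV^{(0)}$ should follow from applying the higher tangent functor to the relation $\kappa_t^*V = tV$; concretely, since $V^{(0)} = T_rV$ and $T_r$ is functorial, $V^{(0)}$ is $\kappa_t$-related to itself in the appropriate homogeneous sense, and I would check this on lifts of functions using $X^{(-i)}f^{(j)} = (Xf)^{(j-i)}$. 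For the second condition, the anchor of $T_rA$ is $a_{T_rA} = T_ra$ (as recorded just before the proposition), so I need $T(T_r\pi)(V^{(0)}) = T_ra$ evaluated appropriately; this follows by applying $T_r$ to $T\pi(V) = a$ together with the identification $T_r(TM) \cong T(T_rM)$ under which $T_r(T\pi) = T(T_r\pi)$.

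For part (b), the key idea is that both sides are computed by integrating $A$-paths to $G$-paths, and the higher tangent functor commutes with this integration. First I would recall that for $v \in A$ near the zero section, $\exp_V(v) = \phi_G(1)$ where $\phi_G$ is the $G$-path integrating the $A$-path $t \mapsto \phi_V^t(v)$. Applying $T_r$ and using \autoref{lemma: flows and lifts}, I get $T_r\phi_V^t = \phi_{V^{(0)}}^t$, so the $T_rA$-path determined by $V^{(0)}$ through a point $\tilde{v} \in T_rA$ is exactly the $T_r$-lift of the corresponding $A$-path. The remaining point is that the integration of $A$-paths to $G$-paths intertwines with $T_r$: if $\varphi$ is the $G$-path integrating an $A$-path $\gamma$, then $T_r\varphi$ is the $T_rG$-path integrating the $T_rA$-path $T_r\gamma$. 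I would establish this by noting that the $A$-path/$G$-path correspondence is equivalent to the correspondence between Lie algebroid morphisms $TJ \to A$ and Lie groupoid morphisms $\mathrm{Pair}(J) \to G$, and that $T_r$ sends such morphisms to the corresponding morphisms $T_r(TJ) = T(T_rJ) \to T_rA$ and $T_r\mathrm{Pair}(J) = \mathrm{Pair}(T_rJ) \to T_rG$, respecting the differentiation-integration correspondence by Lie's second theorem applied to $T_rG$ (whose Lie algebroid is $T_rA$). Evaluating at time $1$ then gives $T_r\exp_V = \exp_{V^{(0)}}$ on the common domain of definition.

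The main obstacle I anticipate is making the compatibility of $T_r$ with the $A$-path/$G$-path integration fully rigorous, since the integration is defined via an initial value problem \eqref{equation: ODE describing integration of $A$-paths} involving the Maurer--Cartan form rather than a bare flow. The cleanest route is probably to avoid working directly with the ODE and instead argue at the level of the Lie algebroid/Lie groupoid morphism correspondence: a $G$-path is the same as a groupoid morphism $\mathrm{Pair}(J) \to G$, and applying $T_r$ produces a groupoid morphism $\mathrm{Pair}(T_rJ) \to T_rG$ by functoriality. The subtlety is that $\exp_V$ and $\exp_{V^{(0)}}$ are only partially defined, so I would need to check that the domain $U_{V^{(0)}} \subseteq T_rA$ on which $\exp_{V^{(0)}}$ is defined contains $T_r U_V$ (or restrict to their intersection), which follows because the flow $\phi_{V^{(0)}}^t = T_r\phi_V^t$ is defined precisely where $\phi_V^t$ is, by \autoref{lemma: flows and lifts} and the remark following it. Once the domains are matched, uniqueness of integration of $A$-paths forces the two partially defined maps to agree wherever both are defined, completing the proof.
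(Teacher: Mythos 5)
Your proposal is correct and takes essentially the same route as the paper: both parts rest on \autoref{lemma: flows and lifts} (that $T_r\phi^t_X = \phi^t_{X^{(0)}}$) together with uniqueness of integration of $A$-paths, the only cosmetic difference in part (a) being that the paper verifies the homogeneity condition via the Euler vector field computation $[E^{(0)}, V^{(0)}] = [E,V]^{(0)} = V^{(0)}$ (using \autoref{lemma: lift of Euler vector field}) whereas you apply $T_r$ directly to the relation $\kappa_t^*V = tV$. Your added care in part (b) --- spelling out, via the correspondence between paths and algebroid/groupoid morphisms, that $T_r$ intertwines $A$-path integration with $G$-path integration, and matching the domains of the two partially defined maps --- fills in a step the paper's one-line ``therefore they integrate to the same $T_rG$-path'' leaves implicit, but it is a refinement of the same argument rather than a different one.
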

\begin{proof}
    \begin{itemize}
        \item[(a)] Let $E\in \ger{X}(A)$ denote the Euler vector field for $A$. Then $E^{(0)} \in \ger{X}(T_rA)$ is the Euler vector field for $T_rA$ and we have that 
            \[ [E^{(0)}, V^{(0)}] = [E, V]^{(0)} = V^{(0)},  \]
        so $T_rV$ satisfies~\autoref{definition: Lie algebroid spray} (a). Condition (b) in~\autoref{definition: Lie algebroid spray} follows by functoriality. 

        \item[(b)] Let $U_V \sset A$ be an open neighbourhood of the zero section for which $\phi_V^t$ is defined for $|t| \leq 1$. By applying~\autoref{lemma: flows and lifts} to $M = U_V$ we see that, for any $u\in T_rU_V$, the $T_rA$-paths $T_r\phi_V^t(u)$ and $\phi_{V^{(0)}}^t(u)$ agree. Therefore, they integrate to the same $T_rG$-path, which proves the claim. \qedhere
    \end{itemize}
\end{proof}

\cleardoublepage
\chapter{Proofs of Various Lemmas}
\label{appendix: technical proofs}

\section{Proof of~\autoref{lemma: graded modules gives sections of weighted normal bundle}}
\label{A-section: graded modules gives sections of weighted normal bundle}

\begin{lemma}[=~\autoref{lemma: graded modules gives sections of weighted normal bundle}]
    For $\sigma \in \G(V)_{(i)}$, the $i$-th homogeneous approximation is a smooth section of $\nuw(V)$ which is homogeneous of degree $-i$ and which depends only on the class of $\sigma$ in $\G(V)_{(i)}/\G(V)_{(i+1)}$. Moreover, for any $f\in C^\infty_{[n]}(V)_{(j)}$ and $g\in C^\infty(M)_{(k)}$ one has 
        \begin{align}
        \label{equation: homogeneous approximation properties}
        \begin{split}
            f^{[j]}\circ \sigma^{[i]} & = (f\circ \sigma)^{[j+ni]} \in C^\infty(\nuw(M,N)) \quad \text{and} \\
            g^{[k]}\sigma^{[i]} & = (g\sigma)^{[i+k]} \in \G(\nuw(V)).
        \end{split}
        \end{align}
\end{lemma}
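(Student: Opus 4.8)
The strategy is to verify the three claims about $\sigma^{[i]}$ by working locally in a weighted frame and reducing everything to the already-established properties of homogeneous approximations of functions (\autoref{theorem: weighted normal bundle is smooth}) together with the defining formula~\eqref{equation: homogeneous approximation definition}. The key conceptual point is that the weighted normal bundle $\nuw(V)$ is characterized by its polynomial functions, so a section of $\nuw(V)$ is determined by how it pairs with the generators $f^{[j]}$ for $f \in C^\infty_{pol}(V)_{(j)}$; hence I only need to check the claims against these generators.

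\textbf{Well-definedness and homogeneity.} First I would show that $\sigma^{[i]}(\varphi)$ is a genuine algebra morphism $\gr(C^\infty_{pol}(V)) \to \R$, i.e.\ an element of $\nuw(V)$, and that $\nuw(\pi)(\sigma^{[i]}(\varphi)) = \varphi$, so that $\sigma^{[i]}$ is indeed a section over $\nuw(M,N)$. The multiplicativity follows from the product rule $f^{[j]}(V)_{(j_1)} \cdot f^{[j_2]} = (f_1 f_2)^{[j_1+j_2]}$ in $\gr(C^\infty_{pol}(V))$ combined with the fact that $\varphi$ is itself an algebra morphism on $\gr(C^\infty(M))$ and that $f \mapsto f \circ \sigma$ respects products. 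To see that $\sigma^{[i]}$ depends only on the class of $\sigma$ in $\G(V)_{(i)}/\G(V)_{(i+1)}$, I observe that if $\sigma' = \sigma + \sigma''$ with $\sigma'' \in \G(V)_{(i+1)}$, then for $f \in C^\infty_{[n]}(V)_{(j)}$ one has $f \circ \sigma'' \in C^\infty(M)_{(j + n(i+1))}$, which lies one filtration degree higher than $j+ni$, so $(f \circ \sigma'')^{[j+ni]} = 0$ in $\gr(C^\infty(M))$; thus the defining formula~\eqref{equation: homogeneous approximation definition} is unchanged. The homogeneity of degree $-i$ with respect to the $\R^\times$-action is a direct computation: applying $\alpha_\lambda$ shifts the grading on $\gr(C^\infty_{pol}(V))_{j+ni}$ by $\lambda^{j+ni}$, and tracking the degrees through~\eqref{equation: homogeneous approximation definition} yields the factor $\lambda^{-i}$ after normalizing by the degree-$j$ homogeneity of $f^{[j]}$.

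\textbf{The two identities and smoothness.} The first identity in~\eqref{equation: homogeneous approximation properties} is essentially a restatement of the defining equation~\eqref{equation: homogeneous approximation definition}: evaluating $f^{[j]} \circ \sigma^{[i]}$ at $\varphi \in \nuw(M,N)$ gives $(\sigma^{[i]}(\varphi))(f^{[j]}) = \varphi((f\circ\sigma)^{[j+ni]}) = (f\circ\sigma)^{[j+ni]}(\varphi)$, which is the claim. The second identity, $g^{[k]}\sigma^{[i]} = (g\sigma)^{[i+k]}$, I would check by pairing both sides against an arbitrary generator $f^{[j]}$ and using the first identity together with the compatibility $f \circ (g\sigma) = (p^*g)(f\circ\sigma)$ for $f$ fiberwise linear (and its analogue in degree $n$), reducing to the multiplicativity of homogeneous approximations of functions already proven. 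For smoothness, I would fix a weighted frame $\sigma_b$ for $V|_U$ and expand $\sigma = \sum_b h_b \sigma_b$ with $h_b \in C^\infty(M)$; using the two identities just established I can write $\sigma^{[i]} = \sum_b h_b^{[\,i-v_b\,]} \sigma_b^{[v_b]}$ in terms of the frame $\sigma_b^{[v_b]}$ of $\nuw(V|_U)$ (\autoref{theorem: sections of the weighted normal bundle}), and each coefficient $h_b^{[i-v_b]}$ is smooth on $\nuw(M,N)$ by \autoref{theorem: weighted normal bundle is smooth}, giving smoothness of $\sigma^{[i]}$.

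\textbf{Main obstacle.} The step I expect to require the most care is confirming that $\sigma^{[i]}(\varphi)$ respects \emph{all} the algebra relations in $\gr(C^\infty_{pol}(V))$, not merely the pairing with linear functions, since $\gr(C^\infty_{pol}(V))$ is generated in degrees $0$ and $1$ but one must ensure the formula~\eqref{equation: homogeneous approximation definition} extends consistently to products of fiberwise-linear functions (i.e.\ higher symmetric powers). The cleanest way around this is to verify the identity $f^{[j]} \circ \sigma^{[i]} = (f\circ\sigma)^{[j+ni]}$ first and then deduce multiplicativity from the fact that $f \mapsto f\circ\sigma$ is an algebra homomorphism $C^\infty_{pol}(V) \to C^\infty(M)$ compatible with filtrations; this shifts the burden onto the already-controlled behavior of $f \circ \sigma$ rather than onto an ad hoc extension.
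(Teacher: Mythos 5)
Your overall strategy --- verifying every claim against the generators $f^{[j]}$ of $\gr(C^\infty_{pol}(V))$ via the defining formula~\eqref{equation: homogeneous approximation definition}, and exploiting that $f\mapsto f\circ\sigma$ is a filtration-compatible algebra morphism so that it suffices to work in fibre degrees $0$ and $1$ --- is essentially the paper's own proof, and your treatment of the two identities, well-definedness, and homogeneity proceeds the same way. There is, however, one genuine structural problem: your smoothness argument is circular as written. You invoke \autoref{theorem: sections of the weighted normal bundle} to expand $\sigma^{[i]} = \sum_b h_b^{[i-v_b]}\sigma_b^{[v_b]}$ in the frame $\sigma_b^{[v_b]}$ of $\nuw(V|_U)$, but that theorem is proved \emph{using} the present lemma: its proof needs the $\sigma_b^{[v_b]}$ to already be smooth sections satisfying~\eqref{equation: homogeneous approximation properties}. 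The repair is short and is what the paper does: once you have $f^{[j]}\circ\sigma^{[i]} = (f\circ\sigma)^{[j+ni]}$, smoothness is immediate, because the coordinates on $\nuw(V|_U)$ are themselves homogeneous approximations $x_a^{[w_a]}$, $p_b^{[-v_b]}$ of weighted vector bundle coordinates (\autoref{theorem: linear weighted normal bundle}(b)), so their pullbacks under $\sigma^{[i]}$ are smooth functions on $\nuw(M,N)$.

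A smaller imprecision: in the class-dependence step you argue as though $(f\circ(\sigma+\sigma''))^{[j+ni]}$ and $(f\circ\sigma)^{[j+ni]}$ differ by $(f\circ\sigma'')^{[j+ni]}$ for arbitrary $f\in C^\infty_{[n]}(V)_{(j)}$, but $\sigma\mapsto f\circ\sigma$ is not additive when $n\geq 2$ (cross terms appear in the expansion). This is harmless inside your own framework: since elements of $\nuw(V)$ are algebra morphisms and $\gr(C^\infty_{pol}(V))$ is generated in fibre degrees $0$ and $1$, it suffices to compare the two candidate sections against fibrewise \emph{linear} $f$, where composition is additive and your degree count $(f\circ\sigma'')^{[j+i]} = 0$ closes the argument; alternatively one checks, as the paper does, that $\sigma''\in\G(V)_{(i+1)}$ forces $(\sigma'')^{[i]}$ to be the zero section, which rests on the same reduction to linear generators. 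With these two repairs your proposal is correct.
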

\begin{proof}
    The equation $f^{[j]}\circ \sigma^{[i]} = (f\circ\sigma)^{[j+ni]}$ follows from~\eqref{equation: homogeneous approximation definition}, and it follows from this that $\sigma^{[i]}:\nuw(M,N) \to \nuw(V)$ is smooth. To see that it is a section, we note that 
        \[ \nuw(\pi)\circ \sigma^{[i]} = \nuw(\pi\circ \sigma) = \mathrm{id}_{\nuw(M,N)}. \]
    Given $g \in C^\infty(M)_{(k)}$, $f\in C^\infty_{[n]}(V)_{(j)}$ and $\varphi \in \nuw(M,N)$, using that $f$ is homogeneous of degree $n$ we compute 
        \begin{align*}
            ((g^{[k]}\sigma^{[i]})(\varphi))(f^{[j]}) & = (\nuw(\kappa_{\varphi(g^{[k]})})\sigma^{[i]}(\varphi))(f^{[j]}) \\
            & = (\sigma^{[i]}(\varphi))(\kappa_{\varphi(g^{[k]})}^*f^{[j]}) \\
            & = (\sigma^{[i]}(\varphi))(\varphi(g^{[k]})^nf^{[j]}) \\
            & = \varphi(g^{[k]})^n\varphi((f\circ \sigma)^{[j+ni]}) \\
            & = \varphi((g^n)^{[nk]}(f\circ \sigma)^{[j+ni]}) \\
            & = \varphi((g^n(f\circ \sigma))^{[j+ni+nk]}) \\
            & = \varphi((f\circ (g\sigma))^{[j+n(i+k)]}) \\
            & = ((g\sigma)^{[i+k]}(\varphi))(f^{[j]}),
        \end{align*}
    which establishes~\eqref{equation: homogeneous approximation properties}. To show that $\sigma^{[i]}$ only depends on the class of $\sigma \in \G(V)_{(i)}/\G(V)_{(i+1)}$, it suffices to show that if $\sigma \in \G(V)_{(i+1)}$ then $\sigma^{[i]} = 0$. Specifically, what this means is that for any $f\in C^\infty_{pol}(V)_{(j)}$ and $\varphi \in \nuw(M,N)$ one has
        \begin{equation}
        \label{equation: what the zero section is}
            (\sigma^{[i]}(\varphi))(f^{[j]}) = \varphi(\kappa_0^*f^{[j]}).
        \end{equation}
    If $f\in C^\infty_{[n]}(V)_{(j)}$ with $n\geq 1$, one has that 
        \[ f\circ \sigma \in C^\infty(M)_{(j+n(i+1))} \implies (f\circ \sigma)^{[j+ni]} = 0, \]
    so~\eqref{equation: what the zero section is} follows in this case. For $f\in C^\infty_{[0]}(V)_{(j)} = C^\infty(M)_{(j)}$ one has that 
        \[ (\sigma^{[i]}(\varphi))(f^{[j]}) = \varphi((f\circ \sigma)^{[j]}) = \varphi(f^{[j]}) =  \varphi(\kappa_0^*f^{[j]}), \]
    since $f$ is homogeneous of degree zero. Since any polynomial is a sum of monomials, it follows that $\sigma^{[i]} = 0$.  Finally, to see that $\sigma^{[i]} \in \G_{[-i]}(\nuw(V))$, we compute 
            \begin{align*}
                (\alpha_\lambda(\sigma^{[i]}(\varphi)))(f^{[j]})  & = (\sigma^{[i]}(\varphi))(\lambda^jf^{[j]})  \\
                & = \varphi(\lambda^j(f\circ \sigma)^{[j+ni]}) \\
                & = \varphi(\lambda^{j+ni}(f\circ (\lambda^{-i}\sigma))^{[j+ni]}) \\
                & = (\alpha_\lambda(\varphi))((f\circ (\lambda^{-i}\sigma))^{[j+ni]}) \\
                & = (\lambda^{-i}\sigma^{[i]}(\alpha_\lambda(\varphi)))(f^{[j]}). \qedhere
            \end{align*}
\end{proof}

\section{Proof of~\autoref{lemma: action is weighted morphism}}
\label{A-section: weighted action proof}

We now prove the following. 

\begin{lemma}[=~\autoref{lemma: action is weighted morphism}]
    If $\mathrm{Pair}(P)$, $\mathrm{Pair}(\mathrm{Spin}(k))$ are given the doubled trivial weighting along the diagonal, and $\Cl(\R^k)$ is given the linear weighting defined by its filtration by subspaces, then the group action 
        \begin{equation}
        \label{equation: severa action is weighted}
            \mathrm{Pair}(\mathrm{Spin}(k)) \times \mathrm{Pair}(P) \times \Cl(\R^k) \to \mathrm{Pair}(P) \times \Cl(\R^k)
        \end{equation}
    is a weighted morphism.
\end{lemma}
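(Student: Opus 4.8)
The plan is to use the weighted paths characterization from~\autoref{A-proposition: characterization of weighted morphisms} (b): a smooth map is a weighted morphism if and only if it takes weighted paths to weighted paths. This is the natural tool here because the weightings involved are all doubled trivial weightings along diagonals (for the $\mathrm{Pair}$ factors) together with a linear weighting from a filtration by subspaces (for $\Cl(\R^k)$), and weighted paths into such products are easy to describe explicitly.

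First I would unwind what a weighted path into the source looks like. Since $\mathrm{Pair}(\mathrm{Spin}(k))$ and $\mathrm{Pair}(P)$ carry the doubled trivial weighting along their diagonals, a weighted path into a pair groupoid $\mathrm{Pair}(X)$ weighted along the diagonal is a path $t\mapsto(a(t),b(t))$ for which the two components agree to first order at $t=0$, i.e. $a(0)=b(0)$ and $a(t)-b(t)=O(t)$ (in any local chart), since the diagonal has the trivial weighting of order $1$. Into $\Cl(\R^k)$ with its filtration $\Cl(\R^k)=\Cl_{-\dim}(\R^k)\supseteq\cdots\supseteq\Cl_0(\R^k)=\C$, a weighted path is a curve $v(t)$ whose component in the degree $j$ part of the associated graded vanishes to order $j$; concretely, writing $v(t)$ in terms of the Clifford filtration, the piece of filtration degree $j$ must be $O(t^{j})$. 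Thus a weighted path into the full source is a tuple $\bigl(g_1(t),g_2(t),f_1(t),f_2(t),v(t)\bigr)$ with $g_1,g_2$ agreeing to first order, $f_1,f_2$ agreeing to first order, and $v(t)$ satisfying the Clifford-degree vanishing conditions.

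Next I would evaluate the action map on such a path, obtaining $\bigl(f_1(t).g_1(t),\,f_2(t).g_2(t),\,g_1(t)v(t)g_2(t)^{-1}\bigr)$, and check that this is again a weighted path into $\mathrm{Pair}(P)\times\Cl(\R^k)$. The $\mathrm{Pair}(P)$-component condition is that $f_1(t).g_1(t)$ and $f_2(t).g_2(t)$ agree to first order: this follows because the principal action $P\times\mathrm{Spin}(k)\to P$ is smooth, $f_1(0)=f_2(0)$, $g_1(0)=g_2(0)$, and first-order agreement of both input pairs propagates through a smooth map by the chain rule. The key point is the $\Cl(\R^k)$-component: I must verify that $w(t):=g_1(t)v(t)g_2(t)^{-1}$ still has its degree $j$ graded piece vanishing to order $j$. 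Here I would use that the $\mathrm{Spin}(k)$-action by $v\mapsto gvg^{-1}$ preserves the Clifford filtration for \emph{every} fixed group element (it acts by filtered automorphisms, indeed by the lift of the orthogonal action, which is filtration-preserving), so the conjugation map $\mathrm{Spin}(k)\times\Cl(\R^k)\to\Cl(\R^k)$ is filtration-preserving in the Clifford variable uniformly in the group variable; combined with the smoothness in $g$ and the vanishing order of $v(t)$, the product $w(t)$ inherits the required degree-wise vanishing.

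The main obstacle I anticipate is precisely this last verification: controlling the interaction between the group-variable dependence (which carries no nontrivial weight, both $g_i$ starting at the same point) and the Clifford-filtration weight of $v(t)$, to see that conjugation does not lower the filtration-order of vanishing. The cleanest way to dispatch it is to fix a weighted basis of $\Cl(\R^k)$ adapted to the filtration, expand $v(t)=\sum_j v_j(t)$ with $v_j(t)=O(t^{j})$ the degree $j$ component, and observe that since $\mathrm{Ad}_{g}$ preserves each filtration level $\Cl_{\le -j}$, the component of $g_1(t)v_j(t)g_2(t)^{-1}$ in filtration degree $\ell$ is zero for $\ell<j$ and, for $\ell\ge j$, is a smooth function of $t$ times $v_j(t)=O(t^j)=O(t^{\le\ell})$; summing over $j$ then gives that the degree $\ell$ piece of $w(t)$ is $O(t^\ell)$, which is exactly the weighted-path condition. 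This reduces the whole lemma to the elementary fact that $\mathrm{Ad}$ is filtration-preserving on the Clifford algebra together with the path characterization, and I would record this computation in the appendix as indicated. I expect no difficulty beyond this bookkeeping, since all maps are smooth and the base-point agreements are built into the definition of weighted paths.
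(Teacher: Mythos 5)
Your overall strategy --- reduce everything to the weighted-path criterion of~\autoref{A-proposition: characterization of weighted morphisms} --- is the same as the paper's, but your execution contains two linked errors, both traceable to ignoring the word \emph{doubled} in the hypothesis. First, for the doubled trivial weighting along the diagonal, functions vanishing to order $i$ on the diagonal have filtration degree $2i$, so a weighted path $(g_1(t),g_2(t))$ into $\mathrm{Pair}(\mathrm{Spin}(k))$ must have components agreeing to \emph{second} order, $g_1(t)g_2(t)^{-1}=1+O(t^2)$, not to first order as you assert. This is not cosmetic: with only first-order agreement the lemma is false. Take $g_1(t)=\exp(t\xi)$, $g_2(t)=1$, $v(t)\equiv 1\in\Cl_0(\R^k)$; then $g_1(t)v(t)g_2(t)^{-1}=1+t\xi+O(t^2)$ has a Clifford-degree-two component of size $O(t)$, violating the weighted-path condition in $\Cl(\R^k)$. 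So under your description of the input paths no argument could close the gap.

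Second, your key step treats $v\mapsto g_1vg_2^{-1}$ as filtration-preserving ``uniformly in the group variable''. That is true only for conjugation ($g_1=g_2$); for $g_1\neq g_2$ it fails badly --- e.g.\ $1\mapsto g_1g_2^{-1}$, a generic element of $\mathrm{Spin}(k)\sset\Cl(\R^k)$, is not a scalar. Consequently your claim that the degree-$\ell$ component of $g_1(t)v_j(t)g_2(t)^{-1}$ vanishes for $\ell<j$ is wrong, and your estimate for $\ell\geq j$ --- a smooth function of $t$ times $O(t^j)$ --- is weaker than the required $O(t^\ell)$ whenever $j<\ell$. The missing idea, which is the actual content of the lemma and of the paper's proof, is the identification $\ger{spin}(k)=\Cl_2(\R^k)$ played against the doubled weighting: write $g_i(t)=g\exp(\xi t)\exp(\xi_i(t))$ with a common part and $\xi_1(t),\xi_2(t)=O(t^2)$ valued in $\ger{spin}(k)$, so that
\begin{equation*}
g_1(t)\,v(t)\,g_2(t)^{-1}=\mathrm{Ad}_{g\exp(\xi t)}\bigl(\exp(\xi_1(t))\,v(t)\,\exp(-\xi_2(t))\bigr).
\end{equation*}
The outer factor is genuine conjugation, hence filtration-preserving; in the inner product each power of $\xi_i(t)$ raises Clifford filtration degree by at most $2$ while contributing a factor of $t^2$, so degree shifts are exactly compensated by vanishing orders. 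This balance is precisely why the weighting must be doubled, and it is the step your bookkeeping omits.
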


As indicated above, the proof uses a characterization of weighted morphisms in terms of weighted paths. We being by explaining this characterization and then give a proof of the lemma. The contents of this section are based on communications with Gabriel Beiner, Yiannis Loizides, and Eckhard Meinrenken. 

Let us briefly recall the set up. Let $V \to M$ be a rank $k$ vector bundle with inner product and given spin structure. Let $P\to M$ be the principal $\mathrm{Spin}(k)$-bundle specified by the spin structure, and recall that the action of $\mathrm{Pair}(\mathrm{Spin}(k))$ on $\mathrm{Pair}(P)\times \Cl(\R^k)$ is given by
    \[ (g_1, g_2).(p_1, p_2, v) = (p_1.g_1, p_2.g_2, g_1vg_2^{-1}), \]
where we are making use of the inclusion $\mathrm{Spin}(k) \sset \Cl(\R^k)$. In order to show that this action is weighted, we make use of weighted paths. 

The weighting on both $\mathrm{Pair}(P)$ and $\mathrm{Pair}(\mathrm{Spin}(k))$ is the doubled weighting along the diagonal, where a function has filtration degree $2i$ if it vanishes to order $i$ along the diagonal. Giving $\mathrm{Pair}(M)$ the doubled weighting along the diagonal, one finds that $\mathrm{Pair}(P)$ can be identified locally with $\mathrm{Pair}(M)\times \mathrm{Pair}(\mathrm{Spin}(k))$ as weighted manifolds. Thus, to show that the action of $\mathrm{Pair}(\mathrm{Spin}(k))$ on $\mathrm{Pair}(P)$ is weighted, it is sufficient (in fact, equivalent) to show that the action of $\mathrm{Pair}(\mathrm{Spin}(k))$ on itself is a weighted morphism. 

\begin{lemma}
    Suppose that $G$ is a Lie group and $H\sset G$ is a closed subgroup. If $G$ is given the doubled weighting along $H$, then the map 
        \begin{align*}
            a: G\times G & \to G \\
            (g_1, g_2) & \mapsto g_1g_2^{-1}
        \end{align*}
    is a weighted morphism. 
\end{lemma}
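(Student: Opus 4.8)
The plan is to reduce the statement to the claim that the multiplication and inversion maps on $G$ (with its doubled weighting along $H$) are weighted morphisms, and then verify this using the weighted path criterion from~\autoref{A-proposition: characterization of weighted morphisms} (b). Recall that a weighted path for the doubled weighting of $G$ along $H$ based at a point $h\in H$ is a smooth path $\gamma(t)$ with $\gamma(0)\in H$ such that, after choosing submanifold coordinates splitting directions transverse and tangent to $H$, the transverse coordinates vanish to order $\geq 2$; more invariantly, $\gamma(0)\in H$, $\dot\gamma(0)\in T_{\gamma(0)}H$, and the ``velocity into the transverse directions'' vanishes. The key point is that in the doubled weighting a function $f$ lies in $C^\infty(G)_{(2i)}$ iff $f(\gamma(t))=O(t^{2i})$ for all weighted paths $\gamma$, so I only need to show that $a$ sends weighted paths to weighted paths.

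First I would set up the source space $G\times G$ with its product weighting: by~\autoref{examples: weighted deformation spaces} and the product formula for weightings, $G\times G$ is weighted along $H\times H$, and a weighted path in $G\times G$ is exactly a pair $(\gamma_1(t),\gamma_2(t))$ of weighted paths in $G$. So it suffices to check that if $\gamma_1,\gamma_2:\R\to G$ are weighted paths (for the doubled weighting along $H$), then $t\mapsto \gamma_1(t)\gamma_2(t)^{-1}$ is again a weighted path. Next I would reduce to the behaviour near $t=0$, where both paths pass through $H$, since away from $H$ there is nothing to check.

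The main computation is a first-order/tangential analysis at $t=0$. Writing $h_j=\gamma_j(0)\in H$, I would use left/right translations in $G$ to pull $\gamma_1(t)\gamma_2(t)^{-1}$ back to a path through the identity and compute its derivative in terms of $\dot\gamma_1(0)$ and $\dot\gamma_2(0)$ via the usual product-and-inverse derivative formula; the tangential condition on weighted paths says precisely that the relevant velocity vectors lie in $\mathfrak{h}=\mathrm{Lie}(H)$ modulo the second-order transverse vanishing. The crucial structural input is that $H$ is a subgroup, so $\mathfrak{h}$ is a subalgebra and in particular translations by elements of $H$ and the inversion differential preserve the splitting $\mathfrak{g}=\mathfrak{h}\oplus\mathfrak{m}$ up to the adjoint action of $H$, which preserves $\mathfrak{h}$. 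This guarantees that the transverse component of the velocity of the product path still vanishes to the required order, so the image path is weighted. I would phrase this cleanly by noting that the doubled weighting along $H$ is exactly the weighting defined by the singular Lie filtration with $\mathcal F_{(0)}=\{X\in\mathfrak X(G): X|_H\in TH\}$ and $\mathcal F_{(-1)}=\mathfrak X(G)$ (cf.~\autoref{theorem: singular Lie filtrations and weightings}), and then invoke~\autoref{proposition: weighted morphisms of weighted lie filtrations}: it is enough that the generating vector fields of $\mathcal F_{(0)}$ on $G\times G$ are $a$-related to vector fields of filtration degree $0$ on $G$.

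The hard part will be organizing the tangential bookkeeping so that the argument is genuinely invariant and does not secretly assume coordinates adapted simultaneously at $h_1$ and $h_2$. The cleanest route, which I expect to adopt, is to bypass explicit paths entirely and instead verify the hypothesis of~\autoref{proposition: weighted morphisms of weighted lie filtrations}: for the doubled weighting, $\mathcal F_{(0)}$ is generated by right-invariant vector fields $\sigma^R$ with $\sigma\in\mathfrak h$ (together with all of $\mathcal F_{(-1)}=\mathfrak X(G)$), and one checks directly that $(\sigma^R,\tau^R)$ on $G\times G$ is $a$-related to a vector field in $\mathcal F_{(0)}$ on $G$ using the formula for the differential of $(g_1,g_2)\mapsto g_1g_2^{-1}$ together with the fact that $\mathrm{Ad}_H$ preserves $\mathfrak h$. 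Once this $a$-relatedness is established, \autoref{proposition: weighted morphisms of weighted lie filtrations} immediately yields that $a$ is a weighted morphism, completing the proof.
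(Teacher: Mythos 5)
Your primary (weighted-path) argument is correct, but it is a genuinely different and much heavier route than the paper's. The paper's proof is two lines of algebra: by definition of the doubled weighting, the filtration is $C^\infty(G)_{(2i)}=C^\infty(G)_{(2i-1)}=\van{H}^i$ (and likewise $\van{H\times H}^i$ for the product weighting on $G\times G$), i.e.\ it is multiplicatively generated by the vanishing ideal sitting in degree $2$. Since $a^*$ is an algebra morphism, $a$ is weighted if and only if $a^*\van{H}\sset \van{H\times H}$, which is precisely the statement $a(H\times H)\sset H$, i.e.\ that $H$ is a subgroup. In particular, for doubled trivial weightings ``weighted morphism'' collapses to ``map of pairs,'' so no tangency/first-order analysis of $a$ is needed at all. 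Your path argument proves the same statement by verifying the conditions $\gamma(0)\in H$, $\dot\gamma(0)\in T_{\gamma(0)}H$ on image paths; this works (and is essentially the technique the paper reserves for the companion lemma on the $\mathrm{Pair}(\mathrm{Spin}(k))$-action on $\Cl(\R^k)$, where the target weighting is genuinely non-trivial), but here it buys nothing and costs you the product/inverse differential computation that the algebraic reduction makes unnecessary.

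There is, however, a real problem with the route you say you intend to adopt (singular Lie filtrations plus \autoref{proposition: weighted morphisms of weighted lie filtrations}). First, a minor indexing slip: the filtration $\mathcal{F}_{(0)}=\{X:X|_H\in TH\}$, $\mathcal{F}_{(-1)}=\mathfrak{X}(G)$ induces via \autoref{theorem: singular Lie filtrations and weightings} the \emph{trivial} weighting along $H$, not the doubled one; for the doubled weighting one needs $\mathcal{F}_{(0)}=\mathcal{F}_{(-1)}$ to be the fields tangent to $H$ and $\mathcal{F}_{(-2)}=\mathfrak{X}(G)$. (This is harmless for the conclusion, since a map is weighted for the trivial weighting iff it is weighted for the doubled one, but it should be said.) Second, and more seriously, the hypothesis of \autoref{proposition: weighted morphisms of weighted lie filtrations} requires \emph{every} $X\in\mathcal{F}_{(i)}$ to be $a$-related to some vector field of the same degree on the target. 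Since $a$ is a submersion and the bottom piece of the filtration is all of $\mathfrak{X}(G\times G)$, this hypothesis literally fails: a generic vector field on $G\times G$ is not $a$-related to anything. What is true is that module generators suffice — e.g.\ the right-invariant frame, with $(\sigma^R,0)\sim_a\sigma^R$ and $(0,\tau^R)\sim_a-\tau^L$, which for $\sigma,\tau\in\mathfrak{h}$ land in fields tangent to $H$ — but then you cannot simply cite the proposition: you must rerun the induction behind it, tracking the coefficient functions (including the generators of the tangent-field module of the form $gX$ with $g\in\van{H\times H}$, which are not $a$-related to anything and are handled only because multiplication by $\van{H\times H}$ raises filtration degree). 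So as written, the ``cleanest route'' has a gap; either patch it with the generator argument or fall back on your path argument (or, better, on the paper's one-line reduction).
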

\begin{proof}
    By definition of the doubled weighting, the map $a$ is weighted if and only if $a^*\van{H} \sset \van{H\times H}$. From this it follows that $a$ is a weighted morphism, since $H$ is a subgroup.  
\end{proof}

It remains to show that the action of $\mathrm{Pair}(\mathrm{Spin}(k))$ on $\Cl(\R^k)$ is weighted, which we accomplish by making use of weighted paths. It is sufficient to consider path $\gamma$ in $\mathrm{Pair}(\mathrm{Spin}(k)) \times \Cl(\R^k)$ of the form 
    \[ \gamma(t) = \left(g\exp(\xi t)\exp(\xi_1(t)), g\exp(\xi t)\exp(\xi_2(t)), \sum_{i=0}^k c_it^i\right),  \]
where $g \in \mathrm{Spin}(k)$, $\xi \in \mathfrak{spin}(k)$, $\xi_1(t), \xi_2(t) = O(t^2)$, and $c_i \in \Cl_i(\R^k)$. Composing this with the action $\mathrm{Pair}(\mathrm{Spin}(k))\times \Cl(\R^K)\to \Cl(\R^k)$ gives 
    \begin{equation}
    \label{A-equation: weighted path calculation}
        \sum_{i=0}^k \mathrm{Ad}_{g\exp(\xi t)}\left(\exp(\xi_1(t))c_i\exp(-\eta_2(t))t^i\right).
    \end{equation}
Using that the adjoint action of $\mathrm{Spin}(k)$ on $\Cl(\R^k)$ is filtration preserving, the identification $\mathfrak{spin}(k) = \Cl_2(\R^k)$, and the fact that $\xi_1(t), \xi_2(t) = O(t^2)$, it follows from the power series expansion of the exponential map that 
    \[ \sum_{i=0}^k \mathrm{Ad}_{g\exp(\xi t)}\left(\exp(\xi_1(t))c_i\exp(-\eta_2(t))t^i\right) = \sum_{i=0}^k v_i t^i + O(t^{k+1}), \]
for some $v_i \in \Cl_i(\R^k)$. In particular,~\eqref{A-equation: weighted path calculation} is a weighted path. By~\autoref{A-proposition: characterization of weighted morphisms}, this completes the proof of~\autoref{lemma: action is weighted morphism}.

\section{Proof of~\autoref{lemma: annilator is a weighted subbundle}}
\label{A-section: annihilator is weighed subbundle}

Let $V\to M$ and $W\to M'$ be weighted vector bundles. 

\begin{proposition}
\label{A-proposition: kernels or linear submersions}
    Suppose that $\varphi:V\to W$ is a weighted vector bundle morphism such that the base map $\varphi_M:M\to M'$ is a weighted submersion and the maps 
        \[ (V|_N)_{(i)} \to (W|_{M'})_{(i)}  \]
    are fibrewise surjective. The $\ker(\varphi)$ is a weighted subbundle of $V$. 
\end{proposition}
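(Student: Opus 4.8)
The plan is to reduce the statement to the characterization of weighted submanifolds we already have, namely \autoref{corollary: criteria for weighted submanifolds} (adapted to the vector bundle setting via \autoref{lemma: weighted morphisms are shift invariant} and the convention of replacing $TM|_N$ with $TV|_N = V|_N \oplus TM|_N$). Since $\varphi$ is a vector bundle morphism whose base map $\varphi_M$ is a submersion and which is fibrewise surjective, \autoref{lemma: vector bundle submersions} shows $\varphi$ is a submersion, so $\ker(\varphi)$ is a smooth subbundle of $V$ (a genuine submanifold of the total space, and a vector subbundle). It therefore remains to verify the three conditions of the weighted-subbundle criterion: that $\ker(\varphi)$ is weighted along its intersection with $V|_N$, that the inclusion $\ker(\varphi)\into V$ is a weighted morphism, and that the induced filtration of $(\ker\varphi)|_{\cdots}$ agrees with $\ker(\varphi)\cap (V|_N)_{(i)}$.

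The cleanest route is to produce explicit weighted vector bundle coordinates for $V$ in which $\ker(\varphi)$ is cut out by the vanishing of a subset of the fibre coordinates. First I would fix a point and choose weighted vector bundle coordinates $x_a, p_b$ on $V|_U$ and weighted vector bundle coordinates $x'_c, p'_d$ on $W|_{U'}$. Because $\varphi_M$ is a weighted submersion, \autoref{theorem: weighted submersion coordinates} lets me arrange the base coordinates so that the $x'_c$ pull back to (part of) a weighted coordinate system on $U$. For the fibre directions, the fibrewise surjectivity in each filtration degree is the key input: it says that for each $i$ the composite $(V|_N)_{(i)} \to (W|_{M'})_{(i)}$ is onto, so by choosing the weighted frame for $V$ compatibly with the filtration I can select fibre coordinates $p_b$ so that $\varphi$ sends a subset of them isomorphically onto the $p'_d$ (respecting vertical weights $v_d$) and annihilates the complementary subset. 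In those coordinates $\ker(\varphi)$ is defined by setting $\varphi^*p'_d = 0$ for all $d$, each such equation being the vanishing of a single fibre coordinate of weight matching $-v_d$, so the system $x_a, p_b$ restricted appropriately furnishes weighted subbundle coordinates.

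The technical heart is the simultaneous selection of frames: I must split the weighted frame $\sigma_b$ of $V$ into a part mapping isomorphically (in a filtration-preserving, weight-by-weight manner) onto the frame of $W$ and a complementary part spanning $\ker\varphi$, and this splitting must be achievable filtration degree by filtration degree. This is exactly where the hypothesis that each $(V|_N)_{(i)} \to (W|_{M'})_{(i)}$ is fibrewise surjective does its work: it guarantees the required splitting exists compatibly with the grading, via a replacement-theorem argument in the spirit of \autoref{lemma: extension of weighted coordinates} and \autoref{lemma: linear algebra projection fact} applied in each graded piece. I expect this graded bookkeeping to be the main obstacle; once the adapted weighted frame is in hand, the three conditions of \autoref{corollary: criteria for weighted submanifolds} follow by direct inspection, and the filtration identity $(\ker\varphi|_{\cdots})_{(i)} = \ker(\varphi)\cap (V|_N)_{(i)}$ is read off from the coordinate description exactly as in the proof of \autoref{proposition: weighted submanifolds are weighted}.

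Finally, to deduce \autoref{lemma: annilator is a weighted subbundle} from this proposition, I would observe that if $W\sset V$ is a weighted subbundle then the projection $\varphi: V^* \to W^*$ dual to the inclusion $W\into V$ is a weighted vector bundle morphism over the identity base map (so $\varphi_M = \mathrm{id}$ is trivially a weighted submersion), and it is fibrewise surjective in each filtration degree because dualizing the filtration-preserving injections $(W|_N)_{(i)}\into (V|_N)_{(i)}$ yields filtration-preserving surjections on the duals; since $\mathrm{ann}(W) = \ker(\varphi)$, the proposition applies and gives the result.
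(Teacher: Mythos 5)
Your proof is correct, and its skeleton is the same as the paper's: both begin by shifting the weightings (via \autoref{lemma: weighted morphisms are shift invariant}) so that everything is concentrated in negative degree, and both exploit the submersion structure of $\varphi$. Where you diverge is in how the final step is discharged. The paper's proof is essentially one line after the shift: since $TV|_N = TM|_N \oplus V|_N$, the two hypotheses (base map a weighted submersion, degree-wise fibrewise surjectivity) say \emph{precisely} that $\varphi \colon V \to W$ is a weighted submersion of weighted manifolds, and then $\ker(\varphi) = \varphi^{-1}(M')$ is the preimage of the zero section --- a weighted submanifold of $W$ --- so \autoref{corollary: inverse image of mfld is mfld} finishes the argument. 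You instead reconstruct adapted weighted vector bundle coordinates by hand: the base normal form from \autoref{theorem: weighted submersion coordinates} plus a graded replacement argument splitting the frame of $V$ into a part mapped isomorphically onto the frame of $W$ and a part spanning $\ker\varphi$. That construction is sound (the fibre coordinates cutting out $\ker\varphi$ are exactly the pullbacks $\varphi^* p'_d$, i.e.\ the equations for $\varphi^{-1}(M')$), but the ``technical heart'' you flag is already packaged inside the normal form theorem and the preimage corollary, so you are re-proving existing machinery rather than citing it. What your route buys is self-containedness and an explicit coordinate description of $\ker\varphi$; what the paper's route buys is brevity and a cleaner conceptual statement (kernel $=$ preimage of the zero section under a weighted submersion).

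One small imprecision in your closing paragraph on \autoref{lemma: annilator is a weighted subbundle}: when the weighted subbundle $W$ sits over a proper submanifold $R \subsetneq M$, the dual map is not a morphism $V^* \to W^*$ over the identity; one must first pull back, forming $i^* V^* \to R$ (weighted via the pullback construction of \autoref{subsection: linear constructions}) and then take the kernel of $i^*V^* \to W^*$, which is how the paper phrases it. Your argument goes through verbatim after inserting that step.
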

\begin{proof}
    As in the proof of~\autoref{theorem: linear weighted morphism in terms of graph}, we may assume that the weightings of both $V$ and $W$ are concentrated in negative degree. In this case, the assumptions on $\varphi$ ensure that it is a weighted submersion. Since $M'$ is a weighted submanifold of $W$, it follows that 
        \[ \mathrm{ker}(\varphi) = \varphi^{-1}(M') \]
    is a weighted subbundle of $V$ by~\autoref{corollary: inverse image of mfld is mfld}. 
\end{proof}

\begin{corollary}[=~\autoref{lemma: annilator is a weighted subbundle}]
    Let $V\to M$ be a weighted vector bundle and $W\to R$ a weighted subbundle. Then $\mathrm{ann}(W)$ is a weighted subbundle of $V^*$. 
\end{corollary}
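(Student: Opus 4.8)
The statement to prove is that for a weighted vector bundle $V\to M$ and a weighted subbundle $W\to R$, the annihilator $\mathrm{ann}(W)$ is a weighted subbundle of $V^*$. My plan is to derive this as a direct consequence of \autoref{A-proposition: kernels or linear submersions}, which says that the kernel of a suitable weighted vector bundle morphism is a weighted subbundle. The key idea is to realize $\mathrm{ann}(W)$ as the kernel of the restriction-to-$W$ map on the dual bundle.

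First I would set up the relevant map. Since $W\to R$ is a weighted subbundle, the inclusion $\iota:W\into V$ is a weighted vector bundle morphism whose base map is the inclusion $R\into M$. Dualizing, I would consider the restriction map
\[
    \iota^* : V^*|_R \to W^*,
\]
which sends a covector $\xi \in V_p^*$ (for $p\in R$) to its restriction $\xi|_{W_p}$. By the functoriality of the dual weighting (\autoref{subsection: linear constructions} (a)) and \autoref{proposition: VB-morphisms in terms of sections}, this is a weighted vector bundle morphism, since $\iota^*:\G(W^*)\to \G(V^*)$ was shown to be filtration preserving there. By definition, $\mathrm{ann}(W)=\ker(\iota^*)$. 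So the task reduces to checking that $\iota^*$ satisfies the hypotheses of \autoref{A-proposition: kernels or linear submersions}: namely that its base map $R\into M$ is a weighted submersion, and that the induced maps $(V^*|_N)_{(i)}\to (W^*|_{R\cap N})_{(i)}$ are fibrewise surjective for each $i$.

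The base-map condition is the one subtle point. As stated, \autoref{A-proposition: kernels or linear submersions} requires the base map to be a weighted \emph{submersion}, but $R\into M$ is a weighted \emph{embedding}, not a submersion. I would handle this by working locally: choosing weighted subbundle coordinates for $W$ (which simultaneously give weighted submanifold coordinates for $R$ by the discussion in \autoref{subsection: weighted subbundles}), the inclusion $\iota^*$ factors through the restriction $V^*|_R\to V^*$, and the essential content reduces to the fibrewise statement. Concretely, I expect the cleanest route is to prove the fibrewise surjectivity of $(V^*|_N)_{(i)}\to (W^*|_{R\cap N})_{(i)}$ directly from the formula $(V^*|_N)_{(i)}=\mathrm{ann}((V|_N)_{(-i+1)})$ together with the subbundle filtration identity $(W|_{R\cap N})_{(i)}=W\cap (V|_N)_{(i)}$ from \autoref{subsection: weighted subbundles}, and then observe that the vanishing locus of the dual coordinates cutting out $W$ exhibits $\mathrm{ann}(W)$ as the common zero set of a subset of the $p_b$ coordinates. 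In weighted vector bundle coordinates $x_a, p_b$ adapted to $W$ (so $W$ is cut out by the vanishing of $p_b$ for $b$ in some index set $J$), $\mathrm{ann}(W)$ is cut out in $V^*$ by the vanishing of the dual fibre coordinates $p_b^*$ for $b\notin J$, which are themselves weighted coordinates. This exhibits weighted subbundle coordinates for $\mathrm{ann}(W)$ directly, matching the definition in \autoref{subsection: weighted subbundles}.

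The main obstacle will be reconciling the embedding-versus-submersion mismatch in invoking \autoref{A-proposition: kernels or linear submersions} cleanly, and I anticipate that the most robust argument sidesteps the proposition and instead gives the explicit coordinate description above. That is, after reducing to a single weighted chart, I would verify that the dual of an adapted weighted subbundle frame for $W$ yields an adapted weighted subbundle frame for $\mathrm{ann}(W)$ in $V^*$, using the dual weighting rule $(V^*|_N)_{(i)}=\mathrm{ann}((V|_N)_{(-i+1)})$ to confirm the ranks and weights match. Since weighted subbundles are a local condition, verifying this on each chart of an adapted weighted atlas completes the proof.
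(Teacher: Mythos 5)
Your final argument is correct, but it takes a different route from the paper, and the reason you abandoned the paper's route rests on a misreading. The obstacle you identify --- that \autoref{A-proposition: kernels or linear submersions} cannot be applied because the base map is the embedding $R\into M$ --- is illusory: the restriction map you write down, $\iota^*:V^*|_R\to W^*$, is a morphism of vector bundles \emph{over $R$}, so its base map is $\mathrm{id}_R$, which is trivially a weighted submersion. This is exactly how the paper proceeds: it first forms $i^*V^*\to R$ as a weighted vector bundle using the pullback weighting of \autoref{subsection: linear constructions}, then applies \autoref{A-proposition: kernels or linear submersions} to the induced map $i^*V^*\to W^*$ (identity base map, fibrewise surjective in each filtration degree), whose kernel is $\mathrm{ann}(W)$. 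Your replacement argument --- choosing weighted subbundle coordinates $x_a,p_b$ adapted to $W$, and observing that the dual fibre coordinates $q_b$ (which by \autoref{subsection: linear constructions} (a) are weighted vector bundle coordinates for $V^*$, since dual frames of weighted frames are weighted frames) cut out $\mathrm{ann}(W)$ by the vanishing of $q_b$ for $b\notin J$ --- is valid and arguably more elementary, as it needs neither the pullback weighting nor the kernel proposition. One small imprecision: since $\mathrm{ann}(W)$ lives over $R$, it is cut out by the complementary dual fibre coordinates \emph{together with} the base coordinates $x_a$, $a\in I$, that cut out $R$; you should state this, though it changes nothing essential. What the paper's approach buys is reusability (the kernel proposition and the pullback weighting are general tools used elsewhere); what yours buys is a self-contained verification straight from the definition of weighted subbundle coordinates.
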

\begin{proof}
    Let $i:R\into M$ be the inclusion. Then $i^*V^*\to R$ is a weighted vector bundle by~\autoref{subsection: linear constructions}. The induced map 
        \[ i^*V^*\to W^* \]
    satisfies the assumptions of~\autoref{A-proposition: kernels or linear submersions} and has kernel equal to $\mathrm{ann}(W)$. 
\end{proof}

\begin{remark}
\label{A_remark: sharp annihilator}
    By a similar argument, if $V_1$ and $V_2$ are weighted vector bundles, and $R\sset V_2\times V_1$ is a weighted subbundle, then $\mathrm{ann}^\sharp(R) \sset V_2^*\times V_1^*$ is a weighted subbundle. Indeed, it is the kernel of the map 
        \[ i^*(V_2^*\times V_1^*) \to R^*, \quad (\xi_2, \xi_1)\mapsto \xi_2-\xi_1. \]
\end{remark}

\cleardoublepage

\cleardoublepage
\bibliographystyle{amsalpha}

\bibliography{thesis}
\cleardoubleevenpage
\thispagestyle{plain}

\hfill

\vfill

\pdfbookmark[0]{Colophon}{colophon}
\section*{Colophon}
\noindent This thesis was typeset using the typographical look-and-feel\\
\texttt{classicthesis} developed by Andr\'e Miede and Ivo Pletikosić.\bigskip

\noindent The style was inspired by Robert Bringhurst's seminal book\\
on typography ``\emph{The Elements of Typographic Style}''.\bigskip



\end{document}